\newtheorem{lem}{Lemma}%[section]
\newtheorem{definition}[lem]{Definition}
\newtheorem{cor}[lem]{Corollary}
\newtheorem{prop}[lem]{Proposition}
\newtheorem*{THM}{Theorem}
\newtheorem*{Lemma}{Lemma}
\newtheorem*{Definitionwn}{Definition}
\newtheorem*{Proposition}{Proposition}
\newtheorem*{Corollary}{Corollary}
\theoremstyle{remark}
\DeclareMathOperator{\id}{id}
\DeclareMathOperator{\Name}{Name}
\DeclareMathOperator{\dom}{dom}
\DeclareMathOperator{\Ord}{Ord}
\DeclareMathOperator{\OR}{OR}
\DeclareMathOperator{\cf}{cf}
\DeclareMathOperator{\Card}{Card}
\DeclareMathOperator{\Succ}{Succ}
\DeclareMathOperator{\Lim}{Lim}
\DeclareMathOperator{\supp}{supp}
\DeclareMathOperator{\rg}{rg}
\DeclareMathOperator{\rk}{rk}
\DeclareMathOperator{\Aut}{Aut}
\newcommand{\uhr}{\hspace*{-0,5mm} \upharpoonright \hspace*{-0,5mm} }
\newcommand{\m}{\mathbb}
\newcommand{\tbl}{\textquotedblleft}
\newcommand{\tbr}{\textquotedblright}
\newcommand{\ol}{\overline}
\newcommand{\wt}{\widetilde}
\begin{document}

\title{An Easton-like Theorem for $ZF\, +  DC$}
%Oder: Easton's theorem in $ZF\, + \, DC$ for set-many cardinals
\begin{center} \Large  \bfseries An Easton-like Theorem for Zermelo-Fraenkel Set Theory with the Axiom of Dependent Choice
%$\mathbf{ZF\, \boldsymbol{+}\, DC}$ 
\end{center}

\begin{center} { \scshape Anne Fernengel and Peter Koepke } \end{center}
\vspace*{5mm}

\begin{center} \bfseries Abstract \end{center}
\begin{quote} \small We show that in the theory $ZF\, \plus\, DC\, \plus$ \tbl for every cardinal $\lambda$, the set $[\lambda]^{\aleph_0}$ is well-ordered\tbr ($AX_4$), the $\theta$-function measuring the surjective size of the powersets $\powerset (\kappa)$ can take almost arbitrary values on any set of uncountable cardinals. This complements our results from \cite{arxiv}, where we prove that in $ZF$ (without $DC$), any possible behavior of the $\theta$-function can be realized; and answers a question of Shelah in \cite{AX4}, where he emphasizes that $ZF\, \plus\, DC\, \plus \, AX_4$ is a \tbl reasonable\tbr\,theory, where much of set theory  and combinatorics is possible.

%Our constructed model $N$ is a symmetric extension by a countably closed forcing $\m{P}$ which generalizes the forcing notion introduced in [GK].

%\colorbox{yellow}{FRAGE: Den letzten Satz weglassen?}
\end{quote}

\vspace*{3mm}

%Darauf eingehen, dass man hier ohne große Kardinalzahlen arbeiten möchte?

%Den \tbl alten Textb\tbr nochmal ansehen!

%auf [GK] eingehen

%FRAGE: irgendwo das Forcing im Detail durchgehen, und auf den Fehler eingehen? Erst in der Arbeit, oder auch hier? Vll. im Kapitel  \tbl The forcing\tbr ?

%$DC$ formulieren, und erwähnen, dass aus $DC$ das abzählbare Auswahlaxiom $AC_\omega$ folgt? Erwähnen, dass man $DC$ erreichen könnte, durch abzählbar-abgeschlossenes Forcing mit abzählbar-abgeschlossenem Filter?

%Man müsste erwähnen, dass man nur für überabzählbare Kardinalzahlen hier zurecht käme... 
%das Argument durchgehen, wieso $DC$ im \tbl klassischen Forcing \tbr\, mit endlich erzeugtem Filter nicht gilt? 
%UNKLAR ist immer noch, wieso nicht Isomorphismen und Filter abzählbaren Support haben könnten! ???

%Auf die Konstruktion eingehen? Forcing, Isomorphsimen, normaler Filter, symmetrisches Modell?

%\section{Basic Notations and Facts about Symmetric Forcing Extensions
\paragraph{Introduction.}
The \textbf{Continuum Function} $\kappa \mapsto 2^\kappa$, which maps any cardinal $\kappa$ to the cardinality of its power set $\powerset (\kappa)$, has been investigated since the early beginnings of set theory. In 1878, Georg Cantor advanced the \textbf{Continuum Hypothesis (CH)} (\cite{cantor}), which states that $2^{\aleph_0} = \aleph_1$, i.e. there is no set the cardinality of which is strictly between $\aleph_0$ and the cardinality of $\powerset(\aleph_0)$. The Continuum Hypothesis was among the first statements that were shown to be independent of $ZF$: Firstly, Kurt Gödel proved in \cite{goedel} that $CH$ holds in the constructible universe $L$. On the other hand, when Paul Cohen invented the method of forcing in \cite{cohen1} and \cite{cohen2}, he proved that $2^{\aleph_0}$ could be any cardinal $\kappa$ of uncountably cofinality. \\[-3mm]

The \textbf{Generalized Continuum Hypothesis (GCH)} was formulated by Felix Hausdorff in \cite{Hausdorff1914} (with earlier versions in \cite{Hausdorff1907} and \cite{Hausdorff1908}). Asserting that $2^\kappa = \kappa^\plus$ for all cardinals $\kappa$, it is a \textit{global} statement about possible behaviors of the Continuum Function. The $GCH$ is consistent with $ZFC$, since it holds true in $L$ (see \cite{goedel}). 
In 1970, William Easton proved the following global result: For \textit{regular} cardinals $\kappa$, any reasonable behavior of the $2^\kappa$-function is consistent with $ZFC$ (\cite{Easton}). In his forcing construction, he takes \tbl many\tbr\, Cohen forcings and combines them in a way that was henceforth known as the \textit{Easton product}. \\[-3mm]

For singular cardinals $\kappa$, however, the situation is a lot more involved, since the value of $2^\kappa$ for singular $\kappa$ is strongly influenced by the behavior of the Continuum Function below. The \textbf{Singular Cardinals Hypothesis (SCH)} implies that for any singular cardinal $\kappa$ with the property that $2^\lambda < \kappa$ holds for all $\lambda < \kappa$, it already follows that $2^\kappa = \kappa^\plus$. \\[-3mm]

It turned out that the negation of the $SCH$ is tightly linked with the existence of large cardinals. Among the first results in this direction was a theorem by Menachem Magidor (\cite{scp1} and \cite{scp2}), who proved that, assuming a huge cardinal, it is possible that $GCH$ first fails at a singular strong limit cardinal. On the other hand, Ronald Jensen and Keith Devlin proved in \cite{marginaliatts} that the negation of $0^\sharp$ implies $SCH$. Motik Gitik determined in \cite{G1} and \cite{G2} the consistency strength of $\neg SCH$ being the existence of a measurable cardinal $\lambda$ of Mitchell order $\sigma (\lambda) = \lambda^{\plus \plus}$. \\[-3mm]

There are many more results about possible behaviors of the Continuum Function starting from large cardinals. For instance, by a theorem of Carmi Merimovich, the theory $ZFC\, \plus\, \forall \, \kappa\ (2^\kappa = \kappa^{\plus n})$ is consistent for each $n < \omega$ (\cite{cm}). \\[-2mm]

On the other hand, \textit{Silver's Theorem} (\cite{Silver}) states that for any singular cardinal $\kappa$ of uncountable cofinality such that $2^\lambda = \lambda^\plus$ holds for all $\lambda < \kappa$, it already follows that $2^\kappa = \kappa^\plus$. Hence, the $SCH$ holds if it holds for all singular cardinals of countable cofinality.
This result was extended by Fred Galvin and Andr\'{a}s Hajnal shortly after (\cite{galvinhajnal}).
%extended this result and proved that for singular cardinals $\kappa$ of uncountably cofinality, the value of $2^\kappa$ strongly depends on the values $2^\nu$ for $\nu < \kappa$.
 
Another prominent example concerning upper bounds on the Continuum Function of singular cardinals is the following theorem by Saharon Shelah (\cite{Shelah}): \\[-4mm]

\textit{If $2^{\aleph_n} < \aleph_\omega$ for all $n < \omega$, then $2^{\aleph_\omega} < \aleph_{\omega_4}$. }\\[-2mm]

This makes clear that there are significant constraints on possible behaviors of the Continuum Function in $ZFC$. In particular, a result like Easton's Theorem can not exist for singular cardinals. \\[-2mm]

All of the aforementioned results essentially involve the Axiom of Choice. Without $AC$, however, there is a lot more possible. In \cite{AK}, Arthur Apter and Peter Koepke examine the consistency strength of the negation of $SCH$ in $ZF\, \plus\, \neg AC$. In this context, one has to distinguish between \textit{injective} and \textit{surjective} failures. An \textit{injective failure of $SCH$ at $\kappa$} is a model of $ZF\, \plus\, \neg AC$ with a singular cardinal $\kappa$ such that $GCH$ holds below $\kappa$, but there is an injective function $\iota: \lambda \hookrightarrow \powerset (\kappa)$ for some $\lambda \geq \kappa^{\plus \plus}$. A \textit{surjective failure of $SCH$ at $\kappa$} is a model of $ZF\, \plus\, \neg AC$ with a singular cardinal $\kappa$ such that $GCH$ holds below $\kappa$, but there is a surjective function $f: \powerset (\kappa) \rightarrow \lambda$ for some cardinal $\lambda \geq \kappa^{\plus \plus}$.

On the one hand, Arthur Apter and Peter Koepke construct injective failures of the $SCH$ at $\aleph_\omega$, $\aleph_{\omega_1}$ and $\aleph_{\omega_2}$ that would contradict the theorems by Shelah and Silver in the $ZFC$-context, but have fairly mild consistency strengths in $ZF\, \plus\, \neg AC$. On the other hand, regarding a surjective failure of the $SCH$, they prove that for every $\alpha \geq 2$, $ZFC$ together with the existence of a measurable cardinal is equiconsistent with the theory \[ZF\; \plus\; \neg AC\; \plus\; \mbox{\tbl GCH holds below } \aleph_\omega\tbr\; \plus\, \]\[ \plus\, \mbox{ \tbl there exists a surjective function } f: [\aleph_\omega]^\omega \rightarrow \aleph_{\omega + 2}\tbr.\] 
%For their construction, they introduce a new notion of parallel Prikry forcing. \\[-3mm]

It follows that also without the Axiom of Choice, injective failures of the $SCH$ are inevitably linked to large cardinals. Regarding surjective failures however, it is not possible to replace in their argument the surjective function $f: [\aleph_\omega]^\omega \rightarrow \aleph_{\omega + 2}$ by a surjection $f: \powerset (\aleph_\omega) \rightarrow \aleph_{\omega + 2}$, so the following question remained: Is it possible, for $\lambda \geq \aleph_{\omega + 2}$, to construct a model of $ZF\, \plus\, \neg AC$ where $GCH$ holds below $\aleph_\omega$ and there is a surjection $f: \powerset(\aleph_\omega) \rightarrow \lambda$ without any large cardinal assumptions?\\[-3mm]

%\colorbox{yellow}{FRAGE: $s$ oder $f$ für Surjektionen?}

This question was positively answered by Motik Gitik and Peter Koepke in \cite{GK}, where a ground model $V \vDash ZFC\, \plus \, GCH$ with a cardinal $\lambda \geq \aleph_{\omega + 2}$ is extended via symmetric forcing, in a way such that the extension $N = V(G)$ preserves all $V$-cardinals, the $GCH$ holds in $N$ below $\aleph_\omega$, and there is a surjective function $f: \powerset (\aleph_\omega) \rightarrow \lambda$. \\[-3mm]

More generally, in the absence of the Axiom of Choice, where the power set $\powerset (\kappa)$ of a cardinal $\kappa$ is generally not well-ordered, the \tbl size\tbr\, of $\powerset(\kappa)$ can be measured surjectively by the $\theta$-function
\[\theta(\kappa) := \sup \{\alpha \in \Ord\ | \ \exists\, f: \powerset (\kappa) \rightarrow \alpha \mbox{ surjective function} \},\] which generalizes the value $\theta := \theta (\omega)$ prominent in descriptive set theory. In the $\neg AC$-context, the $\theta$-function provides a surjective substitute for the Continuum Function $\kappa \mapsto 2^\kappa$. \\[-3mm]

One can show that in the model constructed in \cite{GK}, it follows that indeed, $\theta (\aleph_\omega) = \lambda$. This gives rise to the question whether the behavior of the $\theta$-function might be essentially undetermined in $ZF$. \\[-2mm]

In \cite{arxiv}, we could prove that indeed, the only constraints on the $\theta$-function in $ZF$ are the obvious ones: weak monotonicity and $\theta(\kappa) \geq \kappa^{\plus \plus}$ for all $\kappa$. In other words: In $ZF$, there is an analogue of Easton's Theorem for regular \textit{and} singular cardinals.

\begin{THM}(\cite{arxiv}) Let $V$ be a ground model of $ZFC \, + \, GCH$ with a function $F$ on the class of infinite cardinals such that the following properties hold: \begin{itemize} \item $\forall \kappa\ F(\kappa) \geq \kappa^{\plus \plus}$ \item $\forall\, \kappa, \lambda \ \big(\kappa \leq \lambda \rightarrow F(\kappa) \leq F(\lambda)\big)$. \end{itemize} Then there is a cardinal-preserving extension $N \supseteq V$ with $N \vDash ZF$ such that $\theta^N (\kappa) = F(\kappa)$ holds for all $\kappa$. 
\end{THM}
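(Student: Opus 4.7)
The plan is to construct $N$ as a symmetric submodel of a generic extension of $V$, via an Easton-style product of symmetric forcings that extends the Gitik--Koepke construction of \cite{GK} from the single cardinal $\aleph_\omega$ to every infinite cardinal. For each infinite $\kappa$ one designs a local building block $\mathbb P_\kappa$ that adjoins an $F(\kappa)$-indexed family of mutually generic ``Cohen-like'' subsets $(A^\kappa_\alpha)_{\alpha<F(\kappa)}$ of $\kappa$; at regular $\kappa$ this is essentially $\Fn(F(\kappa)\times\kappa,\,2,\,\kappa)$, while at singular $\kappa$ the family is coherently assembled from pieces added by forcings at the regular cardinals in a cofinal sequence below $\kappa$, as in \cite{GK}. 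Let $\mathcal G_\kappa$ be the natural action of the symmetric group on $F(\kappa)$ on $\mathbb P_\kappa$ permuting the index, and $\mathcal F_\kappa$ the normal filter generated by pointwise stabilisers of bounded subsets of $F(\kappa)$. Taking the Easton-support product $\mathbb P = \prod_\kappa \mathbb P_\kappa$ with the product group $\mathcal G$ and product filter $\mathcal F$, I let $G$ be $\mathbb P$-generic over $V$ and $N$ the resulting symmetric model.

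Cardinal preservation and the lower bound $\theta^N(\kappa) \geq F(\kappa)$ should be essentially routine: each $\mathbb P_\kappa$ is $\kappa^+$-cc and ${<}\kappa$-closed, so the Easton factorisation $\mathbb P \cong \mathbb P_{\leq\kappa} \times \mathbb P_{>\kappa}$ preserves cardinals in $V[G]$ and \emph{a fortiori} in the intermediate model $N$; and the indexing map $\alpha \mapsto A^\kappa_\alpha$ has a symmetric name, so the function sending $A^\kappa_\alpha \mapsto \alpha$ (and everything else to $0$) is a surjection $\mathcal P(\kappa)^N \twoheadrightarrow F(\kappa)$ inside $N$.

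The main obstacle, and the heart of the argument, is the upper bound $\theta^N(\kappa) \leq F(\kappa)$: one has to rule out a symmetric name for a surjection $f \colon \mathcal P(\kappa) \twoheadrightarrow \lambda$ with $\lambda > F(\kappa)$. The key ingredient is an \emph{approximation lemma} for hereditarily symmetric names of subsets of $\kappa$ -- every such subset must already be added by a factor of $\mathbb P$ of ``$\mathcal F$-bounded'' support -- which, combined with $GCH$ in $V$, shows that $\mathcal P(\kappa)^N$ is covered by at most $F(\kappa)$ pieces, each of $V$-cardinality at most $F(\kappa)$. An orbit/stabiliser count applied to a symmetric name $\dot f$ then forces the range of $f$ to sit inside an ordinal of cardinality $\leq F(\kappa)$, contradicting $\lambda>F(\kappa)$. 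The delicate technical work, which I expect to dominate the proof, is to establish the approximation lemma uniformly in $\kappa$ for both the regular and singular cases, and to verify that the Easton product does not create spurious surjections through interactions between factors above and below $\kappa$; this is what makes the construction genuinely more than a pointwise iteration of \cite{GK}.
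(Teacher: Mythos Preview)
This theorem is only \emph{cited} in the present paper; the proof lives in \cite{arxiv}, and what the paper tells you about it is that the forcing there is \emph{not} an Easton-style product but a single forcing whose conditions are functions on finite trees: the levels are indexed by cardinals, at level $\kappa$ there are finitely many vertices $(\kappa,i)$ with $i<F(\kappa)$, the value at a successor vertex $(\kappa^+,i)$ is a partial $0$--$1$ function on $[\kappa,\kappa^+)$, and there is no splitting at limit levels. A generic subset of $\kappa$ is then the union along a branch of these partial functions. The tree structure is precisely what creates the coherence between levels that your sketch defers to ``as in \cite{GK}''. The paper even remarks that this construction ``relies on certain finiteness properties'' and that replacing finite by countable supports already destroys a crucial homogeneity property.

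Your product proposal has a genuine gap at singular $\kappa$, and it is exactly the point where the real content of the theorem lies. You write that at singular $\kappa$ the family $(A^\kappa_\alpha)_{\alpha<F(\kappa)}$ is ``coherently assembled from pieces added at regulars below'', but in a product $\prod_\kappa\mathbb P_\kappa$ the factor $\mathbb P_\kappa$ is by definition independent of the factors below, so you have not said what $\mathbb P_\kappa$ \emph{is} for singular $\kappa$, nor how the $F(\kappa)$-indexed family is produced. If $\mathbb P_\kappa$ is trivial, the subsets of $\kappa$ in $N$ come only from combinations of subsets added at levels $<\kappa$, and there is no reason their surjective size should be exactly $F(\kappa)$; if $\mathbb P_\kappa$ is nontrivial, your blanket claim ``$\mathbb P_\kappa$ is $\kappa^+$-cc and ${<}\kappa$-closed'' needs justification that you have not provided (and which typically fails for naive choices at singular $\kappa$ --- this is why Easton's original theorem stops at regulars). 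The linking mechanism of \cite{GK}, which the present paper generalises, is not something one can simply graft onto a product factor by factor: it requires a global $P_\ast$-component that all the local pieces reference, and getting the independence/linking properties to interact correctly with the symmetry filter is the technical heart of both \cite{arxiv} and the present paper. Your approximation-lemma strategy for the upper bound is the right shape, but it cannot be carried out until the forcing is actually specified at singulars. A smaller point: with your filter (stabilisers of \emph{bounded} subsets of $F(\kappa)$) the full map $\alpha\mapsto A^\kappa_\alpha$ does \emph{not} have a symmetric name; only its restrictions to bounded $\beta<F(\kappa)$ do, which still suffices for $\theta^N(\kappa)\geq F(\kappa)$ since $\theta$ is a supremum, but your sentence as written is incorrect.
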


In our construction, we introduce a forcing notion $\m{P}$ whose elements $p$ are functions on trees $(t, \leq_t)$ with finitely many maximal points. The trees' levels are indexed by cardinals, and on any level $\kappa$, there are finitely many vertices $(\kappa, i)$ with $i < F(\kappa)$. For successor cardinals $\kappa^\plus$, the value $p(\kappa^\plus, i)$ is a partial $0$-$1$-function on the interval $[\kappa, \kappa^\plus)$. Thus, for any condition $p$ and $(\kappa, i) \in \dom p$, it follows that $\bigcup \{ p(\nu^\plus, j)\ | \ (\nu^\plus, j) \leq_t (\kappa, i) \}$ is a partial function on $\kappa$ with values in $\{0, 1\}$. Since we do not allow splitting at limits for the trees, it follows that this forcing indeed adds $F(\kappa)$-many new $\kappa$-subsets for every cardinal $\kappa$.

We discussed in \cite[Chapter 6]{arxiv} whether it might be possible to modify our construction and use trees with countably many maximal points, which could result in a countably closed forcing, giving rise to a symmetric extension $N$ with $N \vDash ZF \, \plus\, DC$. However, this modification would have a drastic impact on the forcing notion, destroying a crucial homogeneity property.
%, which most arguments in our proof were based on.
Seemingly, our construction relies on certain finiteness properties, and hence does not opt for a symmetric extension $N$ with $N \vDash DC$. \\[-2mm]

In this paper, we treat the question whether the $\theta$-function is still essentially undetermined, if we consider a model $N \vDash ZF\, \plus \, DC$. Starting from a ground model $V \vDash ZFC\, \plus\, GCH$, we construct a cardinal-preserving symmetric extension $N \supseteq V$ with $N \vDash ZF\, \plus \, DC$, and this time, we generalize the forcing in [GK] to obtain a countably closed forcing notion $\m{P}$. \\[-2mm]

%, if we want $N \vDash ZF\, \plus \, DC$.
The \textbf{Axiom of Dependent Choice (DC)}, introduced by Paul Bernays in 1942 (\cite{Bernays}), states that for every nonempty set $X$ with a binary relation $R$ such that for all $x \in X$ there is $y \in X$ with $y R x$, it follows that there is a sequence $(x_n\ | \ n < \omega)$ in $X$ such that $x_{n + 1} R x_n$ for all $n < \omega$. \\[-3mm]

When dealing with real numbers, surprisingly often $DC$ is sufficient (instead of the full Axiom of Choice), and the theory $ZF\, \plus\, DC$ provides an interesting framework for real analysis.

Concerning combinatorial set theory however, investigations under $ZF\, \plus\, DC$ seemed rather hopeless in the first place. A crucial step in the other direction was a paper by Saharon Shelah (\cite{settheorywithoutchoice}) with the main result in $ZF\, \plus\, DC$ that whenever $\mu$ is a singular cardinal of uncountable cofinality such that $|H (\mu)| = \mu$, then $\mu^\plus$ is regular and non-measurable. In the case that the power sets $\powerset(\alpha)$ are well-orderable for all $\alpha < \aleph_{\omega_1}$ with $| \bigcup_{\alpha < \aleph_{\omega_1}} \powerset (\alpha)| = \aleph_{\omega_1}$, it essentially follows that also $\powerset(\aleph_{\omega_1})$ is well-orderable. 

%Regarding our question, this implies that whenever we want to set $\theta (\aleph_{\omega_1})$ in dependently of the values $\theta (\alpha)$ for $\alpha < \aleph_{\omega_1}$, we have to make sure that in our construction, also the well ordering of $\bigcup_{\alpha < \aleph_{\omega_1}} \powerset (\alpha)$ is destroyed -- otherwise, a surjective failure of $SCH$ at $\aleph_{\omega_1}$ would come along with an \textit{injective} failure of $SCH$ at $\aleph_{\omega_1}$, which would necessarily involve large cardinals by [AK]. \\[-3mm]

Subsequently (see \cite{pcfwc} and \cite{AX4}), Shelah showed that much of $pcf$-theory is possible in $ZF\, \plus\, DC$, if an additional axiom is adopted: \[ (AX_4)\ \ \mbox{ \textit{For every cardinal}  } \lambda, \mbox{ \textit{the set }} [\lambda]^{\aleph_0}\mbox{ \textit{can be well-ordered}.}\]

%In [pcfwc, 0.4] he points out that, considering models where the Axiom of Choice fails, adopting $AX_4$ is rather working in the opposite direction as investigating $L[\mathbb{R}]$, where we lack essentially a well-ordering of $\powerset(\omega)$. 
Starting from a ground model $V \vDash ZFC$, any symmetric extension by countably closed forcing yields a model of $ZF\, \plus\, DC\, \plus\, AX_4$ (see \cite[p.3 and p.15]{pcfwc}). In \cite[0.1]{AX4}, Shelah concludes that $ZF\, \plus\, ZF\, \plus\, AX_4$ is \tbl \textit{a reasonable theory, for which much of combinatorial set theory can be generalized}\tbr. For example, he proves a rather strong version of the $pcf$-theorem, gives a representation of $\lambda^\kappa$ for $\lambda > > \kappa$, and proves that certain covering numbers exist. Concerning applications to cardinal arithmetic, Shelah emphasizes that we \tbl cannot say much\tbr\,on possible cardinalities of $\powerset (\kappa)$, and suggests to investigate possible cardinalities of $\big(\kappa^{\aleph_0}\ | \ \kappa \in \Card\big)$ rather than $\big(\powerset (\kappa) \ | \ \kappa \in \Card\big)$ (\cite[p. 2]{pcfwc}). In \cite[0.2]{AX4} he asks, referring to \cite{GK}, if there are any bounds on $\theta (\kappa)$ for singular cardinals $\kappa$ in $ZF\, \plus\, DC\, \plus\, AX_4$.  \\[-3mm]

In this paper, we give a negative answer to this question. We prove that in $ZF\, \plus\, DC\, \plus\, AX_4$ again, the only restrictions on the $\theta$-function on a set of uncountable cardinals are the obvious ones: \\[-3mm]

Given a ground model $V \vDash ZFC\, \plus\, GCH$ with \tbl reasonable\tbr\, sequences of uncountable cardinals $(\kappa_\eta\ | \ \eta < \gamma)$ and $(\alpha_\eta\ | \ \eta < \gamma)$ for some ordinal $\gamma$, we construct a cardinal-preserving symmetric extension $N \supseteq V$ with $N \vDash ZF\, \plus\, DC\, \plus\, AX_4$, such that $\theta^N (\kappa_\eta) = \alpha_\eta$ holds for all $\eta < \gamma$. \\[-2mm]

More precisely, we prove:

\begin{THM} Let $V$ be a ground model of $ZFC \, \plus \, GCH$ with $\gamma \in \Ord$ and sequences of uncountable cardinals $(\kappa_\eta\ | \ \eta < \gamma)$ and $(\alpha_\eta\ | \ \eta < \gamma)$, such that $(\kappa_\eta\ | \ \eta < \gamma)$ is strictly increasing and closed, and the following properties hold:

\begin{itemize} \item $\forall\, \eta < \eta^\prime < \gamma\ \ \alpha_\eta \leq \alpha_{\eta^\prime}$, i.e. the sequence $(\alpha_\eta\ | \ \eta < \gamma)$ is increasing, \item $\forall\, \eta < \gamma \ \ \alpha_\eta \geq \kappa_\eta^{\plus\, \plus}$, \item $\forall\, \eta < \gamma\ \ \cf\, \alpha_\eta > \omega$, \item $\forall\, \eta < \gamma\ \ (\alpha_\eta = \alpha^\plus \rightarrow \cf\, \alpha > \omega)$. \end{itemize}

Then there is a cardinal- and cofinality-preserving extension $N \supseteq V$ with $N \vDash ZF\, \plus\, DC\, \plus\, AX_4$ such that that $\theta^N (\kappa_\eta) = \alpha_\eta$ holds for all $\eta < \gamma$. 
\end{THM}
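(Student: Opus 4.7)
The plan is to construct a countably closed symmetric forcing extension $N \supseteq V$ generalizing the Gitik--Koepke construction of \cite{GK} from the single singular cardinal $\aleph_\omega$ to the whole sequence $(\kappa_\eta \mid \eta < \gamma)$. Since every symmetric extension of a $ZFC$-model by a countably closed forcing automatically satisfies $ZF + DC + AX_4$ (as cited above from \cite[pp.~3 and 15]{pcfwc}), both $DC$ and $AX_4$ come for free from the countable closure of the forcing; what remains is to verify cardinal and cofinality preservation together with the prescribed identities $\theta^N(\kappa_\eta) = \alpha_\eta$.

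\textbf{The forcing and its symmetries.} For each $\eta < \gamma$, I would take $\m{P}_\eta$ to be a generalized Cohen-like forcing adding $\alpha_\eta$-many $\kappa_\eta$-subsets: conditions are partial functions $p : \alpha_\eta \times \kappa_\eta \to 2$ of size $< \kappa_\eta$. Since each $\kappa_\eta$ is uncountable, $\m{P}_\eta$ is $\kappa_\eta$-closed, hence in particular countably closed. The full forcing $\m{P}$ is an Easton-style product of the $\m{P}_\eta$'s with bounded support below each regular cardinal; it remains countably closed, and via the usual $\kappa$-closure / $\kappa^+$-cc factorisation together with $GCH$ in $V$ it preserves all cardinals and cofinalities. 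The symmetry group $\mathcal{G}$ consists of sequences $\pi = (\pi_\eta)_{\eta < \gamma}$ where each $\pi_\eta$ is a permutation of $\alpha_\eta$, acting on $\m{P}$ by permuting the $\alpha_\eta$-index in the $\eta$-th coordinate. The normal filter $\mathcal{F}$ is generated by the pointwise stabilisers of ``small'' supports $E = (E_\eta)_{\eta < \gamma}$ with $|E_\eta| < \kappa_\eta$ for all $\eta$, and $N$ is the class of objects denoted by hereditarily symmetric names.

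\textbf{Computing the bounds on $\theta^N(\kappa_\eta)$.} The lower bound $\theta^N(\kappa_\eta) \geq \alpha_\eta$ is immediate: for each $i < \alpha_\eta$ the generic slice $X_{\eta,i} \subseteq \kappa_\eta$ has a canonical symmetric name, and the assignment $X_{\eta,i} \mapsto i$ (sending all other subsets to $0$) extends in $N$ to a surjection $\mathcal{P}(\kappa_\eta) \twoheadrightarrow \alpha_\eta$. The delicate part is the matching upper bound: given a symmetric name $\dot f$ for a surjection $\mathcal{P}(\kappa_\eta) \to \beta$ with support $E$, one must show $\beta \leq \alpha_\eta$. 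A symmetry argument shows that every ordinal $\xi < \beta$ in the range of $\dot f$ is already determined by a subname whose essential coordinates lie in the $\mathcal{G}_E$-orbit of some small parameter; an orbit count, together with $GCH$ in $V$ and the hypothesis $\cf\, \alpha_\eta > \omega$, then bounds $\beta$ by $\alpha_\eta$.

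\textbf{Main obstacle.} The hardest step is precisely this upper bound, and specifically its interplay with $DC$. Because $DC$ keeps countable sequences from $N$ inside $N$, a surjection $f \in N$ could in principle be cobbled together from $\omega$-many ``fragments'' with distinct symmetric supports, threatening to blow $\beta$ past $\alpha_\eta$. The two cofinality hypotheses $\cf\, \alpha_\eta > \omega$ and ``$\alpha_\eta = \alpha^+ \Rightarrow \cf\, \alpha > \omega$'' are tailored precisely to block this, via a fixed-point / approximation lemma showing that any symmetric name $\dot f$ factors through an $\alpha_\eta$-sized piece of $\m{P}$. Carrying out this approximation while preserving countable closure -- the property that had to be sacrificed by the trees-with-finitely-many-maxima forcing of \cite{arxiv} -- is the central technical challenge of the paper, and is what forces the move from the finite-maxima construction of \cite{arxiv} to a genuine generalisation of \cite{GK}.
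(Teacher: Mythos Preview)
Your proposal has a genuine structural gap: the forcing you describe does not work at singular cardinals $\kappa_\eta$, which are precisely the cardinals that make the theorem interesting. You claim that ``since each $\kappa_\eta$ is uncountable, $\m{P}_\eta$ is $\kappa_\eta$-closed,'' but this is false whenever $\kappa_\eta$ is singular: a decreasing $\omega$-sequence of conditions of sizes cofinal in $\kappa_\eta$ has no lower bound of size $<\kappa_\eta$. More seriously, plain Cohen forcing with conditions of size $<\kappa_\eta$ at a singular $\kappa_\eta$ does not have the nice chain-condition/closure factorisation you invoke, and your Easton product will collapse cardinals. The paper handles this by treating limit cardinals and successor cardinals completely differently: for $\eta\in\Succ$ it uses ordinary Cohen forcing $C^\sigma$ as you suggest, but for $\eta\in\Lim$ the forcing $P^\eta$ is an Easton-support product of Cohen pieces on intervals $[\kappa_{\nu,j},\kappa_{\nu,j+1})$ \emph{below} $\kappa_\eta$, and the $\alpha_\eta$-many generic $\kappa_\eta$-subsets are obtained by piecing these together.

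This immediately creates a second problem you have not addressed: each such $P^\sigma$-generic for $\sigma\in\Lim$ restricts to a new subset of every smaller $\kappa_{\eta'}$, so naively one would get far too many $\kappa_{\eta'}$-subsets in $N$ and lose the upper bound $\theta^N(\kappa_{\eta'})\leq\alpha_{\eta'}$. The paper's solution --- the actual content of the Gitik--Koepke generalisation you allude to --- is a \emph{linking mechanism}: a single ``master'' forcing $P_\ast$ together with linking ordinals $a^\sigma_i$ ensures that below any level, the generic $G^\sigma_i$ is determined modulo a ground-model set by a single slice $G_\ast(g^\sigma_i)$ of $G_\ast$. This is what powers the Approximation Lemma, which says every set of ordinals in $N$ lives in a countable product $V[\prod_m G^{\sigma_m}_{i_m}]$. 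Your proposal omits this entirely. Finally, your lower bound argument is also broken: the assignment $X_{\eta,i}\mapsto i$ cannot lie in $N$, since the full sequence $(X_{\eta,i}\mid i<\alpha_\eta)$ must not; the paper instead introduces extra filter groups $H^\lambda_k$ and builds ``clouds'' $(\wt{G^\lambda_i})^{(k)}$ whose \emph{sequence} (for $i<k$) has a symmetric name, giving surjections onto each $k<\alpha_\eta$ without ever putting the full enumeration into $N$.
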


%In other words: In $ZF\, \plus\, DC\, \plus AX_4$, the $\theta$-function can take almost arbitrary values. 

Our paper is structured as follows: In Chapter 1, we briefly review some basic definitions and fact about forcing and symmetric extensions. In Chapter 2, we state our theorem and then argue why it is not possible to drop any of our requirements on the sequences $(\kappa_\eta\ | \ \eta < \gamma)$ and $(\alpha_\eta\ | \ \eta < \gamma)$. 

%\colorbox{yellow}{FRAGE: hier erwähnen, warum man o.B.d.A. annehmen kann, dass $(\alpha_\eta\ | \ \eta < \gamma)$ streng monoton wächst?}

In Chapter 3, we introduce our forcing notion which blows up the power sets $\big(\powerset(\kappa_\eta)\ | \ \eta < \gamma\big)$ according to $(\alpha_\eta\ | \ \eta < \gamma)$; and in Chapter 4 construct a group $A$ of $\m{P}$-automorphisms and a normal filter $\mathcal{F}$ on $A$, giving rise to our symmetric extension $N := V(G)$. 
%\colorbox{red}{FRAGE: Hier erwähnen, dass es um \tbl partial automorphisms\tbl geht?}
In Chapter 5, we prove that sets of ordinals located in $N$ can be captured in fairly \tbl mild\tbr\,$V$-generic extensions, which implies that all cardinals and cofinalities are $N$-$V$-absolute. It remains to show that indeed, $\theta^N (\kappa_\eta) = \alpha_\eta$ holds for all $\eta < \gamma$. The first part, $\theta^N (\kappa_\eta) \geq \alpha_\eta$, follows by our construction (see Chapter 6.1), while for the second part, $\theta^N (\kappa_\eta) \leq \alpha_\eta$, we assume that there was a surjective function $f: \powerset( \kappa_\eta) \rightarrow \alpha_\eta$ in $N$, and obtain a contradiction by capturing a restricted version $f^\beta$ in an intermediate generic extension $V[G^\beta]$ which is sufficiently cardinal-preserving (see Chapter 6.2 and 6.3).
%and applying several isomorphisms arguments  
We treat the values $\theta(\lambda)$ for cardinals $\lambda$ in the \tbl gaps\tbr\, $\lambda \in (\kappa_\eta, \kappa_{\eta + 1})$ and $\lambda \geq \kappa_\gamma := \sup \{\kappa_\eta\ | \ \eta < \gamma\}$ in Chapter 6.4 and 6.5, respectively, and show that they are the smallest possible. We conclude with several remarks in Chapter 7.

%\colorbox{yellow}{FRAGE: Wäre \tbl function on the class of infinite cardinals\tbr\,eindeutig?}

\section{Basics.}
\label{sym forcing} 

%\colorbox{red}{ACHTUNG - erwähnen, dass $\theta(\kappa)$ anstatt $hrtg(\powerset(\kappa))$ verwendet wird!}

In this chapter, we briefly establish some basic notations and terminology about forcing and symmetric extensions. 

We write $Ord$ and $Card$ for the class of all ordinals and cardinals, respectively.

For our construction, we work with a countable transitive model $V$ of $ZFC$, our \textit{ground model}, with a notion of forcing $(\m{P}, \leq, \m{1}) \in V$. The \textit{class of $\m{P}$-names}, $Name (\m{P})$, is defined recursively as follows: $\Name_0 (\m{P}) := \emptyset$, $\Name_{\alpha + 1} (\m{P}) := $ $\powerset\big(\Name_\alpha (\m{P})\, $ $\times\, $ $\m{P}\big)$ for $\alpha \in \Ord$, and $\Name_\lambda (\m{P}) := \bigcup_{\alpha < \lambda} \Name_\alpha (\m{P})$ whenever $\lambda$ is a limit ordinal.
%The \textit{class of $\m{P}$-names} is
Then \[\Name (\m{P}) := \bigcup_{\alpha \in \Ord} \Name_\alpha (\m{P}).\]
As usual, we denote $\m{P}$-names as $\dot{x}$. For $\dot{x} \in \Name (\m{P})$ with $\dot{x} \in \Name_{\alpha + 1} (\m{P}) \setminus \Name_\alpha (\m{P})$, we write $rk \,(\dot{x}) := \alpha$ for the \textit{rank} of $\dot{x}$.

Let $G$ be a $V$-generic filter on $\m{P}$. The \textit{$V$-generic extension by $G$} is $V[G] := \{\dot{x}^G\ | \ \dot{x} \in \Name (\m{P})\}$, where the interpretation function ${(\, \cdot\, )}^G$ is defined recursively on the $\Name_\alpha (\m{P})$-hierarchy as usual. Then $V[G]$ is a transitive model of $ZFC$ with $V \subseteq V[G]$. 

For an element $a$ of the ground model, its canonical name is denoted by $\check{a}$. Whenever $x$, $y \in V[G]$ with $x = \dot{x}^G$, $y = \dot{y}^G$, there is a canonical $\m{P}$-name for the pair $(x, y)$, which will be abbreviated by $\OR_{\m{P}} (\dot{x}, \dot{y})$. \\[-2mm]

%\colorbox{yellow}{FRAGE: wird überhaupt ein Name für das geordnete Paar gebraucht?}

Regarding the construction of symmetric extensions, we follow the presentation in \cite{ID}, where the standard method for forcing with Boolean values as described in \cite{Jech} and \cite{Jech2} is translated to partial orders.  \\[-3mm]

For the rest of this chapter, fix a partial order $\m{P}$. Let $Aut (\m{P})$ denote the automorphism group of $\m{P}$. Any $\pi \in \Aut (\m{P})$ can be extended to an automorphism $\wt{\pi}$ of the name space $Name (\m{P})$ by the following recursive definition: \[\wt{\pi} (\dot{x}) := \{ \, (\wt{\pi}(\dot{y}), \pi p)\ | \ (\dot{y}, p) \in \dot{x}\, \}.\] We confuse any $\pi \in Aut (\m{P})$ with its extension $\wt{\pi}$ (which does not lead to ambiguity). For any canonical name $\check{a}$ and $\pi \in Aut (\m{P})$, it follows recursively that $\pi (\check{a}) = \check{a}$. \\[-2mm]

The \textit{forcing relation} $\Vdash$ can be defined in an outer model as follows: \\[-3mm]

If $\varphi (v_0, \ldots, v_{n-1})$ is a formula of set theory and $\dot{x}_0, \ldots, \dot{x}_{n-1} \in \Name (\m{P})$, then $p \Vdash \varphi (\dot{x}_0, \ldots, \dot{x}_{n-1})$ if for every $V$-generic filter $G$ on $\m{P}$, it follows that $V[G] \vDash \varphi( \dot{x}_0^G, \ldots, \dot{x}_{n-1}^G)$. \\[-3mm]

The forcing relation $\Vdash$ can also be defined in the ground model $V$, and the \textit{forcing theorem} holds:

%\colorbox{yellow}{TO DO: Kunen nachsehen!}

\begin{THM}[Forcing Theorem] If $\varphi(v_0, \ldots, v_{n-1})$ is a formula of set theory and $G$ a $V$-generic filter on $\m{P}$, then for every $\dot{x}_0, \ldots, \dot{x}_{n-1} \in \Name (\m{P})$, it follows that $V[G] \vDash \varphi (\dot{x}_0^G, \ldots, \dot{x}_{n-1}^G)$ if and only if there is a condition $p \in \m{P}$ with $p \Vdash \varphi (\dot{x}_0, \ldots, \dot{x}_{n-1})$. \end{THM}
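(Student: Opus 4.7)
The strategy is the standard one: define inside $V$ a syntactic forcing relation $\Vdash^*$ on pairs (condition, formula with $\m{P}$-name parameters), show it is $V$-definable, and then prove it agrees with the semantic relation $\Vdash$ from the excerpt. First I would define $\Vdash^*$ by recursion on formula complexity. The atomic cases $p \Vdash^* \dot x = \dot y$ and $p \Vdash^* \dot x \in \dot y$ require a simultaneous well-founded recursion on the pair of names, using the rank function $\rk$ on $\Name(\m{P})$; roughly, $p \Vdash^* \dot x \in \dot y$ iff the set of $q \leq p$ for which there is $(\dot z, r) \in \dot y$ with $q \leq r$ and $q \Vdash^* \dot x = \dot z$ is dense below $p$, and $p \Vdash^* \dot x = \dot y$ iff for all $(\dot z, r) \in \dot x$ and all $q \leq p$ with $q \leq r$ one has $q \Vdash^* \dot z \in \dot y$, and symmetrically. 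The Boolean cases are standard: $\wedge$ as conjunction, $\neg$ as ``no $q \leq p$ forces $\varphi$'', and $\exists v\,\varphi(v)$ as ``the set of $q \leq p$ for which there exists $\dot z \in \Name(\m{P})$ with $q \Vdash^* \varphi(\dot z)$ is dense below $p$''.

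Next I would establish the two halves of the equivalence. For the direction ``$p \Vdash^* \varphi$ and $p \in G$ imply $V[G] \vDash \varphi$'', induction on formula complexity works, the delicate point being that one uses (i) the filter property of $G$ in the atomic case and (ii) the fact that $G$ meets every dense set of $V$ in the negation and existential cases. For the converse \emph{truth lemma} ``$V[G] \vDash \varphi(\dot x_0^G,\ldots,\dot x_{n-1}^G)$ implies $\exists p \in G\ p \Vdash^* \varphi(\dot x_0,\ldots,\dot x_{n-1})$'', I would show by induction on complexity that the set
\[ D_\varphi := \{ p \in \m{P} \mid p \Vdash^* \varphi \text{ or } p \Vdash^* \neg\varphi \} \]
is dense in $\m{P}$. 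Since $G$ is $V$-generic, $G$ meets $D_\varphi$, and the previous direction rules out that a condition in $G$ forces $\neg\varphi$ while $V[G] \vDash \varphi$. This yields both that $\Vdash^*$ is definable in $V$ and that it coincides with the semantic $\Vdash$ of the excerpt, so the forcing theorem as stated follows immediately.

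The main obstacle is setting up the atomic recursion correctly, i.e. verifying that the clauses for $p \Vdash^* \dot x = \dot y$ and $p \Vdash^* \dot x \in \dot y$ are a legitimate recursion on the well-founded order $\max(\rk \dot x, \rk \dot y)$ with ties broken by $\{\dot x, \dot y\}$-inclusion of components, and that the resulting relation actually captures semantic equality and membership in arbitrary generic extensions. Once this is in place, the inductive step for negation is the only other subtle point, and it hinges precisely on the density of $D_\varphi$ and on genericity of $G$; the remaining connectives and the quantifier case are routine given how $\Vdash^*$ is defined for them.
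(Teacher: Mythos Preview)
Your proposal outlines the standard textbook proof of the Forcing Theorem, and it is correct. Note, however, that the paper does not actually prove this statement: it is listed in the preliminaries (Chapter~1) as a known background result and is simply stated without proof, so there is no ``paper's own proof'' to compare against. Your sketch is exactly the argument one finds in standard references such as Jech or Kunen, and would be appropriate if a proof were required.
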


%\colorbox{yellow}{ACHTUNG - man sollte die Forcing-Relation definieren! Bei Ioanna nachsehen?}

Let now $A \subseteq \Aut (\m{P})$ denote a group of $\m{P}$-automorphisms. A \textit{normal filter on $A$} is a collection $\mathcal{F}$ of subgroups $B \subseteq A$ such that $\mathcal{F} \neq \emptyset$, $\mathcal{F}$ is closed under supersets and finite intersections, and for any $B \in \mathcal{F}$ and $\pi \in A$, it follows that the conjugate $\pi^{-1} B \pi$ is contained in $\mathcal{F}$, as well. \\[-3mm]

An important property of $\m{P}$-automorphisms is the \textit{symmetry lemma}:

%\colorbox{yellow}{TO DO: Begriff \tbl closed under supersets\tbr nachsehen!} 

\begin{Lemma}[Symmetry Lemma] For a formula of set theory $\varphi (v_0, \ldots, v_{n-1})$, an automorphism $\pi \in Aut (\m{P})$ and $\m{P}$-names $\dot{x}_0, \ldots, x_{n-1}$, it follows that $p \Vdash \varphi (\dot{x}_0, \ldots, \dot{x}_{n-1})$ if and only if $\pi p \Vdash \varphi (\pi \dot{x}_0, \ldots, \pi \dot{x}_{n-1})$. \end{Lemma}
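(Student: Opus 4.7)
The plan is to prove the Symmetry Lemma by passing through the Forcing Theorem already stated, so that I can work with generic filters in $V[G]$ rather than directly with the syntactic forcing relation. The two key facts I need to establish before the main argument are: (a) if $G$ is a $V$-generic filter on $\m{P}$ and $\pi \in \Aut(\m{P})$, then the pointwise image $\pi[G] := \{\pi p \mid p \in G\}$ is again a $V$-generic filter, and (b) for every $\m{P}$-name $\dot{x}$, the interpretation identity
\[
(\pi \dot{x})^{\pi[G]} \;=\; \dot{x}^G
\]
holds.

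For (a), I would observe that since $\pi$ preserves the order and the maximal element, $\pi[G]$ is closed upwards and downward directed; and since $\pi$ maps dense subsets of $\m{P}$ in $V$ bijectively to dense subsets, every $V$-dense $D$ meets $\pi[G]$ iff $\pi^{-1}[D]$ meets $G$, which is true by genericity. For (b) I would run an induction on the rank of $\dot{x}$: by the recursive definition of $\pi$ on names,
\[
(\pi \dot{x})^{\pi[G]} = \bigl\{(\pi \dot{y})^{\pi[G]} \,\bigm|\, (\dot{y},p) \in \dot{x},\ \pi p \in \pi[G]\bigr\} = \bigl\{\dot{y}^G \bigm| (\dot{y},p) \in \dot{x},\ p \in G\bigr\} = \dot{x}^G,
\]
using the inductive hypothesis and the equivalence $\pi p \in \pi[G] \Leftrightarrow p \in G$. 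A further small point is that $V[\pi[G]] = V[G]$, since $\pi \in V$ gives each filter in the other model.

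Now for the main argument: suppose $p \Vdash \varphi(\dot{x}_0, \ldots, \dot{x}_{n-1})$ and let $H$ be any $V$-generic filter on $\m{P}$ containing $\pi p$. Set $G := \pi^{-1}[H]$, which is $V$-generic by (a) (applied to $\pi^{-1}$) and contains $p$. By the Forcing Theorem, $V[G] \vDash \varphi(\dot{x}_0^G, \ldots, \dot{x}_{n-1}^G)$; by (b), $\dot{x}_i^G = (\pi \dot{x}_i)^{\pi[G]} = (\pi \dot{x}_i)^H$; and $V[G] = V[H]$. Hence $V[H] \vDash \varphi((\pi \dot{x}_0)^H, \ldots, (\pi \dot{x}_{n-1})^H)$. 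Since $H$ was an arbitrary $V$-generic filter containing $\pi p$, the Forcing Theorem yields $\pi p \Vdash \varphi(\pi \dot{x}_0, \ldots, \pi \dot{x}_{n-1})$. The converse direction is immediate by applying the same argument to $\pi^{-1}$, the condition $\pi p$, and the names $\pi \dot{x}_i$, using $\pi^{-1}(\pi \dot{x}_i) = \dot{x}_i$.

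I do not expect any real obstacle here: the lemma is genuinely standard, and both (a) and (b) are routine. The only point that requires mild care is the recursion in (b), where one must be careful that the definition $\pi(\dot{x}) = \{(\pi \dot{y}, \pi p) \mid (\dot{y}, p) \in \dot{x}\}$ is applied coherently across ranks, so that the inductive hypothesis actually covers $(\pi \dot{y})^{\pi[G]}$. This is guaranteed because $\rk(\pi \dot{y}) = \rk(\dot{y}) < \rk(\dot{x})$, which follows from the same recursive definition.
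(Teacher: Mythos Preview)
Your proof is correct. The paper, however, takes the other standard route: it says only ``The proof is by induction over the complexity of $\varphi$,'' i.e.\ the syntactic argument that works directly with the ground-model definition of $\Vdash$ and verifies the equivalence clause by clause (atomic formulas, negation, conjunction, existential quantifier). Your approach is the semantic one: you invoke the Forcing Theorem (which the paper has already stated) to pass to generic extensions, and then reduce everything to the two easy lemmas that $\pi[G]$ is again $V$-generic and that $(\pi\dot{x})^{\pi[G]} = \dot{x}^G$. The syntactic proof is self-contained and does not depend on the Forcing Theorem, which is why it is often preferred when one is still setting up the forcing machinery; your semantic proof is shorter and more transparent once the Forcing Theorem is available, and since the paper places the Forcing Theorem first, your ordering is perfectly legitimate here.
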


The proof is by induction over the complexity of $\varphi$. \\[-3mm]

Fix a normal filter $\mathcal{F}$ on $A$. A $\m{P}$-name $\dot{x}$ is \textit{symmetric} if the stabilizer group $\{\pi \in A\ | \ \pi \dot{x} = \dot{x}\}$ is contained in $\mathcal{F}$. Recursively, a name $\dot{x}$ is \textit{hereditarily symmetric}, $\dot{x} \in HS$, if $\dot{x}$ is symmetric and $\dot{y} \in HS$ for all $\dot{y} \in \dom \dot{x}$. \\[-3mm]

For a $V$-generic filter $G$ on $\m{P}$, the \textit{symmetric extension by $\mathcal{F}$ and $G$} is defined as follows: \[N := V(G) := \{ \dot{x}^G\ | \ \dot{x} \in HS\}. \] Then $N$ is a transitive class with $V \subseteq N \subseteq V[G]$ and $N \vDash ZF$. \\[-3mm]

The \textit{symmetric forcing relation} $\Vdash_s$ can be defined informally as follows: 

If $\varphi (v_0, \ldots, v_{n-1})$ is a formula of set theory, and $\dot{x}_0, \ldots, \dot{x}_{n-1} \in HS$, then $p \Vdash_s \varphi (\dot{x}_0, \ldots, \dot{x}_{n-1})$ if for every $V$-generic filter $G$ on $\m{P}$, it follows that $V(G) \vDash \varphi (\dot{x}_0^G, \ldots, $ $\dot{x}_{n-1}^G)$.\\[-3mm]

Note that the symmetric forcing relation $\Vdash_s$ can be defined in the ground model similar to the ordinary forcing relation $\Vdash$, but with the quantifiers and variables ranging over $HS$. It has most of the basic properties as $\Vdash$. In particular, the forcing theorem holds for $\Vdash_s$, and the \textit{symmetry lemma} is true, as well. \\

We will consider the following weak versions of the Axiom of Choice: \\[-3mm]

The \textbf{Axiom of Dependent Choice (DC)} states that whenever $X$ is a nonempty set with a binary relation $R$, such that for all $x \in R$ there exists $y \in R$ with $y\,R \,x$, there is a sequence $(x_n \ | \ n < \omega)$ in $X$ with $x_{n+1} \,R \,x_n$ for all $n < \omega$.

The \textbf{Axiom of Countable Choice ($\boldsymbol{AC_\omega}$)} states that every countable family of nonempty sets has a choice function. \\[-3mm]

%\colorbox{yellow}{Begriffe eher fett statt kursiv schreiben?} \\[-3mm]

The Axiom of Choice implies $DC$, and $DC$ implies $AC_\omega$. \\[-3mm]

The following lemma follows from \cite[Lemma 1]{Karagila}:

\begin{Lemma}[{\cite[Lemma 1]{Karagila}}] If $\m{P}$ is countably closed with a group of automorphisms $A \subseteq Aut (\m{P})$ and a normal filter $\mathcal{F}$ on $A$, such that $\mathcal{F}$ is countably closed as well, then $DC$ holds in the corresponding symmetric extension $N = V(G)$. \end{Lemma}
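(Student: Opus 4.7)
The plan is a standard recursive construction. Let $X, R \in N$ with $X \neq \emptyset$ and $\forall x \in X\, \exists y \in X\, (y\, R\, x)$ in $N$, and fix symmetric names $\dot{X}, \dot{R} \in HS$ together with $p_0 \in G$ symmetrically forcing this hypothesis. Working in $V$ (using $AC$ there), I would recursively build a descending sequence $p_0 \geq p_1 \geq p_2 \geq \cdots$ in $\m{P}$ and hereditarily symmetric names $\dot{x}_n \in HS$ such that $p_n \Vdash_s \dot{x}_n \in \dot{X}$, and for $n \geq 1$ also $p_n \Vdash_s \dot{x}_n\, \dot{R}\, \dot{x}_{n-1}$.

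The recursion rests on the following observation: if $p \Vdash_s \exists y\, \varphi(y, \dot{x})$ with $\dot{x} \in HS$, then some $q \leq p$ and $\dot{y} \in HS$ satisfy $q \Vdash_s \varphi(\dot{y}, \dot{x})$. Indeed, in any $V$-generic $G_0 \ni p$ a witness $y$ exists in $V(G_0)$, and since $V(G_0) = \{\dot{z}^{G_0} : \dot{z} \in HS\}$, this witness has an $HS$-name; the forcing theorem for $\Vdash_s$ then produces the desired $q$, which can be chosen below $p$. Applying this at step $n$ to the statement $\exists y \in \dot{X}\, (y\, \dot{R}\, \dot{x}_{n-1})$, which is symmetrically forced by $p_{n-1}$, yields $\dot{x}_n$ and $p_n$.

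Now countable closure of $\m{P}$ in $V$ provides some $p_\omega \in \m{P}$ below every $p_n$. Set
\[\dot{s} \;:=\; \{\,(\OR_{\m{P}}(\check{n}, \dot{x}_n),\, \m{1})\; \mid\; n < \omega\,\}.\]
In any $V$-generic extension $V[G]$, the interpretation $\dot{s}^G$ is the function $n \mapsto \dot{x}_n^G$; whenever $p_\omega \in G$, all $p_n$ lie in $G$, and so $\dot{s}^G$ is a genuine $DC$-sequence for $X$ and $R$. A density argument below $p_0$ then places such a $\dot{s}^G$ into $N$.

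The step that truly uses both hypotheses of the lemma, and where the conceptual content lies, is the verification $\dot{s} \in HS$. Each $\dot{x}_n \in HS$ has a stabilizer $B_n := \{\pi \in A : \pi \dot{x}_n = \dot{x}_n\} \in \mathcal{F}$. Any $\pi \in A$ sends a pair in $\dot{s}$ to $(\OR_{\m{P}}(\check{n}, \pi\dot{x}_n), \m{1})$, so the common intersection $\bigcap_{n < \omega} B_n$ fixes $\dot{s}$ pointwise; by the countable closure of $\mathcal{F}$, this intersection lies in $\mathcal{F}$, so $\dot{s}$ is symmetric, and hereditarily so since each $\dot{x}_n \in HS$. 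Without the countable closure of $\mathcal{F}$ one could still build the sequence in $V[G]$, but it would have no reason to land in the symmetric submodel $N$; this is precisely the obstacle the hypothesis removes. I expect this last verification to be the main subtle point; the rest is routine symmetric-forcing bookkeeping.
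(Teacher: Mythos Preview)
The paper does not prove this lemma at all: it is stated with a citation to \cite[Lemma 1]{Karagila} and then used as a black box. So there is no ``paper's own proof'' to compare against; you have supplied what the paper omits.

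Your argument is the standard one and is correct. Two small points worth tightening. First, your indexing asks that already $p_0 \Vdash_s \dot{x}_0 \in \dot{X}$, but from $p_0 \Vdash_s \dot{X} \neq \emptyset$ you only get such a witness after passing to an extension; so the base step of the recursion should also be allowed to extend the condition, just like the successor steps. Second, the phrase ``a density argument below $p_0$ then places such a $\dot{s}^G$ into $N$'' hides the actual step: the construction shows that no $q \leq p_0$ can force $\neg\,\exists s\,(\text{DC-sequence})$, hence $p_0$ itself forces the existential, and since $p_0 \in G$ the witness lies in $N$. With those clarifications the proof is complete, and your identification of the key point --- that countable closure of $\mathcal{F}$ is exactly what makes $\dot{s}$ symmetric --- is spot on.
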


The theory $ZF\, \plus \, DC$ is sufficient to develop most of real analysis; while combinatorial set theory in $ZF\, \plus\, DC$ seemed rather hopeless in the first place.

In \cite{pcfwc} and \cite{AX4}, Shelah suggests that when working with $ZF\, \plus \, DC$, another axiom should be adopted to set the framework for a \tbl reasonable\tbr\, set theory, where now, surprisingly much of combinatorial set theory can be realized: 

\begin{center} $\mathbf{(AX_4)}$ \ For every cardinal $\lambda$, the set $[\lambda]^{\aleph_0}$ can be well-ordered. \end{center}

Note that $(AX_4)$ holds true in any symmetric extension by countably closed forcing (see \cite[p.3 and p.15]{pcfwc}).

%\colorbox{red}{TO DO: nach $AC_\omega$ schauen!}

%This chapter contains some basic definitions and results about class forcing. \\ 
%We start with 

%\"Ahnlich wie vorher, aber ohne Klassenforcing? Sollte man \tbl group of partial automorphisms \tbr vll. in diesem Kapitel einf\"uhren?

%Den \tbl alten Text \tbr nochmal ansehen!

\section{The Theorem}
\label{the theorem}

%\colorbox{red}{?????????? FRAGE: AK und \tbl open names\tbr\, zitieren? ??????}

We start from a ground model $V \vDash ZFC \, \plus \, GCH$ and a \textit{reasonable behavior of the $\theta$-function}: There will be sequences of uncountable cardinals $(\kappa_\eta\ | \ 0 < \eta < \gamma)$ and $(\alpha_\eta\ | \ 0 < \eta < \gamma)$ in $V$, where $\gamma \in \Ord$, for which we aim to construct a symmetric extension $N \supseteq V$ with $N \vDash ZF\, \plus\, DC\, \plus\, AX_4$, such that $V$ and $N$ have the same cardinals and cofinalities and $\theta^N (\kappa_\eta) = \alpha_\eta$ holds for all $\eta$.

(Later on, we will set $\kappa_0 := \aleph_0$, $\alpha_0 := \aleph_2$ for technical reasons -- therefore, we talk about sequences $(\kappa_\eta\ | \ 0 < \eta < \gamma)$, $(\alpha_\eta\ | \ 0 < \eta < \gamma)$ here, excluding $\kappa_0$ and $\alpha_0$. )\\[-2mm]

%\colorbox{yellow} {TO DO: NACHSEHEN, ob wirklich überall $\kappa_0$ ausgeschlossen wird!}

First, we want to discuss what properties the sequences $(\kappa_\eta\ | \ 0 < \eta < \gamma)$ and $(\alpha_\eta\ | \ 0 < \eta < \gamma)$ must have to allow for such construction. \\[-3mm]

W.l.o.g. we can assume that ($\kappa_\eta\ | \ 0 < \eta < \gamma)$ is strictly increasing and closed. \\[-3mm]

The following conditions must be satisfied:

\begin{itemize} \item For $\eta < \eta^\prime$, it follows from $\kappa_\eta < \kappa_{\eta^\prime}$ that $\alpha_\eta \leq \alpha_{\eta^\prime}$ must hold, i.e. the sequence $(\alpha_\eta\ | \ 0 < \eta < \gamma)$ must be increasing.
\item For any cardinal $\kappa$, it is possible to construct a surjection $s: \powerset(\kappa) \rightarrow \kappa^\plus$ without making use of the Axiom of Choice. 
Hence, $\alpha_\eta \geq \kappa_\eta^{\plus \plus}$ must hold for all $\eta$.

%\colorbox{yellow}{FRAGEN: Masterarbeit Ionna zitieren? \tbl Strong limits and inaccessibility}

%\colorbox{yellow}{without the axiom of choice?}

\item Since $N \vDash AC_\omega$, it follows that $cf\, \alpha_\eta > \omega$ for all $\eta$: Assume towards a contradiction, there were cardinals $\kappa$, $\alpha$ with $\theta^N (\kappa) = \alpha$, but $cf^N (\alpha) = \omega$. Let $\alpha = \bigcup_{i < \omega} \alpha_i$. By definition of $\theta^N(\kappa)$, it follows that for every $i < \omega$, there exists in $N$ a surjection from $\powerset(\kappa)$ onto $\alpha_i$. Now, $AC_\omega$ allows us to pick in $N$ a sequence $(s_i\ | \ i < \omega)$ such that each $s_i: \powerset(\kappa) \rightarrow \alpha_i$ is a surjection. This yields a surjective function $\ol{s}: \powerset(\kappa) \, \times\, \omega \rightarrow \alpha$, where $\ol{s}(X, i) := s_i (X)$ for each $(X, i) \in \powerset (\kappa)\, \times\, \omega$; which can be easily turned into a surjection $s: \powerset(\kappa) \rightarrow \alpha$. Contradiction, since $\theta^N (\kappa) = \alpha$. Hence, it follows that $cf \,\alpha_\eta > \omega$ for all $\eta$. 

\item Finally, for every $\alpha_\eta$ a successor cardinal with $\alpha_\eta = \alpha^\plus$, we will need that $cf \, \alpha > \omega$. \\[-4mm]

In this setting, it is not possible to drop this requirement: We start from a ground model $V \vDash ZFC\, \plus\, GCH$ with sequences $(\kappa_\eta\ | \ 0 < \eta < \gamma)$, $(\alpha_\eta\ | \ 0 < \eta < \gamma)$, and aim to construct $N \supseteq V$ with $N \vDash ZF\, \plus\, DC$ such that $V$ and $N$ have the same cardinals and cofinalities, and $\theta^N (\kappa_\eta) = \alpha_\eta$ holds for all $\eta$. If there was some $\eta$ with $\theta^N (\kappa_\eta) = \alpha^\plus$, where $cf\, \alpha = \omega$, one could construct in $N$ a surjective function $\ol{s}: \powerset(\kappa_\eta) \rightarrow \alpha^\plus$ as follows: \\Take a surjection $s: \powerset(\kappa_\eta) \rightarrow \alpha$ in $N$. Firstly, the canonical bijection $\kappa \leftrightarrow \kappa\, \times\, \omega$ gives a surjection $s_0: 2^\kappa \rightarrow (2^\kappa)^\omega$. Secondly, the surjection $s: \powerset(\kappa_\eta) \rightarrow \alpha$ yields in $N$ a surjection $s_1: (2^\kappa)^\omega \rightarrow \alpha^\omega$, by setting $s_1 (X_i\ | \ i < \omega) := (s(X_i)\ | \ i < \omega)$. Then $s_1$ is surjective, since for $(\alpha_i \ | \ i < \omega) \in \alpha^\omega$ given, one can use $AC_\omega$ to obtain a sequence $(Y_i\ | \ i < \omega)$ with $Y_i \in s^{-1} [\{\alpha_i\} ]$ for all $i < \omega$. Then $s_1 (Y_i\ | \ i < \omega) = (\alpha_i\ | \ i < \omega)$. %\colorbox{yellow}{FRAGE: Wäre die Schreibweise $(X_i\ | \ i < \omega)$ bzw. $s(X_i\ | \ i < \omega)$ überhaupt in Ordnung?} 
Thirdly, it follows from $cf \, \alpha = \omega$ that there is a surjection $\ol{s_2}: \alpha^\omega \rightarrow \alpha^\plus$ in $V$. Then $\ol{s_2} \in N$, and since $(\alpha^\omega)^N \supseteq (\alpha^\omega)^V$ and $(\alpha^\plus)^N = (\alpha^\plus)^V$, we obtain a surjection $s_2: \alpha^\omega \rightarrow \alpha^\plus$ in $N$. \\ Thus, it follows that $s_2\, \circ\, s_1\, \circ\, s_0 : 2^\kappa \rightarrow \alpha^\plus$ is a surjective function in $N$; contradicting that $\theta^N (\kappa_\eta) = \alpha^\plus$. \\[-3mm]

Hence, in our setting, where we want to extend a ground model $V \vDash ZFC\, \plus\, GCH$ cardinal-preservingly, it is not possible to have $\alpha_\eta = \alpha^\plus$ with $cf \,\alpha = \omega$. \\[-3mm]

The following question arises: More generally, without referring to a ground model $V$, could there be $N \vDash ZF\, \plus \, DC\, \plus\, AX_4$ with cardinals $\kappa$, $\alpha$, such that $cf^N (\alpha) = \omega$ and $\theta^N (\kappa) = \alpha^\plus$? The answer is no: Let $s: 2^\kappa \rightarrow \alpha$ denote a surjective function in $N$. Then with $DC$, it follows as before that there is also a surjective function $s_1: (2^\kappa)^\omega \rightarrow \alpha^\omega$ in $N$; and we also have a surjective function $s_0: 2^\kappa \rightarrow (2^\kappa)^\omega$. Since $\alpha^\omega$ is well-ordered by $(AX_4)$, a diagonalization argument as in \textit{König's Lemma} shows that there is also a surjection $s_2: \alpha^\omega \rightarrow \alpha^\plus$. Hence, $s_2\, \circ\, s_1\, \circ\, s_0: 2^\alpha \rightarrow \alpha^\plus$ is a surjective function in $N$ as desired.

\end{itemize}

We conclude that all the requirements on the sequences $(\kappa_\eta\ | \ 0 < \eta < \gamma)$ and $(\alpha_\eta\ | \ 0 < \eta < \gamma)$ listed above, are necessary for a model $N \vDash ZF\, \plus\, DC\, \plus\, AX_4$. \\[-3mm]

In addition, one could ask if there exists a model $N \vDash ZF\, \plus \, DC$ (without $AX_4$) with cardinals $\kappa$, $\alpha$, such that $\theta^N (\kappa) = \alpha^\plus$ and $cf^N (\alpha) = \omega$. It is not difficult to see that this is not possible under $\neg 0^\sharp$ (cf. Chapter \ref{discussion}). Hence, if one wishes to avoid large cardinal assumptions, then for every $\alpha_\eta = \alpha^\plus$ a successor cardinal, one has to require $cf\,\alpha > \omega$. \\[-2mm]
%in this context where 

%\colorbox{yellow}{FRAGE: Schon hier mehr zu $\neg 0^\sharp$ schreiben? Oder nur in Kapitel \ref{discussion}?} \\[-3mm]

%Finally, $N \vDash AX_4$, 

Our main theorem states that these are the only restrictions on the $\theta$-function for set-many uncountable cardinals in $ZF\, \plus \, DC\, \plus\, AX_4$:

%\colorbox{red}{Diesen Satz so schreiben?}

%In this paper, we give a proof of the following theorem:

\begin{THM} \label{theorem} Let $V$ be a ground model of $ZFC \, \plus \, GCH$ with $\gamma \in \Ord$ and sequences of uncountable cardinals $(\kappa_\eta\ | \ 0 < \eta < \gamma)$ and $(\alpha_\eta\ | \ 0 < \eta < \gamma)$ such that $(\kappa_\eta\ | \ 0 < \eta < \gamma)$ is strictly increasing and closed, and the following properties hold:

\begin{itemize} \item $\forall\, 0 < \eta < \eta^\prime < \gamma\ \ \alpha_\eta \leq \alpha_{\eta^\prime}$, i.e. the sequence $(\alpha_\eta\ | \ 0 < \eta < \gamma)$ is increasing, \item $\forall\, 0 < \eta < \gamma \ \ \alpha_\eta \geq \kappa_\eta^{\plus\, \plus}$, \item $\forall\, 0 < \eta < \gamma\ \ \cf\, \alpha_\eta > \omega$, \item $\forall\, 0 < \eta < \gamma\ \ (\alpha_\eta = \alpha^\plus \rightarrow \cf\, \alpha > \omega)$. \end{itemize}

Then there is a cardinal- and cofinality-preserving extension $N \supseteq V$ with $N \vDash ZF\, \plus\, DC\, \plus \, AX_4$ such that that $\theta^N (\kappa_\eta) = \alpha_\eta$ holds for all $0 < \eta < \gamma$.

\end{THM}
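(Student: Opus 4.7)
The plan is to generalize the Gitik--Koepke symmetric construction of [GK], which realized a single surjective failure $\theta(\aleph_\omega) = \lambda$, to the whole prescribed sequence $(\kappa_\eta, \alpha_\eta)$. First I would define a forcing notion $\m{P}$ whose conditions are countable partial functions determining initial segments of $\alpha_\eta$-many new subsets of $\kappa_\eta$, simultaneously for every $\eta < \gamma$, stratified so that $\kappa_\eta$-information uses only countable pieces on coordinates indexed below $\alpha_\eta$. Countable closure of $\m{P}$ is the crucial design constraint: combined with a countably closed normal filter $\mathcal{F}$ on a natural automorphism group $A$ (induced by permutations of the \tbl copy indices\tbr $\, i < \alpha_\eta$ at each level $\eta$), it yields $DC$ in the symmetric extension $N = V(G)$ by the cited Karagila-style lemma, and hence $AX_4$ by the remark at the end of Chapter 1. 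The group $A$ must be rich enough to force the generic sequence of subsets to be \tbl anonymous\tbr, so that $\theta$ stays bounded above, while $\mathcal{F}$ must still admit symmetric names for the intended surjections $\powerset(\kappa_\eta) \rightarrow \alpha_\eta$; this balance is achieved by basing $\mathcal{F}$ on stabilizers of supports bounded below the relevant $\kappa_\eta$.

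Second, for cardinal and cofinality preservation I would prove a capturing lemma: every set of ordinals $X \in N$ already lies in some intermediate model $V[G^\beta] \subseteq V[G]$ arising from a generic for a subforcing $\m{P}^\beta \subseteq \m{P}$ whose size and closure are controlled by the support of the $HS$-name for $X$. Since $\m{P}$ is countably closed and each $\m{P}^\beta$ is a sufficiently closed forcing of mild size in $V$, standard $ZFC$ arguments inside $V[G^\beta]$ transfer cardinal- and cofinality-preservation from $V$ up to $N$.

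Third, to see $\theta^N(\kappa_\eta) \geq \alpha_\eta$ one builds, for each $i < \alpha_\eta$, a hereditarily symmetric name $\dot{A}_{\eta, i}$ for the $i$-th generic subset of $\kappa_\eta$; a symmetric name for the function $f_\eta \colon \powerset(\kappa_\eta) \rightarrow \alpha_\eta$ sending $\dot{A}_{\eta,i}^G$ to $i$ (and all other $A$ to $0$) then witnesses the lower bound.

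Fourth, and this is the main obstacle, comes the upper bound $\theta^N(\kappa_\eta) \leq \alpha_\eta$. Assume toward a contradiction that in $N$ there is a surjection $f \colon \powerset(\kappa_\eta) \rightarrow \alpha_\eta^{\plus}$ with $HS$-name $\dot{f}$. For each $\beta < \alpha_\eta^{\plus}$ pick $A_\beta \subseteq \kappa_\eta$ in $N$ with $f(A_\beta) = \beta$, and capture the restricted map $f^\beta := f \uhr \{A_\delta \mid \delta \leq \beta\}$ inside a carefully chosen intermediate extension $V[G^\beta]$, where $G^\beta$ is generic for a subforcing of cardinality $\leq \alpha_\eta$ in $V$. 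The hypotheses $\cf \alpha_\eta > \omega$ and $\cf \alpha > \omega$ whenever $\alpha_\eta = \alpha^{\plus}$ are precisely what is needed so that $\alpha_\eta^{\plus}$ is preserved in $V[G^\beta]$; as the discussion preceding the theorem illustrates, dropping either hypothesis would allow an $\omega$-diagonalization that manufactures an illicit surjection. A counting of the possible $V$-names for the $f^\beta$ then injects $\alpha_\eta^{\plus}$ into a set of cardinality $\alpha_\eta$ in $V$, the desired contradiction. The remaining \tbl gap\tbr values $\theta^N(\lambda)$ for $\lambda \in (\kappa_\eta, \kappa_{\eta + 1})$ and $\lambda \geq \sup_\eta \kappa_\eta$ are then shown to take the minimal legal value $\lambda^{\plus\, \plus}$ by the same capturing machinery, since $\m{P}$ adds no new subsets of these cardinals beyond what is forced by the prescribed data.
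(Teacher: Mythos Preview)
Your overall architecture matches the paper: generalize the Gitik--Koepke forcing, use countable closure of $\m{P}$ and of the filter to get $DC$ and $AX_4$, prove an approximation lemma to capture sets of ordinals in mild intermediate extensions, and then establish the two inequalities for $\theta^N(\kappa_\eta)$. But both the lower and the upper bound as you sketch them have real gaps.

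For the lower bound, your naive plan fails. The sequence $(G^\eta_i \mid i < \alpha_\eta)$ of generic $\kappa_\eta$-subsets is \emph{not} in $N$ --- if it were, $N$ would have a surjection $\powerset(\kappa_\eta)\to\alpha_\eta$ and hence onto arbitrarily large ordinals. So a name for ``the function sending $G^\eta_i$ to $i$'' cannot be symmetric. The paper solves this by introducing, for each $k<\alpha_\eta$, a subgroup $H^\eta_k \subseteq \ol{A}$ (automorphisms that eventually do not permute the first $k$ coordinates at level $\eta$), and taking for each $i<k$ the \emph{cloud} $(\wt{G^\eta_i})^{(k)}$, i.e.\ the $H^\eta_k$-orbit of $G^\eta_i$. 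The sequence of clouds $\big((\wt{G^\eta_i})^{(k)}\mid i<k\big)$ \emph{does} have a symmetric name stabilized by $H^\eta_k$, and since distinct clouds are disjoint this yields a surjection $\powerset(\kappa_\eta)\to k$ in $N$. The filter $\mathcal{F}$ must therefore be generated not only by pointwise stabilizers $Fix(\eta,i)$ but also by these $H^\eta_k$; your description ``stabilizers of supports bounded below the relevant $\kappa_\eta$'' misses this second family.

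For the upper bound, first a minor point: to show $\theta^N(\kappa_\eta)\le\alpha_\eta$ you must rule out a surjection onto $\alpha_\eta$, not onto $\alpha_\eta^\plus$. More seriously, your ``count the $V$-names for the $f^\beta$'' strategy does not match the paper and, as stated, does not work: the choices $A_\beta$ and the intermediate models $V[G^\beta]$ all depend on the generic, so you get no injection definable in $V$. The paper's argument is quite different. One fixes a \emph{single} $\beta<\alpha_\eta$ large enough relative to the support of $\dot f$ (and with $\kappa_\eta^\plus$ extra room: $\beta=\wt\beta+\kappa_\eta^\plus$), and lets $f^\beta$ be the restriction of $f$ to those $X\subseteq\kappa_\eta$ capturable via an $\eta$-good pair using only indices $\ol{i}_m<\beta$. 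If $f^\beta$ is still surjective onto $\alpha_\eta$, one builds a cut-off forcing $\m{P}^\beta\uhr(\eta+1)$ (augmented by the finitely many $(\eta_m,i_m)$ in the support of $\dot f$), shows via a delicate isomorphism argument that $f^\beta$ lies in the corresponding generic extension, proves this extension preserves cardinals $\ge\alpha_\eta$ (here the hypotheses $\cf\alpha_\eta>\omega$ and $\alpha_\eta=\alpha^\plus\Rightarrow\cf\alpha>\omega$ are used), and then gets a contradiction by injecting $\dom f^\beta$ into something of size $<\alpha_\eta$. The crucial and hardest case is when $f^\beta$ is \emph{not} surjective: some $\alpha<\alpha_\eta$ is hit only by an $X$ requiring a coordinate $(\ol\lambda_m,\ol k_m)$ with $\ol k_m\ge\beta$. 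The paper then finds a condition $q\in G$ and an unused index $\ol l_m\in(\wt\beta,\beta)$ with $q^{\ol\lambda_m}_{\ol k_m}=q^{\ol\lambda_m}_{\ol l_m}$, and applies an automorphism $\pi$ swapping these two coordinates. This $\pi$ fixes $\dot f$ (by the choice of $\wt\beta$), fixes $q$, and carries the name for $X$ to a name for some $Y$ that \emph{is} $\beta$-capturable with $(Y,\alpha)\in f$ --- contradicting $\alpha\notin\rg f^\beta$. This coordinate-swap is the heart of the upper-bound proof, and nothing like it appears in your sketch.

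Finally, your last sentence is incorrect: $\m{P}$ \emph{does} add new subsets to cardinals $\lambda$ in the gaps (through the $P_\ast$-component), so $\theta^N(\lambda)$ for $\lambda\in(\kappa_\eta,\kappa_{\eta+1})$ or $\lambda\ge\kappa_\gamma$ is not automatically $\lambda^{\plus\plus}$; the paper reruns the entire A)--E) machinery with an adapted notion of $\eta$-good pair to show $\theta^N(\lambda)=\max\{\alpha_\eta,\lambda^{\plus\plus}\}$ (respectively the appropriate value above $\kappa_\gamma$).
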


%\colorbox{yellow}{Ist es sinnvoll, schon hier kein $\kappa_0$ zu haben?}

%\begin{itemize} \item $\alpha_\eta \geq \kappa_\eta^{\plus \plus}$ \\ Weil auch ohne $AC$ immer eine Surjektion $s: \powerset(\kappa_\eta) \rightarrow \kappa_\eta^\plus$ existiert? \item $\cf \alpha_\eta > \omega$ für alle $\eta$ \\ Weil es sonst auch eine Surjektion $s: \powerset(\kappa_\eta) \rightarrow \alpha_\eta$ geben würde?
%\item Falls $\alpha_\eta = \beta\plus$ Nachfolgerkardinalzahl wäre, folgt $\cf \beta > \omega$. \\ Warum ist diese Bedingung notwendig? Den allgemeineren Sachverhalt durchgehen: Falls $V \vDash ZFC$, und $N \vDash ZF$ mit $V \subseteq N$, sodass Kardinalzahlen $V$-$N$-absolut sind, dann ist diese Bedingung notwendig. Dann darauf eingehen, warum diese Bedingungen sinnvoll sind, wenn man eine $(\kappa_\eta \ | \eta < \gamma)$- und eine $(\beta_\eta\ | \ \eta < \gamma)$-Folge gegeben hat? Oder umgekehrt? \tbl In this setting \tbr ... wollen wir zeigen, dass man für ein Grundmodell $V$ mit vorgegebenem Verhalten der $\theta$-Funktion (für \tbl mengen-viele \tbr Ordinalzahlen) das Grundmodell $V$ erweitern könnte zu einem Modell $N \vDash ZF \, \plus\, DC$, in dem dieses Verhalten realisiert wird. 
%[Braucht man an dieser Stelle für das Argument, dass $V$ und $N$ die selben Kardinalzahlen haben? Eher nicht, oder?] Damit würde folgen: Wenn man in $N$ eine Surjektion $s: \powerset(\kappa) \rightarrow \beta$ hätte mit $\cf\beta = \omega$, dann gibt es in $N$ auch eine Surjektion $s: \powerset (\kappa) \rightarrow \beta^\plus$. Deswegen wäre diese gestellte Bedingung notwendig.

In our construction, we will make sure that for any cardinal $\lambda$ in a \tbl gap\tbr\, $(\kappa_\eta, \kappa_{\eta+1})$, the value $\theta^N (\lambda)$ is the smallest possible, i.e. $\theta^N (\lambda) = \max \{\alpha_\eta, \lambda^{\plus \plus} \}$. Also, if we set $\kappa_\gamma := \bigcup \{ \kappa_\eta\ | \ 0 < \eta < \gamma\}$, $\alpha_\gamma  := \bigcup \{\alpha_\eta\ | \ 0 < \eta < \gamma\}$, then for every $\lambda \geq \kappa_\gamma$, we will again make sure that $\theta^N (\lambda)$ takes the smallest possible value: We will have $\theta^N (\lambda) = \max \{ \alpha_\gamma^{\plus \plus}, \lambda^{\plus \plus} \}$ in the case that $cf \,\alpha_\gamma = \omega$, $\theta^N (\lambda) = \max\{ \alpha_\gamma^\plus, \lambda^{\plus \plus}\}$ in the case that $\alpha_\gamma = \alpha^\plus$ for some cardinal $\alpha$ with $cf \,\alpha = \omega$, and $\theta^N (\lambda) = \max \{ \alpha_\gamma, \lambda^{\plus \plus}\}$, else. \\[-3mm]

%\colorbox{yellow}{TO DO: \tbl kleinstmöglich\tbr\,übersetzen!} 

This allows us to assume w.l.o.g. that the sequence $(\alpha_\eta\ | \ 0 < \eta < \gamma)$ is \textit{strictly} increasing: If not, one can start with the original sequences $(\kappa_\eta\ | \ 0 < \eta < \gamma)$ and $(\alpha_\eta\ | \ 0 < \eta < \gamma)$, and successively strike out all $\kappa_\eta$ for which the value $\alpha_\eta$ is not larger than the values $\alpha_{\ol{\eta}}$ before. This procedure results in sequences $(\wt{\kappa}_{\eta}\ | \ 0 < \eta < \wt{\gamma}) := (\kappa_{s(\eta)}\ | \ 0 < \eta < \wt{\gamma})$ and $(\wt{\alpha}_{\eta}\ | \ 0 < \eta < \wt{\gamma}) := (\alpha_{s(\eta)}\ | \ 0 < \eta < \wt{\gamma})$ for some $\wt{\gamma} \leq \gamma$ and a strictly increasing function $s: \wt{\gamma} \rightarrow \gamma$, such that $\wt{\alpha}_{\wt{\gamma}} := \bigcup \{ \wt{\alpha}_{\eta}\ | \ 0 < \eta < \wt{\gamma}\} = \bigcup \{ \alpha_{s(\eta)}\ | \ 0 < \eta < \wt{\gamma} \} = \bigcup \{ \alpha_\eta\ | \ 0 < \eta < \gamma\} = \alpha_\gamma$, and $(\wt{\alpha}_{\eta}\ | \ 0 < \eta < \wt{\gamma}) = (\alpha_{s(\eta)}\ | \ 0 < \eta < \wt{\gamma})$ is strictly increasing. If we then use the sequences $(\wt{\kappa}_{\eta}\ | \ 0 < \eta < \wt{\gamma})$, $(\wt{\alpha}_{\eta}\ | \ 0 < \eta < \wt{\gamma})$ for our construction and make sure that not only $\theta^N (\wt{\kappa}_{\eta}) = \wt{\alpha}_{\eta}$ holds for all $0 < \eta < \wt{\gamma}$, but additionally,  $\theta^N (\lambda)$ takes the smallest possible value for all cardinals $\lambda$ within the \tbl gaps\tbr\,$(\wt{\kappa}_{\eta}, \wt{\kappa}_{\eta + 1})$, 
%(i.e. $\theta^N (\lambda) = \max\{\wt{\alpha}_{\eta}, \lambda^{\plus\plus}\})$, 
and also make sure that $\theta^N (\lambda)$ takes the smallest possible value for all cardinals $\lambda \geq \wt{\kappa}_{\wt{\gamma}} := \bigcup \{ \wt{\kappa}_{\eta}\ | \ 0 < \eta < \wt{\gamma}\}$,
%(i.e. $\theta^N (\lambda) = \max\{\wt{\alpha}_{\wt{\gamma}}^{\plus \plus}, \lambda^{\plus \plus}\}$ if $cf \,\wt{\alpha}_{\wt{\gamma}} = \omega$; $\theta^N (\lambda) = \max\{\wt{\alpha}_{\wt{\gamma}}^\plus, \lambda^{\plus \plus}\}$ in the case that $\wt{\alpha}_\gamma = \alpha^\plus$ with $\cf \alpha = \omega$; and $\theta^N (\lambda) = \max\{\wt{\alpha}_{\wt{\gamma}}, \lambda^{\plus \plus}\}$, else);
then it follows, that for all $\kappa_\eta$ in the original sequence $(\kappa_\eta\ | \ 0 < \eta < \gamma)$, the values $\theta^N (\kappa_\eta) = \alpha_\eta$ are as desired. \\[-2mm]

%\colorbox{yellow}{Übersetzung von \tbl strike out\tbr?}

Hence, from now on, we assume w.l.o.g. that the sequence $(\alpha_\eta\ | \ 0 < \eta < \gamma)$ is \textbf{strictly increasing}.

%colorbox{yellow}{Wären hier für $\gamma$ auch Limiten erlaubt?} 

%Sp\"ater erw\"ahnen, was in den Lücken $(\kappa_\eta, \kappa_{\eta + 1})$ passieren sollte? Wieso man \tbl obdA\tbr annehmen kann, dass die $(\alpha_\eta\ | \ \eta < \gamma)$-Folge streng monoton w\"achst?

\section{The Forcing} \label{chapterforcing}

In this chapter, we will define our forcing notion $\m{P}$. \\[-3mm]

We start from a ground model $V \vDash ZFC\, \plus\, GCH$ with sequences $(\kappa_\eta\ | \ 0 < \eta < \gamma)$, $(\alpha_\eta\ | \ 0 < \eta < \gamma)$ that have all the properties mentioned in Chapter \ref{theorem}. \\[-2mm]

We will have to treat limit cardinals and successor cardinals separately. Let $Lim := \{ 0 < \eta < \gamma\ | \ \kappa_\eta \mbox{ is a limit cardinal}\}$, and $Succ := \{0 < \eta < \gamma\ | \ \kappa_{\eta} \mbox{ is a successor cardinal}\}$. For $\eta \in \Succ$, we denote by $\ol{\kappa_\eta}$ the cardinal predecessor of $\kappa_\eta$; i.e. $\kappa_\eta = \ol{\kappa_\eta}^{\;\plus}$. 
Our forcing will be a product $\m{P} = \m{P}_0\, \times \, \m{P}_1$, where $\m{P}_0$ deals with the limit cardinals $\kappa_\eta$, and $\m{P}_1$ is in charge of the successor cardinals. \\[-3mm]

The forcing $\m{P}_0$ is a generalized version of the forcing notion in \cite{GK}.

%FRAGE: Sollte man auf den Fehler dort eingehen? Evtl berichtigen? Erwähnen, dass die korrigierte Version von [GK] keine gute Kettenbedingung erfüllt; und dass damit ein Klassenforcing schwierig wird? \\[-2mm]

Roughly speaking, for every $\eta \in \Lim$ we add $\alpha_\eta$-many $\kappa_\eta$-subsets, which will be linked in a certain fashion, in order to make sure that not too many $\kappa$-subsets for cardinals $\kappa < \kappa_\eta$ make their way into the eventual model $N$. \\[-3mm]

For technical reasons, let $\kappa_0 := \aleph_0$, $\alpha_0 := \aleph_2$. For all $\eta$ with $\eta + 1 \in \Lim$, we take a sequence of cardinals $(\kappa_{\eta, j}\ | \ j < \cf \kappa_{\eta + 1})$ cofinal in $\kappa_{\eta + 1}$, such that $\kappa_{\eta, 0} = \kappa_\eta$, the sequence $(\kappa_{\eta, j}\ | \ j < \cf \kappa_{\eta + 1})$ is strictly increasing and closed, and any $\kappa_{\eta, j + 1}$ is a successor cardinal 
%(wo überall wird das benötigt? für die Abschlüsse braucht man doch Regularität!) 
with $\kappa_{\eta, j + 1} \geq \kappa_{\eta, j}^{\plus \plus}$ for all $j < \cf \kappa_{\eta + 1}$. \\These \tbl gaps\tbr\,between the cardinals $\kappa_{\eta, j}$ and $\kappa_{\eta, j + 1}$ will be necessary for further factoring arguments. \\[-2mm]

%\colorbox{red}{ACHTUNG - Könnte das nicht problematisch werden, wenn $\kappa_0 = \aleph_0$ gesetzt wird?} \\[-2mm]

%\colorbox{red}{DAS IST NICHT KLAR!}

For all $0 < \eta < \gamma$ for which $\eta + 1 \in \Succ$, i.e. $\kappa_{\eta + 1}$ is a successor cardinal, we set $\kappa_{\eta, 0} := \kappa_\eta$, and $\cf \kappa_{\eta + 1} := 1$ for reasons of homogeneity. \\[-3mm]

Now, in the case that $\eta \in \Lim$, the forcing $P^\eta$ will be defined like an Easton-support product of Cohen forcings for the intervals $[\kappa_{\nu, j}, \kappa_{\nu, j+1}) \subseteq \kappa_\eta$:

\begin{definition} For $\eta \in \Lim$, we let the forcing notion $(P^\eta, \supseteq, \emptyset)$ consist of all functions $p: \dom p \rightarrow 2$ such that $\dom p$ is of the following form: \\[-3mm]

 There is a sequence $(\delta_{\nu, j} \ | \ \nu < \eta\, , \, j < \cf \kappa_{\nu + 1})$ with $\delta_{\nu, j} \in [\kappa_{\nu, j}, \kappa_{\nu, j+1}) $ for all $\nu < \eta$, $j < \cf \kappa_{\nu + 1}$ with \[\dom p = \bigcup_{\substack{\nu < \eta \\ j < \cf \kappa_{\nu + 1}}} \, [\kappa_{\nu, j}, \delta_{\nu, j}),\] and for any regular $\kappa_{\nu, j}$, the domain $\dom p \, \cap \, \kappa_{\nu, j}$ is  bounded below $\kappa_{\nu,j}$. 
\end{definition}

For a set $S \subseteq \kappa_\eta$, we let $P^\eta \uhr S := \{p \in P^\eta\ | \ \dom p \subseteq S \} = \{p \uhr S \ | \ p \in P^\eta\}$. Then for any $\kappa_{\nu, j} < \kappa_\eta$, the forcing $P^\eta$ is isomorphic to the product $P^\eta \uhr \kappa_{\nu, j}\, \times \, P^\eta \uhr [\kappa_{\nu, j}, \kappa_\eta)$, where the first factor has cardinality $\leq \kappa_{\nu, j}^\plus$, and the second factor is $\leq \kappa_{\nu, j}$-closed.  \\[-3mm] 

This helps to establish:

\begin{lem} \label{prescard1} For all $\eta \in \Lim$, the forcing $P^\eta$ preserves cardinals and the $GCH$. \end{lem}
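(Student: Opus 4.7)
My plan is to follow the classical Easton template, using the factorization $P^\eta \cong (P^\eta \uhr \kappa_{\nu,j}) \times (P^\eta \uhr [\kappa_{\nu,j}, \kappa_\eta))$ already recorded in the excerpt for each $\kappa_{\nu,j}$. Once I convert its two crude factor estimates into a $\kappa^+$-chain condition on the lower factor and genuine $\leq\kappa$-closure on the upper factor, the standard product lemma and a nice-names count will yield cardinal preservation and $GCH$ preservation simultaneously.

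For a regular $\kappa = \kappa_{\nu,j}$ I would first sharpen the cardinality bound to $|P^\eta \uhr \kappa| \leq \kappa$ under $GCH$: every condition in $P^\eta \uhr \kappa$ has $\dom p$ Easton-bounded below $\kappa$, there are $\kappa$ possible bounds $\alpha < \kappa$, and each bounded subset of $\kappa$ carries at most $2^{<\kappa} = \kappa$ partial $0$-$1$-functions under $GCH$. This gives $P^\eta \uhr \kappa$ the $\kappa^+$-cc. For $\leq\kappa$-closure of $P^\eta \uhr [\kappa, \kappa_\eta)$, a decreasing $\leq\kappa$-sequence of conditions has the pointwise union as a lower bound: on each block $[\kappa_{\nu',j'}, \kappa_{\nu',j'+1})$ with $\kappa_{\nu',j'} \geq \kappa$, the supremum of the $\delta_{\nu',j'}$-values stays strictly below $\kappa_{\nu',j'+1}$, because $\kappa_{\nu',j'+1}$ is a successor cardinal of size $\geq \kappa_{\nu',j'}^{++} \geq \kappa^{++}$ (the gap condition built into the definition of the intermediate sequence), and the Easton-boundedness of $\dom p$ at any higher regular is preserved under unions of $\leq\kappa$-many bounded sets.

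Combining these via the standard product lemma, each factorization at a regular $\kappa_{\nu,j}$ preserves both $\kappa_{\nu,j}$ and $\kappa_{\nu,j}^+$; since the $\kappa_{\nu,j}$'s are cofinal in $\kappa_\eta$, every cardinal $\leq \kappa_\eta$ is preserved by choosing the factorization point just above the cardinal in question so the upper factor catches it, and cardinals above $\kappa_\eta$ are preserved by the trivial bound $|P^\eta| \leq \kappa_\eta$; singular cardinals come along as sups of preserved regulars. For $GCH$, a nice-names count finishes: for $\mu \geq \kappa_\eta$ directly $|P^\eta|^\mu = \kappa_\eta^\mu = \mu^+$ under $GCH$, and for $\mu < \kappa_\eta$ I factor at $\mu^+$ (the restriction $P^\eta \uhr S$ is defined for arbitrary $S$), note the upper factor is $\leq\mu$-closed and so adds no new $\mu$-subsets, and bound the subsets from the lower factor of cardinality $\leq \mu^+$ by at most $(\mu^+)^\mu = \mu^+$ nice names. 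The main technical care will be with singular $\kappa_{\nu,j}$, which can occur at $j = 0$ when $\kappa_\nu$ is a limit cardinal; there the sharpened bound $|P^\eta \uhr \kappa| \leq \kappa$ may fail and one must fall back on the excerpt's $\leq \kappa^+$-bound and use closure at a neighbouring regular $\kappa_{\nu,j+1}$ to still preserve everything relevant.
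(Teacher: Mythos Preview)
Your plan follows the same Easton-factorization template as the paper and is correct at regular $\kappa_{\nu,j}$ and at cardinals strictly between two consecutive $\kappa_{\nu,j}$'s. Two points deserve comment.

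A minor imprecision: the product decomposition $P^\eta \cong P^\eta\uhr S \times P^\eta\uhr(\kappa_\eta\setminus S)$ is only literally valid when $S$ is a union of full blocks $[\kappa_{\nu,j},\kappa_{\nu,j+1})$, because the definition forces $\dom p$ to meet each block in an initial segment. Factoring ``at $\mu^+$'' when $\mu^+$ lies strictly inside a block does not give a product. The paper always factors at some $\kappa_{\nu,j}$, and you should too: for $\mu\in(\kappa_{\nu,j},\kappa_{\nu,j+1})$ factor at $\kappa_{\nu,j}$, where the upper part is still $\leq\mu$-closed because $\kappa_{\nu,j+1}$ is a regular cardinal $>\mu$, and the lower part has size $\leq\kappa_{\nu,j}^+\leq\mu$.

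The genuine gap is the singular case. Your nice-name count ``$(\mu^+)^\mu=\mu^+$'' tacitly uses a $\mu^+$-chain condition on the lower factor, but for singular $\mu=\kappa_{\nu,0}$ the poset $P^\eta\uhr\kappa_{\nu,0}$ need not be $\kappa_{\nu,0}^+$-cc (conditions may have domain cofinal in $\kappa_{\nu,0}$), and with only the crude size bound $\leq\kappa_{\nu,0}^+$ the number of antichains is $2^{\kappa_{\nu,0}^+}$, giving $\kappa_{\nu,0}^{++}$ nice names --- one step too many. Your suggested fix (``closure at the neighbouring regular $\kappa_{\nu,1}$'') is closure on the wrong side and does not help. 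The paper handles this --- and this is the standard Easton manoeuvre --- by an inductive cofinality argument: once GCH has been verified below $\kappa_{\nu,0}$ in $V[G^\eta]$, write $2^{\kappa_{\nu,0}}=\kappa_{\nu,0}^{\lambda}$ for $\lambda=\cf\kappa_{\nu,0}$, and now factor at the block containing the \emph{small} regular cardinal $\lambda$. The upper part is $\leq\lambda$-closed and hence adds no new $\lambda$-sequences of ordinals, while the lower part has size $\leq\lambda$, so $(\kappa_{\nu,0}^{\lambda})^{V[G^\eta]}\leq(2^{\kappa_{\nu,0}})^V=\kappa_{\nu,0}^+$. The paper in fact organizes the whole proof around the single claim $(2^\alpha)^{V[G^\eta]}\leq(\alpha^+)^V$ for every $\alpha$, deriving cardinal preservation as an immediate consequence; this avoids having to argue separately that successors of singulars survive.
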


\begin{proof} Let $G^\eta$ denote a $V$-generic filter on $P^\eta$. It suffices to show that for all cardinals $\alpha$ in $V$, \[ (2^\alpha)^{V[G^\eta]} \leq (\alpha^\plus)^V,\] which implies that cardinals are $V$-$V[G^\eta]$-absolute: If not, there would be a $V$-cardinal $\alpha$ with a surjection $s: \beta \rightarrow \alpha$ in $V[G^\eta]$ for some $V[G^\eta]$-cardinal $\beta < \alpha$. Then there is also a surjection $\ol{s}: \beta \rightarrow (\beta^\plus)^V$ in $V[G^\eta]$, which gives a surjection $\ol{\ol{s}}: \beta \rightarrow \big(2^\beta\big)^{V[G^\eta]}$. Contradiction.
%\colorbox{yellow} {FRAGE: Sollte man das ausführen?} 
\begin{itemize} \item In the case that $\alpha \geq \kappa_\eta^\plus$, it follows that $(2^\alpha)^{V[G^\eta]} \leq | \powerset (\alpha \cdot |P^\eta|)|^V \leq (2^\alpha)^V = (\alpha^\plus)^V$ by the $GCH$ in $V$. \item Now, assume $\alpha \in (\kappa_{\nu, j}, \kappa_{\nu, j +1})$ for some $\kappa_{\nu, j} < \kappa_\eta$. Then the forcing $P^\eta$ can be factored as $P^\eta \uhr \kappa_{\nu, j} \, \times \, P^\eta \uhr [\kappa_{\nu,j}, \kappa_\eta)$, where $P^\eta\, \uhr \, \kappa_{\nu, j}$ has cardinality $\leq \kappa_{\nu, j}^\plus \leq \alpha$, and $P^\eta \uhr [\kappa_{\nu,j}, \kappa_\eta)$ is $\leq \alpha$-closed. Hence, \[\big (2^\alpha\big)^{V[G^\eta]} \leq \big(2^\alpha\big)^{V[G^\eta \, \uhr \, \kappa_{\nu, j}]} \leq \big|\, \powerset \big(\alpha\, \cdot \, |P^\eta \, \uhr \, \kappa_{\nu, j}|\big)\, \big|^V \leq \big(2^\alpha\big)^V = (\alpha^\plus)^V.\] \item If $\alpha = \kappa_{\nu, j}$ for some regular $\kappa_{\nu,j} < \kappa_\eta$, then $|P^\eta \uhr \kappa_{\nu, j}| = \kappa_{\nu, j}$ and $P^\eta \uhr [\kappa_{\nu, j}, \kappa_\eta)$ is $\leq \kappa_{\nu, j}$-closed; so the same argument applies. \\ If $\alpha = \kappa_\eta$ is regular, then $(2^\alpha)^{V[G^\eta]} \leq (\alpha^\plus)^V$ follows from $|P^\eta| \leq \kappa_\eta$. \end{itemize}

It remains to show that $(2^{\kappa_{\nu, j}})^{V[G^\eta]} = (\kappa_{\nu, j}^\plus)^V$ for all singular $\kappa_{\nu, j} < \kappa_\eta$, and $(2^{\kappa_\eta})^{V[G^\eta]} \leq (\kappa_\eta^\plus)^V$ in the case that $\kappa_\eta$ itself is singular. \\[-3mm]

We only prove the first part (the argument for $\kappa_\eta$ is similar).

\begin{itemize}

\item Assume the contrary and let $\kappa_{\nu, j}$ least with $\lambda := \cf \kappa_{\nu, j} < \kappa_{\nu, j}$ and $(2^{\kappa_{\nu, j}})^{V[G^\eta]} > (\kappa_{\nu, j}^\plus)^V$. Take $(\alpha_i \ | \ i < \lambda)$ cofinal in $\kappa_{\nu, j}$. By assumption and by what we have shown before, it follows that $(2^\alpha)^{V[G^\eta]} = (\alpha^\plus)^V$ for all $\alpha < \kappa_{\nu, j}$. Hence, $\Card^V \, \cap \, (\kappa_{\nu, j} + 1) = \Card^{V[G]}\, \cap \, (\kappa_{\nu, j} + 1)$, and $(2^{\alpha_i})^{V[G^\eta]} = (\alpha_i^\plus)^V$ for all $i < \lambda$. Thus, \[2^{\kappa_{\nu, j}} \leq \prod_{i < \lambda} 2^{\alpha_i} \leq \kappa_{\nu, j}^\lambda \leq \kappa_{\nu, j}^{\kappa_{\nu, j}} = 2^{\kappa_{\nu, j}}\] holds true in $V$ and $V[G^\eta]$. Let $\lambda \in [\kappa_{\mu, m}, \kappa_{\mu, m+1})$ for some $\kappa_{\mu, m} < \kappa_{\nu, j}$. If $\lambda > \kappa_{\mu, m}$, then $|P^\eta \uhr \kappa_{\mu, m}| \leq (\kappa_{\mu, m})^\plus \leq \lambda$, and $P^\eta \uhr [\kappa_{\mu, m}, \kappa_\eta)$ is $\leq \lambda$-closed. In the case that $\lambda = \kappa_{\mu, m}$, it follows by regularity of $\lambda$ that $|P^\eta \uhr \kappa_{\mu, m}| \leq \kappa_{\mu, m} = \lambda$, as well. In either case, \[\big(2^{\kappa_{\nu, j}}\big)^{V[G^\eta]} = \big(\kappa_{\nu, j}^\lambda\big)^{V[G^\eta]} \leq \big(\kappa_{\nu, j}^\lambda\big)^{V[G^\eta \, \uhr \, \kappa_{\mu, m}]} \leq \big(2^{\kappa_{\nu, j}}\big)^{V[G^\eta \, \uhr \,\kappa_{\mu, m}]} \leq \] \[ \leq \big|\,\powerset \big(\kappa_{\nu, j} \, \cdot \, |P^\eta \uhr \kappa_{\mu, m}|\big)\, \big|^V \leq  \big|\, \powerset (\kappa_{\nu, j} \, \cdot \, \kappa_{\mu, m}^\plus)\, \big|^V = (\kappa_{\nu, j}^\plus)^V,\] which gives the desired contradiction.\end{itemize}
\end{proof}

\begin{cor} \label{prescof} For every $\eta \in Lim$, the forcing $P^\eta$ preserves cofinalites. \end{cor}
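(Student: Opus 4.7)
My plan is to show that every $V$-regular cardinal $\mu$ remains regular in $V[G^\eta]$; combined with the cardinal preservation from Lemma~\ref{prescard1}, this will yield cofinality preservation for all $V$-cardinals. For $V$-successor cardinals $\mu = \nu^+$, regularity preservation is immediate: if $\mu$ were singular in $V[G^\eta]$, a cofinal $\nu$-sequence in $\mu$ would yield a surjection $\nu \rightarrow \mu$ in $V[G^\eta]$, contradicting $|\mu|^{V[G^\eta]} = \mu$.

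The substantial case is a $V$-regular limit (and thus inaccessible) cardinal $\mu \leq \kappa_\eta$. Suppose toward a contradiction that there is a cofinal sequence $(\beta_i)_{i < \lambda}$ in $\mu$ in $V[G^\eta]$ with $\lambda < \mu$. Since the sequence $(\kappa_{\nu, j})_{\nu < \eta,\, j < \cf \kappa_{\nu+1}}$ is cofinal in $\kappa_\eta$ and $\mu \leq \kappa_\eta$ is a limit cardinal, I pick some $\kappa_{\nu,j}$ with $\lambda < \kappa_{\nu,j} < \mu$, and factor $P^\eta \cong P^\eta \uhr \kappa_{\nu,j} \times P^\eta \uhr [\kappa_{\nu,j}, \kappa_\eta)$ as in the proof of Lemma~\ref{prescard1}. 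The upper factor is $\leq \kappa_{\nu,j}$-closed, so it adds no new $\lambda$-sequences of ordinals, and the cofinal sequence must already lie in $V[G^\eta \uhr \kappa_{\nu,j}]$. But $|P^\eta \uhr \kappa_{\nu,j}| \leq \kappa_{\nu,j}^+ < \mu$ by inaccessibility of $\mu$, so the lower factor satisfies the $\mu$-chain condition and preserves regularity of $\mu$, contradiction. The remaining case $\mu > \kappa_\eta$ regular is handled by $|P^\eta| \leq \kappa_\eta < \mu$, giving $\mu$-cc directly.

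To extend to singular $\mu$, set $\lambda := \cf^V (\mu)$. By the previous case $\lambda$ is preserved as a regular cardinal, and the $V$-cofinal $\lambda$-sequence in $\mu$ remains cofinal in $V[G^\eta]$, so $\cf^{V[G^\eta]} (\mu) \leq \lambda$. Any strictly shorter cofinal $\lambda'$-sequence in $\mu$ in $V[G^\eta]$ would, by the same factorization argument applied at an appropriate $\kappa_{\nu,j}$ strictly between $\lambda'$ and $\lambda$, be localised in $V[G^\eta \uhr \kappa_{\nu,j}]$ and contradict the $\lambda$-cc of the lower factor. The intentional spacing $\kappa_{\nu, j+1} \geq \kappa_{\nu, j}^{++}$ built into the cofinal sequences ensures that such a $\kappa_{\nu,j}$ can be chosen even when $\lambda$ is a successor cardinal; the improved bound $|P^\eta \uhr \kappa_{\nu,j}| \leq \kappa_{\nu,j}$ for regular $\kappa_{\nu,j}$, available by the Easton-style boundedness of $\dom p \cap \kappa_{\nu,j}$ in the definition of $P^\eta$, supplies the necessary $\lambda$-cc.

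The main obstacle is establishing the $\mu$-cc of $P^\eta \uhr \kappa_{\nu,j}$ at an inaccessible $\mu$; this reduces to the size estimates already employed in Lemma~\ref{prescard1}, together with the fact that the $(\kappa_{\nu,j})$ are cofinal in $\kappa_\eta$, so the factorization point can be placed strictly below $\mu$. The secondary subtlety is the successor-cofinality case in the singular argument, which the built-in spacing of the chosen cofinal sequences is precisely designed to accommodate.
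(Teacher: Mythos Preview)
Your overall strategy --- factor $P^\eta$ at a suitable $\kappa_{\nu,j}$ so that the upper factor is sufficiently closed and the lower factor is small --- is exactly the paper's. But there is a gap in your inaccessible case: you claim you can pick $\kappa_{\nu,j}$ with $\lambda < \kappa_{\nu,j} < \mu$, justifying this only by ``the sequence $(\kappa_{\nu,j})$ is cofinal in $\kappa_\eta$''. That cofinality says nothing about points below an inaccessible $\mu < \kappa_\eta$. Since each $\kappa_{\nu,j+1}$ is merely required to be a successor cardinal $\geq \kappa_{\nu,j}^{++}$, an inaccessible $\mu$ may lie strictly inside some interval $(\kappa_{\nu,j}, \kappa_{\nu,j+1})$ with the new cofinality $\lambda$ also in $[\kappa_{\nu,j}, \mu)$ --- then no $\kappa_{\nu',j'}$ sits strictly between them. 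The same issue reappears in your singular paragraph when you try to place a $\kappa_{\nu,j}$ strictly between $\lambda'$ and $\lambda$.

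The paper avoids this by letting the \emph{new cofinality}, not the target cardinal, determine the factorisation point: for an arbitrary $V$-regular $\lambda$ with putative new cofinality $\ol{\lambda} < \lambda$, take the $\kappa_{\nu,j}$ with $\ol{\lambda} \in [\kappa_{\nu,j}, \kappa_{\nu,j+1})$. The upper factor $P^\eta \uhr [\kappa_{\nu,j}, \kappa_\eta)$ is then $\leq \ol{\lambda}$-closed (every regular $\kappa_{\nu',j'}$ above $\kappa_{\nu,j}$ is $\geq \kappa_{\nu,j+1} > \ol{\lambda}$), and the lower factor has size $\leq \ol{\lambda}$ --- the Easton bound gives $\leq \kappa_{\nu,j}$ when $\ol{\lambda} = \kappa_{\nu,j}$ is regular, and $\leq \kappa_{\nu,j}^+ \leq \ol{\lambda}$ otherwise --- hence size $< \lambda$. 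This handles all regular $\lambda$ uniformly, so your successor/inaccessible split is unnecessary; and once every $V$-regular cardinal stays regular, the singular case is immediate (a shorter cofinal sequence in a singular $\mu$ would singularise the regular cardinal $\cf^V(\mu)$), so no separate argument is needed there either.
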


\begin{proof} We show that every regular $V$-cardinal $\lambda$ is still regular in $V[G^\eta]$. If not, there would be in $V[G^\eta]$ a regular cardinal $\ol{\lambda} < \lambda$ with
%Assume towards a contradiction, there were regular $V$-cardinals $\lambda$, $\ol{\lambda}$ with $\ol{\lambda} < \lambda$, and 
a cofinal function $f: \ol{\lambda} \rightarrow \lambda$. Let $\ol{\lambda} \in [\kappa_{\nu, j}, \kappa_{\nu, j + 1})$. The forcing $P^\eta$ is isomorphic to the product $P^\eta\, \uhr\, \kappa_{\nu, j}\, \times\, P^\eta\, \uhr\, [\kappa_{\nu, j}, \kappa_\eta)$, where the second factor is $\leq \ol{\lambda}$-closed. If $\ol{\lambda} > \kappa_{\nu, j}$, then the first factor has cardinality $\leq \kappa_{\nu, j}^\plus \leq \ol{\lambda}$. In the case that $\ol{\lambda} = \kappa_{\nu, j}$, the first factor has cardinality $\leq \kappa_{\nu, j} = \ol{\lambda}$ by regularity of $\ol{\lambda}$. Hence, $f \in V[G\, \uhr\, \kappa_{\nu, j}]$. However, since $|P^\eta\, \uhr\, \kappa_{\nu, j}| < \lambda$, it follows that $\lambda$ is still a regular cardinal in the generic extension $V[G^\eta\, \uhr\, \kappa_{\nu, j}]$. Contradiction.\\ Thus, it follows that $P^\eta$ preserves cofinalities as desired.  \end{proof} 

Our eventual forcing notion $\m{P}_0$ will contain 
%For every $\sigma < \gamma$, our forcing $\m{P}$ contains 
$\alpha_\sigma$-many copies of $P^\sigma$ for every $\sigma \in \Lim$. They will be labelled $P^\sigma_i$, where $i < \alpha_\sigma$. All the $P^\sigma_i$ for $\sigma \in \Lim$, $i < \alpha_\sigma$, will be linked with a forcing notion $P_\ast$, which is a two-dimensional version of $P^\gamma$, adding $\kappa_{\nu, j + 1}$-many Cohen subsets to every interval $[\kappa_{\nu, j}, \kappa_{\nu, j + 1})$: 

%\colorbox{yellow} {TO DO: Im \tbl ersten Skript\tbr\,nach Formlierungen suchen!}

\begin{definition} We denote by $(P_\ast, \supseteq \emptyset)$ the forcing notion consisting of all functions $p_\ast: \dom p_\ast \rightarrow 2$ such that $\dom p_\ast$ is of the following form: \\[-3mm]

There is a sequence $(\delta_{\nu, j} \ | \ \nu < \gamma\, , \, j < \cf \kappa_{\nu + 1})$ with $\delta_{\nu, j} \in [\kappa_{\nu, j}, \kappa_{\nu, j+1}) $ for all $\nu  < \gamma$, $j < \cf \kappa_{\nu + 1}$ with \[\dom p_\ast = \bigcup_{\substack{\nu < \gamma \\ j < \cf \kappa_{\nu + 1}}} \, [\kappa_{\nu, j}, \delta_{\nu, j})^2,\] and for any $\kappa_{\nu, j}$ a regular cardinal, it follows that $|\dom p_\ast \, \cap \, \kappa_{\nu, j}^2| < \kappa_{\nu, j}$, and in the case that $\kappa_\gamma$ itself is regular, we require that $|\dom p_\ast| < \kappa_\gamma$.
%$|p_\ast| < \kappa_\gamma$. 
\end{definition}

For $p_\ast \in P_\ast$ and $\xi < \kappa_\gamma$, let $p_\ast (\xi) := \{ \, (\zeta, p_\ast (\xi, \zeta))\ | \ (\xi, \zeta) \in \dom p_\ast \,\}$ denote the $\xi$-th section of $p_\ast$. If $a \subseteq \kappa_\gamma$ is a set that hits every interval $[\kappa_{\nu, j}, \kappa_{\nu, j + 1})$ in at most one point, we let \[p_\ast (a) := \{ \, (\zeta, p_\ast (\xi, \zeta))\ | \ \xi \in a, (\xi, \zeta) \in \dom p_\ast \,\}.\] 

As in Lemma \ref{prescard1}, it follows that $P_\ast$ preserves cardinals and the $GCH$. \\[-2mm]

Now, we are ready to define our forcing notion $\m{P}_0$. Every $p_0 \in \m{P}_0$ is of the form \[p_0 = (p_\ast, (p^\sigma_i, a^\sigma_i)_{\sigma \in \Lim\, , \, i < \alpha_\sigma})\] with $p_\ast \in P_\ast$ and $p^\sigma_i \in P^\sigma$ for all $(\sigma, i)$. 

The \textit{linking ordinals} $a^\sigma_i$ will determine how the $i$-th generic $\kappa_\sigma$-subset $G^\sigma_i$, given by the projection of the generic filter $G$ onto $P^\sigma_i$, will be eventually linked with the $P_\ast$-generic filter $G_\ast$. 

\begin{definition} Let $\m{P}_0$ be the collection of all $p_0 = (p_\ast, (p^\sigma_i, a^\sigma_i)_{\sigma \in \Lim\, ,\, i < \alpha_\sigma})$ such that: \begin{itemize} \item The \textit{support} of $p_0$, $\supp p_0$, is countable with $p^\sigma_i = a^\sigma_i = \emptyset$ whenever $(\sigma, i) \notin \supp p_0$. \item We have $p_\ast \in P_\ast$, and $p^\sigma_i \in P^\sigma$ for all $(\sigma, i) \in \supp p_0$. \item The domains of the $p^\sigma_i$ are coherent in the following sense: \\ If $\dom p_\ast = \bigcup_{\nu < \gamma, j < \cf \kappa_{\nu + 1}} [\kappa_{\nu, j}, \delta_{\nu, j})^2$, then for every $(\sigma, i) \in \supp p_0$, it follows that $\dom p^\sigma_i = \bigcup_{\nu < \sigma , j < \cf \kappa_{\nu + 1}} [\kappa_{\nu, j}, \delta_{\nu, j})$.

We set $\dom p_0 := \bigcup_{\nu, j} [\kappa_{\nu, j}, \delta_{\nu, j})$.
%$\dom p^\sigma_i = \bigcup_{\nu < \sigma, i < \cf \kappa_{\nu + 1}} [\kappa_{\nu, j}, \delta_{\nu, j})$. 
\item For all $(\sigma, i) \in \supp p_0$, we have $a^\sigma_i \subseteq \kappa_\sigma$ with $ |a^\sigma_i \, \cap \, [\kappa_{\nu, j}, \kappa_{\nu, j+1})| = 1$ for all intervals $[\kappa_{\nu, j}, \kappa_{\nu, j + 1}) \subseteq \kappa_\sigma$. \\ If $(\sigma_0, i_0) \neq (\sigma_1, i_1)$, then $a^{\sigma_0}_{i_0} \, \cap \, a^{\sigma_1}_{i_1} = \emptyset$. (We call this the {\upshape independence property}).\end{itemize}

{ \upshape Concerning the partial ordering $\leq_0$, any linking ordinal $\{\xi\} = a^\sigma_i \, \cap \, [\kappa_{\nu, j}, \kappa_{\nu, j + 1})$ settles that whenever $q_0 \leq p_0$, the extension $q^\sigma_i \supseteq p^\sigma_i$ within in the interval $[\kappa_{\nu, j}, \kappa_{\nu, j +1})$ is determined by $q_\ast (\xi)$: } \\[-3mm]

For $p_0 = (p_\ast, (p^\sigma_i, a^\sigma_i)_{\sigma, i})$, $q_0 = (q_\ast, (q^\sigma_i, b^\sigma_i)_{\sigma, i}) \in \m{P}_0$, let $q_0 \leq_0 p_0$ if the following holds: $q_\ast \supseteq p_\ast$; $q^\sigma_i \supseteq p^\sigma_i\, , \, b^\sigma_i \supseteq a^\sigma_i$ for all $(\sigma, i) \in \supp p_0$, and whenever $\zeta \in (\dom q^\sigma_i \setminus \dom p^\sigma_i) \, \cap \, [\kappa_{\nu, j}, \kappa_{\nu, j + 1})$ with $a^\sigma_i \, \cap \, [\kappa_{\nu, j}, \kappa_{\nu, j + 1}) = \{\xi \}$, then $\xi \in \dom q_0$ with $q^\sigma_i (\zeta) = q_\ast (\xi, \zeta)$ (we call this the {\upshape linking property }). \\[-4mm]

%\colorbox{yellow} {STIMMT DAS SO??}

The {\upshape maximal element} of $\m{P}_0$ is $\m{1}_0 := (\emptyset, (\emptyset, \emptyset)_{\sigma < \gamma, i < \alpha_\sigma})$.

\end{definition}

%For a condition $p_0 \in \m{P}_0$ with $\dom p_\ast = \bigcup_{\nu, j} [\kappa_{\nu, j}, \delta_{\nu, j})^2$, we set $\dom p_0 := \bigcup_{\nu, j} [\kappa_{\nu, j}, \delta_{\nu, j})$. 

%Erwähnen, dass alle Intervalle $[\kappa_{\nu, j}, \kappa_{\nu, j + 1})$ nun \tbl riesig\tbr\, werden?
%One major difference between $\m{P}_0$ and the forcing in $[GK]$ is that we now have a much stronger independence property; so if $\Lim$ is cofinal in $\gamma$, one can easily write down $\m{P}_0$-antichains of size $\alpha_\gamma$, which stands in contrast to [GK, Lemma 3].

%\colorbox{yellow} {FRAGE: Sollte auf den Fehler in \ref{GK} hier eingegangen werden? Oder  nur in der Arbeit?}

%Hence, many cardinals will be collapsed; and
Let $G_0$ denote a $V$-generic filter on $\m{P}_0$, and $g^\sigma_i := \bigcup \{a^\sigma_i\ | \ p = (p_\ast, (p^\sigma_i, a^\sigma_i)_{\sigma, i}) \in G_0\}$. 

Note that by our strong \textit{independence property}, every interval $[\kappa_{\nu, j}, \kappa_{\nu, j+1})$ will be blown up to size $\sup\{\alpha_\sigma \ | \ \sigma \in \Lim\}$ in a $\m{P}_0$-generic extension.

Hence, since we want our eventual symmetric submodel $N$ preserve all $V$-cardinals, we will have so make sure that $N$ \tbl does not know\tbr\,the sequence of linking ordinals $(g^\sigma_i\ | \ \sigma \in \Lim\, , \, i < \alpha_\sigma)$. \\[-3mm]

A major difference between our forcing and the basic construction in \cite{GK} is the following: 
%If we stuck to [GK], then all $a^\sigma_i$ had to be finite, and 
%if we denote by $G_0$ a $V$-generic filter on $\m{P}_0$, then $g^\sigma_i := \bigcup \{a^\sigma_i\ | \ p = (p_\ast, (p^\sigma_i, a^\sigma_i)_{\sigma, i}) \in G_0\}$ 
The forcing conditions in \cite[Definition 2]{GK} have \textit{finite} linking ordinals $a^\sigma_i$; so the according generics $g^\sigma_i$ are \textbf{not} contained in the ground model $V$.
%for any $g^\sigma_i$ it would follows that 
%would not be contained in the ground model $V$. 
With our definition however, it follows for any $p \in G_0$ with $(\sigma, i) \in \supp p$ that $g^\sigma_i = a^\sigma_i \in V$.
%; hence $g^\sigma_i \in V$. 
By countable support, also countable sequences of linking ordinals $(g^{\sigma_j}_{i_j} \ | \ j < \omega)$ are contained in $V$; but for $\sigma \in \Lim$ not the whole sequence $(g^\sigma_i\ | \ i < \alpha_\sigma)$. 

This modification helps to establish that any generic $G^\sigma_i$ can be described using only $G_\ast$ and sets from the ground model $V$ (see below). \\[-2mm]

%FRAGE: Sollte man hier nur in ein paar Sätzen erwähnen, was der Unterschied ist zum ursprünglichen Forcing in [GK]? Sollte dieses korrigiert werden? Oder \tbl erst \tbr\, in der Arbeit? \\[-2mm]

Next, we define our forcing notion $\m{P}_1$, which will be in charge of the successor cardinals. For every $\sigma \in \Succ$ with $\kappa_\sigma = \ol{\kappa_\sigma\,}^{\, +}$, it follows that $\sigma =: \ol{\sigma} + 1$ must be a successor ordinal, since we have assumed in the beginning that the sequence $(\kappa_\sigma\ | \ 0 < \sigma < \gamma)$ is closed. \\[-3mm]
%say $\sigma = \ol{\sigma} + 1$, 

We denote by $P^\sigma$ the Cohen forcing \[P^\sigma := \{p : \dom p \rightarrow 2 \ \big| \ \dom p \subseteq [\ol{\kappa_\sigma}, \kappa_\sigma)\, , \, |\dom p| < \kappa_\sigma \},\] and let \[C^\sigma := \{p : \dom p \rightarrow 2 \ | \ \dom p = \dom_x p \, \times \m \dom_y p \; \subseteq \; \alpha_\sigma\, \times \, [\ol{\kappa_\sigma}, \kappa_\sigma)\; , \; |\dom p| < \kappa_\sigma\}.\]

%\colorbox{red}{ACHTUNG - Intervalle $[\ol{\kappa_\sigma}, \kappa_\sigma)$ betrachten, oder $[\kappa_{\ol{\sigma}}, \kappa_\sigma)$? DAS IST NICHT KLAR!}

Then both $P^\sigma$ and $C^\sigma$ are $< \kappa_\sigma$-closed, and if $2^{< \kappa_\sigma} = \kappa_\sigma$, i.e. $2^{\ol{\kappa_\sigma}} = \kappa_\sigma$, then they satisfy the $\kappa_\sigma^\plus$-chain condition and hence, preserve cardinals.

In particular, any forcing $P^\sigma$ or $C^\sigma$ preserves cardinals if we are working in our ground model $V$ with $V \vDash GCH$, or any $V$-generic extension by $ \leq \kappa_\sigma$-closed forcing.

%$P^\sigma$ and $C^\sigma$ preserve cardinals.

%\colorbox{red}{ACHTUNG - braucht man hier GCH im Grundmodell?} \\[-3mm]

%\colorbox{red} {ACHTUNG - da gibt es ein Problem!} 
%since they are $< \kappa_\sigma$-closed and satisfy the $\kappa_\sigma^\plus$-chain condition.

\begin{definition} The forcing notion $(\m{P}_1, \leq_1, \emptyset_1)$ consists of all $p_1 = (p^\sigma)_{\sigma \in \Succ}$ with countable support $\supp p_1 := \{ \sigma \in \Succ\ | \ p^\sigma \neq \emptyset\}$, and $p^\sigma \in C^\sigma$ for all $\sigma \in \Succ$. For $p_1 = (p^\sigma)_{\sigma \in \Succ}$, $q_1 = (q^\sigma)_{\sigma \in \Succ} \in \m{P}_1$, we let $q_1 \leq_1 p_1$ if $q^\sigma \supseteq p^\sigma$ for all $\sigma \in \Succ$; and $\m{1}_1 := (\emptyset)_{\sigma \in \Succ}$ is the maximal element. \end{definition}

For $\sigma \in \Succ$ and $i < \alpha_\sigma$, we set $p^\sigma_i = \{(\zeta, p^\sigma (i, \zeta))\ | \ (i, \zeta) \in \dom p^\sigma\}$. \\[-2mm]

Our main forcing will be the product $\m{P} :=\m{P}_0\, \times \,\m{P}_1$ with maximal element $\m{1} := (\m{1}_0, \m{1}_1)$ and order relation $\leq$. In order to simplify notation, we write conditions $p \in \m{P}$ in the form $p = (p_\ast, (p^\sigma_i, a^\sigma_i)_{\sigma \in \Lim, i < \alpha_\sigma}, (p^\sigma)_{\sigma \in \Succ})$. \\[-2mm]

%Since we want $DC$ in our eventual symmetric submodel $N$, it will be importsant that our forcing is countably closed; which follows by countable support

It is not difficult to verify:

\begin{prop} \label{countablyclosed} $\m{P}$ is countably closed.\end{prop}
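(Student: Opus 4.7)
Since $\m{P} = \m{P}_0 \times \m{P}_1$, it suffices to show each factor is countably closed, and the substantial work is at $\m{P}_0$. The plan is to take a descending sequence $(p_n)_{n < \omega}$ in $\m{P}_0$, say $p_n = (p_{\ast, n}, (p^\sigma_{i, n}, a^\sigma_{i, n})_{\sigma \in \Lim, i < \alpha_\sigma})$, and define the candidate lower bound $p = (p_\ast, (p^\sigma_i, a^\sigma_i)_{\sigma, i})$ componentwise by $p_\ast := \bigcup_n p_{\ast, n}$, $p^\sigma_i := \bigcup_n p^\sigma_{i, n}$, and $a^\sigma_i := \bigcup_n a^\sigma_{i, n}$. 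Its support $\supp p = \bigcup_n \supp p_n$ is a countable union of countable sets, hence countable. It is clear that $p$ will be $\leq_0 p_n$ for each $n$ once we show that $p$ is a legal condition; so the burden is to verify the defining clauses of $\m{P}_0$ for $p$.

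The main technical point is preservation of the size and boundedness requirements on domains. Writing $\dom p_{\ast, n} = \bigcup_{\nu, j} [\kappa_{\nu, j}, \delta_{\nu, j, n})^2$, one takes $\delta_{\nu, j} := \sup_n \delta_{\nu, j, n}$ to recover the prescribed form for $\dom p_\ast$. For a regular $\kappa_{\nu, j}$ in the indexing, we need $|\dom p_\ast \cap \kappa_{\nu, j}^2| < \kappa_{\nu, j}$: the only cardinal in the index set that could equal $\omega$ is $\kappa_{0, 0}$, and since no interval $[\kappa_{\nu', j'}, \delta_{\nu', j'})^2$ can sit below $\omega$, the intersection $\dom p_\ast \cap \omega^2$ is empty; for any uncountable regular $\kappa_{\nu, j}$, a countable union of sets of size $< \kappa_{\nu, j}$ is again of size $< \kappa_{\nu, j}$. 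The same principle establishes the boundedness of $\dom p^\sigma_i \cap \kappa_{\nu, j}$ below regular $\kappa_{\nu, j}$ (so that $p^\sigma_i \in P^\sigma$) and, if $\kappa_\gamma$ is regular, the global bound $|\dom p_\ast| < \kappa_\gamma$.

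For the linking data, note that once a coordinate $(\sigma, i)$ enters some $\supp p_n$, the set $a^\sigma_{i, n}$ already contains exactly one point per interval $[\kappa_{\nu, j}, \kappa_{\nu, j + 1}) \subseteq \kappa_\sigma$ and cannot be enlarged further, so $a^\sigma_i$ stabilizes at this value; the independence property transfers without change. The linking property is the delicate clause: if $\zeta \in (\dom p^\sigma_i \setminus \dom p^\sigma_{i, n}) \cap [\kappa_{\nu, j}, \kappa_{\nu, j + 1})$ with $a^\sigma_{i, n} \cap [\kappa_{\nu, j}, \kappa_{\nu, j + 1}) = \{\xi\}$, pick the first stage $m > n$ at which $\zeta$ entered $\dom p^\sigma_{i, m}$; then $p_m \leq_0 p_n$ supplies $\xi \in \dom p_m \subseteq \dom p$ and $p^\sigma_{i, m}(\zeta) = p_{\ast, m}(\xi, \zeta)$, and both sides are fixed at all subsequent stages, so the equation $p^\sigma_i(\zeta) = p_\ast(\xi, \zeta)$ persists in the limit. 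The factor $\m{P}_1$ is handled identically and more easily, each $p^\sigma = \bigcup_n p^\sigma_n$ lying in $C^\sigma$ by uncountable regularity of the successor cardinal $\kappa_\sigma$. The main obstacle is not a single deep step but the careful bookkeeping needed to reconcile all the interlocking clauses (support, domain form, size, linking, independence) of the definition of $\m{P}_0$.
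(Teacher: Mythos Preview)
Your argument is correct and is precisely the natural componentwise verification that the paper has in mind; the paper itself omits the proof entirely, simply stating ``It is not difficult to verify'' before the proposition. One tiny wrinkle in the $\m{P}_1$ part: since conditions in $C^\sigma$ are required to have \emph{rectangular} domain $\dom_x p \times \dom_y p$, the raw union $\bigcup_n p^\sigma_n$ need not lie in $C^\sigma$; you should instead take domain $(\bigcup_n \dom_x p^\sigma_n) \times (\bigcup_n \dom_y p^\sigma_n)$ and extend arbitrarily on the new points, which is harmless since $\kappa_\sigma$ is an uncountable regular cardinal.
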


This is important to make sure that $DC$ holds in our eventual symmetric extension $N$. \\

For $0 < \eta \leq \gamma$ (with $\eta \in Lim$ or $\eta \in Succ$ or $\eta = \gamma$), we define a forcing $\ol{P}^\eta$ like $P^\eta$ is defined in the case that $\eta \in Lim$: \\[-3mm]

\textit{Let $\ol{P}^\eta$ consist of all functions $p: \dom p \rightarrow 2$ such that there is a sequence $(\delta_{\nu, j}\ | \ \nu < \eta, j < \cf \kappa_{\nu + 1})$ with $\delta_{\nu, j} \in [\kappa_{\nu, j}, \kappa_{\nu, j + 1})$ for all $\kappa_{\nu, j} < \kappa_\eta$, and \[\dom p = \bigcup_{\nu, j}\, [\kappa_{\nu, j}, \delta_{\nu, j}),\] such that $|p \uhr \kappa_{\ol{\nu}, \ol{\j}}| < \kappa_{\ol{\nu}, \ol{\j}}$ whenever $\kappa_{\ol{\nu}, \ol{\j}}$ is a regular cardinal, and $|p| < \kappa_\eta$ in the case that $\kappa_\eta$ itself is regular.} \\[-3mm]

For any $0 < \eta < \lambda$ with $\kappa_\lambda$ a limit cardinal, it follows that $\ol{P}^\eta = P^\lambda \uhr \kappa_\eta$. \\

%it follows that $\m{P}$ is $\omega$-closed. \\[-2mm]

Let now $G$ be a $V$-generic filter on $\m{P}$. It induces \[G_\ast := \{ q_\ast \in P_\ast\ | \ \exists\, p = (p_\ast, (p^\sigma_i, a^\sigma_i)_{\sigma, i}, (p^\sigma)_\sigma) \in G \: : \: q_\ast \subseteq p_\ast\},\] and for $\lambda \in Lim$, $k < \alpha_\lambda$: \[ G^\lambda_k := \{q^\lambda_k \in P^\lambda \ | \ \exists\, p = (p_\ast, (p^\sigma_i, a^\sigma_i)_{\sigma, i}, (p^\sigma)_\sigma) \in G\: : \: q^\lambda_k \subseteq p^\lambda_k\}.\] As usually, these filters $G_\ast$, $G^\lambda_k$ are identified with their union $\bigcup G_\ast$, $\bigcup G^\lambda_k$. Then any $G^\lambda_k$ can be regarded a subset of $\kappa_\lambda$.\\[-3mm]

%: \kappa_\sigma \rightarrow 2$.. \\ 
Moreover, let \[g^\lambda_k := \bigcup \{a^\lambda_k\ | \ p = (p_\ast, (p^\sigma_i, a^\sigma_i)_{\sigma, i}, (p^\sigma)_\sigma) \in G \}. \] Then $g^\lambda_k = a^\lambda_k$ for any $p \in G$ with $(\lambda, k) \in \supp p_0$; %\{a^\sigma_i, \emptyset\}$, and we confuse $g^\sigma_i$ with $\bigcup g^\sigma_i \subseteq \kappa_\sigma$, 
and $g^\lambda_k$ hits any interval $[\kappa_{\nu, j}, \kappa_{\nu, j + 1}) \subseteq \kappa_\lambda$ in exactly one point. By the \textit{independence property}, it follows that $g^{\lambda_0}_{k_0} \, \cap \, g^{\lambda_1}_{k_1} = \emptyset$ whenever $(\lambda_0, k_0) \neq (\lambda_1, k_1)$.\\[-3mm]

For $\lambda \in Succ$, set \[G^\lambda := \{p^\lambda\ | \ p = (p_\ast, (p^\sigma_i, a^\sigma_i)_{\eta, i}, (p^\sigma)_\sigma) \in G\},\] and \[G^\lambda_k := \{ p^\lambda_k\ | \ p = (p_\ast, (p^\sigma_i, a^\sigma_i)_{\sigma, i}, (p^\sigma)_\sigma) \in G\}\] for any $k < \alpha_\lambda$. \\[-3mm]

Again, we confuse these filters $G^\lambda$, $G^\lambda_k$ with their union $\bigcup G^\lambda$, $\bigcup G^\lambda_k$. \\[-3mm]
%Then any

Let now $\xi \in [\kappa_{\nu, j}, \kappa_{\nu, j + 1})$. We denote by \[G_\ast (\xi) := \big\{\, q: [\kappa_{\nu, j}, \delta_{\nu, j}) \rightarrow 2\ \ \Big| \ \ \delta_{\nu, j} \in [\kappa_{\nu, j}, \kappa_{\nu, j + 1})\ \ , \ \  \exists\, p = (p_\ast, (p^\sigma_i, a^\sigma_i)_{\sigma, i}, (p^\sigma)_\sigma) \in G\ : \]\[ \ \forall\, \zeta \in \dom q\ \:q (\zeta) = p_\ast (\xi, \zeta)\, \big\} \]

%\[G_\ast (\xi) := \{ \, (\zeta, G_\ast (\xi, \zeta)) \ | \ \zeta \in [\kappa_{\nu, j}, \kappa_{\nu, j+1}) \, \}\] 

the $\xi$-th section of $G_\ast$. \\[-3mm]

If $a \subseteq \kappa_\gamma$ is a set that hits any interval $[\kappa_{\nu, j}, \kappa_{\nu, j+1}) \subseteq \kappa_\gamma$ in at most one point, we denote by $G_\ast (a)$ the set of all $q \in \ol{P}^\gamma$ such that there is $p \in G$ with $q \subseteq p_\ast (a)$.

%\begin{itemize} \item there is $p = (p_\ast, (p^\sigma_i, a^\sigma_i)_{\sigma, i}, (p^\sigma)_\sigma) \in G$ such that for all intervals $[\kappa_{\nu, j}, \kappa_{\nu, j + 1}) \subseteq \kappa_\gamma$ with $a\, \cap\, [\kappa_{\nu, j}, \kappa_{\nu, j + 1}) = \{ \xi\}$, it follows that $q (\zeta) = p_\ast (\xi, \zeta)$, \item for all intervals $[\kappa_{\nu, j}, \kappa_{\nu, j + 1})$ with $a\, \cap\, [\kappa_{\nu, j}, \kappa_{\nu, j + 1}) = \emptyset$, it follows that $\dom q\, \cap\, [\kappa_{\nu, j}, \kappa_{\nu, j + 1}) = \emptyset$. \end{itemize}

As before, we identify any $G_\ast (\xi)$ and $G_\ast (a)$ with their union $\bigcup G_\ast (\xi)$ and $\bigcup G_\ast (a)$, respectively. Then any $G_\ast (\xi)$ with $\xi \in [\kappa_{\nu j}, \kappa_{\nu, j + 1})$ can be regarded as a function $G_\ast (\xi): [\kappa_{\nu, j}, \kappa_{\nu, j + 1}) \rightarrow 2$, and any $G_\ast (a)$ becomes a function $G_\ast (a): dom\,G_\ast (a) \rightarrow 2$, where $dom \,G_\ast (a) \subseteq \kappa_\gamma$ is the union of those intervals $[\kappa_{\nu, j}, \kappa_{\nu, j + 1})$ with $a \, \cap\, [\kappa_{\nu, j}, \kappa_{\nu, j + 1}) \neq \emptyset$. \\

%$dom\, q \, \cap\, [\kappa_{\nu, j}, \kappa_{\nu, j + 1}) = \emptyset 

%\[G_\ast (a) := \big\{\, q \in \ol{P}^\gamma\ \ \Big|\ \ \exists\, p = (p_\ast, (p^\sigma_i, a^\sigma_i)_{\sigma, i}, (p^\sigma)_\sigma) \in G \ : \ \forall\, \kappa_{\nu, j} < \kappa_\gamma  \]

%\[G_\ast (a) := \big \{ \, (\zeta, G_\ast (\xi, \zeta))\ | \ \exists\, \kappa_{\nu, j} \ \big(\, \zeta \in [\kappa_{\nu, j}, \kappa_{\nu, j+1})\, \wedge\, a \,\cap \, [\kappa_{\nu, j}, \kappa_{\nu, j + 1}) = \{\xi\}\,\big)\, \big\}.\]

Now, the \textit{linking property} implies that any $G^\lambda_k \uhr [\kappa_{\nu, j}, \kappa_{\nu, j + 1})$ with $\lambda \in \Lim$, $k < \alpha_\lambda$, is eventually equal to $G_\ast (\xi)$, where $\{\xi \} := a^\lambda_k\, \cap \, [\kappa_{\nu, j}, \kappa_{\nu, j + 1})$. 

Indeed, the symmetric difference $G^\lambda_k \oplus G_\ast (g^\lambda_k)$ is always an element of the ground model $V$:
Take a condition $p \in G$ with $(\lambda, k) \in \supp p_0$, such that for any interval $[\kappa_{\nu, j}, \kappa_{\nu, j + 1}) \subseteq \kappa_\lambda$ with $\dom p_0\, \cap\, [\kappa_{\nu, j}, \kappa_{\nu, j + 1}) \neq \emptyset$ and $\{\xi\} := a^\lambda_k\, \cap \, [\kappa_{\nu, j}, \kappa_{\nu, j + 1})$, it follows that $\xi \in \dom p_0$. \big(This does not interfere with the condition that $\dom p_0$ has to be bounded below all regular $\kappa_{\ol{\nu}, \ol{\j}}$, since we do not bother the intervals $[\kappa_{\nu, j}, \kappa_{\nu, j + 1})$ with $\dom p_0\, \cap\, [\kappa_{\nu, j}, \kappa_{\nu, j + 1}) = \emptyset$.\big)\\[-3mm]

%\colorbox{red}{ACHTUNG - ist es in Ordnung, dass hier die Filter mit ihrer Vereinigung identifiziert werden?}

\begin{itemize} \item Firstly, $G^\lambda_k (\zeta) \, \oplus \, G_\ast (g^\lambda_k) (\zeta) = 0$ whenever $\zeta \notin \dom p_0$: Let $\zeta \in [\kappa_{\nu, j}, \kappa_{\nu, j + 1})$, $\zeta \notin \dom p$ with $\{ \xi\} := g^\lambda_k\, \cap\, [\kappa_{\nu, j}, \kappa_{\nu, j + 1}) = a^\lambda_k\, \cap\, [\kappa_{\nu, j}, \kappa_{\nu, j + 1})$. Take $q \in G$, $q \leq p$ with $\zeta \in \dom q_0$. Then by the \textit{linking property}, it follows that $\xi \in \dom q_0$ with $q^\lambda_k (\zeta) = q_\ast (\xi, \zeta)$.
%since $\zeta \in (\dom q \setminus \dom p)\, \cap\, [\kappa_{\nu, j}, \kappa_{\nu, j + 1})$ with $a^\lambda_k\, \cap\, [\kappa_{\nu, j}, \kappa_{\nu, j + 1}) = \{ \xi\}$. 
Hence, $G^\lambda_k (\zeta) = q^\lambda_k (\zeta) = q_\ast (\xi, \zeta) = G_\ast (g^\lambda_k) (\zeta)$, and $G^\lambda_k (\zeta) \oplus G_\ast (g^\lambda_k) (\zeta) = 0$. \item If $\zeta \in \dom p$ then $G^\lambda_k (\zeta) \oplus G_\ast (g^\lambda_k) (\zeta) = p^\lambda_k (\zeta) \oplus p_\ast (\xi, \zeta)$, where again, $\zeta \in [\kappa_{\nu, j}, \kappa_{\nu, j + 1})$ and $\{\xi\} := g^\lambda_k\, \cap\, [\kappa_{\nu, j}, \kappa_{\nu, j + 1}) = a^\lambda_k\, \cap\, [\kappa_{\nu, j}, \kappa_{\nu, j + 1})$. Here we use that for any interval $[\kappa_{\nu, j}, \kappa_{\nu, j + 1})$ with $\dom p\, \cap\, [\kappa_{\nu, j}, \kappa_{\nu, j + 1}) \neq \emptyset$, it follows that $a^\lambda_k\, \cap\, [\kappa_{\nu, j}, \kappa_{\nu, j + 1}) \subseteq \dom p_0$.
\end{itemize}

Hence, $G^\lambda_k \oplus G_\ast (g^\lambda_k)$ can be calculated in $V$. \\[-3mm]

%it follows that $G^\lambda_k (\zeta) = G_\ast (g^\lambda_k) (\zeta)$ whenever

This will be employed to keep control over the surjective size of $\powerset (\kappa_\lambda)$ in the eventual symmetric extension $N$. \\[-2mm]
% $\kappa_\lambda$-subsets make their way into the eventual model $N$. \\[-2mm]

%Now, for $0 < \eta \leq \gamma$ (with $\eta \in \Lim$ or $\eta \in \Succ$ or $\eta = \gamma$), we define a forcing $\ol{P}^\eta$ like $P^\eta$ is defined in the case that $\eta \in \Lim$: \\[-3mm]

%Let $\ol{P}^\eta$ consist of all functions $p: \dom p \rightarrow 2$ such that there is a sequence $(\delta_{\nu, j}\ | \ \nu < \eta, j < \cf \kappa_{\nu + 1})$ with $\delta_{\nu, j} \in [\kappa_{\nu, j}, \kappa_{\nu, j + 1})$ for all $\kappa_{\nu, j} < \kappa_\eta$, and \[\dom p = \bigcup_{\nu, j}\, [\kappa_{\nu, j}, \delta_{\nu, j})\] such that $|p \uhr \kappa_{\ol{\nu}, \ol{\j}}| < \kappa_{\ol{\nu}, \ol{\j}}$ whenever $\kappa_{\ol{\nu}, \ol{\j}}$ is a regular cardinal, and $|p| < \kappa_\eta$ in the case that $\kappa_\eta$ itself is regular. \\ For any $0 < \eta < \lambda$ with $\kappa_\lambda$ a limit cardinal, it follows that $\ol{P}^\eta = P^\lambda \uhr \kappa_\eta$. \\[-2mm]

%\colorbox{yellow}{TO DO: Durchgehen, wo der Zusatz $0 < ...$ fehlt!} \\[-2mm]

Now, we consider countable products $\prod_{m < \omega} P^{\sigma_m}$ and $\prod_{m < \omega} \ol{P}^{\sigma_m}$: 

\begin{definition} Let $( (\sigma_m, i_m)\ | \ m < \omega)$ be a sequence of pairwise distinct pairs with $0 < \sigma_m < \gamma$, $i < \alpha_{\sigma_m}$ for all $m < \omega$. We denote by $\prod_{m < \omega} P^{\sigma_m}$ the set of all $(p(m)\ | \ m < \omega)$ with $p(m) \in P^{\sigma_m}$ for all $m < \omega$ (with full support), and similarly, $\prod_{m < \omega} \ol{P}^{\sigma_m} := \big\{\, (p(m)\ | \ m < \omega)\ \big| \ \forall\, m < \omega\ \, p(m) \in \ol{P}^{\sigma_m}\, \big\}$. \end{definition}

For any interval $[\kappa_{\nu, j}, \kappa_{\nu, j + 1}) \subseteq \kappa_\gamma$, it follows that $\prod_{m < \omega} P^{\sigma_m}\, \uhr \, \kappa_{\nu, j}$ has cardinality $\leq \kappa_{\nu, j}$ in the case that $\kappa_{\nu, j}$ is regular, and cardinality $\leq \kappa_{\nu, j}^\plus$, else. Moreover, $\prod_{m < \omega} P^{\sigma_m}\, \uhr\, [\kappa_{\nu, j}, \kappa_{\sigma_m})$ is $\leq \kappa_{\nu, j}$-closed. Hence, as in Lemma \ref{prescard1} and Corollary \ref{prescof}, one can show that the product $\prod_{m < \omega} P^{\sigma_m}$ preserves cardinals, cofinalities and the $GCH$. 

Similarly, $\prod_{m < \omega} \ol{P}^{\sigma_m}$ preserves cardinals, cofinalities and the $GCH$. \\[-2mm]

The next lemma implies that countable products $\prod_{m < \omega} G_\ast (g^{\sigma_m}_{i_m})$ 
%for $((\sigma_m, i_m)\ | \ m < \omega)$ a sequence of pairwise distinct indices, 
are $V$-generic over $\prod_{m < \omega} \ol{P}^{\sigma_m}$:

%a rather \tbl mild\tbr\, forcing notion (cf. Lemma \ref{prescard1}):

%\colorbox{red}{ACHTUNG - \tbl product\tbr\,bedeutet doch mit endlichen Support!!}

\begin{lem} \label{generic} Consider a sequence $(a_m\ | \ m < \omega)$ of pairwise disjoint sets such that for all $m < \omega$, the following holds: $a_m$ is a subset of $\kappa_{\sigma_m}$ for some $0 < \sigma_m < \gamma$, and for all $\kappa_{\nu, j} < \kappa_{\sigma_m}$, it follows that $|a_m \, \cap \, [\kappa_{\nu, j}, \kappa_{\nu, j+1})| = 1$, i.e. $a_m$ hits every interval $[\kappa_{\nu, j}, \kappa_{\nu, j+1}) \subseteq \kappa_{\sigma_m}$ in exactly one point. 
%Furthermore, assume $a_m \, \cap \, a_{k^\prime} = \emptyset$ whenever $k \neq k^\prime$. 
Then $\prod_{m < \omega} G_\ast (a_m) := \big\{ \, (p(m)\ | \ m < \omega)\ \big| \ \forall\, m < \omega\ \,p(m) \in G_\ast (a_m)\, \big\}$ is a $V$-generic filter on $\prod_{m < \omega} \ol{P}^{\sigma_m}$. \end{lem}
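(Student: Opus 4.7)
My plan is to reduce $V$-genericity of $F := \prod_{m < \omega} G_\ast(a_m)$ over $\prod_{m < \omega} \ol{P}^{\sigma_m}$ to $V$-genericity of $G_\ast$ over $P_\ast$, via a projection map. The filter property of $F$ is immediate: each factor $G_\ast(a_m)$ is upward closed in $\ol{P}^{\sigma_m}$, and any two of its elements have a common lower bound because $G_\ast(a_m)$ consists of the initial segments of the single partial function $\bigcup G_\ast(a_m) : \kappa_{\sigma_m} \to 2$; upward closure and compatibility in the product then hold coordinatewise. Now define the map $\pi : P_\ast \to \prod_{m < \omega} \ol{P}^{\sigma_m}$ by $\pi(p_\ast) := (p_\ast(a_m))_{m < \omega}$. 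The hypothesis that $a_m$ meets every interval $[\kappa_{\nu, j}, \kappa_{\nu, j+1}) \subseteq \kappa_{\sigma_m}$ in exactly one point ensures that each $p_\ast(a_m)$ is a function with domain of the form $\bigcup_{(\nu, j)} [\kappa_{\nu, j}, \delta_{\nu, j})$ needed for membership in $\ol{P}^{\sigma_m}$, and $\pi$ is manifestly order-preserving.

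The central step is to show that $\pi$ is a projection, in the sense that for every $p_\ast \in P_\ast$ and every $q = (q(m))_{m < \omega} \leq \pi(p_\ast)$ in the product, there exists $p_\ast' \leq p_\ast$ in $P_\ast$ with $\pi(p_\ast') \leq q$. To build such a $p_\ast'$, write $\dom p_\ast = \bigcup_{(\nu, j)} [\kappa_{\nu, j}, \delta_{\nu, j})^2$ and $\dom q(m) = \bigcup_{(\nu, j)} [\kappa_{\nu, j}, \delta_{\nu, j}^m)$, and put $\epsilon_{\nu, j} := \sup \bigl(\{\delta_{\nu, j}\} \cup \{\delta_{\nu, j}^m : m < \omega,\ \kappa_{\nu, j} < \kappa_{\sigma_m}\}\bigr)$. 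Since every $\kappa_{\nu, j+1}$ is a successor cardinal and hence regular uncountable, this countable supremum satisfies $\epsilon_{\nu, j} < \kappa_{\nu, j+1}$. Set $\dom p_\ast' := \bigcup_{(\nu, j)} [\kappa_{\nu, j}, \epsilon_{\nu, j})^2$, let $p_\ast'$ agree with $p_\ast$ on $\dom p_\ast$, and for $(\xi, \zeta) \in \dom p_\ast' \setminus \dom p_\ast$ define $p_\ast'(\xi, \zeta) := q(m)(\zeta)$ if $\xi \in a_m$ for some (necessarily unique by disjointness) $m$ and $\zeta \in \dom q(m)$, and $p_\ast'(\xi, \zeta) := 0$ otherwise. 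Consistency on $\dom p_\ast$ is guaranteed by $q(m) \supseteq p_\ast(a_m)$, and by construction $p_\ast'(a_m) \supseteq q(m)$ for each $m$, so $\pi(p_\ast') \leq q$. Membership $p_\ast' \in P_\ast$ requires the cardinality bound $|\dom p_\ast' \cap \kappa_{\nu, j}^2| < \kappa_{\nu, j}$ for regular $\kappa_{\nu, j}$ (and $|\dom p_\ast'| < \kappa_\gamma$ when $\kappa_\gamma$ is regular); this follows by standard Easton bookkeeping, because $p_\ast$ already satisfies the bound and the enlargement at each $(\nu', j')$ is by a set of size at most $|\epsilon_{\nu', j'}|^2 < \kappa_{\nu', j'+1}$, so the sum over the $(\nu', j')$ contributing below any regular $\kappa_{\nu, j}$ stays strictly below $\kappa_{\nu, j}$.

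Granted the projection property, genericity of $F$ is routine. Let $D \in V$ be dense open in $\prod_{m < \omega} \ol{P}^{\sigma_m}$, and set $D^\ast := \pi^{-1}[D] \in V$. Given any $p_\ast \in P_\ast$, choose $q \leq \pi(p_\ast)$ with $q \in D$ by density, then choose $p_\ast' \leq p_\ast$ with $\pi(p_\ast') \leq q$ by the projection property; openness of $D$ gives $\pi(p_\ast') \in D$ and so $p_\ast' \in D^\ast$. Thus $D^\ast$ is dense in $P_\ast$, and $V$-genericity of $G_\ast$ yields some $p_\ast \in G_\ast \cap D^\ast$, whose image $\pi(p_\ast) = (p_\ast(a_m))_{m < \omega}$ lies in $F \cap D$. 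I expect the main obstacle to be the technical domain bookkeeping in the construction of $p_\ast'$, which is handled by the countable-support structure together with the uncountable cofinality of every $\kappa_{\nu, j+1}$; everything else is a standard projection-of-generics argument.
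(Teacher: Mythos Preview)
Your projection approach is reasonable, but it rests on a fact you have not justified: in the last paragraph you invoke ``$V$-genericity of $G_\ast$'' over $P_\ast$, yet nowhere in the paper is $G_\ast$ shown to be a $V$-generic filter on $P_\ast$. What is given is that $G$ is $V$-generic on the full forcing $\m{P}$. Proving that $G_\ast$ is generic on $P_\ast$ would require showing that $p \mapsto p_\ast$ is a projection $\m{P} \to P_\ast$, i.e.\ that any $p \in \m{P}$ can be extended to some $\ol p \leq p$ with $\ol p_\ast$ below a prescribed $q_\ast \leq p_\ast$; and that extension must respect the linking property for every $(\sigma,i) \in \supp p_0$, which forces you to enlarge the domain so that all linking ordinals $a^\sigma_i \cap [\kappa_{\nu,j}, \kappa_{\nu,j+1})$ land inside it. That is exactly the construction the paper carries out directly, working in $\m{P}$ rather than $P_\ast$: it shows $\ol D = \{p \in \m{P} : (p_\ast(a_m))_m \in D\}$ is dense in $\m{P}$ and then uses genericity of $G$. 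Your factoring through $P_\ast$ does not avoid this work; it only relocates it to a lemma you have not proved.

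There is also a gap in your construction of $p_\ast'$. Your $\epsilon_{\nu,j}$ bounds $\delta_{\nu,j}$ and the $\delta_{\nu,j}^m$, but not the selector $\xi_m \in a_m \cap [\kappa_{\nu,j}, \kappa_{\nu,j+1})$. If $\xi_m \geq \epsilon_{\nu,j}$ then $(\xi_m, \zeta) \notin \dom p_\ast' = \bigcup_{\nu,j} [\kappa_{\nu,j}, \epsilon_{\nu,j})^2$ even when $\zeta \in \dom q(m)$, so $p_\ast'(a_m) \supseteq q(m)$ fails and $\pi$ is not a projection. The paper handles this explicitly by choosing the new $\delta_{\nu,j}$ large enough that $a_m \cap [\kappa_{\nu,j}, \kappa_{\nu,j+1}) \subseteq [\kappa_{\nu,j}, \delta_{\nu,j})$ for all $m$ (and likewise for the $a^\sigma_i$, which it needs for the linking property). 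The fix is easy---throw the countably many $\xi_m$ into the supremum defining $\epsilon_{\nu,j}$, which stays below $\kappa_{\nu,j+1}$ by regularity---but as written the argument is incomplete.
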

%The following lemma states that countable products $G_\ast (a_0)\, \times\, \cdots\, \times \, G_\ast (a_{n-1})

\begin{proof} Let $D \subseteq \prod_{m < \omega} P^{\sigma_m}$ be an open dense set in $V$. We show that $D \, \cap \, \prod_{m < \omega} G_\ast (a_m) \neq \emptyset$. Let \[\overline{D} := \big\{p = (p_\ast, (p^\sigma_i, a^\sigma_i)_{\sigma, i}, (p^\sigma)_\sigma) \in \m{P}\ \big| \ (p_\ast (a_m)\ | \ m < \omega) \in D \big\}.\] It suffices to prove that $\ol{D}$ is dense in $\m{P}$. Assume $p = (p_\ast, (p^\sigma_i, a^\sigma_i)_{\sigma, i}, (p^\sigma)_\sigma) \in \m{P}$ given, and denote by $(q_m\ | \ m < \omega)$ an extension of $(p_\ast (a_m)\ | \ m < \omega)$ in $D$. We have to construct $\ol{p} \leq p$ such that $\ol{p}_\ast (a_m) \supseteq q_m$ for all $m < \omega$. 

Consider an interval $[\kappa_{\nu, j}, \kappa_{\nu, j + 1}) \subseteq \kappa_\gamma$. In the case that $\big(\dom p \, \cup\, \bigcup_{m < \omega} \dom q_m\big)\, \cap \, [\kappa_{\nu, j}, \kappa_{\nu, j+1}) = \emptyset$, let $\delta_{\nu, j} := \kappa_{\nu, j}$. Otherwise, we pick $\delta_{\nu, j} \in [\kappa_{\nu, j}, \kappa_{\nu, j + 1})$ such that firstly, \[\big( \dom p \, \cup \, \bigcup_{m < \omega} \dom q_m \big)\, \cap \, [\kappa_{\nu, j}, \kappa_{\nu, j + 1}) \subseteq [\kappa_{\nu, j}, \delta_{\nu, j});\] secondly, for all $m < \omega$, it follows that $a_m \, \cap \, [\kappa_{\nu, j}, \kappa_{\nu, j + 1}) \subseteq [\kappa_{\nu, j}, \delta_{\nu, j})$; and thirdly, $a^\sigma_i\, \cap \, [\kappa_{\nu, j}, \kappa_{\nu, j + 1}) \subseteq [\kappa_{\nu, j}, \delta_{\nu, j})$ for all $(\sigma, i) \in \supp p$. This is possible, since the sets $a_m\, \cap\, [\kappa_{\nu, j}, \kappa_{\nu, j + 1})$ and $a^\sigma_i\, \cap \, [\kappa_{\nu, j}, \kappa_{\nu, j + 1})$ are singletons or empty, all the domains $\dom p \, \cap\, [\kappa_{\nu, j}, \kappa_{\nu, j + 1})$ and $\dom q_m \, \cap \, [\kappa_{\nu, j}, \kappa_{\nu, j + 1})$ for $m < \omega$ are bounded below $\kappa_{\nu, j + 1}$, and $\kappa_{\nu, j + 1}$ is always a successor cardinal. \\Let \[ \dom \ol{p}_0 := \bigcup_{\nu, j} \,[\kappa_{\nu, j}, \delta_{\nu, j}).\] Then $\dom \ol{p}_0$ is bounded below all regular $\kappa_{\ol{\nu}, \ol{\j}}$, since this holds true for $\dom p$ and $\bigcup_{m < \omega} \dom q_m$. We define $\ol{p}_\ast$ on $\bigcup_{\nu, j} [\kappa_{\nu, j}, \delta_{\nu, j})^2$ as follows: Consider an interval $[\kappa_{\nu, j}, \delta_{\nu, j}) \neq \emptyset$ and $\xi$, $\zeta \in [\kappa_{\nu, j}, \delta_{\nu, j})$.  For $(\xi, \zeta) \in \dom p\, \times\, \dom p$, let $\ol{p}_\ast (\xi, \zeta) := p_\ast (\xi, \zeta)$. If $\{\xi \} = a_m\, \cap \, [\kappa_{\nu, j}, \kappa_{\nu, j + 1})$ for some $ m < \omega$ and $\zeta \in \dom q_m$, we set $\ol{p}_\ast (\xi, \zeta) := q_m (\zeta)$. This is not a contradiction towards $\ol{p}_\ast \, \uhr\, [\kappa_{\nu, j}, \delta_{\nu, j})^2 \supseteq p_\ast\, \uhr\, [\kappa_{\nu, j}, \delta_{\nu, j})^2$, since $q_m \supseteq p_\ast (a_m)$ for all $m < \omega$. Also, the $a_m$ are pairwise disjoint, so for any $\xi \in [\kappa_{\nu, j}, \delta_{\nu, j})$, there is at most one $m$ with $\xi \in a_m$. For all the remaining $(\xi, \zeta) \in \dom \ol{p}_\ast$, we can set $\ol{p}_\ast (\xi, \zeta) \in \{0, 1\}$ arbitrarily. This defines $\ol{p}_\ast$ on $\bigcup_{\nu, j} [\kappa_{\nu, j}, \delta_{\nu, j})^2$. \\[-3mm]

For all $(\sigma, i) \in \supp \ol{p}_0 := \supp p_0$, we set $\ol{a}^\sigma_i := a^\sigma_i$, and define $\ol{p}^\sigma_i \supseteq p^\sigma_i$ on the corresponding domain $\bigcup_{\kappa_{\nu, j} < \kappa_\sigma}  [\kappa_{\nu, j}, \delta_{\nu, j})$ according to the \textit{linking property}: Whenever $\zeta \in (\dom \ol{p}_0 \setminus \dom p)\, \cap \, [\kappa_{\nu, j}, \kappa_{\nu, j + 1})$ and $a^\sigma_i\, \cap \, [\kappa_{\nu, j}, \kappa_{\nu, j + 1}) = \{ \xi \}$, then $\xi \in \dom \ol{p}_0$ follows, so we can set $\ol{p}^\sigma_i (\zeta) := \ol{p}_\ast (\xi, \zeta)$. For the $\zeta \in \dom \ol{p}^\sigma_i \setminus \dom p^\sigma_i$ remaining, we can define $\ol{p}^\sigma_i (\zeta)$ arbitrarily. This completes the construction of $\ol{p}_0$. \\[-3mm]

%ACHTUNG - man müsste UMBENBENNEN!
Let $\ol{p}_1 := p_1$. It is not difficult to check that $\ol{p} \leq p$ indeed is a condition in $\m{P}$ with $\ol{p}_\ast (a_m) \supseteq q_m$ for all $m < \omega$. Hence, $(\ol{p}_\ast (a_m)\ | \ m < \omega) \in D$, and $\ol{p} \in \ol{D}$ as desired. \end{proof}

In particular, for $((\sigma_m, i_m)\ | \ m < \omega)$ a sequence of pairwise distinct pairs as before with $\sigma_m \in \Lim$, $i_m < \alpha_{\sigma_m}$ for all $m < \omega$,
%with $0 < \sigma_m < \gamma$, $i_m < \alpha_{\sigma_m}$ for all $m < \omega$, and $(\sigma_m, i_m) \neq (\sigma_{m^\prime}, i_{m^\prime})$ whenever $m \neq m^\prime$; 
it follows that $\prod_{m < \omega} G_\ast (g^{\sigma_m}_{i_m})$ 
%for $((\sigma_m, i_m)\ | \ m < \omega)$ a sequence of pairwise distinct indices, 
is a  $V$-generic filter over $\prod_{m < \omega} \ol{P}^{\sigma_m}$. \\[-3mm]

Similarly, one can show:

\begin{lem} \label{generic2} Let $((\sigma_m, i_m)\ | \ m < \omega)$ denote a sequence of pairwise distinct pairs with $ 0 < \sigma_m < \gamma$, $i_m < \alpha_{\sigma_m}$ for all $m < \omega$. Then $\prod_{m < \omega} G^{\sigma_m}_{i_m} := \big\{ \, (p(m)\ | \ m < \omega)\ \big| \ \forall\, m < \omega \ \, p(m) \in G^{\sigma_m}_{i_m}\,\big\}$ is a $V$-generic filter on $\prod_{m < \omega} P^{\sigma_m}$. \end{lem}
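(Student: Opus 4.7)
The plan is to mimic the proof of Lemma \ref{generic} almost verbatim. Given an open dense set $D \subseteq \prod_{m < \omega} P^{\sigma_m}$ in $V$, I would define
\[\overline{D} := \big\{p = (p_\ast, (p^\sigma_i, a^\sigma_i)_{\sigma, i}, (p^\sigma)_\sigma) \in \m{P}\ \big| \ (p^{\sigma_m}_{i_m}\ | \ m < \omega) \in D\big\},\]
where for $\sigma_m \in \Lim$ the component $p^{\sigma_m}_{i_m}$ is read off the $\m{P}_0$-part and for $\sigma_m \in \Succ$ it is the $i_m$-th slice of the $\m{P}_1$-component. Once $\overline{D}$ is shown to be dense in $\m{P}$, the generic filter $G$ meets $\overline{D}$, yielding an element of $D\, \cap\, \prod_{m < \omega} G^{\sigma_m}_{i_m}$; that the latter is a filter on $\prod_{m < \omega} P^{\sigma_m}$ is immediate from the definitions of the $G^{\sigma_m}_{i_m}$.

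For density, fix $p \in \m{P}$ and use the density of $D$ to pick $(q_m\ | \ m < \omega) \in D$ with $q_m \supseteq p^{\sigma_m}_{i_m}$ for every $m$. Exactly as in Lemma \ref{generic}, I would first choose $\delta_{\nu, j} \in [\kappa_{\nu, j}, \kappa_{\nu, j + 1})$ large enough to contain $\dom p$, all $\dom q_m$, and all $a^\sigma_i\, \cap\, [\kappa_{\nu, j}, \kappa_{\nu, j + 1})$ for $(\sigma, i) \in \supp p_0$, and set $\dom \overline{p}_0 := \bigcup_{\nu, j}\, [\kappa_{\nu, j}, \delta_{\nu, j})$. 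For every $m$ with $\sigma_m \in \Lim$ and $(\sigma_m, i_m) \notin \supp p_0$, I would add $(\sigma_m, i_m)$ to the support and pick a linking set $\overline{a}^{\sigma_m}_{i_m}$ hitting each $[\kappa_{\nu, j}, \kappa_{\nu, j + 1}) \subseteq \kappa_{\sigma_m}$ in a single point of $[\kappa_{\nu, j}, \delta_{\nu, j})$ that avoids the countably many linking singletons previously chosen; there is enough room because each interval has length $\kappa_{\nu, j + 1} \geq \kappa_{\nu, j}^{\plus \plus}$. Then I would define $\overline{p}_\ast \supseteq p_\ast$ by setting $\overline{p}_\ast(\xi, \zeta) := q_m(\zeta)$ whenever $\{\xi\} = \overline{a}^{\sigma_m}_{i_m}\, \cap\, [\kappa_{\nu, j}, \kappa_{\nu, j + 1})$ with $\sigma_m \in \Lim$ and $\zeta \in \dom q_m$, filling in the rest of $\dom \overline{p}_\ast$ arbitrarily; this is consistent by the pairwise disjointness of the $\overline{a}^{\sigma_m}_{i_m}$ (independence property) and agrees with $p_\ast$ where both are defined, since $q_m \supseteq p^{\sigma_m}_{i_m}$ and $p$ already satisfies the linking property. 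For $\sigma_m \in \Lim$, set $\overline{p}^{\sigma_m}_{i_m} := q_m$ on $\dom q_m$ and extend to the remainder of $\dom \overline{p}_0\, \cap\, \kappa_{\sigma_m}$ via the linking property; treat all other $(\sigma, i) \in \supp p_0$ the same way. For $\sigma_m \in \Succ$, simply put $\overline{p}^{\sigma_m}(i_m, \zeta) := q_m(\zeta)$ for $\zeta \in \dom q_m$; distinct pairs $(\sigma_m, i_m)$ index distinct slots of $\overline{p}^{\sigma_m}$, so no conflict arises. Finally, $\overline{p}_1$ is extended coordinatewise to accommodate the $\sigma_m \in \Succ$ values.

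The main obstacle is purely the simultaneous bookkeeping: keeping $\supp \overline{p}_0$ countable (automatic, as only countably many pairs are adjoined), preserving the independence property for the extended linking sets (a straightforward cardinality argument), and verifying that the value assignments for $\overline{p}_\ast$, the $\overline{p}^\sigma_i$ and the $\overline{p}^\sigma$ are pairwise compatible — all of which reduce to the disjointness of the linking sets together with the pairwise distinctness of the $(\sigma_m, i_m)$. Once $\overline{p} \leq p$ with $\overline{p} \in \overline{D}$ is produced, density of $\overline{D}$ follows, and the lemma is established.
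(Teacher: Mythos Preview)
Your outline is correct and matches the approach the paper intends (it simply writes ``Similarly, one can show'' after Lemma~\ref{generic}). One minor point: the phrase ``$p$ already satisfies the linking property'' is not quite right --- a single condition carries no such constraint relating $p_\ast(\xi,\zeta)$ to $p^{\sigma_m}_{i_m}(\zeta)$ --- but this is harmless, since the linking property for $\ol{p}\leq p$ only concerns $\zeta\in\dom\ol{p}_0\setminus\dom p_0$, and for such $\zeta$ the pair $(\xi,\zeta)$ lies outside $\dom p_\ast$, so your assignment $\ol{p}_\ast(\xi,\zeta):=q_m(\zeta)$ never collides with $p_\ast$.
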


%The proof is similar as for Lemma \ref{generic}. 
%\colorbox{yellow} {FRAGE: Sollte man das genauer erklären? Erwähnen, dass man für die Erweiterungen} \\ \colorbox{yellow}{Freiheit in der $\ast$-Komponente hätte?}

\section{Symmetric names.} \label{symmetric names}

\subsection{Outline.} \label{outline}

For constructing our symmetric submodel $N$, we will define a group $A$ of $\m{P}$-automorphisms and a normal filter $\mathcal{F}$ on $A$. More accurately: In our setting, an automorphism $\pi \in A$ will not be defined on $\m{P}$ itself, but only on a dense subset $D_\pi \subseteq \m{P}$. We call such $\pi: D_\pi \rightarrow D_\pi$ a \textit{partial automorphism}. Hence, the set $A$ is not quite a group, but has a very similar structure:

For any $\pi$, $\sigma \in A$ with $\pi: D_\pi \rightarrow D_\pi$, $\sigma: D_\sigma \rightarrow D_\sigma$ and $p \in D_\pi\, \cap\, D_\sigma$, the image $\sigma (p)$ will be an element of $D_\pi\, \cap\, D_\sigma$ as well; and $A$ will contain a map $\nu: D_\nu \rightarrow D_\nu$ such that $D_\nu = D_\pi\, \cap\, D_\sigma$ and $\nu = \pi\, \circ\, \sigma$ on $D_\nu$. (We will call $\nu$ the \textit{concatenation} $\pi\, \circ\, \sigma$.)

Moreover, for any $\pi \in A$, there will be a map $\nu$ in $A$ with $D_\nu = D_\pi$ such that $\pi\, \circ\, \nu = \nu \, \circ\, \pi = \id_{D_\nu} = \id_{D_\pi}$ is the identity on $D_\nu = D_\pi$. (We call $\nu$ the \textit{inverse} $\pi^{-1}$.)

There will also be an \textit{identity element} $id \in A$, which is the identity map on its domain $D_{id}$, where $D_{id} \supseteq D_{\pi}$ for all $\pi \in A$. \\[-3mm]
% = \m{P}$. \\[-3mm]
%$id \,p = p$ for all $p \in \m{P}$. \\[-2mm]
%$id: \m{P} \rightarrow \m{P}$ 
%will the identity map on $\m{P}$. \\[-2mm]

%there will be a map $\nu \in D$
%The inverse maps will 
%be defined on the same dense subset 
%have the same domain $D_{\pi^{-1}} = D_\pi$; and for $\pi$, $\sigma \in A$, the concatenation $\pi \, \circ \, \sigma$ will be defined on $D_{\pi\, \circ\, \sigma} = D_\pi\, \cap\, D_\sigma$. The identity map $id_{\m{P}}: \m{P} \rightarrow \m{P}$ is defined on $D_{\id} = \m{P}$. \\

This does not quite give a group structure: For instance, for any $\pi \in A$, the concatenation $\pi\, \circ\, \pi^{-1} = \pi^{-1}\, \circ\, \pi = \id_{D_\pi}$ is not really the identity element $id$, which usually has a larger domain $D_{id}$. \\[-2mm]
%, since it would n ot be defined on the entire forcing $\m{P}$. \\[-2mm]

%give the identity map $\id_{D_{\pi}}$ on $D_\pi = D_{\pi^{-1}}$ instead of the the identity map $\id_{\m{P}}$ on the entire forcing $\m{P}$.

%We do not really and obtain a 
%Hence, this does not really give group of $\m{P}$-automorphisms, but a similar structure that we call a \textit{partial automorphism group for $\m{P}$}. \\ 
% denoted by $A$. \
%\colorbox{red}{ACHTUNG - das Wort \tbl Gruppe\tbr\,sollte wahrscheinlich nicht im Begriff vorkommen!} \\
%\colorbox{red} {FRAGE: Begriff?} \\[-3mm]

In this setting, the standard approach would be using Boolean-valued models for the construction of the symmetric submodel $N$: Any automorphism $\pi: D_\pi \rightarrow D_\pi$ can be uniquely extended to an automorphism of the complete Boolean algebra $B(\m{P})$, and thereby induces an automorphism of the Boolean valued model $V^{B(\m{P})}$. Then one can consider the group consisting of these extended automorphisms, define a normal filter and construct the corresponding symmetric submodel as described in \cite[Chapter 5]{Jech2}.

%\colorbox{yellow}{TO DO: NACHSEHEN bei Jech zur dieser Konstruktion! Kapitel nachsehen!} \\[-3mm]

%In our presentation, 
We try to avoid Boolean valued models here, and work with partial orders and automorphisms $\pi: D_{\pi} \rightarrow D_{\pi}$ on dense subsets $D_\pi \subseteq \m{P}$ instead. \\[-3mm]
%We call such $\pi: D_\pi \rightarrow D_\pi$ a \textit{partial automorphism} of $\m{P}$. 

%\colorbox{yellow}{Doppelt definiert.}\\[-2mm]

We will have a collection $\mathcal{D}$ of dense subsets $D \subseteq \m{P}$ with certain properties, and a collection $A$ of \textit{partial automorphisms} $\pi: D_\pi \rightarrow D_\pi$ with $D_{\pi} \in \mathcal{D}$ for any $\pi \in A$. Whenever $D \in \mathcal{D}$ is fixed, the automorphisms $\{\pi \in A\ | \ D_\pi = D\}$ will form a group that we denote by $A_D$. Moreover, for any $D$, $D^\prime \in \mathcal{D}$ with $D \subseteq D^\prime$ and $\pi \in A_{D^\prime}$, we will have $\pi [D] = D$, and the restriction $\pi \, \uhr\, D$ is an element of $A_D$. Hence, there are canonical homomorphisms $\phi_{D^\prime D}: A_{D^\prime} \rightarrow A_D$, $\pi \mapsto \pi\, \uhr\, D$ for any $D$, $D^\prime \in \mathcal{D}$ with $D \subseteq D^\prime$. This gives a directed system, and we can take the direct limit \[\ol{A} := \lim\limits_{\longrightarrow}{A_D} = \bigsqcup A_D \big/ \sim\] with the following equivalence relation \tbl $\sim$\tbr: Whenever $\pi \in A_D$ and $\pi^\prime \in A_{D^\prime}$, then $\pi \sim \pi^\prime$ iff there exists $D^{\prime \prime} \in A$, $D^{\prime \prime} \subseteq D\, \cap \, D^\prime$, such that $\pi$ and $\pi^\prime$ agree on $D^{\prime \prime}$. Since for any $D$, $D^\prime \in A$, the intersection $D\, \cap\, D^\prime$ will be contained in $\mathcal{D}$ as well, and $\m{P}$ is separative, this will be the case if and only if $\pi$ and $\pi^\prime$ agree on the intersection $D\, \cap\, D^\prime$. \\[-3mm]

For $\pi \in A$, we denote by $[\pi]$ its equivalence class: \[[\pi] := \{ \sigma \in A\ | \ \sigma \sim \pi\} = \{ \sigma \in A\ | \ \pi \, \uhr\, (D_\pi\, \cap\, D_\sigma) = \sigma\, \uhr\, (D_\pi\, \cap\, D_\sigma)\}.\] Then $\ol{A} = \big\{ \, [\pi]\ | \ \pi \in A\, \big\}$ becomes a group as follows: For $\pi$, $\sigma \in A$, let $[\pi]\, \circ\, [\sigma] := [\nu]$, where $\nu \in A$ with $D_\nu = D_\pi\, \cap\, D_\sigma$ and $\nu (p) = \pi (\sigma (p))$ for all $p \in D_\pi\, \cap\, D_\sigma$. Such $\nu$ will always exists by our construction of $A$, and $[\nu]$ is well-defined: If $[\pi] = [\pi^\prime]$, $[\sigma] = [\sigma^\prime]$ and $\nu$, $\nu^\prime$ as above, then for all $p \in (D_\pi\, \cap\, D_\sigma)\, \cap (D_{\pi^\prime}\, \cap\, D_{\sigma^\prime})$, it follows that $\nu (p) = \pi(\sigma(p)) = \pi^\prime(\sigma^\prime(p)) = \nu^\prime (p)$. Hence, $[\nu] = [\nu^\prime]$. \\[-3mm]

The identity element $id$ is the identity map on its domain $D_{id} \in \mathcal{D}$, with $D_{id} \supseteq D_\pi$ for all $\pi \in A$ (then $[\pi]\, \circ\, [\id] = [\id]\, \circ\, [\pi] = [\pi]$ for all $\pi \in A$ follows). \\[-3mm]

Finally, for $\pi \in A$, let $[\pi]^{-1} := [\nu]$, where $\nu \in A$ with $D_{\nu} = D_{\pi}$ and $\nu = \pi^{-1}$ on $D_\pi$, i.e. $\nu (\pi (p)) = \pi (\nu (p)) = p$ for all $p \in D_\pi$. Again, such $\nu$ will always exists by our construction of $A$, and $[\nu]$ is well-defined: Whenever $[\pi] = [\pi^\prime]$ and $\nu$, $\nu^\prime$ as above, it follows that $\nu (p) = \nu^\prime (p)$ must hold for all $p \in D_\pi\, \cap\, D_{\pi^\prime} = D_\nu\, \cap\, D_{\nu^\prime}$; hence, $[\nu] = [\nu^\prime]$. 
Moreover, $[\pi]\, \circ\, [\nu] = [\nu]\, \circ\, [\pi] = [\id_{D_\pi}] = [\id]$. \\[-2mm]

Hence, $\ol{A}$ is a group. Later on, we will define a collection of $\ol{A}$-subgroups generating a normal filter $\mathcal{F}$ on $\ol{A}$, giving rise to our notion of symmetry. \\[-2mm]

However, we first have to extend our partial automorphisms $\pi \in A$ to the name space $Name (\m{P})$. \\[-2mm]

%The partial automorphisms $\pi \in A$ can be extended to the name space $Name(\m{P})$ as follows: \\[-2mm]

For any $D \in \mathcal{D}$, we define a hierarchy $\ol{\Name_\alpha(\m{P})}^D$ recursively:\begin{itemize} \item $\ol{\Name_0(\m{P})}^D := \emptyset$ \item $\ol{\Name_{\alpha + 1} (\m{P})}^D := \{\dot{x} \in \Name (\m{P})\ | \ \dot{x} \subseteq \ol{\Name_\alpha(\m{P})}^D\, \times \, D \}$, and \item $\ol{\Name_\lambda(\m{P})}^D := \bigcup_{\alpha < \lambda} \ol{\Name_\alpha(\m{P})}^D$ for $\lambda$ a limit ordinal. \end{itemize} 

Let \[\ol{\Name(\m{P})}^D := \bigcup_{\alpha \in \Ord} \ol{\Name_\alpha(\m{P})}^D.\]

\vspace*{2mm}

In other words: $\ol{\Name(\m{P})}^D$ is the collection of all $\m{P}$-names $\dot{x}$ where only conditions $p \in D$ occur. \\[-2mm]

Whenever $\pi \in A$, ${\pi}: D_{\pi} \rightarrow D_{\pi}$, then the image $\pi \dot{x}$ can be defined es as usual as long as $\dot{x} \in \ol{\Name(\m{P})}^{D_\pi}$.
%, we can take the image ${\pi} \dot{x}$ as usual. 
In the case that $\dot{x}$ is a $\m{P}$-name with $\dot{x} \notin \ol{\Name(\m{P})}^{D_\pi}$, however, it is not clear how to apply $\pi$, so the name $\dot{x}$ has to be modified. \\[-2mm]

%to obtain $\ol{\dot{x}}^{D_{\pi}} \in \ol{\Name (\m{P})}^{D_\pi}$
%$: \\[-2mm]

Given $D \in \mathcal{D}$, we define recursively for $\dot{x} \in \Name (\m{P})$: 
\[ \ol{x}^D := \{ (\ol{y}^D, p)\ | \ \dot{y} \in \dom \dot{x}\, , \, p \in D\, , \, p \Vdash \dot{y} \in \dot{x} \}. \]

Then $\ol{x}^D \in \ol{\Name (\m{P})}^D$ with $\dot{x}^G = (\ol{x}^{D})^G$ for any $G$ a $V$-generic filter on $\m{P}$. \\[-2mm]

We will call a $\m{P}$-name $\dot{x}$ \textit{symmetric} if the collection of all $[\pi]$ with $\pi \ol{x}^{D_\pi} = \ol{x}^{D_\pi}$ is contained in our normal filter $\mathcal{F}$. 

Hereby, we have to make sure that this definition does not depend on which representative of $[\pi]$ we choose: In Lemma \ref{pisimpiprime} and \ref{pisimpiprime2}, we prove that whenever $\pi$ and $\pi^\prime$ belong to the same equivalence class $[\pi] = [\pi^\prime]$, then $\pi \ol{x}^{D_\pi} = \ol{x}^{D_\pi}$ holds if and only if $\pi^\prime \, \ol{x}^{D_{\pi^\prime}} = \ol{x}^{D_{\pi^\prime}}$. \\[-3mm]

%\colorbox{yellow}{FRAGE: $\ol{x}^{D_\pi}$ oder $\ol{\dot{x}}^{D_\pi}$ schreiben?} 

%\colorbox{red}{TO DO: Nachprüfen, dass in der Def. von Symmetrie der Repräsentant keine Rolle spielt!}

%\colorbox{yellow}{Zitat oben?}

%\colorbox{red}{TO DO: Eigenschaften} \begin{itemize} \item $\ol{\ol{x}^{D_\pi}}^{D_\sigma} = \ol{x}^{D_\sigma}$ \item $\dot{x}$ symmetrisch $\Rightarrow$ $\pi \ol{x}^{D_\pi}$ symmetrisch
%\end{itemize}

%\colorbox{red}{\vspace*{7cm}}

%\[\ol{\dot{x}}^{D} := \{ (\ol{\dot{y}}, \ol{p})\ | \ \exists\, (\dot{y}, p) \in \dot{x}\ | \ \ol{p} \leq p, \ol{p} \in D\}.\] Then for any $V$-generic filter $G$ on $\m{P}$ it follows that $(\ol{\dot{x}})^{G} = \dot{x}^{G}$. \\[-2mm]

%Let $\pi, \sigma \in A$ with $\pi: D_{\pi} \rightarrow D_{\pi}$, $\sigma: D_{\sigma} \rightarrow D_{\sigma}$ as above. It is not difficult to verify the following properties: \begin{itemize} \item For any $\dot{x} \in \Name (\m{P})$, it follows that $\ol{\ol{\dot{x}}^{D_{\pi}}}^{D_{\sigma}} = \ol{\dot{x}}^{D_{\pi}\, \cap \, D_{\sigma}}$. \item Whenever $\dot{x} \in \ol{\Name(\m{P})}^{D_{\pi}}$, then also $\sigma \ol{\dot{x}}^{D_{\sigma}} \in \ol{\Name(\m{P})}^{D_{\pi}}$ with $\pi \ol{\dot{x}}^{D_{\sigma}} = \ol{{\pi} \dot{x}}^{D_{\sigma}}$.\end{itemize}

Then we set \[ N := V(G) := \{ \dot{x}^G\ | \ \dot{x} \in HS\},  \] where $HS$ denotes the class of all \textit{hereditarily symmetric} $\m{P}$-names, defined recursively as usual: For every $\dot{x} \in \Name (\m{P})$, we have $\dot{x} \in HS$ if $\dot{x}$ is symmetric and $dom \, \dot{x} \subseteq HS$. \\[-2mm]

We have to verify that also with this modified notion of symmetry, $N = V(G)$ is a model of $ZF$. This will be done in Chapter 5.

%\colorbox{cyan}{TO DO: Peter zeigen!}

%\colorbox{yellow} {Verweis?}

%FEHLT NOCH: $\dot{x}$ symmetrisch $\Rightarrow$ ${\sigma_0} \ol{\dot{x}}^{D_{\sigma_0}}$ symmetrisch für alle $ {\sigma_0}$. Dafür müsste man erst Isomorphismen und den Filter einführen!

%?? Hier irgendwo auf [archiv] verweisen? Nur zitieren? ?? \\[-2mm]

%FRAGE: Man bräuchte doch bestimmt die Bemerkung aus [arXiv] zu $\supp \pi_1 (\eta)$? \\[-2mm]
%?? jede Abbildung aus $A$ hätte einen "`Doppelgänger"' mit anderem Domain?

%Now, we define our collection of $A$-subgroups that generate our symmetric submodel $N$, where by \textit{$A$-subgroup}, we mean a set $B \subseteq A$ such that $B$ is a group of partial $\m{P}$-automorphisms. \\[-3mm]

%We will have $A$-subgroups of the form $B_0\, \times \, A_1$, for an $A_0$-subgroup $B_0$, and $A$-subgroups of the form $A_0\, \times \, B_1$ for an $A_1$-subgroup $B_1$. \\[-3mm]

%We start with defining $A_0$-subgroups. \\[-3mm]

%, as well; so with a normal filter $\mathcal{F}$ on $A$, the notions of \textit{symmetric names}

%We define a normal filter $\mathcal{F}$ on $A$, and see how our partial $\m{P}$-automorphisms can be extended to the name space $Name(\m{P})$.  

%Then and we define a symmetric submodel $V \subseteq N \subseteq V[G]$ as usual, avoiding Boolean Algebras.\\ 
%\colorbox{yellow} {TO DO: NACHSEHEN, wie das beim Baumforcing gemacht wurde} \\ 
%\colorbox{yellow} {FRAGE: Sollte irgendwo die allgemeine Technik erklärt werden? Oder erst EVTL später?} \\[-2mm]

\subsection{Constructing A and $\boldsymbol{\ol{A}}$.}

We start with constructing $A$, our collection of partial $\m{P}$-automorphisms with the properties mentioned in Chapter 4.1. \\[-2mm]

We will have $A = A_0\, \times \, A_1$, where $A_0$ is a collection of partial $\m{P}_0$-automorphisms, 
%(we give a precise definition of this structure later on), 
and $A_1$ is a collection of partial $\m{P}_1$-automorphisms.  \\[-2mm]

Every $\pi_0 \in A_0$ will be an order-preserving bijection $\pi_0: D_{\pi_0} \rightarrow D_{\pi_0}$, where $D_{\pi_0}$ 
%$D_{\pi_0} \subseteq \m{P}_0$ such that $D_{\pi_0}$ 
is contained in our collection $\mathcal{D}_0$, defined as follows: \\[-2mm]

Let $\mathcal{D}_0$ denote the collection of all sets $D \subseteq \m{P}_0$ given by

\begin{itemize} \item a countable \textit{support} $\supp D \subseteq \{ (\sigma, i)\ | \ \sigma \in \Lim, i < \alpha_\sigma \}$, and \item a \textit{domain} $\dom D := \bigcup_{\nu < \gamma\, , \, j < \cf \kappa_{\nu + 1}} [\kappa_{\nu, j}, \delta_{\nu, j})$ such that $\delta_{\nu, j} \in [\kappa_{\nu, j}, \kappa_{\nu, j + 1})$ for all $\nu < \gamma$, $j < \cf \kappa_{\nu + 1}$; and for all regular $\kappa_{\ol{\nu}, \ol{\j}}$, it follows that $dom\, D\, \cap\, \kappa_{\ol{\nu}, \ol{\j}}$ is bounded below $\kappa_{\ol{\nu}, \ol{\j}}$, \end{itemize} such that $D$ is the set of all $p = (p_\ast, (p^\sigma_i, a^\sigma_i)_{\sigma, i}) \in \m{P}_0$ with
% \[D_{\pi_0} = \{p = (p_\ast, (p^\sigma_i, a^\sigma_i)_{\sigma, i}, (p^\sigma)_\sigma) \in \m{P}_0\ | \ 
\begin{itemize} \item $\supp p \supseteq \supp D$\ , \ $\dom p \supseteq \dom D$\,, \, and\item for all intervals $[\kappa_{\nu, j}, \kappa_{\nu, j + 1})$ with $\dom p\, \cap\, [\kappa_{\nu, j}, \kappa_{\nu, j + 1}) \neq \emptyset$, it follows that \[ \bigcup_{ (\sigma, i) \in \supp p}a^\sigma_i\, \cap\, [\kappa_{\nu, j}, \kappa_{\nu, j + 1}) \ \subseteq \ \dom p.\]

%$\forall\, \kappa_{\nu, j} < \kappa_\gamma\, : \ \big (\dom p \, \cap \, [\kappa_{\nu, j}, \kappa_{\nu, j + 1}) \neq \emptyset \Rightarrow \bigcup_{ (\sigma, i) \in \supp p} a^\sigma_i\, \cap\, [\kappa_{\nu, j}, \kappa_{\nu, j + 1})\subseteq \dom p\big)$.
\end{itemize}

In other words, $D$ is the collection of all $p \in \m{P}_0$, the domain and support of which covers a certain domain and support given by $D$; with the additional property that all the linking ordinals $\{\xi\} = a^\sigma_i\, \cap\, [\kappa_{\nu, j}, \kappa_{\nu, j + 1})$ contained in any interval $[\kappa_{\nu, j}, \kappa_{\nu, j + 1})$ hit by $\dom p$, are already contained in $\dom p$. \\[-3mm]

It is not difficult to see that any $D \in \mathcal{D}_0$ is dense in $\m{P}_0$. The sets $D \in \mathcal{D}_0$ are not open dense; but whenever $p$, $q \in \m{P}_0$ with $p \in D$ and $q \leq p$ such that $\supp q = \supp p$, then by the \textit{linking property}, it follows that also $q \in D$. \\[-3mm]

Whenever $D$, $D^\prime \in \mathcal{D}_0$, then the intersection $D\, \cap\, D^\prime$ is contained in $\mathcal{D}_0$, as well, with $supp \, (D\, \cap\, D^\prime) = supp\, D\, \cup\, supp \,D^\prime$, $dom\, (D\, \cap D^\prime) = dom\, D\, \cup dom \,D^\prime$.
\\[-2mm]

%For any $\pi_0$, $\sigma_0 \in A_0$, $\pi_0: D_{\pi_0} \rightarrow D_{\pi_0}$, $\sigma_0: D_{\sigma_0} \rightarrow D_{\sigma_0}$, the \textit{concatenation} $\nu_0 = \pi_0\, \circ\, \sigma_0$ will be contained in $A_0$ as well, with $D_{\nu_0} = D_{\pi_0}\, \cap\, D_{\sigma_0}$, and $\nu_0 (p) = \pi_0 (\sigma_0 (p))$ for all $p \in D_{\nu_0}$. Also, for any $\pi_0 \in A_0$, the \textit{inverse map} $\pi_0^{-1}$ will be contained in $A_0$, with $D_{\pi_0^{-1}} = D_{\pi_0}$, and $\pi_0^{-1} (\pi_0 (p)) = \pi_0 (\pi_0^{-1} (p)) = p$ for all $p \in D_{\pi_0}$. 
%there will be a map $\nu_0: D_{\nu_0} \rightarrow D_{\nu_0}$ in $A_0$ such that $\nu_0$ is the identity on $D_{\nu_0} = D_{\pi_0}\, \cap\, D_{\sigma_0}$. 
%Also, for any $\pi_0 \in A_0$, there will be a map $\pi_0^{-1} \in A_0$ with $D_{\pi_0^{-1}} = D_{\pi_0}$ and $\pi_0\, \circ\, \pi_0^{-1} = \pi_0^{-1}\, \circ \, \pi_0 = \id_{D_{\pi_0}}$. 
%Finally, we will have an \textit{identity element} $\id_0 \in A_0$ with $\supp \id_0 = \dom \id_0 = \emptyset$, 
%and $\id_0$ is the identity on 
%$D_{\id_0} = \{p \in \m{P}_0\ | \ \forall\, \kappa_{\nu, j} < \kappa_\gamma\, : \ (\dom p \, \cap \, [\kappa_{\nu, j}, \kappa_{\nu, j + 1}) \neq \emptyset \Rightarrow \bigcup \{ a^\sigma_i\, \cap\, [\kappa_{\nu, j}, \kappa_{\nu, j + 1})\ | \ (\sigma, i) \in \supp p \} \supseteq \dom p \}$, where $D_{\id_0} \subseteq \m{P}_0$ is dense and $D_{\pi_0} \subseteq D_{\id_0}$ for all $\pi_0 \in A_0$. \\We call this structure a \textit{group of partial $\m{P}_0$-isomorphisms}. \\[-2mm]

We now describe the two types of partial $\m{P}_0$-automorphisms that will generate $A_0$: \\[-3mm]

Our first goal is that for any two conditions $p$, $q \in \m{P}$ with the same \tbl shape\tbr, i.e. $\dom p = \dom q$, $\supp p = \supp q$ and $\bigcup a^\sigma_i = \bigcup b^\sigma_i$, there is an automorphism $\pi_0 \in A_0$ with $\pi_0 p = q$. This homogeneity property will be achieved by giving the maps $\pi_0 \in A_0$ a lot of freedom regarding what can happen on $supp \, \pi_0$ and $dom \, \pi_0$. \\[-2mm]

For $\kappa_{\nu, j} < \kappa_\gamma$, let \[\supp \pi_0 (\nu, j) := \{ (\sigma, i) \in \supp \pi_0\ | \ \kappa_{\nu, j} < \kappa_\sigma\}.\]
Regarding the linking ordinals, we want that for any $p \in D_{\pi_0}$, $p = (p_\ast, (p^\sigma_i, a^\sigma_i)_{\sigma, i})$ with $\pi p = p^\prime = ((p^\prime)_\ast, ((p^\prime)^\sigma_i, (a^\prime)^\sigma_i)_{\sigma, i})$, the sets of linking ordinals for $p$ and $p^\prime$ are the same, i.e. $\bigcup a^\sigma_i = \bigcup (a^\prime)^\sigma_i$.
In other words, for any interval $[\kappa_{\nu, j}, \kappa_{\nu, j + 1})$, the linking ordinals $\xi \in [\kappa_{\nu, j}, \kappa_{\nu, j + 1})$ will be exchanged between the coordinates $(\sigma, i) \in \supp \pi_0 (\nu, j)$,
%$a^\sigma_i \, \cap \, [\kappa_{\nu, j}, \kappa_{\nu, j + 1})$ will be \tbl swapped\tbr WORT NACHSCHLAGEN, 
which is described by an isomorphism $F_{\pi_0} (\nu, j): \supp \pi_0 (\nu, j) \rightarrow \supp \pi_0 (\nu, j)$.

Regarding the $(p^\prime)^\sigma_i$ for $(\sigma, i) \in \supp \pi_0$, there will be for every $\zeta \in [\kappa_{\nu, j}, \kappa_{\nu, j + 1})\, \cap\, \dom \pi_0$ a bijection $\pi_0 (\zeta): 2^{\supp \pi_0 (\nu, j)} \rightarrow 2^{\supp \pi_0 (\nu, j)}$ with \[  \big(\, (p^\prime)^\sigma_i (\zeta)\ | \ (\sigma, i) \in \supp \pi_0 (\nu, j) \, \big) := \pi_0 (\zeta) \big( \, p^\sigma_i (\zeta)\ | \ (\sigma, i) \in \supp \pi_0 (\nu, j) \, \big).\] Concerning $p^\prime_\ast$, we will have a similar construction for the $p^\prime_\ast (\xi, \zeta)$ in the case that $\zeta \in \dom \pi_0$ and $\xi$ is a linking ordinal contained in $\bigcup a^\sigma_i$. For all $(\xi, \zeta) \in \dom \pi_0 \, \cap \, [\kappa_{\nu, j}, \kappa_{\nu, j + 1})^2$, we will have a bijection $\pi_\ast (\xi, \zeta): 2 \rightarrow 2$, with $p^\prime_\ast (\xi, \zeta) = \pi_\ast (\xi, \zeta) (p_\ast (\xi, \zeta))$ whenever $\xi$, $\zeta \in \dom \pi_0$ and $\xi \notin \bigcup a^\sigma_i$. \\[-3mm]

Our second goal is that for any interval $[\kappa_{\nu, j}, \kappa_{\nu, j + 1})$ and $(\sigma, i)$, $(\lambda, k) \in \supp \pi_0 (\nu, j)$, there is an isomorphism $\pi_0 \in A_0$ such that $(\pi_0 G)^\lambda_k \, \cap \, [\kappa_{\nu, j}, \kappa_{\nu, j + 1}) = G^{\sigma}_{i}\, \cap \, [\kappa_{\nu, j}, \kappa_{\nu, j + 1})$. Thus, every $\pi_0 \in A_0$ will be equipped with bijections $G_{\pi_0} (\nu, j): \supp \pi_0 (\nu, j) \rightarrow \supp \pi_0 (\nu, j)$ for every $\kappa_{\nu, j}$, such that the following holds: For every $p \in D_{\pi_0}$, $p^\prime := \pi p$ and $\zeta \in \dom p \setminus \dom \pi_0$, $(\sigma, i) \in \supp \pi_0 (\nu, j)$, we have $(p^\prime)^\sigma_i (\zeta) = p^\lambda_k (\zeta)$ with $(\lambda, k) := G_{\pi_0} (\nu, j) (\sigma, i)$. 

Whenever $\zeta \in \dom \pi_0$ and $(\sigma, i) \in \supp \pi (\nu, j)$, the values $(p^\prime)^\sigma_i (\zeta)$ are described by the maps $\pi_0 (\zeta)$ mentioned in context with \tbl our first goal\tbr\,above, which allows for setting $(p^\prime)^\sigma_i (\zeta) := p^\lambda_k (\zeta)$ for any pair $(\sigma, i)$, $(\lambda, k) \in \supp \pi_0 (\nu, j)$ with $(\lambda, k) = G_{\pi_0} (\nu, j) (\sigma, i)$. \\[-3mm]

Roughly speaking, $A_0$ will be generated by these two types of isomorphism. Regarding the construction of $p_\ast^\prime$, some extra care is needed concerning the values $p^\prime_\ast (\xi, \zeta)$ for $\zeta \notin \dom \pi_0$ and $\xi \in \bigcup a^\sigma_i$ a linking ordinal, since we have to make sure that the maps $\pi_0 \in A_0$ are order-preserving: Whenever $p, q \in D_{\pi_0}$ with $q \leq p$, then also $q^\prime \leq p^\prime$ must hold; in particular, whenever $\{\xi^\sigma_i\}: = a^\sigma_i \, \cap \, [\kappa_{\nu, j}, \kappa_{\nu, j + 1})$ is a linking ordinal and $\zeta \in \dom q \ \setminus \, \dom p$ (hence, $\zeta \notin \dom \pi_0$), then $\{\xi^\sigma_i\} = (a^\prime)^\lambda_k\, \cap \, [\kappa_{\nu, j}, \kappa_{\nu, j + 1})$ with $(\sigma, i) = F_{\pi_0} (\nu, j) (\lambda, k)$, and $q^\prime_\ast (\xi^\sigma_i, \zeta) = (q^\prime)^\lambda_k (\zeta)$ by the linking property for $q^\prime \leq p^\prime$. Moreover, $(q^\prime)^\lambda_k (\zeta) = q^\mu_l (\zeta)$ with $(\mu, l) = G_{\pi_0} (\nu, j) (\lambda, k)$, and $q^\mu_l (\zeta) = q_\ast (\xi^\mu_l, \zeta)$ with $\xi^\mu_l = a^\mu_l\, \cap \, [\kappa_{\nu, j}, \kappa_{\nu, j + 1})$ by the linking property for $q \leq p$. Hence, $q^\prime_\ast (\xi^\sigma_i, \zeta) = q_\ast (\xi^\mu_l, \zeta)$ must hold, where $(\mu, l)= G_{\pi_0} (\nu, j) \, \circ \, (F_{\pi_0} (\nu, j))^{-1} (\sigma, i)$. \\[-2mm]

This gives rise to the following definition:

\begin{definition} Let $A_0$ consist of all automorphisms $\pi_0: D_{\pi_0} \rightarrow D_{\pi_0}$ such that there are \begin{itemize} \item a countable set $\supp \pi_0 \subseteq \{(\sigma, i)\ | \ \sigma \in \Lim, i < \alpha_\sigma\}$ \\(for $\kappa_{\nu, j} < \kappa_\gamma$, we set $\supp \pi_0 (\nu, j) := \{ (\sigma, i) \in \supp \pi_0 \ | \ \kappa_{\nu, j} < \kappa_\gamma\}$), \item a domain $\dom \pi_0 = \bigcup_{\nu < \gamma\, , \, j < \cf \kappa_{\nu + 1}} [\kappa_{\nu, j}, \delta_{\nu, j})$ such that $\delta_{\nu, j} \in [\kappa_{\nu, j}, \kappa_{\nu, j + 1})$ for all $\nu < \gamma$, $j < \cf \kappa_{\nu + 1}$; and for all regular $\kappa_{\ol{\nu}, \ol{\j}}$, it follows that $\dom \pi_0\, \cap\, \kappa_{\ol{\nu}, \ol{\j}}$ is bounded below $\kappa_{\ol{\nu}, \ol{\j}}$ \\(for $\kappa_{\nu, j} < \kappa_\gamma$, we set $\dom \pi_0 (\nu, j) := \dom \pi_0\, \cap\, [\kappa_{\nu, j}, \kappa_{\nu, j + 1})$), \end{itemize} such that \[D_{\pi_0} = \big\{\, p = (p_\ast, (p^\sigma_i, a^\sigma_i)_{\sigma, i}) \in \m{P}_0\ \big| \ \supp p \supseteq \supp \pi_0\, , \, \dom p \supseteq \dom \pi_0, \mbox{ and } \]\[\forall\, \kappa_{\nu, j} < \kappa_\gamma\, : \ \big(\dom p \, \cap \, [\kappa_{\nu, j}, \kappa_{\nu, j + 1}) \neq \emptyset \Rightarrow \bigcup_{(\sigma, i) \in \supp p} a^\sigma_i\, \cap\, [\kappa_{\nu, j}, \kappa_{\nu, j + 1}) \subseteq \dom p\big)\, \big \} ;\]

moreover, there are \begin{itemize} \item for all $\nu < \gamma$, $j < \cf \kappa_{\nu + 1}$, a bijection \[F_{\pi_0} (\nu, j): \supp \pi_0 (\nu, j) \rightarrow \supp \pi_0 (\nu, j)\] (which will be in charge of permuting the linking ordinals as mentioned above), \\[-3mm]

and a bijection \[G_{\pi_0} (\nu, j): \supp \pi_0 (\nu, j) \rightarrow \supp \pi_0 (\nu, j)\] (which will be in charge of permuting the verticals $p^\sigma_i$ outside $\dom \pi_0$ on the interval $[\kappa_{\nu, j}, \kappa_{\nu, j + 1})$), \item for all $\nu < \gamma$, $j < \cf \kappa_{\nu + 1}$ and $\zeta \in [\kappa_{\nu, j}, \kappa_{\nu, j + 1})\, \cap \, \dom \pi_0$, a bijection \[\pi_0 (\zeta): 2^{\supp \pi_0 (\nu, j)} \rightarrow 2^{\supp \pi_0 (\nu, j)}\] (this map will be in charge of setting the values $(\pi p)^\sigma_i (\zeta)$ for $(\sigma, i) \in \supp \pi_0 (\nu, j)$, $\zeta \in \dom \pi_0$), \item for all $\nu < \gamma$, $j < \cf \kappa_{\nu + 1}$, $\zeta \in [\kappa_{\nu, j}, \kappa_{\nu, j + 1})\, \cap \, \dom \pi_0$, and \[ \big (\xi^\sigma_i\ | \ (\sigma, i) \in \supp \pi_0 (\nu, j) \big) \in (\dom \pi_0 (\nu, j))^{\supp \pi_0 (\nu, j)}\] a sequence of pairwise distinct ordinals, a bijection \[(\pi_0)_\ast (\zeta) \big(\xi^\sigma_i\ | \ (\sigma, i) \in \supp \pi_0 (\nu, j)\big): 2^{\supp \pi_0 (\nu, j)} \rightarrow 2^{\supp \pi_0 (\nu, j)}\] (which will be in charge of setting the values $(\pi p)_\ast (\xi^\sigma_i, \zeta)$ for $\{\xi^\sigma_i\} = a^\sigma_i\, \cap \, [\kappa_{\nu, j}, \kappa_{\nu, j + 1})$ a linking ordinal and $\zeta \in [\kappa_{\nu, j}, \kappa_{\nu, j + 1})\, \cap \, \dom \pi_0$ ),
% FRAGE: Sollte man nicht eher $\xi^\sigma_i (\nu, j)$ schreiben?
\item for all $\nu < \gamma$, $j < \cf \kappa_{\nu + 1}$ and $(\xi, \zeta) \in [\kappa_{\nu, j}, \kappa_{\nu, j + 1})^2$, a bijection \[(\pi_0)_\ast (\xi, \zeta): 2 \rightarrow 2\] such that $\pi_\ast (\xi, \zeta)$ is the identity whenever $(\xi, \zeta) \notin (\dom \pi_0)^2$\\ (which will be in charge of the values $(\pi p)_\ast (\xi, \zeta)$ in the case that $\xi \notin \bigcup_{\sigma, i} a^\sigma_i$ is not a linking ordinal);\end{itemize}
which defines for $p \in D_{\pi_0}$, $p = (p_\ast, (p^\sigma_i, a^\sigma_i)_{\sigma, i})$, the image $\pi p =: p^\prime = (p^\prime_\ast, ((p^\prime)^\sigma_i, (a^\prime)^\sigma_i)_{\sigma, i})$ as follows: \\[-2mm]

We will have $\supp p^\prime = \supp p$, $\dom p^\prime = \dom p$. Moreover: \begin{itemize} \item Concerning the linking ordinals, for all $(\sigma, i) \in \supp p^\prime = \supp p$ and $\kappa_{\nu, j} < \kappa_\sigma$: \begin{itemize} \item $(a^\prime)^\sigma_i\, \cap \, [\kappa_{\nu, j}, \kappa_{\nu, j + 1}) = a^\sigma_i\, \cap \, [\kappa_{\nu, j}, \kappa_{\nu, j + 1})$ for $(\sigma, i) \notin \supp \pi_0 (\nu, j)$, \item $(a^\prime)^\sigma_i \, \cap \, [\kappa_{\nu, j}, \kappa_{\nu, j + 1}) = a^\lambda_k\, \cap \, [\kappa_{\nu, j}, \kappa_{\nu, j + 1})$ with $(\lambda, k) = F_{\pi_0} (\nu, j) (\sigma, i)$ in the case that $(\sigma, i) \in \supp \pi_0 (\nu, j)$.\end{itemize}

\item Concerning the $(p^\prime)^\sigma_i$ with $(\sigma, i) \in \supp \pi_0$, \begin{itemize} \item for $\zeta \in \dom \pi_0$, \[\big( (p^\prime)^\sigma_i (\zeta)\ | \ (\sigma, i) \in \supp \pi_0 (\nu, j) \big) = \pi_0 (\zeta) \big( p^\sigma_i (\zeta) \ | \ (\sigma, i) \in \supp \pi_0 (\nu, j) \big),\] \item and in the case that $\zeta \notin \dom \pi_0$, we will have \[(p^\prime)^\sigma_i (\zeta) = p^\lambda_k (\zeta) \mbox { with } (\lambda, k) = G_{\pi_0} (\nu, j) (\sigma, i).\] \end{itemize} 

\item Whenever $(\sigma, i) \notin \supp \pi_0$, then $(p^\prime)^\sigma_i = p^\sigma_i$. 

\item We now turn to $p^\prime_\ast$. Consider an interval $[\kappa_{\nu, j}, \kappa_{\nu, j + 1})$. For any $(\sigma, i) \in \supp \pi_0 (\nu, j)$, let $\{\xi^\sigma_i\} := a^\sigma_i\, \cap\, [\kappa_{\nu, j}, \kappa_{\nu, j + 1})$. For $\zeta \in [\kappa_{\nu, j}, \kappa_{\nu, j + 1})\, \cap \, \dom \pi_0$, we will have $\big( p^\prime_\ast (\xi^\sigma_i, \zeta)\ | \ (\sigma, i) \in \supp \pi_0 (\nu, j) \big) = $\[ (\pi_0)_\ast (\zeta) \big(\xi^\sigma_i\ | \ (\sigma, i) \in \supp \pi_0 (\nu, j)\big) \big(p_\ast (\xi^\sigma_i, \zeta)\ | \ (\sigma, i) \in \supp \pi_0 (\nu, j) \big).\] In the case that $\zeta \in [\kappa_{\nu, j}, \kappa_{\nu, j + 1})\, \cap \, (\dom p \, \setminus \, \dom \pi_0)$, we will have for $(\sigma, i) \in \supp \pi_0 (\nu, j)$: \[p^\prime_\ast (\xi^\sigma_i, \zeta) := p_\ast (\xi^\lambda_k, \zeta),\] where $(\lambda, k) = G_{\pi_0} (\nu, j) \, \circ \, (F_{\pi_0} (\nu, j))^{-1} (\sigma, i)$. \\ Finally, if $(\xi, \zeta) \in (\dom p)^2$ with $\xi, \zeta \in [\kappa_{\nu, j}, \kappa_{\nu, j + 1})$ such that $\xi \notin \bigcup_{\sigma, i} a^\sigma_i$, then \[p^\prime_\ast (\xi, \zeta) = (\pi_0)_\ast (\xi, \zeta) \big(p_\ast (\xi, \zeta)\big).\]

\end{itemize}

\end{definition}

For any $\pi \in A_0$, we have $D_{\pi_0} \in \mathcal{D}_0$ with $\supp D_{\pi_0} := \supp \pi_0$ and $\dom D_{\pi_0} := \dom \pi_0$.

Moreover, whenever $p$ is a condition in $D_{\pi_0}$, then $p^\prime := \pi_0 p \in \m{P}_0$ is well-defined with $p^\prime \in D_{\pi_0}$, since $\supp p^\prime = \supp p$, $\dom p^\prime = \dom p$, and $\bigcup_{\sigma, i} a^\sigma_i = \bigcup_{\sigma, i} (a^\prime)^\sigma_i$  by construction. 

Here we use that $\pi_0$ is only defined on $D_{\pi_0}$ and not on the entire forcing $\m{P}_0$, since we have to make sure that in our construction of the $p^\prime_\ast (\xi^\sigma_i, \zeta)$ for $\zeta \notin \dom \pi_0$, we do not run out of $dom \, p$. \\[-3mm]

%when constructing the $\p^\prime_\ast (\xi^\sigma_, \zeta)$ for $\zeta \notin \dom \pi_0$: 
%we do not run out of $\dom p$
%We need the dense sets $D_{\pi_0}$ in order to make sure that in our construction of the $p^\prime_\ast (\xi^\sigma_i, \zeta)$ for $\zeta \notin \dom {\pi_0}_0$, we do not run out of $\dom p$ when constructing the $\p^\prime_\ast (\xi^\sigma_, \zeta)$ for $\zeta \notin \dom \pi_0$: 

%For any $p \in D_{\pi_0}$, it follows that for every interval $[\kappa_{\nu, j}, \kappa_{\nu, j + 1})$ hit by $\dom p$, we have $\xi^\sigma_i \in \dom p$ for all the linking ordinals $\{\xi^\sigma_i\} = a^\sigma_i\, \cap \, [\kappa_{\nu, j}, \kappa_{\nu, j + 1})$. This implies that w

It is not difficult to see that for any $p$, $q \in D_{\pi_0}$ with $q \leq p$, also $q^\prime \leq p^\prime$ holds. The linking property follows readily from our definition of the $p^\prime_\ast (\xi^\sigma_i, \zeta)$ for $\zeta \notin \dom \pi_0$. \\[-3mm]
%$\zeta \in (\dom p \, \setminus \, \dom \pi_0) \, \cap \, [\kappa_{\nu, j}, \kappa_{\nu, j + 1})$ and $\xi^\sigma_i = a^\sigma_i\, \cap\, [\kappa_{\nu, j}, \kappa_{\nu, j + 1})$ the corresponding linking ordinal. \\[-3mm]

Whenever $\pi_0 \in A_0$ and $D \in \mathcal{D}_0$ with $D \subseteq D_{\pi_0}$, it follows that the map $\ol{\pi}_0 := \pi_0\, \uhr\, D$ is contained in $A_0$, as well. Here we have to use that the maps $\pi_0$ do not disturb the conditions' domain or support, and merely \textit{permute} the linking ordinals. In particular, whenever $p \in D$, it follows that the image $\pi_0 p$ is contained in $D$, as well. \\[-2mm]

It remains to verify that $A_0$ can be endowed with a group structure. More precisely: We will show that for any $D \in \mathcal{D}_0$, the collection $(A_0)_D := \{\pi \in A_0\ | \ D_{\pi_0} = D\}$ gives a group; and then take the direct limit $\ol{A}_0 := \lim\limits_{\longrightarrow}{(A_0)_D}$. \\[-3mm]

Firstly, for any $\pi_0 \in A_0$, it is not difficult to write down a map $\nu_0 \in A_0$ with $D_{\nu_0} = D_{\pi_0}$ such that $\nu_0$ is the inverse of $\pi_0$: \\[-3mm]

%$\nu_0 (\pi_0 (p)) = \pi_0 (\nu_0 (p)) = p$ holds for all $p \in D_{\pi_0}$: \\ 

Let $\supp \nu_0 := \supp \pi_0$ and $\dom \nu_0 := \dom \pi_0$. For any $\kappa_{\nu, j} < \kappa_\gamma$, we set $F_{\nu_0} (\nu, j) := (F_{\pi_0} (\nu, j))^{-1}$, $G_{\nu_0} (\nu, j) := (G_{\pi_0} (\nu, j))^{-1}$; and for $\zeta \in [\kappa_{\nu, j}, \kappa_{\nu, j + 1})$, we let $\nu_0 (\zeta) := (\pi_0 (\zeta))^{-1}$. Regarding $(\nu_0)_\ast$ we use the following notation: \\[-3mm]

\textit{For sets $\mathcal{I}$, $\mathcal{J}$ with a bijection $b: \mathcal{I} \rightarrow \mathcal{J}$ and a sequence $(x_j\ | \ j \in \mathcal{J})$, we denote by $\ol{b} (x_j\ | \ j \in \mathcal{J})$ the induced sequence parametrized by $\mathcal{I}$:} \[\ol{b} (x_j\ | \ j \in \mathcal{J}) := (y_i\ | \ i \in \mathcal{I}) \mbox{ with  }  y_i := x_{b(i)} \mbox{ for all } i \in \mathcal{I}.\]

Whenever $\zeta \in [\kappa_{\nu, j}, \kappa_{\nu, j + 1})\, \cap \, \dom \pi_0$, and \[\big(\xi^\sigma_i\ | \ (\sigma, i) \in \supp \pi_0 (\nu, j)\big) \in (\dom \pi_0 (\nu, j))^{\supp \pi_0 (\nu, j)}\] is a sequence of pairwise distinct ordinals, we set $(\nu_0)_\ast \big(\xi^\sigma_i \ | \ (\sigma, i) \in \supp \pi_0 (\nu, j)\big) := $ \[\ol{F_{\pi_0} (\nu, j)} \, \circ \,  \left[(\pi_0)_\ast \Big(\,  \ol{F_{\pi_0} (\nu, j)}^{\ -1} \big( \xi^\sigma_i\ | \ (\sigma, i) \in \supp \pi_0 (\nu, j) \big )\, \Big)\right] ^{-1} \, \circ \, \ol{F_{\pi_0} (\nu, j)}^{ \ -1},\] which is a bijection on $2^{\supp \pi_0 (\nu, j)}$. \\[-3mm]

For $(\xi, \zeta) \in [\kappa_{\nu, j}, \kappa_{\nu, j + 1})^2$, let $(\nu_0)_\ast (\xi, \zeta) := \big((\pi_0)_\ast (\xi, \zeta)\big)^{-1}$. \\[-2mm]

It is not difficult to verify that indeed, $\pi_0 (\nu_0 (p)) = \nu_0 (\pi_0 (p)) = p$ holds for all $p \in D_{\pi_0} = D_{\nu_0}$. \\[-2mm]

%$\pi_0\, \circ \, \nu_0 = \pi_0^{-1} \, \circ \, \pi_0 = \id_{D_{\pi_0}}$. \\[-3mm]

Secondly, one has to make sure that for any $\pi_0: D_{\pi_0} \rightarrow D_{\pi_0}$, $\sigma_0: D_{\sigma_0} \rightarrow D_{\sigma_0}$ in $A_0$, there is a map $\tau_0 \in A_0$ with $D_{\tau_0} = D_{\pi_0}\, \cap \, D_{\sigma_0}$ such that $\tau_0 (p) = \pi_0 (\sigma_0 (p))$ holds for all $p \in D_{\tau_0}$.

Setting $\dom \tau_0 := \dom \pi_0 \, \cup \, \dom \sigma_0$, $\supp \tau_0 := \supp \pi_0 \, \cup \, \supp \sigma_0$, one can write down $\tau_0$ explicitly (similarly as for the inverse map), and verify that $\tau_0 (p) = \pi_0 (\sigma_0 (p))$ holds for all $p \in D_{\tau_0}$. \\[-3mm]

%\colorbox{yellow}{FRAGE: Müsste man das nachprüfen?} \\[-3mm]

Finally, the identity element $id_0$, which is the identity map on its domain $D_{\id_0} := $ \[ \big\{\, p \in \m{P}_0\ \, \big| \, \ \forall\, \kappa_{\nu, j} < \kappa_\gamma\, : \, \big(\dom p \, \cap \, [\kappa_{\nu, j}, \kappa_{\nu, j + 1}) \neq \emptyset \Rightarrow \bigcup_{(\sigma, i) \in \supp p} a^\sigma_i\, \cap\, [\kappa_{\nu, j}, \kappa_{\nu, j + 1}) \subseteq \dom p \big)\, \big\}, \] is contained in $A_0$ as well with $D_{\id_0} \supseteq D_{\pi_0}$ for all $\pi_0 \in A_0$. \\[-3mm]

It follows that for any $D \in \mathcal{D}_0$, the collection $(A_0)_D := \{\pi_0 \in A_0\ | \ D_{\pi_0} = D\}$ gives a group with the identity element $id_D := id_0\, \uhr\, D$; and whenever $D$, $D^\prime \in \mathcal{D}$ with $D \subseteq D^\prime$, then there is a canonical homomorphism $\phi_{D^\prime D}: (A_0)_{D^\prime} \rightarrow (A_0) _D$ that maps any $\pi_0 \in (A_0)_{D^\prime}$ to its restriction $\pi_0\, \uhr \, D$. We take the direct limit \[\ol{A}_0 := \lim\limits_{\longrightarrow}{(A_0)_D} = \bigsqcup (A_0)_D \big/ \sim \;, \]  with $\pi_0 \sim \pi_0^\prime$ iff $\pi_0 (p) = \pi_0^\prime (p)$ holds for all $p \in D_{\pi_0}\, \cap\, D_{\pi_0^\prime}$. Then $\ol{A}_0$ becomes a group with concatenation and the inverse elements as described in Chapter \ref{outline}, and the identity element $\ol{\id}_0 := [id_0]$ as defined above. \\[-2mm]

%\colorbox{yellow} {FRAGE: Genügt das so?}

%(cf. Section \ref{outline}) as follows:
%\begin{itemize} \item For $[\pi_0]$, $[\sigma_0] \in A_0$, let $[\pi_0]\, \circ\, [\sigma_0] := [\nu_0]$, where $\nu_0$ denotes the map in $A_0$ with $D_{\nu_0} = D_{\pi_0}\, \cap\, D_{\sigma_0}$ and $\nu_0 (p) = \pi_0 (\sigma_0 (p))$ for all $p \in D_{\pi_0}\, \cap\, D_{\sigma_0}$. This is well-defined 
%\begin{lem} $A_0$ is a group of partial $\m{P}_0$-isomorphisms. \end{lem} 
%NACHFRAGEN!

Now, we turn to $\m{P}_1$ and define $A_1$, our collection of partial $\m{P}_1$-isomorphisms. Every $\pi_1 \in A_1$ will be a bijection $\pi_1: D_{\pi_1} \rightarrow D_{\pi_1}$ with a dense set $D_{\pi_1} \in \mathcal{D}_1$, where $\mathcal{D}_1$ is defined as follows:\\[-2mm]

%\colorbox{yellow} {FRAGE: hier und für $\mathcal{D}_0$ als Definition formulieren?} \\[-2mm] 

Let $\mathcal{D}_1$ denote the collection of all $D \subseteq \m{P}_1$ given by:
\begin{itemize} \item a countable \textit{support} $\supp D \subseteq Succ$, and \item for every $\sigma \in \supp D$, $ \kappa_\sigma = \ol{\kappa_\sigma}^{\,\plus}$, a \textit{domain} $\dom D (\sigma) = \dom_x D (\sigma) \, \times \, \dom_y D (\sigma) \subseteq \alpha_\sigma \, \times \, [\ol{\kappa_\sigma}, \kappa_\sigma)$ with $|\dom \pi_1 (\sigma)| < \kappa_\sigma$,
\end{itemize} such that \[D = \{p \in \m{P}_1\ | \ \supp p \supseteq \supp D\; \wedge \; \forall\, \sigma \in \supp D\ \dom p^\sigma \supseteq \dom D  (\sigma) \}.\]

Then every set $D \in \mathcal{D}_1$ is open dense; and whenever $D$, $D^\prime \in \mathcal{D}_1$, then the intersection $D\, \cap\, D^\prime$ is contained in $\mathcal{D}_1$ as well, with $supp\,(D\, \cap\, D^\prime) = supp\,D\, \cup\, supp\,D^\prime$, and $dom_x (D\, \cap\, D^\prime) (\sigma) = dom_x D (\sigma)\, \cup\, dom_x D^\prime (\sigma)$, $dom_y (D\, \cap\, D^\prime) (\sigma) = dom_y D (\sigma) \, \cup \, dom_y D^\prime (\sigma)$ for all $\sigma \in supp\,(D\, \cap\, D^\prime)$.\\[-3mm]

We now describe the two types of partial $\m{P}_1$-isomorphisms that will generate $A_1$:

%can now define $A_1$, our group of partial $\m{P}_1$-isomorphisms. 

Like for $A_0$, our first goal is that for any $p, q \in \m{P}_1$ which have \tbl the same shape\tbr, i.e. $supp\,p = supp\,q$ and $dom\,p^\sigma = dom\,q^\sigma$ for every $\sigma \in supp\,p$, there is an isomorphism $\pi_1 \in A_1$ with $\pi_1 p = q$. These isomorphisms will be of the following form: For every $\sigma \in \supp \pi_1$, we will have a collection of $\pi (\sigma) (i, \zeta): 2 \rightarrow 2$ for $(i, \zeta) \in \dom \pi_1 (\sigma)$, such that 
%every $\pi_1 (\sigma) (i, \zeta): 2 \rightarrow 2$ is a bijection; and 
for any $p \in D_{\pi_1}$, the map $\pi_1$ changes the value of $p^\sigma (i, \zeta)$ if and only if $\pi_1 (\sigma) (i, \zeta) \neq id$. In other words, $(\pi_1 p)^\sigma (i, \zeta) = \pi_1 (\sigma) (i, \zeta) \big(p^\sigma (i, \zeta)\big)$. 
%Hence, there is great freedom about what can happen on the domains $\dom \pi_1 (\sigma)$ for $\sigma \in \supp \pi_1$; which

This allows for constructing an isomorphism $\pi_1$ with $\pi_1 p = q$ for any pair of conditions $p$, $q$ that have the same supports and domains: One can simply set $\pi_1 (\sigma) (i, \zeta) = id$ if $p^\sigma(i, \zeta) = q^\sigma(i, \zeta)$, and $\pi_1 (\sigma) (i, \zeta) \neq id$ in the case that $p^\sigma (i, \zeta) \neq q^\sigma(i, \zeta)$. \\[-2mm]

Secondly, for every pair of generic $\kappa_\sigma$-subsets $G^\sigma_i$ and $G^\sigma_{i^\prime}$ for $\sigma \in \Succ$ and $i, i^\prime < \alpha_\sigma$, we want an isomorphism $\pi \in A_1$ such that $\pi G^\sigma_i = G^\sigma_{i^\prime}$. Therefore, we include into $A_1$ all isomorphisms $\pi_1 = (\pi_1 (\sigma)\ | \ \sigma \in \supp \pi_1)$ such that for every $\sigma \in \supp \pi_1$, there is a bijection $f_{\pi_1} (\sigma)$ on a countable set $\supp \pi_1 (\sigma) \subseteq \alpha_\sigma$; and $\pi_1$ is defined as follows: Whenever $p \in D_{\pi_1}$,
% with $\supp p \supseteq \supp \pi_1$ and $\dom_y p^\sigma \supseteq \supp \pi_1 (\sigma)$ for all $\sigma \in \supp \pi_1$, 
then $(\pi_1 p)^\sigma (i, \zeta) = p^\sigma \big(f_{\pi_1} (\sigma) (i)\,, \,\zeta\big)$ for all $(i, \zeta) \in \dom p^\sigma$. Then $\pi G^\sigma_i = G^\sigma_{f_{\pi_1} (\sigma)(i)}$. \\[-2mm]

Roughly speaking, $A_1$ will be generated by these two types of isomorphisms. In order to retain a group structure, the values $(\pi_1 p)^\sigma (i, \zeta)$ for $(i, \zeta) \in \dom \pi_1 (\sigma)$ and $i \in \supp \pi_1 (\sigma)$ have to be treated separately: For every $\zeta \in \dom_y \pi_1 (\sigma)$, there will be a bijection $\pi_1 (\zeta): 2^{\supp \pi_1 (\sigma)} \rightarrow 2^{\supp \pi_1 (\sigma)}$ such that $ \big((\pi_1 p)^\sigma (i, \zeta)\ | \ i \in \supp \pi_1 (\sigma)\big) = \pi_1 (\zeta) \big(p^\sigma(i, \zeta)\ | \ i \in \supp \pi_1 (\sigma)\big)$. \\[-2mm] 

This gives the following definition:

\begin{definition} \label{defpi1} $A_1$ consists of all isomorphisms $\pi_1: D_{\pi_1} \rightarrow D_{\pi_1}$, $\pi_1 = (\pi_1 (\sigma)\ | \ \sigma \in \supp \pi_1)$ with countable support $\supp \pi_1 \subseteq \Succ$, such that for all $\sigma \in \supp \pi_1$, $\kappa_\sigma = \ol{\kappa_\sigma}^{\,\plus}$, there are 

\begin{itemize} \item a countable set $\supp \pi_1 (\sigma) \subseteq \alpha_\sigma$ with a bijection $f_{\pi_1} (\sigma): \supp \pi_1 (\sigma) \rightarrow \supp \pi_1 (\sigma)$,\item a domain $\dom \pi_1 (\sigma) = \dom_x \pi_1 (\sigma)\, \times \, \dom_y \pi_1 (\sigma) \subseteq \alpha_\sigma \, \times \, [\ol{\kappa_{\sigma}}, \kappa_\sigma)$ with $|\dom \pi_1 (\sigma)| < \kappa_\sigma$, such that $\supp \pi_1 (\sigma) \subseteq \dom_x \pi_1 (\sigma)$,

\item for every $(i, \zeta) \in \alpha_\sigma \, \times \, [\ol{\kappa_\sigma}, \kappa_\sigma)$ a bijection $\pi_1 (\sigma) (i, \zeta): 2 \rightarrow 2$, with $\pi_1 (\sigma) (i, \zeta) = \id$ whenever $(i, \zeta) \notin \dom \pi_1 (\sigma)$, \mbox{\upshape and }

\item for every $\zeta \in \dom_y \pi_1 (\sigma)$ a bijection $\pi_1 (\zeta): 2^{\supp \pi_1 (\sigma)} \rightarrow 2^{\supp \pi_1 (\sigma)}$

\end{itemize}

with \[ D_{\pi_1} = \{p \in \m{P}_1\ | \ \supp p \supseteq \supp \pi_1\, \wedge\, \forall\, \sigma \in \supp \pi_1\ \dom p^\sigma \supseteq \dom \pi_1 (\sigma)\},\]

and for every $p \in D_{\pi_1}$, the image $\pi_1 p$ is defined as follows: \\[-2mm]

We will have $\supp (\pi_1 p) = \supp p$ with $(\pi_1 p)^\sigma = p^\sigma$ whenever $\sigma \notin \supp \pi_1$. Moreover, for $\sigma \in \supp \pi_1$, \begin{itemize} 
\item for every $(i, \zeta) \in \dom p^\sigma$ with $i \notin \supp \pi_1 (\sigma)$, we have $(\pi_1 p)^\sigma (i, \zeta) = \pi_1 (\sigma) (i, \zeta) \big(p^\sigma (i, \zeta) \big)$, \item for every $i \in \supp \pi_1 (\sigma)$ and $\zeta \in \dom_y p^\sigma \, \setminus \dom_y \pi_1 (\sigma)$, \[ (\pi_1 p)^\sigma (i, \zeta) = p^\sigma \big(f_{\pi_1} (\sigma) (i), \zeta\big), \mbox{ \upshape and}\] \item for all $\zeta \in \dom_y \pi_1 (\sigma)$, \[\ \big( (\pi_1 p)^\sigma (i, \zeta)\ | \ i \in \supp \pi_1 (\sigma) \big) = \pi_1 (\zeta) \big( p^\sigma (i, \zeta)\ | \ i \in \supp \pi_1 (\sigma) \big). \]

\end{itemize}

\end{definition} 

In other words: Outside the domain $dom\,\pi_1 (\sigma)$, we have a swap of the horizontal lines $p^\sigma (i, \cdot)$ for $i \in \supp \pi_1 (\sigma)$, according to $f_{\pi_1} (\sigma)$. If $\zeta \in \dom \pi_1 (\sigma)$, then the values $(\pi_1 p)^\sigma (i, \zeta)$ for $i \in \supp \pi_1 (\sigma)$ are given by the map $\pi_1 (\zeta)$. Any remaining value $(\pi_1 p)^\sigma (i, \zeta)$ with $i \notin \supp \pi_1 (\sigma)$ is equal to $p^\sigma (\zeta, i)$ or not, depending on whether $\pi_1 (\sigma) (i, \zeta): 2 \rightarrow 2$ is the identity or not. \\[-2mm]

We need the dense sets $D_{\pi_1}$ in order to make sure that $dom\,(\pi_1 p)^\sigma = dom\,p^\sigma$. In particular, we do not want to run out of $\dom_x p^\sigma$ when permuting the $p^\sigma (i, \cdot)$ for $i \in \supp \pi_1 (\sigma)$.
%according to $f_{\pi_1} (\sigma)$. 
\\[-2mm]

It is not difficult to see that any map $\pi_1: D_{\pi_1} \rightarrow D_{\pi_1}$ as in Definition \ref{defpi1} is order-preserving. \\[-3mm]

Whenever $\pi_1 \in A_1$ and $D \in \mathcal{D}_1$ with $D \subseteq D_{\pi_1}$, then the map $\ol{\pi}_1 := \pi_1\, \uhr\, D$ is contained in $A_1$, as well. Here we have to use that the maps $\pi_1$ do not disturb the conditions' support or domain. In particular, whenever $p \in D$, it follows that $\pi_1 p$ is contained in $D$, as well. \\[-2mm]

It remains to verify that $A_1$ can be endowed with a group structure, which happens similarly as for $A_0$: \\[-3mm]

Firstly, for any $\pi_1 \in A_1$, $\pi_1: D_{\pi_1} \rightarrow D_{\pi_1}$, one can write down a map $\nu_1 \in A_1$ with $D_{\nu_1} = D_{\pi_1}$ such that $\nu_1$ is the inverse of $\pi_1$. 

%the inverse map $\pi_1^{-1}: D_{\pi_1} \rightarrow D_{\pi_1}$ with $\supp \pi_1^{-1} = \supp \pi_1$ and $\dom \pi_1^{-1} (\sigma) = \dom \pi_1 (\sigma)$ for every $\sigma \in \supp \pi_1$ can be written down explicitly such that $\pi_1^{-1} \, \circ \, \pi_1 = \pi_1 \, \circ \, \pi_1^{-1} = \id_{D_{\pi_1}}$. 

Secondly, whenever $\pi_1, \sigma_1 \in A_1$, $\pi_1: D_{\pi_1} \rightarrow D_{\pi_1}$, $\sigma_1: D_{\sigma_1} \rightarrow D_{\sigma_1}$, one can explicitly write down a map $\tau_1 \in A_1$ with $D_{\tau_1} = D_{\pi_1}\, \cap\, D_{\sigma_1}$ such that $\tau_1 (p) = \pi_1 (\sigma_1 (p))$ holds for all $p \in D_{\tau_1}$.

%$\nu_1 = \pi_1 \, \circ \, \sigma_1$ on $D_{\nu_1} = D_{\pi_1} \, \cap \, D_{\sigma_1}$, where $\supp \nu_1 = \supp \pi_1\, \cap \, \supp \sigma_1$, and $\dom \nu_1 (\sigma) = \dom \pi_1 (\sigma) \, \cap \, \dom \sigma_1 (\sigma)$ for all $\sigma \in \supp \nu_1$. 

Thirdly, $A_1$ contains the identity element $\id_1$, which is the identity on its domain $D_{\id_1} = \m{P}_1$. \\[-2mm]

%Finally, the identity element $\id_1$ with $\supp \id_1 = \emptyset$ such that $\id_1$ is the identity on $D_{\id_1} = \m{P}_1$ is contained in $A_1$, as well.
%Hence, 

As before, we  set $(A_1)_D := \{\pi_1 \in A_1\ | \ D_{\pi_1} = D\}$ for $D \in \mathcal{D}_1$, and take the direct limit \[\ol{A}_1 := \lim\limits_{\longrightarrow}{(A_1)_D} = \bigsqcup (A_1)_D \big/ \sim \;,\]  with $\pi_1 \sim \pi_1^\prime$ iff $\pi_1 (p) = \pi_1 ^\prime (p)$ holds for all $p \in D_{\pi_1}\, \cap\, D_{\pi_1^\prime}$. Then $\ol{A}_1$ becomes a group with the identity element $\ol{\id}_1 := [id_1]$. \\[-3mm]

%Then $\ol{A}_0$ becomes a group with concatenation $\circ$ and the inverse elements as in Section \ref{outline}, and the identity element $id_0$ with $D_{\id_0}$ as defined above.
%As for $A_0$, we can tale

%\begin{lem} $A_1$ is a group of partial $\m{P}_1$-automorphisms. \end{lem}

Now, we can define our group $\ol{A}$:

\begin{definition} 
Let $\mathcal{D} := \mathcal{D}_0\, \times\, \mathcal{D}_1$ and $A := A_0 \, \times \, A_1$; i.e. $A$ is the collection of all $\pi = (\pi_0, \pi_1)$ with $\pi_0 \in A_0$ and $\pi_1 \in A_1$ as defined above, and domain $D_\pi = D_{\pi_0}\, \times\, D_{\pi_1} \in \mathcal{D}$. Let $\ol{A} := \ol{A}_0\, \times\, \ol{A}_1$. This is a group with the identity element $id = (\ol{\id}_0, \ol{\id}_1)$.

%Then $A$ is a group of partial $\m{P}$-automorphisms.
\end{definition}

%ACHTUNG! die Betrachtungen zu $\ol{\Name}^D$ etc. bräuchte man doch für $\m{P}_0$ und $\m{P}_1$ zusammen!

For $D \in \mathcal{D}$, we define $\ol{\Name (\m{P})}^D$ as in Chapter \ref{outline}. For $D \in  \mathcal{D}$ and $\dot{x} \in \Name (\m{P})$, we define recursively:
%For any $D = D_\pi = D_{\pi_0}\, \times \, D_{\pi_1}$ with $\pi = (\pi_0, \pi_1) \in A$ as above, we define a hierarchy $\ol{\Name(\m{P})}^D$ recursively as follows: \begin{itemize} \item $\ol{\Name (\m{P})}_0^D := \emptyset$ \item $\ol{\Name (\m{P})}_{\alpha+1}^D := \{\dot{x} \in \Name (\m{P})\ | \ \dot{x} \subseteq \ol{\Name(\m{P})}_\alpha^D\, \times \, D \}$, and \item $\ol{\Name (\m{P})}_\lambda^D := \bigcup_{\alpha < \lambda} \ol{\Name(\m{P})}^D_\alpha$ for $\lambda$ a limit ordinal. \end{itemize} Let \[\ol{\Name(\m{P})}^D := \bigcup_{\alpha \in \Ord} \ol{\Name(\m{P})}^D_\alpha.\]
%\vspace*{2mm}
%Whenever ${\pi}: D_{\pi} \rightarrow D_{\pi}$ and $\dot{x} \in \ol{\Name(\m{P})}$, we can define ${\pi} \dot{x}$ as usual. However, for a $\m{P}$-name $\dot{x}$ with $\dot{x} \notin \ol{\Name(\m{P})}^D$, it is not clear how to apply $\pi$, so we need an extension $\ol{\dot{x}}^{D_{\pi}}$: \\[-2mm]
%For $D \subseteq \m{P}$, $D = D_\pi = D_{\pi_0} \, \times\, D_{\pi_1}$ as above, we define recursively for $\m{P}$-names $\dot{x}$: 
\[\ol{x}^{D} := \{ (\ol{y}^D, p)\ | \ \dot{y} \in \dom{\dot{x}}\, , \, p \in D\, , \, p \Vdash \dot{y} \in \dot{x}\}.\]

Then $\ol{x}^D \in \ol{\Name (\m{P})}^D$; and for any $G$ a $V$-generic filter on $\m{P}$, it follows that $(\ol{x}^D)^G = \dot{x}^G$. \\[-3mm]

It is not difficult to check that whenever $D$, $D^\prime \in \mathcal{D}$ and $\dot{x} \in \Name (\m{P})$, then $\ol{\ol{x}^{D}}^{D^\prime} = \ol{x}^{D^\prime}$. \\[-3mm]

The next lemma is important to establish a notion of symmetry that is coherent with the equivalence relation $\sim$:

\begin{lem} \label{pisimpiprime} Let $\pi$, $\pi^\prime \in A$ with $\pi \sim \pi^\prime$, i.e. $\pi\, \uhr\, (D_\pi\, \cap \, D_{\pi^\prime}) = \pi^\prime\, \uhr\, (D_\pi\, \cap\, D_{\pi^\prime})$. Then for $\dot{x} \in \Name (\m{P})$, it follows that $\pi \ol{x}^{D_\pi} = \ol{x}^{D_\pi}$ if and only if $\pi^\prime \ol{x}^{D_{\pi^\prime}} = \ol{x}^{D_{\pi^\prime}}$. \end{lem}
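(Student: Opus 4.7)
Set $D := D_\pi \cap D_{\pi'} \in \mathcal{D}$ and $\sigma := \pi \upharpoonright D = \pi' \upharpoonright D$; by the construction of $A$ above, $\sigma \in A$ with $D_\sigma = D$. The statement of the lemma is symmetric in $\pi$ and $\pi'$, so it suffices to prove the following reduction: \emph{for every $\tau \in A$ with $D \subseteq D_\tau$ and $\tau \upharpoonright D = \sigma$, we have $\tau \ol{x}^{D_\tau} = \ol{x}^{D_\tau}$ if and only if $\sigma \ol{x}^D = \ol{x}^D$.} Applying this reduction once with $\tau = \pi$ and once with $\tau = \pi'$ then yields the lemma.

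The reduction rests on two identities, each proved by induction on $\rk(\dot{z})$ for arbitrary $\dot{z} \in \Name(\m{P})$. The first, the \emph{key identity}, is
$$\ol{\tau \ol{\dot{z}}^{D_\tau}}^D \; = \; \sigma \ol{\dot{z}}^D.$$
Its proof unfolds $\tau \ol{\dot{z}}^{D_\tau}$ and applies $\ol{\cdot}^D$, reparametrises conditions via the bijection $p \mapsto \tau p$ of $D_\tau$, transports the forcing statement $p \Vdash \dot{u} \in \dot{z}$ first via the equivalence $\dot{u} \in \dot{z} \leftrightarrow \ol{\dot{u}}^{D_\tau} \in \ol{\dot{z}}^{D_\tau}$ and then via the symmetry lemma for $\tau$, and uses the inductive hypothesis $\ol{\tau \ol{\dot{u}}^{D_\tau}}^D = \sigma \ol{\dot{u}}^D$ at $\dot{u} \in \dom \dot{z}$. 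The crucial step that lets $\sigma$ appear on the right is that for $q \in D$, $\tau^{-1} q = \sigma^{-1} q$ since $\tau$ and $\sigma$ agree on $D$. The second identity,
$$\ol{\tau \ol{\dot{z}}^{D_\tau}}^{D_\tau} \; = \; \tau \ol{\dot{z}}^{D_\tau},$$
says that names of the form $\tau \ol{\dot{z}}^{D_\tau}$ are fixed points of $\ol{\cdot}^{D_\tau}$; it is proved by exactly the analogous unfolding argument carried out entirely at the level $D_\tau$.

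Granted these two identities, together with the general composition law $\ol{\ol{\dot{y}}^{D'}}^{D''} = \ol{\dot{y}}^{D''}$ recalled just before the lemma, both directions of the reduction are short manipulations. Forward: apply $\ol{\cdot}^D$ to $\tau \ol{x}^{D_\tau} = \ol{x}^{D_\tau}$; the left side becomes $\sigma \ol{x}^D$ by the key identity, and the right side becomes $\ol{x}^D$ by the composition law. Converse: assuming $\sigma \ol{x}^D = \ol{x}^D$, the key identity at $\dot{z} = \dot{x}$ gives $\ol{\tau \ol{x}^{D_\tau}}^D = \ol{\ol{x}^{D_\tau}}^D$; now apply $\ol{\cdot}^{D_\tau}$ to both sides, collapse the iterated bars using the composition law, and use the second identity on the left to reduce $\ol{\tau \ol{x}^{D_\tau}}^{D_\tau}$ back to $\tau \ol{x}^{D_\tau}$. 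The right-hand side collapses to $\ol{x}^{D_\tau}$ by the composition law again, and we conclude $\tau \ol{x}^{D_\tau} = \ol{x}^{D_\tau}$.

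I expect the main delicate point to be the bookkeeping in the inductive proofs of the two identities: correctly tracking the reparametrisation $p \leftrightarrow \tau p$, the successive applications of the symmetry lemma, and the swap between $\dot{u}$ and $\ol{\dot{u}}^{D_\tau}$ inside forcing statements, so that the right-hand sides simplify precisely to $\sigma \ol{\dot{z}}^D$ and to $\tau \ol{\dot{z}}^{D_\tau}$ respectively. Everything else is a formal consequence of these two identities together with the composition law already stated in the excerpt.
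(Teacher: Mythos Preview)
Your proof is correct but takes a genuinely different route from the paper's. The paper does not prove the lemma directly; instead it formulates and proves a stronger statement (Lemma~\ref{pisimpiprime2}): for $\dot{y},\dot{z}$ of the same rank, $\pi\,\ol{y}^{D_\pi}=\ol{z}^{D_\pi}$ iff $\pi'\,\ol{y}^{D_{\pi'}}=\ol{z}^{D_{\pi'}}$. This two-name formulation is what makes the single induction go through, since unfolding $\pi\,\ol{x}^{D_\pi}$ produces comparisons of the form $\pi\,\ol{u}^{D_\pi}=\ol{v}^{D_\pi}$ with $\dot u\neq\dot v$. After reducing (as you do) to the case $D_{\pi'}\subseteq D_\pi$, the paper proves both directions by a direct element-chase.

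Your approach avoids the two-name strengthening by instead proving two \emph{identities of names} by induction: the key identity $\ol{\tau\,\ol{z}^{D_\tau}}^{D}=\sigma\,\ol{z}^{D}$ and the fixed-point identity $\ol{\tau\,\ol{z}^{D_\tau}}^{D_\tau}=\tau\,\ol{z}^{D_\tau}$. These are equalities rather than biconditionals, so the induction hypothesis applies cleanly to each $\dot u\in\dom\dot z$ without needing a second name. The final deduction is then purely algebraic, using the composition law $\ol{\ol{x}^{D}}^{D'}=\ol{x}^{D'}$ together with your two identities. Both arguments rest on the same underlying observation (that $\tau^{-1}q=\sigma^{-1}q$ for $q\in D$), but your packaging is more modular: the two identities are reusable and the converse direction becomes a short manipulation rather than a separate element-chase. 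The paper's packaging, on the other hand, yields Lemma~\ref{pisimpiprime2} as a byproduct, which is a slightly stronger statement than what you prove.
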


We prove the following more general statement by induction over $\alpha$:

\begin{lem} \label{pisimpiprime2} Let $\pi$, $\pi^\prime \in A$ with $\pi \sim \pi^\prime$, i.e. $\pi\, \uhr\, (D_\pi\, \cap \, D_{\pi^\prime}) = \pi^\prime\, \uhr\, (D_\pi\, \cap\, D_{\pi^\prime})$, and $\alpha \in \Ord$. Then for any $\dot{y}$, $\dot{z} \in \Name (\m{P})$ with $\rk \dot{y} = \rk \dot{z} = \alpha$, it follows that $\pi \ol{y}^{D_\pi} = \ol{z}^{D_\pi}$ if and only if $\pi^\prime \ol{y}^{D_{\pi^\prime}} = \ol{z}^{D_{\pi^\prime}}$. \end{lem}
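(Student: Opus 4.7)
The plan is to proceed by transfinite induction on $\alpha$. The base case is trivial: rank $0$ forces $\dot{y} = \dot{z} = \emptyset$, and the equivalence holds vacuously. For the inductive step, set $E := D_\pi\, \cap\, D_{\pi^\prime}$; by the properties of $\mathcal{D}$ developed earlier, $E \in \mathcal{D}$ is dense in $\m{P}$, satisfies $\pi[E] = \pi^\prime[E] = E$, and $\pi\, \uhr\, E = \pi^\prime\, \uhr\, E$.

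The central idea is to route both sides of the desired equivalence through the common restriction to $E$. To this end I establish, in parallel with the outer induction, the following \emph{reduction claim}: for every $D \in \mathcal{D}$ with $D \subseteq D_\pi$, $\pi \ol{y}^{D_\pi} = \ol{z}^{D_\pi}$ if and only if $(\pi\, \uhr\, D)\, \ol{y}^D = \ol{z}^D$. Applying the reduction claim with $D = E$ to $\pi$, and separately to $\pi^\prime$, and then invoking $\pi\, \uhr\, E = \pi^\prime\, \uhr\, E$, immediately yields the lemma.

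For the reduction claim, I unfold the definition $\pi \ol{y}^{D_\pi} = \{(\pi \ol{u}^{D_\pi}, \pi p)\ | \ \dot{u} \in \dom \dot{y}\, , \, p \in D_\pi\, , \, p \Vdash \dot{u} \in \dot{y}\}$. Using the symmetry lemma for the names $\ol{u}^{D_\pi}, \ol{y}^{D_\pi} \in \ol{\Name (\m{P})}^{D_\pi}$, together with the fact that $p \Vdash \dot{u} \in \dot{y}$ iff $p \Vdash \ol{u}^{D_\pi} \in \ol{y}^{D_\pi}$ (since the two names have the same interpretation), one rewrites $\pi \ol{y}^{D_\pi}$ in the \emph{saturated form} $\{(\dot{w}, q)\ | \ \dot{w} \in \dom (\pi \ol{y}^{D_\pi})\, , \, q \in D_\pi\, , \, q \Vdash \dot{w} \in \pi \ol{y}^{D_\pi}\}$; the name $\ol{z}^{D_\pi}$ is saturated by direct inspection. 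Hence the equality $\pi \ol{y}^{D_\pi} = \ol{z}^{D_\pi}$ splits into (i) a domain equality $\{\pi \ol{u}^{D_\pi}\ | \ \dot{u} \in \dom \dot{y}\} = \{\ol{v}^{D_\pi}\ | \ \dot{v} \in \dom \dot{z}\}$ and (ii) a pointwise equality of forcing sections on $D_\pi$. The forward direction of the reduction ($D_\pi$-equality implies $D$-equality) is then immediate from $\pi[D] = D$ together with the inductive hypothesis applied to elements of $\dom \dot{y}$ and $\dom \dot{z}$ (which have rank $< \alpha$), identifying $\pi \ol{u}^{D_\pi} = \ol{v}^{D_\pi}$ with $(\pi\, \uhr\, D)\, \ol{u}^D = \ol{v}^D$.

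The main obstacle is the reverse direction, where one assumes $(\pi\, \uhr\, D)\, \ol{y}^D = \ol{z}^D$ and must match pairs $(\pi \ol{u}^{D_\pi}, \pi p)$ with $p \in D_\pi \setminus D$. My plan is to combine density of $D$ in $\m{P}$ with the saturated viewpoint: below any such $p$ there are densely many $\bar{p} \in D$ with $\bar{p} \Vdash \dot{u} \in \dot{y}$, and the $D$-level equality produces corresponding $\dot{v} \in \dom \dot{z}$ with $(\pi\, \uhr\, D)\, \ol{u}^D = \ol{v}^D$, which the inductive hypothesis upgrades to $\pi \ol{u}^{D_\pi} = \ol{v}^{D_\pi}$; this gives the domain equality in (i), and by symmetry also the reverse inclusion. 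For (ii), applying the symmetry lemma at both the $D_\pi$- and the $D$-levels translates the $D$-side identity $\{(\pi\, \uhr\, D) p\ | \ p \in D\, , \, p \Vdash \dot{u} \in \dot{y}\} = \{q \in D\ | \ q \Vdash \dot{v} \in \dot{z}\}$ into its $D_\pi$-counterpart: every $q \in D_\pi$ lies densely above $D$-conditions, so the equality of forcing sections propagates from $D$ up to $D_\pi$, finishing the reduction and thus the lemma.
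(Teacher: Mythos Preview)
Your approach is correct and is essentially the paper's argument, reorganized. The paper's first move is to assume without loss of generality that $D_{\pi'} \subseteq D_\pi$, justified by passing through $\wt{\pi} := \pi\,\uhr\,(D_\pi \cap D_{\pi'}) \in A$; this is exactly your \emph{reduction claim} specialized to $D = D_{\pi'}$ (in that situation $\pi' = \pi\,\uhr\,D_{\pi'}$). Your routing through $E = D_\pi \cap D_{\pi'}$ on both sides is the same two-step reduction the paper makes implicitly.

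The substantive content also matches. For the direction from the smaller dense set back up to $D_\pi$, the paper takes $(\ol{x}^{D_\pi},\ol{p}) \in \ol{z}^{D_\pi}$, extends $\ol{p}$ into $D_{\pi'}$ to find a witness $\dot{u}\in\dom\dot{y}$, upgrades $\pi'\ol{u}^{D_{\pi'}}=\ol{x}^{D_{\pi'}}$ to $\pi\ol{u}^{D_\pi}=\ol{x}^{D_\pi}$ by the inductive hypothesis, and then uses a density argument through $D_{\pi'}$ to show $\pi^{-1}\ol{p}\Vdash \dot{u}\in\dot{y}$. Your treatment does the same thing, phrased via the ``saturated form'' and the decomposition into a domain equality plus equality of forcing sections; the density step you describe for propagating forcing sections from $D$ up to $D_\pi$ is precisely the paper's final paragraph in the ``$\Leftarrow$'' case. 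The only cosmetic difference is that the paper works directly by element-chasing on pairs, while you first observe that both names are saturated and then compare domains and sections separately.
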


%\colorbox{yellow} {TO DO: Rang für Namen einführen!}

\begin{proof} W.l.o.g. we can assume that $D_{\pi^\prime} \subseteq D_\pi$, since the map $\wt{\pi} := \pi\, \uhr\, (D_\pi\, \cap\, D_{\pi^\prime}) = \pi^\prime\, \uhr\, (D_\pi\, \cap\, D_{\pi^\prime})$ is contained in $A$ as well, with $D_{\wt{\pi}} = D_\pi\, \cap\, D_{\pi^\prime}$. \\[-3mm]

Consider $\alpha \in \Ord$, and assume that the statement is true for all $\beta < \alpha$. Let $\dot{y}$, $\dot{z} \in \Name (\m{P})$ with $\rk \dot{y} = \rk \dot{z} = \alpha$. \begin{itemize} \item[\tbl $\Rightarrow$\tbr:] First, assume that $\pi \ol{y}^{D_\pi} = \ol{z}^{D_\pi}$. We only prove $\ol{z}^{D_{\pi^\prime}} \subseteq \pi^\prime \ol{y}^{D_{\pi^\prime}}$; the other inclusion is similar. 

Let $(\ol{x}^{D_{\pi^\prime}}, \ol{p}) \in \ol{z}^{D_{\pi^\prime}}$, i.e. $\dot{x} \in \dom \dot{z}$, $\ol{p} \in D_{\pi^\prime}$, and $\ol{p} \Vdash \dot{x} \in \dot{z}$. Then also $\ol{p} \in D_\pi$ holds. Hence, $(\ol{x}^{D_\pi}, \ol{p}) \in \ol{z}^{D_\pi}$, and $\ol{z}^{D_\pi} = \pi \ol{y}^{D_\pi}$ by assumption; so there must be $\dot{u} \in \dom \dot{y}$ with $\ol{x}^{D_\pi} = \pi \ol{u}^{D_\pi}$. Setting $\ol{q} := \pi^{-1} \ol{p}$, it follows that $\ol{q} \Vdash \ol{u}^{D_\pi} \in \ol{y}^{D_\pi}$ and $\ol{q} \Vdash \dot{u} \in \dot{y}$. 

Since $\rk \dot{u} = \rk \dot{x} < \alpha$, our inductive assumption implies that $\ol{x}^{D_{\pi^\prime}} = \pi^\prime \ol{u}^{D_{\pi^\prime}}$ holds. Hence, $(\ol{x}^{D_{\pi^\prime}}, \ol{p}) = (\pi \ol{u}^{D_{\pi^\prime}}, \pi \ol{q})$, which is contained in $\pi^\prime \ol{y}^{D_{\pi^\prime}}$, since $\dot{u} \in \dom \dot{y}$, $\ol{q} \in D_{\pi^\prime}$ (since $\ol{p} \in D_{\pi^\prime}$ and $\ol{q} = \pi^{-1} \ol{p}$), and $\ol{q} \Vdash \dot{u} \in \dot{y}$.

\item[\tbl $\Leftarrow$\tbr:] Now, we assume that $\pi^\prime \ol{y}^{D_{\pi^\prime}} = \ol{z}^{D_{\pi^\prime}}$. As before, we only prove the inclusion $\ol{z}^{D_\pi} \subseteq \pi \ol{y}^{D_\pi}$. 

Consider $(\ol{x}^{D_\pi}, \ol{p}) \in \ol{z}^{D_\pi}$, i.e. $\dot{x} \in \dom \dot{z}$, $\ol{p} \in D_\pi$ and $\ol{p} \Vdash \dot{x} \in \dot{z}$. Let $\wt{p} \leq \ol{p}$ with $\wt{p} \in D_{\pi^\prime}$. Then $(\ol{x}^{D_{\pi^\prime}}, \wt{p}) \in \ol{z}^{D_{\pi^\prime}} = \pi^\prime \ol{y}^{D_{\pi^\prime}}$, so there must be $\dot{u} \in \dom \dot{y}$ with $\ol{x}^{D_{\pi^\prime}} = \pi^\prime \ol{u}^{D_{\pi^\prime}}$. By the inductive assumption, it follows that $\ol{x}^{D_\pi} = \pi \ol{u}^{D_\pi}$, since $\rk \dot{u} = \rk \dot{x} < \alpha$. Let $\ol{q} := \pi^{-1} \ol{p}$. We have to show that $(\pi \ol{u}^{D_\pi}, \pi \ol{q}) \in \pi \ol{y}^{D_\pi}$. Since $\dot{u} \in \dom \dot{y}$ and $\ol{q} \in D_\pi$, it suffices to verify that $\ol{q} \Vdash \dot{u} \in \dot{y}$. We prove that whenever $r \leq \ol{q}$, $r \in D_{\pi^\prime}$, then $r \Vdash \dot{u} \in \dot{y}$. Consider $r \leq \ol{q}$ with $r \in D_{\pi^\prime}$. Then $\pi r \in D_{\pi^\prime}$, and $\pi r \leq \ol{p}$ implies that $\pi r \Vdash \dot{x} \in \dot{z}$. Hence, $(\ol{x}^{D_{\pi^\prime}}, \pi r) \in \ol{z}^{D_{\pi^\prime}}$, and $\ol{z}^{D_{\pi^\prime}} = \pi^\prime \ol{y}^{D_{\pi^\prime}}$ by assumption. Now, $(\pi^\prime \ol{u}^{D_{\pi^\prime}}, \pi^\prime r) = (\ol{x}^{D_{\pi^\prime}}, \pi r) \in \pi^\prime \ol{y}^{D_{\pi^\prime}}$ implies that $r \Vdash \ol{u}^{D_{\pi^\prime}} \in \ol{y}^{D_{\pi^\prime}}$; hence, $r \Vdash \dot{u} \in \dot{y}$ as desired. 

%Let $\ol{\ol{q}} := (\pi^\prime)^{-1} \ol{\ol{p}}$.
%Let 
\end{itemize}
\end{proof}

%Let $\pi, \sigma \in A$ with $\pi: D_{\pi} \rightarrow D_{\pi}$, $\sigma: D_{\sigma} \rightarrow D_{\sigma}$ as above. It is not difficult to verify the following properties: \begin{itemize} \item For any $\dot{x} \in \Name (\m{P})$, it follows that $\ol{\ol{\dot{x}}^{D_{\pi}}}^{D_{\sigma}} = \ol{\dot{x}}^{D_{\pi}\, \cap \, D_{\sigma}}$. \item Whenever $\dot{x} \in \ol{\Name(\m{P})}^{D_{\pi}}$, then also $\sigma \ol{\dot{x}}^{D_{\sigma}} \in \ol{\Name(\m{P})}^{D_{\pi}}$ with $\pi \ol{\dot{x}}^{D_{\sigma}} = \ol{{\pi} \dot{x}}^{D_{\sigma}}$.\end{itemize}

\subsection{Constructing $\boldsymbol{\mathcal{F}}$.} \label{constructingf}

Now, we define our collection of $\ol{A}$-subgroups that will generate a normal filter $\mathcal{F}$ on $\ol{A}$, establishing our notion of symmetry. \\[-3mm]

%symmetric submodel $N$. \\[-3mm]

We will introduce two different types of subgroups. \\[-3mm]

%, such that countable intersections of those generate a normal filter $\mathcal{F}$ on $\ol{A}$. Then we can use $\mathcal{F}$ to establish our notion of symmetry. \\[-3mm]

%where by \textit{$A$-subgroup}, we mean a set $B \subseteq A$ such that $B$ is a group of partial $\m{P}$-automorphisms. \\[-3mm]

%We will have $A$-subgroups of the form $B_0\, \times \, A_1$, for an $A_0$-subgroup $B_0$, and $A$-subgroups of the form $A_0\, \times \, B_1$ for an $A_1$-subgroup $B_1$. \\[-3mm]

%We start with defining $A_0$-subgroups. \\[-3mm]

Firstly, for any $0 < \eta < \gamma$,  $i < \alpha_\eta$ (with $\eta \in \Lim$ or $\eta \in \Succ$), let \[Fix (\eta, i) := \big\{\, [\pi] \in \ol{A} \ | \ \forall p \in D_\pi\ (\pi p)^\eta_i = p^\eta_i\, \big\}.\] 

Whenever $\pi \sim \pi^\prime$, it follows that $(\pi p)^\eta_i = p^\eta_i$ for all $p \in D_\pi$ if and only if $(\pi^\prime p)^\eta_i = p^\eta_i$ for all $p \in D_{\pi^\prime}$. Hence, $Fix (\eta, i)$ is well-defined, and clearly, any $Fix (\eta, i)$ is a subgroup of $\ol{A}$. \\[-3mm]

By including $Fix (\eta, i)$ into our filter $\mathcal{F}$, we make sure that any canonical name $\dot{G}^\eta_i$ for the $i$-th generic $\kappa_\eta$-subset $G^\eta_i$ is hereditarily symmetric, since $\pi \dot{G}^\eta_i = \dot{G}^\eta_i$ for all $\pi \in Fix (\eta, i)$. Hence, our eventual model $N$ will contain any generic $\kappa_\eta$-subset $G^\eta_i$. \\[-2mm]

%ACHTUNG - wurde schon erwähnt, dass man $G^\eta_i$, $\bigcup G^\eta_i$, und die zugehörige $\kappa_\eta$-Teilmenge nicht sauber trennt?
%\[\dot{G}^\eta_i := \{\]

Now, we turn to the second type of $\ol{A}$-subgroup. For any $0 < \lambda < \gamma$ and $k < \alpha_\lambda$ (with $\lambda \in \Lim$ or $\lambda \in \Succ$), we need in $N$ a surjection $s: \powerset(\kappa_\lambda) \rightarrow k$ in order to make sure that $\theta^N (\kappa_\lambda) \geq \alpha_\lambda$. However, the sequence $(G^\lambda_i\ | \ i < \alpha_\lambda)$ must \textit{not} be included into $N$, since $\theta^N (\kappa_\lambda) \leq \alpha_\lambda$, so $N$ must not contain a surjection $s: \powerset (\kappa_\lambda) \rightarrow \alpha_\lambda$. \\[-2mm]

The idea (which appears in \cite{GK} in a slightly different setting, and in \cite{arxiv} similar as here) is that for any $ 0 < \lambda < \gamma$ and $k < \alpha_\lambda$, we define a \tbl cloud\tbr\,around each $G^\lambda_i$ for $i\leq k$, denoted by $(\wt{G^\lambda_i})^{(k)}$, and make sure that the \tbl sequence of clouds\tbr\, $(\, (\wt{G^\lambda_i})^{(k)}\ | \ i < k)$ makes its way into $N$. \\[-2mm]

When defining these subgroups, we have to treat limit cardinals and successor cardinals separately. \\[-2mm]

For $\lambda \in \Lim$, $k < \alpha_\lambda$, let \[H^\lambda_k := \Big\{\, [\pi] \in \ol{A}\ \, \Big| \ \, \exists\, \kappa_{\ol{\nu}, \ol{\j}} < \kappa_\lambda\ \: \forall\, \kappa_{\nu, j} \in [\kappa_{\ol{\nu}, \ol{\j}}, \kappa_\lambda)\ \: \forall\, i \leq k\: : \]\[\Big( (\lambda, i) \notin \supp \pi_0 (\nu, j)\, \vee\, G_{\pi_0} (\nu, j) (\lambda, i) = (\lambda, i) \Big) \, \Big\}.\]

It is not difficult to verify that any $H^\lambda_k$ is well-defined and indeed a subgroup of $\ol{A}$. \\[-3mm]

%This is well-defined: Whenever $\pi \sim \pi^\prime$, and there is $\kappa_{\ol{\nu}, \ol{\j}}$ as above, then for all $\kappa_{\nu, j}, \geq \kappa_{\ol{\nu}, \ol{\j}}$ and $i \leq k$ it also follows that $(\lambda, i) \notin \supp \pi^\prime_0 (\nu, j)$ or $G_{\pi^\prime_0} (\nu, j) (\lambda, i) = (\lambda, i)$. \\ Again, it is not difficult to verify that $H^\lambda_k$ is indeed an subgroup of $\ol{A}$.\\[-3mm]

Roughly speaking, $H^\lambda_k$ contains all $[\pi] \in \ol{A}$ such that above some $\kappa_{\ol{\nu}, \ol{\j}} < \kappa_\lambda$, there is no permutation of the vertical lines $P^\lambda_i\, \uhr \, [\kappa_{\ol{\nu}, \ol{\j}}, \kappa_\lambda)$ for $i \leq k$.

%the following holds for $\pi_0$: For each interval $[\kappa_{\nu, j}, \kappa_{\nu, j + 1})$ above some $\kappa_{\ol{\nu}, \ol{j}} (\pi) < \kappa_\lambda$, there is no permutation of the vertical lines $P^\lambda_i\, \uhr \, [\kappa_{\nu, j}, \kappa_{\nu, j + 1})$ for $i \leq k$. 

This implies that for any $i, j < k$ with $i \neq j$ and $[\pi] \in H^\lambda_k$, it is not possible that $\pi G^\lambda_i = G^\lambda_j$. Hence, for any $i < k$, we can define a \tbl cloud\tbr\,around $G^\lambda_i$ as follows: \[(\dot{\wt{G^\lambda_i}})^{(k)} := \big \{ \big( \pi \ol{G^\lambda_i}^{D_\pi}, \m{1} \big)\ | \ [\pi] \in H^\lambda_k \big\}.\] With $(\wt{G^\lambda_i})^{(k)} := \big((\dot{\wt{G^\lambda_i}})^{(k)}\big)^G$, it follows that $(\wt{G^\lambda_i})^{(k)}$ is the orbit of $G^\lambda_i$ under $H^\lambda_k$; so two distinct orbits $(\wt{G^\lambda_i})^{(k)}$ and $(\wt{G^\lambda_j})^{(k)}$ for $i \neq j$ are disjoint. The sequence $\big( (\wt{G^\lambda_i})^{(k)}\ | \ i < k \big)$, which has a canonical symmetric name stabilized by all $\pi$ with $[\pi] \in H^\lambda_k$, gives a surjection $s: \powerset (\kappa_\lambda) \rightarrow k$ in $N$ (see Chapter 6.1). \\[-2mm]

%FRAGE: Den oberen Index $(k)$ evtl weglassen? Oder wäre dieser später nötig? \\[-2mm]

Now, we consider the case that $\lambda \in \Succ$. For $k < \alpha_\lambda$, let \[H^\lambda_k := \big \{\, [\pi] \in \ol{A}\ | \ \forall\, i \leq k\ \big(i \notin \supp \pi_1 (\lambda)\, \vee \, f_{\pi_1} (\lambda) (i) = i \big)\, \big\}.\] 

Again, 
%for $\pi \sim \pi^\prime$, it follows that whenever $i \notin \supp \pi_1 (\lambda)$ or $f_{\pi_1} (\lambda) (i) = i$ for all $i \leq k$, then the same holds for $\pi^\prime$. Hence, $H^\lambda_k$ is well-defined; and 
one can easily check that $H^\lambda_k$ is well-defined and indeed an $\ol{A}$-subgroup. \\[-2mm]

Whenever $[\pi]$ is contained in $H^\lambda_k$, then $\pi$ does not 
%Then $H^\lambda_k$ does not contain any of those automorphisms that 
interchange any $G^\lambda_i$ and $G^\lambda_j$ for $i, j < k$ in the case that $i \neq j$. Thus, as for $\lambda \in \Lim$, we can define \tbl clouds\tbr\,$(\wt{G^\lambda_i})^{(k)}$ for $i \leq k$ and obtain a surjection $s: \powerset(\kappa_\eta) \rightarrow k$ in $N$ (see Chapter 6.1). \\[-2mm]

%for any $i < k$, we can define a \tbl cloud\tbr\,around $G^\eta_i$ with the symmetric name \[(\dot{\wt{G^\eta_i}})^{(k)} := \big\{\big( \pi \dot{G}^\eta_i , \m{1} \big)\ | \ \pi \in H^\eta_k \big\},\] such that for $(\wt{G^\eta_i})^{(k)} := \big( (\dot{\wt{G^\eta_i}})^{(k)}\big)^G$ it follows that any two distinct \tbl clouds\tbr\, $(\wt{G^\eta_i})^{(k)}$ and $(\wt{G^\eta_j})^{(k)}$ for $i, j < k$ with $i \neq j$ are disjoint. Hence, the sequence $\big( (\wt{G^\eta_i})^{(k)}\ | \ i \leq k \big)$, which has a canonical symmetric name stabilized by all $\pi \in H^\lambda_k$, gives a surjection $s: \powerset (\kappa_\eta) \rightarrow k$ in $N$.

%ACHTUNG - aus der \tbl Bemerkung\tbr\,zur Eindeutigkeit von $\supp \pi_1 (\eta)$, die hier noch nicht zu finden ist, würde folgen, dass $F_{\pi_1} (i) = i$ impliziert, dass $i \notin \supp \pi_1 (\eta)$!. \\[-3mm]

%It is not difficult to see that any $H^\lambda_k$ for $\lambda \in \Lim$ or $\lambda \in \Succ$ is indeed an $A$-subgroup. \\[-2mm]

We are now ready to define our normal filter $\mathcal{F}$ on $\ol{A}$. Note that the $Fix (\eta, i)$ and $H^\lambda_k$ are \textit{not} normal $\ol{A}$-subgroups: For instance, if $[\pi] \in Fix (\eta, i)$ for some $\eta \in \Lim$, $i < \alpha_\eta$, and $\sigma \in A$ with $G_{\sigma_0} (\nu, j)(\eta, i) = (\eta, i^\prime)$ for all $\kappa_{\nu, j}, < \kappa_\eta$ such that $[\pi] \notin Fix (\eta, i^\prime)$, then in general, $[\sigma]^{-1}[\pi][\sigma]$ is not contained in $Fix (\eta, i)$. \\[-3mm]
%consider $p \in D_\pi\, \cap\, D_\sigma$. We have %$((\sigma^{-1}\, \circ \, \pi \, \circ \, \sigma) p)^\eta_i

However, it is not difficult to verify: \begin{lem} \label{normalfilter} \begin{itemize} \item For all $\sigma \in A$, and $\eta \in \Lim$, $i < \alpha_\eta$, \[[\sigma] Fix (\eta, i) [\sigma]^{-1} \supseteq Fix (\eta, i) \, \cap\, \bigcap \{ Fix (\eta_m, i_m)\ | \ m < \omega, (\eta_m, i_m) \in \supp \sigma_0 \}. \] In the case that $\sigma \in A$, and $\eta \in \Succ$, $i < \alpha_\eta$, \[[\sigma] Fix (\eta, i) [\sigma]^{-1} \supseteq Fix (\eta, i)\, \cap\, \bigcap \{ Fix(\eta, i_m)\ | \ m < \omega, i_m \in \supp \sigma_1 (\eta) \}.\] \item For $\sigma \in A$ and $\lambda \in \Lim$, $k < \alpha_\lambda$, \[[\sigma] H^\lambda_k [\sigma]^{-1} \supseteq  H^\lambda_k\, \cap \, \bigcap \{ Fix (\eta_m, i_m)\ | \ m < \omega, (\eta_m, i_m) \in \supp \sigma_0 \}.\] In the case that $\lambda \in \Succ$, $k < \alpha_\lambda$, \[[\sigma] H^\lambda_k [\sigma]^{-1} \supseteq H^\lambda_k\, \cap \, \bigcap \{ Fix(\lambda, i_m)\ | \ m < \omega, i_m \in \supp \sigma_1 (\lambda) \}.\]  \end{itemize} \end{lem}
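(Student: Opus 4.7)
The plan is to prove each of the four inclusions by the standard reformulation: for every $[\tau]$ in the right-hand intersection, set $[\pi] := [\sigma]^{-1}[\tau][\sigma]$ and show that $[\pi]$ lies in the corresponding ``inner'' subgroup $Fix(\eta, i)$ or $H^\lambda_k$; this immediately yields $[\tau] = [\sigma][\pi][\sigma]^{-1}$ in the conjugate subgroup, as required. Picking a representative of $\pi$ whose domain is contained in $D_\sigma\, \cap\, D_\tau\, \cap\, \sigma^{-1}[D_\tau] \in \mathcal{D}$, the claim needs to be verified only pointwise on this dense set, and by Lemma \ref{pisimpiprime2} the conclusion is independent of the choice of representative.

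For the two $Fix$-inclusions, fix $p \in D_\pi$ and trace the three-step composition $p' := \sigma^{-1}(\tau(\sigma p))$ at the $(\eta, i)$-coordinate. By the action formulas for elements of $A_0$, the value $(\sigma p)^\eta_i(\zeta)$ depends only on the values of $p$ at coordinates $(\eta', i')$ lying in the $F_{\sigma_0}(\nu, j)$- and $G_{\sigma_0}(\nu, j)$-orbit of $(\eta, i)$; in the $\eta \in \Lim$-case this orbit is contained in $\{(\eta, i)\}\, \cup\, \supp \sigma_0$, and in the $\eta \in \Succ$-case in $\{(\eta, i_m)\ | \ i_m \in \supp \sigma_1(\eta)\}$. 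Since $[\tau]$ fixes each of those coordinates by hypothesis, $\tau$ leaves $(\sigma p)^{\eta'}_{i'}$ unchanged at every coordinate needed to reconstruct the $(\eta, i)$-value under $\sigma^{-1}$, so the composition gives $(p')^\eta_i = p^\eta_i$ as desired.

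For the two $H^\lambda_k$-inclusions, let $\kappa_{\ol{\nu}, \ol{\j}} < \kappa_\lambda$ be a threshold witnessing $[\tau] \in H^\lambda_k$, and enlarge $\kappa_{\ol{\nu}, \ol{\j}}$ past all of the countably many levels $\kappa_\mu$ with $(\mu, m) \in \supp \sigma_0$ and $\kappa_\mu < \kappa_\lambda$; this is possible because $\kappa_\lambda$ is a limit cardinal and $\supp \sigma_0$ is countable. For $\kappa_{\nu, j}$ above this enlarged threshold and $i \leq k$, one writes $G_{\pi_0}(\nu, j)$ as the composition $G_{\sigma_0}(\nu, j)\, \circ\, G_{\tau_0}(\nu, j)\, \circ\, G_{\sigma_0^{-1}}(\nu, j)$ on $\supp \pi_0(\nu, j) = \supp \sigma_0(\nu, j)\, \cup\, \supp \tau_0(\nu, j)$ (each $G$-map extended by the identity outside its own support). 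The map $G_{\sigma_0^{-1}}(\nu, j)$ sends $(\lambda, i)$ to some coordinate in $\{(\lambda, i)\}\, \cup\, \supp \sigma_0(\nu, j)$; the middle map $G_{\tau_0}(\nu, j)$ fixes this intermediate coordinate because $[\tau]$ lies both in $H^\lambda_k$ (giving the fix on $(\lambda, i)$) and in $Fix(\mu, m)$ for every $(\mu, m) \in \supp \sigma_0$ (giving the fix on each nontrivial image); and the final map $G_{\sigma_0}(\nu, j)$ returns the coordinate to $(\lambda, i)$. Hence $G_{\pi_0}(\nu, j)(\lambda, i) = (\lambda, i)$ above the enlarged threshold, witnessing $[\pi] \in H^\lambda_k$. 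The $\lambda \in \Succ$-case runs identically, with $f_{\pi_1}(\lambda)$ in place of $G_{\pi_0}(\nu, j)$ and $\supp \sigma_1(\lambda)$ in place of $\supp \sigma_0$.

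The main technical obstacle is the bookkeeping of how the permutation data $(F_{\pi_0}, G_{\pi_0}, \pi_0(\zeta), f_{\pi_1}(\cdot), \pi_1(\zeta))$ of the threefold composition $\sigma^{-1}\tau\sigma$ is assembled from that of the individual factors, since the formula for $\pi_0 p$ splits into subcases according to whether each coordinate $\zeta$ lies in $\dom \pi_0$, and these subcases have to be threaded consistently through the composition. Fortunately the subgroups $Fix(\eta, i)$ and $H^\lambda_k$ only see the \emph{permutation} portion of the data -- the more elaborate value-bijections $(\pi_0)_\ast(\xi, \zeta)$ play no role -- so the verification reduces to a clean combinatorial identity on the bijections $F$, $G$, $f$ and on the componentwise maps on $2^{\supp}$, which collapses to the identity under the stated fixing hypotheses on $[\tau]$.
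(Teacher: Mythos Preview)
Your argument is essentially correct and is precisely the kind of verification the paper has in mind; the paper itself gives no proof beyond the phrase ``it is not difficult to verify'', so your sketch is in fact more detailed than what appears there.

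One small point deserves correction. In the $H^\lambda_k$ case for $\lambda \in \Lim$ you propose to enlarge the threshold $\kappa_{\ol{\nu}, \ol{\j}}$ past all levels $\kappa_\mu < \kappa_\lambda$ with $(\mu, m) \in \supp \sigma_0$, justifying this by ``$\kappa_\lambda$ is a limit cardinal and $\supp \sigma_0$ is countable''. This justification fails when $\cf \kappa_\lambda = \omega$: the countably many $\kappa_\mu$ may well be cofinal in $\kappa_\lambda$. Fortunately the enlargement is unnecessary. Your own subsequent reasoning already works at every $\kappa_{\nu, j}$ above the original $\tau$-threshold: if $G_{\sigma_0^{-1}}(\nu, j)$ moves $(\lambda, i)$ to some $(\mu, m) \in \supp \sigma_0(\nu, j)$, then $[\tau] \in Fix(\mu, m)$ forces $G_{\tau_0}(\nu, j)$ to fix $(\mu, m)$ (since otherwise one easily builds $p \in D_\tau$ and $\zeta \notin \dom \tau_0$ with $p^\mu_m(\zeta) \neq p^{G_{\tau_0}(\nu,j)(\mu,m)}(\zeta)$), and $G_{\sigma_0}(\nu, j)$ returns $(\mu, m)$ to $(\lambda, i)$. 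So simply delete the enlargement step and the argument goes through unchanged.
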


%\colorbox{yellow} {FRAGE: Müsste man erwähnen, warum das nicht nachgerechnet wird?}

Hence, it follows that countable intersections of the $\ol{A}$-subgroups $Fix (\eta, i)$ and $H^\lambda_k$ generate a normal filter on $\ol{A}$.

\begin{definition} Let $\mathcal{F}$ denote the filter on $\ol{A}$ defined as follows: \\[-3mm]

A subgroup $B \subseteq \ol{A}$ is contained in $\mathcal{F}$ if there are $( (\eta_m, i_m)\ | \ m < \omega)$, $( (\lambda_m, k_m)\ | \ m < \omega)$ with \[B \supseteq \bigcap_{m < \omega} Fix (\eta_m, i_m)\, \cap \, \bigcap_{m < \omega} H^{\lambda_m}_{k_m}.\] \end{definition}

Then by Lemma \ref{normalfilter}, it follows that $\mathcal{F}$ is a normal filter on $\ol{A}$; which is countably closed. \\[-3mm]

%\colorbox{yellow}{FRAGE: bei Definitionen \tbl if and only if\tbr\, nicht so schreiben?}

Now, we can use $\mathcal{F}$ to establish our notion of symmetry.

\begin{definition} A $\m{P}$-name $\dot{x}$ is { \upshape symmetric} if \[ \big\{ \, [\pi] \in \ol{A}\ | \ \pi \ol{x}^{D_\pi} = \ol{x}^{D_\pi} \, \big\} \in \mathcal{F}. \] Recursively, a name $\dot{x}$ is {\upshape hereditarily symmetric}, $x \in HS$, if $\dot{x}$ is symmetric, and $\dot{y}$ is hereditarily symmetric for all $\dot{y} \in \dom \dot{x}$. \end{definition}

%there are $( (\eta_m, i_m)\ | \ m < \omega)$, $( (\lambda_m, k_m)\ | \ m < \omega)$ such that \[\{\pi \in A\ | \ \pi \ol{x}^{D_\pi} = \ol{x}^{D_\pi} \} \supseteq \bigcap_{m < \omega} Fix (\eta_m, i_m)\, \cap \, \bigcap_{m < \omega} H^{\lambda_m}_{k_m}.\] 
By Lemma \ref{pisimpiprime}, this is well-defined, since for $\pi \sim \pi^\prime$ and $\dot{x} \in \Name (\m{P})$, it follows that $\pi \ol{x}^{D_\pi} = \ol{x}^{D_\pi}$ if and only if $\pi^\prime \ol{x}^{D_{\pi^\prime}} = \ol{x}^{D_{\pi^\prime}}$. \\[-3mm]

%\colorbox{yellow}{FRAGE: Sollte man nochmal schreiben, wann eine Name $\dot{x}$ symmetrisch ist?} 

%\colorbox{yellow}{FRAGE: Sollte man nochmal erwähnen, dass $\pi \ol{x}^{D_\pi} = \ol{x}^{D_\pi}$ nicht vom Repräsentanten abhängt?}

We will use the following properties: If $\dot{x} \in HS$ and $\pi \in A$, then firstly, it is not difficult to verify that also $\ol{x}^{D_\pi} \in HS$ holds, and secondly, $\pi \ol{x}^{D_\pi} \in HS$. For the second claim, one can check that whenever $\sigma \in A$ with $\sigma \ol{x}^{D_\sigma} = \ol{x}^{D_\sigma}$, then \[(\pi \sigma \pi^{-1})\, \ol{\pi \ol{x}^{D_\pi}}^{\ D_{\pi \sigma \pi^{-1}}} = \ol{\pi \ol{x}^{D_\pi}}^{\ D_{\pi \sigma \pi^{-1}}},\] and then use the normality of $\mathcal{F}$.

%The following properties are not difficult to verify: \begin{itemize} \item If $\dot{x} \in HS$ and $\pi \in A$, then also $\ol{\dot{x}}^{D_\pi} \in HS$. \item IF $\dot{x} \in HS$ and $\pi \in A$ with $\dot{x} \in \ol{\Name(\m{P})}^{D_\pi}$, then also $\pi \dot{x} \in HS$. \end{itemize}

%The map $\pi_0$ will be given by the following parameters:

%we will work with symmetric names.
%partial orders and try to avoid using Boolean Algebras.

\section{The symmetric submodel.} 

Let $G$ be a $V$-generic filter on $\m{P}$. Our symmetric extension is \[N := V(G) = \{\dot{x}^G\ | \ \dot{x} \in HS\}. \] 

Since we do not use the standard method for constructing symmetric extensions, we first have to make sure that $N \vDash ZF$.\\[-2mm]

%Since our construction

%Since we do not use the standard method of constructing symmetric submodels, 
%where an isomorphisms $\pi: D_\pi \rightarrow D_\pi$ on a dense set $D_\pi$ is uniquely extended to an automorphism of the corresponding Boolean Algebra, and $V(G)$ is constructed using the Boolean valued model $V^B$, 
%we have to make sure that our model $N$ is indeed a model of $ZF$. \\[-2mm] 

%\colorbox{red}{ACHTUNG - GILT ÜBERHAUPT DAS FORCING THEOREM? DA GIBT ES EIN PROBLEM}

%\colorbox{red}{EVTL: im Skript der Vorlesung nachsehen?}

The symmetric forcing relation \tbl\,$\Vdash_s$\tbr\, can be defined in the ground model as usually, and the forcing theorem holds. \\[-3mm]

Whenever $\dot{x}$, $\dot{y} \in HS$ and $p \in \m{P}$, then $p \Vdash_s \dot{y} \in \dot{x}$ if and only if $p \Vdash \dot{y} \in \dot{x}$ (with the ordinary forcing relation $\Vdash$) and $p \Vdash_s \dot{x} = \dot{y}$ if and only if $p \Vdash \dot{x} = \dot{y}$. In particular, for any $\dot{x} \in HS$ and $D \in \mathcal{D}$, we have \[\ol{x}^D = \{(\ol{y}^D, p) \ | \ \dot{y} \in \dom \dot{x}\, , \, p \in D\, , \, p \Vdash_s \dot{y} \in \dot{x} \}. \]

\vspace*{2mm} Since $V \subseteq V(G) \subseteq V[G]$ and $V(G)$ is transitive, it follows that $V(G)$ satisfies the axioms of $Emptyset$, $Foundation$, $Extensionality$ and $Infinity$. The proofs of the axioms of $Pairing$, $Union$ and $Separation$ are similar to the proofs for the standard construction, with some extra care needed to make sure that all the involved names are indeed symmetric. \\[-3mm]

We give a proof of $Power$ $Set$ and $Replacement$.

%\colorbox{red}{ACHTUNG - das wären doch nicht alle Axiome! Was wäre mit Separation??}

\begin{lem} \label{power set} $V(G) \vDash \; Power\; Set$. \end{lem}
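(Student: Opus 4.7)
The plan is to imitate the classical proof of Power Set in symmetric extensions: for each $\dot{x} \in HS$, produce a single hereditarily symmetric name $\dot{P}$ whose $G$-interpretation is $\mathcal{P}(\dot{x}^G) \cap V(G)$. The guiding idea is that every subset of $\dot{x}^G$ belonging to $V(G)$ admits an $HS$-name whose domain is already contained in $\dom \dot{x}$; gathering all such ``restricted'' names into one name then yields $\dot{P}$.

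First, fix $\dot{x} \in HS$ and choose $B \in \mathcal{F}$ in the stabilizer of $\dot{x}$, i.e. satisfying $\pi \ol{x}^{D_\pi} = \ol{x}^{D_\pi}$ for every $[\pi] \in B$. For any $y \in V(G)$ with $y \subseteq \dot{x}^G$, pick some $\dot{y} \in HS$ with $\dot{y}^G = y$ and define the restricted name
\[
\dot{y}' := \{\, (\dot{z}, p) \, : \, \dot{z} \in \dom \dot{x}, \ p \in \m{P}, \ p \Vdash_s \dot{z} \in \dot{y}\,\}.
\]
A routine density argument in $V[G]$ gives $(\dot{y}')^G = y$. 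Moreover $\dot{y}' \in HS$: since $\dot{y}'$ is definable from $\dot{x}$ and $\dot{y}$ using only $\dom \dot{x}$ and the symmetric forcing relation, the symmetry lemma for $\Vdash_s$ implies that the stabilizer of $\dot{y}'$ contains the intersection of the stabilizers of $\dot{x}$ and $\dot{y}$, which belongs to $\mathcal{F}$ by closure under finite intersections; hereditary symmetry follows from $\dom \dot{y}' \subseteq \dom \dot{x} \subseteq HS$.

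Next, set
\[
\dot{P} := \{\, (\dot{y}', \m{1}) \, : \, \dot{y}' \in HS, \ \dot{y}' \subseteq \dom \dot{x} \times \m{P} \,\}.
\]
This is a genuine set in $V$ because each candidate $\dot{y}'$ is a subset of the fixed set $\dom \dot{x} \times \m{P}$. The identity $\dot{P}^G = \mathcal{P}(\dot{x}^G) \cap V(G)$ is then immediate: the inclusion $\subseteq$ holds because $\dom \dot{y}' \subseteq \dom \dot{x}$ already forces $(\dot{y}')^G \subseteq \dot{x}^G$, while $\supseteq$ is the content of the preceding paragraph. To verify that $\dot{P}$ is (hereditarily) symmetric, I would show that $B$ lies in its stabilizer: for each $[\pi] \in B$, the assignment $\dot{y}' \mapsto \pi\,\ol{y'}^{D_\pi}$ should be a bijection of the collection of first coordinates occurring in $\ol{P}^{D_\pi}$ onto itself. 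Two ingredients enter here: that $\pi$ maps hereditarily symmetric names to hereditarily symmetric names (as recorded at the end of Chapter \ref{constructingf}), and that $\pi \ol{x}^{D_\pi} = \ol{x}^{D_\pi}$ translates into the preservation of the domain bound after passing through the normalization operator $\ol{\cdot}^{D_\pi}$.

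The main obstacle is making this symmetry calculation rigorous in the present framework, since each $[\pi] \in B$ acts on $\dot{P}$ only through the detour $\dot{P} \mapsto \ol{P}^{D_\pi} \mapsto \pi\,\ol{P}^{D_\pi}$, and elements of $\dom \ol{y'}^{D_\pi}$ have the form $\ol{z}^{D_\pi}$ for $\dot{z} \in \dom \dot{y}'$ rather than being elements of $\dom \dot{x}$ in the literal sense. The delicate point is to trace how the defining bound $\dot{y}' \subseteq \dom \dot{x} \times \m{P}$ is transformed by the successive operations and to argue that the stabilizer property of $B$ precisely compensates. Lemma \ref{pisimpiprime2} together with the compatibility identity $\ol{\ol{y'}^{D}}^{\,D^\prime} = \ol{y'}^{D^\prime}$ and the fact that the collection $\{[\pi] \, : \, \pi \ol{P}^{D_\pi} = \ol{P}^{D_\pi}\}$ is well-defined modulo $\sim$ should together yield the required symmetry, completing the proof.
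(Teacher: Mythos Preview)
Your overall architecture matches the paper's: collect all $HS$-names $\dot{y}'$ with $\dot{y}' \subseteq \dom\dot{x}\times\m{P}$ into a single name, then verify that this name is symmetric. Two points, one minor and one substantial.

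First, your claim that $\dot{P}^G = \powerset(\dot{x}^G)\cap V(G)$ is not quite right: the inclusion $\subseteq$ fails, because $\dom\dot{y}'\subseteq\dom\dot{x}$ only gives $(\dot{y}')^G\subseteq\{\dot z^G:\dot z\in\dom\dot x\}$, and the latter set may be strictly larger than $\dot{x}^G$ (elements $\dot z\in\dom\dot x$ with no pair $(\dot z,p)\in\dot x$ for $p\in G$). The paper avoids this by including the side condition $p\Vdash_s\dot Y\subseteq\dot X$ in its name $\dot B$. Your variant still suffices for Power Set once Separation is available, but the reasoning you give for $\subseteq$ is incorrect.

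Second, and more importantly, you correctly flag the symmetry verification as the main obstacle, but you do not resolve it, and this is exactly where the paper's argument lives. The issue is not merely bookkeeping through $\ol{\,\cdot\,}^{D_\pi}$: to prove $\ol P^{D_\pi}\subseteq\pi\,\ol P^{D_\pi}$ one must, given $\dot Y\in HS$ with $\dot Y\subseteq\dom\dot x\times\m{P}$, produce a \emph{concrete} $\dot Y_0\in HS$ with $\dot Y_0\subseteq\dom\dot x\times\m{P}$ and $\pi\,\ol{Y_0}^{D_\pi}=\ol Y^{D_\pi}$. There is no abstract reason such a $\dot Y_0$ exists; the paper writes it down explicitly as
\[
\dot Y_0 := \bigl\{(\dot z,p)\ :\ \dot z\in\dom\dot x,\ p\in D_\pi,\ \pi\,\ol z^{D_\pi}\in\dom\ol Y^{D_\pi},\ \pi p\Vdash_s \pi\,\ol z^{D_\pi}\in\ol Y^{D_\pi}\bigr\},
\]
and then checks by hand that $\pi\,\ol{Y_0}^{D_\pi}=\ol Y^{D_\pi}$ and, separately, that $\dot Y_0\in HS$. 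The latter uses a conjugation computation: if $\sigma$ stabilizes $\dot Y$ then $\pi^{-1}\sigma\pi$ stabilizes $\dot Y_0$, whence normality of $\mathcal F$ gives symmetry of $\dot Y_0$. Lemma~\ref{pisimpiprime2} and the compatibility identity are indeed used along the way, but they do not by themselves furnish the preimage name; that construction is the heart of the proof and is missing from your sketch.
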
 \begin{proof} Consider $X\in N$, $X = \dot{X}^G$ with $\dot{X} \in HS$. We have to show that $\powerset^N (X) \in N$. Let \[\dot{B} := \{ (\dot{Y}, p)\ | \ \dot{Y} \in HS\, , \, \dot{Y} \subseteq \dom \dot{X}\, \times\, \m{P}\, , \, p \Vdash_s \dot{Y} \subseteq \dot{X}\}.\] Then $\dot{B}^G = \powerset^N (X)$, since for any $Y \in N$ with $Y \subseteq X$, there exists a name $\dot{Y} \in HS$, $\dot{Y}^G = Y$, such that $\dot{Y} \subseteq \dom \dot{X}\, \times\, \m{P}$.

%\colorbox{yellow}{TO DO: $\Vdash_s$ müsste definiert werden!}

It remains to make sure that the name $\dot{B}$ is symmetric. Consider $\pi \in A$ with $\pi \ol{X}^{D_\pi} = \ol{X}$. Then \[\ol{B}^{D_\pi} = \big \{ \,(\ol{Y}^{D_\pi}, \ol{p}) \ | \ \dot{Y} \in HS\, , \, \dot{Y} \subseteq \dom \dot{X} \, \times \, \m{P}\, , \, \ol{p} \Vdash_s \dot{Y} \in \dot{B}\, , \, \ol{p} \in D_\pi\, \big\}. \] It is not difficult to check that \[\ol{B}^{D_\pi} = \big \{ \,(\ol{Y}^{D_\pi}, \ol{p}) \ | \ \dot{Y} \in HS\, , \, \dot{Y} \subseteq \dom \dot{X} \, \times \, \m{P}\, , \, \ol{p} \Vdash_s \dot{Y} \subseteq \dot{X}\, , \, \ol{p} \in D_\pi\, \big\}, \] since for any $p \in D_\pi$ and $\dot{Y} \in HS$, $\dot{Y} \subseteq \dom{\dot{X}}\, \times\, \m{P}$, it follows that $p \Vdash_s \dot{Y} \in \dot{B}$ if and only if $p \Vdash_s \dot{Y} \subseteq \dot{X}$. Hence,
\[\pi \ol{B}^{D_\pi} = \big\{ \,(\pi \ol{Y}^{D_\pi}, \pi \ol{p})\ | \ \dot{Y} \in HS\, , \, \dot{Y} \subseteq \dom \dot{X}\, \times\, \m{P}\, , \, \pi \ol{p} \Vdash_s \pi \ol{Y}^{D_\pi} \subseteq  \pi \ol{X}^{D_\pi}\, , \, \pi \ol{p} \in D_\pi\, \big\}.\]

It remains to show that $\ol{B}^{D_\pi} = \pi \ol{B}^{D_\pi}$; then \[ \big\{\,  [\pi] \in \ol{A}\ | \ \pi \ol{B}^{D_\pi} = \ol{B}^{D_\pi}\, \big\} \supseteq \big\{ \, [\pi] \in \ol{A}\ | \ \pi \ol{X}^{D_\pi} = \ol{X}^{D_\pi} \, \big\} \in \mathcal{F}\] as desired. \\[-3mm]

For the inclusion $\ol{B}^{D_\pi} \subseteq \pi \ol{B}^{D_\pi}$, consider $(\ol{Y}^{D_\pi}, \ol{p}) \in \ol{B}^{D_\pi}$ as above. It suffices to construct $\dot{Y_0} \in HS$, $\dot{Y_0} \subseteq \dom \dot{X} \, \times \, \m{P}$ with $\pi \ol{Y_0}^{D_\pi} = \ol{Y}^{D_\pi}$. Then setting $\ol{p}_0 := \pi^{-1} \ol{p}$, it follows that $(\ol{Y}^{D_\pi}, \ol{p}) = (\pi \ol{Y_0}^{D_\pi}, \pi \ol{p}_0) \in \pi \ol{B}^{D_\pi}$, since $\ol{p} \Vdash_s \ol{Y}^{D_\pi} \subseteq \ol{X}^{D_\pi}$ and $\pi \ol{X}^{D_\pi} = \ol{X}^{D_\pi}$ gives $\pi \ol{p}_0 \Vdash_s \pi \ol{Y_0}^{D_\pi} \subseteq \pi \ol{X}^{D_\pi}$. 

Let \[\dot{Y_0} := \big \{ \,(\dot{z}, p)\ | \ \dot{z} \in \dom \dot{X}\, , \, p \in D_\pi\, , \, \pi \ol{z}^{D_\pi} \in \dom \ol{Y}^{D_\pi}\, , \,  \pi p \Vdash_s \pi \ol{z}^{D_\pi} \in \ol{Y}^{D_\pi} \big\}.\] 
Then \[\pi \ol{Y_0}^{D_\pi} = \big\{ (\pi \ol{z}^{D_\pi}, \pi p) | \ \dot{z} \in \dom \dot{X}\, , \, p \in D_\pi\, , \, \pi \ol{z}^{D_\pi} \in \dom \ol{Y}^{D_\pi}\, , \, p \Vdash_s \dot{z} \in \dot{Y}_0\, \big\}. \]

%\exists\,\ p \in D_\pi\: : \: \ol{p} \leq p, (\pi \ol{\dot{z}}^{D_\pi}, \pi p) \in \ol{\dot{Y}}^{D_\pi} \big\}.\] Clearly, $\pi \ol{\dot{Y_0}}^{D_\pi} \subseteq \ol{\dot{Y}}^{D_\pi}$. 

We first show that whenever $\dot{z} \in \dom \dot{X}$, $p \in D_\pi$ and $\pi \ol{z}^{D_\pi} \in \dom \ol{Y}^{D_\pi}$ as above, then $p \Vdash_s \dot{z} \in \dot{Y}_0$ if and only if $\pi p \Vdash_s \pi \ol{z}^{D_\pi} \in \ol{Y}^{D_\pi}$. \begin{itemize}  \item[\tbl$\Rightarrow$\tbr : ] If $\pi p \Vdash_s \pi \ol{z}^{D_\pi} \in \ol{Y}^{D_\pi}$, it follows that $(\dot{z}, p) \in \dot{Y}_0$; hence, $p \Vdash_s \dot{z} \in \dot{Y}_0$ as desired. \item[\tbl$\Leftarrow$\tbr :]Now, assume that $p \Vdash_s \dot{z} \in \dot{Y}_0$. Let $H$ be a $V$-generic filter on $\m{P}$ with $\pi p \in H$. We have to show that $(\pi \ol{z}^{D_\pi})^H \in (\ol{Y}^{D_\pi})^H$. Let $H^\prime := \pi^{-1} H$. Then $(\pi \ol{z}^{D_\pi})^H = \dot{z}^{H^\prime}$, and $p \in H^\prime$. Hence, $\dot{z}^{H^\prime} \in \dot{Y}_0^{H^\prime}$ implies that there must be $(\dot{u}, r) \in \dot{Y}_0$ with $\dot{u}^{H^\prime} = \dot{z}^{H^\prime}$ and $r \in H^\prime$. Then $\pi r \Vdash_s \pi \ol{u}^{D_\pi} \in \ol{Y}^{D_\pi}$ by construction of $\dot{Y}_0$. Since $\pi r \in H$, it follows that $(\pi \ol{u}^{D_\pi})^H \in (\ol{Y}^{D_\pi})^H$, with $(\pi \ol{u}^{D_\pi})^H = \dot{u}^{H^\prime} = \dot{z}^{H^\prime} = (\pi \ol{z}^{D_\pi})^H$ as desired.
\end{itemize}

Hence, \[\pi \ol{Y_0}^{D_\pi} = \big\{ (\pi \ol{z}^{D_\pi}, \pi p) | \ \dot{z} \in \dom \dot{X}\, , \, p \in D_\pi\, , \, \pi \ol{z}^{D_\pi} \in \dom \ol{Y}^{D_\pi}\, , \, \pi p \Vdash_s \pi \ol{z}^{D_\pi} \in \ol{Y}^{D_\pi}\, \big\}. \]

We have to make sure that $\pi \ol{Y_0}^{D_\pi} = \ol{Y}^{D_\pi}$. The inclusion $\pi \ol{Y_0}^{D_\pi} \subseteq \ol{Y}^{D_\pi}$ is clear. Regarding $\ol{Y}^{D_\pi} \subseteq \pi \ol{Y_0}^{D_\pi}$, consider $(\ol{u}^{D_\pi}, q) \in \ol{Y}^{D_\pi}$ with $\dot{u} \in \dom \dot{Y} \subseteq \dom \dot{X}$ and $q \in D_\pi$ such that $q \Vdash_s \dot{u} \in \dot{Y}$. From $\ol{u}^{D_\pi} \in \dom \ol{X}^{D_\pi} = \dom \pi \ol{X}^{D_\pi}$, it follows that there must be $\dot{v} \in \dom \dot{X}$ with $\ol{u}^{D_\pi} = \pi \ol{v}^{D_\pi}$. Let $r := \pi^{-1} q$. Then $(\ol{u}^{D_\pi}, q) = (\pi \ol{v}^{D_\pi}, \pi r) \in \pi \ol{Y_0}^{D_\pi}$, since $q \Vdash_s \dot{u} \in \dot{Y}$ implies $\pi r \Vdash_s \pi \ol{v}^{D_\pi} \in \ol{Y}^{D_\pi}$ as desired. \\[-3mm]

Thus, we have constructed $\dot{Y}_0 \subseteq \dom \dot{X}\, \times\, \m{P}$ with $\pi \ol{Y_0}^{D_\pi} = \ol{Y}^{D_\pi}$. It remains to make sure that $\dot{Y}_0 \in HS$. Firstly, $\dot{Y}_0 \subseteq \dom \dot{X}\, \times\, \m{P}$, so $\dom \dot{Y}_0 \subseteq HS$. Secondly, for any $\sigma \in A$ with $\sigma \ol{Y}^{D_\sigma} = \ol{Y}^{D_\sigma}$, it follows that \[(\pi^{-1} \sigma \pi)\, \ol{Y_0}^{\,D_{\pi^{-1} \sigma \pi}} = (\pi^{-1} \sigma \pi)\, \ol{\ol{Y_0}^{D_\pi}}^{\,D_{\pi^{-1} \sigma \pi}} = (\pi^{-1} \sigma \pi)\ \ol{\pi^{-1} \ol{Y}^{D_\pi}}^{\,D_{\pi^{-1} \sigma \pi}}, \] 

and since $\sigma \ol{Y}^{D_\sigma} = \ol{Y}^{D_\sigma}$, one can easily check that \[(\pi^{-1} \sigma \pi)\ \ol{\pi^{-1} \ol{Y}^{D_\pi}}^{\ D_{\pi^{-1} \sigma \pi}} = \ol{\pi^{-1} \ol{Y}^{D_\pi}}^{\ D_{\pi^{-1} \sigma \pi}},\] and

\[\ol{\pi^{-1} \ol{Y}^{D_\pi}}^{\ D_{\pi^{-1} \sigma \pi}} = \ol{\ol{Y_0}^{D_\pi}}^{\ D_{\pi^{-1} \sigma \pi}} = \ol{Y_0}^{\ D_{\pi^{-1} \sigma \pi}}.\]

Since the name $\dot{Y}$ is symmetric, it follows by normality of $\mathcal{F}$ that $\dot{Y}_0$ is symmetric, as well. Hence, $\dot{Y}_0$ has all the desired properties; and it follows that $\ol{B}^{D_\pi} \subseteq \pi \ol{B}^{D_\pi}$.\\[-3mm]

The inclusion $\pi \ol{B}^{D_\pi} \subseteq \ol{B}^{D_\pi}$ is similar.

%\colorbox{red}{ACHTUNG - Ist es egal ob man $\Vdash$ oder $\Vdash_s$ schreibt?} \\
%\colorbox{red}{ACHTUNG - EVTL anmerken, dass das für $\in$ und $\subseteq$ egal wäre?}\\

%\colorbox{red}{ACHTUNG - es fehlt die Inklusion $\pi \ol{B}^{D_\pi} \subseteq \ol{B}^{D_\pi}$!!}
%\colorbox{red}{??? DAS IST NICHT KLAR!}

%For the inclusion $\ol{\dot{Y}}^{D_\pi} \subseteq \pi \ol{\dot{Y_0}}^{D_\pi}$, consider $(\ol{\dot{z_0}}^{D_\pi}, \ol{p}) \in \ol{\dot{Y}}^{D_\pi}$ with $\dot{z_0} \in \dom \dot{Y} \subseteq \dom \dot{X}$, and $\ol{p} \in D_\pi$. Then $\ol{\dot{z_0}} \in \dom \ol{\dot{X}}^{D_\pi} = \dom \pi \ol{\dot{X}}^{D_\pi}$ implies that $\ol{\dot{z_0}}^{D_\pi} = \pi \ol{\dot{z_1}}^{D_\pi}$ for some $\dot{z_1} \in \dom \dot{x}$. Let $\ol{p_1} := \pi^{-1} (\ol{p})$. Then $(\ol{\dot{z_0}}^{D_\pi}, \ol{p}) = (\pi \ol{\dot{z_1}}^{D_\pi}, \pi \ol{p_1})$, where $\dot{z_1} \in \dom \dot{x}$, $\ol{p_1} \in D_\pi$, and $(\pi \ol{\dot{z_1}}^{D_\pi}, \pi \ol{p_1}) = (\ol{\dot{z_0}}^{D_\pi}, \ol{p}) \in \ol{\dot{Y_0}}^{D_\pi}$. Hence, $(\ol{\dot{z_0}}^{D_\pi}, \ol{p}) \in \pi \ol{\dot{Y_0}}^{D_\pi}$ as desired. \\[-3mm]

%Hence, $\ol{\dot{Y}}^{D_\pi} = \pi \ol{\dot{Y_0}}^{D_\pi}$ as desired; so $\ol{\dot{B}}^{D_\pi} \subseteq \pi \ol{\dot{B}}^{D_\pi}$ follows. The inclusion $\pi \ol{\dot{B}}^{D_\pi}$ is similar.

%DIE NOTATION KANN NICHT SO BLEIBEN!

%TO DO: Man müsste $\vDash_s$ definieren!
%FRAGE: Bei Namen $\ol{x}^{D_\pi}$ anstatt $\ol{\dot{x}}^{D_\pi}$ schreiben?

\end{proof}

\begin{lem} \label{replacement} $V(G) \vDash\, Replacement$. \end{lem}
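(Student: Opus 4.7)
The strategy mirrors the proof of Power Set in Lemma \ref{power set}, with a rank-reflection step used to bound the witnessing names. Given $X = \dot X^G \in V(G)$ with $\dot X \in HS$, a parameter $\dot z \in HS$ with $z = \dot z^G$, and a formula $\varphi$ such that $V(G) \vDash \forall x \in X\ \exists!\, y\ \varphi(x, y, z)$, the goal is to exhibit a single $\dot B \in HS$ whose interpretation is the image of $X$.

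First, I work inside $V$ and apply reflection to the formula $\psi(\dot x, p) \equiv \exists\, \dot y \in HS\, (p \Vdash_s \varphi(\dot x, \dot y, \dot z))$. Since $\dom \dot X$ and $\m{P}$ are sets, there is a single ordinal $\alpha \in \Ord$ such that whenever $\psi(\dot x, p)$ holds for some $\dot x \in \dom \dot X$ and $p \in \m{P}$, it is already witnessed by a $\dot y \in HS$ with $\rk \dot y < \alpha$. Then I define
\[
\dot B := \{ (\dot y, p)\ | \ \dot y \in HS,\ \rk \dot y < \alpha,\ \exists\, \dot x \in \dom \dot X\ \: p \Vdash_s \varphi(\dot x, \dot y, \dot z) \}.
\]
A routine genericity argument, combining the forcing theorem for $\Vdash_s$ with the uniqueness of $y$ in $V(G)$ and the choice of $\alpha$, shows $\dot B^G = \{y \in V(G)\ |\ \exists x \in X\ \varphi(x, y, z)\}$. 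The clause $\dot y \in HS$ in the definition ensures $\dom \dot B \subseteq HS$ automatically.

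The main obstacle is symmetry of $\dot B$. I follow the template of Lemma \ref{power set}: for any $\pi \in A$ with $\pi \ol{X}^{D_\pi} = \ol{X}^{D_\pi}$ and $\pi \ol{z}^{D_\pi} = \ol{z}^{D_\pi}$, I verify $\pi \ol{B}^{D_\pi} = \ol{B}^{D_\pi}$. Unwinding the definitions and applying the symmetry lemma for $\Vdash_s$ converts $\ol p \Vdash_s \varphi(\dot x, \dot y, \dot z)$ into $\pi \ol p \Vdash_s \varphi(\pi \ol{x}^{D_\pi}, \pi \ol{y}^{D_\pi}, \ol{z}^{D_\pi})$; the assumptions on $\pi$ then guarantee that $\dom \ol{X}^{D_\pi}$ is only permuted and $\ol{z}^{D_\pi}$ is fixed, so the defining families on both sides match. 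For each $(\ol{y}^{D_\pi}, \ol p) \in \ol{B}^{D_\pi}$ I construct a preimage $\dot y_0 \in HS$ with $\pi \ol{y_0}^{D_\pi} = \ol{y}^{D_\pi}$ exactly as in the Power Set proof, using crucially that $\pi$ preserves both rank and membership in $HS$, so that the bound $\alpha$ remains compatible with the $\pi$-action.

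Once symmetry is established, the stabilizer of $\dot B$ contains the intersection of the stabilizers of $\dot X$ and $\dot z$, which lies in $\mathcal{F}$ by closure of $\mathcal{F}$ under finite intersections. Hence $\dot B$ is symmetric, and combined with $\dom \dot B \subseteq HS$ this yields $\dot B \in HS$, so that $\dot B^G \in V(G)$ is the desired range witnessing $Replacement$.
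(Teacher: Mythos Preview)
Your overall plan is sound, but there is a real gap at the step where you say the preimage $\dot y_0$ is built ``exactly as in the Power Set proof.'' In Lemma~\ref{power set} the preimage $\dot Y_0$ is assembled from elements of the fixed set $\dom \dot X$: the construction $\dot Y_0 := \{(\dot z, p)\ |\ \dot z \in \dom \dot X,\ \ldots\}$ only works because every $\dot Y$ under consideration satisfies $\dot Y \subseteq \dom \dot X \times \m{P}$. For an arbitrary $\dot y \in HS$ with $\rk \dot y < \alpha$ there is no such ambient domain to draw from, so that argument does not transfer. What you actually need is a recursive operation on names---the paper's map $\wt\pi$, defined by $\wt\pi(\dot y) := \{(\wt\pi(\dot z), \pi \ol q)\ |\ \exists (\dot z, q) \in \dot y,\ \ol q \leq q,\ \ol q \in D_\pi\}$---which simultaneously preserves rank and $HS$-membership and satisfies $\pi\,\ol y^{D_\pi} = \ol{\wt\pi(\dot y)}^{D_\pi}$. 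This is precisely the new ingredient the paper isolates for Replacement; it is not present in the Power Set argument.

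Once one has $\wt\pi$, the paper's route is also considerably simpler than yours: instead of carrying the clause $\exists\,\dot x \in \dom\dot X\ p \Vdash_s \varphi(\dot x, \dot y, \dot z)$ into the name, the paper just sets $\dot b := \{(\dot y, \m{1})\ |\ \dot y \in \Name_\beta(\m{P}) \cap HS\}$. This $\dot b$ makes no reference to $\varphi$, $\dot X$, or $\dot z$, so its symmetry reduces entirely to the statement that $\wt\pi$ sends $\Name_\beta(\m{P}) \cap HS$ into itself---and in fact $\dot b$ is fixed by \emph{every} $[\pi] \in \ol A$, not merely by the stabilizer of $\dot X$ and $\dot z$. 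Your more restrictive $\dot B$ can be made to work with the same $\wt\pi$ machinery, but you then have the additional burden of tracking how the forcing condition on $\varphi$ transforms under $\pi$, which the paper's choice avoids.
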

\begin{proof} Consider $a \in N$ such that $N \vDash \forall x \in a \ \exists y \ \varphi(x, y)$. We have to show that there is $b \in N$ with \[ N \vDash \forall x \in a\ \exists\, y \in b\ \varphi (x, y). \] Let $a = \dot{a}^G$ with $\dot{a} \in HS$. We proceed like in the proof of $Replacement$ in ordinary forcing extensions. For $\dot{x} \in \dom \dot{a}$ and $p \in \m{P}$, let \[\alpha (\dot{x}, p) := \min \{\alpha\ | \ \exists\, \dot{w} \in \Name_\alpha (\m{P})\, \cap\, HS\: : \: p \Vdash_s  (\varphi (\dot{x}, \dot{w})\, \wedge\, \dot{x} \in \dot{a}) \}\] if such $\alpha$ exists, and $\alpha (\dot{x}, p) := 0$, else.

By $Replacement$ in $V$, take $\beta \in \Ord$ with $\beta \geq \sup \{\alpha (\dot{x}, p)\ | \ \dot{x} \in \dom \dot{a}\, , \, p \in \m{P}\}$. Let \[\dot{b} := \{ (\dot{y}, \m{1})\ | \ \dot{y} \in \Name_\beta(\m{P})\, \cap \, HS\},\]and $b := \dot{b}^G$. Then for all $x \in a$, it follows that there exists $y \in b$ with $N \vDash \varphi (x, y)$. It remains to show that the name $\dot{b}$ is symmetric. Let $\pi \in A$. Then \[\ol{b}^{D_\pi} = \{ (\ol{y}^{D_\pi}, q)\ | \ \dot{y} \in \Name_\beta(\m{P}) \, \cap\, HS\; , \; q \in D_\pi\},\] and \[\pi \ol{b}^{D_\pi} = \{ (\pi \ol{y}^{D_\pi}, \pi q)\ | \ \dot{y} \in \Name_\beta(\m{P})\, \cap\, HS\; , \; q \in D_\pi\}. \] 

We will show that $\pi \ol{b}^{D_\pi} = \ol{b}^{D_\pi}$.\\[-3mm]

Since it is not possible to apply $\pi$ to arbitrary $\m{P}$-names $\dot{y}$ with $\dot{y} \notin \ol{\Name (\m{P})}^{D_\pi}$, we construct an alternative $\wt{\pi}$ which is sufficient for our purposes here. Recursively, we define for $\m{P}$-names $\dot{y}$: \[ \wt{\pi} (\dot{y}) := \big\{ \, (\wt{\pi} (\dot{z}), \pi \ol{q})\ | \ \exists\, (\dot{z}, q) \in \dot{y}\; , \; \ol{q} \leq q\, , \, \ol{q} \in D_\pi\, \big\}. \] Then for all $\dot{y} \in \Name_\beta (\m{P})$, it follows that $\wt{\pi} (\dot{y}) \in \Name_\beta (\m{P})$, as well. \\[-3mm]

Whenever $H$ is a $V$-generic filter on $\m{P}$, $H^\prime := \pi^{-1} H$ and $\dot{y} \in \Name (\m{P})$, then $(\wt{\pi} (\dot{y}))^H = \dot{y}^{H^\prime}$, and one can easily check that
 \[ \pi \ol{y}^{D_\pi} = \ol{\wt{\pi} (\dot{y})}^{D_\pi}. \] Moreover, whenever $\sigma \in A$ with $\sigma \ol{y}^{D_\sigma} = \ol{y}^{D_\sigma}$, then setting $\tau := \pi \sigma \pi^{-1}$, it follows recursively that \[ \tau \; \ol{\wt{\pi} (\dot{y})}^{D_\tau} = \ol{\wt{\pi} (\dot{y})}^{D_\tau}.\]

%\[(\pi \sigma \pi^{-1} ) \ol{\wt{\pi}(\dot{y})}^{D_{\pi \sigma \pi^{-1}}} = \ol{\wt{\pi} (\dot{y})}^{D_{\pi \sigma \pi^{-1}}}\] follows. 

%\colorbox{yellow} {FRAGE: Sollte man das nachrechnen?} \\
%\colorbox{yellow}{TO DO: Man müsste die $\Name_\beta (\m{P})$-Hierarchie definieren!}

Hence, \[ \big\{\, [\tau] \in \ol{A}\ \; \big| \ \; \tau \; \ol{\wt{\pi} (\dot{y})}^{D_\tau} = \ol{\wt{\pi} (\dot{y})}^{D_\tau} \, \big\} \supseteq \big\{\, [\pi] [\sigma] [\pi]^{-1}\ \; \big| \; \ [\sigma] \in \ol{A}\, , \, \sigma \ol{y}^{D_\sigma} = \ol{y}^{D_\sigma} \, \big\}. \] In the case that $\dot{y}$ is symmetric, i.e. $\big\{\, [\sigma] \in \ol{A}\ \; \big| \ \; \sigma \ol{y}^{D_\sigma} = \ol{y}^{D_\sigma}\, \big\} \in \mathcal{F}$, it follows by normality that also $\big\{\, [\tau] \in \ol{A}\ \; \big| \; \ \tau \; \ol{\wt{\pi} (\dot{y})}^{D_\tau} = \ol{\wt{\pi} (\dot{y})}^{D_\tau} \, \big\} \in \mathcal{F}$. Hence, $\wt{\pi} (\dot{y}) \in HS$ whenever $\dot{y} \in HS$. \\[-2mm]

Now, $\pi \ol{b}^{D_\pi} = \ol{b}^{D_\pi}$ follows: For the inclusion \tbl$\subseteq$\tbr, consider $(\pi \ol{y}^{D_\pi}, \pi q) \in \pi \ol{b}^{D_\pi}$ with $\dot{y} \in \Name_\beta(\m{P})\, \cap \, HS$, $q \in D_\pi$. Then also $\pi q \in D_\pi$, and $\pi \ol{y}^{D_\pi} = \ol{\wt{\pi} (\dot{y})}^{D_\pi}$, where $\wt{\pi} (\dot{y}) \in \Name_\beta (\m{P})\, \cap \, HS$; so $(\pi \ol{y}^{D_\pi}, \pi q) = (\ol{\wt{\pi} (\dot{y})}^{D_\pi}, \pi q) \in \ol{b}^{D_\pi}$ follows. The inclusion \tbl $\supseteq$\tbr\;is similar.\\[-3mm]

Hence, $\dot{b} \in HS$ as desired. 

\end{proof}

Thus, our symmetric extension $N$ is a model of $ZF$. Since our forcing $\m{P}$ is countably closed (Proposition \ref{countablyclosed}), and our normal filter $\mathcal{F}$ generating $N$ is countably closed, it follows that $N \vDash DC$ (see for example \cite[Lemma 1]{Karagila}). Moreover, $N  \vDash AX_4$ (see \cite[p.3 and p.15]{pcfwc}): For any cardinal $\lambda$, we have $([\lambda]^{\aleph_0})^N = ([\lambda]^{\aleph_0})^V$; so the set $[\lambda]^{\aleph_0}$ can be well-ordered in $N$, using the according well-ordering of $[\lambda]^{\aleph_0}$ in $V$. \\[-2mm]

%\colorbox{red}{TO DO: Referenz suchen! Oder kurz erklären?}
%\colorbox{red}{TO DO: Erwähnen, dass $\m{P}$ countably closed ist! z.B. als Proposition}

%\colorbox{red}{ACHTUNG: Könnte die symmetrische Forcing-Relation Probleme bereiten?}

%\colorbox{yellow}{FRAGE: Was sollte ins erste Kapitel?}

%\colorbox{yellow} {EVTL das erste Kapitel weglassen, und das Nötige ungefähr hier versuchen, aufzuschreiben?}

Next, we want to show that $N$ preserves all $V$-cardinals; which will follow from the fact that any set of ordinals $X \subseteq \alpha$, $X \in N$, can be captured in a \tbl mild\tbr\,$V$-generic extension by a forcing notion as in Lemma \ref{generic} and Lemma \ref{generic2}. 

This \textit{Approximation Lemma} demonstrates how our symmetric extension $N$ can be approximated from within by fairly nice $V$-generic extensions. Later on, this will be a crucial step in keeping control over the values $\theta^N (\kappa_\eta)$. 

\begin{lem}[Approximation Lemma] \label{approx} 

Consider $X \in N$, $X \subseteq \alpha$ with $X = \dot{X}^G$ such that $\pi \ol{X}^{D_\pi} = \ol{X}^{D_\pi}$ holds for $\pi \in A$ with $[\pi]$ contained in the intersection \[\bigcap_{m < \omega} Fix( \eta_m, i_m)\, \cap\, \bigcap_{m < \omega} Fix (\ol{\eta}_m, \ol{i}_m)\; \cap \; \bigcap_{m < \omega} H^{\lambda_m}_{k_m}\, \cap \, \bigcap_{m < \omega} H^{\ol{\lambda}_m}_{\ol{k}_m},\] 
 where $( (\eta_m, i_m)\ | \ m < \omega)$, $((\ol{\eta}_m, \ol{i}_m)\ | \ m < \omega)$, $((\lambda_m, k_m)\ | \ m < \omega)$ and $( (\ol{\lambda}_m, \ol{k}_m)\ | \ m < \omega)$ denote sequences with $\eta_m \in \Lim$, $i_m < \alpha_{\eta_m}$; $\ol{\eta}_m \in \Succ$, $\ol{i}_m < \alpha_{\ol{\eta}_m}$ for all $m < \omega$; and $\lambda_m \in \Lim$, $k_m < \alpha_{\lambda_m}$; $\ol{\lambda}_m \in \Succ$, $\ol{k}_m < \alpha_{\ol{\lambda}_m}$ for all $m < \omega$. \\[-4mm]
 
%where $\eta_m \in \Lim$, $i_m < \alpha_{\eta_m}$, $\ol{\eta}_m \in \Succ$, $\ol{i}_m < \alpha_{\ol{\eta}_m}$ for all $m < \omega$; and $\lambda_m \in \Lim$, $k_m < \alpha_{\lambda_m}$, $\ol{\lambda}_m \in Succ$, $\ol{k}_m < \alpha_{\ol{\lambda}_m}$ for all $m < \omega$. 
Then \[X \in V \big[\prod_{m < \omega} G^{\eta_m}_{i_m} \, \times \, \prod_{m < \omega} G^{\ol{\eta}_m}_{\ol{i}_m}\big].\]\end{lem}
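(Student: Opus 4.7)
The plan is to show that $X$ can be reconstructed from the ``small'' generic filter $H := \prod_{m<\omega} G^{\eta_m}_{i_m} \times \prod_{m<\omega} G^{\ol{\eta}_m}_{\ol{i}_m}$, viewed as a filter on the product forcing $\m{Q} := \prod_m P^{\eta_m} \times \prod_m P^{\ol{\eta}_m}$. By Lemma \ref{generic2} (applied to the merged sequence of pairs), $H$ is $V$-generic over $\m{Q}$. I would introduce a restriction map $\rho: \m{P} \to \m{Q}$ that sends $p = (p_\ast, (p^\sigma_i, a^\sigma_i), (p^\sigma))$ to the tuple consisting of the verticals $p^{\eta_m}_{i_m}$ (for $\eta_m \in \Lim$) together with the $\ol{i}_m$-th horizontal slices of $p^{\ol{\eta}_m}$ (for $\ol{\eta}_m \in \Succ$); note that $\rho$ is order-preserving and $\rho[G]$ generates $H$. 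Define in $V$, for each ordinal $\beta < \alpha$, the set $B_\beta := \{q \in \m{Q} : \exists\, p \in \m{P},\ \rho(p) = q,\ p \Vdash_s \check{\beta} \in \dot{X}\}$. Granted the Homogeneity Claim below, $\beta \in X$ iff $B_\beta \cap H \neq \emptyset$, which yields $X \in V[H]$ as desired.

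The nontrivial direction reduces to the following \emph{Homogeneity Claim}: whenever $\rho(p_1)$ and $\rho(p_2)$ are compatible in $\m{Q}$ and $p_1 \Vdash_s \check{\beta} \in \dot{X}$, then $p_2 \not\Vdash_s \check{\beta} \notin \dot{X}$. The proof is by contradiction: supposing the contrary, I construct $\pi = (\pi_0, \pi_1) \in A$ with $[\pi]$ in $\bigcap_m \mathit{Fix}(\eta_m, i_m) \cap \bigcap_m \mathit{Fix}(\ol{\eta}_m, \ol{i}_m) \cap \bigcap_m H^{\lambda_m}_{k_m} \cap \bigcap_m H^{\ol{\lambda}_m}_{\ol{k}_m}$ such that $\pi p_1$ is compatible with $p_2$. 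Since this intersection lies in the stabilizer of $\dot{X}$ by hypothesis, the Symmetry Lemma yields $\pi p_1 \Vdash_s \check{\beta} \in \dot{X}$, contradicting $p_2 \Vdash_s \check{\beta} \notin \dot{X}$.

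To construct $\pi$, I would first use countable closure (Proposition \ref{countablyclosed}) to replace $p_1, p_2$ by extensions $p_1^\ast, p_2^\ast$ of identical shape (same support and domain) with $\rho(p_1^\ast) = \rho(p_2^\ast)$. On $S := \{(\eta_m, i_m),\, (\ol{\eta}_m, \ol{i}_m) : m < \omega\}$ the $\mathit{Fix}$-constraints force $\pi$ to act as the identity, so $\pi p_1^\ast$ automatically agrees with $p_2^\ast$ on $S$-coordinates. On the remaining coordinates of $\supp p_1^\ast$, I build $\pi$ level-by-level using the extensive data $F_{\pi_0}(\nu, j)$, $G_{\pi_0}(\nu, j)$, $\pi_0(\zeta)$, $(\pi_0)_\ast(\xi, \zeta)$, $f_{\pi_1}(\sigma)$, $\pi_1(\zeta)$, $\pi_1(\sigma)(i, \zeta)$: for non-$S$ coordinates that are not frozen by any $H^{\lambda_m}_{k_m}$ or $H^{\ol{\lambda}_m}_{\ol{k}_m}$ I permute freely via the $F, G, f_{\pi_1}$; and for the frozen verticals (those $(\lambda_m, k)$ with $k \leq k_m$, and similarly for $\ol{\lambda}_m$) I set the relevant permutations to the identity and match values through the bit-flipping maps $\pi_0(\zeta), \pi_1(\zeta), \pi_1(\sigma)(i,\zeta)$. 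The $H^\lambda_k$-constraint only freezes \emph{permutations} above some threshold $\kappa_{\ol{\nu}, \ol{\j}} < \kappa_\lambda$, which can be chosen to lie below all relevant levels, so this freezing imposes no additional restriction on value-matching.

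The main obstacle is the coordinated combinatorial construction: one must simultaneously (i) respect the linking property, which ties each $(\pi p)^\sigma_i(\zeta)$ on the interval $[\kappa_{\nu,j}, \kappa_{\nu,j+1})$ to the value of $(\pi p)_\ast$ at the linking ordinal $\xi^\sigma_i \in a^\sigma_i \cap [\kappa_{\nu,j}, \kappa_{\nu,j+1})$ whenever $\zeta \notin \dom \pi_0$; (ii) adjust the $(\pi_0)_\ast(\xi, \zeta)$ maps on non-linked $(\xi, \zeta)$-pairs so that $\pi p_{1,\ast}^\ast$ agrees with $p_{2,\ast}^\ast$ on their common domain; and (iii) verify that the resulting $\pi$ truly lies in the specified intersection of $\mathit{Fix}$- and $H^\lambda_k$-subgroups. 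Because $p_1^\ast, p_2^\ast$ have countable support, only countably many $(\lambda_m, k_m)$ are frozen, and the $\alpha_\sigma$ are uncountable, there is ample room to enlarge $\supp \pi$ and to choose the bit-flipping bijections so as to force $\pi p_1^\ast = p_2^\ast$ on the common domain, completing the contradiction and thereby the proof.
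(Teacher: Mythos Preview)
Your overall strategy coincides with the paper's: define the candidate set from the small generic via conditions forcing membership, then use an automorphism argument for the nontrivial inclusion. The paper's version of your $B_\beta$ additionally requires $a^{\eta_m}_{i_m} = g^{\eta_m}_{i_m}$ (legitimate since the sequence $(g^{\eta_m}_{i_m})_m$ lies in $V$), and the contradicting condition $p'$ is taken from $G$ rather than arbitrary; these are minor organizational differences.

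Where you diverge is in the construction of $\pi$, and here you over-complicate matters and miss the paper's key simplification. The groups $H^{\lambda_m}_{k_m}$ and $H^{\ol{\lambda}_m}_{\ol{k}_m}$ constrain only the permutation data $G_{\pi_0}(\nu,j)$ and $f_{\pi_1}(\sigma)$, not the bit-flipping maps. The paper therefore simply sets $G_{\pi_0}(\nu,j) := \id$ for every interval and $\supp \pi_1(\sigma) := \emptyset$ for every $\sigma$; membership in all the $H$-groups is then automatic, and no ``freezing of verticals'', choice of thresholds, or use of $f_{\pi_1}$ is needed at all. The only genuine work is (i) extending $p,p'$ to $\ol{p},\ol{p}'$ of the same shape --- which means not just same support and domain but also the same \emph{set} of linking ordinals $\bigcup_{\sigma,i}\ol{a}^\sigma_i = \bigcup_{\sigma,i}(\ol{a}')^\sigma_i$, achieved by adjoining a countable auxiliary family $\{(\sigma_k,l_k):k<\omega\}$ to the support, a step you do not mention and which countable closure alone does not deliver --- with $\ol{p}^{\eta_m}_{i_m}=(\ol{p}')^{\eta_m}_{i_m}$ and $\ol{p}^{\ol{\eta}_m}_{\ol{i}_m}=(\ol{p}')^{\ol{\eta}_m}_{\ol{i}_m}$; and (ii) choosing the bit-flip maps $\pi_0(\zeta)$, $(\pi_0)_\ast$, $\pi_1(\sigma)(i,\zeta)$ so that $\pi\ol{p}=\ol{p}'$ exactly. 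Since $\dom\pi_0=\dom\ol{p}$, the maps $G_{\pi_0}$ never act on $\ol{p}$ itself, so the choice $G_{\pi_0}=\id$ costs nothing for the equality $\pi\ol{p}=\ol{p}'$ while securing all the $Fix$- and $H$-constraints for free.
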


\begin{proof} Let \[X^\prime := \Big\{\, \beta < \alpha\ \ \big| \ \ \exists\, p = (p_\ast, (p^\sigma_i, a^\sigma_i)_{\sigma, i}, (p^\sigma)_\sigma) \; : \ p \Vdash_s \beta \in \dot{X}\; , \; \forall m: \ (\eta_m, i_m) \in \supp p_0\; , \] \[\forall m\, : \ a^{\eta_m}_{i_m} = g^{\eta_m}_{i_m}\; ,\; (p^{\eta_m}_{i_m})_{m < \omega} \in \prod_{m < \omega} G^{\eta_m}_{i_m}\; , \; ( p^{\ol{\eta}_i}_{\ol{i}_m})_{m < \omega} \in \prod_{m < \omega} G^{\ol{\eta}_m}_{\ol{i}_m}\, \Big\}.\] Then \[X^\prime \in V \big[\prod_{m < \omega} G^{\eta_m}_{i_m} \, \times \, \prod_{m < \omega} G^{\ol{\eta}_m}_{\ol{i}_m}\big], \] since the sequence $(g^{\eta_m}_{i_m})_{m < \omega}$ is contained in $V$. It remains to show that $X = X^\prime$. The inclusion $X \subseteq X^\prime$ follows from the forcing theorem. Concerning \tbl $\supseteq$\tbr\,, assume towards a contradiction there was $\beta \in X^\prime \setminus X$. Take $p$ as above with  $(\eta_m, i_m) \in \supp p_0$ for all $m < \omega$, and \[p \Vdash_s \beta \in \dot{X}\; , \; \forall m\; : \ a^{\eta_m}_{i_m} = g^{\eta_m}_{i_m}\; , \; (p^{\eta_m}_{i_m})_{m < \omega} \in \prod_{m < \omega} G^{\eta_m}_{i_m}\; , \; ( p^{\ol{\eta}_i}_{\ol{i}_m})_{m < \omega} \in \prod_{m < \omega} G^{\ol{\eta}_m}_{\ol{i}_m}. \] Since $\beta \notin X$, we can take $p^\prime \in G$, $p^\prime = \big(p^\prime_\ast, \big( (p^\prime)^\sigma_i, (a^\prime)^\sigma_i\big)_{\sigma, i}, \big((p^\prime)^\sigma\big)_\sigma\big)$ with $p^\prime \Vdash_s \beta \notin \dot{X}$, such that $(\eta_m, i_m) \in \supp p^\prime_0$ for all $m < \omega$. \\[-2mm]

%\colorbox{yellow}{FRAGE: Die Notation $p$ und $\ol{p}$, $\eta_m$ und $\ol{\eta}_m$ ist SCHLECHT!}
%\colorbox{yellow}{Könnte man das EVTL so lassen?}

First, we want to extend $p$ and $p^\prime$ and obtain conditions $\ol{p} \leq p$, $\ol{p}^\prime \leq p^\prime$, $\ol{p} = (\ol{p}_\ast, (\ol{p}^\sigma_i, \ol{a}^\sigma_i)_{\sigma, i}, (\ol{p}^\sigma)_\sigma)$, $\ol{p}^\prime = \big(\ol{p}^\prime_\ast, \big((\ol{p}^\prime)^\sigma_i, (\ol{a}^\prime)^\sigma_i\big)_{\sigma, i}, \big((\ol{p}^\prime)^\sigma\big)_\sigma\big)$ such that the following holds: \begin{itemize} \item $\forall\, m < \omega\ \ \ol{p}^{\eta_m}_{i_m} = (\ol{p}^\prime)^{\eta_m}_{i_m}$\; , \; $\ol{a}^{\eta_m}_{i_m} = (\ol{a}^\prime)^{\eta_m}_{i_m}$ \item $\forall\, m < \omega\ \ \ol{p}^{\ol{\eta}_m}_{\ol{i}_m} = (\ol{p}^\prime)^{\ol{\eta}_m}_{\ol{i}_m}$ 

%\colorbox{red}{ACHTUNG - $\sigma$ wurde hier als Index, aber auch als Isomorphismus vorher $\sigma \in A$ verwendet!}

\item $\dom \ol{p}_0 = \dom \ol{p}^\prime_0$ \item $\supp \ol{p}_0 = \supp \ol{p}^\prime_0$ \item $\bigcup_{(\sigma, i) \in \supp \ol{p}_0} \ol{a}^\sigma_i = \bigcup_{ (\sigma, i) \in \supp \ol{p}^\prime_0} (\ol{a}^\prime)^\sigma_i$ \item $\forall\, (\nu, j)\ \; :\ \dom \ol{p}_0\, \cap \, [\kappa_{\nu, j}, \kappa_{\nu, j + 1}) \neq \emptyset\ \rightarrow\ \big( \bigcup_{ \sigma, i} \, \ol{a}^\sigma_i\, \cap \, [\kappa_{\nu, j}, \kappa_{\nu, j + 1}) \big) \subseteq \dom \ol{p}_0$\ , \\ $\forall\, (\nu, j)\ \; :\ \dom \ol{p}^\prime_0\, \cap \, [\kappa_{\nu, j}, \kappa_{\nu, j + 1}) \neq \emptyset\ \rightarrow\ \big( \bigcup_{\sigma, i}\, (\ol{a}^\prime)^\sigma_i\, \cap \, [\kappa_{\nu, j}, \kappa_{\nu, j + 1}) \big) \subseteq \dom \ol{p}^\prime_0$ \item $\supp \ol{p}_1 = \supp \ol{p}^\prime_1$ 
%and $\dom \ol{p}_1 (\eta) = \dom \ol{p}^\prime_1 (\eta)$ for all $\eta \in \supp \ol{p}_1 = \supp \ol{p}^\prime_1$ 
\item $\forall\, \sigma \in \supp \ol{p}_1 = \supp \ol{p}^\prime_1\ : \ \dom \ol{p}_1 (\sigma) = \dom \ol{p}^\prime_1 (\sigma)$. 
\end{itemize}

We will now describe how $\ol{p}_0$ and $\ol{p}^\prime_0$ can be constructed. First, we need a set $\supp_0 := \supp \ol{p}_0 = \supp \ol{p}^\prime_0$. Consider \[s := \sup \{\kappa_\sigma\ | \  \sigma \in \Lim\; , \; \exists\, i < \alpha_\sigma\: : \: (\sigma, i) \in \supp p_0\, \cup \supp p^\prime_0 \}. \] Then by closure of the sequence $(\kappa_\sigma\ | \ 0 < \sigma < \gamma)$, it follows that $s = \kappa_{\ol{\gamma}}$ for some $\ol{\gamma} \leq \gamma$. If $\ol{\gamma} = \gamma$, then $cf \, \kappa_\gamma = \omega$ and we can take  $((\sigma_k, l_k)\ | \ k < \omega)$ with $\sigma_k \in \Lim$, $l_k < \alpha_{\sigma_k}$ for all $k < \omega$ such that $(\kappa_{\sigma_k}\ | \ k < \omega)$ is cofinal in $\kappa_{\gamma}$, and $(\sigma_k, l_k) \notin \supp p_0\, \cup \supp p^\prime_0$ for all $k < \omega$. Let \[\supp_0 := \supp \ol{p}_0 := \supp \ol{p}^\prime_0 := \supp p_0\, \cup \, \supp p^\prime_0\, \cup \, \{ (\sigma_k, l_k)\ | \ k < \omega\}.\]

If $\ol{\gamma} < \gamma$, we can set $\sigma_k := \ol{\gamma} \in \Lim$ for all $k < \omega$ and take $(l_k\ | \ k < \omega)$ such that $l_k < \alpha_{\sigma_k}$ with $(\sigma_k, l_k) = (\ol{\gamma}, l_k) \notin \supp p_0\, \cup\, \supp p^\prime_0$ for all $k < \omega$. Let \[\supp_0 := \supp \ol{p}_0 := \supp \ol{p}^\prime_0 := \supp p_0\, \cup \, \supp p^\prime_0\, \cup \, \{ (\sigma_k, l_k)\ | \ k < \omega\}\] as before.

%This defines $\supp_0 := \supp \ol{p}_0 = \supp \ol{p}^\prime_0$. \\[-2mm]

The next step is to define the linking ordinals. Take a set $X \subseteq \kappa_{\ol{\gamma}}$ such that for all intervals $[\kappa_{\nu, j}, \kappa_{\nu, j + 1}) \subseteq \kappa_{\ol{\gamma}}$, it follows that $|X\, \cap \, [\kappa_{\nu, j}, \kappa_{\nu, j + 1})| = \aleph_0$; and $X\, \cap\, \big( \bigcup_{(\sigma, i) \in \supp p_0} a^\sigma_i\, \cup \, \bigcup_{ (\sigma, i) \in \supp p^\prime_0} (a^\prime)^\sigma_i \big) = \emptyset$. Let \[\ol{X} := X\, \cup \, \bigcup_{\sigma, i} a^\sigma_i\, \cup \, \bigcup_{\sigma, i} (a^\prime)^\sigma_i.\] Our aim is to construct $\ol{p}$ and $\ol{p}^\prime$ such that $\bigcup_{\sigma, i} \, \ol{a}^\sigma_i = \bigcup_{ \sigma, i} \, (\ol{a}^\prime)^\sigma_i = \ol{X}$. \\[-3mm]

Consider an interval $[\kappa_{\nu, j}, \kappa_{\nu, j + 1}) \subseteq \kappa_{\ol{\gamma}}$. For every $(\sigma, i) \in \supp p_0$ with $\kappa_\sigma > \kappa_{\nu, j}$, we let $\ol{a}^\sigma_i\, \cap\, [\kappa_{\nu, j}, \kappa_{\nu, j + 1}) := a^\sigma_i\, \cap\, [\kappa_{\nu, j}, \kappa_{\nu, j + 1})$. \\[-3mm]

Define \[\{\xi_k (\nu, j)\ | \ k < \omega\} := \big( \ol{X} \, \cap \, [\kappa_{\nu, j}, \kappa_{\nu, j + 1}) \big) \setminus \bigcup_{ \sigma, i  } a^\sigma_i.\] This set has cardinality $\aleph_0$ by construction of $X$. \\[-3mm]

%\colorbox{yellow}{ACHTUNG - sollte man nicht eher $\aleph_0$ anstatt $\omega$ schreiben, wenn es um Kardinalitäten geht?}

Moreover, let \[\big \{\, \big(\ol{\sigma}_k (\nu, j), \ol{l}_k (\nu, j)\big)\ | \ k < \omega\, \big\} =: \{ (\sigma, i) \in \supp \ol{p}_0 \setminus \supp p_0 \ | \ \kappa_\sigma > \kappa_{\nu, j}\}. \] This set also has cardinality $\aleph_0$ by construction of $\supp \ol{p}_0 = \supp_0$. \\[-3mm]

Now, for any $k < \omega$, let \[\ol{a}\, ^{\ol{\sigma}_k (\nu, j)}_{\ol{l}_k (\nu, j)}\, \cap\, [\kappa_{\nu, j}, \kappa_{\nu, j + 1}) := \{\xi_k (\nu, j) \}.\] Together with same construction for $\ol{p}^\prime$, we obtain the linking ordinals $\ol{a}^\sigma_i$, $(\ol{a}^\prime)^\sigma_i$ for $(\sigma, i) \in \supp_0 = \supp \ol{p}_0 = \supp (\ol{p}^\prime)_0$ such that the \textit{independence property} holds, and $\bigcup_{\sigma, i} \ol{a}^\sigma_i = \bigcup_{\sigma, i} (\ol{a}^\prime)^\sigma_i = X$. \\[-2mm]

Next, we construct $\dom_0 := \dom \ol{p}_0 = \dom (\ol{p}^\prime)_0 := \bigcup_{\nu, j}  [\kappa_{\nu, j}, \delta_{\nu, j})$ as follows: Consider an interval $[\kappa_{\nu, j}, \kappa_{\nu, j + 1}) \subseteq \kappa_\gamma$. In the case that $\dom p_0\, \cap\, [\kappa_{\nu, j}, \kappa_{\nu, j + 1}) = \dom (p^\prime)_0 \, \cap \, [\kappa_{\nu, j}, \kappa_{\nu, j + 1}) = \emptyset$, let $\delta_{\nu, j} := \kappa_{\nu, j}$. Otherwise, take $\delta_{\nu, j} \in [\kappa_{\nu, j}, \kappa_{\nu, j + 1})$ such that $(\dom p_0\, \cup \, \dom p^\prime_0\, \cup \, \ol{X})\, \cap \, [\kappa_{\nu, j}, \kappa_{\nu, j + 1}) \subseteq [\kappa_{\nu, j}, \delta_{\nu, j})$. (This is possible since the set $\ol{X}\, \cap\, [\kappa_{\nu, j}, \kappa_{\nu, j + 1})$ is countable, and any $\kappa_{\nu, j + 1}$ is a successor cardinal.) \\ Let \[\dom_0 := \dom \ol{p}_0 := \dom \ol{p}^\prime_0 := \bigcup_{\nu, j} [\kappa_{\nu, j}, \delta_{\nu, j}).\] 

%\colorbox{yellow} {FRAGE: Braucht man die Bezeichnungen $\supp_0$ und $\dom_0$?} 

This set is bounded below all regular $\kappa_{\ol{\nu}, \ol{\j}}$ by construction, since $\dom p_0$ and $\dom p^\prime_0$ are bounded below all regular $\kappa_{\ol{\nu}, \ol{\j}}$. \\[-3mm]

Now, for $(\sigma, i) \in \supp \ol{p}_0$, let $\ol{p}^\sigma_i: \dom \ol{p}^\sigma_i \rightarrow 2$ on the corresponding domain with $\dom \ol{p}^\sigma_i = \dom \ol{p}_0 \cap\, \kappa_\sigma$, such that $\ol{p}^\sigma_i \supseteq p^\sigma_i $ for all $(\sigma, i) \in \supp p_0$, and in the case that $(\sigma, i) = (\eta_m, i_m)$ for some $m < \omega$, we additionally require that $\ol{p}^{\eta_m}_{i_m} \supseteq (p^\prime)^{\eta_m}_{i_m}$. This is possible, since $p^\prime \in G$ and $p^{\eta_m}_{i_m} \in G^{\eta_m}_{i_m}$, so $p^{\eta_m}_{i_m}$ and $(p^\prime)^{\eta_m}_{i_m}$ are compatible. \\[-3mm]
%let $\ol{p}^\eta_i \supseteq p^\eta_i$ with $\dom \ol{p}^\eta_i = \dom \ol{p}_0\, \cap\, \kappa_\eta$.

% \rightarrow 2$ with $\ol{p}^\eta_i (\zeta) = p^\eta_i (\zeta)$ for all $\zeta \in \dom p_0$. 
We define $\ol{p}_\ast$ on the according domain $\bigcup_{\nu, j} [\kappa_{\nu, j}, \delta_{\nu, j})^2$ such that $\ol{p}_\ast \supseteq p_\ast$, and the \textit{linking property} holds for $\ol{p}_0 \leq p_0$: Consider an interval $[\kappa_{\nu, j}, \kappa_{\nu, j + 1})$ with $\delta_{\nu, j} > \kappa_{\nu, j}$. For $\zeta \in (\dom \ol{p}_0 \setminus \dom p_0)\, \cap \, [\kappa_{\nu, j}, \kappa_{\nu, j + 1})$ and $\{\xi\} := a^\sigma_i\, \cap \, [\kappa_{\nu, j}, \kappa_{\nu, j + 1})$ for some $(\sigma , i) \in \supp p_0$, it follows by construction that $\xi \in \dom \ol{p}_0$. Let $\ol{p}_\ast (\xi, \zeta) := \ol{p}^\sigma_i (\zeta)$. For all $\xi$, $\zeta \in \dom p_0\, \cap \, [\kappa_{\nu, j}, \kappa_{\nu, j + 1})$, we set $\ol{p}_\ast (\xi, \zeta) := p_\ast (\xi, \zeta)$; and $\ol{p}_\ast (\xi, \zeta) \in \{0, 1\}$ arbitrary for the $\xi$, $\zeta \in \dom \ol{p}_0$ remaining. \\[-3mm]

Concerning $\ol{p}^\prime$, we set $(\ol{p}^\prime)^{\eta_m}_{i_m} = \ol{p}^{\eta_m}_{i_m}$ for all $m < \omega$. Then $(\ol{p}^\prime)^{\eta_m}_{i_m} \supseteq (p^\prime)^{\eta_m}_{i_m}$ by construction. For the $(\sigma, i) \in \supp (\ol{p}^\prime)_0 = \supp \ol{p}_0$ remaining, we can set $(\ol{p}^\prime)^\sigma_i$ arbitrarily on the given domain such that $(\ol{p}^\prime)^\sigma_i \supseteq (p^\prime)^\sigma_i$. 

Finally, we let $(\ol{p}^\prime)_\ast \supseteq (p^\prime)_\ast$ according to the \textit{linking property} for $\ol{p}_0^\prime \leq p_0^\prime$ (same construction as for $\ol{p}_\ast$). \\[-3mm]

It follows that $\ol{p}_0 \leq p_0$ and $\ol{p}^\prime_0 \leq p^\prime_0$, and $\ol{p}_0$ and $\ol{p}^\prime_0$ have all the required properties. \\[-3mm] 

%WIRKLICH? DAS MÜSSTE MAN NACHPRÜFEN!! \\

The construction of $\ol{p}_1 \leq p_1$ and $\ol{p}^\prime_1 \leq p^\prime_1$ is similar. \\[-2mm]

Our aim is to write down an isomorphism $\pi \in A$ with the following properties:

\begin{itemize} \item $\ol{p} \in D_\pi$ with $\pi \ol{p} = \ol{p}^\prime$, \item $\pi \in \bigcap_{m < \omega} Fix( \eta_m, i_m)\, \cap\, \bigcap_{m < \omega} Fix (\ol{\eta}_m, \ol{i}_m)\; \cap \; \bigcap_{m < \omega} H^{\lambda_m}_{k_m}\, \cap \, \bigcap_{m < \omega} H^{\ol{\lambda}_m}_{\ol{k}_m}$\\ (then $\pi \ol{X}^{D_\pi} = \ol{X}^{D_\pi}$ follows).\end{itemize}

From $\ol{p} \Vdash_s \beta \in \dot{X}$, we will then obtain $\pi \ol{p} \Vdash_s \beta \in  \pi\ol{X}^{D_\pi}$; hence, $\ol{p}^\prime \Vdash_s \beta \in \ol{X}^{D_\pi}$. This will be a contradiction towards $p^\prime \Vdash_s \beta \notin \dot{X}$. \\[-2mm]

We start with $\pi_0$. Let $\dom \pi_0 := \dom \ol{p}_0 = \dom \ol{p}^\prime_0$, and $\supp \pi_0 := \supp \ol{p}_0 = \supp \ol{p}^\prime_0$. \begin{itemize} \item Consider an interval $[\kappa_{\nu, j}, \kappa_{\nu, j + 1})$. 
%Recall that we have set $\supp \pi_0 (\nu, j) := \{ (\sigma, i) \in \supp \pi_0\ | \ \kappa_{\nu, j} < \kappa_\sigma$. 
We define $F_{\pi_0} (\nu, j): \supp \pi_0 (\nu, j) \rightarrow \supp \pi_0 (\nu, j)$ as follows: Let $F_{\pi_0} (\nu, j) (\sigma, i) := (\lambda, k)$ in the case that $(\ol{a}^\prime)^\sigma_i\, \cap \, [\kappa_{\nu, j}, \kappa_{\nu, j + 1}) = \ol{a}^\lambda_k\, \cap\, [\kappa_{\nu, j}, \kappa_{\nu, j + 1})$. This is well-defined by the \textit{independence property},
%(wurde die Verwendung der Independence Property sonst noch irgendwo erwähnt? Sollte man wohl!), 
and since we have arranged $\bigcup_{\sigma, i} \ol{a}^\sigma_i = \bigcup_{\sigma, i} (\ol{a}^\prime)^\sigma_i$. 

%\colorbox{yellow}{ACHTUNG - wurde sonst nicht eher $\bigcup_{\sigma, i}$ geschrieben?} 
\item For every interval $[\kappa_{\nu, j}, \kappa_{\nu, j + 1})$, let $G_{\pi_0} (\nu, j) (\sigma, i) = (\sigma, i)$ for all $(\sigma, i) \in \supp \pi_0 (\nu, j)$.

(These maps $G_{\pi_0} (\nu, j)$ will be the only parameters of $\pi_0$ which are \textit{not} determined by the requirement that $\pi_0 \ol{p}_0 = \ol{p}^\prime_0$. However, in order to make sure that $\pi \in \bigcap_{m < \omega} Fix(\eta_m, i_m)\, \cap\,\bigcap_{m < \omega} H^{\lambda_m}_{k_m}$, we firstly need $G_{\pi_0} (\nu, j) (\eta_m, i_m) = (\eta_m, i_m)$ for all $m < \omega$; and secondly, whenever $m < \omega$ and $i \leq k_m$, we need that $G_{\pi_0} (\nu, j) (\lambda_m, i) = (\lambda_m, i)$ for all $\kappa_{\nu, j}$ above a certain $\kappa_{\ol{\nu}, \ol{\j}}$.)

\item For $\zeta \in [\kappa_{\nu, j}, \kappa_{\nu, j + 1})\, \cap \, \dom \pi_0$, we define $\pi_0 (\zeta): 2^{\supp \pi_0 (\nu, j)} \rightarrow 2^{\supp \pi_0 (\nu, j)}$ as follows: For $\big(\epsilon_{(\sigma, i)}\ | \ (\sigma, i) \in \supp \pi_0 (\nu, j)\big) \in 2^{\supp \pi_0 (\nu, j)}$ given, let $\pi_0 (\zeta) \big(\epsilon_{(\sigma, i)}\ | $ $\ (\sigma, i) \in \supp \pi(\nu, j)\big) := \big( \wt{\epsilon}_{(\sigma, i)}\ | \ (\sigma, i) \in \supp \pi_0 (\nu, j)\big)$ such that $\wt{\epsilon}_{(\sigma, i)} = \epsilon_{(\sigma, i)}$ whenever $\ol{p}^\sigma_i (\zeta) = (\ol{p}^\prime)^\sigma_i (\zeta)$, and $\wt{\epsilon}_{(\sigma, i)} \neq \epsilon_{(\sigma, i)}$ in the case that $\ol{p}^\sigma_i (\zeta) \neq (\ol{p}^\prime)^\sigma_i (\zeta)$. 

\item Let now $\zeta \in \dom \pi_0\, \cap\, [\kappa_{\nu, j}, \kappa_{\nu, j + 1})$, and $\big(\xi^\sigma_i (\nu, j)\ | \ (\sigma, i) \in \supp \pi_0 (\nu, j)\big) \in \dom \pi_0 (\nu, j)^{\,\supp \pi_0 (\nu, j)}$. The map $\pi_\ast (\zeta) \big(\xi^\sigma_i (\nu, j)\ | \ (\sigma, i) \in \supp \pi_0 (\nu, j)\big): 2^{\supp \pi_0 (\nu, j)} \rightarrow 2^{\supp \pi_0 (\nu, j)}$ is defined as follows: A sequence $\big(\epsilon_{(\sigma, i)}\ | \ (\sigma, i) \in \supp \pi_0 (\nu, j)\big)$ is mapped to $\big(\wt{\epsilon}_{(\sigma, i)}\ | \ (\sigma, i) \in \supp \pi_0 (\nu, j)\big)$ with $\wt{\epsilon}_{(\sigma, i)} = \epsilon_{(\sigma, i)}$ if $\ol{p}_\ast (\xi^\sigma_i (\nu, j), \zeta) = \ol{p}^\prime_\ast (\xi^\sigma_i (\nu, j), \zeta)$, and $\wt{\epsilon}_{(\sigma, i)} \neq \epsilon_{(\sigma, i)}$ in the case that $\ol{p}_\ast (\xi^\sigma_i (\nu, j), \zeta)$ $\neq$ $\ol{p}^\prime_\ast (\xi^\sigma_i (\nu, j), \zeta)$. 

\item For $(\xi, \zeta) \in [\kappa_{\nu, j}, \kappa_{\nu, j + 1})^2$, the map $\pi_\ast (\xi, \zeta): 2 \rightarrow 2$ is defined as follows: We let $\pi_\ast (\xi, \zeta) = id$ in the case that $(\xi, \zeta) \notin (\dom \pi_0 (\nu, j))^2$. If $\xi, \zeta \in \dom \pi_0 (\nu, j)$, let $\pi_\ast (\xi, \zeta) = id$ if $\ol{p}_\ast (\xi, \zeta) = \ol{p}^\prime_\ast (\xi, \zeta)$, and $\pi_\ast (\xi, \zeta) \neq id$ in the case that $\ol{p}_\ast (\xi, \zeta) \neq \ol{p}^\prime_\ast (\xi, \zeta)$.
\end{itemize}

This defines $\pi_0$. Directly by construction, it follows that $\pi_0 \ol{p}_0 = \ol{p}^\prime_0$: Let $\pi_0 \ol{p}_0 =: ( (\pi \ol{p})_\ast, ( (\pi \ol{p})^\sigma_i, (\pi \ol{a})^\sigma_i)_{\sigma, i}, (\pi \ol{p}^\sigma)_\sigma)$. Then for any $(\sigma, i) \in \supp (\pi_0 \ol{p}_0) = \supp \ol{p}_0$ and $\kappa_{\nu, j} < \kappa_\sigma$, we have $(\pi \ol{a})^\sigma_i \, \cap\, [\kappa_{\nu, j}, \kappa_{\nu, j + 1}) = \ol{a}^\lambda_k\, \cap \, [\kappa_{\nu, j}, \kappa_{\nu, j + 1})$, where $(\lambda, k) = F_{\pi_0} (\nu, j) (\sigma, i)$; hence, $\ol{a}^\lambda_k\, \cap \, [\kappa_{\nu, j}, \kappa_{\nu, j + 1}) = (\ol{a}^\prime)^\sigma_i\, \cap \, [\kappa_{\nu, j}, \kappa_{\nu, j + 1})$ as desired.
%. Hence, it follows that $(\pi \ol{a})^\sigma_i \, \cap\, [\kappa_{\nu, j}, \kappa_{\nu, j + 1}) = (\ol{a}^\prime)^\sigma_i\, \cap \, [\kappa_{\nu, j}, \kappa_{\nu, j + 1})$ as desired. \\[-3mm]

For any $\zeta \in \dom \ol{p}_0$, it follows by definition of $\pi_0 (\zeta)$ that $\big( (\pi \ol{p})^\sigma_i (\zeta)\ | \ (\sigma, i) \in \supp \pi_0 (\nu, j)\big) = \big( (\ol{p}^\prime)^\sigma_i (\zeta)\ | \ (\sigma, i) \in \supp \pi_0 (\nu, j)\big)$, and similarly, $(\pi \ol{p})_\ast (\xi, \zeta) = \ol{p}^\prime_\ast (\xi, \zeta)$ for all $(\xi, \zeta) \in \dom (\pi \ol{p})_\ast = \dom \ol{p}_\ast$. \\ Hence, $\pi_0 \ol{p}_0 = \ol{p}^\prime_0$. \\[-3mm]

It remains to verify that $\pi \in \bigcap_m Fix (\eta_m, i_m) \, \cap\, \bigcap_m H^{\lambda_m}_{k_m}$. Consider a condition $r \in D_{\pi_0}$ and let $r^\prime := \pi_0 r$. Take an interval $[\kappa_{\nu, j}, \kappa_{\nu, j + 1}) \subseteq \kappa_\gamma$. Then for any $m < \omega$ with $(\eta_m, i_m) \in \supp \pi_0 (\nu, j)$ and $\zeta \in \dom \pi_0 (\nu, j)$, it follows that $(r^\prime)^{\eta_m}_{i_m} (\zeta) = r^{\eta_m}_{i_m} (\zeta)$ by construction of the map $\pi_0 (\zeta)$, since we have arranged $\ol{p}^{\eta_m}_{i_m} (\zeta) = (\ol{p}^\prime)^{\eta_m}_{i_m} (\zeta)$. In the case that $\zeta \in [\kappa_{\nu, j}, \kappa_{\nu, j + 1})$ with $\zeta \in \dom \ol{r}_0 \setminus \dom \pi_0$, it follows for $m < \omega$ that $(r^\prime)^{\eta_m}_{i_m} (\zeta) = r^\lambda_k (\zeta)$, where $(\lambda, k) = G_{\pi_0} (\nu, j) (\eta_m, i_m) = (\eta_m, i_m)$ as desired. Hence, $(r^\prime)^{\eta_m}_{i_m} = r^{\eta_m}_{i_m}$ for all $m < \omega$. Since $r \in D_{\pi_0}$ was arbitrary, it follows that $\pi_0 \in \bigcap_{m < \omega} Fix (\eta_m, i_m)$. \\[-3mm]

Similarly, $\pi_0 \in \bigcap_m H^{\lambda_m}_{k_m}$ follows from the fact that $G_{\pi_0} (\nu, j) = id$ for all intervals $[\kappa_{\nu, j}, \kappa_{\nu, j + 1}) \subseteq \kappa_\gamma$. \\[-2mm]

Now, we turn to the map $\pi_1$. \\[-3mm]

Let $\supp \pi_1 := \supp \ol{p}_1 = \supp \ol{p}^\prime_1$, and $\dom \pi_1 (\sigma) := \dom \ol{p}_1 (\sigma) = \dom \ol{p}_1^\prime (\sigma)$ for $\sigma \in \supp \pi_1$. We set $\supp \pi_1 (\sigma) := \emptyset$ for all $\sigma \in \supp \pi_1$. Then we only have to define maps $\pi_1 (\sigma) (i, \zeta): 2 \rightarrow 2$ for $\sigma \in \supp \pi$, $(i, \zeta) \in \dom \pi_1 (\sigma)$: Let $\pi_1 (\sigma) (i, \zeta) = id$ if $\ol{p} (\sigma) (i, \zeta) = \ol{p}^\prime (\sigma) (i, \zeta)$, and $\pi_1 (\sigma) \neq id$ in the case that $\ol{p} (\sigma) (i, \zeta) \neq \ol{p}^\prime (\sigma) (i, \zeta)$. \\[-3mm]

Clearly, $\pi_1 \ol{p}_1 = \ol{p}^\prime_1$. Moreover, $\pi \in \bigcap_m Fix (\ol{\eta}_m, \ol{i}_m)$: Let $m < \omega$ and $r \in D_{\pi_1}$ with $\ol{\eta}_m \in \supp r$ and $\ol{i}_m \in \dom_x r (\ol{\eta}_m)$. In the case that $\ol{\eta}_m \in \supp \pi_1$, it follows for any $\zeta \in \dom_y r (\ol{\eta}_m)$ that $(\pi r) (\ol{\eta}_m) (\ol{i}_m, \zeta) = \pi_1 (\ol{\eta}_m) (i_m, \zeta) \big(r (\ol{\eta}_m) (\ol{i}_m, \zeta)\big) = r (\ol{\eta}_m) (\ol{i}_m, \zeta)$ by construction of $\pi_1$, since we have arranged that $\ol{p}^\prime (\ol{\eta}_m) (\ol{i}_m, \zeta) = \ol{p} (\ol{\eta}_m) (\ol{i}_m, \zeta)$ whenever $(\ol{i}_m, \zeta) \in \dom  \ol{p} (\ol{\eta}_m) = \dom \ol{p}^\prime (\ol{\eta}_m) = \dom \pi_1 (\ol{\eta}_m)$. If $\ol{\eta}_m \notin \supp \pi_1$, then $(\pi r) (\ol{\eta}_m) = r (\ol{\eta}_m)$ by construction.

Finally, $\pi \in \bigcap_m H^{\ol{\lambda}_m}_{\ol{k}_m}$ follows from the fact that $\supp \pi_1 (\lambda) = 0$ for all $\lambda \in \supp\pi_1$. \\[-2mm]

Hence, the map $\pi$ has all the desired properties. \\[-2mm]

This finishes the proof of $X = X^\prime$, and \[X = X^\prime \in V \big[\prod_{m < \omega} G^{\eta_m}_{i_m}\, \times \, \prod_{m < \omega} G^{\ol{\eta}_m}_{\ol{i}_m}\big]\] follows.

\end{proof}

It is not difficult to see that with the exception of the maps $G_{\pi_0} (\nu, j)$, all the parameters describing $\pi$ are given by the requirement that $\pi \ol{p} = \ol{p}^\prime$. We call an isomorphism $\pi \in A$ of this form a \textit{standard isomorphism for $\pi \ol{p} = \ol{p}^\prime$}. \\[-2mm]

With the same proofs as for Lemma \ref{prescard1} and \ref{prescof}, one can show: 

\begin{lem} \label{prescard2anfang} Let $( (\sigma_m, i_m)\ | \ m < \omega)$, $((\ol{\sigma}_m, \ol{i}_m)\ | \ m < \omega)$ with $\sigma_m \in Lim$, $i_m < \alpha_{\sigma_m}$, and $\ol{\sigma}_m \in Succ$, $\ol{i}_m < \alpha_{\ol{\sigma}_m}$ for all $m < \omega$. Then $\prod_{m < \omega} P^{\sigma_m}\, \times\, \prod_{m < \omega} P^{\ol{\sigma}_m}$ preserves cardinals, cofinalities and the $GCH$. \end{lem}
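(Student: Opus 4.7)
Write $Q := \prod_{m < \omega} P^{\sigma_m}\, \times\, \prod_{m < \omega} P^{\ol{\sigma}_m}$. The plan is to mimic the proofs of Lemma \ref{prescard1} and Corollary \ref{prescof} almost verbatim, once I have verified that $Q$ admits the same factoring behaviour as a single $P^\eta$. Concretely, I would first check that for every $\kappa_{\nu, j}$, the poset $Q$ is isomorphic to $Q\uhr \kappa_{\nu, j}\,\times\, Q\uhr [\kappa_{\nu, j}, \infty)$, where the lower factor has cardinality $\leq \kappa_{\nu, j}$ when $\kappa_{\nu, j}$ is regular and $\leq \kappa_{\nu, j}^\plus$ otherwise, while the upper factor is $\leq \kappa_{\nu, j}$-closed. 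The only new point beyond Lemma \ref{prescard1} is to absorb the countable product: the upper factor is a countable product of $\leq \kappa_{\nu, j}$-closed forcings and therefore remains $\leq \kappa_{\nu, j}$-closed, since every $\kappa_{\nu, j}$ occurring here is uncountable; and the lower factor is a countable product of sets of size $\leq \kappa_{\nu, j}^\plus$, which under $GCH$ has size $\leq (\kappa_{\nu, j}^\plus)^{\aleph_0} = \kappa_{\nu, j}^\plus$ because $\cf (\kappa_{\nu, j}^\plus) > \omega$.

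With that factoring in hand, GCH preservation follows by the exact case analysis of Lemma \ref{prescard1}. Let $H$ be $V$-generic on $Q$ and $\alpha$ a $V$-cardinal. If $\alpha$ dominates all $\kappa_{\sigma_m}$ and $\kappa_{\ol{\sigma}_m}$, one bounds $(2^\alpha)^{V[H]} \leq |\powerset(\alpha \cdot |Q|)|^V = (\alpha^\plus)^V$. If $\alpha$ lies in a gap $(\kappa_{\nu, j}, \kappa_{\nu, j + 1})$ or equals a regular $\kappa_{\nu, j}$, the factoring at $\kappa_{\nu, j}$ reduces the computation to $V[H \uhr \kappa_{\nu, j}]$, giving $(2^\alpha)^{V[H]} \leq (2^\alpha)^{V[H\uhr \kappa_{\nu, j}]} \leq (\alpha^\plus)^V$. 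For a singular $\kappa_{\nu, j}$, I would repeat the minimal-counterexample argument of Lemma \ref{prescard1}: picking a cofinal $(\alpha_i\ | \ i < \lambda)$ and using the same product bound $2^{\kappa_{\nu, j}} \leq \prod_{i < \lambda} 2^{\alpha_i} \leq \kappa_{\nu, j}^\lambda$, then applying the factoring at $\kappa_{\mu, m}$ where $\lambda \in [\kappa_{\mu, m}, \kappa_{\mu, m + 1})$. Cardinal preservation is then the same soft consequence as in Lemma \ref{prescard1}.

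Cofinality preservation is an immediate transcription of Corollary \ref{prescof}: if $\lambda$ is regular in $V$ but $\ol{\lambda} < \lambda$ were a new cofinality witnessed by $f: \ol{\lambda} \to \lambda$, choose $\kappa_{\nu, j}$ with $\ol{\lambda} \in [\kappa_{\nu, j}, \kappa_{\nu, j + 1})$, use $\leq \ol{\lambda}$-closure of $Q\uhr [\kappa_{\nu, j}, \infty)$ to conclude $f \in V[H \uhr \kappa_{\nu, j}]$, and use $|Q \uhr \kappa_{\nu, j}| < \lambda$ to preserve regularity of $\lambda$ in that smaller extension; contradiction.

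I do not expect any serious obstacle: the only point requiring a line of verification beyond Lemma \ref{prescard1} and Corollary \ref{prescof} is the $(\kappa_{\nu, j}^\plus)^{\aleph_0} = \kappa_{\nu, j}^\plus$ bound for the countable product, and this is automatic from $GCH$ together with the fact that every cardinal $\kappa_{\sigma_m}$, $\kappa_{\ol{\sigma}_m}$ appearing in the product is uncountable by the standing assumption $0 < \sigma_m, \ol{\sigma}_m < \gamma$.
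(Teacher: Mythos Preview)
Your proposal is correct and follows exactly the approach the paper intends: the paper's entire proof is the sentence ``With the same proofs as for Lemma~\ref{prescard1} and \ref{prescof}'', and the factoring estimates you spell out (countable product of $\leq\kappa_{\nu,j}$-closed forcings is $\leq\kappa_{\nu,j}$-closed; the lower factor has size $\leq\kappa_{\nu,j}^{\aleph_0}=\kappa_{\nu,j}$ for regular $\kappa_{\nu,j}$ and $\leq(\kappa_{\nu,j}^\plus)^{\aleph_0}=\kappa_{\nu,j}^\plus$ otherwise, by $GCH$) are precisely the ones the paper records in the paragraph following Definition~7.
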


Hence, the $Approximation$ $Lemma$ \ref{approx} implies:

\begin{cor} Cardinals and cofinalities are $V$-$N$-absolute. \end{cor}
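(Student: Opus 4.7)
The plan is to deduce the corollary as an essentially immediate consequence of the Approximation Lemma \ref{approx} together with Lemma \ref{prescard2anfang}. One direction is free: because $V \subseteq N$, any $V$-bijection witnessing failure of a $V$-cardinal already lies in $N$, so every $N$-cardinal is a $V$-cardinal, and likewise for cofinalities. The content is therefore the upward direction, i.e.\ that no $V$-cardinal is collapsed and no regular $V$-cardinal has its cofinality lowered in $N$.

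I would argue by contradiction. Suppose a $V$-cardinal $\kappa$ fails to be a cardinal in $N$; then there is a surjection $f \colon \mu \to \kappa$ in $N$ with $\mu < \kappa$. Encoding $f$ in the usual way as a set of ordinals $X \subseteq \mu \cdot \kappa$, we have $X = \dot X^G$ for some $\dot X \in HS$. By definition of $\mathcal{F}$, the stabilizer of $\dot X$ contains a subgroup of the form $\bigcap_m \mathrm{Fix}(\eta_m,i_m) \cap \bigcap_m H^{\lambda_m}_{k_m}$. Splitting the two sequences according to whether the first index lies in $\Lim$ or in $\Succ$, we obtain exactly the four countable sequences of parameters required to invoke Lemma \ref{approx}. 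That lemma then places $X$ inside an intermediate extension
\[
V\big[\textstyle\prod_{m<\omega} G^{\eta_m}_{i_m} \times \prod_{m<\omega} G^{\ol{\eta}_m}_{\ol{i}_m}\big],
\]
which by Lemmas \ref{generic} and \ref{generic2} is a $V$-generic extension by the countable product $\prod_{m<\omega} P^{\eta_m} \times \prod_{m<\omega} P^{\ol{\eta}_m}$.

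Now Lemma \ref{prescard2anfang} tells us precisely that this forcing preserves all $V$-cardinals and cofinalities. Consequently $\kappa$ is still a cardinal in the intermediate model, yet $X$ decodes to the collapsing map $f$ living there, a contradiction. The same argument with $f$ replaced by a short cofinal map $\mu \to \kappa$ handles the preservation of cofinalities verbatim.

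I do not anticipate any real obstacle: the only minor bookkeeping step is the splitting of the two filter-sequences into their $\Lim$/$\Succ$ parts so that the hypothesis of Lemma \ref{approx} is met, and the verification that coding a function as a set of ordinals preserves the relevant hereditary symmetry of its name (which is automatic from the coding being definable in $V$ from $\dot X$). Everything substantive has already been done in the Approximation Lemma and in the product-forcing preservation lemma.
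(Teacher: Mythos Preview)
Your argument is correct and is exactly the route the paper intends: the corollary is stated there without proof as an immediate consequence of the Approximation Lemma \ref{approx} together with Lemma \ref{prescard2anfang}, and your write-up simply spells out the evident details (coding the collapsing or cofinal map as a set of ordinals, splitting the support sequences into their $\Lim$/$\Succ$ parts to match the hypotheses of Lemma \ref{approx}, and then invoking the preservation lemma for the countable product).
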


We will now take a closer look at the intermediate generic extensions introduced in the \textit{Approximation Lemma} \ref{approx}. Firstly, we replace the generic filters $G^{\sigma_m}_{i_m}$ by $G_\ast (g^{\sigma_m}_{i_m})$, and secondly, we factor at $\kappa_\eta$ (or $\kappa_{\eta + 1}$).

%We will also use the \textit{Approximation Lemma} to find intermediate generic extensions where the $\kappa_\eta$-subsets located in $N$ can be captured.

%, which will be crucial in our proof of $\theta^N (\kappa_\eta) \leq \alpha_\eta$.

\begin{definition} For $0 < \eta < \gamma$, we say that $\big( (a_m)_{m < \omega}, (\ol{\sigma}_m, \ol{i}_m)_{m < \omega}\big)$ is an \textit{ \upshape $\eta$-good pair} if the following hold: \begin{itemize} \item $ (a_m\ | \ m < \omega)$ is a sequence of pairwise disjoint $\kappa_\eta$-subsets, such that for all $m < \omega$ and $\kappa_{\ol{\nu}, \ol{\j}} < \kappa_\eta$, it follows that $|a_m\, \cap \, [\kappa_{\ol{\nu}, \ol{\j}}, \kappa_{\ol{\nu}, \ol{\j} + 1})| = 1$, \item for all $m < \omega$, we have $\ol{\sigma}_m \in \Succ$ with $\ol{\sigma}_m \leq \eta$, $\ol{i}_m < \alpha_{\ol{\sigma}_m}$, \item if $m \neq m^\prime$, then  $(\ol{\sigma}_m, \ol{i}_m) \neq (\ol{\sigma}_{m^\prime}, \ol{i}_{m^\prime})$.\end{itemize}
\end{definition}

As in Lemma \ref{generic} and Lemma \ref{generic2}, it follows that for any $\eta$-good pair  $\big( (a_m)_{m < \omega}, (\ol{\sigma}_m, \ol{i}_m)_{m < \omega} \big)$, \[\prod_{m < \omega} G_\ast (a_m) \, \times\, \prod_{m < \omega} G^{\ol{\sigma}_m}_{\ol{i}_m}\] is a $V$-generic filter on $\prod_{m < \omega} (\ol{P}^\eta)^\omega\, \times\, \prod_{m < \omega} P^{\ol{\sigma}_m}$.

\begin{prop} \label{Xsubseteqkappaeta} Let $0 < \eta < \gamma$ and $X \in N$ with $X \subseteq \kappa_\eta$. If $\kappa_{\eta + 1} > \kappa_\eta^\plus$ (or $\kappa_\eta = \kappa_{\ol{\gamma}}$ with $\gamma = \ol{\gamma} + 1$),
% or $\kappa_\eta = \kappa_\gamma$, 
it follows that there is an $\eta$-good pair $\big( (a_m)_{m < \omega}, (\ol{\sigma}_m, \ol{i}_m)_{m < \omega} \big)$ with \[X \in V \big[\prod_{m < \omega} G_\ast (a_m)\, \times\, \prod_{m < \omega} G^{\ol{\sigma}_m}_{\ol{i}_m}\big]. \]

\end{prop}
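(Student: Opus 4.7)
The starting point is the Approximation Lemma \ref{approx}. Since $X\in N$ and any hereditarily symmetric name $\dot X$ for $X$ has stabilizer in $\mathcal F$, Lemma \ref{approx} furnishes countable sequences $((\eta_m,i_m))_{m<\omega}$ with $\eta_m\in\Lim$ and $((\ol\eta_m,\ol{i}_m))_{m<\omega}$ with $\ol\eta_m\in\Succ$ such that
\[
X \in V\!\left[\prod_{m<\omega} G^{\eta_m}_{i_m}\ \times\ \prod_{m<\omega} G^{\ol\eta_m}_{\ol{i}_m}\right].
\]
The first task is to rewrite each limit factor through $G_\ast$. As established in Chapter \ref{chapterforcing}, for every $(\eta_m,i_m)$ with $\eta_m\in\Lim$ the symmetric difference $G^{\eta_m}_{i_m}\oplus G_\ast(g^{\eta_m}_{i_m})$ lies in $V$, hence $V[G^{\eta_m}_{i_m}]=V[G_\ast(g^{\eta_m}_{i_m})]$. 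Since the entire sequence $(g^{\eta_m}_{i_m})_{m<\omega}$ is likewise an element of $V$ (by countable support, as noted after Definition 3.4), the analogous equality holds for the countable product, so
\[
X \in V\!\left[\prod_{m<\omega} G_\ast(g^{\eta_m}_{i_m})\ \times\ \prod_{m<\omega} G^{\ol\eta_m}_{\ol{i}_m}\right].
\]

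The second task is to cut the picture down to $\kappa_\eta$ and massage the indices into an $\eta$-good pair. For $m$ with $\eta_m\geq\eta$ set $a_m:=g^{\eta_m}_{i_m}\cap\kappa_\eta$; by the structural property of the $g^{\eta_m}_{i_m}$ recalled in Chapter \ref{chapterforcing}, each such $a_m$ meets every interval $[\kappa_{\ol\nu,\ol\jmath},\kappa_{\ol\nu,\ol\jmath+1})\subseteq\kappa_\eta$ in exactly one point, and distinct $a_m$ are disjoint by the independence property. For the (countably many) $m$ with $\eta_m<\eta$, extend $g^{\eta_m}_{i_m}$ inside $V$ to a set $a_m\subseteq\kappa_\eta$ hitting every such interval in exactly one point and disjoint from the previously chosen $a_{m'}$; this is possible because each $\kappa_{\ol\nu,\ol\jmath+1}$ is a successor cardinal with $\kappa_{\ol\nu,\ol\jmath+1}\geq\kappa_{\ol\nu,\ol\jmath}^{\,++}$, leaving ample room for countably many additional points. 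In the successor part, retain only the indices $(\ol\sigma_m,\ol{i}_m):=(\ol\eta_m,\ol{i}_m)$ with $\ol\eta_m\leq\eta$. By construction $\bigl((a_m)_{m<\omega},(\ol\sigma_m,\ol{i}_m)_{m<\omega}\bigr)$ is an $\eta$-good pair.

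It remains to verify that $X$ still belongs to the associated intermediate model. The product $\prod_m\ol P^{\eta_m}\times\prod_m P^{\ol\eta_m}$ factors at $\kappa_\eta$ into a lower part, whose generic is precisely $\prod_m G_\ast(a_m)\times\prod_m G^{\ol\sigma_m}_{\ol{i}_m}$ (with the data $g^{\eta_m}_{i_m}\setminus\kappa_\eta$ and the choice of $a_m$ for $\eta_m<\eta$ fixed in $V$), and an upper complement supported on $[\kappa_\eta,\kappa_\gamma)$. Arguing exactly as in Lemma \ref{prescard1}, this upper factor is $\leq\kappa_\eta$-closed, under our hypothesis: the hypothesis $\kappa_{\eta+1}>\kappa_\eta^{+}$ prevents any discarded successor forcing $P^{\ol\eta_m}$ with $\ol\eta_m>\eta$ from being ``too low'' (its predecessor cardinal $\ol{\kappa_{\ol\eta_m}}$ is then $\geq\kappa_\eta$), while the alternative $\kappa_\eta=\kappa_{\ol\gamma}$ with $\gamma=\ol\gamma+1$ makes the issue vacuous as there is nothing to factor out above. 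A $\leq\kappa_\eta$-closed forcing adds no new subset of $\kappa_\eta$, so $X\in V[\prod_m G_\ast(a_m)\times\prod_m G^{\ol\sigma_m}_{\ol{i}_m}]$, completing the proof. The main obstacle is the last step: carefully matching the factoring of $\prod_m\ol P^{\eta_m}\times\prod_m P^{\ol\eta_m}$ with the decomposition of its generic, which is exactly the point where the gap hypothesis is indispensable.
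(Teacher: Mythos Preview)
Your proof follows the paper's route exactly: invoke the Approximation Lemma, trade each $G^{\eta_m}_{i_m}$ for $G_\ast(g^{\eta_m}_{i_m})$ via the linking property and the fact that $(g^{\eta_m}_{i_m})_{m<\omega}\in V$, then factor the product at $\kappa_\eta$ and discard the upper part. One small correction: the upper factor is in fact $\le\kappa_\eta^{+}$-closed, not merely $\le\kappa_\eta$-closed (this is precisely where the gap hypothesis $\kappa_{\eta,1}\ge\kappa_\eta^{++}$, respectively $\kappa_{\eta+1}>\kappa_\eta^{+}$, is used), and it is this stronger closure combined with the lower part having cardinality $\le\kappa_\eta^{+}$ that makes the standard product-factoring argument go through---your sentence ``a $\le\kappa_\eta$-closed forcing adds no new subset of $\kappa_\eta$'' is literally true over $V$, but you need it over $V[G_{\text{lower}}]$, which requires the extra room. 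Your explicit extension of the $a_m$ when $\eta_m<\eta$ is additional care the paper omits; it simply sets $a_m:=g^{\sigma_m}_{i_m}\cap\kappa_\eta$ uniformly.
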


\begin{proof} By the \textit{Approximation Lemma} \ref{approx}, there are sequences $( (\sigma_m, i_m) \ | \ m < \omega)$, $( (\ol{\sigma}_m, \ol{i}_m)\ | \ m < \omega)$ of pairwise distinct pairs with $\sigma_m \in Lim$, $i_m < \alpha_{\sigma_m}$; $\ol{\sigma}_m \in Succ$, $\ol{i}_m < \alpha_{\ol{\sigma}_m}$ for all $m < \omega$, such that 

\[X \in V \big[ \prod_{m < \omega} G^{\sigma_m}_{i_m}\, \times \, \prod_{m < \omega} G^{\ol{\sigma}_m}_{\ol{i}_m}\big].  \]

%Consider $( (\sigma_m, i_m)\ | \ m < \omega)$ with $\sigma_m \in Lim$, $i_m < \alpha_{\sigma_m}$ for all $m < \omega$. 
The sequence of linking ordinals $(g^{\sigma_m}_{i_m}\ | \ m < \omega)$ is contained in $V$, and by the \textit{linking property}, it follows that $V[\prod_{m < \omega} G^{\sigma_m}_{i_m}] = V[\prod_{m < \omega} G_\ast (g^{\sigma_m}_{i_m})]$.
% If $p$ is a condition in $G$ with $(\sigma_m, i_m) \in \supp p_0$ for all $m < \omega$, then $G^{\sigma_m}_{i_m} (\zeta) = G_\ast (\xi, \zeta)$ whenever $\zeta \notin \dom p_0$, and $\zeta$ is contained in an interval $[\kappa_{\nu, j}, \kappa_{\nu, j + 1})$ with $g^{\sigma_m}_{i_m}\, \cap\, [\kappa_{\nu, j}, \kappa_{\nu, j + 1}) = \{\xi\}$.

%\colorbox{red}{ACHTUNG - sollte man das ausführen?}

Hence,
 \[X \in V \big[ \prod_{m < \omega} G_\ast (g^{\sigma_m}_{i_m})\, \times\, \prod_{m < \omega} G^{\ol{\sigma}_m}_{\ol{i}_m} \big].\]

The forcing $\prod_{m < \omega} P^{\sigma_m}\, \times \, \prod_{m < \omega} P^{\ol{\sigma}_m}$ can be factored as \[ \big(\prod_{m < \omega} P^{\sigma_m} \uhr \kappa_\eta\; \times\; \prod_{\ol{\sigma}_m \leq \eta} P^{\ol{\sigma}_m} \big)\; \times\; \big( \prod_{m < \omega} P^{\sigma_m} \uhr [\kappa_\eta, \kappa_{\sigma_m})\, \times\, \prod_{\ol{\sigma}_m > \eta} P^{\ol{\sigma}_m} \big),\]

%\colorbox{yellow}{ACHTUNG - bräuchte man nicht $P^{\sigma_m}_{i_m}$? Ok?}

where the \tbl lower part\tbr\,has cardinality $\leq \kappa_\eta^\plus$ by the $GCH$ in $V$, and the \tbl upper part\tbr\, is $\leq \kappa_\eta^{\plus}$-closed: If $\kappa_{\eta + 1}$ is a limit cardinal, this follows from the fact that $\kappa_{\eta, j + 1} \geq \kappa_{\eta, j}^{\plus \plus}$ for all $j < \cf \kappa_{\eta + 1}$ by construction (in particular, $\kappa_{\eta, 1} \geq \kappa_\eta^{\plus \plus}$); and if $\kappa_{\eta + 1}$ is a successor cardinal, we use our assumption that $\kappa_{\eta + 1} > \kappa_\eta^\plus$. Hence, \[X \in V \big[ \prod_{m < \omega} G_\ast (g^{\sigma_m}_{i_m}\, \cap \, \kappa_\eta)\, \times\, \prod_{\ol{\sigma}_m \leq \eta} G^{\ol{\sigma}_m}_{\ol{i}_m} \big].\] Setting $a_m := g^{\sigma_m}_{i_m}\, \cap \, \kappa_\eta$ for $m < \omega$, it follows by the \textit{independence property} that $ \big( (a_m)_{m < \omega}, (\ol{\sigma}_m, \ol{i}_m)_{m < \omega\, , \, \ol{\sigma}_m \leq \eta} \big)$ is an $\eta$-good pair with \[X \in V[ \prod_{m < \omega} G_\ast (a_m) \, \times\, \prod_{\ol{\sigma}_m \leq \eta} G^{\ol{\sigma}_m}_{\ol{i}_m} \big].\]
\end{proof}

In the case that $\kappa_{\eta + 1} = \kappa_\eta^\plus$, we use our notion of an \textit{$\eta$-almost good pair}, which is defined like an $\eta$-good pair, with the exception that for an \textit{$\eta$-almost good pair} $\big( (a_m)_{m < \omega}, (\ol{\sigma}_m, \ol{i}_m)_{m < \omega} \big)$, we have $a_m \subseteq \kappa_{\eta + 1}$ for all $m < \omega$.

\begin{definition} \label{etaalmostgoodpair} For $0 < \eta < \gamma$ with $\kappa_{\eta + 1} = \kappa_\eta^\plus$, we say that $\big( (a_m)_{m < \omega}, (\ol{\sigma}_m, \ol{i}_m)_{m < \omega} \big)$ is an \textit{ \upshape $\eta$-almost good pair} if the following hold: \begin{itemize} \item $ (a_m\ | \ m < \omega)$ is a sequence of pairwise disjoint $\kappa_{\eta + 1}$-subsets, such that for all $m < \omega$ and $\kappa_{\ol{\nu}, \ol{\j}} < \kappa_{\eta + 1}$, it follows that $|a_m\, \cap \, [\kappa_{\ol{\nu}, \ol{\j}}, \kappa_{\ol{\nu}, \ol{\j} + 1})| = 1$, \item for all $m$, we have $\ol{\sigma}_m \in \Succ$ with $\ol{\sigma}_m \leq \eta$, and $\ol{i}_m < \alpha_{\ol{\sigma}_m}$, \item if $m \neq m^\prime$, then  $(\ol{\sigma}_m, \ol{i}_m) \neq (\ol{\sigma}_{m^\prime}, \ol{i}_{m^\prime})$.\end{itemize}
\end{definition}

The counterpart of Proposition \ref{Xsubseteqkappaeta} states: 

\begin{prop} \label{Xsubseteqkappaeta2} Let $0 < \eta < \gamma$ and $X \in N$ with $X \subseteq \kappa_\eta$. In the case that $\kappa_{\eta + 1} = \kappa_\eta^\plus$, there is an $\eta$-almost good pair $\big( (a_m)_{m < \omega}, (\ol{\sigma}_m, \ol{i}_m)_{m < \omega} \big)$ with \[X \in V \big[\prod_{m < \omega} G_\ast (a_m)\, \times\, \prod_{m < \omega} G^{\ol{\sigma}_m}_{\ol{i}_m}\, \times\, G^{\eta + 1} \big]. \]

\end{prop}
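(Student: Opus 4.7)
The plan is to parallel the proof of Proposition~\ref{Xsubseteqkappaeta}, modified in two ways to handle the absence of a gap between $\kappa_\eta$ and $\kappa_{\eta+1}$: I factor the approximating forcing at $\kappa_{\eta+1}$ rather than at $\kappa_\eta$, and I absorb the copies of $P^{\eta+1}$ (which are only $<\kappa_{\eta+1}$-closed and hence cannot be moved into the upper, closed factor) into the single generic filter $G^{\eta+1}$ for $C^{\eta+1}$.

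First I apply the Approximation Lemma~\ref{approx} to $\dot{X}$ to obtain sequences $((\sigma_m, i_m)\ |\ m < \omega)$ with $\sigma_m \in \Lim$ and $((\ol{\sigma}_m, \ol{i}_m)\ |\ m < \omega)$ with $\ol{\sigma}_m \in \Succ$ such that $X \in V[\prod_m G^{\sigma_m}_{i_m} \times \prod_m G^{\ol{\sigma}_m}_{\ol{i}_m}]$. As in the proof of Proposition~\ref{Xsubseteqkappaeta}, the linking property and the fact that the sequence $(g^{\sigma_m}_{i_m})_{m<\omega}$ lies in $V$ allow me to replace each $G^{\sigma_m}_{i_m}$ by $G_\ast(g^{\sigma_m}_{i_m})$, so that $X$ is captured in the $V$-generic extension by $\prod_m \ol{P}^{\sigma_m}\times \prod_m P^{\ol{\sigma}_m}$. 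I then factor this product at $\kappa_{\eta+1}$ as
\[
\big(\prod_m \ol{P}^{\sigma_m}\uhr \kappa_{\eta+1} \,\times\, \prod_{\ol{\sigma}_m \leq \eta+1} P^{\ol{\sigma}_m}\big) \,\times\, \big(\prod_m \ol{P}^{\sigma_m}\uhr [\kappa_{\eta+1}, \kappa_{\sigma_m}) \,\times\, \prod_{\ol{\sigma}_m > \eta+1} P^{\ol{\sigma}_m}\big).
\]
The second (``upper'') factor is $\leq \kappa_{\eta+1}$-closed: for $\ol{\sigma}_m > \eta+1$ one has $\kappa_{\ol{\sigma}_m} \geq \kappa_{\eta+2} \geq \kappa_{\eta+1}^\plus$, so $P^{\ol{\sigma}_m}$ is $\leq \kappa_{\eta+1}$-closed; and for $\ol{P}^{\sigma_m}\uhr[\kappa_{\eta+1},\kappa_{\sigma_m})$ the smallest regular cardinal of the sequence $(\kappa_{\nu,j})$ strictly above $\kappa_{\eta+1}$ is either $\kappa_{\eta+1,1}\geq \kappa_{\eta+1}^{\plus\plus}$ (when $\eta+2\in\Lim$) or $\kappa_{\eta+2}\geq \kappa_{\eta+1}^\plus$ (when $\eta+2\in\Succ$), so descending chains of length $\leq \kappa_{\eta+1}$ always admit a lower bound.

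Since $|X| \leq \kappa_\eta < \kappa_{\eta+1}$, closure of the upper factor yields that $X$ is already in the extension by the lower factor:
\[
X\in V\big[\prod_{m<\omega}G_\ast(a_m)\,\times\,\prod_{\ol{\sigma}_m\leq\eta}G^{\ol{\sigma}_m}_{\ol{i}_m}\,\times\,\prod_{\ol{\sigma}_m=\eta+1}G^{\eta+1}_{\ol{i}_m}\big],
\]
where $a_m:=g^{\sigma_m}_{i_m}\cap\kappa_{\eta+1}$. Each $G^{\eta+1}_{\ol{i}_m}$ is a slice of the full filter $G^{\eta+1}$, so the last product sits inside $V[G^{\eta+1}]$, which gives the desired inclusion; and by the independence property, $((a_m)_m,(\ol{\sigma}_m,\ol{i}_m)_{\ol{\sigma}_m\leq\eta})$ is an $\eta$-almost good pair in the sense of Definition~\ref{etaalmostgoodpair}, exactly as in the proof of Proposition~\ref{Xsubseteqkappaeta}. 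I expect the main technical subtlety to lie in the closure verification for the upper factor in the subcase $\eta+2\in\Succ$, where without the built-in spacing $\kappa_{\nu,j+1}\geq\kappa_{\nu,j}^{\plus\plus}$ available above $\kappa_{\eta+1}$ one must lean directly on the regularity of $\kappa_{\eta+2}$ together with the inequality $\kappa_{\eta+2}\geq\kappa_{\eta+1}^\plus$ to conclude that the union of $\leq\kappa_{\eta+1}$ conditions remains a valid forcing condition.
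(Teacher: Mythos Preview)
Your proof is correct and follows essentially the same route as the paper: apply the Approximation Lemma, replace the $G^{\sigma_m}_{i_m}$ by $G_\ast(g^{\sigma_m}_{i_m})$ via the linking property, factor at $\kappa_{\eta+1}$ instead of $\kappa_\eta$, use $\leq\kappa_{\eta+1}$-closure of the upper factor to drop it, and absorb the finitely many slices $G^{\eta+1}_{\ol{i}_m}$ into the full $G^{\eta+1}$. The paper states the closure of the upper factor without the case analysis you give, but your more detailed verification (distinguishing $\eta+2\in\Lim$ versus $\eta+2\in\Succ$) is a reasonable elaboration of the same point.
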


\begin{proof} We follow the proof of Proposition \ref{Xsubseteqkappaeta} with a slightly different factorization: Let  \[X \in V \big[ \prod_{m < \omega} G_\ast (g^{\sigma_m}_{i_m})\, \times\, \prod_{m < \omega} G^{\ol{\sigma}_m}_{\ol{i}_m} \big]\] as before with $\sigma_m \in Lim$, $i_m < \alpha_{\sigma_m}$; $\ol{\sigma}_m \in Succ$, $\ol{i}_m < \alpha_{\ol{\sigma}_m}$ for all $m < \omega$. The forcing $\prod_{m < \omega} P^{\sigma_m}_{i_m}\, \times\, \prod_{m < \omega} P^{\ol{\sigma}_m}_{\ol{i}_m}$ can be factored as \[ \big(\prod_{m < \omega} P^{\sigma_m} \uhr \kappa_{\eta + 1}\; \times\; \prod_{\ol{\sigma}_m \leq \eta + 1} P^{\ol{\sigma}_m} \big)\; \times\; \big( \prod_{m < \omega} P^{\sigma_m} \uhr [\kappa_{\eta + 1}, \kappa_{\sigma_m})\, \times\, \prod_{\ol{\sigma}_m > \eta + 1} P^{\ol{\sigma}_m} \big),\] 
%\colorbox{yellow}{ACHTUNG - bräuchte man nicht $P^{\sigma_m}_{i_m}$?}

where the \tbl lower part\tbr\,has cardinality $\leq \kappa_{\eta + 1}$ by the $GCH$ in $V$ (since $\kappa_{\eta + 1} = \kappa_\eta^\plus$), and the \tbl upper part\tbr\,is $\leq \kappa_{\eta + 1}$-closed. Hence, \[X \in V\big[ \prod_{m < \omega} G_\ast (g^{\sigma_m}_{i_m} \, \cap \, \kappa_{\eta + 1})\, \times\, \prod_{\ol{\sigma}_m \leq \eta + 1} G^{\ol{\sigma}_m}_{\ol{i}_m}\big] \subseteq V\big[ \prod_{m < \omega} G_\ast (g^{\sigma_m}_{i_m} \, \cap \, \kappa_{\eta + 1})\, \times\, \prod_{\ol{\sigma}_m \leq \eta} G^{\ol{\sigma}_m}_{\ol{i}_m}\, \times\, G^{\eta + 1}\big].\] With $a_m := g^{\sigma_m}_{i_m}\, \cap \, \kappa_{\eta + 1}$ for $m < \omega$, it follows that $ \big( (a_m)_{m < \omega}, (\ol{\sigma}_m, \ol{i}_m)_{m < \omega\, , \, \ol{\sigma}_m \leq \eta} \big)$ is an $\eta$-almost good pair with \[X \in V[ \prod_{m < \omega} G_\ast (a_m) \, \times\, \prod_{\ol{\sigma}_m \leq \eta} G^{\ol{\sigma}_m}_{\ol{i}_m} \, \times\, G^{\eta + 1} \big]\] as desired.

\end{proof}

\section{\bfseries $ \mathbf{ \boldsymbol{\forall \eta}\ \;\boldsymbol{\theta^N} \boldsymbol{(\kappa_\eta)  = \alpha_\eta}}$.} \label{chapter6} 

It remains to make sure that in our $ZF$-model $N$, the values $\theta^N (\kappa_\eta)$ are as desired. Firstly, in Chapter \ref{6.1}, \ref{6.2} and \ref{6.3}, we will show that $\theta^N (\kappa_\eta) = \alpha_\eta$ holds for all $0 < \eta < \gamma$. After that, in Chapter \ref{6.4} and \ref{6.5}, we will see that for any cardinal $ \lambda \in (\kappa_\eta, \kappa_{\eta + 1})$ in a \tbl gap\tbr, or $\lambda \geq \kappa_\gamma = \sup \{\kappa_\eta\ | \ 0 < \eta < \gamma\}$, the value $\theta^N (\lambda)$ is the smallest possible.

By our remarks from Chapter \ref{the theorem}, this justifies our assumption from the beginning that the sequence $(\alpha_\eta\ | \ 0 < \eta < \gamma)$ is strictly increasing.

%\colorbox{yellow}{TO DO EVTL: Nummern der Kapitel sind so NICHT RICHTIG! Nochmal durchgehen? }

\subsection{\bfseries $ \mathbf{ \boldsymbol{\forall \eta }\ \;\boldsymbol{\theta^N} \boldsymbol{(\kappa_\eta) \geq \alpha_\eta}}$.} \label{6.1}

Using the subgroups $H^\eta_k$, it is not difficult to see that 
%$\theta^N (\kappa_\eta) \geq \alpha_\eta$ holds for all $0 < \eta < \gamma$. In other words: F
for all $k < \alpha_\eta$, there exists in $N$ a surjection $s: \powerset (\kappa_\eta) \rightarrow k$.

\begin{prop} Let $0 < \eta < \gamma$. Then $\theta^N (\kappa_\eta) \geq \alpha_\eta$. \end{prop}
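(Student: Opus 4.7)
The plan is to construct, for each $k < \alpha_\eta$, a surjection $s_k : \mathcal{P}(\kappa_\eta) \to k$ in $N$ using the clouds $(\widetilde{G^\eta_i})^{(k)}$ for $i < k$ already mentioned in Chapter \ref{constructingf}. Concretely, for $i < k$ I set
\[
(\dot{\widetilde{G^\eta_i}})^{(k)} \;:=\; \big\{\, (\pi\,\ol{\dot{G}^\eta_i}^{D_\pi}, \m{1})\ \big|\ [\pi] \in H^\eta_k\,\big\},
\]
so that $(\widetilde{G^\eta_i})^{(k)}$ is the $H^\eta_k$-orbit of $G^\eta_i$. Let $\dot{s}_k$ be the canonical name for the sequence $\langle (\widetilde{G^\eta_i})^{(k)} : i < k \rangle$, which is of the form $\{\, (\OR_\m{P}(\check{\imath}, (\dot{\widetilde{G^\eta_i}})^{(k)}), \m{1}) : i < k\,\}$. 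Once the orbits are pairwise disjoint, the assignment $X \mapsto$ (the unique $i < k$ with $X \in (\widetilde{G^\eta_i})^{(k)}$, or $0$ if no such $i$ exists) will give a surjection $s_k \in N$.

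First I verify that $\dot{s}_k \in HS$. Each $\ol{\dot{G}^\eta_i}^{D_\pi}$ lies in $HS$ as a restricted canonical name for the $i$-th generic, and the definition of $(\dot{\widetilde{G^\eta_i}})^{(k)}$ is invariant under $H^\eta_k$: for any $[\sigma] \in H^\eta_k$, post-composing with $[\sigma]$ just permutes the index $[\pi]$ over $H^\eta_k$. Hence the stabilizer of $(\dot{\widetilde{G^\eta_i}})^{(k)}$ contains $H^\eta_k$, which is in $\mathcal{F}$. The name $\dot{s}_k$ is likewise stabilized by $H^\eta_k$ (as an element of the filter), and all its entries are hereditarily symmetric, so $\dot{s}_k \in HS$.

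The main obstacle is showing the clouds are pairwise disjoint, i.e.\ that for $i \neq j$ with $i,j < k$ and any $[\pi] \in H^\eta_k$, $\pi G^\eta_i \neq G^\eta_j$. I split on whether $\eta \in \Lim$ or $\eta \in \Succ$. For $\eta \in \Lim$, by definition of $H^\eta_k$ there is $\kappa_{\ol{\nu},\ol{\j}} < \kappa_\eta$ such that for every $\kappa_{\nu,j} \in [\kappa_{\ol{\nu},\ol{\j}}, \kappa_\eta)$ and every $l \leq k$, either $(\eta,l) \notin \supp\pi_0(\nu,j)$ or $G_{\pi_0}(\nu,j)(\eta,l) = (\eta,l)$. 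Inspecting the definition of $\pi_0$, in both cases the value $(\pi_0 p)^\eta_l(\zeta)$ agrees with $p^\eta_l(\zeta)$ for every $\zeta \in [\kappa_{\nu,j}, \kappa_{\nu,j+1}) \setminus \dom\pi_0$. Since $\dom\pi_0 \cap [\kappa_{\nu,j}, \kappa_{\nu,j+1})$ is bounded below $\kappa_{\nu,j+1}$, the symmetric difference $(\pi G^\eta_i) \triangle G^\eta_i$ meets each such interval in a bounded set. If one had $\pi G^\eta_i = G^\eta_j$, then $G^\eta_i \triangle G^\eta_j$ would be bounded in cofinally many intervals $[\kappa_{\nu,j}, \kappa_{\nu,j+1})$ above $\kappa_{\ol{\nu},\ol{\j}}$, contradicting the Cohen-style genericity of the pair $(G^\eta_i, G^\eta_j)$ supplied by Lemma~\ref{generic2}, which produces a dense set of conditions forcing disagreement past any prescribed bound. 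For $\eta \in \Succ$ the argument is entirely analogous: the $H^\eta_k$ condition forces that $\pi$ neither swaps the horizontal lines $p^\eta(l,\cdot)$ for $l \leq k$ (outside $\dom\pi_1(\eta)$) nor touches the vertical columns for $l \notin \supp\pi_1(\eta)$ (outside $\dom\pi_1(\eta)$), so $(\pi G^\eta_i) \triangle G^\eta_i$ has size $< \kappa_\eta$; but $G^\eta_i \triangle G^\eta_j$ has size $\kappa_\eta$ by genericity of $P^\eta$, which again yields a contradiction.

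With disjointness established, the function $s_k$ defined above is a well-defined element of $N$ (via $\dot{s}_k$), and it is surjective onto $k$ because $G^\eta_i \in (\widetilde{G^\eta_i})^{(k)}$ witnessed by $\pi = \id$ for every $i < k$. Since $k < \alpha_\eta$ was arbitrary, $\theta^N(\kappa_\eta) \geq \alpha_\eta$.
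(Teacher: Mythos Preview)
Your overall strategy matches the paper's: build the clouds $(\widetilde{G^\eta_i})^{(k)}$ as $H^\eta_k$-orbits, show the sequence has a symmetric name, prove the clouds are pairwise disjoint, and read off the surjection. The symmetric-difference observation you use for disjointness (that any $[\pi]\in H^\eta_k$ perturbs $G^\eta_l$ for $l\le k$ only on the bounded part $\dom\pi_0$ of each interval above the threshold $\kappa_{\ol\nu,\ol\j}$) is correct and is essentially the same mechanism the paper exploits.

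There is, however, a genuine gap in your reduction. You write that disjointness of the clouds is \emph{equivalent} to ``for any $[\pi]\in H^\eta_k$, $\pi G^\eta_i\neq G^\eta_j$''. This is not so: the clouds are not orbits of a group action on $\powerset(\kappa_\eta)$ (the automorphism $\pi$ acts on the whole generic $G$, and $(\pi G)^\eta_i$ is not obtained by applying anything to the set $G^\eta_i$ alone). So an overlap $(\pi^{-1}G)^\eta_i=(\tau^{-1}G)^\eta_{i'}$ with $[\pi],[\tau]\in H^\eta_k$ does \emph{not} reduce to a single automorphism. The paper handles this correctly by working with two automorphisms simultaneously: it chooses $\kappa_{\ol\nu,\ol\j}$ large enough for both $\pi$ and $\tau$, and then finds by genericity a point $\zeta\notin\dom\pi_0\cup\dom\tau_0$ above $\kappa_{\ol\nu,\ol\j}$ where $q^\eta_i(\zeta)\neq q^\eta_{i'}(\zeta)$; at such $\zeta$ both $\pi^{-1}$ and $\tau^{-1}$ act trivially on the relevant coordinates, forcing a contradiction. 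Your symmetric-difference argument can be patched the same way: apply it to both $\pi$ and $\tau$ (each with its own threshold, then take the maximum), obtaining that $(\pi^{-1}G)^\eta_i\triangle G^\eta_i$ and $(\tau^{-1}G)^\eta_{i'}\triangle G^\eta_{i'}$ are both bounded in each interval above the common threshold; hence $G^\eta_i\triangle G^\eta_{i'}$ would be bounded there too, contradicting Lemma~\ref{generic2}. So the fix is easy, but as written your proof only establishes $G^\eta_j\notin(\widetilde{G^\eta_i})^{(k)}$, which is strictly weaker than what is needed.

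A minor secondary point: in this paper's framework of partial automorphisms and $\ol{x}^{D}$-names, the verification that $H^\eta_k$ stabilises the name $(\dot{\widetilde{G^\eta_i}})^{(k)}$ is not a one-liner; the paper carries out the computation $\sigma\,\ol{\pi\,\ol{G^\eta_i}^{D_\pi}}^{\,D_\sigma}=\ol{(\sigma\pi)\,\ol{G^\eta_i}^{D_{\sigma\pi}}}^{\,D_\sigma}$ explicitly. Your sketch (``post-composing with $[\sigma]$ permutes the index $[\pi]$'') captures the idea but skips the bookkeeping that the $\ol{\,\cdot\,}^{D}$ machinery requires here.
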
 

\begin{proof}

%\colorbox{yellow}{ACHTUNG - ist überall $\eta = 0$ ausgeschlossen?}

Let $k < \alpha_\eta$. We construct in $N$ a surjection $s: \powerset(\kappa_\eta) \rightarrow k$. As already outlined in Chapter \ref{constructingf}, we define around each $G^\eta_i$ with $i < k$ a \tbl cloud\tbr\,as follows: \[\big(\wt{G^\eta_i}\big)^{(k)} \; := \; \Big( \, \big(\dot{\wt{G^\eta_i}}\big)^{(k)} \, \Big)^G,\] where

\[\big(\dot{\wt{G^\eta_i}}\big)^{(k)} := \Big \{ \, (\pi \ol{G^\eta_i}^{D_\pi}, \m{1} )\ | \ [\pi] \in H^\eta_k \, \Big\}; \]

and we take the following canonical name for the $i$-th generic $\kappa_\eta$-subset:

\[\dot{G}^\eta_i := \big\{\, (a, p)\ \big| \ p \in \m{P}\: , \: \exists\, \zeta < \kappa_\eta \ \exists\, \epsilon \in \{0, 1\}\: : \: a = \mbox{OR}_{\m{P}} (\check{\zeta}, \check{\epsilon})\, \wedge\, p^\eta_i (\zeta) = \epsilon\, \big\}. \]

%\zeta < \kappa_\eta\, , \,  \epsilon \in 2\ : \  p^\eta_i (\zeta) = \epsilon\big) \big\}\] 

%\colorbox{yellow}{TO DO: widecheck} \\[-3mm]

Roughly speaking, $\big(\wt{G^\eta_i}\big)^{(k)}$ is the orbit of $G^\eta_i$ under the $\ol{A}$-subgroup $H^\eta_k$; hence, its canonical name $\big(\dot{\wt{G^\eta_i}}\big)^{(k)}$ is fixed by all automorphisms in $H^\eta_k$. 

More precisely: \\[-3mm]

%\colorbox{yellow}{Kann man das so schreiben?} \\[-3mm]

Let $\sigma \in A$ with $[\sigma] \in H^\eta_k$. Then \[\ol{(\dot{\wt{G^\eta_i}})^{(k)}}^{D_\sigma} = \big \{\; \big( \; \ol{\pi \ol{G^\eta_i}^{D_\pi}}^{D_\sigma}, p\; \big)\ | \ [\pi] \in H^\eta_k\; , \; p \in D_\sigma  \;\big\}. \]

Moreover, for all $\pi$, \[\ol{\pi \ol{G^\eta_i}^{D_\pi}}^{D_\sigma} = \big \{\; \big(\;\ol{a}^{D_\sigma}\; , \; p \; \big)\ | \ p \in D_\sigma\; , \;  p \Vdash_s a \in \pi \ol{G^\eta_i}^{D_\pi}\; , \; \exists\, \zeta < \kappa_\eta \ \exists\, \epsilon \in \{0, 1\}\: : \: a = \mbox{OR}_{\m{P}} (\check{\zeta}, \check{\epsilon}) \;\big\},\]

since for any $a = \mbox{OR}_{\m{P}} (\check{\zeta}, \check{\epsilon})$ as above, it follows that $\ol{\pi \ol{a}^{D_\pi}}^{D_\sigma} = \ol{ \ol{a}^{D_\pi}}^{D_\sigma} = \ol{a}^{D_\sigma}$. \\[-3mm]

Now, it is not difficult to see that $p \in D_\sigma$ with $p \Vdash_s a \in \pi \ol{G^\eta_i}^{D_\pi}$ if and only if $p \in D_\sigma$ and for all $q \leq p$ with $q \in D_\pi\, \cap \, D_\sigma$ and $\zeta \in \dom q_0$, it follows that $(\pi^{-1} q )^\eta_i (\zeta) = \epsilon$. \\[-4mm]

Also, $\sigma \ol{a}^{D_\sigma} = \ol{a}^{D_\sigma}$ holds for all $\sigma$. \\[-2mm]

Hence, \[\sigma \, \ol{\pi \ol{G^\eta_i}^{D_\pi}}^{D_\sigma} = \big \{\; \big(\; \sigma \ol{a}^{D_\sigma}\; , \; \sigma p \; \big)\ | \ p \in D_\sigma\; , \; \exists\, \zeta < \kappa_\eta\; \exists\, \epsilon \in \{0, 1\} \ \;a = \mbox{OR}_{\m{P}} (\check{\zeta}, \check{\epsilon}) \; ,\] \[\forall\, q \in D_\pi\, \cap\, D_\sigma\ \big(\, (q \leq p \, \wedge\, \zeta \in \dom q_0) \Rightarrow (\pi^{-1} q)^\eta_i (\zeta) = \epsilon \, \big)\; \big\} \]

\[ = \big \{\; \big(\; \ol{a}^{D_\sigma}\; , \;  p \; \big)\ | \ p \in D_\sigma\; , \; \exists\, \zeta < \kappa_\eta\; \exists\, \epsilon \in \{0, 1\} \ \;a = \mbox{OR}_{\m{P}} (\check{\zeta}, \check{\epsilon}) \; ,\] \[\forall\, q \in D_\pi\, \cap\, D_\sigma\ \big(\, (q \leq p \, \wedge\, \zeta \in \dom q_0) \Rightarrow (\pi^{-1} \sigma^{-1} q)^\eta_i (\zeta) = \epsilon \, \big)\; \big\}.\]

Setting $\tau := \sigma \pi$, it follows that \[\sigma \ol{\pi \ol{G^\eta_i}^{D_\pi}}^{D_\sigma} = \ol{\tau \ol{G^\eta_i}^{D_\tau}}^{D_\sigma}.\]

Now, any element of \; $\sigma \, \ol{(\dot{\wt{G^\eta_i}})^{(k)}}^{D_\sigma}$ is of the form \[\big( \; \sigma \ol{\pi \ol{G^\eta_i}^{D_\pi}}^{D_\sigma}, \sigma p\; \big)\] with $[\pi] \in H^\eta_k$ and $p \in D_\sigma$. Since \[\big( \; \sigma \ol{\pi \ol{G^\eta_i}^{D_\pi}}^{D_\sigma}, \sigma p\; \big) = \big( \; \ol{\tau \ol{G^\eta_i}^{D_\tau}}^{D_\sigma}, \ol{p}\; \big),\] where $\tau := \sigma \pi$ and $\ol{p} := \sigma p$ satisfy $[\tau] \in H^\eta_k$ and $\ol{p} \in D_\sigma$, it follows that \[\big( \; \sigma \ol{\pi \ol{G^\eta_i}^{D_\pi}}^{D_\sigma}, \sigma p\; \big) \in \ol{(\dot{\wt{G^\eta_i}})^{(k)}}^{D_\sigma}.\] Hence, \[\sigma \,\ol{(\dot{\wt{G^\eta_i}})^{(k)}}^{D_\sigma} \subseteq \ol{(\dot{\wt{G^\eta_i}})^{(k)}}^{D_\sigma}.\] 

The inclusion \tbl $\supseteq$\tbr\,is similar. \\[-2mm]

%\[\ol{(\dot{\wt{G^\eta_i}})^{(k)}}^{D_\sigma} \subseteq \sigma \ol{(\dot{\wt{G^\eta_i}})^{(k)}}^{D_\sigma}\] is similar.

%= \big\{ \; \big(\; \pi \ol{\ol{ \dot{G}^\eta_i}^{D_\pi}}^{D_\sigma}, p \; \big) \ | \ \pi \in H^\eta_k\; , \; p \in D_\sigma\;\big\},\] and 

%\begin{eqnarray*} \sigma \; \ol{ \big(\dot{\wt{G^\eta_i}}\big)^{(k)}}^{D_\sigma} & = & \big \{\; \big( \; \sigma \; \ol{\pi \ol{\dot{G}^\eta_i}^{D_\pi}}^{D_\sigma}, \sigma p\; \big)\ | \ \pi \in H^\eta_i\; , \; p \in D_\sigma \big\} \\ & = & \big\{ \; \big( \; \sigma \pi \ol{\dot{G}^\eta_i}^{D_\pi \, \cap \, D_\sigma}, p \big)\ | \ \pi \in H^\eta_k \; ,\; p \in D_\sigma \big\} \\ & = &\big \{  \; \big(\sigma \pi \ol{\dot{G}^\eta_i}^{D_{\sigma \pi}}, p \; \big)\ | \ \pi \in H^\eta_k, p \in D_\sigma  \; \big\}. \end{eqnarray*} 

%Since $\pi \in H^\eta_k$ implies that also $\sigma \pi \in H^\eta_k$ with \[\ol{\dot{G}^\eta_i}^{D_{\sigma \pi}} = \ol{\ol{\dot{G}^\eta_i}^{D_\sigma \pi}}^{D_\sigma},\] it follows that \[\sigma \ol{\big(\dot{\wt{G^\eta_i}}\big)^{(k)}}^{D_\sigma} = \ol{\big(\dot{\wt{G^\eta_i}}\big)^{(k)}}^{D_\sigma}\] for all $\sigma \in H^\eta_k$.

Thus, \[ \Big( \; \big(\dot{\wt{G^\eta_i}}\big)^{(k)}\ \ \big| \ \ i < k \; \Big) := \Big \{\; \Big(\, \OR_{\m{P}} \Big( \; \check{i}, \big(\dot{\wt{G^\eta_i}}\big)^{(k)}\; \Big) \,, \, \m{1}\, \Big) \ \ \Big| \ \ i < k \; \Big\}, \]  is a name for the sequence $\Big(\big(\wt{G^\eta_i}\big)^{(k)}\ | \ i < k \Big)$ that is stabilized by all $\sigma$ with $[\sigma] \in H^\eta_k$. Hence, $\Big(\big(\wt{G^\eta_i}\big)^{(k)}\ | \ i < k \Big) \in N$. \\[-2mm]

Now, we can define in $N$ a surjection $s: \powerset(\kappa_\eta) \rightarrow k$ as follows: For $X \in N$, $X \subseteq \kappa_\eta$, let $s(X) := i$ in the case that $X \in \big(\wt{G^\eta_i}\big)^{(k)}$ if such $i$ exists, and $s(x) := 0$, else. \\[-3mm]

The surjectivity of $s$ is clear, since $G^\eta_i \in N$ for all $i < k $ with $s\big(G^\eta_i\big) = i$. It remains to show that $s$ is well-defined; i.e. for any $i, i^\prime < k$ with $i \neq i^\prime$, it follows that $\big(\wt{G^\eta_i}\big)^{(k)}\; \cap \; \big(\wt{G^\eta_{i^\prime}}\big)^{(k)} = \emptyset$. \\[-2mm]

First, let $\eta\in Lim$, and take $i$, $i^\prime < k$ with $i \neq i^\prime$. \\[-3mm]

The point is that the automorphisms in $H^\eta_k$ do not permute the vertical lines $P^\eta_i\, \uhr\, [\kappa_{\ol{\nu}, \ol{\j}}, \kappa_\eta)$ and $P^\eta_{i^\prime}\ \uhr\, [\kappa_{\ol{\nu}, \ol{\j}}, \kappa_\eta)$ above some $\kappa_{\ol{\nu}, \ol{\j}} < \kappa_\eta$. Thus, the orbits of $G^\eta_i$ and $G^\eta_{i^\prime}$ under $H^\eta_k$ must be disjoint: \\[-3mm]

Assume towards a contradiction there was $X \in \big( \wt{G^\eta_i}\big)^{(k)}\; \cap \; \big(\wt{G^\eta_{i^\prime}} \big)^{(k)}$. Then we have \[\big( \pi \ol{G^\eta_i}^{D_\pi} \big)^G = \big( \tau \ol{G^\eta_{i^\prime}}^{D_\tau} \big)^G  \] for some $\pi, \tau$ with $[\pi] \in H^\eta_k$ and $[\tau] \in H^\eta_k$. Hence, $(\pi^{-1} G)^\eta_i = (\tau^{-1} G)^\eta_{i^\prime}$. Take $\kappa_{\ol{\nu}, \ol{\j}} < \kappa_\eta$ such that for all $\kappa_{\nu, j} \in [\kappa_{\ol{\nu}, \ol{\j}}, \kappa_\eta)$ and $l < k$, it follows that $G_{\pi_0} (\nu, j) (\eta, l) = (\eta, l)$ whenever $(\eta, l) \in \supp \pi_0 (\nu, j)$, and $G_{\tau_0} (\nu, j) (\eta, l) = (\eta, l)$ whenever $(\eta, l) \in \supp \tau_0 (\nu, j)$.

By genericity, take $q \in G$ with $q \in D_\pi\, \cap\, D_\tau$ such that there is $\zeta \in \dom q \setminus (\dom \pi_0\, \cap\, \dom \tau_0)$, $\zeta \in [\kappa_{\ol{\nu}, \ol{\j}}, \kappa_\eta)$ with $q^\eta_i (\zeta) \neq q^\eta_{i^\prime} (\zeta)$.

% since the set \[\wt{D} := \{q \in \m{P}\ | \ q \in D_\pi\, \cap\, D_\tau\; ; \; (\eta, i), (\eta, j) \in \supp q_0\; ; \; \exists\, \zeta \in [\kappa_{\ol{\nu}, \ol{j}}, \kappa_\eta)\, \cap \, \]\[ \cap\, \big(\dom q \setminus (\dom \pi_0\, \cap\, \dom \tau_0)\big) \ q^\eta_i (\zeta) \neq q^\eta_j (\zeta) \}\] is dense in $\m{P}$. 
W.l.o.g., let $q^\eta_i (\zeta) = 1$, $q^\eta_{i^\prime} (\zeta) = 0$. With $r := \pi^{-1} q$, $r^\prime := \tau^{-1} q$, it follows by construction of the isomorphism that $r^\eta_i (\zeta) = q^\eta_i (\eta) = 1$ and $(r^\prime)^\eta_{i^\prime} (\zeta) = q^\eta_{i^\prime} (\zeta) = 0$, which would contradict $(\pi^{-1}G)^\eta_i = (\tau^{-1}G)^\eta_{i^\prime}$. \\[-3mm]

Hence, $s: \powerset(\kappa_\eta) \rightarrow k$ is a well-defined surjection in $N$. \\[-3mm]

The case $\eta \in Succ$ is similar.
\end{proof} 

\subsection{\bfseries $ \mathbf{ \boldsymbol{\forall} \boldsymbol{\eta } \ \boldsymbol {\big(\, \kappa_{\eta + 1} > \kappa_\eta^\plus\, \longrightarrow \,} \boldsymbol{\theta^N} \boldsymbol{(\kappa_\eta)}  \leq \boldsymbol{\alpha_\eta\, \big)} }$.} 
\label{6.2}

%It remains to show that $\theta^N (\kappa_\eta) \leq \alpha_\eta$ for all $\eta$. \\ 
Let $0 < \eta < \gamma$. Throughout this Chapter \ref{6.2}, we assume that \[ \boldsymbol{\kappa_{\eta + 1} > \kappa_\eta^\plus.}\] Then Proposition \ref{Xsubseteqkappaeta} can be applied. \\[-3mm]

In Chapter \ref{6.3}, we discuss the case that $\kappa_{\eta + 1} = \kappa_\eta^\plus$, where the proof can be structured the very same way; except that the intermediate generic extensions where the $\kappa_\eta$-subsets in $N$ are located are given by Proposition \ref{Xsubseteqkappaeta2}. Thus, we will have to take care of an extra factor $G^{\eta+1}$ in our products describing these intermediate generic extensions, which will lead to a couple of modifications. In Chapter \ref{6.3}, we take a brief look at each step in the proof presented here, and go through the major changes.\\[-1mm]

%Proposition \ref{Xsubseteqkappaeta2} describes the intermediate generic extensions where the $\kappa_\eta$-subsets in $N$ are located. \\[-2mm]

Assume towards a contradiction that there was a surjective function $f: \powerset (\kappa_\eta) \rightarrow \alpha_\eta$ in $N$. Let $f = \dot{f}^G$ with $\dot{f} \in HS$, such that $\pi \ol{f}^{D_\pi} = \ol{f}^{D_\pi}$ holds for all $\pi \in A$ with $[\pi]$ contained in the intersection \[\bigcap_{m < \omega} Fix (\eta_m, i_m)\, \cap\, \bigcap_{m < \omega} H^{\lambda_m}_{k_m} \hspace*{5,5cm} (A_{\dot{f}}).\] By Proposition \ref{Xsubseteqkappaeta}, it follows that any $X \in dom\,f$ is of the form \[X = \dot{X}^{\prod_{m < \omega} G_\ast (a_m)\, \times\, \prod_{m < \omega} G^{\ol{\sigma}_m}_{\ol{i}_m}}, \] where $ \big( (a_m)_{m < \omega}, (\ol{\sigma}_m, \ol{i}_m)_{m < \omega} \big)$ is an $\eta$-good pair. \\[-3mm]

Our proof will be structured as follows: We pick some $\boldsymbol{\beta < \alpha_\eta}$ \textit{large enough for the intersection $(A_{\dot{f}})$} (we give a definition of this term on the next page) and consider a map $f^\beta$, which will be obtained from $f$ by restricting its domain to those $X$ that are contained in a generic extension  \[V \big[\prod_{m < \omega} G_\ast (a_m)\, \times\, \prod_{m < \omega}G^{\ol{\sigma}_m}_{\ol{i}_m} \big]\] for an $\eta$-good pair $ \big( (a_m)_{m < \omega}, (\ol{\sigma}_m, \ol{i}_m)_{m < \omega} \big)$ such that $\boldsymbol{\ol{i}_m < \beta}$ \textbf{for all} $\boldsymbol{m < \omega}$. 

We wonder if this restricted function $f^\beta$ could still be surjective onto $\alpha_\eta$. \\[-2mm]

The main steps of our proof (similar as in \cite[Chapter 5]{arxiv}) can be outlined as follows: \\[-3mm]

First, we assume that also $f^\beta: \dom f^\beta \rightarrow \alpha_\eta$ was surjective onto $\alpha_\eta$.

\begin{itemize} \item[A)] We define a forcing notion $\m{P}^\beta\, \uhr \, (\eta + 1)$, which will be obtained from $\m{P}$ by essentially \tbl cutting off \hspace*{-0,3mm}\tbr\, at height $\eta + 1$ and width $\beta$. We show that there is a projection of forcing posets $\rho^\beta: \m{P} \rightarrow \m{P}^\beta\, \uhr\, (\eta + 1)$.
%\colorbox{yellow}{TO DO: \tbl complete projection\tbr\,nachschlagen und definieren!}
%colorbox{red}{FRAGE: Wo wurde eine $V$-generischer Filter $G$ auf $\m{P}$ gewählt?}
%$\pi: \m{P} \rightarrow \m{P}^\beta\, \uhr\, (\eta + 1)$. 
Then the $V$-generic filter $G$ on $\m{P}$ induces a $V$-generic filter $G^\beta\, \uhr\, (\eta + 1)$ on $\m{P}^\beta\, \uhr\, (\eta + 1)$. \item[B)] We show that $f^\beta$ is contained in an intermediate generic extension similar to $V[G^\beta\, \uhr\, (\eta + 1)]$. \item[C)] We prove that the forcing $\m{P}^\beta\, \uhr\, (\eta + 1)$ preserves cardinals $\geq \alpha_\eta$. \item[D)] We construct in $V[G^\beta\, \uhr\, (\eta + 1)]$ a set $\wt{\powerset} (\kappa_\eta) \supseteq \dom f^\beta$ with an injection $\iota: \wt{\powerset} (\kappa_\eta) \hookrightarrow \beta$.

%\colorbox{yellow}{FRAGE: Sollte man direkt $\dom \iota = \powerset^N (\kappa_\eta)$ nehmen?}

\end{itemize}

Then D) together with B) and C) gives the desired contradiction. \\[-3mm]

Hence, $f^\beta: \dom f^\beta \rightarrow \alpha_\eta$ must \textit{not} be surjective.

\begin{itemize} \item[E)] We consider $\alpha < \alpha_\eta$ with $\alpha \in rg\, f \setminus rg \,f^\beta$, and use an isomorphism argument to obtain a contradiction, again.\end{itemize}

We see that either case, whether $f^\beta$ was surjective or not, leads into a contradiction. Thus, our initial assumption must be wrong, and we can finally conclude: \\[-3mm]

\textit{There is no surjective function $f: \powerset(\kappa_\eta) \rightarrow \alpha_\eta$}. \\[-2mm]

Before we start with Chapter \ref{6.2} A), we first define our term \textit{large enough for the intersection} $(A_{\dot{f}})$:

\begin{definition} \label{largeenough} A limit ordinal $\wt{\beta} < \alpha_\eta$ is { \upshape large enough for the intersection $(A_{\dot{f}})$} if the following hold: \begin{itemize} \item $\wt{\beta} > \kappa_\eta^\plus$ \item $\wt{\beta} > \sup \{ i_m\ | \ \eta_m \leq \eta\}$ \item $\wt{\beta} > \sup \{k_m\ | \ \lambda_m \leq \eta\}$ \end{itemize}
\end{definition}

\vspace*{2mm}
(We use that $\alpha_\eta \geq \kappa_\eta^{\plus \plus}$, and $\cf \alpha_\eta > \omega$.) \\[-2mm]

%\colorbox{red}{ACHTUNG: Sollte man $\wt{\beta}$ nicht kleinstmöglich nehmen?}

%\colorbox{yellow}{TO DO: Erwähnen, dass das Vorgehen ähnlich ist wie im Baum-Forcing? Hier oder am Anfang?}

%\colorbox{yellow}{FRAGE: Wäre der Aufbau im Baumforcing nicht besser?}

Fix a limit ordinal $\wt{\beta} < \alpha_\eta$ \textit{large enough for the intersection $(A_{\dot{f}})$}, and let $\beta := \wt{\beta} + \kappa_\eta^\plus$ (addition of ordinals). \\[-3mm]

The restriction $f^\beta$ is defined as follows:

\begin{definition} \label{deffbeta}\[f^\beta := \Big \{\  (X, \alpha) \in f \ \Big| \ \exists\, \big( (a_m)_{m < \omega}, (\ol{\sigma}_m, \ol{i}_m)_{m < \omega} \big) \ \eta\mbox{-good pair}\; : \]\[(\forall  m\ \; \ol{i}_m < \beta)\, \wedge \exists\, \dot{X} \in \Name \big( (\ol{P}^\eta)^\omega\, \times\, \prod_{m < \omega} P^{\ol{\sigma}_m}\big)\ \ X = \dot{X}^{\prod_{m < \omega} G_\ast (a_m)\, \times\, \prod_{m < \omega} G^{\ol{\sigma}_m}_{\ol{i}_m}} \ \Big\}.\]\end{definition}

First, we assume towards a contradiction that $\boldsymbol{f^\beta: \dom f^\beta \rightarrow \alpha_\eta}$ \textbf{ is surjective}.

%\colorbox{yellow}{FRAGE: diese Stelle hervorheben? Ähnlich wie bei $\kappa_{\eta + 1} > \kappa_\eta^\plus$?}

\subsubsection*{A) Constructing $\boldsymbol{\m{P}^\beta \, \uhr\, (\eta + 1)}$.} 

Our aim is to construct a forcing notion $\m{P}^\beta\, \uhr\, (\eta + 1)$ that is obtained from $\m{P}$ by essentially \tbl cutting off\tbr\, at height $\eta$ and width $\beta$; i.e. only the cardinals $\kappa_\sigma$ for $\sigma \leq \eta$ should be considered, and for any such $\kappa_\sigma$, we add at most $\beta$-many new $\kappa_\sigma$-subsets $G^\sigma_i$. 

Regarding our $V$-generic filter $G$ on $\m{P}$, we need that the restriction $G^\beta\,\uhr\, (\eta + 1) := G \uhr \big(\m{P}^\beta\, \uhr (\eta + 1)\big)$ is a $V$-generic filter on $\m{P}^\beta\, \uhr\, (\eta + 1)$, which will be guaranteed by making sure that the canonical map $\rho^\beta: \m{P} \rightarrow \m{P}^\beta\, \uhr\, (\eta + 1)$, $p \mapsto p^\beta\, \uhr \, (\eta + 1)$ is a projection of forcing posets.  \\[-3mm]

%\colorbox{yellow}{Begriff?}

A first attempt to define $\m{P}^\beta\, \uhr \, (\eta + 1)$ could be the following: \\[-3mm]

For $p \in \m{P}$, let \[p^\beta\, \uhr (\eta + 1) = \big(p_\ast \uhr \kappa_\eta^2, (p^\sigma_i, a^\sigma_i)_{\sigma \leq \eta, i < \min \{\alpha_\sigma, \beta\}}, (p^\sigma\, \uhr\, (\min \{\alpha_\sigma, \beta\}\, \times\, \dom_y p^\sigma)_{\sigma \leq \eta} \big)\] denote the canonical restriction; and set \[\m{P}^\beta\, \uhr\, (\eta + 1) := \{p^\beta\, \uhr\, (\eta + 1)\ | \ p \in \m{P} \}.\] But then, $G^\beta\, \uhr \, (\eta + 1) := \{p^\beta \, \uhr \, (\eta + 1)\ | \ p \in G\}$ would not be a $V$-generic filter on $\m{P}^\beta\, \uhr\, (\eta + 1)$: Consider a linking ordinal $\xi \in g^{\ol{\sigma}}_{\ol{i}}$ for some $(\ol{\sigma}, \ol{i})$, such that $\eta < \ol{\sigma} < \gamma$, $\ol{i} < \alpha_{\ol{\sigma}}$ holds; or $\ol{\sigma} \leq \eta$, $\beta \leq \ol{i} < \alpha_{\ol{\sigma}}$. The set $D := \{p \in \m{P}^\beta\, \uhr\, (\eta + 1)\ | \ \xi \in \bigcup_{\sigma \leq \eta, i < \beta} a^{\sigma}_i \}$ is dense in $\m{P}^\beta\, \uhr\, (\eta + 1)$; but $D\, \cap\, G^\beta\, \uhr\, (\eta + 1) = \emptyset$ by the \textit{independence property}.
% it follows that there is no $\ol{p} \in G$ with $\xi \in \ol{a}^\sigma_i$ for some $\sigma \leq \eta$, $i < \beta$. 
Hence, $G^\beta\, \uhr\, (\eta + 1)$ can not be a $V$-generic filter on $\m{P}^\beta\, \uhr \, (\eta + 1)$. \\[-3mm]

This shows that the conditions in $\m{P}^\beta\, \uhr \, (\eta + 1)$ should contain some information about which linking ordinals are \tbl forbidden\tbr\, for $\bigcup_{\sigma \leq \eta, i < \beta} \, a^\sigma_i$, being already occupied by some index $(\ol{\sigma}, \ol{i})$ with $\ol{\sigma} > \eta$ or $\ol{i} \geq \beta$. 

Thus, for $p \in \m{P}$, we add to $p^\beta\, \uhr \, (\eta + 1)$ a new coordinate $X_p$, which is essentially the union of all $a^\sigma_i\, \cap\, \kappa_\eta$ for $\sigma > \eta$ or $i \geq \beta$. 
%\bigcup_{\sigma > \eta \; \vee\; i \geq \beta} (a^\sigma_i\, \cap\, \kappa_\eta)\; .\] 
Then $X_p$ is a subset of $\kappa_\eta$ that hits any interval $[\kappa_{\nu, j}, \kappa_{\nu, j + 1})$ in at most countably many points. \\[-2mm] 

%\colorbox{red}{ACHTUNG - wäre $X_p$ als Bezeichnung in Ordnung??? Man hat doch $(X, \alpha)$!} \\[-2mm]

Let $\wt{\eta} := \sup \{\sigma < \eta\ | \ \sigma \in Lim\}$. By closure of the sequence $(\kappa_\sigma\ | \ 0 < \sigma < \gamma)$, it follows that $\wt{\eta} \in Lim$ with $\wt{\eta} = \max \{\sigma \leq \eta\ | \ \eta \in Lim\}$, and $\kappa_{\wt{\eta}} = \sup \{ \kappa_\sigma\ | \ \sigma \in Lim\, , \, \sigma < \wt{\eta}\}$. \\[-3mm] 

W.l.o.g. we restrict to the case that \[\boldsymbol{\beta < \alpha_{\wt{\eta}}} \textbf{ \ \ or\ \ }\boldsymbol{Lim\, \cap\, (\eta, \gamma) \neq \emptyset}\,;\] which is the same as requiring that there exist coordinates $(\sigma, i)$ with $\sigma \in Lim$, and $\sigma > \eta$ or $i \geq \beta$. 
(Otherwise, the forcing $\m{P}^\beta\, \uhr \, (\eta + 1)$ already contains all coordinates $(\sigma, i)$ with $\sigma \in Lim$, and there are no \tbl forbidden\tbr\,linking ordinals. In that case, we can indeed set $\m{P}^\beta\, \uhr\, (\eta + 1) := \big\{\, \big (p_\ast\, \uhr\, \kappa_\eta^2, (p^\sigma_i, a^\sigma_i)_{\sigma \leq \eta, i < \beta}, (p^\sigma\, \uhr\, (\beta\, \times\, \dom_y p^\sigma))_{\sigma \leq \eta}\big)\ | \ p \in \m{P} \, \big\}$, and obtain that $G^\beta\, \uhr\, (\eta + 1)$ is a $V$-generic filter on $\m{P}^\beta\, \uhr \, (\eta + 1)$. ) \\[-2mm]

For a condition $p \in \m{P}$, let \[X_p := \bigcup \big\{\; a^\sigma_i \, \cap \, \kappa_{\wt{\eta}}\ | \ \sigma \in Lim \mbox{ with } (\sigma > \eta \mbox{ or }
 i \geq \beta)\; \big\}\,,\] and \[p^\beta\, \uhr \, (\eta + 1) := \big(\, p_\ast\ \uhr \, \kappa_\eta^2, (p^\sigma_i, a^\sigma_i)_{\sigma \leq \wt{\eta}, i < \beta}, (p^\sigma\, \uhr \, (\beta\, \times\, \dom_y p^\sigma))_{\sigma \leq \eta}, X_p\, \big).\] For reasons of homogeneity, we include into $\m{P}^\beta\, \uhr \, (\eta + 1)$ only those conditions $p^\beta\, \uhr \, (\eta + 1)$ for which the set $X_p$ hits every interval $[\kappa_{\nu, j}, \kappa_{\nu, j + 1}) \subseteq \kappa_{\wt{\eta}}$ in countably many points, which is the same as requiring $|\{ (\sigma, i) \in \supp p_0\ | \ \sigma > \wt{\eta} \mbox{ or } i \geq \beta\}| = \aleph_0$.

%\colorbox{yellow}{FRAGE: $\omega$ oder $\aleph_0$ für Kardinalitäten?} 

\begin{definition} \label{defpbetauhretapluseins}$\m{P}^\beta\, \uhr\, (\eta + 1) := $ \[\big\{\; p^\beta\, \uhr\, (\eta + 1)\ \ \big| \ \ p \in \m{P}\; , \; |\{ (\sigma, i) \in \supp p_0\ | \ \sigma > \eta \mbox{ or } i \geq \beta\}| = \aleph_0\, \big\}  \]\[ \cup \; \{\m{1}^\beta_{\eta + 1}\},\] with $\m{1}^\beta_{\eta + 1}$ as the maximal element. \\[-3mm]

%\colorbox{red}{ACHTUNG - EVTL die Bezeichnung $\emptyset$ weglassen?}

For conditions $p^\beta\, \uhr \, (\eta + 1)$, $q^\beta\, \uhr \, (\eta + 1)$ in $\m{P}^\beta\, \uhr \, (\eta + 1) \setminus \{ \m{1}^\beta_{\eta + 1}\}$, let $q^\beta\, \uhr \, (\eta + 1) \leq^\beta_{\eta +1} p^\beta\, \uhr \, (\eta + 1)$ if $X_q \supseteq X_p$, and $\big(q_\ast\, \uhr\, \kappa_\eta^2, (q^\sigma_i, b^\sigma_i)_{\sigma \leq \eta, i < \beta}, (q^\sigma\, \uhr \, (\beta\, \times\, \dom_y q^\sigma)_{\sigma \leq \eta}\big) \leq \big(p_\ast\, \uhr\, \kappa_\eta^2, (p^\sigma_i, a^\sigma_i)_{\sigma \leq \eta, i < \beta}, (p^\sigma\, \uhr \, (\beta\, \times\, \dom_y p^\sigma)_{\sigma \leq \eta}\big)$ regarded as conditions in $\m{P}$. \end{definition}

%\colorbox{yellow}{FRAGE: eine besser Schreibweise für $\uhr\, (\eta + 1)$ suchen}

%\colorbox{yellow}{Sollte man nicht $p_\ast \, \uhr\, \kappa_{\wt{\eta}}^2$ nehmen?}

% and also EINE BESSERE SCHREIBWEISE FÜR $\ \uhr\, (\eta + 1)$ SUCHEN?? $X_q \supseteq X_p$. \end{definition}

In other words: $\m{P}^\beta\,\uhr \, (\eta + 1)$ is the collection of all $(p_\ast, (p^\sigma_i, a^\sigma_i)_{\sigma \leq \eta, i < \beta}, (p^\sigma)_{\sigma \leq \eta}, X_p)$ such that \begin{itemize} \item $p := (p_\ast, (p^\sigma_i, a^\sigma_i)_{\sigma \leq \eta, i < \beta}, (p^\sigma)_{\sigma \leq \eta})$ is a condition in $\m{P}$ with $\dom p_0 \subseteq \kappa_\eta$, $\supp p_0 \subseteq \{ (\sigma, i)\ | \ \sigma \leq \eta, i < \beta\}$, and $\supp p_1 \subseteq \eta + 1$ with $\forall\, \sigma \in \supp p_1 \: : \: \dom_x p^\sigma \subseteq \beta$, \item $X_p \subseteq \kappa_{\wt{\eta}}$ with $\forall [\kappa_{\nu, j}, \kappa_{\nu, j + 1}) \subseteq \kappa_{\wt{\eta}}\ \ |X_p\, \cap \, [\kappa_{\nu, j}, \kappa_{\nu, j + 1})| = \aleph_0$, and $X_p\, \cap\, \bigcup_{\sigma \leq \eta\, , \, i < \beta}\, a^\sigma_i = \emptyset$. \end{itemize} For $p$, $q \in \m{P}$ with $q \leq p$ and $|\{ (\sigma, i) \in \supp p_0\ | \ \sigma > \eta \mbox{ or } i \geq \beta\}| = \aleph_0$, it follows that $q^\beta\, \uhr \, (\eta + 1) \leq p^\beta\, \uhr \, (\eta + 1)$.

\begin{definition} \[G^\beta\, \uhr \, (\eta + 1) := \big \{\,p \in \m{P}^\beta\, \uhr \, (\eta + 1)\ \  \big| \ \ \exists\, \ol{p} \in G\; : \ |\{ (\sigma, i) \in \supp \ol{p}_0\ | \ \sigma > \eta \mbox{ or } i \geq \beta\}| = \aleph_0\; ,\]\[ \ol{p}^\beta \, \uhr \, (\eta + 1) \leq^\beta_{\eta + 1} p\, \big\}.\]\end{definition}

\vspace*{2mm}
We will now show that $G^\beta\, \uhr \, (\eta + 1)$ is a $V$-generic filter on $\m{P}^\beta\, \uhr \, (\eta + 1)$. \\[-3mm]

Let $\ol{\m{P}} \subseteq \m{P}$ denote the collection of all $p \in \m{P}$ with the property that $|\, \{ (\sigma, i) \in \supp p_0\ | \ \sigma > \eta\, \vee\, i \geq \beta\}\,| = \aleph_0$, together with the maximal element $\m{1}$. Then $\ol{\m{P}}$ is a dense subforcing of $\m{P}$.

\vspace*{1mm}

\begin{prop} \label{projectionrho} The map $\rho^\beta: \ol{\m{P}} \rightarrow \m{P}^\beta\, \uhr \, (\eta + 1)$ with $p \mapsto p^\beta\, \uhr \, (\eta + 1)$ in the case that $|\, \{ (\sigma, i) \in \supp p_0\ | \ \sigma > \eta\, \vee\, i \geq \beta\}\,| = \aleph_0$, and $\m{1} \mapsto \m{1}^\beta_{\eta + 1}$, 
%in the case that $|\{ (\sigma, i) \in \supp p_0\ | \ \sigma > \eta\, \vee\, i \geq \beta\}| = \aleph_0$, and $p \mapsto \m{1}^\beta_{\eta + 1}$ else, 
is a \textit{projection of forcing posets}:
\begin{itemize} \item $\rho^\beta (\m{1}) = \m{1}^\beta_{\eta + 1}$\,, \item if $\ol{p}$, $\ol{q} \in \ol{\m{P}}$ with $\ol{q} \leq \ol{p}$, it follows that $\rho^\beta (\ol{q}) \leq^\beta_{\eta + 1} \rho^\beta (\ol{p})$\,, \item for any $\ol{p} \in \ol{\m{P}}$ and $q \in \m{P}^\beta\, \uhr \, (\eta + 1)$ with $q \leq^\beta_{\eta + 1} \rho^\beta (\ol{p})$, there exists $\ol{q} \in \ol{\m{P}}$ such that $\ol{q} \leq \ol{p}$ and $\rho^\beta (\ol{q}) \leq q$.\end{itemize}

Hence, $G^\beta\, \uhr (\eta + 1)$ is a $V$-generic filter on $\m{P}^\beta\, \uhr \, (\eta + 1)$. 
 \end{prop}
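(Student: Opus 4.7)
The plan is to check the three defining properties of a projection in order of increasing difficulty. The first, $\rho^\beta(\m{1}) = \m{1}^\beta_{\eta+1}$, holds by definition. For the second, suppose $\ol{q} \leq \ol{p}$ in $\ol{\m{P}}$. The canonical restriction to $\kappa_\eta$ and to coordinates $(\sigma, i)$ with $\sigma \leq \eta$, $i < \beta$ preserves the order relation on $\m{P}$. Moreover, if $\ol{q} \leq \ol{p}$ then the linking ordinals only grow, so in particular $X_{\ol{q}} \supseteq X_{\ol{p}}$ (since both are unions of linking ordinals $a^\sigma_i\, \cap\, \kappa_{\wt{\eta}}$ over $(\sigma, i)$ with $\sigma > \eta$ or $i \geq \beta$, and every such coordinate contributing to $X_{\ol{p}}$ also appears in $\ol{q}$). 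This is exactly what Definition \ref{defpbetauhretapluseins} requires for $\leq^\beta_{\eta+1}$.

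The main work is the lifting property. Given $\ol{p} \in \ol{\m{P}}$ and $q = (q_\ast, (q^\sigma_i, b^\sigma_i)_{\sigma \leq \eta, i < \beta}, (q^\sigma)_{\sigma \leq \eta}, X_q) \in \m{P}^\beta\, \uhr\, (\eta + 1)$ with $q \leq^\beta_{\eta+1} \rho^\beta(\ol{p})$, I will construct $\ol{q} \in \ol{\m{P}}$ with $\ol{q} \leq \ol{p}$ and $\rho^\beta(\ol{q}) \leq^\beta_{\eta+1} q$. On coordinates with $\sigma \leq \eta$ and $i < \beta$ simply copy the data from $q$; on coordinates with $\sigma > \eta$ or $i \geq \beta$ that already appear in $\supp \ol{p}_0$, extend $\ol{p}^\sigma_i$ to the enlarged domain, using $q_\ast (\xi, \zeta)$ at points $\zeta \in [\kappa_{\nu, j}, \kappa_{\nu, j + 1})$ where $\{\xi\} = \ol{a}^\sigma_i \cap [\kappa_{\nu, j}, \kappa_{\nu, j + 1})$ lies in $\dom q_\ast$ (so the linking property is preserved), and setting arbitrary values elsewhere. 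For $\ol{q}_\ast$, take the union of $q_\ast$ with $\ol{p}_\ast$ restricted to the part outside $\kappa_\eta^2$, extending the height at $\nu \geq \eta$ just enough so all involved linking ordinals are captured.

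The critical step is handling $X_q \setminus X_{\ol{p}}$: these countably many extra ordinals (on each of countably many intervals $[\kappa_{\nu, j}, \kappa_{\nu, j+1}) \subseteq \kappa_{\wt{\eta}}$) must be realized as linking ordinals $a^\sigma_i \cap [\kappa_{\nu, j}, \kappa_{\nu, j+1})$ for coordinates with $\sigma > \eta$ or $i \geq \beta$ and $\sigma \in \Lim$. By our standing hypothesis $\beta < \alpha_{\wt{\eta}}$ or $\Lim \cap (\eta, \gamma) \neq \emptyset$, such coordinates exist in abundance; pick a fresh countable set of them disjoint from $\supp \ol{p}_0$, and distribute the new linking ordinals among them so that the \emph{independence property} holds (each interval meets each new $a^\sigma_i$ in a singleton). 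Since $\ol{p}$ already satisfies the cardinality bound $|\dom p_\ast \cap \kappa_{\nu, j}^2| < \kappa_{\nu, j}$ at regular $\kappa_{\nu, j}$ and $q$ does too on $\kappa_\eta^2$, the combined condition remains valid. The resulting $\ol{q}$ lies in $\ol{\m{P}}$ because we have added only countably many new coordinates above $\eta$ or above $\beta$, so $|\{(\sigma, i) \in \supp \ol{q}_0 \mid \sigma > \eta \vee i \geq \beta\}| = \aleph_0$. The main obstacle in the argument is the bookkeeping to check simultaneously the linking property, the independence property, and the domain-form and cardinality requirements of $\m{P}$; once that is done, the conclusion that $G^\beta\, \uhr\, (\eta+1)$ is $V$-generic follows by the standard fact that images of generic filters under projections are generic.
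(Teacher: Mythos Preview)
Your proposal is correct and follows essentially the same route as the paper: copy $q$ on the coordinates $(\sigma,i)$ with $\sigma\le\eta$, $i<\beta$, keep $\ol{p}$ on the remaining old coordinates, and introduce a fresh countable family of coordinates with $\sigma>\eta$ or $i\ge\beta$ whose linking ordinals absorb $X_q\setminus X_{\ol{p}}$. Two small points to tighten: the intervals $[\kappa_{\nu,j},\kappa_{\nu,j+1})\subseteq\kappa_{\wt{\eta}}$ are not just countably many in general (you only need that $X_q$ meets each in countably many points), and the one place where the independence property is not automatic---between the $b^\sigma_i$ coming from $q$ and the $\ol{a}^\tau_j$ coming from $\supp\ol{p}_0\setminus\supp q_0$---is handled precisely by the built-in disjointness $X_q\cap\bigcup b^\sigma_i=\emptyset$ together with $\ol{a}^\tau_j\subseteq X_{\ol{p}}\subseteq X_q$, which you should state explicitly.
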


%\colorbox{red}{ACHTUNG - DIE ÄNDERUNGEN MÜSSTE MAN NOCH DURCHGEHEN! PROPOSITION 35??}

\begin{proof} Clearly, the map $\rho^\beta$ as defined above is order-preserving with $\rho^\beta (\m{1}) = \m{1}^\beta_{\eta + 1}$. Consider $\ol{p} = (\ol{p}_\ast, (\ol{p}^\sigma_i, \ol{a}^\sigma_i)_{\sigma, i}, (\ol{p}^\sigma)_\sigma) \in \ol{\m{P}}$ 
%\colorbox{red}{ACHTUNG - WÄRE HIER AUCH DER FALL $\ol{p} = \m{1}$ IN ORDNUNG?}
%(w.l.o.g. we can assume $|\{ (\sigma, i) \in \supp \ol{p}\ | \ \sigma > \eta\, \vee\, i \geq \beta\}| = \omega$) 
and $q = (q_\ast\, \uhr\, \kappa_\eta^2, (q^\sigma_i, b^\sigma_i)_{\sigma \leq \eta, i < \beta}, (q^\sigma)_{\sigma \leq \eta}, X_q) \in \m{P}^\beta\, \uhr \, (\eta + 1)$ with $q \leq^\beta_{\eta + 1} \rho^\beta (\ol{p}) = \ol{p}^\beta\, \uhr \, (\eta + 1)$. Then \begin{eqnarray} (q_\ast\, \uhr \, \kappa_\eta^2, (q^\sigma_i, a^\sigma_i)_{\sigma \leq \eta, i < \beta}) & \leq_0 & (\ol{p}_\ast \, \uhr \, \kappa_\eta^2, (p^\sigma_i, a^\sigma_i)_{\sigma \leq \eta, i < \beta}) \mbox{ \hspace*{0,8cm} in }\m{P}_0  \ \ , \nonumber \\
(q^\sigma)_{\sigma \leq \eta}  & \leq_1 & (p^\sigma\, \uhr\, (\beta\, \times\, \dom_y p^\sigma))_{\sigma \leq \eta} \mbox{ \hspace*{0,8cm} in }\m{P}_1\ \ , \mbox{ \hspace*{2mm} \textit{and} } \nonumber \\ X_q  &\supseteq  &\bigcup \{ \ol{a}^\sigma_i\, \cap\, \kappa_{\wt{\eta}}\ | \ \sigma > \eta\, \vee\, i \geq \beta\}.  \nonumber \end{eqnarray}
We have to construct $\ol{q} \in \m{P}$, $\ol{q} = (\ol{q}_\ast, (\ol{q}^\sigma_i, \ol{b}^\sigma_i)_{\sigma, i}, (\ol{q}^\sigma)_\sigma)$, with $\ol{q} \leq \ol{p}$ and $\rho^\beta (\ol{q}) = \ol{q}^\beta\, \uhr \, (\eta + 1) \leq^\beta_{\eta + 1} q$. \\[-3mm]

We start with $\ol{q}_0$: 
%\colorbox{yellow}{TO DO: MOTIVATION FÜR DIESE KONSTRUKTION?}
\begin{itemize} 
%\item Let $\dom \ol{q}_0 \, \cap\, \kappa_\eta := \dom q_0$, and $\dom \ol{q}_0\, \cap\, [\kappa_\eta, \kappa_\gamma) := \dom \ol{p}_0\, \cap \, [\kappa_\eta, \kappa_\gamma)$. We set $\ol{q}_\ast \, \uhr \, \kappa_\eta^2 := q_\ast \supseteq \ol{p}_\ast \, \uhr \, \kappa_\eta^2$, and $\ol{q}_\ast\, \uhr\, [\kappa_\eta, \kappa_\gamma)^2 := \ol{p}_\ast \, \uhr \, [\kappa_\eta, \kappa_\gamma)^2$. 
\item 

%\colorbox{yellow}{TO DO: sollte man den Index $\wt{\wt{\eta}}$ umbenennen?} 
In order to achieve $X_{\ol{q}} \supseteq X_q$, we will enlarge $\supp \ol{p}_0\, \cup\, \supp q_0$ by countably many $((\hat{\eta}, m_k)\ | \ k < \omega)$, where $\hat{\eta} > \eta$ or $m_k \geq \beta$ for all $k < \omega$, and arrange that any $\xi \in X_q \setminus X_p$ occurs as a linking ordinal in some $\ol{b}^{\,\hat{\eta}}_{\,m_k}$. \\More precisely: Let $\supp \ol{q}_0 := \supp \ol{p}_0\, \cup\, \supp q_0 \, \cup\, \supp_\ast$, where $\supp_\ast := \{(\hat{\eta}, m_k)\ | \ k < \omega\}$ such that $(\hat{\eta}, m_k) \notin \supp \ol{p}_0\, \cup\, \supp q_0$ for all $k <  \omega$, and since we are working in the case that $\beta < \alpha_{\wt{\eta}}$ or $(\eta, \gamma)\, \cap\, Lim \neq \emptyset$, we can take either $\hat{\eta} := \wt{\eta}$ and $m_k \in (\beta, \alpha_{\wt{\eta}})$ for all $k < \omega$; or $\hat{\eta} \in (\eta, \gamma)\, \cap\, Lim$. Then for all $(\hat{\eta}, m_k)$, it follows that $\hat{\eta} > \eta$ or $m_k \geq \beta$.

\item Next, we define the linking ordinals $\ol{b}^\sigma_i$ for $(\sigma, i) \in \supp \ol{q}_0$ such that $X_{\ol{q}} \supseteq X_q$. 

For $(\sigma, i) \in \supp q_0$, we let $\ol{b}^\sigma_i := b^\sigma_i \supseteq \ol{a}^\sigma_i$; and in the case that $(\sigma, i) \in \supp \ol{p}_0 \setminus \supp q_0$, we set $\ol{b}^\sigma_i := \ol{a}^\sigma_i$. Finally, we define $(\ol{b}^{\,\hat{\eta}}_{\,m_k}\ | \ k < \omega)$ with the following properties:

\begin{itemize} \item as usual, every $\ol{b}^{\, \hat{\eta}}_{\,m_k}$ is a subset of $\kappa_{\hat{\eta}}$ that hits every interval $[\kappa_{\nu, j}, \kappa_{\nu, j + 1}) \subseteq \kappa_{\hat{\eta}}$ in exactly one point\,, \item $\bigcup \big \{ \ol{b}^{\,\hat{\eta}}_{\,m_k}\, \cap\, \kappa_{\wt{\eta}}\ | \ k < \omega \big\} \supseteq X_q \setminus X_{\ol{p}}$\,, \item $\ol{b}^{\,\hat{\eta}}_{\,m_k}\, \cap \, b^{\ol{\sigma}}_{\ol{i}} = \emptyset$ for all $k < \omega$ and $(\ol{\sigma}, \ol{i}) \in \supp q_0$\,, \item $\ol{b}^{\,\hat{\eta}}_{\,m_k}\, \cap\, \ol{a}^{\ol{\sigma}}_{\ol{i}} = \emptyset$ for all $k < \omega$ and $(\ol{\sigma}, \ol{i}) \in \supp \ol{p}_0 \setminus \supp q_0$ \\(since $q \leq^\beta_{\eta + 1} \ol{p}^\beta\, \uhr\, (\eta + 1)$, it follows that in this case, $\ol{\sigma} > \eta$ or $\ol{i} \geq \beta$)\,,\item $\ol{b}^{\,\hat{\eta}}_{\,m_k}\, \cap \, \ol{b}^{\,\hat{\eta}}_{\,m_{k^\prime}} = \emptyset$ whenever $k \neq k^\prime$. \end{itemize}

This is possible, since 
%$| (X_q \setminus X_{\ol{p}})\, \cap \, [\kappa_{\nu, j}, \kappa_{\nu, j + 1})| \leq \omega$ for all $\kappa_{\nu, j} < \kappa_\eta$; moreover, 
$X_q\, \cap\, b^{\ol{\sigma}}_{\ol{i}} = \emptyset$ for any $(\ol{\sigma}, \ol{i}) \in \supp q$ by construction of $\m{P}^\beta\, \uhr\, (\eta + 1)$; and whenever $(\ol{\sigma}, \ol{i}) \in \supp \ol{p}_0 \setminus \supp q_0$, then $\ol{\sigma} > \eta$ or $\ol{i} \geq \beta$ implies $\ol{a}^{\ol{\sigma}}_{\ol{i}} \subseteq X_{\ol{p}}$; thus $(X_q \setminus X_{\ol{p}})\, \cap\, \ol{a}^{\ol{\sigma}}_{\ol{i}} = \emptyset$.

\item We now define $\dom \ol{q}_0$. For any interval $[\kappa_{\nu, j}, \kappa_{\nu, j + 1}) \subseteq \kappa_\eta$, take $\delta_{\nu, j} \in [\kappa_{\nu, j}, \kappa_{\nu, j + 1})$ as follows: In the case that $\dom q_0\, \cap \, [\kappa_{\nu, j}, \kappa_{\nu, j + 1}) = \emptyset$, let $\delta_{\nu, j} := \kappa_{\nu, j}$. If $\dom q_0\, \cap \, [\kappa_{\nu, j}, \kappa_{\nu, j + 1}) \neq \emptyset$, we take $\delta_{\nu, j} \in (\kappa_{\nu, j}, \kappa_{\nu, j + 1})$ such that $\bigcup \{ \ol{b}^{\,\sigma}_{\,i} \, | \, (\sigma, i) \in \supp \ol{q}_0 \}\, \cap\, [\kappa_{\nu, j}, \kappa_{\nu, j + 1}) \subseteq [\kappa_{\nu, j}, \delta_{\nu, j})$ and $\dom q_0\, \cap \, [\kappa_{\nu, j}, \kappa_{\nu, j + 1}) \subseteq [\kappa_{\nu, j}, \delta_{\nu, j})$. Since $\dom q_0$ is bounded below all regular cardinals $\kappa_{\ol{\nu}, \ol{\j}}$, this is also true for $\bigcup  \{\,[\kappa_{\nu, j}, \delta_{\nu, j})\ | \ \kappa_{\nu, j} < \kappa_\eta\,\}$. Let \[\dom \ol{q}_0\, \cap\, \kappa_\eta := \bigcup  \{\,[\kappa_{\nu, j}, \delta_{\nu, j})\ | \ \kappa_{\nu, j} < \kappa_\eta\,\},\] and $\dom \ol{q}_0 \, \cap\, [\kappa_\eta, \kappa_\gamma) := \dom \ol{p}_0 \, \cap\, [\kappa_\eta, \kappa_\gamma)$.

\item We take $\ol{q}_\ast \, \uhr \, \kappa_\eta^2 \supseteq q_\ast \,\uhr\, \kappa_\eta^2$ arbitrary on the given domain; and $\ol{q}_\ast \, \uhr \, [\kappa_\eta, \kappa_\gamma)^2 := \ol{p}_\ast\, \uhr \, [\kappa_\eta, \kappa_\gamma)^2$.

\item It remains to define $\ol{q}^\sigma_i$ for $(\sigma, i) \in \supp \ol{q}_0$.

For $(\sigma, i) \in \supp q_0$, we define $\ol{q}^\sigma_i \supseteq q^\sigma_i$ on the given domain $\bigcup_{\kappa_{\nu, j} < \kappa_\sigma} [\kappa_{\nu, j}, \delta_{\nu, j})$ according to the \textit{linking property}: Consider an interval $[\kappa_{\nu, j}, \kappa_{\nu, j + 1})$ with $\delta_{\nu, j} > \kappa_{\nu, j}$. For any $\zeta \in (\dom \ol{q}_0 \setminus \dom q_0)\, \cap\, [\kappa_{\nu, j}, \kappa_{\nu, j + 1})$, set $\ol{q}^\sigma_i (\zeta) := \ol{q}_\ast (\xi, \zeta)$, where $\{\xi\} := b^\sigma_i\, \cap\, [\kappa_{\nu, j}, \kappa_{\nu, j + 1}) = \ol{b}^\sigma_i\, \cap \, [\kappa_{\nu, j}, \kappa_{\nu, j + 1})$. (Note that $\xi \in \dom \ol{q}_0$ by construction). For $(\sigma, i) \in \supp \ol{p}_0 \setminus \supp q_0$, we set $\ol{q}^\sigma_i\, \uhr \, [\kappa_\eta, \kappa_\gamma) := \ol{p}^\sigma_i\, \uhr \, [\kappa_\eta, \kappa_\gamma)$, and define $\ol{q}^\sigma_i\, \uhr\, \kappa_\eta \supseteq \ol{p}^\sigma_i\, \uhr\, \kappa_\eta$ on the given domain according to the \textit{linking property} as before. \\ Finally, $\ol{q}^{\,\hat{\eta}}_{\,m_k}$ for $k < \omega$ can be arbitrary on the given domain.
\end{itemize} 

Then $\ol{q}_0 = (\ol{q}_\ast, (\ol{q}^\sigma_i, \ol{b}^\sigma_i)_{\sigma, i})$ is a condition in $\m{P}_0$. In particular, the \textit{independence property} holds for the linking ordinals $\ol{b}^\sigma_i$: Firstly, by construction of $(\ol{b}^{\,\hat{\eta}}_{\,m_k}\ | \ k < \omega)$, it follows that $\ol{b}^{\,\hat{\eta}}_{\,m_k}\, \cap\, \ol{b}^{\,\ol{\sigma}}_{\,\ol{i}} = \emptyset$ for any $(\ol{\sigma}, \ol{i}) \in \supp q_0\, \cup \, \supp \ol{p}_0$. Secondly, whenever $(\sigma_0, i_0) \in \supp q_0$ and $(\sigma_1, i_1) \in \supp \ol{p}_0 \setminus \supp q_0$, then $\sigma_1 > \eta$ or $i_1 \geq \beta$; hence, $\ol{b}^{\sigma_1}_{i_1} = \ol{a}^{\sigma_1}_{i_1} \subseteq X_{\ol{p}} \subseteq X_q$. Since $\ol{b}^{\sigma_0}_{i_0}\, \cap \, X_q = b^{\sigma_0}_{i_0}\, \cap\, X_q = \emptyset$, this implies $\ol{b}^{\sigma_0}_{i_0}\, \cap \, \ol{b}^{\sigma_1}_{i_1} = \emptyset$ as desired. Thus, the independence property holds for $\ol{q}_0$.\\[-2mm]

Moreover, $(\rho^\beta (\ol{q}))_0 = (\ol{q}_\ast\, \uhr\, \kappa_\eta^2, (\ol{q}^\sigma_i, \ol{b}^\sigma_i)_{\sigma \leq \eta, i < \beta}, X_{\ol{q}}) \leq q_0$ by construction; in particular, $X_q \subseteq X_{\ol{q}}$: Consider $\xi \in X_q$. In the case that $\xi \in X_{\ol{p}}$, it follows that $\xi \in \ol{a}^{\ol{\sigma}}_{\ol{i}}$ for some $(\ol{\sigma}, \ol{i}) \in \supp \ol{p}_0$ with $\ol{\sigma} > \eta$, or $\ol{i} \geq \beta$. Then $(\ol{\sigma}, \ol{i}) \in \supp \ol{p}_0 \setminus \supp q_0$; hence, $\ol{b}^{\ol{\sigma}}_{\ol{i}} = \ol{a}^{\ol{\sigma}}_{\ol{i}}$, and it follows that $\xi \in \ol{b}^{\ol{\sigma}}_{\ol{i}} \subseteq X_{\ol{q}}$ as desired. In the case that $\xi \in X_q \setminus X_{\ol{p}}$, we have $\xi \in \ol{b}^{\,\hat{\eta}}_{\,m_k}$ for some $k < \omega$; so again, $\xi \in X_{\ol{q}}$ as desired.\\[-2mm]

Finally, $\ol{q}_0 \leq \ol{p}_0$ by construction; and it follows that $\ol{q}_0$ has all the desired properties. \\[-2mm]

The construction of $\ol{q}_1$ is similar. Thus, the map $\rho^\beta : \ol{\m{P}} \rightarrow \m{P}^\beta\, \uhr\, (\eta + 1)$ as defined above,
%$p \mapsto p^\beta\, \uhr\, (\eta + 1)$ in the case that $ |\{ (\sigma, i) \in \supp p\ | \ \sigma > \eta \mbox{ or } i \geq \beta\}| = \omega$, and $p \mapsto \emptyset$, else, 
is indeed a projection of forcing posets. \\[-3mm]

%\colorbox{yellow}{Etwas weniger ausführlich?}

It follows that $G^\beta\, \uhr\, (\eta + 1)$ is a $V$-generic filter on $\m{P}^\beta\, \uhr\, (\eta + 1)$: For genericity, consider an open dense set $D \subseteq \m{P}^\beta\, \uhr \, (\eta + 1)$. It suffices to show that the set $\ol{D} := \{ \ol{p} \in \ol{\m{P}}\ | \ \ol{p}^\beta\, \uhr \, (\eta + 1) \in D\}$ is dense in $\m{P}$. Take a condition $p \in \m{P}$, and let $\ol{p} \leq p$ with $\ol{p} \in \ol{\m{P}}$. Since $D \subseteq \m{P}^\beta\, \uhr \, (\eta + 1)$ is dense, there exists $q \in \m{P}^\beta\, \uhr \, (\eta + 1)$ with $q \leq^\beta_{\eta + 1} \ol{p}^\beta\, \uhr \, (\eta + 1)$. By what we have just shown, we there exists $\ol{q} \leq \ol{p}$ with $\ol{q}^\beta\, \uhr \, (\eta + 1) \leq q$. Then $\ol{q}$ is an extension of $p$ in $\ol{D}$ as desired. \end{proof}

%\colorbox{yellow}{FRAGE: Könnte man vll die Bezeichnung für $\m{P}^\beta\, \uhr\, (\eta + 1)$ ändern?}

%\colorbox{yellow}{War es im Baumforcing genauso?}

\subsubsection*{B) Capturing $\boldsymbol{f^\beta}$.}

In this section, we will show that the map $f^\beta$ is contained in a generic extension similar to $V[G^\beta\, \uhr\, (\eta + 1)]$. \\[-3mm]

Recall that we are working in the case that $\boldsymbol{\kappa_{\eta + 1} > \kappa_\eta^\plus}$, and $\boldsymbol{\beta < \alpha_{\wt{\eta}}}$ \textbf{ or } $\boldsymbol{(\wt{\eta}, \gamma)\, \cap\, Lim \neq \emptyset}$, where $\wt{\eta} := \max \{\sigma < \eta\ | \ \sigma \in Lim\}$. \\[-2mm]

Recall that any $X \in \dom f$ is of the form \[X = \dot{X}^{\prod_{m < \omega} G_\ast (a_m)\, \times\, \prod_{m < \omega} G^{\ol{\sigma}_m}_{\ol{i}_m}}, \] where $\dot{X} \in \Name ( (\ol{P}^\eta)^\omega)\, \times\, \prod_{m < \omega} P^{\ol{\sigma}_m}_{\ol{i}_m})$ and $ \big( (a_m)_{m < \omega}, (\ol{\sigma}_m, \ol{i}_m)_{m < \omega}\big)$ is an $\eta$-good pair. Moreover, \[f^\beta := \big\{\  (X, \alpha) \in f \ \big| \ \exists\, \big( (a_m)_{m < \omega}, (\ol{\sigma}_m, \ol{i}_m)_{m < \omega} \big) \ \eta\mbox{\textit{-good pair}}\ :  \]\[(\forall  m\ \;\ol{i}_m < \beta\,) \, \wedge \, \exists\, \dot{X} \in \Name \big( (\ol{P}^\eta)^\omega\, \times\, \prod_{m < \omega} P^{\ol{\sigma}_m}\big)\ \: X = \dot{X}^{\prod_{m < \omega} G_\ast (a_m)\, \times\, \prod_{m < \omega} G^{\ol{\sigma}_m}_{\ol{i}_m}}\ \big\}.\]

Fix an $\eta$-good pair $\varrho = \big( (a_m)_{m < \omega}, (\ol{\sigma}_m, \ol{i}_m)_{m < \omega} \big)$. We use recursion over the $\Name ( (\ol{P}^\eta)^\omega\, $ $\times \,$ $\prod_{m < \omega} P^{\ol{\sigma}_m})$-hierarchy to define a map $\tau_{\varrho}: \Name ( ( \ol{P}^\eta)^\omega\, \times \, \prod_{m < \omega} P^{\ol{\sigma}_m}) \rightarrow \Name (\m{P})$ that maps any name $\dot{Y} \in \Name ( (\ol{P}^\eta)^\omega\, \times \, \prod_{m < \omega} P^{\ol{\sigma}_m})$ to a name $\tau_{\varrho} (\dot{Y}) \in \Name (\m{P})$ % which will be denoted by $\wt{\dot{Y}}$, 
such that \[\dot{Y}^{\prod_{m < \omega} G_\ast (a_m)\, \times\, \prod_{m < \omega}G^{\ol{\sigma}_m}_{\ol{i}_m}} = (\tau_{\varrho} (\dot{Y}))^G. \]

\begin{definition} \label{tauvarrho} For an $\eta$-good pair $ \varrho = \big( (a_m)_{m < \omega}, (\ol{\sigma}_m, \ol{i}_m)_{m < \omega} \big)$, we define recursively for $\dot{Y} \in \Name ( (\ol{P}^\eta)^\omega\, \times \, \prod_{m < \omega} P^{\ol{\sigma}_m})$: \[\tau_\varrho (\dot{Y}) := \Big \{ \; (\tau_{\varrho} (\dot{Z}), q)\ \, \big| \, \ q \in \m{P}\; , \; \exists\, \big(\,\dot{Z}\, , \, \big( (p_\ast (a_m))_{m < \omega}\, , \,  (p^{\ol{\sigma}_m}_{\ol{i}_m})_{m < \omega}\big) \, \big) \in \dot{Y}\; :\ \]\[\forall m\ \; \big(\, q_\ast (a_m) \supseteq p_\ast (a_m) \; , \; q^{\ol{\sigma}_m}_{\ol{i}_m} \supseteq p^{\ol{\sigma}_m}_{\ol{i}_m}\, \big) \;\Big\}.\] 

\end{definition}

%\colorbox{yellow}{FRAGE: $a_m$ oder $\ol{a}_m$?}

%\colorbox{yellow}{FRAGE: die Bezeichnung $\tau$ nicht lieber weglassen?} \\[-2mm]

It is not difficult to check that indeed, $\dot{Y}^{\prod_{m < \omega} G_\ast (a_m)\, \times\, \prod_{m < \omega}G^{\ol{\sigma}_m}_{\ol{i}_m}} = (\tau_{\varrho} (\dot{Y}))^G$ holds for all $\dot{Y} \in \Name ( (\ol{P}^\eta)^\omega\, \times \, \prod_{m < \omega} P^{\ol{\sigma}_m})$. \\[-2mm]

%When the $\eta$-good pair $\varrho = \big( (a_m\ | \ m < \omega), ((\ol{\sigma}_m, \ol{i}_m) \ | \ m < \omega) \big)$ is clear from the context, we will usually just write $\tau (\dot{Y})$ or $\wt{\dot{Y}}$. 

%\colorbox{yellow}{FRAGE: Könnte man die Bezeichnungen $\tau (\dot{Y})$ oder $\wt{\dot{Y}}$ wirklich weglassen?} \\[-2mm]

%Although this definition depends on the good pair $\big( (a_m\ | \ m < \omega), ((\ol{\sigma}_m, \ol{i}_m) \ | \ m < \omega) \big)$, we will usually just write $\wt{\dot{Y}}$ for $\tau (\dot{Y})$, when it is clear from the context which good pair we are considering.

Now, we define a map $(f^\beta)^\prime \supseteq f^\beta$, which is contained in an intermediate generic extension similar to $V[G^\beta\, \uhr \, (\eta + 1)]$. We will then use an isomorphism argument to show that actually, $(f^\beta)^\prime = f^\beta$. \\[-3mm]

Recall that $f = \dot{f}^G$, where $\pi \ol{f}^{D_\pi} = \ol{f}^{D_\pi}$ whenever $[\pi]$ contained in the intersection $\bigcap_{m < \omega} Fix (\eta_m, i_m)\, \cap \, \bigcap_{m < \omega} H^{\lambda_m}_{k_m}$ denoted by $(A_{\dot{f}})$. \\[-3mm]

The idea is that we include into $\m{P}^\beta\, \uhr\, (\eta + 1)$ the verticals $P^{\eta_m}_{i_m}$ for $\eta_m \in \Lim$, $\eta_m > \eta$. Below $\kappa_\eta$, the linking property will be important, so we also have to include the linking ordinals $a^{\eta_m}_{i_m}\, \cap\, \kappa_\eta$. \\[-2mm]

%\colorbox{yellow}{FRAGE: Ist \tbl vertical line\tbr\,ein korrekter Begriff?}

For a condition $p \in \m{P}$, we set \[\wt{X}_p := \bigcup \{ a^\sigma_i\, \cap\, \kappa_{\wt{\eta}} \ | \ \sigma \in \Lim\; , \; (\sigma, i) \neq (\eta_m, i_m) \mbox{ for all } m < \omega\; , \; (\sigma > \eta \mbox{ or } i \geq \beta) \}.\] Then $\wt{X}_p$ is similar to $X_p$, but excludes the linking ordinals $a^{\eta_m}_{i_m}$ for $\eta_m \in Lim$. \\[-3mm]

%\colorbox{red}{TO DO: die Notation $(p^\beta\, \uhr\, (\eta + 1))^{( \eta_m, i_m)_{m < \omega}}$ ist SCHLECHT! DAS MÜSSTE MAN ÄNDERN!}

%\colorbox{red}{Versuche: s. letzte Seite im Block} \\[-2mm]

For reasons of notational convenience and better clarity, we introduce the following ad-hoc notation: \\[-2mm]

Let \[ (p^\beta\, \uhr\, (\eta + 1))^{( \eta_m, i_m)_{m < \omega}} := \big(\,p_\ast\, \uhr\, \kappa_\eta^2\; , \; (p^\sigma_i, a^\sigma_i)_{\sigma \leq \eta\,, \,i < \beta}\; , \; (p^{\eta_m}_{i_m}\, \uhr\, \kappa_\eta, a^{\eta_m}_{i_m}\, \cap\, \kappa_\eta)_{m < \omega\,, \,\eta_m > \eta}, \]\[(p^\sigma\, \uhr\, ( \beta\, \times\, \dom_y p^\sigma))_{\sigma \leq \eta}, \wt{X}_p\, \big).\] Then $(p^\beta\, \uhr\, (\eta + 1))^{( \eta_m, i_m)_{m < \omega}}$ can be obtained from $p^\beta\, \uhr\, (\eta + 1)$ by using $\wt{X}_p$ instead of $X_p$, and including $(p^{\eta_m}_{i_m}\, \uhr\, \kappa_\eta, a^{\eta_m}_{i_m}\, \cap\, \kappa_\eta)$ for $\eta_m \in Lim$ with $\eta_m > \eta$. (Note that for $\eta_m \leq \eta$, it follows that $i_m < \beta$, so $(p^{\eta_m}_{i_m}, a^{\eta_m}_{i_m})$ is already part of the condition $p^\beta\, \uhr\, (\eta + 1)$.) \\[-3mm]

%\colorbox{yellow}{FRAGE: sollte man in der Definition $\wt{\eta}$ verwenden? oder stattdessen $\eta$?}\\[-3mm]

We are now ready to define our forcing notion $(\m{P}^\beta\, \uhr\, (\eta + 1))^{( \eta_m, i_m)_{m < \omega}}$. The order relation is defined similarly as for the forcing notion $\m{P}^\beta\, \uhr\, (\eta + 1)$; but additionally, we require for $(q^\beta\, \uhr \, (\eta + 1))^{( \eta_m, i_m)_{m < \omega}} \leq (p^\beta\, \uhr\, (\eta + 1))^{( \eta_m, i_m)_{m < \omega}}$ that the linking property below $\kappa_\eta$ holds for all $(\eta_m, i_m)$ with $\eta_m \in \Lim$, $\eta_m > \eta$. 

\begin{definition} \label{defpbetauhreta+1kompliziert} Let $(\m{P}^\beta\, \uhr\, (\eta + 1))^{( \eta_m, i_m)_{m < \omega}}$ denote the collection of all $(p^\beta\, \uhr\, (\eta + 1))^{( \eta_m, i_m)_{m < \omega}}$ such that $p \in \ol{\m{P}}$ (i.e. $p \in \m{P}$ with $| \{ (\sigma, i) \in \supp p_0\ | \ \sigma > \eta \mbox{ or } i \geq \beta\}| = \aleph_0$); together with $(\m{1}^\beta_{\eta + 1})^{( \eta_m, i_m)_{m < \omega}}$ as the maximal element. \\[-3mm]

%\colorbox{red}{FRAGE: $\emptyset$ beim maximalen Element weglassen?}

For conditions $p$, $q \in \ol{\m{P}}$, let $(q^\beta\, \uhr\, (\eta + 1))^{( \eta_m, i_m)_{m < \omega}} \leq (p^\beta\, \uhr\, (\eta + 1))^{( \eta_m, i_m)_{m < \omega}}$ if \begin{itemize} \item $\wt{X}_q \supseteq \wt{X}_p$, \item $\big(q_\ast\, \uhr\, \kappa_\eta^2, (q^\sigma_i, b^\sigma_i)_{\sigma \leq \eta, i < \beta}, (q^\sigma\, \uhr \, (\beta\, \times\, \dom_y q^\sigma))_{\sigma \leq \eta}\big) \leq \big(p_\ast\, \uhr\, \kappa_\eta^2, (p^\sigma_i, a^\sigma_i)_{\sigma \leq \eta, i < \beta}, (p^\sigma\, \uhr \, (\beta\, \times\, \dom_y p^\sigma))_{\sigma \leq \eta}\big)$ regarded as conditions in $\m{P}$, \item $\forall\, \eta_m > \eta \ : \ q^{\eta_m}_{i_m} \, \uhr\, \kappa_\eta \supseteq p^{\eta_m}_{i_m}\, \uhr\, \kappa_\eta$, \item $\forall\, \eta_m > \eta\; , \; (\eta_m, i_m) \in \supp p\ : \ b^{\eta_m}_{i_m} = a^{\eta_m}_{i_m}$,
% \mbox{ whenever } (\eta_m, i_m) \in \supp p$, 
\item for all intervals $[\kappa_{\nu, j}, \kappa_{\nu, j + 1}) \subseteq \kappa_\eta$ and $\eta_m > \eta$ with $a^{\eta_m}_{i_m}\, \cap\, [\kappa_{\nu, j}, \kappa_{\nu, j + 1}) = \{\xi\}$, it follows that $q^{\eta_m}_{i_m} (\zeta) = q_\ast (\xi, \zeta)$ whenever $\zeta \in (\dom q \setminus \dom p)\, \cap\, [\kappa_{\nu, j}, \kappa_{\nu, j + 1})$.\end{itemize} 

\end{definition}

%\colorbox{yellow}{FRAGE: Evtl $(p^\beta\, \uhr\, (\eta + 1))^{( \eta_m, i_m)_{m < \omega}} := \emptyset$ definieren für gewisse $p \in \m{P}$? NEIN!! DAS GEHT NICHT!}

%Recall that we are working in the case that $\beta < \alpha_{\wt{\eta}}$ or $(\wt{\eta}, \gamma)\, \cap\, Lim \neq \emptyset$, so there are always enough indices $(\sigma, i)$ with $\sigma \in Lim$ and $\sigma > \eta$ or $i \geq \beta$. \\[-3mm]

%\colorbox{yellow}{ACHTUNG - sollte man weiter oben $\m{1}^\beta\, \uhr\, (\eta + 1)$ oder $\m{1}^\beta_{\eta + 1}$ schreiben?}

%\colorbox{yellow}{oder besser keine Namen für Ordnungsrelation und das maximale Element einführen?}

Finally, for constructing our intermediate generic extension for capturing $f^\beta$, we also have to include the verticals $P^{\eta_m}\, \uhr\, [\kappa_\eta, \kappa_{\eta_m})$ for $\eta_m > \eta$. \\[-2mm]

This gives a product \[(\m{P}^\beta\, \uhr\, (\eta + 1))^{(\eta_m, i_m)_{m < \omega}}\; \times\; \prod_{m < \omega} P^{\eta_m} \, \uhr\, [\kappa_\eta, \kappa_{\eta_m}),\] which is the set of all \[ \big(\; (p^\beta\, \uhr\, (\eta + 1))^{( \eta_m, i_m)_{m < \omega}}\; , \; (p^{\eta_m}_{i_m}\, \uhr\, [\kappa_\eta, \kappa_{\eta_m}))_{m < \omega} \; \big)\] such that $p \in \ol{\m{P}}$ (i.e. $p \in \m{P}$ with $| \{ (\sigma, i) \in \supp p_0\ | \ \sigma > \eta \mbox{ or } i \geq \beta\}| = \aleph_0$); together with a maximal element $(\ol{\m{1}}^\beta_{\eta + 1})^{( \eta_m, i_m)_{m < \omega}}$. \\[-3mm]

Then \[(G^\beta\, \uhr\, (\eta + 1))^{( \eta_m, i_m)_{m < \omega}}\, \times\, \prod_{m < \omega} G^{\eta_m}_{i_m}\, \uhr\, [\kappa_\eta, \kappa_{\eta_m})\] is the set of all $\big(\, (p^\beta\, \uhr\, (\eta + 1))^{( \eta_m, i_m)_{m < \omega}}\; , \; (p^{\eta_m}_{i_m}\, \uhr\, [\kappa_\eta, \kappa_{\eta_m}))_{m < \omega}\, \big)$ such that there exists $q \in G\, \cap\, \ol{\m{P}}$ with $(q^\beta\, \uhr \, (\eta + 1))^{(\eta_m ,i_m)_{m < \omega}} \leq (p^\beta\, \uhr\, (\eta + 1))^{( \eta_m, i_m)_{m < \omega}}$ and $q^{\eta_m}_{i_m} \, \uhr\, [\kappa_\eta, \kappa_{\eta_m}) \supseteq p^{\eta_m}_{i_m}\, \uhr\, [\kappa_\eta, \kappa_{\eta_m})$ for all $m < \omega$; together with the maximal element $(\ol{\m{1}}^\beta_{\eta + 1})^{( \eta_m, i_m)_{m < \omega}}$. \\[-3mm]

In order to show that $(G^\beta\, \uhr\, (\eta + 1))^{( \eta_m, i_m)_{m < \omega}}\, \times\, \prod_{m < \omega} G^{\eta_m}_{i_m}\, \uhr\, [\kappa_\eta, \kappa_{\eta_m})$ is a $V$-generic filter on $(\m{P}^\beta\, \uhr\, (\eta + 1))^{ (\eta_m, i_m)_{m <  \omega}}\; \times\; \prod_{m < \omega} P^{\eta_m} \, \uhr\, [\kappa_\eta, \kappa_{\eta_m})$, we proceed similarly as in Proposition \ref{projectionrho}:

\begin{prop} \label{projectionrho2} The map $(\rho^\beta)^{( \eta_m, i_m)_{m < \omega}}\,: \,\ol{\m{P}} \rightarrow (\m{P}^\beta\, \uhr\, (\eta + 1))^{ (\eta_m, i_m)_{m < \omega}}\; \times\; \prod_{m < \omega} P^{\eta_m} \, \uhr\, [\kappa_\eta, \kappa_{\eta_m})$, \[p \mapsto \big(\, (p^\beta\, \uhr\, (\eta + 1))^{( \eta_m, i_m)_{m < \omega}}\; , \; (p^{\eta_m}_{i_m}\, \uhr\, [\kappa_\eta, \kappa_{\eta_m}))_{m < \omega}\, \big) \] in the case that $|\{ (\sigma, i) \in \supp p_0\ | \ \sigma > \eta\, \vee\, i \geq \beta\}| = \aleph_0$, and $\m{1} \mapsto (\ol{\m{1}}^\beta_{\eta + 1})^{( \eta_m, i_m)_{m < \omega}}$, is a projection of forcing posets. \end{prop}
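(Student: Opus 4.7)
My plan is to prove the three projection conditions by closely paralleling the proof of Proposition \ref{projectionrho}, while carefully tracking the extra data carried by the codomain. The first two conditions (preservation of the maximal element and order-preservation) are essentially immediate: the maximal element goes to the maximal element by definition, and for order-preservation each clause of the definition of $\leq$ on $(\m{P}^\beta\, \uhr\, (\eta+1))^{(\eta_m, i_m)_{m < \omega}}\times \prod_{m < \omega} P^{\eta_m}\, \uhr\, [\kappa_\eta, \kappa_{\eta_m})$ is directly witnessed by the corresponding restriction of $\ol{q}\le \ol{p}$ in $\ol{\m{P}}$ (noting that if $\ol{q}\leq \ol{p}$ in $\m{P}$, then in particular $b^{\eta_m}_{i_m} = a^{\eta_m}_{i_m}$ for all $(\eta_m,i_m) \in \supp \ol{p}_0$, and the linking property below $\kappa_\eta$ holds).

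The heart of the proof is the lifting property. Given $\ol{p}\in \ol{\m{P}}$ and an extension $\big((q^\beta\, \uhr\, (\eta+1))^{(\eta_m, i_m)_{m < \omega}},(r^m)_{m < \omega}\big) \leq (\rho^\beta)^{(\eta_m, i_m)_{m < \omega}}(\ol{p})$, I would construct $\ol{q}\leq \ol{p}$ in $\ol{\m{P}}$ following the template of Proposition \ref{projectionrho}. First, set $\supp \ol{q}_0 := \supp \ol{p}_0\, \cup\, \supp q_0\, \cup\, \{(\hat\eta,m_k)\ |\ k<\omega\}$ with the $(\hat\eta,m_k)$ fresh and satisfying $\hat\eta>\eta$ or $m_k\geq \beta$, as before. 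For the linking ordinals, put $\ol{b}^\sigma_i := b^\sigma_i$ when $(\sigma,i)\in \supp q_0$ and $\ol{b}^\sigma_i := \ol{a}^\sigma_i$ otherwise, and pick new $\ol{b}^{\hat\eta}_{m_k}$ covering $\wt{X}_q\setminus \wt{X}_{\ol{p}}$ pairwise disjointly from everything else. Then define $\dom \ol{q}_0 \cap \kappa_\eta$ as in Proposition \ref{projectionrho}, and $\dom \ol{q}_0 \cap [\kappa_\eta,\kappa_\gamma) := (\dom \ol{p}_0\cap [\kappa_\eta,\kappa_\gamma))\, \cup\, \bigcup_m \dom r^m$. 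Define $\ol{q}^\sigma_i$ extending $q^\sigma_i$ or $\ol{p}^\sigma_i$ by the linking property, and for $\eta_m>\eta$ set $\ol{q}^{\eta_m}_{i_m}\, \uhr\, \kappa_\eta$ extending $q^{\eta_m}_{i_m}\, \uhr\, \kappa_\eta$ (which automatically extends $\ol{p}^{\eta_m}_{i_m}\, \uhr\, \kappa_\eta$ by hypothesis) and $\ol{q}^{\eta_m}_{i_m}\, \uhr\, [\kappa_\eta,\kappa_{\eta_m})$ extending $r^m$. Finally, define $\ol{q}_\ast\, \uhr\, \kappa_\eta^2$ extending $q_\ast\, \uhr\, \kappa_\eta^2$ so that the linking property is respected at all the newly specified linking points, and $\ol{q}_\ast\, \uhr\, [\kappa_\eta,\kappa_\gamma)^2 := \ol{p}_\ast\, \uhr\, [\kappa_\eta,\kappa_\gamma)^2$.

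The main obstacle is verifying the independence property for $\ol{q}_0$ and the linking property for $\ol{q}\leq\ol{p}$ along the $(\eta_m,i_m)$-coordinates with $\eta_m>\eta$. Independence of the fresh $\ol{b}^{\hat\eta}_{m_k}$ from the $b^{\ol{\sigma}}_{\ol{i}}$ for $(\ol{\sigma},\ol{i})\in\supp q_0$ follows from disjointness of $\wt{X}_q$ from these $b$'s; for the $(\eta_m,i_m)$-linking ordinals with $\eta_m>\eta$ (now carried separately rather than inside $\wt{X}$), independence is preserved because $b^{\eta_m}_{i_m}=a^{\eta_m}_{i_m}$ forces them to coincide with those of $\ol{p}$, and the fresh $(\hat\eta,m_k)$ were chosen outside $\supp \ol{p}_0\, \cup\, \supp q_0$. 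For the linking property at new $\zeta$ in $(\dom \ol{q}\setminus\dom \ol{p})\cap \kappa_\eta$ along a coordinate $(\eta_m,i_m)$ with $\eta_m>\eta$, the value $\ol{q}^{\eta_m}_{i_m}(\zeta)$ is either already fixed by $q^{\eta_m}_{i_m}$ (in which case we assign $\ol{q}_\ast(\xi,\zeta):= q^{\eta_m}_{i_m}(\zeta)$ at the corresponding linking point $\xi$), or still free, in which case we may choose $\ol{q}^{\eta_m}_{i_m}(\zeta)$ to match a freely chosen value of $\ol{q}_\ast(\xi,\zeta)$. This is where the assumption that $\m{P}_0$-conditions do not constrain $p_\ast$ outside the prescribed rectangles is used.

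The construction of $\ol{q}_1$ proceeds exactly as in Proposition \ref{projectionrho}, with the additional coordinates $\sigma\leq\eta$, $i\geq\beta$ taken from $\ol{p}_1$ unchanged. Having produced $\ol{q}\in\ol{\m{P}}$ with $\ol{q}\leq \ol{p}$ and $(\rho^\beta)^{(\eta_m,i_m)_{m<\omega}}(\ol{q})\leq \big((q^\beta\, \uhr\, (\eta+1))^{(\eta_m, i_m)_{m < \omega}},(r^m)_{m < \omega}\big)$, the three projection clauses are verified. As a consequence, by the standard density-lifting argument used at the end of Proposition \ref{projectionrho}, the filter $(G^\beta\, \uhr\, (\eta+1))^{(\eta_m, i_m)_{m < \omega}}\, \times\, \prod_{m<\omega} G^{\eta_m}_{i_m}\, \uhr\, [\kappa_\eta,\kappa_{\eta_m})$ is $V$-generic on $(\m{P}^\beta\, \uhr\, (\eta+1))^{(\eta_m, i_m)_{m < \omega}}\, \times\, \prod_{m<\omega} P^{\eta_m}\, \uhr\, [\kappa_\eta,\kappa_{\eta_m})$, providing the intermediate extension in which $f^\beta$ will be captured.
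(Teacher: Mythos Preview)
Your approach is essentially the paper's approach: follow the template of Proposition \ref{projectionrho}, now carrying along the extra coordinates $(p^{\eta_m}_{i_m}\, \uhr\, \kappa_\eta, a^{\eta_m}_{i_m}\, \cap\, \kappa_\eta)$ for $\eta_m>\eta$ and the upper verticals $p^{\eta_m}_{i_m}\, \uhr\, [\kappa_\eta,\kappa_{\eta_m})$. However, there is a genuine gap in your construction above $\kappa_\eta$.

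You enlarge $\dom \ol{q}_0\,\cap\,[\kappa_\eta,\kappa_\gamma)$ to absorb the domains of the $r^m$, but then set $\ol{q}_\ast\,\uhr\,[\kappa_\eta,\kappa_\gamma)^2 := \ol{p}_\ast\,\uhr\,[\kappa_\eta,\kappa_\gamma)^2$. This does not define $\ol{q}_\ast$ on the enlarged domain: for an interval $[\kappa_{\nu,j},\kappa_{\nu,j+1})\subseteq[\kappa_\eta,\kappa_\gamma)$ with $\dom r^m$ strictly larger than $\dom \ol{p}_0$ there, the pairs $(\xi,\zeta)$ with $\zeta\in\dom \ol{q}_0\setminus\dom \ol{p}_0$ are simply missing from $\ol{p}_\ast$. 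Worse, the \emph{linking property} for $\ol{q}\leq \ol{p}$ fails: if $(\sigma,i)\in\supp \ol{p}_0$ with $\kappa_\sigma>\kappa_{\nu,j}\geq \kappa_\eta$, and $\{\xi\}=\ol{a}^\sigma_i\cap[\kappa_{\nu,j},\kappa_{\nu,j+1})$, then for $\zeta$ in the new part of the domain you must have $\ol{q}^\sigma_i(\zeta)=\ol{q}_\ast(\xi,\zeta)$, but $\ol{q}_\ast(\xi,\zeta)$ is undefined. The paper handles this in the opposite order above $\kappa_\eta$: first fix the verticals $\ol{q}^\sigma_i\,\uhr\,[\kappa_{\nu,j},\kappa_{\nu,j+1})$ (extending $\ol{p}^\sigma_i$ and, for the $(\eta_m,i_m)$, also extending $r^m$), and \emph{then} define $\ol{q}_\ast\,\uhr\,[\kappa_{\nu,j},\kappa_{\nu,j+1})^2\supseteq \ol{p}_\ast\,\uhr\,[\kappa_{\nu,j},\kappa_{\nu,j+1})^2$ so that $\ol{q}_\ast(\xi,\zeta):=\ol{q}^\sigma_i(\zeta)$ at each relevant linking point. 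Note also that $\dom \ol{q}_0$ must be enlarged enough to contain the linking ordinals $\ol{b}^\sigma_i\cap[\kappa_{\nu,j},\kappa_{\nu,j+1})$ on every interval it meets; a bare union with $\bigcup_m\dom r^m$ need not achieve this.

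A secondary point: you set $\supp \ol{q}_0:=\supp \ol{p}_0\cup\supp q_0\cup\{(\hat\eta,m_k)\}$ and take $\ol{b}^\sigma_i:=\ol{a}^\sigma_i$ for $(\sigma,i)\notin\supp q_0$. But for $(\eta_m,i_m)$ with $\eta_m>\eta$ the full linking ordinals $\ol{b}^{\eta_m}_{i_m}\subseteq\kappa_{\eta_m}$ must be specified (only $b^{\eta_m}_{i_m}\cap\kappa_\eta$ is given by $q$), and you must ensure they are disjoint from all other $\ol{b}^\sigma_i$ on the intervals in $[\kappa_\eta,\kappa_{\eta_m})$. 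The paper explicitly adds $\{(\eta_m,i_m)\mid \eta_m\in\Lim\}$ to $\supp \ol{q}_0$ and constructs the $\ol{b}^{\eta_m}_{i_m}$ separately for this reason.
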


\begin{proof}We closely follow the proof of Proposition \ref{projectionrho}. Consider $\ol{p} \in \ol{\m{P}}$ with $|\{ (\sigma, i) \in \supp \ol{p}_0\ | \ \sigma > \eta\, \vee\, i \geq \beta\}| = \aleph_0$, and a condition \[q = \Big(\, q_\ast\, \uhr\, \kappa_\eta^2\, , \, (q^\sigma_i, b^\sigma_i)_{\sigma \leq \eta, i < \beta}\, , \, (q^{\eta_m}_{i_m}\, \uhr\, \kappa_\eta, b^{\eta_m}_{i_m}\, \cap\, \kappa_\eta)_{m < \omega, \eta_m > \eta}\,, \]\[ (q^\sigma)_{\sigma \leq \eta}\, , \, \wt{X}_q \,, \,(q^{\eta_m}_{i_m}\, \uhr\, [\kappa_\eta, \kappa_{\eta_m}))_{m < \omega}\, \big)\] in \[(\m{P}^\beta\, \uhr\, (\eta + 1))^{(\eta_m, i_m)_{m < \omega}}\, \times\, \prod_{m < \omega} P^{\eta_m}\, \uhr\, [\kappa_\eta, \kappa_{\eta_m})\] with $q \leq (\rho^\beta)^{( \eta_m, i_m)_{m < \omega}} (\ol{p})$. We have to construct $\ol{q} \leq \ol{p}$, $\ol{q} = (\ol{q}_\ast, (\ol{q}^\sigma_i, \ol{b}^\sigma_i)_{\sigma, i}, (\ol{q}^\sigma)_\sigma)$, such that $(\rho^\beta)^{( \eta_m, i_m)_{m < \omega}} (\ol{q}) \leq q$. \\[-3mm]

%\colorbox{yellow}{TO DO EVTL: $q_\ast\, \uhr\, \kappa_\eta^2$ anstatt $q_\ast$ hier und auch in Proposition \ref{projectionrho} ändern?}

%FRAGE: Eher $\wt{\m{P}}^\beta\, \uhr\, (\eta + 1)$ schreiben anstatt $\m{P}^\beta\, \uhr\,(\eta + 1))^{( \eta_m, i_m)_{m < \omega}}$?
%TO DO!

%W.l.o.g. we can assume that $(\eta_m, i_m) \in \supp \ol{p}_0$ for all $\eta_m \in Lim$. \\[-3mm]  NEIN!!

%\colorbox{red}{ACHTUNG - so klar ist das NICHT!!} \\[-3mm]

We start with $\ol{q}_0$.
\begin{itemize} \item Similarly as in Proposition \ref{projectionrho}, we construct $\supp_\ast = \{ (\check{\eta}, m_k)\ | \ k < \omega\}$ such that $\check{\eta} > \eta$ or $m_k \geq \beta$, and $(\check{\eta}, m_k) \notin \supp \ol{p}_0\, \cup\, \supp q_0$ for all $k < \omega$; with the additional property that for all $k < \omega$, we have $(\check{\eta}, m_k) \notin \{ (\eta_m, i_m)\ | \ m < \omega\, , \, \eta_m \in \Lim\}$. We set $\supp \ol{q}_0 = \supp \ol{p}_0\, \cup\, \supp q_0\, \cup\, \supp_\ast\, \cup \, \{ (\eta_m, i_m)\ | \ m < \omega\, , \, \eta_m \in \Lim\}$. 

\item Next, we define the linking ordinals $\ol{b}^\sigma_i$ for $(\sigma, i) \in \supp \ol{q}_0$, such that $\wt{X}_{\ol{q}} \supseteq \wt{X}_q$ holds: \\[-4mm]

First, we consider the case that $(\sigma, i) \notin \{(\eta_m, i_m)\ | \ m < \omega\, , \, \eta_m \in \Lim\}$. For $(\sigma, i) \in \supp q_0$, we let $\ol{b}^\sigma_i := b^\sigma_i \supseteq \ol{a}^\sigma_i$, and $\ol{b}^\sigma_i := \ol{a}^\sigma_i$ in the case that $(\sigma, i) \in \supp \ol{p}_0 \setminus \supp q_0$. 
 
We construct $(\ol{b}^{\,\check{\eta}}_{m_k}\ | \ k < \omega)$ as in Proposition \ref{projectionrho}. \\[-4mm]

%, with the additional property that $\ol{b}^{\,\check{\eta}}_{m_k}\, \cap\, \ol{b}^{\eta_m}_{i_m} = \emptyset$ for all $k < \omega$ and $m < \omega$ with $\eta_m \in \Lim$. (This is possible, since we have arranged that $\bigcup \{ \ol{b}^{\eta_m}_{i_m}\ | \ m < \omega\, , \, \eta_m \in \Lim\}\, \cap\, \wt{X}_q = \emptyset$.
After that, we define the linking ordinals $(\ol{b}^{\eta_m}_{i_m}\ | \ m < \omega\, , \, \eta_m \in \Lim)$ with the following properties:
\begin{itemize} \item As usual, every $\ol{b}^{\eta_m}_{i_m}$ is a subset of $\kappa_{\eta_m}$ that hits any interval $[\kappa_{\nu, j}, \kappa_{\nu, j + 1}) \subseteq \kappa_{\eta_m}$ in exactly one point. \item The $\ol{b}^{\eta_m}_{i_m}$ are pairwise disjoint, 
%and for every $m < \omega$, we have $\ol{b}^{\eta_m}_{i_m}\, \cap\, \wt{X}_q = \emptyset$, 
and $\ol{b}^{\eta_m}_{i_m}\, \cap\, \ol{b}^\sigma_i = \emptyset$ for every $m < \omega$ and $(\sigma, i) \in \supp \ol{q}_0$
%\supp q_0\, \cup\, \supp \ol{p}_0\, \cup\, \{(\check{\eta}, m_k)\ | \ k < \omega\}$ 
with $(\sigma, i) \neq (\eta_m, i_m)$. \item For every $(\eta_m, i_m) \in \supp \ol{p}_0$, we set $\ol{b}^{\eta_m}_{i_m} := a^{\eta_m}_{i_m}$; for every $(\eta_m, i_m) \in \supp q_0 \setminus \supp \ol{p}_0$ with $\eta_m \leq \eta$, we set $\ol{b}^{\eta_m}_{i_m} := b^{\eta_m}_{i_m}$; and whenever $(\eta_m, i_m) \in \supp q_0 \setminus \supp \ol{p}_0$ with $\eta_m > \eta$, we let $\ol{b}^{\eta_m}_{i_m} \supseteq b^{\eta_m}_{i_m}\, \cap \, \kappa_\eta$.  \end{itemize}

This concludes our construction of the linking ordinals $\ol{b}^\sigma_i$.

\item We define $\dom \ol{q}_0 = \bigcup_{\nu, j} [\kappa_{\nu, j}, \delta_{\nu, j})$ as follows: Let $dom := \dom \ol{p}_0\, \cup\, \dom q_0\, \cup \, \bigcup_{\eta_m \in Lim} \dom q^{\eta_m}_{i_m}\, \uhr\, [\kappa_\eta, \kappa_{\eta_m})$. For every interval $[\kappa_{\nu, j}, \kappa_{\nu, j + 1})$ with $dom\, \cap\, [\kappa_{\nu, j}, \kappa_{\nu, j + 1}) = \emptyset$, we set $\delta_{\nu, j} := \kappa_{\nu, j}$; and whenever $dom\, \cap\, [\kappa_{\nu, j}, \kappa_{\nu, j + 1}) \neq \emptyset$, we pick $\delta_{\nu, j} \in (\kappa_{\nu, j}, \kappa_{\nu, j + 1})$ with the property that $dom\, \cap\, [\kappa_{\nu, j}, \kappa_{\nu, j + 1}) \subseteq [\kappa_{\nu, j}, \delta_{\nu, j})$, and $\ol{b}^\sigma_i\, \cap\, [\kappa_{\nu, j}, \kappa_{\nu, j + 1}) \subseteq [\kappa_{\nu, j}, \delta_{\nu, j})$ for all $(\sigma, i) \in \supp \ol{q}_0$.

%also the linking ordinals $\ol{b}^\sigma_i\, \cap\, [\kappa_{\nu, j}, \kappa_{\nu, j + 1})$ 
%for $(\sigma, i) \in \supp \ol{q}_0$ 
%are contained in $[\kappa_{\nu, j}, \delta_{\nu, j})$. \\ 

Since $\dom \,\ol{p}_0$, $\dom \,q_0$ and the domains $\dom\, q^{\eta_m}_{i_m}\, \uhr\, [\kappa_\eta, \kappa_{\eta_m})$ are bounded below all regular cardinals, this is also true for $dom$ and $\dom \,\ol{q}_0$.

\item We take $\ol{q}_\ast \, \uhr\, \kappa_\eta^2 \supseteq q_\ast\, \uhr\, \kappa_\eta^2$ arbitrary on the given domain. 

The verticals $\ol{q}^\sigma_i\, \uhr \, \kappa_\eta$ for $(\sigma, i) \in (\supp q_0\, \cup \, \supp \ol{p}_0) \setminus \{ (\eta_m, i_m)\ | \ m < \omega\, , \, \eta_m \in \Lim\}$ can be defined according to the \textit{linking property} as in Proposition \ref{projectionrho}. 

The verticals $\ol{q}^{\,\hat{\eta}}_{\,m_k}\, \uhr\, \kappa_\eta$ with $(\hat{\eta}, m_k) \in \supp_\ast$ can be set arbitrarily on the given domain.

%For $(\sigma, i) \in \supp q$ (then $\sigma \leq \eta$, $i < \beta$), we define $\ol{q}^\sigma_i \supseteq q^\sigma_i$ on the given domain according to the \textit{linking property}: For every interval $[\kappa_{\nu, j}, \kappa_{\nu, j + 1}) \subseteq \kappa_\eta$ and $\zeta \in \dom \ol{q}_0 \setminus \dom q_0$, we set $\ol{q}^\sigma_i (\zeta) := \ol{q}_\ast (\xi, \zeta)$, where $\{\xi\} = b^\sigma_i\, \cap\, [\kappa_{\nu, j}, \kappa_{\nu, j + 1})$. (Note that $\xi \in \dom \ol{q}_0$ follows by construction.) \\ In the case that $(\sigma, i) \in \supp \ol{p}_0 \setminus \supp q_0$ (then $\sigma > \eta$ or $i \geq \beta$), but $(\sigma, i) \notin \{(\eta_m, i_m)\ | \ \eta_m > \eta\}$, we proceed similarly for intervals $[\kappa_{\nu, j}, \kappa_{\nu, j + 1}) \subseteq \kappa_\eta$, and define $\ol{q}^\sigma_i \, \uhr\, [\kappa_{\nu, j}, \kappa_{\nu, j + 1}) \supseteq \ol{p}^\sigma_i\, \uhr\, [\kappa_{\nu, j}, \kappa_{\nu, j + 1})$ according to the \textit{linking property} as before. On intervals $[\kappa_{\nu, j}, \kappa_{\nu, j + 1}) \subseteq [\kappa_\eta, \kappa_\gamma)$, we set $\ol{q}^\sigma_i \, \uhr\, [\kappa_{\nu, j}, \kappa_{\nu, j + 1}) \supseteq \ol{p}^\sigma_i\, \uhr\, [\kappa_{\nu, j}, \kappa_{\nu, j + 1})$ arbitrary on the given domain. \\ We can set the $\ol{q}^{\hat{eta}}_{m_i}$ arbitary on the given domain. EVTL nicht so ausführlich aufschreiben? 

Now, consider $(\eta_m, i_m)$ with $\eta_m \in \Lim$. In the case that $(\eta_m, i_m) \in \supp q_0$ with $\eta_m \leq \eta$, we can proceed as before, and define $\ol{q}^{\eta_m}_{i_m} \supseteq q^{\eta_m}_{i_m}$ according to the \textit{linking property} as in Proposition \ref{projectionrho}. \\
Concerning the verticals $\ol{q}^{\eta_m}_{i_m}\, \uhr\, \kappa_\eta$ for $(\eta_m, i_m) \in \supp q_0$ with $\eta_m > \eta$, we define $\ol{q}^{\eta_m}_{i_m}\, \uhr\, [\kappa_{\nu, j}, \kappa_{\nu, j + 1}) \supseteq q^{\eta_m}_{i_m}\, \uhr \, [\kappa_{\nu, j}, \kappa_{\nu, j + 1})$ on intervals $[\kappa_{\nu, j}, \kappa_{\nu, j + 1}) \subseteq \kappa_\eta$ according to the \textit{linking property}, and use that we have incorporated the linking ordinals $b^{\eta_m}_{i_m}\, \cap\, \kappa_\eta$ into our forcing notion $(\m{P}^\beta\, \uhr\, (\eta + 1))^{( \eta_m, i_m)_{m < \omega}}$: For $\zeta \in (\dom \ol{q}_0 \setminus \dom q_0)\, \cap\, [\kappa_{\nu, j}, \kappa_{\nu, j + 1})$, we set $\ol{q}^{\eta_m}_{i_m} (\zeta) := \ol{q}_\ast (\xi, \zeta)$, where $\{\xi\} = b^{\eta_m}_{i_m}\, \cap\, [\kappa_{\nu, j}, \kappa_{\nu, j + 1}) = \ol{b}^{\eta_m}_{i_m}\, \cap \, [\kappa_{\nu, j}, \kappa_{\nu, j + 1})$. (Note that $\xi \in \dom \ol{q}_0$ by construction.) 

In the case that $(\eta_m, i_m) \notin \supp q_0$, it follows that also $(\eta_m, i_m) \notin \supp \ol{p}_0$, and we can set $\ol{q}^{\eta_m}_{i_m}\, \uhr\, \kappa_\eta$ arbitrarily on the given domain.

\item Next, consider an interval $[\kappa_{\nu, j}, \kappa_{\nu, j + 1}) \subseteq [\kappa_\eta, \kappa_\gamma)$. We first set the verticals $\ol{q}^\sigma_i \, \uhr\, [\kappa_{\nu, j}, \kappa_{\nu, j + 1})$ for $(\sigma, i) \in \supp \ol{q}_0$, $\sigma > \eta$, on the given domain, with the property that $\ol{q}^{\eta_m}_{i_m}\, \uhr \, [\kappa_{\nu, j}, \kappa_{\nu, j + 1}) \supseteq q^{\eta_m}_{i_m}\, \uhr \, [\kappa_{\nu, j}, \kappa_{\nu, j + 1})$ for all $m < \omega$ with $(\eta_m, i_m) \in \supp q_0$, and $\ol{q}^\sigma_i \, \uhr\, [\kappa_{\nu, j}, \kappa_{\nu, j +1}) \supseteq \ol{p}^\sigma_i\, \uhr\, [\kappa_{\nu, j}, \kappa_{\nu, j + 1})$ whenever $(\sigma, i) \in \supp \ol{p}_0$. After that, we define $\ol{q}_\ast\, \uhr\, [\kappa_{\nu, j}, \kappa_{\nu, j + 1})^2 \supseteq \ol{p}_\ast \, \uhr \, [\kappa_{\nu, j}, \kappa_{\nu, j + 1})^2$ according to the \textit{linking property}: Whenever $\zeta \in (\dom \ol{q}_0 \setminus \dom \ol{p}_0)\, \cap\, [\kappa_{\nu, j}, \kappa_{\nu, j + 1})$ and $\{\xi\} = \ol{a}^\sigma_i\, \cap\, [\kappa_{\nu, j}, \kappa_{\nu, j + 1}) = \ol{b}^\sigma_i\, \cap\, [\kappa_{\nu, j}, \kappa_{\nu, j + 1})$ for some $(\sigma, i) \in \supp \ol{p}_0$, then $\ol{q}_\ast (\xi, \zeta) := \ol{q}^\sigma_i (\zeta)$. 
%Similarly, in the case that $\zeta \in (\dom \ol{q}_0 \setminus \dom q^{\eta_m}_{i_m})\, \cap\, [\kappa_{\nu, j}, \kappa_{\nu, j + 1})$ and $\{\xi\} = b^{\eta_m}_{i_m}\, \cap \, [\kappa_{\nu, j}, \kappa_{\nu, j + 1})$, we set $\ol{q}_\ast (\xi, \zeta) := \ol{q}^{\eta_m}_{i_m} (\zeta)$. 
Otherwise, $\ol{q}_\ast (\xi, \zeta)$ can be set arbitrarily. 
%Then $\ol{q}_\ast\, \uhr\, [\kappa_\eta, \kappa_\gamma)^2$ is well-defined by the independence property. 
\end{itemize}

%\colorbox{red}{TO DO: Das müsste man nochmal DURCHGEHEN! AUCH Proposition \ref{projectionrho}!}

This defines $\ol{q}_0$. The construction of $\ol{q}_1$ is similar; and it is not difficult to see that $\ol{q} \leq \ol{p}$ with $(\rho^\beta)^{( \eta_m, i_m)_{m < \omega}} (\ol{q}) \leq q$. \\[-3mm]

Hence, $(\rho^\beta)^{( \eta_m, i_m)_{m < \omega}}$ is a projection of forcing posets.

\end{proof}

%\colorbox{yellow}{TO DO EVTL: BEZEICHNUNGEN ÄNDERN!! $\wt{\rho^\beta}$ anstatt $(\rho^\beta)^{( \eta_m, i_m)_{m < \omega}}$?}

%\colorbox{yellow}{Wegen den Bezeichnungen mal nachfragen?}

Thus, it follows that $(G^\beta\, \uhr\, (\eta + 1))^{( \eta_m, i_m)_{m < \omega}}\, \times\, \prod_{m < \omega} G^{\eta_m}_{i_m}\, \uhr\, [\kappa_\eta, \kappa_{\eta_m})$ is a $V$-generic filter on the forcing notion $(\m{P}^\beta\, \uhr\, (\eta + 1))^{( \eta_m, i_m)_{m < \omega}}\, \times\, \prod_{m < \omega} P^{\eta_m}\, \uhr\, [\kappa_\eta, \kappa_{\eta_m})$. \\[-2mm]

The aim of Chapter \ref{6.2} B) is to show that $f^\beta$ is contained in the intermediate $V$-generic extension $V[\, (G^\beta\, \uhr\, (\eta + 1))^{( \eta_m, i_m)_{m < \omega}}\, \times\, \prod_{m < \omega} G^{\eta_m}_{i_m}\, \uhr\, [\kappa_\eta, \kappa_{\eta_m})\, ]$.

\begin{definition} \label{deffbetaprime}Let $(f^\beta)^\prime$ denote the set of all $(X, \alpha)$ for which there exists an $\eta$-\textit{good pair} $\varrho = \big( (a_m)_{m < \omega}, (\ol{\sigma}_m, \ol{i}_m)_{m < \omega}\big)$ with $\ol{i}_m < \beta$ for all $m < \omega$ such that \[X = \dot{X}^{\prod_m G_\ast (a_m)\, \times \, \prod_m G^{\ol{\sigma}_m}_{\ol{i}_m}},\] and there is a condition $p \in \m{P}$ with \begin{itemize} \item $|\{ (\sigma, i) \in \supp p_0\ | \ \sigma > \eta \mbox{ or } i \geq \beta\}| = \aleph_0$, \item $p \Vdash_s \big(\tau_\varrho (\dot{X}), \alpha\big) \in \dot{f}$ 
%(where $\wt{\dot{X}} := \tau_\varrho (\dot{X})$ denotes the transformation of names introduced in Definition \ref{tauvarrho}), 
\item $\big(\, (p^\beta\, \uhr\, (\eta + 1))^{( \eta_m, i_m)_{m < \omega}}\; , \; ( p^{\eta_m}_{i_m}\, \uhr\, [\kappa_\eta, \kappa_{\eta_m}))_{m < \omega}\, \big) \in (G^\beta\, \uhr\, (\eta + 1))^{( \eta_m, i_m)_{m < \omega}}\, \times\, \prod_{m < \omega} G^{\eta_m}_{i_m} \, \uhr \, [\kappa_\eta, \kappa_{\eta_m})$, \item $\forall\, \eta_m \in \Lim \; : \; (\eta_m, i_m) \in \supp p_0$ with $a^{\eta_m}_{i_m} = g^{\eta_m}_{i_m}$. \end{itemize} \end{definition}

Then $(f^\beta)^\prime \in V[\, (G^\beta\, \uhr\, (\eta + 1))^{( \eta_m, i_m)_{m < \omega}}\, \times\, \prod_{m < \omega} G^{\eta_m}_{i_m}\, \uhr\, [\kappa_\eta, \kappa_{\eta_m})\, ]$, since the sequence $(g^{\eta_m}_{i_m}\ | \ m < \omega)$ is contained in the ground model $V$. \\[-3mm]

We will now use an isomorphism argument and show that $f^\beta = (f^\beta)^\prime$.

%the following hold: \[p^\beta\, \uhr \, (\eta + 1) \in G^\beta\, \uhr\, (\eta + 1)\; , \; \prod_{j < \omega} p^{\eta_j}_{i_j} \in \prod_{j < \omega} G^{\eta_j}_{i_j}\; , \; \prod_{j < \omega\; , \; \eta_j \in Lim} a^{\eta_j}_{i_j} = \prod_{j < \omega\; , \; \eta_j \in Lim} g^{\eta_j}_{i_j}.\]\end{definition}  

%EVTL irgendwo erwähnen, für was Indizes $\eta_j$ und $\ol{\eta}_j$ verwendet werden?

%\colorbox{red}{ACHTUNG - wurde überall zwischen $\Vdash$ und $\Vdash_s$ unterschieden?}

\begin{prop} \label{fbetafbetaprime} $f^\beta = (f^\beta)^\prime$.\end{prop}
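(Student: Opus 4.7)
The plan is to establish the two inclusions separately. For the inclusion $f^\beta \subseteq (f^\beta)^\prime$, the argument will be direct: given $(X, \alpha) \in f^\beta$ with witnessing $\eta$-good pair $\varrho = ((a_m)_{m < \omega}, (\ol{\sigma}_m, \ol{i}_m)_{m < \omega})$ and name $\dot{X} \in \Name((\ol{P}^\eta)^\omega \times \prod_{m < \omega} P^{\ol{\sigma}_m})$ with $\ol{i}_m < \beta$ for all $m$, we use $X = (\tau_\varrho(\dot{X}))^G$ to find $p_0 \in G$ with $p_0 \Vdash_s (\tau_\varrho(\dot{X}), \alpha) \in \dot{f}$. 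By genericity of $G$ we extend $p_0$ to some $p \in G$ satisfying the two shape requirements of Definition \ref{deffbetaprime}, namely (i) $(\eta_m, i_m) \in \supp p_0$ with $a^{\eta_m}_{i_m} = g^{\eta_m}_{i_m}$ for every $\eta_m \in \Lim$, and (ii) $|\{(\sigma, i) \in \supp p_0 : \sigma > \eta \vee i \geq \beta\}| = \aleph_0$. Both conditions define dense subsets of $\m{P}$, so such $p$ exists; its intermediate projection is then automatically in the intermediate $V$-generic filter, which gives $(X, \alpha) \in (f^\beta)^\prime$.

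The converse $(f^\beta)^\prime \subseteq f^\beta$ will be proved by a symmetry argument. Suppose $(X, \alpha) \in (f^\beta)^\prime$, witnessed by $\eta$-good pair $\varrho$, name $\dot{X}$, and a condition $p \in \m{P}$ with $p \Vdash_s (\tau_\varrho(\dot{X}), \alpha) \in \dot{f}$ together with the shape requirements. The form of $X$ already puts it in $\dom f^\beta$; what needs proof is $(X, \alpha) \in f$. The condition $p$ itself need not lie in $G$, but its intermediate projection does, so we can pick $p^* \in G$ whose projection to the intermediate forcing extends that of $p$. The plan is to build a partial automorphism $\pi \in A$ with three properties: first, $[\pi]$ stabilizes $\dot{f}$, i.e.\ $[\pi] \in \bigcap_{m < \omega} Fix(\eta_m, i_m) \cap \bigcap_{m < \omega} H^{\lambda_m}_{k_m}$; second, $\pi \ol{\tau_\varrho(\dot{X})}^{D_\pi} = \ol{\tau_\varrho(\dot{X})}^{D_\pi}$; third, $\pi p$ is compatible with $p^*$. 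Once $\pi$ is constructed, the symmetry lemma applied to $p \Vdash_s (\tau_\varrho(\dot{X}), \alpha) \in \dot{f}$ yields $\pi p \Vdash_s (\tau_\varrho(\dot{X}), \alpha) \in \dot{f}$, and any common extension of $\pi p$ and $p^*$ in $G$ then forces the same sentence, giving $(X, \alpha) = (\tau_\varrho(\dot{X})^G, \alpha) \in f$.

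The construction of $\pi$ will proceed exactly as in the Approximation Lemma \ref{approx}: first take common shape-extensions $\ol{p} \leq p$ and $\ol{p}^* \leq p^*$ in $\m{P}$ with identical supports, domains, and unions of linking ordinals, arranged so that the stabilizer coordinates $(\eta_m, i_m)$, $(\lambda_m, k_m)$, the $\tau_\varrho$-coordinates $(\ol{\sigma}_m, \ol{i}_m)$, the linking ordinals in $\bigcup_m a_m$, and the verticals $\ol{p}^{\eta_m}_{i_m}\, \uhr\, \kappa_\eta$ for $\eta_m \in \Lim$ with $\eta_m > \eta$ all already agree between $\ol{p}$ and $\ol{p}^*$. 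This is exactly the reason that $(\m{P}^\beta\, \uhr\, (\eta + 1))^{(\eta_m, i_m)_{m < \omega}}$ was designed to include the pairs $(p^{\eta_m}_{i_m}\, \uhr\, \kappa_\eta, a^{\eta_m}_{i_m}\, \cap\, \kappa_\eta)$ for $\eta_m > \eta$ and that $\wt{X}_p$ deliberately excludes those linking ordinals; the compatibility of $p$ and $p^*$ on the intermediate projection guarantees that such common shape-extensions exist. Then we let $\pi$ be the standard isomorphism realizing $\pi \ol{p} = \ol{p}^*$, choosing $G_{\pi_0}(\nu, j) = \id$ and $f_{\pi_1}(\lambda) = \id$ throughout. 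Triviality of these maps together with agreement of $\ol{p}$ and $\ol{p}^*$ on the stabilizer coordinates places $[\pi] \in \bigcap_m Fix(\eta_m, i_m) \cap \bigcap_m H^{\lambda_m}_{k_m}$, while a straightforward induction on the rank of $\dot{X}$ using the fact that $\pi$ acts as the identity on each $p_\ast(a_m)$ and on each $p^{\ol{\sigma}_m}_{\ol{i}_m}$ yields $\pi \ol{\tau_\varrho(\dot{X})}^{D_\pi} = \ol{\tau_\varrho(\dot{X})}^{D_\pi}$.

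The main obstacle is the bookkeeping in the common-shape extension: one must simultaneously arrange the independence property of the new linking ordinals, the linking property at the coordinates $(\eta_m, i_m)$ with $\eta_m > \eta$ below $\kappa_\eta$, and the preservation of every piece of data used in $\tau_\varrho$ and in the stabilizer of $\dot{f}$. Once this delicate construction is carried out (essentially mimicking the one in Lemma \ref{approx} but with the extra coordinates $(\ol{\sigma}_m, \ol{i}_m)$, $(\eta_m, i_m)$ with $\eta_m > \eta$, and the linking ordinals $a_m$ forced to be preserved by $\pi$), both inclusions will follow and we obtain $f^\beta = (f^\beta)^\prime$.
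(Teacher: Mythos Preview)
Your overall architecture is right --- the inclusion $f^\beta \subseteq (f^\beta)^\prime$ is direct, and the reverse is an Approximation-Lemma-style automorphism argument --- but there is a genuine gap in the construction of $\pi$. You propose $G_{\pi_0}(\nu,j) = \id$ throughout, exactly as in Lemma~\ref{approx}. That choice does secure $[\pi] \in \bigcap_m Fix(\eta_m,i_m)\cap\bigcap_m H^{\lambda_m}_{k_m}$, but it does \emph{not} make $\pi$ act trivially on $q_\ast(a_m)$ for arbitrary $q \in D_\pi$. The problem is the clause governing $p'_\ast(\xi^\sigma_i,\zeta)$ for $\zeta \in \dom q \setminus \dom \pi_0$: there one has
\[
(\pi q)_\ast(\xi^\sigma_i,\zeta)=q_\ast(\xi^\lambda_k,\zeta),\qquad (\lambda,k)=G_{\pi_0}(\nu,j)\circ(F_{\pi_0}(\nu,j))^{-1}(\sigma,i),
\]
where $\xi^\sigma_i$ is the linking ordinal of $q$ at $(\sigma,i)$. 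For a generic $q\in D_\pi$, some $\xi\in a_m$ may well occur as such a $\xi^\sigma_i$; with $G_{\pi_0}=\id$ the composite becomes $F_{\pi_0}^{-1}$, and since the linking ordinals of $\ol p$ and $\ol p^\ast$ at indices with $\sigma>\eta$ or $i\geq\beta$ generally differ (only the \emph{union} $\wt X_{\ol p}=\wt X_{\ol p^\ast}$ agrees, not the indexing), $F_{\pi_0}$ is not the identity. Hence $(\pi q)_\ast(\xi,\zeta)\neq q_\ast(\xi,\zeta)$, and the claim $\pi\,\ol{\tau_\varrho(\dot X)}^{D_\pi}=\ol{\tau_\varrho(\dot X)}^{D_\pi}$ fails.

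The paper repairs exactly this point, and this is the new idea beyond Lemma~\ref{approx}: first arrange that the full intermediate restriction agrees, in particular $\ol p_\ast\!\uhr\kappa_\eta^2=(\ol p^{\,\prime})_\ast\!\uhr\kappa_\eta^2$ and $\ol a^\sigma_i=(\ol a^{\,\prime})^\sigma_i$ for all $\sigma\leq\eta$, $i<\beta$ (which is possible precisely because the intermediate projections are compatible), and then set $G_{\pi_0}(\nu,j):=F_{\pi_0}(\nu,j)$ for every $\kappa_{\nu,j}<\kappa_\eta$ while keeping $G_{\pi_0}(\nu,j)=\id$ above $\kappa_\eta$. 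Now $G_{\pi_0}\circ F_{\pi_0}^{-1}=\id$ below $\kappa_\eta$, so $\pi$ is the identity on all of $q_\ast\!\uhr\kappa_\eta^2$ and hence on every $q_\ast(a_m)$; and one still gets $[\pi]\in\bigcap_m H^{\lambda_m}_{k_m}$ because the arranged agreement of linking ordinals forces $F_{\pi_0}(\nu,j)(\lambda_m,i)=(\lambda_m,i)$ for $\lambda_m\leq\eta$, $i\leq k_m<\beta$, while for $\lambda_m>\eta$ one uses $G_{\pi_0}=\id$ above $\kappa_\eta$. The list of agreements you propose (individual coordinates $(\ol\sigma_m,\ol i_m)$, ordinals in $\bigcup_m a_m$, etc.) is too weak to support either choice.
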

\begin{proof} By the forcing theorem, it follows that $(f^\beta)^\prime \supseteq f^\beta$. Assume towards a contradiction, there was $(X, \alpha) \in (f^\beta)^\prime \setminus f^\beta$. Let \[X = \dot{X}^{\prod_{m < \omega} G_\ast (a_m)\, \times\, \prod_{m < \omega} G^{\ol{\sigma}_m}_{\ol{i}_m}}\] for an $\eta$-good pair $\varrho = \big( (a_m)_{m < \omega},  (\ol{\sigma}_m, \ol{i}_m)_{m < \omega} \big)$ with $\ol{i}_m < \beta$ for all $m < \omega$. Take $p \in \m{P}$ as in Definition \ref{deffbetaprime} with $p \Vdash_s (\tau_{\varrho (\dot{X})}, \alpha) \in \dot{f}$; and
%with $p \Vdash_s (\wt{\dot{X}}, \alpha) \in \dot{f}$, $| \{ (\sigma, i) \in \supp p_0\ | \ \sigma > \eta\mbox{ or } i \geq \beta\}| = \omega$, and  (\wt{\dot{X}}, \alpha) \in \dot{f}$ and $|\{ (\sigma, i) \in \supp p_0\ | \ \sigma > \eta \mbox{ or } \sigma \geq \eta, i \geq \beta\}| = \omega$, and $\big(\, (p^\beta\, \uhr\, (\eta + 1))^{( \eta_m, i_m)_{m < \omega}}, ( p^{\eta_m}_{i_m}\, \uhr\, [\kappa_\eta, \kappa_{\eta_m}))_{m < \omega}\, \big) \in (G^\beta\, \uhr\, (\eta + 1))^{( \eta_m, i_m)_{m < \omega}}\, \times\, \prod_{m < \omega} G^{\eta_m}_{i_m} \, \uhr \, [\kappa_\eta, \kappa_{\eta_m})$.
%$p^\beta\, \uhr \, (\eta + 1) \in G^\beta\, \uhr \, (\eta + 1)$, $\prod p^{\eta_m}_{i_m} \in \prod G^{\eta_m}_{i_m}$, and $\prod a^{\eta_m}_{i_m} = \prod g^{\eta_m}_{i_m}$. 
since $(X, \alpha) \notin f^\beta$, we can take $p^\prime \in G$ with $p^\prime \Vdash_s (\tau_\varrho (\dot{X}), \alpha) \notin \dot{f}$ and $(\eta_m, i_m) \in \supp p^\prime_0$ for all $\eta_m \in Lim$. \\[-3mm] 

Our first step will be to extend the conditions $p$ and $p^\prime$ and obtain $\ol{p} \leq p$, $\ol{p}^\prime \leq p^\prime$ such that $\ol{p}$ and $\ol{p}^\prime$ have \tbl the same shape\tbr\, similarly as in the \textit{Approximation Lemma} \ref{approx}; but additionally, $\ol{p}^\beta\, \uhr\, (\eta + 1) = (\ol{p}^\prime)^\beta\, \uhr\, (\eta + 1)$ holds, and $\ol{p}^{\eta_m}_{i_m} = (\ol{p}^\prime)^{\eta_m}_{i_m}$ for all $m < \omega$, and $\ol{a}^{\eta_m}_{i_m} = (\ol{a}^\prime)^{\eta_m}_{i_m}$ for all $m < \omega$ with $\eta_m \in Lim$.

After that, we construct an isomorphism $\pi$ such that firstly, $\pi \ol{p} = \ol{p}^\prime$; secondly, $\pi$ should not disturb the forcing $\m{P}^\beta\, \uhr \, (\eta + 1)$ (which will imply $\pi \,\ol{\tau_{\varrho} (\dot{X})}^{D_\pi} = \ol{\tau_{\varrho} (\dot{X})}^{D_\pi}$); and thirdly $[\pi]$ should be contained in the intersection $\bigcap_m Fix (\eta_m, i_m)\, \cap \, \bigcap_m H^{\lambda_m}_{k_m}$ (which implies $\pi \ol{f}^{D_\pi} = \ol{f}^{D_\pi}$).

%\colorbox{yellow}{FRAGE: Könnte man nicht $\wt{\dot{X}}$ überall ersetzen durch $\tau_{\varrho} (\dot{X})$?}

%\colorbox{yellow}{Beide Bezeichnunge sind SCHLECHT!}

Then from $\ol{p} \Vdash_s (\tau_{\varrho} (\dot{X}), \alpha) \in \dot{f}$ it follows $\pi \ol{p} \Vdash_s (\pi \, \ol{\tau_{\varrho}(\dot{X})}^{D_\pi}, \alpha) \in \pi \ol{f}^{D_\pi}$. Together with $\ol{p}^\prime \Vdash_s (\ol{\tau_{\varrho} (\dot{X})}^{D_\pi}, \alpha) \notin \ol{f}^{D_\pi}$, this gives our desired contradiction. \\[-2mm]

In order to make such an isomorphism $\pi$ possible, the extensions $\ol{p} \leq p$ and $\ol{p}^\prime \leq p^\prime$ will satisfy the following properties:

\begin{itemize} \item $\ol{\supp}_0 := \supp \ol{p}_0 = \supp \ol{p}^\prime_0$ \item $\ol{\dom}_0 := \dom \ol{p}_0 = \dom \ol{p}^\prime_0$ \item $\bigcup \ol{a} := \bigcup_{(\sigma, i) \in \ol{\supp}_0} \ol{a}^\sigma_i = \bigcup_{ (\sigma, i) \in \ol{\supp}_0} (\ol{a}^\prime)^\sigma_i$ \item $\forall\; \nu\,,\,j\ : \ \big( \ol{\dom}_0 \, \cap \, [\kappa_{\nu, j}, \kappa_{\nu, j + 1}) \neq \emptyset \Rightarrow \bigcup \ol{a} \, \cap \, [\kappa_{\nu, j}, \kappa_{\nu, j + 1}) \subseteq \ol{\dom}_0\big)$ \item $\ol{\supp}_1 := \supp \ol{p}_1 = \supp \ol{p}^\prime_1$ \item $\forall\, \sigma \in \ol{\supp}_1\ : \ \ol{\dom}_1 (\sigma) := \dom \ol{p}^\sigma = \dom (\ol{p}^\prime)^\sigma$. \end{itemize}

Additionally, we want: \begin{itemize} \item $\forall\, m < \omega\ : \ \ol{p}^{\eta_m}_{i_m} = (\ol{p}^\prime)^{\eta_m}_{i_m}$ \item  $\forall\, m < \omega\; , \; \eta_m \in \Lim\ : \ \ol{a}^{\eta_m}_{i_m} = (\ol{a}^\prime)^{\eta_m}_{i_m}$ \item $\ol{p}^\beta\, \uhr\, (\eta + 1) = (\ol{p}^\prime)^\beta\, \uhr\, (\eta + 1)$, i.e. \begin{itemize} \item $\ol{p}_\ast\, \uhr\, \kappa_\eta^2 = \ol{p}^\prime_\ast \, \uhr \, \kappa_\eta^2$ \item $\forall\, \sigma \in Lim$, $\sigma \leq \eta$, $i < \min \{\alpha_\sigma, \beta\}\ : \ \ol{p}^\sigma_i = (\ol{p}^\prime)^\sigma_i$, $\ol{a}^\sigma_i = (\ol{a}^\prime)^\sigma_i$ \item $\forall\, \sigma \in Succ$ $\sigma \leq \eta\ \; : \ \ol{p}^\sigma\, \uhr\, (\beta\, \times\, \dom_y \ol{p}^\sigma) = (\ol{p}^\prime)^\sigma\, \uhr\, (\beta\, \times\, \dom_y (\ol{p}^\prime)^\sigma)$.\end{itemize} 

Then it follows that $\wt{X}_{\ol{p}} = \wt{X}_{\ol{p}^\prime}$.
%\colorbox{red}{ACHTUNG - $X$ wird mehrfach verwendet!}
%\colorbox{red}{Für die Linking Ordinalzahlen einen anderen Buchstaben versuchen?}
\end{itemize}

Note that $\ol{a}^{\eta_m}_{i_m} = (\ol{a}^\prime)^{\eta_m}_{i_m}$ for $\eta_m \in \Lim$ follows automatically, since $a^{\eta_m}_{i_m} = (a^\prime)^{\eta_m}_{i_m} = g^{\eta_m}_{i_m}$ by assumption. \\[-3mm]

%ACHTUNG - $\Vdash$ oder $\Vdash_s$? UNTERSCHIED??

Now, we construct the conditions $\ol{p}$ and $\ol{p}^\prime$. \\[-3mm]

We start with the linking ordinals $\ol{a}^\sigma_i$ and $(\ol{a}^\prime)^\sigma_i$, with our aim that $\bigcup_{\sigma, i} \ol{a}^\sigma_i = \bigcup_{\sigma, i} (\ol{a}^\prime)^\sigma_i = : \bigcup \ol{a}$. We closely follows our construction from the \textit{Approximation Lemma} \ref{approx}; but now, some extra care is needed, since we additionally have to make sure that $\ol{a}^\sigma_i = (\ol{a}^\prime)^\sigma_i$ holds for all $\sigma \leq \eta$, $i < \beta$. \\[-3mm]

Similarly as in the \textit{Approximation Lemma} \ref{approx}, let 
\[s := \kappa_{\ol{\delta}} := \sup \{\kappa_\sigma\ | \ \sigma \in \Lim\; , \; \exists\, i < \alpha_\sigma\ (\sigma, i) \in \supp p_0\, \cup \supp p^\prime_0\}. \] 

Recall that we are assuming $\boldsymbol{\beta < \alpha_{\wt{\eta}}}$ \textbf{or} $\boldsymbol{Lim \, \cap \, (\wt{\eta}, \gamma) \neq \emptyset}$, where $\wt{\eta} := \max \{\sigma \leq \eta\ | \ \sigma \in Lim\}$. \\[-3mm]

%$\wt{\eta} := \sup \{\sigma < \eta\ | \ \sigma \in Lim\}$
In the case that $\kappa_{\ol{\delta}} = \kappa_\gamma$, we set $\ol{\gamma} := \ol{\delta}$ and take $((\sigma_k, l_k)\ | \ k < \omega)$ such that $\sup \{\kappa_{\sigma_k}\ | \ k < \omega\} = \kappa_{\ol{\gamma}} = \kappa_\gamma$, and $(\sigma_k, l_k) \notin \supp p_0\, \cup \, \supp p^\prime_0$ for all $k < \omega$, with the additional property
% By our assumption that $\beta < \alpha_{\wt{\eta}}$ or $Lim \, \cap \, (\wt{\eta}, \gamma) \neq \emptyset$ (where $\wt{\eta} := \sup \{\sigma \leq \eta\ | \ \sigma \in Lim\}$), we can arrange that for all $k < \omega$, 
that $\sigma_k > \wt{\eta}$ or $l_k \geq \beta$ for all $k < \omega$. 

If $\kappa_{\ol{\delta}} < \kappa_\gamma$ and $Lim\, \cap\, (\wt{\eta}, \gamma) \neq \emptyset$, let $\ol{\gamma} \in \Lim\, \cap\, (\wt{\eta}, \gamma)$ with $\ol{\gamma} \geq \ol{\delta}$, and take $((\sigma_k, l_k)\ | \ k < \omega)$ such that $(\sigma_k, l_k) = (\ol{\gamma}, l_k) \notin \supp p_0\, \cup \, \supp p^\prime_0$ for all $k < \omega$.

Finally, if $\kappa_{\ol{\delta}} < \kappa_\gamma$ and $Lim\, \cap\, (\wt{\eta}, \gamma) = \emptyset$, then $\beta < \alpha_{\wt{\eta}}$ follows. In this case, let $\ol{\gamma} := \wt{\eta} \geq \ol{\delta}$, and take $((\sigma_k, l_k)\ | \ k < \omega)$ with $(\sigma_k, l_k) = (\ol{\gamma}, l_k) \notin \supp p_0\, \cup \, \supp p^\prime_0$ for all $k < \omega$; with the additional property that $l_k \geq \beta$ for all $k < \omega$. \\[-3mm]

Let \[\ol{\supp}_0 := \supp \ol{p}_0 := \supp \ol{p}^\prime_0 := \supp p_0\, \cup \, \supp p^\prime_0\, \cup \, \{ (\sigma_k, l_k)\ | \ k < \omega\}.\]

We now construct the linking ordinals $\ol{a}^\sigma_i$. 
For any $(\sigma, i) \in \supp p_0$, we set $\ol{a}^\sigma_i := a^\sigma_i$; and whenever $(\sigma, i) \in \supp p^\prime_0 \setminus \supp p_0$ with $\sigma \leq \eta$, $i < \beta$, then $\ol{a}^\sigma_i := (a^\prime)^\sigma_i$. \\[-3mm]

Now, take a set $Z \subseteq \kappa_{\ol{\gamma}}$ such that for all intervals $[\kappa_{\nu, j}, \kappa_{\nu, j + 1}) \subseteq \kappa_{\ol{\gamma}}$, we have $|Z\, \cap \, [\kappa_{\nu, j}, \kappa_{\nu, j + 1})| = \aleph_0$, and $Z\, \cap\, \Big( \bigcup_{(\sigma, i) \in \supp p_0}\,a^\sigma_i\, \cup \, \bigcup_{ (\sigma, i) \in \supp p^\prime_0}\,(a^\prime)^\sigma_i \Big) = \emptyset$. Let \[\ol{Z} := Z\, \cup \, \bigcup_{\sigma, i} a^\sigma_i\, \cup \, \bigcup_{\sigma, i} (a^\prime)^\sigma_i.\] Our aim is to construct $\ol{p}$ and $\ol{p}^\prime$ with $\bigcup_{\sigma, i} \ol{a}^\sigma_i = \bigcup_{\sigma, i} (\ol{a}^\prime)^\sigma_i = \bigcup \ol{a} := \ol{Z}$. \\[-3mm]

%\colorbox{yellow}{ACHTUNG - müsste man die Kardinaltitäten nicht $\aleph_0$ schreiben?}

%Every interval $[\kappa_{\nu, j}, \kappa_{\nu, j + 1}) \subseteq \kappa_{\ol{\gamma}}$ is treated separately. \\[-3mm] 

Fix an interval $[\kappa_{\nu, j}, \kappa_{\nu, j + 1}) \subseteq \kappa_{\ol{\gamma}}$. Let \[ \ol{Z}_{\nu, j} := \Bigl( \: \bigcup \{ a^\sigma_i \ | \ (\sigma, i) \in \supp p_0\}\, \cup \, \bigcup \{ (a^\prime)^\sigma_i\ | \ (\sigma, i) \in \supp p^\prime_0\, , \, \sigma \leq \eta, i < \beta\}\ \Bigr)\ \cap\] \[\cap \ [\kappa_{\nu, j}, \kappa_{\nu, j + 1})\] and 
\[\{\xi_k (\nu, j)\ | \ k < \omega\} := \big( \ol{Z} \, \cap \, [\kappa_{\nu, j}, \kappa_{\nu, j + 1}) \big) \setminus \ol{Z}_{\nu, j} .\] This set has cardinality $\aleph_0$ by construction of $Z$. \\[-3mm]

Now, let \[\{(\ol{\sigma}_k, \ol{l}_k)\ | \ k < \omega\} =: \{ (\sigma, i) \in \supp \ol{p}_0 \setminus \supp p_0 \ | \ \kappa_{\nu, j} < \kappa_\sigma \mbox{ and } (\sigma > \eta \mbox{ or } i \geq \beta)\}.\]

This set also has cardinality $\aleph_0$ by construction of $\supp \ol{p}_0$. Now, for any $k < \omega$, we let \[\ol{a}^{\,\ol{\sigma}_k}_{\,\ol{l}_k}\, \cap\, [\kappa_{\nu, j}, \kappa_{\nu, j + 1}) := \{\xi_k (\nu, j) \}.\] We apply the same construction to the linking ordinals $(\ol{a}^\prime)^\sigma_i$ for $(\sigma, i) \in \supp \ol{p}^\prime_0 = \ol{\supp}_0$.
%, we obtain the linking ordinals $\ol{a}^\sigma_i$, $(\ol{a}^\prime)^\sigma_i$ for $(\sigma, i) \in \supp \ol{p}_0 = \supp (\ol{p}^\prime)_0$ such 
It is not difficult to see that $\bigcup_{\sigma, i} \ol{a}^\sigma_i = \bigcup_{\sigma, i} (\ol{a}^\prime)^\sigma_i = \bigcup \ol{a} = \ol{Z}$, the independence property holds, and $\ol{a}^\sigma_i = (\ol{a}^\prime)^\sigma_i$ whenever $\sigma \leq \eta$, $i < \beta$.  \\[-2mm]

Next, take $\ol{\dom}_0 := \dom \ol{p}_0 = \dom \ol{p}^\prime_0 = \bigcup_{\nu, j} [\kappa_{\nu, j}, \delta_{\nu, j})$ with the property that firstly, $\dom p_0\, \cup \, \dom p^\prime_0 \subseteq \ol{\dom}_0$, and secondly, for every interval $[\kappa_{\nu, j}, \kappa_{\nu, j + 1}) \subseteq \kappa_\gamma$ with $\ol{\dom}_0\, \cap\, [\kappa_{\nu, j}, \kappa_{\nu, j + 1}) \neq \emptyset$, it follows that $\ol{Z}\, \cap\, [\kappa_{\nu, j}, \kappa_{\nu, j + 1}) \subseteq \ol{\dom}_0$. \\[-2mm]

It remains to construct $\ol{p}_\ast$, $\ol{p}^\prime_\ast$, and $\ol{p}^\sigma_i$, $(\ol{p}^\prime)^\sigma_i$ for $(\sigma, i) \in \ol{\supp}_0$. \\[-3mm] 

First, we consider an interval $[\kappa_{\nu, j}, \kappa_{\nu, j + 1}) \subseteq \kappa_\eta$. \\[-3mm]
%First, we consider the case that ${\mathbf \kappa_{\nu, j} < \kappa_\eta}$. 

We start with the construction of $\ol{p}_\ast\, \uhr\, [\kappa_{\nu, j}, \kappa_{\nu, j + 1})^2 = \ol{p}^\prime_\ast\, \uhr \, [\kappa_{\nu, j}, \kappa_{\nu, j + 1})^2$. \\[-3mm]
% and we have to take care of the \textit{linking property}. \\[-3mm]

%We start with $\ol{p}_\ast\, \uhr\, [\kappa_{\nu, j}, \kappa_{\nu, j + 1})^2$ and $p^\prime_\ast \, \uhr\, [\kappa_{\nu, j}, \kappa_{\nu, j + 1})^2$. 
Let $\xi$, $\zeta \in [\kappa_{\nu, j}, \kappa_{\nu, j + 1}) \, \cap \, \ol{\dom}_0$.

\begin{itemize} \item In the case that $(\xi, \zeta) \in \dom p_0\, \times\, \dom p_0$, we set $\ol{p}^\prime_\ast (\xi, \zeta) := \ol{p}_\ast (\xi, \zeta) := p_\ast (\xi, \zeta)$. \item If $(\xi, \zeta) \in \dom p^\prime_0\, \times\, \dom p^\prime_0$, then $\ol{p}^\prime_\ast (\xi, \zeta) := \ol{p}_\ast (\xi, \zeta) := p^\prime_\ast (\xi, \zeta)$. \\[-3mm]

For $(\xi, \zeta) \in (\dom p_0\, \times\, \dom p_0) \, \cap\, (\dom p_0^\prime\, \times\, \dom p_0^\prime)$, this is not a contradiction, since $p_\ast\, \uhr\, \kappa_\eta^2$ and $p^\prime_\ast \, \uhr \, \kappa_\eta^2$ are compatible. \item If $\zeta \in \dom p_0 \setminus \dom p_0^\prime$ and $\xi \notin \dom p_0$, we proceed as follows: In the case that $\{\xi\} = a^\sigma_i\, \cap\, [\kappa_{\nu, j}, \kappa_{\nu, j + 1})$ for some $(\sigma, i) \in \supp p_0$ with $\sigma \leq \eta$, $i < \beta$ or $(\sigma, i) \in \{(\eta_m, i_m)\ | \ m < \omega\}$, we set $\ol{p}^\prime_\ast (\xi, \zeta) := \ol{p}_\ast (\xi, \zeta) := p^\sigma_i (\zeta)$. Otherwise, we set $\ol{p}^\prime_\ast (\xi, \zeta) = \ol{p}_\ast (\xi, \zeta)$ arbitrarily. 

\item In the case that $\zeta \in \dom p_0^\prime \setminus \dom p_0$ and $\xi \notin \dom p_0^\prime$, we proceed as before: If $\{\xi\} = (a^\prime)^\sigma_i \, \cap\, [\kappa_{\nu, j}, \kappa_{\nu, j + 1})$ for some $(\sigma, i) \in \supp p^\prime_0$ with $\sigma \leq \eta$, $i < \beta$ or $(\sigma, i) \in \{ (\eta_m, i_m)\ | \ m < \omega\}$, then $\ol{p}^\prime_\ast (\xi, \zeta) := \ol{p}_\ast (\xi, \zeta) := (p^\prime)^\sigma_i (\zeta)$. Otherwise, we set $\ol{p}^\prime_\ast (\xi, \zeta) = \ol{p}_\ast (\xi, \zeta)$ arbitrarily.
\item In all other cases, $\ol{p}^\prime_\ast (\xi, \zeta) = \ol{p}_\ast (\xi, \zeta)$ can be set arbitrarily.
\end{itemize}

%\colorbox{yellow}{FRAGE: \tbl can be arbitrary\tbr\,oder \tbl can be set arbitrarily\tbr\,besser schreiben?}

This defines $\ol{p}_\ast\, \uhr\, [\kappa_{\nu, j}, \kappa_{\nu, j + 1})^2 = \ol{p}^\prime_\ast\, \uhr\, [\kappa_{\nu, j}, \kappa_{\nu, j + 1})^2$. \\[-3mm]
%on all intervals $[\kappa_{\nu, j}, \kappa_{\nu, j + 1}) \subseteq \kappa_\eta$, and we have

%with the property that $\ol{p}_\ast\, \uhr\, \kappa_\eta^2 = \ol{p}_\ast^\prime\,\uhr\,\kappa_\eta^2$. \\[-2mm]

%$\ol{p}_\ast \, \uhr\, [\kappa_{\nu, j}, \kappa_{\nu, j + 1})^2 = \ol{p}^\prime_\ast\, \uhr\, [\kappa_{\nu, j}, \kappa_{\nu, j + 1})^2$.\\[-2mm]

Now, consider $(\sigma, i) \in \ol{\supp}_0$. We define $\ol{p}^\sigma_i$ and $(\ol{p}^\prime)^\sigma_i$ on the interval $[\kappa_{\nu, j}, \kappa_{\nu, j + 1}) \subseteq \kappa_\eta$ as follows:

\begin{itemize} \item For $(\sigma, i) \in \supp p_0$, we define $\ol{p}^\sigma_i \, \uhr\, [\kappa_{\nu, j}, \kappa_{\nu, j + 1}) \supseteq p^\sigma_i \, \uhr\, [\kappa_{\nu, j}, \kappa_{\nu, j + 1})$ according to the \textit{linking property}:
Let $\{\xi\} := a^\sigma_i\, \cap\, [\kappa_{\nu, j}, \kappa_{\nu, j + 1})$ and consider $\zeta \in [\kappa_{\nu, j}, \kappa_{\nu, j + 1})\, \cap\, \ol{\dom}_0$. If $\zeta \in \dom p_0$, we set $\ol{p}^\sigma_i (\zeta) := p^\sigma_i (\zeta)$; and $\ol{p}^\sigma_i (\zeta) := \ol{p}_\ast (\xi, \zeta)$ in the case that $\zeta \in \ol{\dom}_0 \setminus \dom p_0$. (Note that $\xi \in \ol{\dom}_0$ follows by construction.)

\item In the case that $(\sigma, i) \in \supp p_0^\prime$, we define $(\ol{p}^\prime)^\sigma_i \, \uhr\, [\kappa_{\nu, j}, \kappa_{\nu, j + 1}) \supseteq (p^\prime)^\sigma_i \, \uhr\, [\kappa_{\nu, j}, \kappa_{\nu, j + 1})$ according to the \textit{linking property} as before: Let $\{\xi\} := (a^\prime)^\sigma_i\, \cap\, [\kappa_{\nu, j}, \kappa_{\nu, j + 1})$, and consider $\zeta \in [\kappa_{\nu, j}, \kappa_{\nu, j + 1})\, \cap\, \ol{\dom}_0$. If $\zeta \in \dom p^\prime_0$, we set $(\ol{p}^\prime)^\sigma_i (\zeta) := (p^\prime)^\sigma_i (\zeta)$; and $(\ol{p}^\prime)^\sigma_i (\zeta) := (\ol{p}^\prime_\ast )(\xi, \zeta)$ in the case that $\zeta \in \ol{\dom}_0 \setminus \dom p^\prime_0$. (Again, $\xi \in \ol{\dom}_0$ by construction.) 

\item For $(\sigma, i) \in \supp p_0 \setminus \supp p^\prime_0$, let $(\ol{p}^\prime)^\sigma_i \, \uhr\, [\kappa_{\nu, j}, \kappa_{\nu, j + 1}) := \ol{p}^\sigma_i \, \uhr\, [\kappa_{\nu, j}, \kappa_{\nu, j + 1})$. 

\item For $(\sigma, i) \in \supp p^\prime_0 \setminus \supp p_0$, let $\ol{p}^\sigma_i \, \uhr\, [\kappa_{\nu, j}, \kappa_{\nu, j + 1}) := (\ol{p}^\prime)^\sigma_i \, \uhr\, [\kappa_{\nu, j}, \kappa_{\nu, j + 1})$. 

\item If $(\sigma, i) \in \ol{\supp}_0 \setminus (\supp p_0\, \cup \, \supp p_0^\prime)$, then $\ol{p}^\sigma_i \, \uhr\, [\kappa_{\nu, j}, \kappa_{\nu, j + 1}) = (\ol{p}^\prime)^\sigma _i \, \uhr\, [\kappa_{\nu, j}, \kappa_{\nu, j + 1})$ can be set arbitrarily on the given domain.
% $\ol{\dom}_0\, \cap\, \kappa_\sigma$. 

%\item In the case that $(\sigma, i) \in \ol{\supp}_0 \setminus (\supp p_0\, \cup \, \supp p^\prime_0)$, we set $(\ol{p}^\prime)^\sigma_i := \ol{p}^\sigma_i$. 

\end{itemize}

This defines all $\ol{p}^\sigma_i$ and $(\ol{p}^\prime)^\sigma_i$ for $(\sigma, i) \in \ol{\supp}_0$ on intervals $[\kappa_{\nu, j}, \kappa_{\nu, j + 1}) \subseteq \kappa_\eta$. \\[-2mm]

%Thus, we have defined $\ol{p}_\ast\, \uhr\, \kappa_\eta$, $\ol{p}^\prime_\ast\, \uhr\, \kappa_\eta$, and $\ol{p}^\sigma_i \, \uhr\, \kappa_\eta$, $(\ol{p}^\prime)^\sigma_i\, \uhr\, \kappa_\eta$ for all $(\sigma, i) \in \supp_0$. \\[-2mm]

We now have to verify that $\ol{p}^\sigma_i = (\ol{p}^\prime)^\sigma_i$ for any $(\sigma, i) \in \ol{\supp}_0$ with $\sigma \leq \eta$, $i < \beta$. We only have to treat the case that $(\sigma, i) \in \supp p_0\, \cap\, \supp p_0^\prime$. 

Consider an interval $[\kappa_{\nu, j}, \kappa_{\nu, j + 1}) \subseteq \kappa_\sigma \subseteq \kappa_\eta$. Then $p^\prime \in G$ and \[ (p^\beta\, \uhr \, (\eta + 1))^{( \eta_m, i_m)_{m < \omega}} \in (G^\beta\, \uhr \, (\eta + 1))^{( \eta_m, i_m)_{m < \omega}}\] implies that $p^\sigma_i$ and $(p^\prime)^\sigma_i$ are compatible, and $a^{\sigma}_i\, \cap\, [\kappa_{\nu, j}, \kappa_{\nu, j + 1}) = (a^\prime)^\sigma_i\, \cap\, [\kappa_{\nu, j}, \kappa_{\nu, j + 1}) =: \{ \xi\}$. \\[-3mm]

Let $\zeta \in [\kappa_{\nu, j}, \kappa_{\nu, j + 1}) \, \cap\, \ol{\dom}_0$. 

\begin{itemize} \item If $\zeta \in \dom p_0\, \cap\, \dom p^\prime_0$, then $\ol{p}^\sigma_i (\zeta) = p^\sigma_i (\zeta) = (p^\prime)^\sigma_i (\zeta) = (\ol{p}^\prime)^\sigma_i (\zeta)$. \item For $\zeta \in \ol{\dom}_0 \setminus (\dom p_0\, \cup \, \dom p^\prime_0)$, it follows that $\ol{p}^\sigma_i (\zeta) = \ol{p}_\ast (\xi, \zeta) = \ol{p}^\prime_\ast (\xi, \zeta) = (\ol{p}^\prime)^\sigma_i (\zeta)$ by construction, since we have arranged $\ol{p}_\ast \, \uhr\, \kappa_\eta^2 = \ol{p}^\prime_\ast \, \uhr\, \kappa_\eta^2$. \item Let now $\zeta \in \dom p_0 \setminus \dom p^\prime_0$, $\xi \notin \dom p_0$. Then $\ol{p}^\sigma_i (\zeta) = p^\sigma_i (\zeta)$, and $(\ol{p}^\prime)^\sigma_i (\zeta) = \ol{p}^\prime_\ast (\xi, \zeta)$. Since $\ol{p}^\prime_\ast (\xi, \zeta) = p^\sigma_i (\zeta)$ by construction of $\ol{p}^\prime_\ast$, this gives $\ol{p}^\sigma_i (\zeta) = (\ol{p}^\prime)^\sigma_i (\zeta)$ as desired.

The case that $\zeta \in \dom p^\prime_0 \setminus \dom p_0$, $\xi \notin \dom p^\prime_0$, can be treated similarly.

\item If $\zeta \in \dom p_0 \setminus \dom p^\prime_0$ and $\xi \in \dom p_0$, it follows that $\ol{p}^\sigma_i (\zeta) = p^\sigma_i (\zeta)$ and $(\ol{p}^\prime)^\sigma_i (\zeta) = \ol{p}^\prime_\ast (\xi, \zeta)$ as before; but in this case, we have set $\ol{p}^\prime_\ast (\xi, \zeta) := p_\ast (\xi, \zeta)$, so it remains to verify that $p^\sigma_i (\zeta) = p_\ast (\xi, \zeta)$. \\[-5mm]

Since $p^\prime \in G$, $(p^\beta\, \uhr\, (\eta + 1))^{( \eta_m, i_m)_{m < \omega}} \in (G^\beta\, \uhr\, (\eta + 1))^{( \eta_m, i_m)_{m < \omega}}$, we can take $q \in G$ with $(q^\beta\, \uhr\, (\eta + 1))^{(\eta_m , i_m)_{m < \omega}} \leq (p^\beta\, \uhr\, (\eta + 1))^{(\eta_m, i_m)_{m < \omega}}$, and assume w.l.o.g. that $q \leq p^\prime$. Then $q^\sigma_i (\zeta) = q_\ast (\xi, \zeta)$ by the linking property for $q \leq p^\prime$, since $(a^\prime)^\sigma_i\, \cap\, [\kappa_{\nu, j}, \kappa_{\nu, j + 1}) = \{\xi \}$. Moreover, $p^\sigma_i (\zeta) = q^\sigma_i (\zeta)$ and $p_\ast (\xi, \zeta) = q_\ast (\xi, \zeta)$, and we are done.

\item The remaining case is that $\zeta \in \dom p^\prime_0 \setminus \dom p_0$ and $\xi \in \dom p^\prime_0$. Then $\ol{p}^\sigma_i (\zeta) = \ol{p}_\ast (\xi, \zeta) = p^\prime_\ast (\xi, \zeta)$ and $(\ol{p}^\prime)^\sigma_i (\zeta)= (p^\prime)^\sigma_i (\zeta)$, and it remains to verify that $(p^\prime)^\sigma_i (\zeta) = p^\prime_\ast (\xi, \zeta)$. As before, take $q \in G$ with $q \leq p^\prime$ and $(q^\beta\, \uhr\, (\eta + 1))^{(\eta_m , i_m)_{m < \omega}} \leq (p^\beta\, \uhr\, (\eta + 1))^{( \eta_m, i_m)_{m < \omega}}$. The latter gives $q^\sigma_i (\zeta) = q_\ast (\xi, \zeta)$ by the linking property, since $\sigma \leq \eta$, $i < \beta$, $a^\sigma_i\, \cap\, [\kappa_{\nu, j}, \kappa_{\nu, j + 1}) = \{\xi\}$ and $\zeta \in \dom q_0 \setminus \dom p_0$. Moreover, from $q \leq p^\prime$ it follows that $(p^\prime)^\sigma_i (\zeta) = q^\sigma_i (\zeta)$ and $p^\prime_\ast (\xi, \zeta) = q_\ast (\xi, \zeta)$; hence, $(p^\prime)^\sigma_i (\zeta) = p^\prime_\ast (\xi, \zeta)$ as desired.
%The case $\zeta \in \dom p_0 \setminus \dom p^\prime_0$ (with $\xi \notin \dom p^\prime_0$ or $\xi \in \dom p^\prime_0$) ca be treated similarly.

%\colorbox{yellow}{FRAGE: Könnte man das weglassen?}
\end{itemize}

Thus, it follows that $\ol{p}^\sigma_i = (\ol{p}^\prime)^\sigma_i$ holds for all $(\sigma, i) \in \ol{\supp}_0$ with $\sigma \leq \eta$, $i < \beta$. \\[-2mm]

If $m < \omega$ with $\eta_m \leq \eta$, then $i_m < \beta$ follows by construction of $\beta$. Hence, $\ol{p}^{\eta_m}_{i_m} = (\ol{p}^\prime)^{\eta_m}_{i_m}$. It remains to make sure that whenever $m < \omega$ with $\eta_m > \eta$, then $\ol{p}^{\eta_m}_{i_m} \, \uhr\, \kappa_\eta = (\ol{p}^\prime)^{\eta_m}_{i_m}\, \uhr\, \kappa_\eta$ holds; which can be shown similarly as $\ol{p}^\sigma_i = (\ol{p}^\prime)^\sigma_i$ in the case that $\sigma \leq \eta$, $i < \beta$: We use that $a^{\eta_m}_{i_m} = (a^\prime)^{\eta_m}_{i_m}$ and $p^{\eta_m}_{i_m} (\zeta) = (p^\prime)^{\eta_m}_{i_m} (\zeta)$ for all $m < \omega$ and $\zeta \in \dom p_0\, \cap\, \dom p_0^\prime$; and now, it is important that we are using the forcing notion $(\m{P}^\beta\, \uhr\, (\eta + 1))^{( \eta_m, i_m)_{m < \omega}}$ instead of $\m{P}^\beta\, \uhr\, (\eta + 1)$; since we need the linking property below $\kappa_\eta$ for the $(\eta_m, i_m)$ with $\eta_m > \eta$. \\[-2mm]

It remains to construct $\ol{p}_\ast \, \uhr\, [\kappa_\eta, \kappa_\gamma)^2$, $\ol{p}^\prime_\ast\, \uhr\, [\kappa_\eta, \kappa_\gamma)^2$, and $\ol{p}^\sigma_i\, \uhr\, [\kappa_\eta, \kappa_\gamma)$, $(\ol{p}^\prime)^\sigma_i \, \uhr \, [\kappa_\eta, \kappa_\gamma)$ for all $(\sigma, i) \in \ol{\supp}_0$ with $\sigma > \eta$.

\begin{itemize} \item For $(\eta_m, i_m)$ with $\eta_m > \eta$, we take $\ol{p}^{\eta_m}_{i_m}\, \uhr\, [\kappa_\eta, \kappa_{\eta_m}) \supseteq p^{\eta_m}_{i_m}\, \uhr\, [\kappa_\eta, \kappa_{\eta_m})$, $(\ol{p}^\prime)^{\eta_m}_{i_m}\, \uhr\, [\kappa_\eta, \kappa_{\eta_m}) \supseteq (p^\prime)^{\eta_m}_{i_m} \, \uhr \, [\kappa_\eta, \kappa_{\eta_m})$ on the given domain, such that $\ol{p}^{\eta_m}_{i_m} \, \uhr \, [\kappa_\eta, \kappa_{\eta_m}) = (\ol{p}^\prime)^{\eta_m}_{i_m}\, \uhr \, [\kappa_\eta, \kappa_{\eta_m})$. This is possible, since $p^\prime \in G$ and $(p^\beta\, \uhr\, (\eta + 1))^{( \eta_m, i_m)_{m < \omega}} \in (G^\beta\, \uhr\, (\eta + 1))^{( \eta_m, i_m)_{m < \omega}}$; so $p^{\eta_m}_{i_m}$ and $(p^\prime)^{\eta_m}_{i_m}$ are compatible for all $m < \omega$.

%$p^{\eta_j}_{i_j} = (p^\prime)^{\eta_j}_{i_j}$. 
\item For the $(\sigma, i) \in \ol{\supp}_0$ remaining, we set $\ol{p}^\sigma_i\, \uhr\, [\kappa_\eta, \kappa_\gamma) \supseteq p^\sigma_i \, \uhr\, [\kappa_\eta, \kappa_\gamma)$ and $(\ol{p}^\prime)^\sigma_i \, \uhr\, [\kappa_\eta, \kappa_\gamma) \supseteq (p^\prime)^\sigma_i \, \uhr\, [\kappa_\eta, \kappa_\gamma)$ arbitrarily on the given domain. 
\item Consider an interval $[\kappa_{\nu, j}, \kappa_{\nu, j + 1}) \subseteq [\kappa_\eta, \kappa_\gamma)$. We define $\ol{p}_\ast\, \uhr\, [\kappa_{\nu, j}, \kappa_{\nu, j + 1})^2 \supseteq p_\ast\, \uhr\, [\kappa_{\nu, j}, \kappa_{\nu, j + 1})^2$ according to the \textit{linking property}: Whenever $\zeta \in \ol{\dom}_0 \setminus \dom p_0$ and $\{\xi\} = a^{\sigma}_i\, \cap\, [\kappa_{\nu, j}, \kappa_{\nu, j + 1})$ for some $(\sigma, i) \in \supp p_0$, then $\ol{p}_\ast (\xi, \zeta) := \ol{p}^{\sigma}_i (\zeta)$.\\ The construction of $\ol{p}^\prime_\ast\, \uhr\, [\kappa_{\nu, j}, \kappa_{\nu, j + 1})^2 \supseteq p^\prime_\ast \, \uhr\, [\kappa_{\nu, j}, \kappa_{\nu, j + 1})^2$ is similar. \end{itemize}

%ACHTUNG - man müsste erwähnen, dass man nur $\eta_j \in Lim$ betrachtet!

This completes our construction of $\ol{p}_0 \leq p_0$ and $\ol{p}_0^\prime \leq p_0^\prime$ with all the desired properties. \\[-2mm]

Similarly, one can construct $\ol{p}_1 \leq p_1$, $\ol{p}^\prime_1 \leq p^\prime_1$ such that $\ol{\supp}_1 := \supp \ol{p}_1 = \supp \ol{p}^\prime_1$, $\ol{\dom}_1 (\sigma) := \dom \ol{p}_1 (\sigma) = \dom \ol{p}_1^\prime (\sigma)$ for all $\sigma \in \ol{\supp}_1$; and $\ol{p}^\sigma_i = (\ol{p}^\prime)^\sigma_i$ for all $\sigma \leq \eta$, $i < \beta$ with $\sigma \in Succ$, and $\ol{p}^{\eta_m}_{i_m} = (\ol{p}^\prime)^{\eta_m}_{i_m}$ for all $m < \omega$ with $\eta_m \in Succ$. \\[-2mm]

We now proceed similarly as in the \textit{Approximation Lemma} \ref{approx} and construct an isomorphism $\pi$ such that $\pi$ a \textit{standard isomorphism for} $\pi \ol{p} = \ol{p}^\prime$. This determines all parameters of $\pi$ except the maps $G_0 (\nu, j): \supp \pi_0 (\nu, j) \rightarrow \supp \pi_0 (\nu, j)$, which will be defined as follows: Consider an interval $[\kappa_{\nu, j}, \kappa_{\nu, j + 1})$. Recall that we have the map $F_{\pi_0} (\nu, j): \supp \pi_0 (\nu, j) \rightarrow \supp \pi_0 (\nu, j)$, which is in charge of permuting the linking ordinals: We set $F_{\pi_0} (\nu, j) (\sigma, i) := (\lambda, k)$ for $(\ol{a}^\prime)^\sigma_i\, \cap\, [\kappa_{\nu, j}, \kappa_{\nu, j + 1}) = \ol{a}^\lambda_k \, \cap \, [\kappa_{\nu, j}, \kappa_{\nu, j + 1})$. We define $G_{\pi_0} (\nu, j) := F_{\pi_0} (\nu, j)$ for all $\kappa_{\nu, j} < \kappa_\eta$, and $G_{\pi_0} (\nu, j) := id$ whenever $\kappa_{\nu, j} \geq \kappa_\eta$. \\[-2mm]

By construction, it follows that $\pi \ol{p} = \ol{p}^\prime$. We will now check that $[\pi]$ is contained in the intersection $\bigcap_m Fix (\eta_m, i_m)\, \cap\, \bigcap_m H^{\lambda_m}_{k_m}$. 

\begin{itemize} \item Consider $m < \omega$ with $\eta_m \in Lim$ and $r \in D_{\pi}$, $r^\prime := \pi r$, with $(\eta_m, i_m) \in \supp r_0$. 

For an interval $[\kappa_{\nu, j}, \kappa_{\nu, j + 1}) \subseteq \kappa_{\eta_m}$ and $\zeta \in \dom \pi_0\, \cap\, [\kappa_{\nu, j}, \kappa_{\nu, j + 1})$, it follows by construction of the map $\pi_0 (\zeta)$
%as in the \textit{Approximation Lemma}
that $(r^\prime)^{\eta_m}_{i_m} (\zeta) = r^{\eta_m}_{i_m} (\zeta)$ holds; since $\ol{p}^{\eta_m}_{i_m}  = (\ol{p}^\prime)^{\eta_m}_{i_m}$. 

In the case that $\zeta \in [\kappa_{\nu, j}, \kappa_{\nu, j + 1}) \, \cap\, (\dom r_0 \, \setminus \, \dom \pi_0)$, it follows that $(r^\prime)^{\eta_m}_{i_m} (\zeta) = r^{\lambda}_k (\zeta)$ with $(\lambda, k) = G_{\pi_0} (\nu, j) (\eta_m, i_m)$. If $\kappa_{\nu, j} < \kappa_\eta$,  then $(\lambda, k) $ $=$  $G_{\pi_0} (\nu, j) (\eta_m, i_m)$ $=$ $F_{\pi_0} (\nu, j) (\eta_m, i_m)$ $=$ $(\eta_m, i_m)$, since $\ol{a}^{\eta_m}_{i_m} = (\ol{a}^\prime)^{\eta_m}_{i_m}$. In the case that $\kappa_{\nu, j} \geq \kappa_\eta$, we have $G_{\pi_0} (\nu, j) = id$; so again, $(\lambda, k) = (\eta_m, i_m)$. 

Hence, $r^{\eta_m}_{i_m} (\zeta) = (r^\prime)^{\eta_m}_{i_m} (\zeta)$ holds for all $\zeta \in \dom r_0\, \cap\, \kappa_{\eta_m}$. 

This proves $[\pi] \in Fix (\eta_m, i_m)$ in the case that $\eta_m \in Lim$. For $\eta_m \in Succ$, we obtain $[\pi] \in Fix (\eta_m, i_m)$ as in the \textit{Approximation Lemma}.

\item Consider $m < \omega$ with $\lambda_m \in Lim$. In the case that $\lambda_m > \eta$, we have $G_{\pi_0} (\nu, j) (\lambda_m, i)$ $=$ $(\lambda_m, i)$ for all $\kappa_{\nu, j} \in [\kappa_\eta, \kappa_{\lambda_m})$, and $[\pi] \in H^{\lambda_m}_{k_m}$ follows. If $\lambda_m \leq \eta$, it follows that $k_m < \beta$ by construction of $\beta$.
% since $\beta$ is \textit{large enough for} $(A_{\dot{f}})$. 
Hence, whenever $\kappa_{\nu, j} < \kappa_{\lambda_m}$ and $i \leq k_m$, we have $G_{\pi_0} (\nu, j) (\lambda_m, i) = F_{\pi_0} (\nu, j)(\lambda_m, i) = (\lambda_m, i)$; since $\ol{a}^{\lambda_m}_i = (\ol{a}^\prime)^{\lambda_m}_i$ follows from $\lambda_m \leq \eta$, $i < \beta$. 

In the case that $\lambda_m \in Succ$, we obtain $[\pi] \in H^{\lambda_m}_{k_m}$ as in the \textit{Approximation Lemma}.
\end{itemize}

Thus, we have shown that $[\pi] \in \bigcap_m Fix (\eta_m, i_m)\, \cap\, \bigcap_m H^{\lambda_m}_{k_m}$; which implies $\pi \ol{f}^{D_\pi} = \ol{f}^{D_\pi}$. It remains to make sure that $\pi \ol{\tau_{\varrho}(\dot{X})}^{D_\pi} = \ol{\tau_{\varrho}(\dot{X})}^{D_\pi}$. \\[-2mm]

Recall that we have an $\eta$-good pair $\varrho = ( (a_m)_{m < \omega}, (\ol{\sigma}_m, \ol{i}_m)_{m < \omega})$ with $\ol{i}_m < \beta$ for all $m < \omega$, and $\dot{X} \in \Name \big( (\ol{P}^\eta)^\omega\, \times \prod_{m < \omega} P^{\ol{\sigma}_m} \big)$ with \[\tau_{\varrho} (\dot{X}) = \Big\{\; \big(\tau_{\varrho}(\dot{Y}), q\big)\ \; \big| \; \ q \in \m{P}\; , \; \exists\, \Big( \dot{Y}\; , \; \big( ( p_\ast (a_m))_{m < \omega}\, , \, (\, p^{\ol{\sigma}_m}_{\ol{i}_m})_{m < \omega} \,\big)\, \Big) \in \dot{X}\; : \]\[ \ \forall\,m\ \ \Big(\,  q_\ast (a_m) \supseteq p_\ast (a_m)\, , \, q^{\ol{\sigma}_m}_{\ol{i}_m} \supseteq p^{\ol{\sigma}_m}_{\ol{i}_m}\, \Big) \; \Big\}. \] Then \[\ol{\tau_{\varrho}(\dot{X})}^{D_\pi} = \Big\{ \, \big(\,\ol{\tau_{\varrho}(\dot{Y})}^{D_\pi}, q\,\big)\ \, \big| \, \ q \in D_\pi\; , \; \dot{Y} \in \dom \dot{X}\, , \, q \Vdash_s \tau_{\varrho} (\dot{Y}) \in \tau_{\varrho} (\dot{X})\, \Big\}, \] and \[\pi \ol{\tau_{\varrho}(\dot{X})}^{D_\pi} = \Big\{ \, \big( \,\pi \ol{\tau_{\varrho}(\dot{Y})}^{D_\pi}, \pi q \,\big)\ \, | \ \, \pi q \in D_\pi\; , \; \dot{Y} \in \dom \dot{X}\, , \, q \Vdash_s \tau_{\varrho} (\dot{Y}) \in \tau_{\varrho} (\dot{X})\, \Big\} .\]

%\ol{\tau(\dot{X})}^{D_\pi} = \big\{ (\ol{\tau(\dot{Y})}^{D_\pi}, q)\ | \ q \in D_\pi\; , \; \exists\, \big( \dot{Y}\; , \; \prod_m p_\ast (a_m)\, \times\, \prod_m p^{\ol{\sigma}_m}_{\ol{i}_m} \big) \in \dot{X}\; : \ \forall j\  \ \big( q_\ast (a_m) \supseteq p_\ast (a_m), \]\[ q^{\ol{\sigma}_m}_{\ol{i}_m} \supseteq p^{\ol{\sigma}_m}_{\ol{i}_m} \big) \, \big\}. \]

%\colorbox{yellow}{FRAGE: Sollte man ${\wt{\dot{X}}}^{D_\pi}$ überhaupt einführen? Oder eher $\wt{X}^{D_\pi}$ schreiben?}

We will now check that $\pi$ is the identity on $\m{P}^\beta\, \uhr\, (\eta + 1)$. More precisely: Let $q \in D_\pi$, $q = (q_\ast, (q^\sigma_i, b^\sigma_i)_{\sigma, i}, (q^\sigma)_\sigma)$ with $\pi q = q^\prime = \big(q^\prime_\ast, ( (q^\prime)^\sigma_i, (b^\prime)^\sigma_i)_{\sigma, i}, ( (q^\prime)^\sigma)_{\sigma}\big)$. We prove  that $q^\prime_\ast\, \uhr\, \kappa_\eta^2 = q_\ast\, \uhr\, \kappa_\eta^2$; moreover, $(q^\prime)^\sigma_i = q^\sigma_i$, $(b^\prime)^\sigma_i = b^\sigma_i$ for all $\sigma \leq \eta$, $i < \beta$ with $\sigma \in Lim$, and $(q^\prime)^\sigma_i = q^\sigma_i$ for all $\sigma \leq \eta$, $i < \beta$ with $\sigma \in Succ$.

\begin{itemize} \item Since $\pi$ is a \textit{standard isomorphism for }$\pi \ol{p} = \ol{p}^\prime$, it follows that $q^\prime_\ast \, \uhr\, \kappa_\eta^2 = q_\ast\, \uhr\, \kappa_\eta^2$ for all $q \in D_\pi$; since firstly, $\ol{p}_\ast\, \uhr\, \kappa_\eta^2 = \ol{p}^\prime_\ast\, \uhr\, \kappa_\eta^2$, and secondly, $G_{\pi_0} (\nu, j) = F_{\pi_0} (\nu, j)$ for all $\kappa_{\nu, j} < \kappa_\eta$. The latter makes sure that $q^\prime_\ast (\xi^\sigma_i (\nu, j), \zeta) = q_\ast (\xi^\sigma_i (\nu, j), \zeta)$ whenever $\zeta \in \dom q_0 \setminus \dom \pi_0$, and $\{\xi^\sigma_i (\nu, j)\} := b^\sigma_i\, \cap\, [\kappa_{\nu, j}, \kappa_{\nu, j + 1})$ for some $(\sigma, i) \in \supp \pi_0 (\nu, j)$: We have $q^\prime_\ast (\xi^{\sigma}_i (\nu, j), \zeta) = q_\ast (\xi^\lambda_k (\nu, j), \zeta)$ with $(\lambda, k) = G_{\pi_0} (\nu, j)\, \circ\, (F_{\pi_0} (\nu, j))^{-1} (\sigma, i)$; so from 
%for all $(\sigma, i) \in \supp \pi_0 (\nu, j)$ and $\zeta \in \dom q_0 \setminus \dom \pi_0$. Hence, from 
$G_{\pi_0} (\nu, j) = F_{\pi_0} (\nu, j)$ it follows that $q^\prime_\ast (\xi^\sigma_i (\nu, j), \zeta) = q_\ast (\xi^\sigma_i (\nu, j), \zeta)$ as desired.

\item Let now $(\sigma, i) \in \supp \pi_0 = \supp \ol{p}_0$ with $\sigma \leq \eta$, $i < \beta$ and $\sigma \in Lim$. Then $\ol{a}^\sigma_i = (\ol{a}^\prime)^\sigma_i$; hence, $F_{\pi_0} (\nu, j) (\sigma, i) = (\sigma, i)$ for all $\kappa_{\nu, j} < \kappa_\sigma$. This gives $(b^\prime)^\sigma_i = b^\sigma_i$ as desired.
For $\zeta \in \dom \pi_0 = \dom \ol{p}_0$, it follows from $\ol{p}^\sigma_i = (\ol{p}^\prime)^\sigma_i$ by construction of $\pi_0$ that $(q^\prime)^\sigma_i (\zeta) = q^\sigma_i (\zeta)$ holds. Finally, if $\zeta  \in (\dom q_0 \setminus \dom \pi_0)$, and $\zeta$ is contained in an interval $[\kappa_{\nu, j}, \kappa_{\nu, j + 1}) \subseteq \kappa_\sigma$, then $(q^\prime)^\sigma_i (\zeta) = q^\lambda_k (\zeta)$ with $(\lambda, k) = G_{\pi_0} (\nu, j) (\sigma, i) = F_{\pi_0} (\nu, j) (\sigma, i) = (\sigma, i)$ as desired. 
%(We use that $\sigma \leq \eta$, $i < \beta$ implies $\ol{a}^\sigma_i = (\ol{a}^\prime)^\sigma_i$.)
Hence, it follows that $(q^\prime)^\sigma_i = q^\sigma_i$ for all $\sigma \leq \eta$, $i < \beta$. 

\item In the case that $\sigma \leq \eta$, $i < \beta$ with $\sigma \in Succ$, we obtain $(q^\prime)^\sigma_i = q^\sigma_i$ from $\ol{p}^\sigma_i = (\ol{p}^\prime)^\sigma_i$ as in the \textit{Approximation Lemma} \ref{approx}.
\end{itemize}

Hence, $\pi$ is the identity on $\m{P}^\beta\, \uhr \, (\eta + 1)$. \\[-3mm]

Now, it is not difficult to prove recursively that for every $\dot{Z}\in \Name ( (\ol{P}^\eta)^\omega\, \times\, \prod_{m < \omega} P^{\ol{\sigma}_m})$ the following holds: If $H$ is a $V$-generic filter on $\m{P}$,
%$(\ol{P}^\eta)^\omega\, \times\, \prod_{m < \omega} P^{\ol{\sigma}_m}_{\ol{i}_m}$, 
then $(\tau_{\varrho} (\dot{Z}))^{\pi H} = (\tau_{\varrho} (\dot{Z}))^H = (\tau_{\varrho} (\dot{Z}))^{\pi^{-1} H}$. 

%\colorbox{red}{ACHTUNG - für Filter wurde $\pi H$ nocht nicht definiert, aber schon vorher verwendet!}

%\colorbox{yellow}{FRAGE: Sollte man das genauer erklären?}

This implies $\ol{\tau_{\varrho} (\dot{X})}^{D_\pi} = \pi \ol{\tau_{\varrho} (\dot{X})}^{D_\pi}$, since for every $q \in D_\pi$ and $\dot{Y} \in \dom \dot{X}$, we have $q \Vdash_s \tau_{\varrho}(\dot{Y}) \in \tau_{\varrho}(\dot{X})$ if and only if $\pi q \Vdash_s \tau_{\varrho}(\dot{Y}) \in \tau_{\varrho}(\dot{X})$ holds. \\[-3mm]

%\colorbox{red}{wegen $\Vdash$ und $\Vdash_s$ nachfragen!}

%$\pi \ol{\tau (\dot{Y})}^{D_\pi} = \ol{\tau (\dot{Y})}^{D_\pi}$ for all $\dot{Y}\in \Name ( (\ol{P}^\eta)^\omega\, \times\, \prod_{j < \omega} P^{\ol{\eta}_j}_{\ol{i}_j})$: Firstly, we have just shown that $(\pi q)_\ast\, \uhr\, \kappa_\eta^2 = q_\ast\, \uhr\, \kappa_\eta^2$ for all $q \in D_\pi$, which gives hence, $(\pi q)_\ast (a_j) = q_\ast (a_j)$ for all $j < \omega$. Secondly, $(\pi q)^{\ol{\eta}_j}_{\ol{i}_j} = q^{\ol{\eta}_j}_{\ol{i}_j}$, for all $j < \omega$, since $\ol{\eta}_j \leq \eta$, $\ol{i}_j < \beta$. This implies $\pi \wt{\dot{X}}^{D_\pi} = \wt{\dot{X}}^{D_\pi}$ as desired.\\[-2mm]

Summing up, this gives our desired contradiction: Since $\ol{p} \Vdash_s \big(\tau_{\varrho} (\dot{X}), \alpha\big) \in \dot{f}$, it follows that $\pi \ol{p} \Vdash_s \big(\pi \ol{\tau_{\varrho}(\dot{X})}^{D_\pi}, \alpha\big) \in \pi \ol{f}^{D_\pi}$; hence, $\ol{p}^\prime \Vdash_s \big(\ol{\tau_{\varrho} (\dot{X})}^{D_\pi}, \alpha\big) \in \ol{f}^{D_\pi}$. But this contradicts $\ol{p}^\prime \Vdash_s (\tau_{\varrho}(\dot{X}), \alpha) \notin \dot{f}$. \\[-3mm]

Thus, our assumption that $(X, \alpha) \in (f^\beta)^\prime \setminus f^\beta$ was wrong, and it follows that $(f^\beta)^\prime = f^\beta$ as desired.

%ACHTUNG - $\Vdash$ oder $\Vdash_s$?

%FRAGE: Bezeichnungen beim Approximation Lemma??

\end{proof}

%From $f^\beta = (f^\beta)^\prime$, it follows that 

Hence, $f^\beta \in V[\, (G^\beta\, \uhr\, (\eta + 1))^{( \eta_m, i_m)_{m < \omega}}\, \times\, \prod_{m < \omega} G^{\eta_m}_{i_m}\, \uhr\, [\kappa_\eta, \kappa_{\eta_m})\, ]$.

\subsubsection*{C) ${\mathbf (\boldsymbol{\m{P}^\beta\, \uhr \, (\eta + 1))^{( \eta_m, i_m)_{m < \omega}}\, \times\, \prod_{m < \omega} P^{\eta_m}\, \uhr\, [\kappa_\eta, \kappa_{\eta_m})}}$ preserves cardinals $\boldsymbol{\geq \alpha_\eta}$.}

The next step is to show that cardinals $\geq \alpha_\eta$ are absolute between $V$ and $V[G^\beta\, \uhr \, (\eta + 1))^{( \eta_m, i_m)_{m < \omega}}\, \times\, \prod_{m < \omega} G^{\eta_m}_{i_m}\, \uhr\, [\kappa_\eta, \kappa_{\eta_m})]$. \\[-3mm]

Recall that we are assuming $GCH$ in our ground model $V$, which will be used implicitly throughout this Chapter \ref{6.2} C): When we claim that a particular forcing notion preserves cardinals, then we mean it preserves cardinals under the assumption that $GCH$ holds, if not stated differently. \\[-2mm]

%\colorbox{red}{WEGEN DIESER FORMULIERUNG NACHFRAGEN!!}

First, we have a look at the cardinality of $(\m{P}^\beta\, \uhr\, (\eta + 1))^{( \eta_m, i_m)_{m < \omega}}$. Recall that $\beta$ was an ordinal \textit{large enough for the intersection $(A_{\dot{f}})$} with $\kappa_\eta^\plus < \beta < \alpha_\eta$.

\begin{lem} \label{cardforc} $|(\m{P}^\beta\, \uhr\, (\eta + 1))^{( \eta_m, i_m)_{m < \omega}}| \leq |\beta|^\plus$. \end{lem}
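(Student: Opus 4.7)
The plan is to bound $|(\m{P}^\beta \uhr (\eta + 1))^{(\eta_m, i_m)_{m < \omega}}|$ by enumerating the five components of a condition, as listed in Definition \ref{defpbetauhreta+1kompliziert}, and estimating each separately under $GCH$ in $V$. Recall that $\wt{\beta} > \kappa_\eta^\plus$, so $|\beta| \geq \kappa_\eta^\plus$; the goal is to show that each component contributes at most $|\beta|^\plus$ many possibilities.

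First I would handle the three components that only involve coordinates below $\kappa_\eta$. For $p_\ast \uhr \kappa_\eta^2$, the domain is a subset of $\kappa_\eta^2$ of size $< \kappa_\eta$, bounded in the prescribed way, so by $GCH$ there are at most $\kappa_\eta^{<\kappa_\eta} \leq \kappa_\eta^\plus$ such restrictions. For the family $(p^{\eta_m}_{i_m} \uhr \kappa_\eta,\, a^{\eta_m}_{i_m} \cap \kappa_\eta)_{m < \omega,\, \eta_m > \eta}$, each of the countably many entries has at most $\kappa_\eta^\plus$ possibilities, yielding the bound $(\kappa_\eta^\plus)^{\aleph_0} = \kappa_\eta^\plus$ by regularity of $\kappa_\eta^\plus$. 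For $\wt{X}_p$, a subset of $\kappa_{\wt{\eta}} \leq \kappa_\eta$ meeting each interval $[\kappa_{\nu, j}, \kappa_{\nu, j + 1}) \subseteq \kappa_{\wt{\eta}}$ countably, the count is bounded by $\kappa_\eta^{\aleph_0} \leq \kappa_\eta^\plus$ under $GCH$. All three contributions are dominated by $|\beta|^\plus$.

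Next I would handle the two families indexed by countable supports in $(\eta + 1) \times \beta$. For $(p^\sigma_i, a^\sigma_i)_{\sigma \leq \eta,\, i < \beta}$, each individual pair has at most $\kappa_\sigma^\plus \cdot \kappa_\sigma \leq \kappa_\eta^\plus \leq |\beta|$ choices, and the countable supports contribute at most $|\beta|^{\aleph_0} \leq |\beta|^\plus$ possibilities, giving a total of $|\beta|^\plus$. For $(p^\sigma \uhr (\beta \times \dom_y p^\sigma))_{\sigma \leq \eta,\, \sigma \in \Succ}$, each entry is a $0$-$1$-function on a subset of $\beta \times [\ol{\kappa_\sigma}, \kappa_\sigma)$ of size $< \kappa_\sigma \leq \kappa_\eta$, of which there are at most $|\beta|^{<\kappa_\eta} \leq |\beta|^\plus$ under $GCH$; countable support then yields $(|\beta|^\plus)^{\aleph_0} = |\beta|^\plus$. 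Multiplying the five bounds gives $|\beta|^\plus$, as claimed. The only real obstacle is routine $GCH$-cardinal arithmetic, relying on $\mu^{\aleph_0} \leq \mu^\plus$ for infinite $\mu$, $(\mu^\plus)^{\aleph_0} = \mu^\plus$ when $\mu^\plus$ is regular, and $|\beta|^{<\kappa_\eta} \leq |\beta|^\plus$ (by splitting on whether $\cf |\beta| > \kappa_\eta$ or not).
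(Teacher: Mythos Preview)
Your approach is essentially the same as the paper's: decompose a condition into its five components and bound each by $\leq |\beta|^\plus$ using routine $GCH$ arithmetic. The paper's proof proceeds identically, with only cosmetic differences in the cardinal estimates (e.g.\ the paper bounds the $(p^\sigma)_{\sigma\leq\eta}$ component by $(2^{|\beta|\cdot\kappa_\eta})^{\aleph_0}$ rather than your $|\beta|^{<\kappa_\eta}$).

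One minor slip: your count for $\wt{X}_p$ is not quite right as written. Knowing that $\wt{X}_p$ meets each interval $[\kappa_{\nu,j},\kappa_{\nu,j+1})$ in countably many points does not give a bound of $\kappa_\eta^{\aleph_0}$, since there may be up to $\kappa_{\wt\eta}$ such intervals. The correct (and simpler) bound is just $2^{\kappa_\eta}=\kappa_\eta^\plus$, as $\wt{X}_p\subseteq\kappa_{\wt\eta}\leq\kappa_\eta$; this is what the paper uses. The conclusion $\leq\kappa_\eta^\plus\leq|\beta|$ is unaffected.
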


\begin{proof} The forcing notion $(\m{P}^\beta\, \uhr\, (\eta + 1))^{( \eta_m, i_m)_{m < \omega}}$ is the set of all \[\big(\, p_\ast\, \uhr\, \kappa_\eta\; ,\; (p^\sigma_i, a^\sigma_i)_{\sigma \leq \eta, i < \beta}, (p^\sigma\, \uhr\, (\beta\, \times\ \dom_y p^\sigma))_{\sigma \leq \eta}\, , \, ( p^{\eta_m}_{i_m}\, \uhr\, \kappa_\eta, a^{\eta_m}_{i_m}\, \cap\, \kappa_\eta)_{m < \omega\, , \, \eta_m > \eta}\; , \; \wt{X}_p\, \big)\] for $p \in \m{P}$ with $|\{ (\sigma, i) \in \supp p_0\ | \ \sigma > \eta\, \vee\, i \geq \beta\}| = \aleph_0$, together with the maximal element $(\m{1}^\beta_{\eta + 1})^{(\eta_m, i_m)_{m < \omega}}$. Since $\wt{X}_p \subseteq \kappa_\eta$, there are only $\kappa_\eta^\plus \leq |\beta|$-many possibilities for $\wt{X}_p$; and there are only $\leq \kappa_\eta^\plus \leq |\beta|$-many possibilities for $p_\ast\, \uhr\, \kappa_\eta^2$ and $(p^{\eta_m}_{i_m}\, \uhr\, \kappa_\eta, a^{\eta_m}_{i_m}\, \cap\, \kappa_\eta)_{m < \omega}$. Concerning $(p^\sigma_i, a^\sigma_i)_{\sigma \leq \eta, i < \beta}$, there are $|\beta|^{\aleph_0} \leq |\beta|^\plus$-many possibilities for the countable support; and with the support fixed, we have $(2^{\kappa_\eta})^{\aleph_0} \leq \kappa_\eta^\plus \leq |\beta|$-many possibilities for countably many $(p^\sigma_i, a^\sigma_i)$ with $\sigma \leq \eta$, $i < \beta$. Finally, for $(p^\sigma\, \uhr\, (\beta\, \times\, \dom_y p^\sigma))_{\sigma \leq \eta}$, there are only $|\eta|^\omega \leq \kappa_\eta^\plus \leq |\beta|$-many possibilities for the countable support; and with the countable support fixed, there are $\leq (2^{|\beta|\, \cdot\, \kappa_\eta} )^{\aleph_0}  = |\beta|^\plus$-many possibilities for countably many $p^\sigma$ with $\dom p^\sigma \subseteq \beta\, \times \kappa_\sigma \subseteq \beta\, \times\, \kappa_\eta$.
Hence, it follows that the forcing notion $(\m{P}^\beta\, \uhr\, (\eta + 1))^{( \eta_m, i_m)_{m < \omega}}$ has cardinality $\leq |\beta|^\plus$. 
%By our assumption that $|\beta|^\plus < \alpha_\eta$, it follows that $|(\m{P}^\beta\, \uhr\, (\eta + 1))^{( \eta_m, i_m)_{m < \omega}}| < \alpha_\eta$ as desired.
\end{proof}

\begin{cor} \label{prescard2mitte} If $|\beta|^\plus < \alpha_\eta$, then $(\m{P}^\beta\, \uhr\, (\eta + 1))^{( \eta_m, i_m)_{m < \omega}}\, \times\, \prod_{m < \omega} P^{\eta_m}\, \uhr\, [\kappa_\eta, \kappa_{\eta_m})$ preserves cardinals $\geq \alpha_\eta$. \end{cor}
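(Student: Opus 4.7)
The plan is to split the product into its two factors and handle them separately, then combine. Write $\m{Q}_1 := (\m{P}^\beta\, \uhr\, (\eta + 1))^{(\eta_m, i_m)_{m < \omega}}$ for the first factor and $\m{Q}_2 := \prod_{m < \omega} P^{\eta_m}\, \uhr\, [\kappa_\eta, \kappa_{\eta_m})$ for the second; one can then view the product as the two-step iteration $\m{Q}_1 \ast \check{\m{Q}}_2$.

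First I would apply Lemma \ref{cardforc}, which gives $|\m{Q}_1| \leq |\beta|^\plus$. Since by hypothesis $|\beta|^\plus < \alpha_\eta$, the forcing $\m{Q}_1$ has strictly fewer than $\alpha_\eta$ conditions, so it trivially satisfies the $\alpha_\eta$-chain condition. Hence $\m{Q}_1$ preserves all cardinals $\geq \alpha_\eta$. Moreover, as $|\m{Q}_1| < \alpha_\eta \leq \kappa_\eta^{\plus\plus}$ need not hold in general, cardinals below $\alpha_\eta$ may be affected, but that is irrelevant for the claim.

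Next I would handle $\m{Q}_2$ by the factorization argument of Lemma \ref{prescard1} and Lemma \ref{prescard2anfang}. For any cardinal $\lambda \geq \alpha_\eta$, one can fit $\lambda$ into a suitable interval $[\kappa_{\nu,j},\kappa_{\nu,j+1})$ (or equal to a regular $\kappa_{\nu,j}$) with $\kappa_{\nu,j} \geq \kappa_\eta$, and factor $\m{Q}_2$ as a small lower part $\m{Q}_2\, \uhr\, \kappa_{\nu,j}$ (of cardinality $\leq \kappa_{\nu,j}^\plus \leq \lambda$ in $V$, by $GCH$) times a $\leq \lambda$-closed upper part $\m{Q}_2\, \uhr\, [\kappa_{\nu,j}, \kappa_\gamma)$; the standard computation $(2^\lambda)^{V[H]} \leq (\lambda^\plus)^V$ then shows $\m{Q}_2$ preserves $\lambda$ as a cardinal. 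Since this works for every $\lambda \geq \alpha_\eta$, in fact $\m{Q}_2$ preserves all $V$-cardinals $\geq \alpha_\eta$ (and actually all cardinals, by the same reasoning as in Lemma \ref{prescard2anfang}).

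To finish, I would combine the two via the iteration $\m{Q}_1 \ast \check{\m{Q}}_2$: forcing first with $\m{Q}_1$ preserves cardinals $\geq \alpha_\eta$ by the cardinality bound. In $V[G_1]$, the forcing $\m{Q}_2$ retains the same closure properties (closure being absolute downward from $V$ to $V[G_1]$) and the same underlying set of conditions, so the factorization argument from the previous paragraph, now carried out over $V[G_1]$, still shows $(2^\lambda)^{V[G_1][G_2]} \leq (\lambda^\plus)^{V[G_1]}$ for $\lambda \geq \alpha_\eta$, and hence $\m{Q}_2$ preserves cardinals $\geq \alpha_\eta$ over $V[G_1]$. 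The only delicate step is verifying that $GCH$ survives the first forcing step sufficiently to fuel the second factorization — but since $|\m{Q}_1| \leq |\beta|^\plus < \alpha_\eta$, the standard nice-names count gives $(2^\lambda)^{V[G_1]} \leq (\lambda^\plus)^V$ for all $\lambda \geq |\beta|^\plus$, which is precisely what is needed to push the closure argument through. Composing the two preservation results, the product preserves all cardinals $\geq \alpha_\eta$.
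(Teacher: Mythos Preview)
Your decomposition into the small factor $\m{Q}_1$ and the Easton-like factor $\m{Q}_2$ is exactly the paper's approach, but you iterate in the opposite order from the paper, and this creates a genuine gap in your third paragraph. The assertion that closure of $\m{Q}_2$ is ``absolute downward from $V$ to $V[G_1]$'' is not correct: $V[G_1] \supseteq V$, and $\leq\!\kappa$-closure of a poset can fail in a generic extension because new descending sequences appear. Concretely, the factorization argument for $\m{Q}_2$ needs that a union of $<\!\kappa_{\mu,l}$-many bounded subsets of a $V$-regular $\kappa_{\mu,l}$ remains bounded, and this can fail in $V[G_1]$ once $\m{Q}_1$ has collapsed the cofinality of some $\kappa_{\mu,l}$ in the interval $(\kappa_\eta, |\beta|^{\plus})$ --- which you have no way to rule out, since $\m{Q}_1$ is only countably closed. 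Your observation that $GCH$ survives at levels $\geq |\beta|^\plus$ is correct but does not address this; the delicate step is cofinality preservation below $|\beta|^{\plus\plus}$, not $GCH$ above it.

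The paper sidesteps the issue entirely by reversing the order: force first with $\m{Q}_2$ over $V$, where full $GCH$ is available and the Lemma~\ref{prescard1} argument shows $\m{Q}_2$ preserves \emph{all} cardinals; then force with $\m{Q}_1$ over $V[G_2]$, noting that $|\m{Q}_1| \leq |\beta|^\plus$ still holds there (since $\m{Q}_2$ preserved cardinals), so $\m{Q}_1$ has the $|\beta|^{\plus\plus}$-c.c.\ and preserves cardinals $\geq |\beta|^{\plus\plus}$. Since the two factors commute as a product, this yields the result with no closure-in-the-extension worries at all.
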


%\colorbox{red}{ACHTUNG - die Referenz auf \label{prescard2} müsste EVTL überprüft werden!}

\begin{proof} With the same arguments as in Lemma \ref{prescard1}, one can show that the forcing $\prod_{m < \omega} P^{\eta_m}\, \uhr \, [\kappa_\eta, \kappa_{\eta_m})$ preserves all cardinals. By Lemma \ref{cardforc} above, the forcing $(\m{P}^\beta\, \uhr\, (\eta + 1))^{( \eta_m, i_m)_{m < \omega}}$ has cardinality $\leq |\beta|^\plus$ (in $V$; and hence, also in any $\prod_{m < \omega} P^{\eta_m}\, \uhr\, [\kappa_\eta, \kappa_{\eta_m})$-generic extension). It follows that the product $(\m{P}^\beta\, \uhr\, (\eta + 1))^{( \eta_m, i_m)_{m < \omega}}\, \times\, \prod_{m < \omega} P^{\eta_m}\, \uhr\, [\kappa_\eta, \kappa_{\eta_m})$ preserves all cardinals $\geq |\beta|^{\plus \plus}$. \end{proof} 

It remains to consider the case that $|\beta|^\plus = \alpha_\eta$. Then by our assumptions on the sequence $(\alpha_\eta\ | \ 0 < \eta < \gamma)$ (cf. Chapter 2), it follows that $cf \,|\beta| > \omega$. Hence, $GCH$ gives $|\beta|^{\aleph_0} = |\beta| < \alpha_\eta$; and by our proof of Lemma \ref{cardforc}, it follows that all components of $(\m{P}^\beta\, \uhr\, (\eta + 1))^{( \eta_m, i_m)_{m < \omega}}$ have cardinality $\leq |\beta| < \alpha_\eta$; with the exception of $(p^\sigma\, \uhr\, (\beta\, \times\, \dom_y p^\sigma))_{\sigma \leq \eta}$, where there might be $(2^{|\beta|\, \cdot\, \kappa_\eta})^{\aleph_0} = |\beta|^\plus = \alpha_\eta$-many possibilities. \\[-2mm] 

We now have to distinguish several cases depending on whether $\eta$ is a limit ordinal or not, and depending on whether $\kappa_\eta$ is a limit cardinal or a successor cardinal (i.e. $\eta \in Lim$ or $\eta \in Succ$).

%\in Lim$ or $\kappa_\eta \in Succ$ and depending on whether 
We will have to separate one or two components $P^{\sigma}\, \uhr \, (\beta\, \times\, [\ol{\kappa_{\sigma}}, \kappa_{\sigma}))$, where $\sigma \in Succ$, $\sigma \leq \eta$, $\kappa_{\sigma} = \ol{\kappa_{\sigma}}^{\, \plus}$,
%?? Die Bedingungen $p^{\ol{\sigma}}\, \uhr\, (\beta\, \times\, \dom_y p^{\ol{\sigma}})$ erwähnen?? 
from the forcing notion $(\m{P}^\beta\, \uhr\, (\eta + 1))^{( \eta_m, i_m)_{m < \omega}}$; and obtain a forcing $\big((\m{P}^\beta\, \uhr\, (\eta + 1))^{( \eta_m, i_m)_{m < \omega}}\big)^\prime$ which has cardinality $< \alpha_\eta$, while the product of the remaining $P^{\sigma}\, \uhr \, (\beta\, \times\, [\ol{\kappa_{\sigma}}, \kappa_{\sigma}))$ and $\prod_{m < \omega} P^{\eta_m}\, \uhr\, [\kappa_\eta, \kappa_{\eta_m})$ preserves cardinals.

\begin{prop} \label{prescardalphaeta} The forcing notion $(\m{P}^\beta\, \uhr \, (\eta + 1))^{( \eta_m, i_m)_{m < \omega}}\, \times\, \prod_{m < \omega} P^{\eta_m}\, \uhr\, [\kappa_\eta, \kappa_{\eta_m})$ preserves all cardinals $\geq \alpha_\eta$.\end{prop}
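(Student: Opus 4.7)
The case $|\beta|^\plus < \alpha_\eta$ is already handled by Corollary \ref{prescard2mitte}, so I only need to treat $|\beta|^\plus = \alpha_\eta$. In this case our standing assumption on the sequence $(\alpha_\eta)$ (Chapter \ref{the theorem}) forces $\cf |\beta| > \omega$, and $GCH$ in $V$ then yields $|\beta|^{\aleph_0} = |\beta|$. Inspecting the proof of Lemma \ref{cardforc}, the only block whose contribution to the total cardinality can jump to $|\beta|^\plus$ is the countable-support product $\big(p^\sigma \uhr (\beta\, \times\, \dom_y p^\sigma)\big)_{\sigma \leq \eta,\, \sigma \in \Succ}$ coming from the Cohen factors $C^\sigma \uhr (\beta\, \times\, [\ol{\kappa_\sigma}, \kappa_\sigma))$; every other coordinate carries at most $|\beta|$ many conditions.

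My plan is to split off one or two critical coordinates $C^{\sigma^\ast} \uhr (\beta\, \times\, [\ol{\kappa_{\sigma^\ast}}, \kappa_{\sigma^\ast}))$ from this product, so that the remaining part becomes small. Write $\big((\m{P}^\beta\, \uhr\, (\eta + 1))^{(\eta_m, i_m)_{m < \omega}}\big)'$ for the forcing obtained by deleting these factors. If $\eta \in \Succ$, I will take $\sigma^\ast := \eta$, the unique coordinate with $|C^{\sigma^\ast} \uhr (\beta\, \times\, [\ol{\kappa_\eta}, \kappa_\eta))| = |\beta|^\plus$. If $\eta \in \Lim$, I pick $\sigma^\ast \leq \eta$, $\sigma^\ast \in \Succ$, with $\ol{\kappa_{\sigma^\ast}} \geq \cf |\beta|$; depending on where $\cf|\beta|$ sits relative to the sequence $(\kappa_\sigma)_{\sigma \leq \eta,\, \sigma \in \Succ}$, a second coordinate $\sigma^{\ast\ast}$ may be needed to absorb the contribution of cardinals just below $\cf|\beta|$. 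A direct cardinal count using $GCH$, $|\beta|^{<\cf|\beta|} = |\beta|$, and the countable support will then show $\big|\big((\m{P}^\beta\, \uhr\, (\eta + 1))^{(\eta_m, i_m)_{m < \omega}}\big)'\big| \leq |\beta| < \alpha_\eta$, so this forcing preserves all cardinals $\geq \alpha_\eta$.

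It remains to check that forcing with the extracted factors over the generic extension by $\big((\m{P}^\beta\, \uhr\, (\eta + 1))^{(\eta_m, i_m)_{m < \omega}}\big)'$ also preserves cardinals $\geq \alpha_\eta$. Each extracted $C^{\sigma^\ast} \uhr (\beta\, \times\, [\ol{\kappa_{\sigma^\ast}}, \kappa_{\sigma^\ast}))$ is $< \kappa_{\sigma^\ast}$-closed, and since the small prior forcing still leaves $2^{\ol{\kappa_{\sigma^\ast}}} = \kappa_{\sigma^\ast}$ in the intermediate model, a standard $\Delta$-system argument yields the $\kappa_{\sigma^\ast}^\plus$-c.c.~there, so all cardinals are preserved. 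The factoring technique of Lemma \ref{prescard1} handles $\prod_{m < \omega} P^{\eta_m}\, \uhr\, [\kappa_\eta, \kappa_{\eta_m})$ analogously: it is $\leq \kappa_\eta$-closed and $\kappa_\eta^\plus$-c.c.~in the relevant intermediate model (using the gaps $\kappa_{\nu, j+1} \geq \kappa_{\nu, j}^{\plus \plus}$). Composing the three stages will then give the desired cardinal preservation.

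The principal obstacle is the bookkeeping in the case $\eta \in \Lim$ with $\cf|\beta| \leq \kappa_\eta$: one has to pin down precisely which (at most two) coordinates must be extracted as a function of the position of $\cf|\beta|$ within the sequence $(\kappa_\sigma)_{\sigma \leq \eta,\, \sigma \in \Succ}$, and verify in each subcase that $|C^\sigma \uhr (\beta\, \times\, [\ol{\kappa_\sigma}, \kappa_\sigma))| \leq |\beta|$ for every remaining $\sigma$. Once this case analysis is sorted out, the $< \kappa_{\sigma^\ast}$-closure, the $\Delta$-system argument for the c.c., and the closure of $\prod_{m < \omega} P^{\eta_m}\, \uhr\, [\kappa_\eta, \kappa_{\eta_m})$ are all routine adaptations of the arguments already carried out in Lemma \ref{prescard1} and Lemma \ref{prescard2anfang}.
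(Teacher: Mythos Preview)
Your approach has a genuine gap: you have missed the key structural fact that drives the paper's argument, namely that the sequence $(\alpha_\sigma\,|\,0<\sigma<\gamma)$ was arranged to be \emph{strictly increasing} (cf.\ Chapter~\ref{the theorem}). This means that for every $\sigma<\eta$ one has $\alpha_\sigma<\alpha_\eta=|\beta|^\plus$, and whenever there is some $\sigma'$ with $\sigma<\sigma'<\eta$ one even gets $\alpha_\sigma<|\beta|$. Since $\dom_x p^\sigma\subseteq\alpha_\sigma$, the factor $C^\sigma\uhr(\beta\times[\ol{\kappa_\sigma},\kappa_\sigma))$ coincides with $C^\sigma\uhr(\alpha_\sigma\times[\ol{\kappa_\sigma},\kappa_\sigma))$ and has cardinality $\leq\alpha_\sigma^\plus\leq|\beta|$. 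So the correct case split is on the \emph{ordinal} structure of $\eta$: if $\eta$ is a limit ordinal (then necessarily $\eta\in\Lim$) every $\sigma\in\Succ$ with $\sigma\leq\eta$ already satisfies $\alpha_\sigma<|\beta|$ and \emph{no} factor needs to be extracted at all; if $\eta=\ol{\eta}+1$, only $P^\eta$ (if $\eta\in\Succ$) and/or $P^{\ol{\eta}}$ (if $\ol{\eta}\in\Succ$) can be large and must be split off. Your criterion based on $\cf|\beta|$ and on where it sits among the $\kappa_\sigma$'s is simply the wrong invariant: if $\cf|\beta|$ is small (say $\omega_1$), your condition $\ol{\kappa_{\sigma^\ast}}\geq\cf|\beta|$ would flag essentially every $\sigma\in\Succ$, and extracting one or two of them does nothing for the rest. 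Relatedly, your claim that for $\eta\in\Succ$ the coordinate $\sigma^\ast=\eta$ is the \emph{unique} one of size $|\beta|^\plus$ is not justified; when $\ol{\eta}\in\Succ$ and $\alpha_{\ol{\eta}}=|\beta|$, the factor $P^{\ol{\eta}}$ may also have size $|\beta|^\plus$, which is why the paper extracts both.

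A second problem is your order of forcing. You propose to force first with the small remainder $\big((\m{P}^\beta\uhr(\eta+1))^{(\eta_m,i_m)_m}\big)'$ and then argue that the extracted $C^{\sigma^\ast}$ still has the $\kappa_{\sigma^\ast}^\plus$-c.c.\ because ``the small prior forcing still leaves $2^{\ol{\kappa_{\sigma^\ast}}}=\kappa_{\sigma^\ast}$''. This is false: the ``small'' forcing has cardinality $|\beta|\geq\kappa_\eta^\plus>\kappa_{\sigma^\ast}$, so after forcing with it one may well have $2^{\ol{\kappa_{\sigma^\ast}}}$ as large as $|\beta|^\plus$, and the $\Delta$-system argument for the chain condition of $C^{\sigma^\ast}$ collapses. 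The paper factors in the \emph{opposite} order: first the $\leq\kappa_\eta$-closed product $\prod_m P^{\eta_m}\uhr[\kappa_\eta,\kappa_{\eta_m})$ (which leaves $2^{<\kappa_\eta}=\kappa_\eta$ intact), then the extracted Cohen factor(s), and only at the end the genuinely small piece of cardinality $\leq|\beta|$.
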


\begin{proof} By Corollary \ref{prescard2mitte}, we only have to treat the case that $\alpha_\eta = |\beta|^\plus$. Then $\cf |\beta| > \omega$ and $|\beta|^{\aleph_0} = |\beta|$.

First, we assume that \textbf{ ${\boldsymbol \eta}$ is a limit ordinal}. Then by closure of the sequence $(\kappa_\sigma\ | \ 0 < \sigma < \gamma)$, it follows that $\boldsymbol{\eta \in \Lim}$, i.e. $\kappa_\eta = \sup \{\kappa_\sigma\ | \ 0 < \sigma < \eta\}$ is a limit cardinal. \\[-3mm]

%\colorbox{red}{ACHTUNG - STIMMT DAS ÜBERHAUPT?}

Since the sequence $(\alpha_\sigma \ | \ 0 < \sigma < \gamma)$ is strictly increasing (cf. Chapter \ref{the theorem}), it follows that $\alpha_\sigma < |\beta|$ for all $\sigma < \eta$. Hence, for any $\sigma \in Succ$ with $\sigma < \eta$, the forcing notion $P^\sigma\, \uhr \, (\beta\, \times\, [\ol{\kappa_\sigma}, \kappa_\sigma)) = P^\sigma\, \uhr \, (\alpha_\sigma \, \times\, [\ol{\kappa_\sigma}, \kappa_\sigma))$ has cardinality $\leq \alpha_\sigma^\plus \leq |\beta|$; and we conclude that there are only $\leq|\eta|^{\aleph_0}\, \cdot |\beta|^{\aleph_0} = |\beta|$-many possibilities for $(p^\sigma\, \uhr\, (\beta\, \times\, \dom_y p^\sigma))_{\sigma \leq \eta}$.

%\begin{itemize} \item 
%If $\boldsymbol{\eta \in Lim}$, i.e. $\kappa_\eta$ is a limit cardinal, it follows that there are only $\leq|\eta|^{\aleph_0}\, \cdot |\beta|^{\aleph_0} = |\beta|$-many possibilities for $(p^\sigma\, \uhr\, (\beta\, \times\, \dom_y p^\sigma))_{\sigma \leq \eta}$.

%the countable-support-product $ \prod_{\sigma \leq \eta} P^\sigma\, \uhr\, (\beta\, \times\, [\ol{\kappa_\sigma}, \kappa_\sigma)) = \prod_{\sigma < \eta} P^\sigma\, \uhr\, (\beta\, \times\, [\ol{\kappa_\sigma}, \kappa_\sigma))$ has cardinality $ \leq|\eta|^{\aleph_0}\, \cdot |\beta|^{\aleph_0} = |\beta|$. 

Hence, by the proof of Lemma \ref{cardforc}, it follows that $(\m{P}^\beta\, \uhr\, (\eta + 1))^{( \eta_m, i_m)_{m < \omega}}$ has cardinality $\leq |\beta| < \alpha_\eta$.
% since there are only $ \leq|\beta|$-many possibilities for $(p^\sigma\, \uhr\, (\beta\, \times\, \dom_y p^\sigma))_{\sigma \leq \eta}$. 
Like in Corollary \ref{prescard2mitte}, this implies that the product $(\m{P}^\beta\, \uhr\, (\eta + 1))^{( \eta_m, i_m)_{m < \omega}}\, \times\, \prod_{m < \omega} P^{\eta_m}\, \uhr\, [\kappa_\eta, \kappa_{\eta_m})$  preserves all cardinals $\geq |\beta|^\plus = \alpha_\eta$ as desired. \\[-2mm]

The remaining case is that $\boldsymbol{\eta}$ \textbf{ is a successor ordinal}. Let $\boldsymbol{\eta = \ol{\eta} + 1}$. We now have to distinguish four cases, depending on whether $\kappa_\eta$ and $\kappa_{\ol{\eta}}$ are successor cardinals or limit cardinals. \\[-2mm]

If $\boldsymbol{\eta \in Lim} \textbf{ and } \boldsymbol{\ol{\eta} \in Lim}$, it follows for any $P^\sigma\, \uhr\, (\beta\, \times\, [\ol{\kappa_\sigma}, \kappa_\sigma))$ with $\sigma \leq \eta$, $\sigma \in Succ$ that $\sigma < \ol{\eta}$ must hold; hence, $\alpha_\sigma < \alpha_{\ol{\eta}} < \alpha_\eta = |\beta|^\plus$, which implies $\alpha_\sigma < |\beta|$. Thus, the corresponding forcing notion $P^\sigma\, \uhr\, (\beta\, \times\, [\ol{\kappa_\sigma}, \kappa_\sigma)) = P^\sigma\, \uhr\, (\alpha_\sigma\, \times\, [\ol{\kappa_\sigma}, \kappa_\sigma))$ has cardinality $\leq \alpha_\sigma^\plus \leq |\beta|$; and as before, it follows that the  forcing $(\m{P}^\beta\, \uhr\, (\eta + 1))^{( \eta_m, i_m)_{m < \omega}}$ has cardinality $\leq |\beta|^{\aleph_0} = |\beta|$. Like in Corollary \ref{prescard2mitte}, this implies that the product $(\m{P}^\beta\, \uhr\, (\eta + 1))^{( \eta_m, i_m)_{m < \omega}}\, \times\, \prod_{m < \omega} P^{\eta_m}\, \uhr\, [\kappa_\eta, \kappa_{\eta_m})$  preserves all cardinals $\geq |\beta|^\plus = \alpha_\eta$ as desired. \\[-2mm]

If $\boldsymbol{\ol{\eta} \in Lim} \textbf{ and } \boldsymbol{\eta \in Succ}$, we consider the forcing notion $\big((\m{P}^\beta\, \uhr\, (\eta + 1))^{( \eta_m, i_m)_{m < \omega}}\big)^\prime$, which is obtained from $(\m{P}^\beta\, \uhr\, (\eta + 1))^{( \eta_m, i_m)_{m < \omega}}$ by excluding $P^\eta\, \uhr \, (\beta\, \times\, [\ol{\kappa_\eta}, \kappa_\eta))$; i.e. we consider \[(p^\sigma\, \uhr\, (\beta\, \times\, \dom_y p^\sigma))_{\sigma < \eta}= (p^\sigma\, \uhr\, (\beta\, \times\, \dom_y p^\sigma))_{\sigma < \ol{\eta}}\]

%the product \[\,\prod_{\substack{\sigma < \eta \\ \sigma \in \Succ}} P^\sigma\, \uhr\, (\beta\, \times\, [\ol{\kappa_\sigma}, \kappa_\sigma)) = \prod_{\substack{\sigma < \ol{\eta} \\ \sigma \in \Succ}} P^\sigma\, \uhr\, (\beta\, \times\, [\ol{\kappa_\sigma}, \kappa_\sigma))\] 

instead of $(p^\sigma\, \uhr\, (\beta\, \times\, \dom_y p^\sigma))_{\sigma \leq \eta}$.
%$\,\prod_{\sigma \leq \eta} P^\sigma\, \uhr\, (\beta\, \times\, [\ol{\kappa_\sigma}, \kappa_\sigma))$. 
Then $\big((\m{P}^\beta\, \uhr\, (\eta + 1))^{( \eta_m, i_m)_{m < \omega}}\big)^\prime$ has cardinality $\leq |\beta|$ as before; and it suffices to check that the remaining product \[P^\eta\, \uhr\, (\beta\, \times\, [\ol{\kappa_\eta}, \kappa_\eta))\; \times\; \prod_{m < \omega} P^{\eta_m}\, \uhr\, [\kappa_\eta, \kappa_{\eta_m})\] preserves all cardinals. 

%\colorbox{red}{ACHTUNG - man betrachtet wohl Intervalle $[\ol{\kappa}_\sigma, \kappa_\sigma)$ mit $\ol{\kappa}_\sigma$ Kardinalzahl-Vorgänger von $\kappa_\sigma$?}

%\colorbox{red}{Das heißt, es gibt \tbl Lücken\tbr ?} 

%\colorbox{red}{ACHTUNG - das stimmt so nicht!}

The forcing notion $\prod_{m < \omega} P^{\eta_m}\, \uhr\, [\kappa_\eta, \kappa_{\eta_m})$ preserves cardinals. Moreover, $\prod_{m < \omega} P^{\eta_m}\, \uhr\, [\kappa_\eta, \kappa_{\eta_m})$ is $\leq \kappa_\eta$-closed. Hence, in any $V$-generic extension by $\prod_{m < \omega} P^{\eta_m}\, \uhr\, [\kappa_\eta, \kappa_{\eta_m})$ the following holds: Firstly, $P^\eta\, \uhr\, (\beta\, \times\, [\ol{\kappa_\eta}, \kappa_\eta))$ is the same forcing notion as in $V$; and secondly, $P^\eta\, \uhr\, (\beta\, \times\, [\ol{\kappa_\eta}, \kappa_\eta))$ preserves cardinals, since $2^{< \kappa_\eta} = \kappa_\eta$. Thus, it follows that the product $P^\eta\, \uhr\, (\beta\, \times\, [\ol{\kappa_\eta}, \kappa_\eta))\; \times\; \prod_{m < \omega} P^{\eta_m}\, \uhr\, [\kappa_\eta, \kappa_{\eta_m})$ preserves all cardinals as desired. \\[-2mm]

If $\boldsymbol{\ol{\eta} \in \Succ} \textbf{ and } \boldsymbol{\eta \in \Lim}$, we proceed similarly, but exclude $P^{\ol{\eta}}\, \uhr \, (\beta\, \times\, [\ol{\kappa_{\ol{\eta}}}, \kappa_{\ol{\eta}}))$ instead of $P^\eta\, \uhr \, (\beta\, \times\, [\ol{\kappa_\eta}, \kappa_\eta))$. \\[-2mm]

If $\boldsymbol{\eta \in Succ}$ \textbf{ and } $\boldsymbol{\ol{\eta} \in Succ}$, then both $P^\eta\, \uhr\, (\beta\, \times\, [\ol{\kappa_\eta}, \kappa_\eta))$ and $P^{\ol{\eta}}\, \uhr\, (\beta\, \times\, [\ol{\kappa_{\ol{\eta}}}, \kappa_{\ol{\eta}}))$ have to be parted from $(\m{P}^\beta\, \uhr\, (\eta + 1))^{( \eta_m, i_m)_{m < \omega}}$. As before, it follows that firstly, the remaining forcing notion, denoted by $\big((\m{P}^\beta\, \uhr\, (\eta + 1))^{( \eta_m, i_m)_{m < \omega}}\big)^{\prime \prime}$, has cardinality $\leq |\beta|$; and secondly, the remaining product \[P^{\ol{\eta}}\, \uhr\, (\beta\, \times\, [\ol{\kappa_{\ol{\eta}}}, \kappa_{\ol{\eta}}))\, \times \, P^\eta\, \uhr\, (\beta\, \times\, [\ol{\kappa_\eta}, \kappa_\eta))\, \times\, \prod_{m < \omega} P^{\eta_m} \, \uhr\, [\kappa_\eta, \kappa_{\eta_m})\]

%of $\prod_{m < \omega} P^{\eta_m} \, \uhr\, [\kappa_\eta, \kappa_{\eta_m})$ with $P^\eta\, \uhr\, (\beta\, \times\, [\ol{\kappa_\eta}, \kappa_\eta))$ or $P^{\ol{\eta}}\, \uhr\, (\beta\, \times\, [\ol{\kappa_{\ol{\eta}}}, \kappa_{\ol{\eta}}))$  

preserves all cardinals. \\[-3mm]

%If $\eta \in Succ$ and $\ol{\eta} \in Succ$, then either $P^\eta\, \uhr\, (\beta\, \times\, [\ol{\kappa_\eta}, \kappa_\eta))$ or $P^{\ol{\eta}}\, \uhr\, (\beta\, \times\, [\ol{\kappa_{\ol{\eta}}}, \kappa_{\ol{\eta}}))$ or both have to be parted from $(\m{P}^\beta\, \uhr\, (\eta + 1))^{( \eta_m, i_m)_{m < \omega}}$. Then as before, it follows that the remaining forcing notion, denoted by $((\m{P}^\beta\, \uhr\, (\eta + 1))^\prime)^{( \eta_m, i_m)_{m < \omega}}$ again, has cardinality $\leq |\beta|$; and the remaining product of $\prod_{m < \omega} P^{\eta_m} \, \uhr\, [\kappa_\eta, \kappa_{\eta_m})$ with $P^\eta\, \uhr\, (\beta\, \times\, [\ol{\kappa_\eta}, \kappa_\eta))$ or $P^{\ol{\eta}}\, \uhr\, (\beta\, \times\, [\ol{\kappa_{\ol{\eta}}}, \kappa_{\ol{\eta}}))$ or both, preserves cardinals. \\[-3mm]

It follows that $(\m{P}^\beta\, \uhr\, (\eta + 1))^{( \eta_m, i_m)_{m < \omega}}\, \times\, \prod_{m < \omega} P^{\eta_m}\, \uhr\, [\kappa_\eta, \kappa_{\eta_m})$ preserves all cardinals $\geq \alpha_\eta$. \\[-2mm]

This concludes our proof by cases.

%FRAGE: Abkürzung für $P^\eta\, \uhr\, (\beta\, \times\, [\ol{\kappa}_\eta, \kappa_\eta))$ einführen? vll. $\m{P}^\beta (\eta)$?

\end{proof}

%\colorbox{red}{ACHTUNG - wo wurde vergessen, dass die Kardinalzahlen überabzählbar sein müssen???}

\subsubsection*{\textbf D) A set $\boldsymbol{\wt{\powerset}(\kappa_\eta) \supseteq \dom f^\beta}$ with an injection $\boldsymbol{\iota: \wt{\powerset} (\kappa_\eta) \hookrightarrow |\beta|^{\aleph_0}}$.}
%$\wt{\powerset}(\kappa_\eta) \supseteq \dom f^\beta$ with a
%\colorbox{red}{ACHTUNG - es ist NICHT $\wt{\powerset}(\kappa_\eta) \supseteq \powerset^N (\kappa_\eta)$!}

In this section, we construct in $V[(G^\beta\, \uhr\, (\eta + 1))^{( \eta_m, i_m)_{m < \omega}}\, \times \, \prod_{m < \omega} G^{\eta_m}_{i_m}\, \uhr\, [\kappa_\eta, \kappa_{\eta_m})]$ a set $\wt{\powerset}(\kappa_\eta)$ with $\wt{\powerset}(\kappa_\eta) \supseteq \dom f^\beta$, together with an injective function $\iota: \wt{\powerset} (\kappa_\eta) \hookrightarrow (|\beta|^{\aleph_0})^V < \alpha_\eta$. Since $f^\beta$ is contained in $V[(G^\beta\, \uhr\, (\eta + 1))^{( \eta_m, i_m)_{m < \omega}}\, \times \, \prod_{m < \omega} G^{\eta_m}_{i_m}\, \uhr\, [\kappa_\eta, \kappa_{\eta_m})]$ by Definition \ref{deffbetaprime} and Proposition \ref{fbetafbetaprime}, and $(\m{P}^\beta\, \uhr\, (\eta + 1))^{( \eta_m, i_m)_{m < \omega}}\, \times\, \prod_{m < \omega} P^{\eta_m}\, \uhr\, [\kappa_\eta, \kappa_{\eta_m})$ preserves cardinals $\geq \alpha_\eta$ by Proposition \ref{prescardalphaeta}, this will contradict our initial assumption that $f^\beta: \dom f^\beta \rightarrow \alpha_\eta$ was surjective. \\[-1mm]

Fix an $\eta$-good pair $ \varrho = \big( (a_m)_{m < \omega}, (\ol{\sigma}_m, \ol{i}_m)_{m < \omega}\big)$. Then $\prod_m G_\ast (a_m)\, \times\, \prod_m G^{\ol{\sigma}_m}_{\ol{i}_m}$ is a $V$-generic filter on $\prod_m \ol{P}^\eta\, \times\, \prod_m P^{\ol{\sigma}_m}$; and as in Lemma \ref{prescard1}, it follows that this forcing preserves cardinals and the $GCH$. 
%Frage: Kam das schon irgendwo vor? 
Hence, there is an injection $\chi: \powerset (\kappa_\eta) \hookrightarrow (\kappa_\eta^\plus)^V$ in $V[ \prod_m G_\ast (a_m)\, \times\, \prod_m G^{\ol{\sigma}_m}_{\ol{i}_m}]$. \\[-2mm]

Let $M_\beta$ be the set of all $\eta$-good pairs $ \big( (a_m)_{m < \omega}, (\ol{\sigma}_m, \ol{i}_m)_{m < \omega}\big)$ in $V$ with the property that $\ol{i}_m < \beta$ for all $m < \omega$. Then $M_\beta$ has cardinality $\leq (2^{\kappa_\eta})^{\aleph_0}\, \cdot\, |\eta|^{\aleph_0}\, \cdot \, |\beta|^{\aleph_0} \leq |\beta|^{\aleph_0}$. \\[-3mm]
%Oder $M$ nennen? \\[-3mm]

First, we consider the case that $\boldsymbol{|\beta|^\plus = \alpha_\eta}$. Then $cf\,|\beta| > \omega$; hence, $GCH$ gives $|\beta|^{\aleph_0} = |\beta|$ and there is an injection $\psi: M _\beta\hookrightarrow |\beta|$ in $V$.\\[-2mm] 

%\colorbox{red}{ACHTUNG - $m$ ist doch sonst der Laufindex!}

%From Proposition \ref{} Nachsehen! Korollar von Approximation Lemma, it follows that any $X \subseteq \kappa_\eta$, $X \in N$ 

By construction of $f^\beta$ (cf. Definition \ref{deffbeta}), it follows that any $X \subseteq \kappa_\eta$ with $X \in dom \,f^\beta$ is contained in a model $V[\prod_m G_\ast (a_m)\, \times\, \prod_m G^{\ol{\sigma}_m}_{\ol{i}_m}]$ for some $\eta$-good pair $\big( (a_m)_{m < \omega}, (\ol{\sigma}_m, \ol{i}_m)_{m < \omega}\big) \in M_\beta$. Hence, $dom \,f^\beta$ is a subset of \[\wt{\powerset} (\kappa_\eta) \ := \ \bigcup \Big \{\, \powerset(\kappa_\eta)\, \cap\, V[\prod_m G_\ast (a_m)\, \times\, \prod_m G^{\ol{\sigma}_m}_{\ol{i}_m}]\ \,\Big| \, \ \big((a_m)_{m < \omega}, (\ol{\sigma}_m, \ol{i}_m)_{m < \omega}\big) \in M_\beta\, \Big\}.\] 

The set $\wt{\powerset}(\kappa_\eta)$ can be defined in $V[(G^\beta\, \uhr\, (\eta + 1))^{( \eta_m, i_m)_{m < \omega}}\, \times\, G^{\eta_m}_{i_m}\, \uhr\, [\kappa_\eta, \kappa_{\eta_m})]$, since for any $\big( (a_m)_{m < \omega}, (\ol{\sigma}_m, \ol{i}_m)_{m < \omega}\big) \in M_\beta$, we have $a_m \subseteq \kappa_\eta$, and $\ol{\sigma}_m \leq \eta$, $\ol{i}_m < \beta$ for all $m < \omega$. \\[-3mm]

For the rest of this section, we work in $V[(G^\beta\, \uhr\, (\eta + 1))^{( \eta_m, i_m)_{m < \omega}}\, \times \, \prod_{m < \omega} G^{\eta_m}_{i_m}\, \uhr\, [\kappa_\eta, \kappa_{\eta_m})]$, and construct there an injective function $\iota: \wt{\powerset} (\kappa_\eta) \hookrightarrow |\beta|^V$. \\[-2mm]

For a set $X \in \wt{\powerset} (\kappa_\eta)$, let \[\wt{\iota} (X) := ( (a_m)_{m < \omega}, (\ol{\sigma}_m, \ol{i}_m)_{m < \omega})\]
if $( (a_m)_{m < \omega}, (\ol{\sigma}_m, \ol{i}_m)_{m < \omega}) \in M_\beta$ with $X \in \powerset(\kappa_\eta)\, \cap\, V[\prod_m G_\ast (a_m)\, \times\, \prod_m G^{\ol{\sigma}_m}_{\ol{i}_m}]$, and $\psi((a_m)_{m < \omega}, (\ol{\sigma}_m, \ol{i}_m)_{m < \omega})$ is least with this property. 

Now, we use the Axiom of Choice in $V[(G^\beta\, \uhr\, (\eta + 1))^{( \eta_m, i_m)_{m < \omega}}\, \times \, \prod_{m < \omega} G^{\eta_m}_{i_m}\, \uhr\, [\kappa_\eta, \kappa_{\eta_m})]$, and choose for all $( (a_m)_{m < \omega}, (\ol{\sigma}_m, \ol{i}_m)_{m < \omega}) \in M_\beta$ an injection \[\chi_{( (a_m)_{m < \omega}, (\ol{\sigma}_m, \ol{i}_m)_{m < \omega})}\ :  \ \Big(\, \powerset(\kappa_\eta)\, \cap\, V[\prod_m G_\ast (a_m)\, \times\, \prod_m G^{\ol{\sigma}_m}_{\ol{i}_m}]\, \Big) \hookrightarrow (\kappa_\eta^\plus)^V. \]

Now, we can define $\iota: \wt{\powerset} (\kappa_\eta) \hookrightarrow (\kappa_\eta^\plus)^V\, \cdot |\beta|^V$ as follows: For $X \in \wt{\powerset} (\kappa_\eta)$, let \[\iota (X) := \big(\,\chi_{\wt{\iota} (X)} (X), \psi (\wt{\iota} (X))\,\big).\] Since $\psi$ and the maps $\chi_{( (a_m)_{m < \omega}, (\ol{\sigma}_m, \ol{i}_m)_{m < \omega})}$ for $( (a_m)_{m < \omega}, (\ol{\sigma}_m, \ol{i}_m)_{m < \omega}) \in M_\beta$ are injective, it follows that also $\iota$ is injective; which finishes our construction in the case that $(|\beta|^\plus)^V = \alpha_\eta$. \\[-2mm]

If $\boldsymbol{|\beta|^\plus < \alpha_\eta}$ in $V$, we can take an injection $\psi: M_\beta \hookrightarrow (|\beta|^\plus)^V$, and construct an injective function $\iota: \wt{\powerset} (\kappa_\eta) \hookrightarrow (\kappa_\eta^\plus)^V\, \cdot\, (|\beta|^\plus)^V$ in $V[(G^\beta\, \uhr\, (\eta + 1))^{( \eta_m, i_m)_{m < \omega}}\, \times \, \prod_{m < \omega} G^{\eta_m}_{i_m}\, \uhr\, [\kappa_\eta, \kappa_{\eta_m})]$ similarly as before. \\[-3mm]

%\colorbox{red}{ACHTUNG - $\gamma$ für die Injektion ist SCHLECHT!} \\[-2mm]

%\colorbox{red}{ACHTUNG - die Klammerung bei den \tbl good pairs\tbr\, ist NICHT RICHTIG!}

%\colorbox{red}{TO DO: die Notation bei \tbl good pairs\tbr\, müsste man DURCHGEHEN! Wäre NICHT EINHEITLICH!!}

This gives the following proposition:

\begin{prop} \label{iota} If $(|\beta|^\plus)^V = \alpha_\eta$, then there is in $V[(G^\beta\, \uhr\, (\eta + 1))^{( \eta_m, i_m)_{m < \omega}}\, \times \, \prod_{m < \omega} G^{\eta_m}_{i_m}\, \uhr\, [\kappa_\eta, \kappa_{\eta_m})]$ an injection $\iota: \wt{\powerset} (\kappa_\eta) \hookrightarrow |\beta|^V$, where \[ \wt{\powerset} (\kappa_\eta) := \bigcup \Big \{\, \powerset(\kappa_\eta)\, \cap\, V[\prod_m G_\ast (a_m)\, \times\, \prod_m G^{\ol{\sigma}_m}_{\ol{i}_m}]\ \; \Big| \; \ ( (a_m)_{m < \omega}, (\ol{\sigma}_m, \ol{i}_m)_{m < \omega}) \in M_\beta\, \Big\}.\] 
%is a superset of $\dom f^\beta$.
If $(|\beta|^\plus)^V < \alpha_\eta$, there is in $V[(G^\beta\, \uhr\, (\eta + 1))^{( \eta_m, i_m)_{m < \omega}}\, \times \, \prod_{m < \omega} G^{\eta_m}_{i_m}\, \uhr\, [\kappa_\eta, \kappa_{\eta_m})]$ an injection $\iota: \wt{\powerset} (\kappa_\eta) \hookrightarrow (|\beta|^\plus)^V$. \end{prop}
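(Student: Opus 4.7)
The plan is to assemble $\iota$ from two pieces: an enumeration of the $\eta$-good pairs in $M_\beta$, and, for each such pair $\varrho$, a cardinality bound on the $\kappa_\eta$-subsets that appear in the intermediate generic extension associated with $\varrho$. Throughout, I will work in the target model $V[(G^\beta\, \uhr\, (\eta + 1))^{(\eta_m, i_m)_{m < \omega}}\, \times \, \prod_{m < \omega} G^{\eta_m}_{i_m}\, \uhr\, [\kappa_\eta, \kappa_{\eta_m})]$ (call it $V[\ast]$ for brevity), which satisfies $AC$ since it is an ordinary forcing extension of $V$, and which — crucially — contains every generic filter $\prod_m G_\ast(a_m)\,\times\,\prod_m G^{\ol{\sigma}_m}_{\ol{i}_m}$ indexed by a pair $((a_m)_m, (\ol{\sigma}_m, \ol{i}_m)_m) \in M_\beta$: the sections $G_\ast(a_m)$ with $a_m \subseteq \kappa_\eta$ are recoverable from $p_\ast\, \uhr\, \kappa_\eta^2$-information, and the factors $G^{\ol{\sigma}_m}_{\ol{i}_m}$ with $\ol{\sigma}_m \leq \eta$, $\ol{i}_m < \beta$ sit inside the part $(p^\sigma_i, a^\sigma_i)_{\sigma \leq \eta, i < \beta}$ of the condition.

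Next I would bound $|M_\beta|^V$. A pair in $M_\beta$ consists of countably many pairwise disjoint subsets of $\kappa_\eta$ of the specified shape together with countably many coordinates from $\eta \times \beta$, so $|M_\beta|^V \leq (2^{\kappa_\eta})^{\aleph_0}\cdot |\eta|^{\aleph_0}\cdot |\beta|^{\aleph_0} = |\beta|^{\aleph_0}$ by $GCH$ in $V$ together with $\beta > \kappa_\eta^\plus$. In the case $(|\beta|^\plus)^V = \alpha_\eta$, the assumption $\cf\,\alpha_\eta > \omega$ from Chapter \ref{the theorem} forces $\cf |\beta| > \omega$, and $GCH$ then gives $|\beta|^{\aleph_0} = |\beta|$; in the case $(|\beta|^\plus)^V < \alpha_\eta$, trivially $|\beta|^{\aleph_0} \leq (|\beta|^\plus)^V$. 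Hence in $V$ I can fix an injection $\psi : M_\beta \hookrightarrow \mu$, where $\mu = |\beta|^V$ in the first case and $\mu = (|\beta|^\plus)^V$ in the second.

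For each fixed $\varrho = ((a_m)_m, (\ol{\sigma}_m, \ol{i}_m)_m) \in M_\beta$, the forcing $\prod_m \ol{P}^\eta \times \prod_m P^{\ol{\sigma}_m}$ preserves cardinals and $GCH$ (by the same argument as in Lemma \ref{prescard1} and Lemma \ref{prescard2anfang}), so
\[
\bigl|\powerset(\kappa_\eta)\, \cap\, V[\,{\textstyle\prod_m} G_\ast(a_m)\,\times\,{\textstyle\prod_m} G^{\ol{\sigma}_m}_{\ol{i}_m}\,]\bigr| \;\leq\; (\kappa_\eta^\plus)^V
\]
in that intermediate model, and hence also in $V[\ast]$. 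Working inside $V[\ast]$ I now invoke $AC$ to pick, for every $\varrho \in M_\beta$, an injection
\[
\chi_\varrho : \powerset(\kappa_\eta)\, \cap\, V[\,{\textstyle\prod_m} G_\ast(a_m)\,\times\,{\textstyle\prod_m} G^{\ol{\sigma}_m}_{\ol{i}_m}\,] \;\hookrightarrow\; (\kappa_\eta^\plus)^V.
\]

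Finally, for $X \in \wt{\powerset}(\kappa_\eta)$ define $\wt{\iota}(X) \in M_\beta$ to be the unique $\varrho$ which minimizes $\psi(\varrho)$ among those $\varrho \in M_\beta$ with $X$ lying in the corresponding intermediate extension, and set
\[
\iota(X) \;:=\; \bigl(\,\chi_{\wt{\iota}(X)}(X),\; \psi(\wt{\iota}(X))\,\bigr) \in (\kappa_\eta^\plus)^V \times \mu.
\]
Injectivity of $\iota$ is immediate from injectivity of $\psi$ and each $\chi_\varrho$. Since $\kappa_\eta^\plus \leq |\beta|$ (by $\beta > \kappa_\eta^\plus$) we have $(\kappa_\eta^\plus)^V \cdot \mu = \mu$, so composing with a bijection $(\kappa_\eta^\plus)^V \times \mu \to \mu$ available in $V$ yields the desired injection into $|\beta|^V$ or $(|\beta|^\plus)^V$. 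The only real subtlety — and the step most worth double-checking — is the verification that the intermediate filters $\prod_m G_\ast(a_m)\,\times\,\prod_m G^{\ol{\sigma}_m}_{\ol{i}_m}$ are uniformly definable in $V[\ast]$ as $\varrho$ varies over $M_\beta$, which is what allows the $AC$-choice of the family $(\chi_\varrho)_{\varrho \in M_\beta}$ to be carried out inside $V[\ast]$.
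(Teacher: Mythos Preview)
Your proof is correct and follows essentially the same approach as the paper: bound $|M_\beta|$ by $|\beta|^{\aleph_0}$, fix an enumeration $\psi$ in $V$, use $AC$ in the target extension to choose per-pair injections $\chi_\varrho$ into $(\kappa_\eta^\plus)^V$, and combine via $\iota(X) = (\chi_{\wt\iota(X)}(X), \psi(\wt\iota(X)))$. One small slip: since $\ol{\sigma}_m \in \Succ$, the generics $G^{\ol{\sigma}_m}_{\ol{i}_m}$ are read off from the $\m{P}_1$-component $(p^\sigma \uhr (\beta \times \dom_y p^\sigma))_{\sigma \leq \eta}$, not from $(p^\sigma_i, a^\sigma_i)_{\sigma \leq \eta, i < \beta}$; this does not affect the argument.
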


%\colorbox{red}{TO DO: Nochmal durchsehen!! Kardinalzahlen $(|\beta|^\plus)^V$ etc. ? }

%\colorbox{yellow}{FRAGE: die Bezeichnung $M_\beta$ schon bei der Definition von $f_\beta$ einführen? Eher nicht?}

This leads to our desired contradiction: We assumed that $f^\beta: \dom f^\beta \rightarrow \alpha_\eta$ was surjective.
% where $\dom f^\beta \subseteq \wt{\powerset} (\kappa_\eta)$.  
By Chapter \ref{6.2} B), Definition \ref{deffbetaprime} and Proposition \ref{fbetafbetaprime}, it follows that $f^\beta \in V[(G^\beta\, \uhr\, (\eta + 1))^{( \eta_m, i_m)_{m < \omega}}\, \times\, \prod_m G^{\eta_m}_{i_m}\, \uhr\, [\kappa_\eta, \kappa_{\eta_m})]$; where $(G^\beta\, \uhr\, (\eta + 1))^{( \eta_m, i_m)_{m < \omega}}\, \times\, \prod_m G^{\eta_m}_{i_m}\, \uhr\, [\kappa_\eta, \kappa_{\eta_m})$ is a $V$-generic filter on the forcing notion $((\m{P}^\beta\, \uhr\, (\eta + 1))^{( \eta_m, i_m)_{m < \omega}}\, \times\, \prod_m P^{\eta_m}\, \uhr\, [\kappa_\eta, \kappa_{\eta_m})$, which preserves cardinals $\geq \alpha_\eta$ by Chapter \ref{6.2} C), Proposition \ref{prescardalphaeta}.

However, since $dom \,f^\beta \subseteq \wt{\powerset}(\kappa_\eta)$ and $|\beta|^V < \alpha_\eta$, it follows that $f^\beta$ together with the map $\iota$ from Proposition \ref{iota} above, collapses the cardinal $\alpha_\eta$ in $V[(G^\beta\, \uhr\, (\eta + 1))^{( \eta_m, i_m)_{m < \omega}}\, \times\, \prod_m G^{\eta_m}_{i_m}\, \uhr\, [\kappa_\eta, \kappa_{\eta_m})]$. Contradiction. \\[-2mm]

Thus, we have shown that our initial assumption that $f^\beta: \dom f^\beta \rightarrow \alpha_\eta$ was surjective, was wrong. \\[-3mm]

Hence, there must be $\alpha < \alpha_\eta$ with $\alpha \notin \rg f^\beta$. 

\subsubsection*{E) We use an isomorphism argument and obtain a contradiction.}

%We will need the following canonical name for $f^\beta$:

%\begin{definition} \label{namefbeta} Let $\dot{f}^\beta$ denote the collection of all $(\OR_{\m{P}} (\dot{Y}, \alpha), q)$ 
%\colorbox{yellow}{TO DO: Die Bezeichnung $\OR(...)$ müsste eingeführt werden!}
%such that $q \in \m{P}$ with $q \Vdash (\dot{Y}, \alpha) \in \dot{f}$, and there exists an $\eta$-good pair $\varrho = ( (a_m)_{m < \omega}, (\ol{\sigma}_m, \ol{i}_m)_{m < \omega}) \in M_\beta$ 
%with $\ol{i}_j < \beta$ for all $j < \omega$ 
%and a name $\dot{Z} \in \Name ( (\ol{P}^\eta)^\omega\, \times\, \prod_m P^{\ol{\sigma}_m})$ with $\dot{Y} = \tau_\varrho (\dot{Z})$.
%\end{definition}

%It is not difficult to check that indeed, $(\dot{f}^\beta)^G = f^\beta$. \\[-3mm]

We fix an ordinal $\alpha < \alpha_\eta$ with $\alpha \notin \rg f^\beta$. By surjectivity of $f$, there must be $X \subseteq \kappa_\eta$, $X \in N$, with $f(X) = \alpha$. Hence, there is an $\eta$-good pair $\varrho = ( (a_m)_{m < \omega}, (\ol{\sigma}_m, \ol{i}_m)_{m < \omega})$ with $X \in V[\prod_m G_\ast (a_m)\, \times\, \prod_m G^{\ol{\sigma}_m}_{\ol{i}_m}]$; but since $X \notin dom \,f^\beta$, there must be at least one index $m < \omega$ with $\ol{i}_m \geq \beta$. Let $S_0$ denote the set of all $(\ol{\sigma}_m, \ol{i}_m)$ with $\ol{i}_m < \beta$, and let $S_1$ be the set of all $(\ol{\sigma}_m, \ol{i}_m)$ with $\ol{i}_m \geq \beta$. Then $|S_1| \geq 1$.

For better clarity, we now switch to a slightly different notation, and write $(\ol{\lambda}_m, \ol{k}_m) := (\ol{\sigma}_m, \ol{i}_m)$ in the case that $m \in S_1$. We denote our $\eta$-good pair $\varrho$ by \[\varrho = \big( (a_m)_{m < \omega}, \big((\ol{\sigma}_m, \ol{i}_m)_{m \in S_0}, (\ol{\lambda}_m, \ol{k}_m)_{m  \in S_1}\big)\big). \] Then \[X = \dot{X}^{\prod_m G_\ast (a_m)\, \times\, \prod_{m \in S_0} G^{\ol{\sigma}_m}_{\ol{i}_m}\, \times\, \prod_{m \in S_1} G^{\ol{\lambda}_m}_{\ol{k}_m}} \]
for some $\dot{X} \in Name ( (\ol{P}^\eta)^\omega\, \times\, \prod_{m \in S_0} P^{\ol{\sigma}_m}\, \times\, \prod_{m \in S_1} P^{\ol{\lambda}_m})$, such that the following holds: \begin{itemize} \item $ (a_m\ | \ m < \omega)$ is a sequence of pairwise disjoint $\kappa_\eta$-subsets, such that for all $m < \omega$ and $\kappa_{\ol{\nu}, \ol{\j}} < \kappa_\eta$, it follows that $|a_m\, \cap \, [\kappa_{\ol{\nu}, \ol{\j}}, \kappa_{\ol{\nu}, \ol{\j} + 1})| = 1$, \item $S_0 \subseteq \omega$, and for all $m \in S_0$, we have $\ol{\sigma}_m \in Succ$ with $\ol{\sigma}_m \leq \eta$, $\ol{i}_m < \min\{\alpha_{\ol{\sigma}_m}, \beta\}$, \item if $m$, $m^\prime \in S_0$ with $m \neq m^\prime$, then $(\ol{\sigma}_m, \ol{i}_m) \neq (\ol{\sigma}_{m^\prime}, \ol{i}_{m^\prime})$, \item $\emptyset \neq S_1 \subseteq \omega$, and for all $m \in S_1$, we have $\ol{\lambda}_m \in Succ$ with $\ol{\lambda}_m \leq \eta$, $\ol{k}_m \in [\beta, \alpha_{\ol{\lambda}_m})$, \item if $m$, $m^\prime \in S_1$ with $m \neq m^\prime$, then  $(\ol{\lambda}_m, \ol{k}_m) \neq (\ol{\lambda}_{m^\prime}, \ol{k}_{m^\prime})$.\end{itemize}

%Let $\rho$ denote the good pair $( (a_m)_{j < \omega}, ( (\ol{\eta}_m, \ol{i}_m)_{m \in S_0}, (\ol{\lambda}_m, \ol{k}_m)_{m \in S_1}))$, and let 
%Setting $\wt{\dot{X}} := \tau_\varrho (\dot{X})$, it follows that \[{\wt{\dot{X}}}^G =\dot{X}^{\prod_m G_\ast (a_m)\, \times\, \prod_{m \in S_0} G^{\ol{\sigma}_m}_{\ol{i}_m}\, \times\, \prod_{m \in S_1} G^{\ol{\lambda}_m}_{\ol{k}_m}} = X.\] 

Since $(X, \alpha) \in f$, take $p \in G$ with \[p \Vdash_s (\tau_{\varrho} (\dot{X}), \alpha) \in \dot{f}.\]
%\mbox {\ \, and \, \ } p \Vdash_s \alpha \notin \rg \dot{f}^\beta,\] where $\dot{f}^\beta$ denotes the canonical name for $f^\beta$ from Definition \ref{namefbeta} above. 

Since we are using countable support, we can asssume w.l.o.g. that $\ol{\sigma}_m \in \supp p_1$, $\ol{i}_m \in \dom_x p_1 (\ol{\sigma}_m)$ for all $m \in S_0$; and $\ol{\lambda}_m \in \supp p_1$, $\ol{k}_m \in \dom_x p_1 (\ol{\lambda}_m)$ for all $m \in S_1$. \\[-2mm]

The idea can roughly be explained as follows: Recall that we have $\beta = \wt{\beta} \plus \kappa_\eta^\plus$ (addition of ordinals), where the ordinal $\wt{\beta}$ is \textit{large enough for $(A_{\dot{f}})$}. In particular, $\kappa_\eta^\plus < \wt{\beta} < \beta < \alpha_\eta$.
We will now extend $p$ and obtain a condition $q \in G$, $q \leq p$, such that there is a sequence $(\ol{l}_m\ | \ m \in S_1)$ with $\ol{l}_m \in (\wt{\beta}, \beta)$ for all $m \in S_1$, such that $q^{\ol{\lambda}_m}_{\ol{k}_m} = q^{\ol{\lambda}_m}_{\ol{l}_m}$ for all $m \in S_1$. Then we construct an isomorphism $\pi \in A$ that swaps any $(\ol{\lambda}_m, \ol{k}_m)$-coordinate with the according $(\ol{\lambda}_m, \ol{l}_m)$-coordinate.

%\colorbox{cyan}{Peter wegen Formulierung fragen!}

%\colorbox{red}{ACHTUNG - wieso \tbl Addition von Ordinalzahlen\tbr\, bei $\wt{\beta}$ und $\beta$? GEHT DAS ÜBERHAUPT?} 

%\colorbox{red}{NOCHMAL DURCHSEHEN?}

% and does nothing else. 
Then $\pi q = q$; and we will see that $\pi \in \bigcap_m Fix (\eta_m, i_m)\, \cap\, \bigcap_m H^{\lambda_m}_{k_m}$, since $\wt{\beta}$ is \textit{large enough for} $(A_{\dot{f}})$. Hence, $\pi \ol{f}^{D_\pi} = \ol{f}^{D_\pi}$; so from $q \Vdash_s (\tau_{\varrho}(\dot{X}), \alpha) \in \dot{f}$, we obtain that $q \Vdash_s (\pi \ol{\tau_{\varrho}(\dot{X})}^{D_\pi}, \alpha) \in \ol{f}^{D_\pi}$. Setting \[Y := \Big(\pi \ol{\tau_{\varrho}(\dot{X})}^{D_\pi}\Big)^G,\] it follows that \[(Y, \alpha) \in f. \]

However, we will see that $Y = \dot{X}^{\prod_m G_\ast (a_m)\, \times\, \prod_{m \in S_0} G^{\ol{\sigma}_m}_{\ol{i}_m}\, \times\, \prod_{m \in S_1} G^{\ol{\lambda}_m}_{\ol{l}_m}}$; where $\ol{i}_m < \beta$ for all $m \in S_0$, but also $\ol{l}_m < \beta$ for all $m \in S_1$. But then, the $\eta$-good pair $\varrho^\prime = ( (a_m)_{m < \omega}, ((\ol{\sigma}_m, \ol{i}_m)_{m \in S_0}, (\ol{\lambda}_m, \ol{l}_m)_{m \in S_1}))$ is an element of $M_\beta$, and it follows that \[(Y, \alpha) = \big(\dot{X}^{\prod_m G_\ast (a_m)\, \times\, \prod_{m \in S_0} G^{\ol{\sigma}_m}_{\ol{i}_m}\, \times\, \prod_{m \in S_1} G^{\ol{\lambda}_m}_{\ol{l}_m}}, \alpha\big) \in f^\beta.\] But this would be a contradiction towards $\alpha \notin \rg f^\beta$. \\

%\colorbox{red}{ACHTUNG - wurde die Schreibweise ${\wt{\dot{X}}}^{D_\pi}$ bzw. ${\wt{X}}^{D_\pi}$ irgendwo eingeführt? Wahrscheinlich NICHT!}

%\colorbox{red}{FRAGE: könnte mal EVTL überall $\tau_{\varrho} (\dot{X})$ schreiben anstatt $\wt{\dot{X}}$?}

%and $\pi \in \bigcap_m Fix (\eta_m, i_m)$: We have $\ol{\lambda}_m \leq \eta$, $\ol{k}_m \geq \beta > \wt{\beta}$, $\ol{l}_m > \wt{\beta}$ for all $m \in S_1$; but since $\wt{\beta}$ is \textit{large enough for} $(A_{\dot{f}})$, it follows that whenever $\eta_{j^\prime} \leq \eta$, then $i_{j^\prime} < \wt{\beta}$. Hence, it follows that neither $(\ol{\lambda}_m, \ol{k}_m)$ nor $(\ol{\lambda}_m, \ol{l}_m)$ can be equal to any $(\eta_m, i_m)$, which gives $\pi \in \bigcap Fix (\eta_m, i_m)$ as desired.. Similary

We start our proof with the following lemma:

\begin{lem} Let $D$ be the set of all $q \in \m{P}$ for which there exists a sequence of pairwise distinct ordinals $(\ol{l}_m\ | \ m \in S_1)$ with $\ol{l}_m \in (\wt{\beta}, \beta) \setminus \{ \ol{i}_m\ | \ m \in S_0\}$ for all $m \in S_1$, such that $q^{\ol{\lambda}_m}_{\ol{k}_m} = q^{\ol{\lambda}_m}_{\ol{l}_m}$ holds for all $m \in S_1$. Then $D$ is dense below $p$.
\end{lem}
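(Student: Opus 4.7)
The plan is to show that any condition $p^\prime \leq p$ has a further extension $q \leq p^\prime$ lying in $D$. Note that the property defining $D$ concerns only the $\m{P}_1$-components at coordinates $\ol{\lambda}_m$ for $m \in S_1$, so I will leave the $\m{P}_0$-part of $p^\prime$ alone, set $q_0 := p^\prime_0$, and only enlarge the relevant pieces of $p^\prime_1$.

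The idea is to find ``fresh'' ordinals $\ol{l}_m$ in the gap $(\wt{\beta}, \beta)$ and then copy the $\ol{k}_m$-th column of $p^{\prime\, \ol{\lambda}_m}$ onto the $\ol{l}_m$-th column to force the equality $q^{\ol{\lambda}_m}_{\ol{k}_m} = q^{\ol{\lambda}_m}_{\ol{l}_m}$. Concretely, I would put
\[ E := \{\ol{i}_m\ | \ m \in S_0\}\ \cup\ \bigcup_{m \in S_1} \dom_x p^{\prime\, \ol{\lambda}_m}. \]
Since $S_0$ is countable and each $|\dom_x p^{\prime\, \ol{\lambda}_m}| < \kappa_{\ol{\lambda}_m} \leq \kappa_\eta$, we have $|E| \leq \kappa_\eta$, whereas the interval $(\wt{\beta}, \beta)$ has cardinality $\kappa_\eta^\plus$ by the choice $\beta = \wt{\beta} + \kappa_\eta^\plus$. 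Hence I can pick a sequence $(\ol{l}_m\ | \ m \in S_1)$ of pairwise distinct ordinals in $(\wt{\beta}, \beta) \setminus E$.

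Now I would define $q$ as follows: set $q_0 := p^\prime_0$, and for $\sigma \in \supp p^\prime_1$ with $\sigma \notin \{\ol{\lambda}_m\ | \ m \in S_1\}$, set $q^\sigma := p^{\prime\, \sigma}$. For each $\sigma = \ol{\lambda}_m$ occurring in $S_1$, let $I_\sigma := \{m \in S_1\ | \ \ol{\lambda}_m = \sigma\}$ and extend $p^{\prime\, \sigma}$ by adding, for each $m \in I_\sigma$, the new column indexed by $\ol{l}_m$, defined on $\dom_y p^{\prime\, \sigma}$ by $q^\sigma(\ol{l}_m, \zeta) := p^{\prime\, \sigma}(\ol{k}_m, \zeta)$. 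Concretely,
\[ \dom q^\sigma := \bigl(\dom_x p^{\prime\, \sigma} \cup \{\ol{l}_m\ | \ m \in I_\sigma\}\bigr)\, \times\, \dom_y p^{\prime\, \sigma}, \]
with $q^\sigma$ agreeing with $p^{\prime\, \sigma}$ on the old part and defined by the copying rule on the new columns. The exclusion of $E$ from the range of $(\ol{l}_m)$ guarantees $\ol{l}_m \notin \dom_x p^{\prime\, \ol{\lambda}_m}$, so there is no clash with existing values, and $\ol{l}_m \neq \ol{i}_{m'}$ for $m' \in S_0$ with $\ol{\sigma}_{m'} = \ol{\lambda}_m$, which later will guarantee that the modified parameters still form an $\eta$-good pair. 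Since the $\ol{l}_m$ are pairwise distinct and $\ol{l}_m < \beta \leq \ol{k}_{m'}$ for all $m, m' \in S_1$, the construction is unambiguous and $|\dom q^\sigma| < \kappa_\sigma$ is preserved.

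It is then routine to check that $q \in \m{P}$, that $q \leq p^\prime$, and that $q^{\ol{\lambda}_m}_{\ol{k}_m} = q^{\ol{\lambda}_m}_{\ol{l}_m}$ for each $m \in S_1$, since both functions are defined on $\dom_y p^{\prime\, \ol{\lambda}_m}$ and take the value $p^{\prime\, \ol{\lambda}_m}(\ol{k}_m, \zeta)$ there. Thus $q \in D$, proving that $D$ is dense below $p$. The only real point is the cardinality comparison $|E| \leq \kappa_\eta < \kappa_\eta^\plus = |(\wt{\beta}, \beta)|$, for which the padding $\beta = \wt{\beta} + \kappa_\eta^\plus$ was introduced in the first place; no genuine obstacle arises.
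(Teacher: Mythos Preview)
Your argument is correct and follows essentially the same approach as the paper: both pick fresh ordinals $\ol{l}_m$ in the gap $(\wt{\beta},\beta)$ outside a set of size $\leq\kappa_\eta$ (your $E$, the paper's complement of $\Delta$) and then extend only the $\m{P}_1$-part by copying the $\ol{k}_m$-th column onto the new $\ol{l}_m$-th column. The only cosmetic difference is that the paper also excludes $\bigcup_{m\in S_0}\dom_x q(\ol{\sigma}_m)$ rather than just $\{\ol{i}_m\mid m\in S_0\}$, but since $\ol{i}_m\in\dom_x p(\ol{\sigma}_m)$ was arranged beforehand, this amounts to the same thing.
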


\begin{proof} Consider $q \in \m{P}$ with $q \leq p$. We have to construct $\ol{q} \leq q$ with $q \in D$. The idea is that for every $m \in S_1$, we enlarge $\dom_x q(\ol{\lambda}_m)$ by some suitable $\ol{k}_m$, and set $\ol{q}(\ol{\lambda}_m) (\ol{k}_m, \zeta) := \ol{q} (\ol{\lambda}_m) (\ol{l}_m, \zeta) = q(\ol{\lambda}_m) (\ol{l}_m, \zeta)$ for all $\zeta \in \dom_y \ol{q} (\ol{\lambda}_m) = \dom_y q (\ol{\lambda}_m)$.\\[-3mm]

Note that for every $m \in S_1$, we have $\ol{\lambda}_m \in \supp q_1$ with $|\dom_x q_1 (\ol{\lambda}_m)| < \kappa_{\ol{\lambda}_m} \leq \kappa_\eta$, since $\ol{\lambda}_m \leq \eta$. Hence, it follows that $|\bigcup_{m \in S_1} \dom_x q (\ol{\lambda}_m)| \leq \kappa_\eta < \kappa_\eta^\plus$; and similarly, $|\bigcup_{m \in S_0} \dom_x q (\ol{\sigma}_m)| \leq \kappa_\eta < \kappa_\eta^\plus$. Thus, the set \[\Delta := (\wt{\beta}, \beta)\; \setminus\; \big(\, \bigcup_{m \in S_1} \dom_x q(\ol{\lambda}_m)\; \cup\; \bigcup_{m \in S_0} \dom_x q(\ol{\sigma}_m)\, \big)\] has cardinality $\kappa_\eta^\plus$. \\[-3mm]

Recall that for every $m \in S_0$, we have assumed that $\ol{i}_m \in \dom_x p (\ol{\sigma}_m) \subseteq \dom_x q (\ol{\sigma}_m)$; hence $\ol{i}_m \notin \Delta$. 

For $m \in S_1$, we have $\ol{k}_m \in [\beta, \alpha_{\ol{\lambda}_m})$; hence, $\beta < \alpha_{\ol{\lambda}_m}$ and $\Delta \subseteq (\wt{\beta}, \beta) \subseteq \alpha_{\ol{\lambda}_m}$ follows. \\[-3mm]

We take a sequence of pairwise distinct ordinals $(\ol{l}_m \ | \ m \in S_1)$ in $\Delta$ (then $\{\ol{l}_m\ | \ m \in S_1\} \subseteq (\wt{\beta}, \beta) \setminus \{\ol{i}_m\ | \ m \in S_0\}$), and define the extension $\ol{q} \leq q$ as follows: \\[-3mm]

Set $\ol{q}_0 := q_0$, and $\supp \ol{q}_1 = \supp q_1$. (From $q \leq p$ it follows that $\ol{\lambda}_m \in \supp q_1$ for all $m \in S_1$.) For $\sigma \in \supp \ol{q}_1$ with $\sigma \notin \{\ol{\lambda}_m\ | \ m \in S_1\}$, we set $\ol{q} (\sigma) := q (\sigma)$. For $\sigma \in \{\ol{\lambda}_m\ | \ m \in S_1\}$, we proceed as follows: Let $S_1 (\sigma) := \{m \in S_1\ | \ \sigma = \ol{\lambda}_m\}$. We set $\dom_y \ol{q} (\sigma) := \dom_y q (\sigma)$, and $\dom_x \ol{q} (\sigma) := \dom_x q(\sigma)\, \cup\, \{\ol{l}_m\ | \ m \in S_1 (\sigma)\}$. Note that by construction of $\Delta$ this union is disjoint, since $\ol{l}_m \notin \dom_x q(\sigma) = \dom_x q (\ol{\lambda}_m)$ for all $m \in S_1 (\sigma)$. \\[-3mm]
%for all $m \in S_1 (\sigma)$.\\[-3mm]

%\colorbox{yellow}{FRAGE: Sollte man eine Bezeichnung für die Menge $\{m \in S_1\ | \ \sigma = \ol{\lambda}_m\}$ einführen?}
Note that for every $m \in S_1 (\sigma)$, we have $\ol{k}_m \in \dom_x p(\sigma) \subseteq \dom_x q(\sigma) \subseteq \dom_x \ol{q} (\sigma)$. \\[-3mm] 

We let $\ol{q} (\sigma) (i, \zeta) := q (\sigma) (i, \zeta)$ whenever $(i, \zeta) \in \dom_x q(\sigma)\, \times\, \dom_y q(\sigma)$. If $(i, \zeta) \in \dom \ol{q} (\sigma) \setminus \dom q(\sigma)$, then $\zeta \in \dom_y q(\sigma)$ and $i = \ol{l}_{n}$ for some $n \in S_1 (\sigma)$, i.e. $n \in S_1$ with $\sigma = \ol{\lambda}_n$. In this case, we set $\ol{q} (\sigma) (i, \zeta) = \ol{q} (\ol{\lambda}_{n}) (\ol{l}_{n}, \zeta) := q (\ol{\lambda}_{n}) (\ol{k}_{n}, \zeta) = q (\sigma) (\ol{k}_{n}, \zeta)$. \\[-3mm]

%\colorbox{red}{ACHTUNG - $N$ IST DOCH DAS MODELL! DAS MÜSSTE MAN UMDEFINIEREN!}

%(Note that $\ol{l}_{\ol{m}} \notin \dom_x q (\ol{\lambda}_m)$ for all $m \in S_1$.)

This defines $\ol{q} \leq q$ with the property that $\ol{q}^{\ol{\lambda}_m}_{\ol{k}_m} = \ol{q}^{\ol{\lambda}_m}_{\ol{l}_m}$ holds for all $m \in S_1$. \\[-3mm]
% We have assumed that for every $m \in S_1$, $\ol{\lambda}_m \in \supp q_1$ and $\ol{k}_m \in \dom_x q (\ol{\lambda}_m)$. Hence, whenever $\zeta \in \dom_x \ol{q} (\ol{\lambda}_m) = \dom_x q (\ol{\lambda}_m$, then $\ol{q} (\ol{\lambda}_m) (\ol{k}_m, \zeta) = q (\ol{\lambda}_m) (\ol{k}_m, \zeta) = \ol{q} (\ol{\lambda}_m) (\ol{l}_m)$ by construction. 

Thus, it follows that $D$ is dense below $p$.

\end{proof}

Since $p \in G$, we can now take $q \in G$, $q \leq p$ with $q \in D$. Take $(\ol{l}_m\ | \ m \in S_1)$ as in the definition of $D$, with $\ol{l}_m \in (\wt{\beta}, \beta) \setminus \{\ol{i}_m\ | \ m \in S_0\}$ and $q^{\ol{\lambda}_m}_{\ol{k}_m} = q^{\ol{\lambda}_m}_{\ol{l}_m}$ for all $m \in S_1$. Then the sets $\{(\ol{\lambda}_m, \ol{l}_m) \ | \ m \in S_1\}$ and $\{(\ol{\sigma}_m, \ol{i}_m)\ | \ m \in S_0\}$ are disjoint. \\[-3mm]

Since $q \leq p$, we have \[q \Vdash_s (\tau_{\varrho} (\dot{X}), \alpha) \in \dot{f}. \]

%\ \ , \ \ q \Vdash_s \alpha \notin \rg \dot{f}^\beta.\]

The next step is to construct an isomorphism $\pi$ that swaps every $(\ol{\lambda}_m, \ol{k}_m)$-coordinate with the according $(\ol{\lambda}_m, \ol{l}_m)$-coordinate for $m \in S_1$, and does nothing else.

\begin{definition}
We define an isomorphism $\pi \in A$ as follows:

\begin{itemize} \item The map $\pi_0$ is the identity on $D_{\pi_0} = \m{P}_0$. \item We set $\supp \pi_1 := \supp q_1$, and for every $\sigma \in \supp q_1$, we let $\dom \pi_1 (\sigma) := \dom q_1 (\sigma)$. 

{\textrm{Then for all $m \in S_1$, it follows that $\ol{\lambda}_m \in \supp p_1 \subseteq \supp q_1 = \supp \pi_1$; and $\ol{k}_m \in \dom_x p_1 (\ol{\lambda}_m) \subseteq \dom_x q_1 (\ol{\lambda}_m) = \dom_x \pi_1 (\ol{\lambda}_m)$, $\ol{l}_m \in \dom_x q_1 (\ol{\lambda}_m) = \dom_x \pi_1 (\ol{\lambda}_m)$.}}

 \item Consider $\sigma \in \supp \pi_1$ with $\kappa_\sigma = \ol{\kappa_\sigma}^{\,\plus}$. In the case that $\sigma \notin \{\ol{\lambda}_m\ | \ m \in S_1\}$, we set $\supp \pi_1 (\sigma) := \emptyset$, and let $\pi_1 (\sigma) (i, \zeta): 2 \rightarrow 2$ be the identity map for all $(i, \zeta) \in \alpha_\sigma\, \times\, [\ol{\kappa_\sigma}, \kappa_\sigma)$. \item For $\sigma \in \{ \ol{\lambda}_m \ | \ m \in S_1\}$, consider the set $S_1 (\sigma) := \{ m \in S_1\ | \ \sigma = \ol{\lambda}_m\}$, and let $\supp \pi_1 (\sigma) := \{ \ol{k}_m\ | \ m \in S_1 (\sigma)\}\, \cup\, \{ \ol{l}_m\ | \ m \in S_1 (\sigma)\}$. Then $\supp \pi_1 (\sigma)$ is a subset of $\dom_x \pi_1 (\sigma)$. \\[-3mm]
% = \dom q_1 (\sigma)$, since $\ol{k}_m \in \dom_x p_1 (\ol{\lambda}_m) = \dom_x p_1 (\sigma) \subseteq \dom_x q_1 (\sigma)$ for all $m \in S_1 (\sigma)$, and $\ol{l}_m \in \dom_x q_1 (\ol{\lambda}_m) = \dom_x q_1 (\sigma)$ for all $m \in S_1 (\sigma)$ by construction.

The map $f_{\pi_1} (\sigma): \supp \pi_1 (\sigma) \rightarrow \supp \pi_1 (\sigma)$ is defined as follows: Let $f_{\pi_1} (\sigma) (\ol{k}_m) = \ol{l}_m$, and $f_{\pi_1} (\sigma) (\ol{l}_m) = \ol{k}_m$ for all $m \in S_1 (\sigma)$. 

{\textrm{Then $f_{\pi_1} (\sigma)$ is well-defined and bijective, since $\ol{k}_m \geq \beta$ for all $m \in S_1$, and $\ol{l}_m < \beta$ for all $m \in S_1$.}} \\[-3mm]

It remains to define the maps $\pi_1 (\zeta): 2^{\supp \pi_1 (\sigma)} \rightarrow 2^{\supp \pi_1 (\sigma)}$ for $\zeta \in \dom_y \pi_1 (\sigma)$: Let $\pi_1 (\zeta) (\epsilon_i\ | \ i \in \supp \pi_1 (\sigma)) := (\wt{\epsilon}_i\ | \ i \in \supp \pi_1 (\sigma))$, where $\wt{\epsilon}_{\ol{k}_m} := \epsilon_{\ol{l}_m}$, $\wt{\epsilon}_{\ol{l}_m} := \epsilon_{\ol{k}_m}$ for all $m \in S_1 (\sigma)$. \\[-3mm]

Finally, for every $(i, \zeta) \in \alpha_\sigma\, \times\, [\ol{\kappa_\sigma}, \kappa_\sigma)$, we let $\pi_1 (\sigma) (i, \zeta): 2 \rightarrow 2$ be the identity.

%ACHTUNG - wie sieht $\m{P}_1$ aus? Werden Teilmengen von $[\kappa_{\ol{\eta}}, \kappa_\eta)$ oder von $[\ol{\kappa_\eta}, \kappa_\eta)$% zugefügt?
%Das müsste man ÜBERLEGEN!

\end{itemize}

\end{definition}

This defines our automorphism $\pi \in A$.

%Insgesamt besser strukturieren! In Proposition etc. einteilen?

\begin{lem} \label{yalphainf} For $Y := \Big(\pi \ol{\tau_{\varrho}(\dot{X})}^{D_\pi}\Big)^G$, it follows that $(Y, \alpha) \in f$. \end{lem}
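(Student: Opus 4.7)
The plan is to combine three ingredients: (i) the symmetry lemma applied to the forcing statement $q \Vdash_s (\tau_\varrho(\dot X), \alpha) \in \dot f$, (ii) the fact that $\pi$ leaves the hereditarily symmetric name $\dot f$ invariant because $[\pi]$ lies in the stabilizing intersection $(A_{\dot f})$, and (iii) the fact that $\pi q = q$, so that after applying $\pi$ we remain in the generic filter $G$.

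First I would verify that $\pi q = q$. Since $\pi_0$ is the identity, only the $\m{P}_1$-part matters. For $\sigma \in \supp\pi_1$ with $\sigma \notin \{\ol\lambda_m\mid m\in S_1\}$, we have $\supp\pi_1(\sigma)=\emptyset$ and $\pi_1(\sigma)(i,\zeta)=\id$ for every $(i,\zeta)$, so $(\pi q)(\sigma)=q(\sigma)$. For $\sigma=\ol\lambda_m$, the map $\pi_1(\zeta)$ swaps the $\ol k_n$- and $\ol l_n$-coordinates (for $n\in S_1(\sigma)$), and by choice of $q\in D$ we have $q^{\ol\lambda_n}_{\ol k_n}=q^{\ol\lambda_n}_{\ol l_n}$; any other coordinate is fixed since $\pi_1(\sigma)(i,\zeta)=\id$ for $i\notin\supp\pi_1(\sigma)$. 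Hence $\pi q=q\in G$.

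Next I would check that $[\pi]$ lies in $\bigcap_{m<\omega}\mathrm{Fix}(\eta_m,i_m)\cap\bigcap_{m<\omega}H^{\lambda_m}_{k_m}$, so that $\pi\ol{f}^{D_\pi}=\ol{f}^{D_\pi}$. Since $\pi_0=\id$, the clauses involving $\Lim$-indices are automatic. For $\eta_m\in\Succ$ with $\eta_m>\eta$, the coordinate $\eta_m$ is irrelevant because $\{\ol\lambda_n\mid n\in S_1\}\subseteq\eta+1$, so $\supp\pi_1(\eta_m)=\emptyset$ and $\pi_1(\eta_m)(i_m,\cdot)=\id$. For $\eta_m\in\Succ$ with $\eta_m\leq\eta$, the choice of $\wt\beta$ large enough for $(A_{\dot f})$ gives $i_m<\wt\beta$, while $\supp\pi_1(\eta_m)\subseteq\{\ol k_n,\ol l_n\mid \ol\lambda_n=\eta_m\}\subseteq(\wt\beta,\alpha_{\eta_m})$, so $i_m\notin\supp\pi_1(\eta_m)$ and $\pi_1(\eta_m)(i_m,\cdot)=\id$; hence $(\pi r)^{\eta_m}_{i_m}=r^{\eta_m}_{i_m}$ for all $r\in D_\pi$. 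The argument for $H^{\lambda_m}_{k_m}$ with $\lambda_m\in\Succ$ is completely analogous, using $k_m<\wt\beta$ in the case $\lambda_m\leq\eta$, and $\supp\pi_1(\lambda_m)=\emptyset$ in the case $\lambda_m>\eta$.

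Finally, the symmetry lemma applied to $q\Vdash_s(\tau_\varrho(\dot X),\alpha)\in\dot f$ yields
\[
\pi q\Vdash_s\bigl(\pi\,\ol{\tau_\varrho(\dot X)}^{D_\pi},\alpha\bigr)\in\pi\ol{f}^{D_\pi}.
\]
Combining $\pi q=q$ and $\pi\ol{f}^{D_\pi}=\ol{f}^{D_\pi}$ gives $q\Vdash_s\bigl(\pi\,\ol{\tau_\varrho(\dot X)}^{D_\pi},\alpha\bigr)\in\ol{f}^{D_\pi}$. Since $q\in G$ and $(\ol{f}^{D_\pi})^G=\dot f^G=f$, evaluating at $G$ yields $(Y,\alpha)=\bigl((\pi\,\ol{\tau_\varrho(\dot X)}^{D_\pi})^G,\alpha\bigr)\in f$, which is the desired conclusion. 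The main (but minor) obstacle is the bookkeeping needed in verifying $\pi q=q$ and in checking membership in each stabilizer subgroup; all of this is forced by the careful choice of $(\ol l_m)_{m\in S_1}\subseteq(\wt\beta,\beta)$ together with the definition of $\pi$ having empty support outside the $(\ol\lambda_m,\ol k_m),(\ol\lambda_m,\ol l_m)$ pairs.
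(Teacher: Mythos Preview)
Your proof is correct and follows essentially the same approach as the paper's own argument: verify $\pi q = q$ from the coordinate-swap description of $\pi$ together with $q^{\ol{\lambda}_m}_{\ol{k}_m} = q^{\ol{\lambda}_m}_{\ol{l}_m}$, check that $[\pi]$ lies in the stabilizing intersection $(A_{\dot f})$ using that $\wt{\beta}$ is large enough (so the relevant indices $i_m, k_m$ fall below $\wt{\beta}$ while all of $\supp\pi_1(\sigma)$ lies above $\wt{\beta}$), and then apply the symmetry lemma to conclude. Your case analysis is slightly more explicit than the paper's in separating the $\Lim$ versus $\Succ$ and $>\eta$ versus $\leq\eta$ cases, but the content is the same.
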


\begin{proof} By construction of $\pi$ it follows that whenever $r$ is a condition in $D_\pi$ with $r^\prime := \pi r$, then the following holds: Firstly, for all $m \in S_1$, we have $(r^\prime)^{\ol{\lambda}_m}_{\ol{k}_m} = r^{\ol{\lambda}_m}_{\ol{l}_m}$ and $(r^\prime)^{\ol{\lambda}_m}_{\ol{l}_m} = r^{\ol{\lambda}_m}_{\ol{k}_m}$.
Secondly, whenever $\sigma \in \supp r_1$, $i \in \dom_x r (\sigma)$ with $(\sigma, i) \notin \{(\ol{\lambda}_m, \ol{k}_m)\ | \ m \in S_1\} \, \cup\, \{ (\ol{\lambda}_m, \ol{l}_m)\ | \ m \in S_1\}$, then $(r^\prime)^\sigma_i = r^\sigma_i$. 

In particular, $(r^\prime)^{\ol{\sigma}_{m^\prime}}_{\ol{i}_{m^\prime}} = r^{\ol{\sigma}_{m^\prime}}_{\ol{i}_{m^\prime}}$ holds for all $m^\prime \in S_0$:

%\colorbox{yellow}{FRAGE: anderen Namen als $m^\prime$ für Elemente aus $S_0$ wählen?}

On the one hand, we have $(\ol{\sigma}_{m^\prime}, \ol{i}_{m^\prime}) \notin \{(\ol{\lambda}_m, \ol{k}_m)\ | \ m \in S_1\}$ for all $m^\prime \in S_0$, since $\ol{i}_{m^\prime} < \beta$; but $\ol{k}_m \geq \beta$ for all $m \in S_1$. On the other hand, $(\ol{\sigma}_{m^\prime}, \ol{i}_{m^\prime}) \notin \{(\ol{\lambda}_m, \ol{l}_m)\ | \ m \in S_1\}$ for all $m^\prime \in S_0$ follows by construction of the set $D$. \\[-3mm]

In other words: The map $\pi$ swaps for all $m \in S_1$ the $(\ol{\lambda}_m, \ol{k}_m)$-coordinate with the according $(\ol{\lambda}_m, \ol{l}_m)$-coordinate, and does nothing else.

Hence, it follows that $\pi q = q$; since $q^{\ol{\lambda}_m}_{\ol{k}_m} = q^{\ol{\lambda}_m}_{\ol{l}_m}$ for all $m \in S_1$. \\[-2mm]

Next, we want to show that $\pi \in \bigcap_m Fix (\eta_m, i_m)\, \cap \, \bigcap_m H^{\lambda_m}_{k_m}$. Then $\pi \ol{f}^{D_\pi} = \ol{f}^{D_\pi}$ follows. Regarding $\pi \in \bigcap_m Fix (\eta_m, i_m)$, it suffices to make sure that for all $m < \omega$, we have $(\eta_{m}, i_{m}) \notin \{ (\ol{\lambda}_{m^\prime}, \ol{k}_{m^\prime})\ | \ m^\prime \in S_1\}\, \cup\, \{ (\ol{\lambda}_{m^\prime}, \ol{l}_{m^\prime})\ | \ m^\prime \in S_1\}$. But this follows from the fact that $\ol{\lambda}_{m^\prime} \leq \eta$ and $\ol{k}_{m^\prime} \geq \beta > \wt{\beta}$, $\ol{l}_{m^\prime} > \wt{\beta}$ for all $m^\prime \in S_1$; but $\wt{\beta}$ is \textit{large enough for }$(A_{\dot{f}})$, so for any $\eta_{m}$ with $\eta_{m} \leq \eta$, it follows that $i_{m} < \wt{\beta}$. This implies $(\eta_{m}, i_{m}) \notin \{ (\ol{\lambda}_{m^\prime}, \ol{k}_{m^\prime})\ | \ m^\prime \in S_1\}\, \cup\, \{ (\ol{\lambda}_{m^\prime}, \ol{l}_{m^\prime})\ | \ m^\prime \in S_1\}$ for all $m < \omega$ as desired. \\ Hence, $\pi \in \bigcap_m Fix (\eta_m, i_m)$. \\[-3mm] 

%$(\eta_{m_0}, i_{m_0}) \neq (\ol{\lambda}_{m_1}, \ol{k}_{m_1})$ and $(\eta_{m_0}, i_{m_0}) \neq (\ol{\lambda}_{m_1}, \ol{l}_{m_1})$ for any $m_0$, $m_1$. But this follows from the fact that $\ol{\lambda}_{m_1} \leq \eta$ and $\ol{k}_{m_1} \geq \beta > \wt{\beta}$, $\ol{l}_{m_1} > \wt{\beta}$ for all $m_1 \in S_1$; but $\wt{\beta}$ is \textit{large enough for }$(A_{\dot{f}})$, so for any $\eta_{m_0}$ with $\eta_{m_0} \leq \eta$, it follows that $i_{m_0} < \wt{\beta}$. This implies $(\eta_{m_0}, i_{m_0}) \neq (\ol{\lambda}_{m_1}, \ol{k}_{m_1})$, $(\eta_{m_0}, i_{m_0}) \neq (\ol{\lambda}_{m_1}, \ol{l}_{m_1})$ for all $m_0$, $m_1$ as desired. \\ Hence, $\pi \in \bigcap_m Fix (\eta_m, i_m)$. \\[-3mm]

Regarding $\pi \in \bigcap_m H^{\lambda_m}_{k_m}$, we have to make sure that 
%for all $m_1$ it follows that $\supp \pi_1 (\lambda_{m_1}) \subseteq (k_{m_1}, \alpha_{\lambda_{m_1}})$. In other words: 
whenever $\lambda_m = \ol{\lambda}_{m^\prime}$ for some $m < \omega$ and $m^\prime \in S_1$, then $\supp \pi_1 (\lambda_{m}) = \supp \pi_1 (\ol{\lambda}_{m^\prime}) \subseteq (k_{m}, \alpha_{\lambda_{m}})$ holds; i.e. $\ol{k}_{m^\prime} > k_m$ and $\ol{l}_{m^\prime} > k_m$. Again, this follows from the fact that $\ol{\lambda}_{m^\prime} \leq \eta$ and $\ol{k}_{m^\prime} \geq \beta > \wt{\beta}$, $\ol{l}_{m^\prime} > \wt{\beta}$ for all $m^\prime \in S_1$; and $\wt{\beta}$ is \textit{large enough for} $(A_{\dot{f}})$, so whenever $\lambda_{m} \leq \eta$, then $k_{m} < \wt{\beta}$ follows. Hence, $\pi \in \bigcap_m H^{\lambda_m}_{k_m}$. \\[-3mm]

%\colorbox{yellow}{ACHTUNG - das ist SCHWIERIG mit $m_0$ und $m_1$!!}

%\colorbox{yellow}{EVTL bessere Bezeichnungen versuchen?}

%\colorbox{red}{HIER $m$ und $m^\prime$ VERTAUSCHEN?}

Thus, it follows that $\pi \ol{f}^{D_\pi} = \ol{f}^{D_\pi}$. \\[-2mm]

Now, from $q \Vdash_s (\tau_{\varrho} (\dot{X}), \alpha) \in \dot{f}$, we obtain $\pi q \Vdash_s (\pi \ol{\tau_{\varrho}(\dot{X})}^{D_\pi}, \alpha) \in \pi \ol{f}^{D_\pi}$; hence, $q \Vdash_s (\pi \ol{\tau_{\varrho} (\dot{X})}^{D_\pi}, \alpha) \in \ol{f}^{D_\pi}$. With \[Y := \big(\pi \ol{\tau_{\varrho} (\dot{X})}^{D_\pi}\big)^G,\] it follows from $q \in G$ that $(Y, \alpha) \in f$ as desired.

\end{proof}

%Moreover, $q \Vdash \alpha \notin \rg \dot{f}^\beta$
%Erwähnen, dass $\alpha \notin \rg f^\beta$! 

We will now show that $(Y, \alpha) \in f$ implies that also $(Y, \alpha) \in f^\beta$ must hold. 
%Since $q \in G$ with $q \Vdash_s \alpha \notin \rg \dot{f}^\beta$, 
This finally gives our desired contradiction, since $\alpha \notin \rg f^\beta$. \\[-3mm]

Indeed, we will prove that \[Y = \dot{X}^{\prod_m G_\ast (a_m)\, \times\, \prod_{m \in S_0} G^{\ol{\sigma}_m}_{\ol{i}_m}\, \times\, \prod_{m \in S_1} G^{\ol{\lambda}_m}_{\ol{l}_m}}.\] Since $\ol{i}_m < \beta$ for all $m \in S_0$ and $\ol{l}_m < \beta$ for all $m \in S_1$, it follows that the $\eta$-good pair \[\varrho^\prime := \big( (a_m)_{m < \omega}, ( (\ol{\sigma}_m, \ol{i}_m)_{m \in S_0}, (\ol{\lambda}_m, \ol{l}_m)_{m \in S_1})\big)\] is an element of $M_\beta$. Hence, $(Y, \alpha) \in f$
%$\big(\, \dot{X}^{\prod_m G_\ast (a_m)\, \times\, \prod_{m \in S_0} G^{\ol{\sigma}_m}_{\ol{i}_m}\, \times\, \prod_{m \in S_1} G^{\ol{\lambda}_m}_{\ol{l}_m}}, \alpha \, \big) \in f$ 
would then imply that also $(Y, \alpha) \in f^\beta$
%$\big(\, \dot{X}^{\prod_m G_\ast (a_m)\, \times\, \prod_{m \in S_0} G^{\ol{\sigma}_m}_{\ol{i}_m}\, \times\, \prod_{m \in S_1} G^{\ol{\lambda}_m}_{\ol{l}_m}}, \alpha \, \big) \in f^\beta$ 
must hold, and we are done. \\[-1mm]

Recall that 
%$\wt{\dot{X}}$ was our abbreviation for $\tau_{\varrho} (\dot{X})$, where 
%$\varrho$ is the $\eta$-good pair
\[\varrho := \big( (a_m)_{m < \omega}, ( (\ol{\sigma}_m, \ol{i}_m)_{m \in S_0}, (\ol{\lambda}_m, \ol{k}_m)_{m \in S_1})\big),\]
%$\tau_{\varrho} (\dot{X})$ is the canonical extension of 
$\dot{X} \in \Name ( (\ol{P}^\eta)^\omega\, \times\, \prod_{m \in S_0} P^{\ol{\sigma}_m}\, \times\, \prod_{m \in S_1} P^{\ol{\lambda}_m})$, and $\tau_{\varrho} (\dot{X})$ is the canonical extension of $\dot{X}$ to a name for $\m{P}$ (see Definition \ref{tauvarrho}). \\[-2mm] 
%to a name for $\m{P}$. \\[-2mm]

We will show recursively:

\begin{lem} For every $\dot{Y} \in \Name \big( (\ol{P}^\eta)^\omega\,\times\, \prod_{m \in S_0} P^{\ol{\sigma}_m}\, \times\, \prod_{m \in S_1} P^{\ol{\lambda}_m}\big)$, 
%and $\wt{\dot{Y}} := \tau_\rho (\dot{Y})$, 
it follows that \[\pi \, {\ol{\tau_\varrho (\dot{Y})}}^{D_\pi} = \ol{\tau_{\varrho^\prime} (\dot{Y})}^{D_\pi}.\] 

%\[\big(\, \pi \, \ol{\tau_\varrho (\dot{Y})}^{D_\pi}\, \big)^G = \big(\tau_{\varrho^\prime} (\dot{Y}) \big)^G = \dot{Y}^{\prod_m G_\ast (a_m)\, \times\, \prod_{m \in S_0} G^{\ol{\sigma}_m}_{\ol{i}_m}\, \times\, \prod_{m \in S_1} G^{\ol{\lambda}_m}_{\ol{l}_m}}.\]
\end{lem}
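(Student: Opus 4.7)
The plan is to prove the claim by induction on the rank of $\dot{Y}$, following the template of Lemma~\ref{pisimpiprime2}. The base case $\dot{Y} = \emptyset$ is immediate, since both sides are empty. For the inductive step, I would first unfold the recursion
\[\ol{\tau_\varrho(\dot{Y})}^{D_\pi} \;=\; \big\{\,(\ol{\tau_\varrho(\dot{Z})}^{D_\pi},\, q)\ \big|\ \dot{Z} \in \dom \dot{Y},\ q \in D_\pi,\ q \Vdash_s \tau_\varrho(\dot{Z}) \in \tau_\varrho(\dot{Y})\,\big\},\]
apply $\pi$ termwise, and substitute $\ol{\tau_{\varrho'}(\dot{Z})}^{D_\pi}$ for $\pi\,\ol{\tau_\varrho(\dot{Z})}^{D_\pi}$ using the inductive hypothesis. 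Writing $r := \pi q$ and using that $\pi$ is a bijection of $D_\pi$ onto itself, the claim reduces to the forcing equivalence
\[q \Vdash_s \tau_\varrho(\dot{Z}) \in \tau_\varrho(\dot{Y}) \ \Longleftrightarrow \ \pi q \Vdash_s \tau_{\varrho'}(\dot{Z}) \in \tau_{\varrho'}(\dot{Y})\]
for $q \in D_\pi$ and $\dot{Z} \in \dom \dot{Y}$.

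To establish this equivalence I would argue semantically, by tracking how $\pi$ transforms the factor filters feeding into $\tau_\varrho$. Since $\pi_0$ is the identity, the section filters $G_\ast(a_m)$ are untouched. By the definition of $\pi_1$ and the choice of the $\ol{l}_{m'} \in (\wt{\beta},\beta) \setminus \{\ol{i}_m : m \in S_0\}$ together with $\ol{i}_m < \beta \le \ol{k}_{m'}$, the pairs $(\ol{\sigma}_m,\ol{i}_m)$ for $m \in S_0$ avoid the set $\{(\ol{\lambda}_{m'},\ol{k}_{m'}),\,(\ol{\lambda}_{m'},\ol{l}_{m'}) : m' \in S_1\}$; consequently the filters $G^{\ol{\sigma}_m}_{\ol{i}_m}$ are also preserved. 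The only effect of $\pi$ on the remaining components is to swap, for each $m' \in S_1$, the $\ol{k}_{m'}$-th and $\ol{l}_{m'}$-th columns on the $\ol{\lambda}_{m'}$-axis, so that $G^{\ol{\lambda}_{m'}}_{\ol{k}_{m'}}(\pi H) = G^{\ol{\lambda}_{m'}}_{\ol{l}_{m'}}(H)$ for every $V$-generic $H$ on $\m{P}$. Unfolding Definition~\ref{tauvarrho}, this gives
\[(\tau_\varrho(\dot{Y}))^{\pi H} \;=\; (\tau_{\varrho'}(\dot{Y}))^H,\]
and the desired forcing equivalence follows from the forcing theorem combined with the symmetry lemma.

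The main obstacle is the bookkeeping caused by the fact that $\pi$ is only a partial automorphism on $D_\pi$, so that the action $H \mapsto \pi H$ has to be interpreted through the image $\pi(H \cap D_\pi)$ together with its upward $\m{P}$-closure, using density of $D_\pi$ in $\m{P}$. This is precisely the setup that drives Lemma~\ref{pisimpiprime2} and the isomorphism arguments in Lemma~\ref{approx} and Proposition~\ref{fbetafbetaprime}, so the technique transfers without modification; what still has to be done explicitly is a coordinate-by-coordinate verification that the swap of $(\ol{\lambda}_{m'},\ol{k}_{m'})$ with $(\ol{\lambda}_{m'},\ol{l}_{m'})$ for $m' \in S_1$ is genuinely the only change that $\pi$ induces on the sub-generics read off by $\tau_\varrho$, which is where the carefully chosen disjointness of the two sets of coordinates does the work.
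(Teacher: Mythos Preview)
Your proposal is correct and follows essentially the same route as the paper: induction on the rank of $\dot{Y}$, unfolding $\ol{\tau_\varrho(\dot{Y})}^{D_\pi}$, applying $\pi$ termwise, invoking the inductive hypothesis on the first coordinate, and reducing to the forcing equivalence $q \Vdash_s \tau_\varrho(\dot{Z}) \in \tau_\varrho(\dot{Y}) \iff \pi q \Vdash_s \tau_{\varrho'}(\dot{Z}) \in \tau_{\varrho'}(\dot{Y})$, which is then established semantically via the identity $(\tau_\varrho(\dot{Z}_0))^H = (\tau_{\varrho'}(\dot{Z}_0))^{\pi H}$ for any $V$-generic $H$. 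The paper's proof is exactly this, with the coordinate-swap verification you flag as the remaining work already recorded just before the lemma (in the proof of Lemma~\ref{yalphainf}); your discussion of the partial-automorphism bookkeeping is more explicit than the paper's, but not a different argument.
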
 

\begin{proof} Consider $\dot{Y} \in \Name_{\alpha+1} ( (\ol{P}^\eta)^\omega\,\times\, \prod_{m \in S_0} P^{\ol{\sigma}_m}\, \times\, \prod_{m \in S_1} P^{\ol{\lambda}_m})$, and assume recursively that 
%ACHTUNG - nicht eher $\dot{Y} \in \Name ( (\ol{P}^\eta)^\omega\,\times\, \prod_{m \in S_0} P^{\ol{\sigma}_m}\, \times\, \prod_{m \in S_1} P^{\ol{\lambda}_m})$ ??
the claim was true for all $\dot{Z} \in \Name_\alpha( (\ol{P}^\eta)^\omega\,\times\, \prod_{m \in S_0} P^{\ol{\sigma}_m}\, \times\, \prod_{m \in S_1} P^{\ol{\lambda}_m})$. \\[-4mm]

%\colorbox{yellow}{Man müsste die $\Name_\alpha$-Hierarchie einführen!} \\[-3mm]

First, \[\ol{\tau_{\varrho} (\dot{Y})}^{D_\pi} = \big\{\, \big(\, \ol{\tau_{\varrho} (\dot{Z})}^{D_\pi}, r\, \big)\ \ \big| \ \ r \in D_\pi\;, \; \dot{Z} \in \dom \dot{Y}\; , \; r \Vdash_s \tau_{\varrho} (\dot{Z}) \in \tau_{\varrho} (\dot{Y})\, \big\},\]

and \[ \pi \ol{\tau_{\varrho} (\dot{Y})}^{D_\pi} = \big\{\, \big(\, \pi \ol{\tau_{\varrho} (\dot{Z})}^{D_\pi}, \pi r\, \big)\ \ \big| \ \ r \in D_\pi\; , \; \dot{Z} \in \dom \dot{Y}\, , \; r \Vdash_s \tau_{\varrho} (\dot{Z}) \in \tau_{\varrho} (\dot{Y})\, \big\}. \]

Now, for any $H$ a $V$-generic filter on $\m{P}$ and $\dot{Z}_0 \in \Name( (\ol{P}^\eta)^\omega\,\times\, \prod_{m \in S_0} P^{\ol{\sigma}_m}\, \times\, \prod_{m \in S_1} P^{\ol{\lambda}_m})$, it follows by construction of the map $\pi$ that \begin{eqnarray} \big(\tau_{\varrho} (\dot{Z}_0)\big)^H & = &\dot{Z}_0^{\prod_{m < \omega} H_\ast (a_m)\, \times\, \prod_{m \in S_0} H^{\ol{\sigma}_m}_{\ol{i}_m}\, \times\, \prod_{m \in S_1} H^{\ol{\lambda}_m}_{\ol{k}_m}} \nonumber \\  & = & \dot{Z}_0^{\prod_{m < \omega} (\pi H)_\ast (a_m)\, \times\, \prod_{m \in S_0} (\pi H)^{\ol{\sigma}_m}_{\ol{i}_m}\, \times\, \prod_{m \in S_1} (\pi H)^{\ol{\lambda}_m}_{\ol{l}_m}} \ \nonumber \\ & = & \big(\tau_{\varrho^\prime} (\dot{Z}_0)\big)^{\pi H}, \nonumber \end{eqnarray}

since $\pi$ swaps any $(\ol{\lambda}_m, \ol{k}_m)$-coordinate with the according $(\ol{\lambda}_m, \ol{l}_m$)-coordinate, and does nothing else. \\[-3mm]

Hence, whenever $r \in D_\pi$, then $r \Vdash_s \tau_{\varrho} (\dot{Z}) \in \tau_{\varrho} (\dot{Y})$ if and only if $\pi r \Vdash_s \tau_{\varrho^\prime} (\dot{Z}) \in \tau_{\varrho^\prime} (\dot{Y})$. Thus, by our recursive assumption, \begin{eqnarray} \pi \ol{\tau_{\varrho} (\dot{Y})}^{D_\pi} & = &\big\{\, \big(\, \pi \ol{\tau_{\varrho} (\dot{Z})}^{D_\pi}, \pi r\, \big)\ \ \big| \ \ \pi r \in D_\pi\; , \; \dot{Z} \in \dom \dot{Y}\, , \; \pi r \Vdash_s \tau_{\varrho^\prime} (\dot{Z}) \in \tau_{\varrho^\prime} (\dot{Y})\, \big\} \ \nonumber \\ & = & \big\{\, \big(\, \ol{\tau_{\varrho^\prime} (\dot{Z})}^{D_\pi}, r\, \big)\ \ \big| \ \  r \in D_\pi\; , \; \dot{Z} \in \dom \dot{Y}\, , \; r \Vdash_s \tau_{\varrho^\prime} (\dot{Z}) \in \tau_{\varrho^\prime} (\dot{Y})\, \big\} \ \nonumber \\  & = & \ol{\tau_{\varrho^\prime} (\dot{Y})}^{D_\pi}. \nonumber \end{eqnarray}

\end{proof}

Hence, \[ Y = \big( \pi \ol{\tau_{\varrho} (\dot{X})}^{D_\pi}\big)^G = \big( \ol{\tau_{\varrho^\prime} (\dot{X})}^{D_\pi}\big)^G = \big( \tau_{\varrho^\prime} (\dot{X})\big)^G = \]\[ = \dot{X}^{\prod_m G_\ast (a_m)\, \times\, \prod_{m \in S_0} G^{\ol{\sigma}_m}_{\ol{i}_m}\, \times\, \prod_{m \in S_1} G^{\ol{\lambda}_m}_{\ol{l}_m}}.\]

%= \big( \pi \ol{\tau_{\rho} (\dot{X})}^{D_\pi}\big)^G =  = Y.\] 
Hence, by Lemma \ref{yalphainf} above, it follows that \[(\dot{X}^{\prod_{m} G_\ast (a_m)\, \times\, \prod_{m \in S_0} G^{\ol{\sigma}_m}_{\ol{i}_m}\, \times\, \prod_{m \in S_1} G^{\ol{\lambda}_m}_{\ol{l}_m}}, \alpha) \in f.\] But $\ol{i}_m < \beta$ for all $m \in S_0$ and $\ol{l}_m < \beta$ for all $m \in S_1$; hence, \[(\dot{X}^{\prod_m G_\ast (a_m)\, \times\, \prod_{m \in S_0} G^{\ol{\sigma}_m}_{\ol{i}_m}\, \times\, \prod_{m \in S_1} G^{\ol{\lambda}_m}_{\ol{l}_m}}, \alpha) \in f^\beta.\] 

%\colorbox{red}{FRAGE: Braucht man überhaupt $q \Vdash_s \alpha \notin \rg \dot{f}^\beta$?}

%\colorbox{red}{Könnte man nicht auch den Namen $\dot{f}^\beta$ dann weglassen?} 

%\colorbox{red}{TO DO: ÄNDERN?} 

But this contradicts our choice of $\alpha \notin \rg f^\beta$. \\

Thus, in either case our assumption of a surjective function $f: \powerset^N (\kappa_\eta) \rightarrow \alpha_\eta$ in $N$ has lead to a contradiction, and it follows that indeed, $\theta^N (\kappa_\eta) \leq \alpha_\eta$. \\

Recall that we have assumed throughout our proof that $\kappa_{\eta + 1} > \kappa_\eta^\plus$. In the next Chapter 6.3, we will treat the case that $\kappa_{\eta + 1} = \kappa_\eta^\plus$, and discuss where the arguments from Chapter \ref{6.2} have to be modified. 

\subsection{$\boldsymbol{\forall\, \eta\ \big(\,  \kappa_{\eta + 1} = \kappa_\eta^\plus\, \longrightarrow\, \theta^N (\kappa_\eta) \leq \alpha_\eta\, \big)}$.} \label{6.3}

If $\kappa_{\eta + 1} = \kappa_\eta^\plus$, we need the notion of an \textit{$\eta$-almost good pair} (cf. Definition \ref{etaalmostgoodpair} and Proposition \ref{Xsubseteqkappaeta2}): For any $X \in  N$, $X \subseteq \kappa_\eta$, there exists an \textit{$\eta$-almost good pair} $( (a_m)_{m < \omega},  (\ol{\sigma}_m, \ol{i}_m)_{m < \omega})$ such that $X \in V[ \prod_m G_\ast (a_m)\, \times\, \prod_m G^{\ol{\sigma}_m}_{\ol{i}_m}\, \times\, G^{\eta + 1}]$. \\[-2mm]

Throughout this Chapter 6.3, we assume that \[\boldsymbol{\kappa_{\eta + 1} = \kappa_\eta^\plus}.\]

As before in Chapter \ref{6.2}, we assume towards a contradiction that there was a surjective function $f: \powerset^N (\kappa_\eta) \rightarrow \alpha_\eta$ in $N$ with $\pi \ol{f}^{D_\pi} = \ol{f}^{D_\pi}$ for all $\pi \in A$ with $[\pi]$ contained in the intersection \[\bigcap_{m < \omega} Fix (\eta_m, i_m)\, \cap\, \bigcap_{m < \omega}H^{\lambda_m}_{k_m} \hspace*{4cm} (A_{\dot{f}}). \] We take $\wt{\beta}$ \textit{large enough for $(A_{\dot{f}})$} as in Chapter \ref{6.2}, Definition \ref{largeenough}, and set $\beta := \wt{\beta} \plus \kappa_\eta^\plus$ (addition of ordinals). \\[-2mm]

Now, we can adapt our definition of $f^\beta$ to $\eta$-almost good pairs, and obtain: \[f^\beta := \Big\{ \; (X, \alpha) \in f\ \; \big|\; \ \exists\, ( (a_m)_{m < \omega}, (\ol{\sigma}_m, \ol{i}_m)_{m < \omega}) \ \ \eta\mbox{\textit{-almost good pair}} \, :\ (\forall m\ \,\ol{i}_m < \beta)\; \wedge\; \]\[ \exists\, \dot{X} \in \Name\big((\ol{P}^\eta)^\omega\, \times\, \prod_m P^{\ol{\sigma}_m}\, \times\, P^{\eta + 1}\big)\  \, X = \dot{X}^{\prod_m G_\ast (a_m)\, \times\, \prod_m G^{\ol{\sigma}_m}_{\ol{i}_m}\, \times\, G^{\eta + 1}}\; \Big\}.\]

First, we assume towards a contradiction that $\boldsymbol{f^\beta: \dom f^\beta \rightarrow \alpha_\eta}$ \textbf{is surjective}.

%\colorbox{yellow}{ACHTUNG - wie sollte man ein $\eta$-almost good pair notieren?}

\subsubsection*{A) Constructing $\boldsymbol{\wt{\m{P}}^\beta\, \uhr\, (\eta + 1)}$.}

As before, we only treat the case that \[ \boldsymbol{\beta < \alpha_{\wt{\eta}}} \mbox{\, or \, } \boldsymbol{Lim\, \cap\, (\eta, \gamma) \neq \emptyset}, \] where $\wt{\eta} := \sup \{\sigma < \eta\ | \ \sigma \in Lim\}$, i.e. we presume that there exist $(\sigma, i)$ with $\sigma \in Lim$ and $i \geq \beta$ or $\sigma > \eta$. \\[-3mm]

This times, we construct a forcing notion $\wt{\m{P}}^\beta\, \uhr \, (\eta + 1)$ instead of $\m{P}^\beta\, \uhr \, (\eta + 1)$; which
%\colorbox{yellow}{TO DO: eine bessere Bezeichnung für $\wt{\m{P}}^\beta\, \uhr \, (\eta + 1)$!}
should be like $\m{P}^\beta\, \uhr\, (\eta + 1)$, except that firstly, we use restrictions $p_\ast\, \uhr\, \kappa_{\eta + 1}^2$ instead of $p_\ast\, \uhr\, \kappa_\eta^2$, and secondly, we include $P^{\eta + 1}$.

\begin{definition} For $p \in \m{P}$, let \[\wt{p}^\beta\, \uhr\, (\eta  + 1) := \big(\, p_\ast\, \uhr\, \kappa_{\eta + 1}^2, (p^\sigma_i, a^\sigma_i)_{\sigma \leq \eta, i < \beta}, (p^\sigma\, \uhr\, (\beta\, \times\, \dom_y p^\sigma))_{\sigma \leq \eta}, p^{\eta + 1}, X_p\, \big),\]

and denote by $\wt{\m{P}}^\beta\, \uhr\, (\eta + 1)$ the collection of all $\wt{p}^\beta\, \uhr\, (\eta + 1)$ such that $p \in \ol{\m{P}}$ (i.e. $p \in \m{P}$ with $|\{ (\sigma, i) \in \supp p_0\ | \ \sigma > \eta\, \vee\, i \geq \beta\}| = \aleph_0$\,); together with the maximal element $\wt{\m{1}}^\beta_{\eta + 1}$. The order relation ${\wt{\leq}}^\beta_{\eta + 1}$ is defined as in \mbox{\upshape Definition \ref{defpbetauhretapluseins}}.
\end{definition} 

%\colorbox{red}{ACHTUNG - $\emptyset$ als Bezeichnung für das maximale Element weglassen?}

Like in Chapter 6.2 A), one can write down a projection of forcing posets ${\wt{\rho}}^\beta_{\eta + 1}: \ol{\m{P}} \rightarrow \wt{\m{P}}^\beta\, \uhr\, (\eta + 1)$ and conclude that 
\[\wt{G}^\beta\, \uhr\, (\eta + 1) := \big\{ \, p \in \wt{\m{P}}^\beta\, \uhr\, (\eta + 1) \ \ \big| \ \ \exists\, q \in G\, \cap\, \ol{\m{P}}\  \  \,\wt{q}^\beta\, \uhr \, (\eta + 1) \ \wt{\leq}^\beta_{\eta + 1} \ p\, \big\}\]

%\ : \ |\{ (\sigma, i) \in \supp q_0 \ | \ \sigma > \eta\, \vee \, i \geq \beta\}| = \omega \ \wedge\, \]\[ \wedge\ p \, \wt{\geq}^\beta_{\eta + 1} \, \wt{q}^\beta\, \uhr \, (\eta + 1)\, \big\}\] 

is a $V$-generic filter on $\wt{\m{P}}^\beta\, \uhr\, (\eta + 1)$.

%\colorbox{yellow}{FRAGE: Könnte man EVTL $\rho^\beta_{\eta + 1} (q) = \emptyset$ setzen, falls $|\{ (\sigma, i) \in \supp q_0 \ | \ \sigma > \eta\, \vee \, i \geq \beta\}| < \omega$? NEIN!!!}

\subsubsection*{B) Capturing $\boldsymbol{f^\beta}$.}

We define a forcing notion $(\wt{\m{P}}^\beta\, \uhr\, (\eta + 1))^{( \eta_m, i_m)_{m < \omega}}$, which will be obtained from $\wt{\m{P}}^\beta\, \uhr\, (\eta + 1)$ by using $\wt{X}_p$ instead of $X_p$ (cf. Chapter \ref{6.2} B)\,), and including for $\eta_m \in \Lim$, $\eta_m > \eta$ the verticals $p^{\eta_m}_{i_m}\, \uhr\, \kappa_{\eta + 1}$, and also $a^{\eta_m}_{i_m}\, \cap\, \kappa_{\eta + 1}$, the according linking ordinals up to $\kappa_{\eta + 1}$. \\[-3mm]

%\colorbox{yellow}{TO DO: nach einem besseren Begriff für \tbl vertical line\tbr\, suchen} \\[-3mm]

The restriction $(\wt{p}^\beta\, \uhr\, (\eta + 1))^{( \eta_m, i_m)_{m < \omega}}$ for $p \in \m{P}$ is defined as follows:
\[ (\wt{p}^\beta\, \uhr\, (\eta + 1))^{( \eta_m, i_m)_{m < \omega}} := \big(\,p_\ast\, \uhr\, \kappa_{\eta + 1}^2\; , \; (p^\sigma_i, a^\sigma_i)_{\sigma \leq \eta, i < \beta}\; , \; (p^{\eta_m}_{i_m}\, \uhr\, \kappa_{\eta + 1}, a^{\eta_m}_{i_m}\, \cap\, \kappa_{\eta + 1})_{m < \omega\, , \, \eta_m > \eta}, \]\[(p^\sigma\, \uhr\, ( \beta\, \times\, \dom_y p^\sigma))_{\sigma \leq \eta}\; , \; \wt{X}_p\; , \;p^{\eta + 1} \, \big).\]

Roughly speaking, the difference with the restrictions $(p^\beta\, \uhr\, (\eta + 1))^{( \eta_m, i_m)_{m < \omega}}$  introduced in Chapter 6.2 B) is, that we are now reaching up to $\kappa_{\eta + 1} = \kappa_\eta^\plus$ instead of $\kappa_\eta$. \\[-3mm]

We denote by $(\wt{\m{P}}^\beta\, \uhr\, (\eta + 1))^{( \eta_m, i_m)_{m < \omega}}$ the collection of all $(\wt{p}^\beta\, \uhr\, (\eta + 1))^{( \eta_m, i_m)_{m < \omega}}$ for $p \in \ol{\m{P}}$
%$p \in \m{P}$ with $| \{ (\sigma, i) \in \supp p_0\ | \ \sigma > \eta\, \vee\, i \geq \beta\}| = \omega$, 
together with the maximal element $(\wt{\m{1}}^\beta_{\eta + 1})^{( \eta_m, i_m)_{m < \omega}}$. The order relation \tbl $\leq$\tbr\,is defined like in Definition \ref{defpbetauhreta+1kompliziert}. \\[-3mm]

Finally, we include the verticals $P^{\eta_m}\, \uhr\, [\kappa_{\eta + 1}, \kappa_{\eta_m})$ for $\eta_m > \eta + 1$, which gives the product \[( \wt{\m{P}}^\beta\, \uhr\, (\eta + 1))^{(\eta_m, i_m)_{m < \omega}}\, \times\, \prod_{m < \omega} P^{\eta_m}\, \uhr\, [\kappa_{\eta + 1}, \kappa_{\eta_m}).\]

Let \[(\wt{G}^\beta\, \uhr\, (\eta + 1))^{( \eta_m, i_m)_{m < \omega}}\, \times\, \prod_{m < \omega} G^{\eta_m}_{i_m}\, \uhr\, [\kappa_{\eta + 1}, \kappa_{\eta_m})\] denote the collection of all \[\big((\wt{p}^\beta\, \uhr\, (\eta + 1))^{( \eta_m, i_m)_{m < \omega}}, (p^{\eta_m}_{i_m}\, \uhr\, [\kappa_{\eta + 1}, \kappa_{\eta_m})_{m < \omega})\big)\] such that there exists $q \in G\, \cap\, \ol{\m{P}}$ with $(\wt{q}^\beta\, \uhr \, (\eta + 1))_{(\eta_m ,i_m)_{m < \omega}} \leq (\wt{p}^\beta\, \uhr\, (\eta + 1))^{( \eta_m, i_m)_{m < \omega}}$ and $q^{\eta_m}_{i_m} \, \uhr\, [\kappa_{\eta + 1}, \kappa_{\eta_m}) \supseteq p^{\eta_m}_{i_m}\, \uhr\, [\kappa_{\eta + 1}, \kappa_{\eta_m})$ for all $m < \omega$. \\[-3mm]

As in Proposition \ref{projectionrho2}, one can construct a projection of forcing posets \[({\wt{\rho}}^\beta)^{( \eta_m, i_m)_{m < \omega}}: \ol{\m{P}} \rightarrow (\wt{\m{P}}^\beta\, \uhr\, (\eta + 1))^{( \eta_m, i_m)_{m < \omega}}\, \times\, \prod_{m < \omega} P^{\eta_m}\, \uhr\,  [\kappa_{\eta + 1}, \kappa_{\eta_m}),\]

%\colorbox{yellow}{ACHTUNG - irgendwo wurde der Begriff \tbl complete projection\tbr\,verwendet!}

and it follows that $(\wt{G}^\beta\, \uhr\, (\eta + 1))^{( \eta_m, i_m)_{m < \omega}}\, \times\, \prod_{m < \omega} G^{\eta_m}_{i_m}\, \uhr\, [\kappa_{\eta + 1}, \kappa_{\eta_m})$ is a $V$-generic filter on $(\wt{\m{P}}^\beta\, \uhr\, (\eta + 1))^{( \eta_m, i_m)_{m < \omega}}\, \times\, \prod_{m < \omega} P^{\eta_m}\, \uhr\,  [\kappa_{\eta + 1}, \kappa_{\eta_m})$. \\[-4mm]

Like in Chapter 6.2 B), we want to define a map $(f^\beta)^\prime$ contained in $V[\wt{G}^\beta\, \uhr\, (\eta + 1))^{( \eta_m, i_m)_{m < \omega}}\, \times\, \prod_{m < \omega} G^{\eta_m}_{i_m}\, \uhr\, [\kappa_{\eta + 1}, \kappa_{\eta_m})]$, and then use an isomorphism argument to show that $f^\beta = (f^\beta)^\prime$. \\[-2mm]

Before that, we have to modify our transformations of names $\tau_{\varrho}$ (where $\varrho$ is an $\eta$-good pair), and define transformations $\wt{\tau}_{\varrho}$(where $\varrho$ is an $\eta$-almost good pair) with \[\wt{\tau}_{\varrho}: \Name ( (\ol{P}^{\eta+ 1})^\omega\, \times \, \prod_{m < \omega} P^{\ol{\sigma}_m}\, \times \, P^{\eta + 1}) \rightarrow \Name (\m{P})\] as follows (cf. Definition \ref{tauvarrho}):

\begin{definition} \label{wttauvarrho} For an $\eta$-almost good pair $ \varrho = \big( (a_m)_{m < \omega}, (\ol{\sigma}_m, \ol{i}_m)_{m < \omega} \big)$, define recursively for $\dot{Y} \in \Name ( (\ol{P}^{\eta+ 1})^\omega\, \times \, \prod_{m < \omega} P^{\ol{\sigma}_m}\, \times \, P^{\eta + 1})$: \[\wt{\tau}_\varrho (\dot{Y}) := \big \{ \, (\wt{\tau}_{\varrho} (\dot{Z}), q)\ \, \big| \ \, q \in \m{P}\; , \;  \exists\, \big(\dot{Z}\, , \, \big( (p_\ast (a_m))_{m < \omega}\, , \,  (p^{\ol{\sigma}_m}_{\ol{i}_m})_{m < \omega}\, , \, p^{\eta + 1}\big)\big) \in \dot{Y}\; :\ \]\[\forall m\ \; \big(\, q_\ast (a_m) \supseteq p_\ast (a_m) \; , \; q^{\ol{\sigma}_m}_{\ol{i}_m} \supseteq p^{\ol{\sigma}_m}_{\ol{i}_m}\, \big) \; , \; q^{\eta + 1} \supseteq p^{\eta + 1}\,\big\}.\]

\end{definition}

%\colorbox{yellow}{FRAGE: Bustabe $\varrho$ für $\eta$-good pairs und $\eta$-almost good pairs nehmen? Ok? Oder eher $\wt{\varrho}$?}

Then $\dot{Y}^{\prod_{m < \omega} G_\ast (a_m)\, \times\, \prod_{m < \omega}G^{\ol{\sigma}_m}_{\ol{i}_m}\, \times\, G^{\eta + 1}} = (\wt{\tau}_{\varrho} (\dot{Y}))^G$ holds for all $\dot{Y} \in \Name ( (\ol{P}^{\eta+1})^\omega\, \times \, \prod_{m < \omega} P^{\ol{\sigma}_m}\, \times\, P^{\eta + 1})$. \\[-2mm]

%When the $\eta$-almost good pair $\varrho = \big( (a_m\ | \ m < \omega), ((\ol{\sigma}_m, \ol{i}_m) \ | \ m < \omega) \big)$ is clear from the context, we sometimes just write $\wt{\dot{Y}}$ instead of $\wt{\tau}_{\varrho} (\dot{Y})$. 

\begin{definition} 
Let $(f^\beta)^\prime$ denote the set of all $(X, \alpha)$ for which there exists an $\eta$-\textit{\upshape almost good pair} $\varrho = ( (a_m)_{m < \omega}, (\ol{\sigma}_m, \ol{i}_m)_{m < \omega})$ with $\ol{i}_m < \beta$ for all $m < \omega$, such that \[X = \dot{X}^{\prod_m G_\ast (a_m)\, \times \, \prod_m G^{\ol{\sigma}_m}_{\ol{i}_m}\, \times\, G^{\eta + 1}}, \] and there is a condition $p \in \m{P}$ with the following properties: 

\begin{itemize}  \item $|\{ (\sigma, i) \in \supp p_0\ | \ \sigma > \eta \mbox{ or } i \geq \beta\}| = \aleph_0$, \item $p \Vdash_s \big(\wt{\tau}_{\varrho} (\dot{X}), \alpha\big) \in \dot{f}$,\item $\big(\, (\wt{p}^\beta\, \uhr\, (\eta + 1))^{( \eta_m, i_m)_{m < \omega}}, ( p^{\eta_m}_{i_m}\, \uhr\, [\kappa_{\eta + 1}, \kappa_{\eta_m}))_{m < \omega}\, \big) \in (\wt{G}^\beta\, \uhr\, (\eta + 1))^{( \eta_m, i_m)_{m < \omega}}\, \times\, \prod_{m < \omega} G^{\eta_m}_{i_m} \, \uhr \, [\kappa_{\eta + 1}, \kappa_{\eta_m})$, \item $\forall\, \eta_m \in \Lim\ : \ (\eta_m, i_m) \in \supp p_0$ with $a^{\eta_m}_{i_m} = g^{\eta_m}_{i_m}$. \end{itemize} \end{definition}

Then $(f^\beta)^\prime \in V[(\wt{G}^\beta\, \uhr\, (\eta + 1))^{( \eta_m, i_m)_{m < \omega}}\, \times\, \prod_{m < \omega} G^{\eta_m}_{i_m}\, \uhr\, [\kappa_{\eta + 1}, \kappa_{\eta_m})]$.

%FRAGE: Sollte man erwähnen, dass die Definition von $\wt{\dot}$ hier modifiziert werden müsste?  

\begin{prop} $f^\beta = (f^\beta)^\prime$. \end{prop}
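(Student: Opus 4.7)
The plan is to mirror the proof of Proposition \ref{fbetafbetaprime} with the modifications dictated by working with $\eta$-almost good pairs instead of $\eta$-good pairs and by the explicit factor $G^{\eta + 1}$ in all intermediate extensions. The inclusion $(f^\beta)^\prime \supseteq f^\beta$ is immediate from the forcing theorem and our definitions. For the reverse inclusion, I would assume toward a contradiction there exists $(X, \alpha) \in (f^\beta)^\prime \setminus f^\beta$, fix an $\eta$-almost good pair $\varrho = \big((a_m)_{m < \omega}, (\ol{\sigma}_m, \ol{i}_m)_{m < \omega}\big)$ with $\ol{i}_m < \beta$ and a name $\dot{X}$ witnessing $(X, \alpha) \in (f^\beta)^\prime$, pick a condition $p$ as in the definition of $(f^\beta)^\prime$ with $p \Vdash_s (\wt{\tau}_\varrho(\dot{X}), \alpha) \in \dot{f}$, and a condition $p^\prime \in G$ with $p^\prime \Vdash_s (\wt{\tau}_\varrho(\dot{X}), \alpha) \notin \dot{f}$ and $(\eta_m, i_m) \in \supp p^\prime_0$ for all $\eta_m \in \Lim$.

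Next I would construct common extensions $\ol{p} \leq p$ and $\ol{p}^\prime \leq p^\prime$ with the same shape in the sense of Proposition \ref{fbetafbetaprime}, but now with the stronger compatibility requirements
\[\ol{p}_\ast \uhr \kappa_{\eta + 1}^2 = \ol{p}^\prime_\ast \uhr \kappa_{\eta + 1}^2,\quad \ol{p}^{\eta + 1} = (\ol{p}^\prime)^{\eta + 1},\]
together with $\ol{p}^\sigma_i = (\ol{p}^\prime)^\sigma_i$, $\ol{a}^\sigma_i = (\ol{a}^\prime)^\sigma_i$ for all $\sigma \leq \eta$, $i < \beta$, and $\ol{p}^{\eta_m}_{i_m} = (\ol{p}^\prime)^{\eta_m}_{i_m}$ for all $m < \omega$. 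The first equation can be arranged since $p_\ast \uhr \kappa_{\eta + 1}^2$ and $p^\prime_\ast \uhr \kappa_{\eta + 1}^2$ are forced to be compatible by membership of $(\wt{p}^\beta \uhr (\eta + 1))^{(\eta_m, i_m)_{m < \omega}}$ in the filter $(\wt{G}^\beta \uhr (\eta + 1))^{(\eta_m, i_m)_{m < \omega}}$; the second follows analogously since both $p^{\eta + 1}$ and $(p^\prime)^{\eta + 1}$ lie in $G^{\eta + 1}$. The rest of the combinatorial construction of $\ol{p}$ and $\ol{p}^\prime$ follows the proof of Proposition \ref{fbetafbetaprime} verbatim, working now with intervals $[\kappa_{\nu, j}, \kappa_{\nu, j + 1}) \subseteq \kappa_{\eta + 1}$ wherever the original proof used $\subseteq \kappa_\eta$; the linking property below $\kappa_{\eta + 1}$ for coordinates $(\eta_m, i_m)$ with $\eta_m > \eta$ is exactly what motivated the extra information $a^{\eta_m}_{i_m} \cap \kappa_{\eta + 1}$ in Definition \ref{defpbetauhreta+1kompliziert} (in its $\wt{\phantom{P}}$-variant).

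Then I would construct a standard isomorphism $\pi$ for $\pi \ol{p} = \ol{p}^\prime$, setting the free maps $G_{\pi_0}(\nu, j) := F_{\pi_0}(\nu, j)$ for every $\kappa_{\nu, j} < \kappa_{\eta + 1}$ and $G_{\pi_0}(\nu, j) := \id$ for $\kappa_{\nu, j} \geq \kappa_{\eta + 1}$, and arranging $\pi_1$ so that $\eta + 1 \in \supp \pi_1$ acts trivially on the $\eta + 1$-column (which is forced by $\ol{p}^{\eta + 1} = (\ol{p}^\prime)^{\eta + 1}$). As in Proposition \ref{fbetafbetaprime}, this guarantees that $\pi$ is the identity on the extended forcing $\wt{\m{P}}^\beta \uhr (\eta + 1)$ restricted to conditions in its domain, i.e. fixes $q_\ast \uhr \kappa_{\eta + 1}^2$, each $(q^\sigma_i, b^\sigma_i)$ for $\sigma \leq \eta$, $i < \beta$, and $q^{\eta + 1}$ itself. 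Since $\wt{\beta}$ is large enough for $(A_{\dot f})$, we again obtain $[\pi] \in \bigcap_m \Fix(\eta_m, i_m) \cap \bigcap_m H^{\lambda_m}_{k_m}$, so $\pi \ol{f}^{D_\pi} = \ol{f}^{D_\pi}$. A recursion along the $\Name(\m{P})$-hierarchy shows $\pi \,\ol{\wt{\tau}_\varrho(\dot{X})}^{D_\pi} = \ol{\wt{\tau}_\varrho(\dot{X})}^{D_\pi}$, using that the generic datum $\prod_m G_\ast(a_m) \times \prod_m G^{\ol{\sigma}_m}_{\ol{i}_m} \times G^{\eta + 1}$ is entirely contained in the part of $\m{P}$ fixed by $\pi$; combining $\ol{p} \Vdash_s (\wt{\tau}_\varrho(\dot{X}), \alpha) \in \dot{f}$ with $\ol{p}^\prime \Vdash_s (\wt{\tau}_\varrho(\dot{X}), \alpha) \notin \dot{f}$ and $\pi \ol{p} = \ol{p}^\prime$ then gives the desired contradiction.

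The main obstacle, compared with Proposition \ref{fbetafbetaprime}, is the bookkeeping required to make $\pi$ act trivially on the additional factor $P^{\eta + 1}$ while keeping it a standard isomorphism for $\pi \ol{p} = \ol{p}^\prime$: one has to include $\eta + 1$ in $\supp \pi_1$ with $\dom \pi_1(\eta + 1) = \dom \ol{p}^{\eta + 1}$ but with $\supp \pi_1(\eta + 1) = \emptyset$, $f_{\pi_1}(\eta + 1) = \id$, and every $\pi_1(\eta + 1)(i, \zeta) = \id$, and verify that this is consistent with the requirement $\ol{p}^{\eta + 1} = (\ol{p}^\prime)^{\eta + 1}$ arranged in the extension step. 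Once this is done, the remainder of the argument proceeds in complete parallel with the $\kappa_{\eta + 1} > \kappa_\eta^\plus$ case.
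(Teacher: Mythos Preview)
Your proposal is correct and follows essentially the same route as the paper's proof: both mirror Proposition \ref{fbetafbetaprime}, extending the $\ol{p}_\ast = \ol{p}^\prime_\ast$ and vertical constructions from intervals below $\kappa_\eta$ to all intervals below $\kappa_{\eta+1}$, arranging $\ol{p}^{\eta+1} = (\ol{p}^\prime)^{\eta+1}$, and setting $G_{\pi_0}(\nu,j) := F_{\pi_0}(\nu,j)$ for $\kappa_{\nu,j} < \kappa_{\eta+1}$ so that $\pi$ is the identity on $\wt{\m{P}}^\beta\,\uhr\,(\eta+1)$ including the $P^{\eta+1}$ factor. Your explicit description of $\pi_1$ on the $(\eta+1)$-column (with $\supp \pi_1(\eta+1) = \emptyset$ and all $\pi_1(\eta+1)(i,\zeta) = \id$) simply unpacks what the paper's phrase ``clear by construction of $\pi_1$'' means for a standard isomorphism when $\ol{p}^{\eta+1} = (\ol{p}^\prime)^{\eta+1}$.
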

           
\begin{proof} We briefly outline, where the isomorphism argument form Proposition \ref{fbetafbetaprime} has to be modified.
We start with $(X, \alpha) \in (f^\beta)^\prime \setminus f^\beta$, $X = \dot{X}^{\prod_m G_\ast (a_m)\, \times\, \prod_m G^{\ol{\sigma}_m}_{\ol{i}_m}\, \times\, G^{\eta + 1}}$, for an $\eta$-almost good pair $\varrho = ( (a_m)_{m < \omega}, ( (\ol{\sigma}_m, \ol{i}_m)_{m < \omega}))$. Take $p$ as in the definition of $(f^\beta)^\prime$ with $p \Vdash_s (\wt{\tau}_{\varrho} (\dot{X}), \alpha) \in \dot{f}$, and $p^\prime \in G$ with $p^\prime \Vdash_s (\wt{\tau}_{\varrho} (\dot{X}), \alpha) \notin \dot{f}$. \\[-3mm]

The first step is the construction of extensions $\ol{p} \leq p$, $\ol{p}^\prime \leq p^\prime$, such that $\ol{p}$ and $\ol{p}^\prime$ have \tbl the same shape\tbr, agree on $\wt{\m{P}}^\beta\, \uhr\, (\eta + 1)$; and $\ol{p}^{\eta_m}_{i_m} = (\ol{p}^\prime)^{\eta_m}_{i_m}$ holds for all $m < \omega$, and $\ol{a}^{\eta_m}_{i_m} = (\ol{a}^\prime)^{\eta_m}_{i_m}$ holds for all $m < \omega$ with $\eta_m \in Lim$. \\[-3mm]

We proceed as in Proposition \ref{fbetafbetaprime}, with the following modifications: \begin{itemize} \item The construction of $\ol{p}_\ast$, $\ol{p}^\prime_\ast$ that we used in the Proposition \ref{fbetafbetaprime} for intervals $[\kappa_{\nu, j}, \kappa_{\nu, j + 1}) \subseteq \kappa_\eta$, has to be applied to all intervals $[\kappa_{\nu, j}, \kappa_{\nu, j + 1}) \subseteq \kappa_{\eta + 1}$ now, since we need $\ol{p}_\ast$ and $\ol{p}^\prime_\ast$ agree on $\kappa_{\eta + 1}^2$.
 \item Analogously, the construction of $\ol{p}^\sigma_i$, $(\ol{p}^\prime)^\sigma_i$ for $\sigma \in Lim$, $i < \alpha_\sigma$ for intervals $[\kappa_{\nu, j}, \kappa_{\nu, j + 1}) \subseteq \kappa_\eta$, has to be applied to all intervals $[\kappa_{\nu, j}, \kappa_{\nu, j + 1}) \subseteq \kappa_{\eta + 1}$ now, in the case that $\sigma > \eta + 1$. \item Additionally, we have to make sure that $\ol{p}^{\eta + 1} = (\ol{p}^\prime)^{\eta + 1}$. 
%FRAGE: $p^{\eta + 1}$ oder $p (\eta + 1)$? 
\end{itemize}
 
The next step is the construction of an isomorphism $\pi$ such that $\pi \ol{p} = \ol{p}^\prime$, $\pi \ol{f}^{D_\pi} = \ol{f}^{D_\pi}$, and $\pi \,\ol{\wt{\tau}_{\varrho} (\dot{X})}^{D_\pi} = \ol{\wt{\tau}_{\varrho}(\dot{X})}^{D_\pi}$. Again, we take for $\pi$ a \textit{standard isomorphism for }$\pi \ol{p} = \ol{p}^\prime$; but this time, we set $G_{\pi_0} (\nu, j) := F_{\pi_0} (\nu, j)$ for all intervals $[\kappa_{\nu, j}, \kappa_{\nu, j + 1}) \subseteq \kappa_{\eta + 1}$ (instead of only intervals $[\kappa_{\nu, j}, \kappa_{\nu, j +1})\subseteq \kappa_\eta$), and $G_{\pi_0} (\nu, j) = id$ for all $\kappa_{\nu, j} \geq \kappa_{\eta + 1}$ (instead of all $\kappa_{\nu, j} \geq \kappa_\eta$). Then as before, it follows that $\pi \in \bigcap_m Fix (\eta_m, i_m)\, \cap\, \bigcap_m H^{\lambda_m}_{k_m}$. 

For verifying $\pi \,\ol{\wt{\tau}_{\varrho} (\dot{X})}^{D_\pi} = \ol{\wt{\tau}_{\varrho} (\dot{X})}^{D_\pi}$, we now additionally have to make sure that $\pi$ is the identity on $P^{\eta + 1}$. But since we have arranged $\ol{p}^{\eta + 1} = (\ol{p}^\prime)^{\eta + 1}$, this is clear by construction of $\pi_1$.\\[-2mm]

%\colorbox{red}{ACHTUNG - wurde $\wt{X}^{D_\pi} := \ol{\wt{X}}^{D_\pi}$ überhaupt definiert?}

Now, it follows from $\ol{p} \Vdash_s \big(\wt{\tau}_{\varrho} (\dot{X}), \alpha\big) \in \dot{f}$ that $\pi \ol{p} \Vdash_s \big(\pi \,\ol{\wt{\tau}_{\varrho} (\dot{X})}^{D_\pi}, \alpha\big) \in \pi \ol{f}^{D_\pi}$. Hence, $\ol{p}^\prime \Vdash_s \big(\ol{\wt{\tau}_{\varrho} (\dot{X})}^{D_\pi}, \alpha\big) \in \ol{f}^{D_\pi}$, which is a contradiction towards $\ol{p}^\prime \Vdash_s \big(\wt{\tau}_{\varrho} (\dot{X}), \alpha\big) \notin \dot{f}$.
%FRAGE: GENÜGT DAS WIRKLICH? 
\end{proof}

Thus, $f^\beta = (f^\beta)^\prime \in V[(\wt{G}^\beta\, \uhr\, (\eta + 1))^{( \eta_m, i_m)_{m < \omega}}\, \times\, \prod_m G^{\eta_m}_{i_m}\, \uhr\, [\kappa_{\eta + 1}, \kappa_{\eta_m})]$ as desired. 

\subsubsection*{C) $\boldsymbol{(\wt{\m{P}}^\beta\, \uhr\, (\eta + 1))^{( \eta_m, i_m)_{m < \omega}}\, \times\, \prod_m P^{\eta_m}\, \uhr\, [\kappa_{\eta + 1}, \kappa_{\eta_m})}$ preserves cardinals $\boldsymbol{\geq \alpha_\eta}$.}

Now, we will show that cardinals $\geq \alpha_\eta$ are absolute between $V$ and $V[(\wt{G}^\beta\, \uhr \, (\eta + 1))^{( \eta_m, i_m)_{m < \omega}}\, \times\, \prod_{m < \omega} G^{\eta_m}_{i_m}\, \uhr\, [\kappa_{\eta + 1}, \kappa_{\eta_m})]$. \\[-3mm]

As in Chapter \ref{6.2} C), we are using that $GCH$ holds in our ground model $V$, and when we argue that a particular forcing notion preserves cardinals, we mean that it preserves cardinals under $GCH$, if not stated differently. \\[-2mm]

% that we are assuming $GCH$ in our ground model $V$, which will be used implicitly throughout this Chapter \ref{6.2} C): When we argue that a particular forcing notion preserves cardinals, then we always mean it preserves cardinals under the assumption that $GCH$ holds. \\[-2mm]

%\colorbox{red}{WEGEN DIESER FORMULIERUNG NACHFRAGEN!!}

%First, we have a look at the cardinality of $(\m{P}^\beta\, \uhr\, (\eta + 1))^{( \eta_m, i_m)_{m < \omega}}$, and use that $\beta$ was an ordinal \textit{large enough for the intersection $(A_{\dot{f}})$} with $\kappa_\eta^\plus < \beta < \alpha_\eta$.

\begin{lem} If $|\beta|^\plus < \alpha_\eta$, then $(\wt{\m{P}}^\beta\, \uhr\, (\eta + 1))^{( \eta_m, i_m)_{m < \omega}}\, \times\, \prod_m P^{\eta_m}\, \uhr\, [\kappa_{\eta + 1}, \kappa_{\eta_m})$ preserves cardinals $\geq \alpha_\eta$. \end{lem}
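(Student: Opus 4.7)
The plan is to mimic Lemma~\ref{cardforc} and Corollary~\ref{prescard2mitte}, isolating the new ingredient, namely the component $p^{\eta+1}$ which is now part of a condition. Concretely, I would factor
\[
(\wt{\m{P}}^\beta\, \uhr\, (\eta + 1))^{( \eta_m, i_m)_{m < \omega}}\; \times\; \prod_{m < \omega} P^{\eta_m}\, \uhr\, [\kappa_{\eta + 1}, \kappa_{\eta_m})\ \cong\ Q\; \times\; C^{\eta+1}\; \times\; \prod_{m < \omega} P^{\eta_m}\, \uhr\, [\kappa_{\eta + 1}, \kappa_{\eta_m}),
\]
where $Q$ collects all remaining coordinates of $(\wt{\m{P}}^\beta\, \uhr\, (\eta + 1))^{(\eta_m, i_m)_{m < \omega}}$, i.e. $p_\ast\, \uhr\, \kappa_{\eta+1}^2$, the family $(p^\sigma_i, a^\sigma_i)_{\sigma\leq\eta, i<\beta}$, the traces $(p^{\eta_m}_{i_m}\, \uhr\, \kappa_{\eta+1}, a^{\eta_m}_{i_m}\, \cap\, \kappa_{\eta+1})_{\eta_m>\eta}$, the sequence $(p^\sigma\, \uhr\, (\beta\,\times\,\dom_y p^\sigma))_{\sigma\leq\eta, \sigma\in\Succ}$, and $\wt{X}_p$.

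The first step is to bound $|Q|$ by $|\beta|^\plus$, exactly as in Lemma~\ref{cardforc}. Using $GCH$ in $V$ together with $\kappa_{\eta+1}=\kappa_\eta^\plus$ and the lower bound $|\beta|\geq \kappa_\eta^\plus$, each coordinate contributes at most $|\beta|^\plus$: in particular $2^{\kappa_{\eta+1}}=\kappa_\eta^{\plus\plus}\leq|\beta|^\plus$, the family $(p^\sigma_i,a^\sigma_i)_{\sigma\leq\eta,i<\beta}$ contributes $|\beta|^{\aleph_0}\leq|\beta|^\plus$ via countable support, and $(p^\sigma\,\uhr\,(\beta\,\times\,\dom_y p^\sigma))_{\sigma\leq\eta}$ contributes at most $(2^{|\beta|\cdot\kappa_\eta})^{\aleph_0}=|\beta|^\plus$; the other parts are majorized by $\kappa_\eta^\plus$. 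Hence $Q$ is a forcing of cardinality at most $|\beta|^\plus<\alpha_\eta$, so it preserves every cardinal $\geq|\beta|^{\plus\plus}$, and in particular every cardinal $\geq\alpha_\eta$, since $|\beta|^\plus<\alpha_\eta$ forces $|\beta|^{\plus\plus}\leq\alpha_\eta$ whether $\alpha_\eta$ is a limit or a successor cardinal.

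To finish, I handle the two remaining factors. The forcing $\prod_{m<\omega} P^{\eta_m}\, \uhr\,[\kappa_{\eta+1},\kappa_{\eta_m})$ preserves all cardinals and is $\leq\kappa_{\eta+1}$-closed by the argument of Lemma~\ref{prescard2anfang}; as it adds no new subsets of $\kappa_{\eta+1}$, passing to its generic extension leaves $Q$ and $C^{\eta+1}$ unchanged as forcing notions, and the bound $|Q|\leq|\beta|^\plus$ still holds there. The factor $C^{\eta+1}$ is $<\kappa_{\eta+1}$-closed, and under $GCH$ (which persists after the $\leq\kappa_{\eta+1}$-closed stage) satisfies $2^{<\kappa_{\eta+1}}=\kappa_{\eta+1}$, hence enjoys the $\kappa_{\eta+1}^\plus$-chain condition and preserves every cardinal. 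Iterating these observations in the order $\prod P^{\eta_m}\,\uhr\,[\kappa_{\eta+1},\kappa_{\eta_m})$, then $C^{\eta+1}$, then $Q$, shows that the full product preserves every cardinal $\geq\alpha_\eta$, as required.

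The only real subtlety I foresee is to make sure the size bound on $Q$ survives the preliminary $\leq\kappa_{\eta+1}$-closed forcing: since every object parametrizing a $Q$-condition has size $\leq\kappa_\eta<\kappa_{\eta+1}$, such an object already lies in $V$, so no new $Q$-conditions are created, and the bound $|Q|\leq|\beta|^\plus$ transfers without further computation.
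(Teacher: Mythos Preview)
Your proposal is correct and follows essentially the same approach as the paper: factor off the $p^{\eta+1}$-component (the paper writes this as splitting off $P^{\eta+1}$, i.e.\ $C^{\eta+1}$), bound the remaining part $Q$ by $|\beta|^\plus$ via $GCH$ and $\kappa_{\eta+1}=\kappa_\eta^\plus$, and note that $C^{\eta+1}\times\prod_m P^{\eta_m}\uhr[\kappa_{\eta+1},\kappa_{\eta_m})$ preserves all cardinals. The paper's estimate for the $p_\ast\uhr\kappa_{\eta+1}^2$-coordinate is slightly sharper (it obtains $\kappa_\eta^\plus\leq|\beta|$ by using that domains are bounded below the regular cardinal $\kappa_{\eta+1}$), but your coarser bound $2^{\kappa_{\eta+1}}\leq|\beta|^\plus$ already suffices here.
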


\begin{proof} We closely follow the proof of Lemma \ref{cardforc} and Corollary \ref{prescard2mitte}. \\[-3mm]

The forcing notion $(\wt{\m{P}}^\beta\, \uhr\, (\eta + 1))^{(\eta_m ,i_m)_{m < \omega}}$ is the set of all \[ \big(\,p_\ast\, \uhr\, \kappa_{\eta + 1}^2\; , \; (p^\sigma_i, a^\sigma_i)_{\sigma \leq \eta, i < \beta}\; , \; (p^{\eta_m}_{i_m}\, \uhr\, \kappa_{\eta + 1}, a^{\eta_m}_{i_m}\, \cap\, \kappa_{\eta + 1})_{m < \omega, \eta_m > \eta}, \]\[(p^\sigma\, \uhr\, ( \beta\, \times\, \dom_y p^\sigma))_{\sigma \leq \eta}\; , \; \wt{X}_p\; , \; p^{\eta + 1} \, \big),\] where $p \in \m{P}$ with $|\{ (\sigma, i) \in \supp p_0\ | \ \sigma > \eta\, \vee\, i \geq \beta\}| = \aleph_0$. \\[-3mm]

Since $\kappa_{\eta + 1} = \kappa_\eta^\plus$, it follows that the $p_\ast \, \uhr\, \kappa_{\eta+1}^2$, as well as $(p^{\eta_m}_{i_m}\, \uhr\, \kappa_{\eta + 1}, a^{\eta_m}_{i_m}\, \cap\, \kappa_{\eta + 1})$ for $m < \omega$ are bounded below $\kappa_{\eta + 1}$; which gives only $\leq (\kappa_{\eta + 1} \, \cdot\, 2^{\kappa_\eta})^\omega = \kappa_{\eta + 1} = \kappa_\eta^\plus \leq |\beta|$-many possibilities.
% for $p_\ast\, \uhr\, \kappa_{\eta + 1}^2$ and $(p^{\eta_m}_{i_m}\, \uhr\, \kappa_{\eta + 1}, a^{\eta_m}_{i_m}\, \cap\, \kappa_\eta)_{m < \omega}$.
 
Since $\wt{X}_p \subseteq \kappa_\eta$, there are only $\leq \kappa_\eta^\plus \leq |\beta|$-many possibilities for $\wt{X}_p$, as well. Regarding $(p^\sigma_i, a^\sigma_i)_{\sigma \leq \eta\, , \, i < \beta}$ and $(p^\sigma\, \uhr\, (\dom_x p^\sigma\, \times\, \beta))_{\sigma \leq \eta}$, it follows as in Lemma \ref{cardforc} that there are only $\leq |\beta|^\plus \, \cdot\, \kappa_\eta^\plus = |\beta|^\plus$-many possibilities. \\[-3mm]  

We denote by $\big((\wt{\m{P}}^\beta\, \uhr\, (\eta + 1))^{( \eta_m, i_m)_{m < \omega}}\big)^\prime$ the forcing notion that is obtained from $(\wt{\m{P}}^\beta\, \uhr\, (\eta + 1))^{(\eta_m, i_m)_{m < \omega}}$ by excluding $P^{\eta + 1}$. Then $(\wt{\m{P}}^\beta\, \uhr\, (\eta + 1))^{( \eta_m, i_m)_{m < \omega}}$ is isomorphic to the product $\big((\wt{\m{P}}^\beta\, \uhr\, (\eta + 1))^{(\eta_m, i_m)_{m < \omega}}\big)^\prime \, \times\, P^{\eta + 1}$. \\[-3mm]

By what we have just argued, it follows that the forcing notion $\big((\wt{\m{P}}^\beta\, \uhr\, (\eta + 1))^{( \eta_m, i_m)_{m < \omega}}\big)^\prime$ has cardinality $\leq |\beta|^\plus$; and the remaining product $P^{\eta + 1}\, \times\, \prod_{m < \omega} P^{\eta_m} \, \uhr\, [\kappa_{\eta + 1}, \kappa_{\eta_m})$ preserves all cardinals by similar arguments as in Proposition \ref{prescardalphaeta}. Hence, it follows that $\big((\wt{\m{P}}^\beta\, \uhr\, (\eta + 1))^{( \eta_m, i_m)_{m < \omega}}\big)^\prime\, \times\, P^{\eta + 1}\, \times\, \prod_{m < \omega} P^{\eta_m}\, \uhr\, [\kappa_{\eta + 1}, \kappa_{\eta_m})$ preserves all cardinals $\geq |\beta|^{\plus\plus}$.

\end{proof}
%ACHTUNG - die Bezeichnung $\wt{\m{P}}^\beta\, \uhr\, (\eta + 1))$ wurde in Paragraph C) schon benutzt! Das muesste man AENDERN!!

%TO DO: bessere Bezeichung fuer $\widehat{\wt{\m{P}}}^\beta\, \uhr \, (\eta + 1)$ ueberlegen!

\begin{prop} The forcing $(\wt{\m{P}}^\beta\, \uhr\, (\eta + 1))^{( \eta_m, i_m)_{m < \omega}}\, \times\, \prod_m P^{\eta_m}\, \uhr\, [\kappa_{\eta + 1}, \kappa_{\eta_m})$ preserves cardinals $\geq \alpha_\eta$.\end{prop}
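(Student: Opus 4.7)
The plan is to mirror the case analysis of Proposition \ref{prescardalphaeta}, with the extra complication that $(\wt{\m{P}}^\beta\, \uhr\, (\eta + 1))^{(\eta_m, i_m)_{m < \omega}}$ now carries an additional component $P^{\eta+1}$. By the preceding lemma it suffices to handle the case $|\beta|^\plus = \alpha_\eta$; in this situation our standing assumption $\alpha_\eta = \alpha^\plus \rightarrow \cf\, \alpha > \omega$ (cf.\ Chapter \ref{the theorem}) forces $\cf |\beta| > \omega$, so $GCH$ in $V$ gives $|\beta|^{\aleph_0} = |\beta|$.

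First I would rerun the cardinality count from the preceding lemma inside this sharper bound. The factors $p_\ast\, \uhr\, \kappa_{\eta+1}^2$, $(p^{\eta_m}_{i_m}\, \uhr\, \kappa_{\eta+1}, a^{\eta_m}_{i_m}\, \cap\, \kappa_{\eta+1})_{m < \omega}$ and $\wt{X}_p$ each contribute at most $\kappa_{\eta+1}\cdot 2^{\kappa_\eta} \leq |\beta|$ possibilities, and countable sequences thereof still number at most $|\beta|^{\aleph_0} = |\beta|$. Exactly as in Proposition \ref{prescardalphaeta}, among the $(p^\sigma\, \uhr\, (\beta\, \times\, \dom_y p^\sigma))_{\sigma \leq \eta,\, \sigma \in \Succ}$ only the factors $P^\sigma\, \uhr\, (\beta\, \times\, [\ol{\kappa_\sigma}, \kappa_\sigma))$ with $\sigma \in \{\ol\eta, \eta\} \cap \Succ$ can exceed size $|\beta|$. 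I would split off these (at most two) factors together with $P^{\eta+1}$, leaving a residual forcing of cardinality $\leq |\beta|$ which therefore preserves all cardinals $\geq |\beta|^\plus = \alpha_\eta$.

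It remains to argue that the separated product
\[Q \; := \; \Big(\prod_{\sigma \in \{\ol\eta, \eta\} \cap \Succ} P^\sigma\, \uhr\, (\beta\, \times\, [\ol{\kappa_\sigma}, \kappa_\sigma))\Big)\; \times \; P^{\eta+1}\; \times\; \prod_{m < \omega} P^{\eta_m}\, \uhr\, [\kappa_{\eta+1}, \kappa_{\eta_m})\]
preserves \emph{all} $V$-cardinals. Working top down, the tail $\prod_{m < \omega} P^{\eta_m}\, \uhr\, [\kappa_{\eta+1}, \kappa_{\eta_m})$ is $\leq \kappa_{\eta+1}$-closed and cardinal-preserving by the argument of Lemma \ref{prescard1}; in the extension it produces, $2^{<\kappa_{\eta+1}} = \kappa_{\eta+1}$ still holds, so $P^{\eta+1}$ is $<\kappa_{\eta+1}$-closed and satisfies the $\kappa_{\eta+1}^\plus$-cc and hence preserves all cardinals; and the remaining $P^\sigma$-factors for $\sigma \in \{\ol\eta, \eta\} \cap \Succ$ can then be added by the same $<\kappa_\sigma$-closed, $\kappa_\sigma^\plus$-cc argument, since the already-forced factors are sufficiently closed below $\kappa_\sigma$ to preserve the relevant $GCH$-instance.

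The main obstacle will be correctly ordering this last absorption step and verifying that $2^{<\kappa_\sigma} = \kappa_\sigma$ persists in each intermediate extension, especially when $\sigma = \eta \in \Succ$ and we have just forced with the similarly-shaped $P^{\eta+1}$. This is controlled by the fact that all separated factors preceding $P^\sigma$ in our order of absorption are $\leq \kappa_\sigma$-closed, hence add no bounded subsets of $\kappa_\sigma$; the remainder is bookkeeping parallel to the proof of Proposition \ref{prescardalphaeta}.
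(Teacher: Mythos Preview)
Your proposal is correct and follows essentially the same route as the paper: reduce to the case $|\beta|^\plus = \alpha_\eta$, split off $P^{\eta+1}$ together with the at most two large Cohen factors $P^\sigma\,\uhr\,(\beta\times[\ol{\kappa_\sigma},\kappa_\sigma))$ for $\sigma\in\{\ol\eta,\eta\}\cap\Succ$, bound the residual forcing by $|\beta|$, and verify that the separated product preserves all cardinals via the closure-and-chain-condition ladder. Your explicit top-down absorption of the factors of $Q$ is exactly the verification the paper leaves implicit by pointing back to Proposition~\ref{prescardalphaeta}.
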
 

%It remains to make sure that also in the case $|\beta|^\plus = \alpha_\eta$, the product $(\wt{\m{P}}^\beta\, \uhr\, (\eta + 1))_{(\eta_j, i_j)_{j < \omega}}\, \times\, \prod_{j < \omega} P^{\eta_j}_{i_j}\, \uhr \, [\kappa_{\eta + 1}, \kappa_{\eta_j})$ preserves cardinals $\geq \alpha_\eta$. \\ 
\begin{proof} We only have to treat the case that $\alpha_\eta = |\beta|^\plus$. Then $cf \,\beta > \omega$, and $GCH$ gives $|\beta|^{\aleph_0} = |\beta|$. The proof is similar as for Proposition \ref{prescardalphaeta}: We distinguish several cases, and construct $\big((\wt{\m{P}}^\beta\, \uhr \, (\eta + 1))^{( \eta_m, i_m)_{m < \omega}}\big)^{\prime\prime}$ from $(\wt{\m{P}}^\beta\, \uhr\, (\eta + 1))^{( \eta_m, i_m)_{m < \omega}}$ by splitting up $P^{\eta + 1}$, and also one or two factors $P^{\sigma}\, \uhr\, (\beta\, \times\, [\ol{\kappa_{\sigma}}, \kappa_{\sigma}))$ for $\sigma \in Succ$ with $\sigma = \eta$, or $\sigma = \ol{\eta}$ in the case that $\eta$ is a successor ordinal with $\eta = \ol{\eta} + 1$.
% with $\kappa_{\ol{\sigma}} = : \ol{\kappa}_{\ol{\sigma}}^\plus$. ACHTUNG - man muesste KLAEREN ob die Rechtecke bei $\ol{\kappa}_{\ol{\sigma}}$ wie hier anfangen sollen, oder lieber bei $\kappa_{\ol{\sigma}^\prime}$ mit $\ol{\sigma} = \ol{\sigma}^\prime + 1$? 
Then as in the proof of Proposition \ref{prescardalphaeta}, it follows that $\big((\wt{\m{P}}^\beta\, \uhr \, (\eta + 1))^{( \eta_m, i_m)_{m < \omega}}\big)^{\prime \prime}$ has cardinality $\leq |\beta| < \alpha_\eta$, and the product of the remaining $P^{\sigma}\, \uhr\, (\beta\, \times\, [\ol{\kappa_{\sigma}}, \kappa_{\sigma}))$, $P^{\eta + 1}$ and $\prod_m P^{\eta_m}\, \uhr\, [\kappa_{\eta + 1}, \kappa_{\eta_m})$ preserves all cardinals. \end{proof}
%(The only difference is that we now have one more factor $P^{\eta + 1}$ to care about.)
%\\ Hence, it follows that $(\wt{\m{P}}^\beta\, \uhr\, (\eta + 1))^{( \eta_m, i_m)_{m < \omega}}$ preserves all cardinals $\geq \alpha_\eta$.

\subsubsection*{D) A set $\boldsymbol{\wt{\powerset} (\kappa_\eta) \supseteq \dom f^\beta}$ with an injection $\boldsymbol{\iota: \wt{\powerset} (\kappa_\eta) \hookrightarrow |\beta|^{\aleph_0}}$.}

%An injection $\iota: \wt{\powerset} (\kappa_\eta) \hookrightarrow |\beta|^\omega$ with $\wt{\powerset} (\kappa_\eta) \supseteq \powerset^N (\kappa_\eta)$ NEIN! $\wt{\powerset} (\kappa_\eta) \supset \dom f^\beta$!}
          
 %oder: \tbl A set $\wt{\powerset} (\kappa_\eta) \supseteq \dom f^\beta$ with an injection $\iota: \wt{\powerset} (\kappa_\eta) \hookrightarrow |\beta|^\omega$? \tbr. \\
          
For an $\eta$-almost good pair $\varrho = ( (a_m)_{m < \omega}, (\ol{\sigma}_m, \ol{i}_m)_{m < \omega})$, it follows that $\prod_m G_\ast (a_m)\, \times\, \prod_m G^{\eta_m}_{i_m}\, \times\, G^{\eta + 1}$ is a $V$-generic filter on $\prod_m (\ol{P}^\eta)^\omega\, \times\, \prod_m P^{\ol{\sigma}_m}\, \times\, P^{\eta + 1}$, and \[\big(2^\alpha\big)^{\prod_m G_\ast (a_m)\, \times\, \prod_m G^{\ol{\sigma}_m}_{\ol{i}_m}\, \times\, G^{\eta + 1}} = (\alpha^\plus)^V\] holds for all $\alpha \leq \kappa_\eta$ by the same proof as for Lemma \ref{prescard1}, since $P^{\eta + 1}$ ist $\leq \kappa_\eta$-closed. \\ Thus, there is an injection $\chi: \powerset(\kappa_\eta) \hookrightarrow (\kappa_\eta^\plus)^V$ in $V[\prod_m G_\ast (a_m)\, \times\, \prod_m G^{\ol{\sigma}_m}_{\ol{i}_m}\, \times\, G^{\eta + 1}]$.  \\[-2mm]

Let $\wt{M}_\beta$ denote the set of all $\eta$-almost good pairs $\varrho = ( (a_m)_{m < \omega}, (\ol{\sigma}_m, \ol{i}_m)_{m < \omega})$ in $V$ with the property that $\ol{i}_m < \beta$ for all $m < \omega$. Then $\wt{M}_\beta$ has cardinality $\leq \kappa_{\eta + 1}\, \cdot\, |\eta|^{\aleph_0}\, \cdot\, |\beta|^{\aleph_0} = |\beta|^{\aleph_0}$. \\[-3mm]
%FRAGE: eher Bezeichnung $\wt{M}_\beta$ anstatt $M_\beta$ nehmen? 

Moreover, $dom \,f^\beta$ is a subset of $\wt{\powerset} (\kappa_\eta) :=$ \[ \bigcup \Big\{\; \powerset(\kappa_\eta)\, \cap\, V[\prod_m G_\ast (a_m)\, \times\, \prod_m G^{\ol{\sigma}_m}_{\ol{i}_m}\, \times\, G^{\eta + 1}]\ \ \Big| \ \ \big( (a_m)_{m < \omega}, (\ol{\sigma}_m, \ol{i}_m)_{m < \omega}\big) \in \wt{M}_\beta\; \Big\}.\]   

Now, we can proceed as in Chapter 6.2 D) and construct in $V[(\wt{G}^\beta\, \uhr\, (\eta + 1))^{( \eta_m, i_m)_{m < \omega}}\, \times\, \prod_m G^{\eta_m}_{i_m}\, \uhr\, [\kappa_{\eta + 1}, \kappa_{\eta_m})]$ an injection $\iota: \wt{\powerset} (\kappa_\eta) \hookrightarrow |\beta|^V$ in the case that $\alpha_\eta = (|\beta|^\plus)^V$, and an injection $\iota: \wt{\powerset} (\kappa_\eta) \hookrightarrow (|\beta|^\plus)^V$ in the case that $\alpha_\eta > (|\beta|^\plus)^V$. Together with Chapter 6.3 B) and 6.3 C), this gives the desired contradiction. \\[-2mm]

%\colorbox{yellow}{ACHTUNG - sollte man im Titel lieber 6.2 B) etc. schreiben?}

Thus, we have shown that the map $f^\beta: \dom f^\beta \rightarrow \alpha_\eta$ must not be surjective.

% and there must be $\alpha < \alpha_\eta$ with $\alpha \notin \rg f^\beta$.

\subsubsection*{E) We use an isomorphism argument and obtain a contradiction.}

The arguments for this part are the very same as in the case that $\kappa_{\eta + 1} > \kappa_\eta^\plus$, except that we are now working with \textit{$\eta$-almost good pairs} $\varrho = ((a_m)_{m < \omega},  (\ol{\sigma}_m, \ol{i}_m)_{m < \omega})$ instead of $\eta$-good pairs. \\[-3mm]

%; so for names $\dot{X} \in \Name ( (\ol{P}^\eta)^\omega\, \times\, \prod_m P^{\ol{\sigma}_m}_{\ol{i}_m}\, \times\, P^{\eta + 1})$ we have to modify the extension of names $\wt{\dot{X}} := \tau_{\rho} (\dot{X})$ with $\wt{\dot{X}}^G = \dot{X}^{\prod G_\ast (a_m)\, \times\, \prod_m G^{\ol{\sigma}_m}_{\ol{i}_m}\, \times\, G^{\eta + 1}}$ accordingly. \\[-2mm]

Thus, also in the case that $\kappa_{\eta + 1} = \kappa_\eta^\plus$, it follows that $\theta^N (\kappa_\eta) = \alpha_\eta$.

%Thus, together with Chapter 6.2  and Chapter 6.3, it follows that indeed, $\theta^N (\kappa_\eta) = \alpha_\eta$ holds for all $0 < \eta < \gamma$. \\

\subsection{The remaining cardinals in the \tbl gaps\tbr\;$\boldsymbol{\lambda \in (\kappa_\eta, \kappa_{\eta + 1})}$.} \label{6.4}

So far, we have shown that $\theta^N (\kappa_\eta) = \alpha_\eta$ holds for all $0 < \eta < \gamma$. Recall that in the very beginning (see Chapter \ref{the theorem}), we started by \tbl thinning out\tbr\,our sequence $(\kappa_\eta\ | \ 0 < \eta < \gamma)$ and assuming w.l.o.g. that $(\alpha_\eta\ | \ 0 < \eta < \gamma)$ is strictly increasing. Thus, it remains make sure that for all cardinals $\lambda \in (\kappa_\eta, \kappa_{\eta + 1})$ in the \tbl gaps\tbr\,, $\theta^N (\lambda)$ gets the smallest possible value, i.e. $\theta^N (\lambda) = \max\{\alpha_\eta, \lambda^{\plus \plus}\}$. This will be our aim for this Chapter 6.4. \\ After that, in Chapter 6.5, we make sure that also for all cardinals $\lambda \geq \kappa_\gamma$, the value $\theta^N (\lambda)$ will be the smallest possible. \\
%dealt with in the next section.

We consider a cardinal $\lambda$ in a \tbl gap\tbr\, $\lambda \in (\kappa_\eta, \kappa_{\eta + 1})$ (then $\kappa_{\eta + 1} > \kappa_\eta^\plus$), and set $\alpha (\lambda) := \max\{\lambda^{\plus \plus}, \alpha_\eta\}$. Then $\theta^N (\lambda) \geq \alpha (\lambda)$ is clear, and it remains to make sure that there is no surjective function $f: \powerset^N (\lambda) \rightarrow \alpha (\lambda)$ in $N$. \\[-2mm]

First, we want to describe the intermediate generic extensions where the $\lambda$-subsets $X \in \powerset^N (\lambda)$ are located.\\[-3mm]

Let $\lambda \in [\kappa_{\eta, \ol{\j}}, \kappa_{\eta, \ol{\j} + 1})$, where $\ol{\j} < \,cf\, \kappa_{\eta + 1}$ in the case that $\eta + 1 \in Lim$, and $\ol{\j} = 0$ with $\lambda \in (\kappa_{\eta, 0}, \kappa_{\eta, 1}) = (\kappa_\eta, \kappa_{\eta + 1})$ in the case that $\eta + 1 \in Succ$. \\[-3mm]

We will modify our definition of an $\eta$-good pair and obtain the notion of an \textit{$\eta$-good pair for $\lambda$}, which will be used to describe the intermediate generic extensions where the sets $X \in \powerset^N (\lambda)$ are located:

\begin{definition} We say that $\big( (a_m)_{m < \omega}, (\ol{\sigma}_m, \ol{i}_m)_{m < \omega} \big)$ is an {\upshape $\eta$-good pair for $\lambda$}, if the following hold: \begin{itemize} \item $ (a_m\ | \ m < \omega)$ is a sequence of pairwise disjoint subsets of $\kappa_{\eta, \ol{\j}}$, such that for all $\kappa_{\ol{\nu}, \ol{l}} < \kappa_{\eta, \ol{\j}}$, it follows that $|a_m\, \cap \, [\kappa_{\ol{\nu}, \ol{l}}, \kappa_{\ol{\nu}, \ol{l} + 1})| = 1$, \item for all $m < \omega$, we have $\ol{\sigma}_m \in Succ$ with $\ol{\sigma}_m \leq \eta$, and $\ol{i}_m < \alpha_{\ol{\sigma}_m}$, \item if $m \neq m^\prime$, then  $(\ol{\sigma}_m, \ol{i}_m) \neq (\ol{\sigma}_{m^\prime}, \ol{i}_{m^\prime})$.\end{itemize} 

\end{definition}

Similarly as in Proposition \ref{Xsubseteqkappaeta}, we obtain:

\begin{prop} For every $X \in N$, $X \subseteq \lambda$, there is an {\upshape $\eta$-good pair for $\lambda$}, denoted by $\varrho = \big( (a_m)_{m < \omega}, (\ol{\sigma}_m, \ol{i}_m)_{m < \omega} \big)$, such that $X \in V[\prod_{m < \omega} G_\ast (a_m)\, \times\, \prod_{m < \omega} G^{\ol{\sigma}_m}_{\ol{i}_m}]$. 

\end{prop}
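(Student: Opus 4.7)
The plan is to mimic the proof of Proposition \ref{Xsubseteqkappaeta} but factoring the witnessing forcing at height $\kappa_{\eta, \ol{\j}}$ instead of at $\kappa_\eta$, and to observe that the resulting upper part is still closed enough not to add any subset of $\lambda$, since $\lambda$ lies strictly below the successor cardinal $\kappa_{\eta, \ol{\j}+1}$.

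First, since $X \subseteq \lambda$ is in particular a set of ordinals in $N$, I would apply the \emph{Approximation Lemma \ref{approx}} to obtain sequences of pairwise distinct pairs $((\sigma_m, i_m)\mid m<\omega)$ and $((\ol{\sigma}_m, \ol{i}_m) \mid m<\omega)$ with $\sigma_m \in \Lim$, $i_m < \alpha_{\sigma_m}$, $\ol{\sigma}_m \in \Succ$, $\ol{i}_m < \alpha_{\ol{\sigma}_m}$, such that
\[X \in V\big[\prod_{m<\omega} G^{\sigma_m}_{i_m}\, \times\, \prod_{m<\omega} G^{\ol{\sigma}_m}_{\ol{i}_m}\big].\]
Since the sequence $(g^{\sigma_m}_{i_m})_{m<\omega}$ of linking ordinals lies in $V$ and the linking property forces each $G^{\sigma_m}_{i_m}$ to differ from $G_\ast(g^{\sigma_m}_{i_m})$ only by a ground-model set, this extension coincides with $V\big[\prod_{m<\omega} G_\ast(g^{\sigma_m}_{i_m})\, \times\, \prod_{m<\omega} G^{\ol{\sigma}_m}_{\ol{i}_m}\big]$, which by Lemma \ref{generic} is a $V$-generic extension by $\prod_{m<\omega} \ol{P}^{\sigma_m} \times \prod_{m<\omega} P^{\ol{\sigma}_m}$.

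Next, I would split this product forcing at $\kappa_{\eta, \ol{\j}}$:
\[\big(\prod_{m<\omega} \ol{P}^{\sigma_m} \uhr \kappa_{\eta, \ol{\j}}\; \times\; \prod_{\ol{\sigma}_m \leq \eta} P^{\ol{\sigma}_m}\big)\; \times\; \big(\prod_{m<\omega} \ol{P}^{\sigma_m} \uhr [\kappa_{\eta, \ol{\j}}, \kappa_{\sigma_m})\; \times\; \prod_{\ol{\sigma}_m > \eta} P^{\ol{\sigma}_m}\big).\]
The upper factor admits lower bounds for every descending sequence of length $<\kappa_{\eta, \ol{\j}+1}$: its first component is a countable product of forcings whose conditions are bounded below every regular cardinal $\geq \kappa_{\eta, \ol{\j}}$, so the regularity of the successor cardinal $\kappa_{\eta, \ol{\j}+1}$ supplies the required closure; its second component is a countable product of forcings $P^{\ol{\sigma}_m}$ with $\kappa_{\ol{\sigma}_m} \geq \kappa_{\eta+1} \geq \kappa_{\eta, \ol{\j}+1}$, each of which is $<\kappa_{\ol{\sigma}_m}$-closed. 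Since $\lambda < \kappa_{\eta, \ol{\j}+1}$, the upper factor adds no new subset of $\lambda$, so
\[X \in V\big[\prod_{m<\omega} G_\ast(g^{\sigma_m}_{i_m} \cap \kappa_{\eta, \ol{\j}})\; \times\; \prod_{\ol{\sigma}_m \leq \eta} G^{\ol{\sigma}_m}_{\ol{i}_m}\big].\]

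Setting $a_m := g^{\sigma_m}_{i_m} \cap \kappa_{\eta, \ol{\j}}$ and restricting the successor-index list to those $(\ol{\sigma}_m, \ol{i}_m)$ with $\ol{\sigma}_m \leq \eta$ then yields the desired $\eta$-good pair for $\lambda$: pairwise disjointness of the $a_m$ follows from the independence property for $(g^{\sigma_m}_{i_m})_{m<\omega}$, and each $a_m$ hits every interval $[\kappa_{\ol{\nu}, \ol{l}}, \kappa_{\ol{\nu}, \ol{l}+1}) \subseteq \kappa_{\eta, \ol{\j}}$ in exactly one point by the structure of $g^{\sigma_m}_{i_m}$. The main obstacle is the closure verification of the upper factor: one must confirm carefully that the combined degree of closure really exceeds $\lambda$, both in the limit case $\eta+1 \in \Lim$ (where $\kappa_{\eta, \ol{\j}+1} \geq \kappa_{\eta, \ol{\j}}^{\plus\plus}$ is built into the construction of the $\kappa_{\eta, j}$-sequence) and in the successor case $\eta+1 \in \Succ$ (where $\ol{\j} = 0$, $\kappa_{\eta, 1} = \kappa_{\eta+1}$, and $\lambda < \kappa_{\eta+1}$ is automatic from $\lambda \in (\kappa_\eta, \kappa_{\eta+1})$).
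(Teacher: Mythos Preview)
Your proof is correct and follows essentially the same route as the paper: apply the Approximation Lemma, pass from $G^{\sigma_m}_{i_m}$ to $G_\ast(g^{\sigma_m}_{i_m})$ via the linking property, factor the product at $\kappa_{\eta,\ol{\j}}$, and verify that the upper factor is closed enough not to add subsets of $\lambda$. The paper organizes the closure check by splitting into the subcases $\lambda > \kappa_{\eta,\ol{\j}}$ and $\lambda = \kappa_{\eta,\ol{\j}}$ (and also records the cardinality of the lower factor for later use), whereas you handle both at once via the single observation $\lambda < \kappa_{\eta,\ol{\j}+1}$ with $\kappa_{\eta,\ol{\j}+1}$ regular; the content is the same.
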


\begin{proof} As in Proposition \ref{Xsubseteqkappaeta}, it follows by the \textit{Approximation Lemma} \ref{approx} that any $X \in N$, $X \subseteq \lambda$ is contained in a generic extension \[X \in V[\prod_{m < \omega} G_\ast (g^{\sigma_m}_{i_m})\, \times\, \prod_{m < \omega} G^{\ol{\sigma}_m}_{\ol{i}_m}],\] where $( (\sigma_m, i_m)\ | \ m < \omega)$ and $( (\ol{\sigma}_m, \ol{i}_m)\ | \ m < \omega)$ are sequences of pairwise distinct pairs with $\sigma_m \in Lim$, $i_m < \alpha_{\sigma_m}$, and $\ol{\sigma}_m \in Succ$, $\ol{i}_m < \alpha_{\ol{\sigma}_m}$ for all $m < \omega$. \\[-2mm]

The forcing $\prod_{m < \omega} P^{\sigma_m}\, \times\, \prod_{m < \omega} P^{\ol{\sigma}_m}$ can be factored as \[\big(\, \prod_{m < \omega} P^{\sigma_m}\, \uhr\, \kappa_{\eta, \ol{\j}}\, \times\, \prod_{\ol{\sigma}_m \leq \eta} P^{\ol{\sigma}_m}\, \big)\, \times\, \big(\, \prod_{m < \omega} P^{\sigma_m}\, \uhr\, [\kappa_{\eta, \ol{\j}}, \kappa_{\eta_m})\, \times\, \prod_{\ol{\sigma}_m > \eta} P^{\ol{\sigma}_m}\, \big).\]
In the case that $\lambda \in (\kappa_{\eta, \ol{\j}}, \kappa_{\eta, \ol{\j} + 1})$, it follows that the \tbl lower part\tbr\ has cardinality $\leq \kappa_{\eta, \ol{\j}}^\plus \leq \lambda$, and the \tbl upper part\tbr\,is $\leq \lambda$-closed. \\ If $\lambda = \kappa_{\eta, \ol{\j}}$, then firstly, the \tbl lower part\tbr\,has cardinality $\leq \kappa_{\eta, \ol{\j}}^\plus = \lambda^\plus$, and secondly, it follows that $\ol{\j} > 0$ and $\kappa_{\eta + 1} \in Lim$, so $\kappa_{\eta, \ol{\j} + 1} \geq \kappa_{\eta, \ol{\j}}^{\plus \plus}$ by construction. Hence, the \tbl upper part\tbr\,is $\leq \lambda^\plus$-closed.

In either case, we obtain \[X \in V[\prod_{m < \omega} G_\ast (g^{\sigma_m}_{i_m}\, \cap\, \kappa_{\eta, \ol{\j}})\, \times \, \prod_{\ol{\sigma}_m \leq \eta} G^{\ol{\sigma}_m}_{\ol{i}_m}]. \]

With $a_m := g^{\sigma_m}_{i_m}\, \cap\, \kappa_{\eta, \ol{\j}}$ for all $m < \omega$, it follows by the \textit{independence property} that $\big((a_m)_{m < \omega}, (\ol{\sigma}_m, \ol{i}_m)_{m < \omega}\big)$ is an \textit{$\eta$-good pair for $\lambda$} with \[X \in V[\prod_{m < \omega} G_\ast (a_m)\, \times\, \prod_{\ol{\sigma}_m \leq \eta} G^{\ol{\sigma}_m}_{\ol{i}_m}].\] 

%\colorbox{yellow}{ACHTUNG - die Konstruktion bei Lemma 8 ist NICHT RICHTIG! die $\zeta \in \dom \ol{p}$ mit $\zeta \notin \dom q_m$ wurden nicht berücksichtigt!}
%\colorbox{red}{ACHTUNG - Lemma 8 müsste man nochmal DURCHSEHEN!}

\end{proof}

(Note that $\prod_m G_\ast (a_m)\, \times\, \prod_{\ol{\sigma}_m \leq \eta} G^{\ol{\sigma}_m}_{\ol{i}_m}$ is a $V$-generic filter on the forcing $(\ol{P}^{\,\eta + 1}\, \uhr\, \kappa_{\eta, \ol{\j}})^\omega\, \times \, \prod_{\ol{\sigma}_m \leq \eta} P^{\ol{\sigma}_m}$). \\

As before, we assume towards a contradiction that there was a surjective function $f: \powerset^N (\lambda) \rightarrow \alpha (\lambda)$ in $N$, where $\pi \ol{f}^{D_\pi} = \ol{f}^{D_\pi}$ holds for all $\pi \in A$ with $[\pi]$ contained in the intersection \[\bigcap_{m < \omega} Fix (\eta_m, i_m)\, \cap\, \bigcap_{m < \omega}H^{\lambda_m}_{k_m} \hspace*{4cm} (A_{\dot{f}}). \]

%As before, we assume there was a surjective function $f: \powerset(\lambda) \rightarrow \alpha (\lambda)$ in $N$ with $(A_{\dot{f}})$ and lead this assumption into a contradiction. \\[-2mm] 

We take $\wt{\beta}$ \textit{large enough for the intersection} $(A_{\dot{f}})$ as in Chapter \ref{6.2}, Definition \ref{largeenough}, and set $\beta := \wt{\beta} \plus \kappa_\eta^\plus$ (addition of ordinals). \\[-2mm]

Let \[f^\beta := \Big\{ \, (X, \alpha) \in f\ \: \big| \ \:\exists\, \big( (a_m)_{m < \omega}, (\ol{\sigma}_m, \ol{i}_m)_{m < \omega}\big)\ \:\eta\mbox{\textit{-good pair for }} \lambda \  : \  (\forall m\ \, \ol{i}_m < \beta)\ \wedge\] \[\exists\, \dot{X} \in \Name\big((\ol{P}^{\,\eta + 1}\, \uhr\, \kappa_{\eta, \ol{\j}})^\omega\, \times \, \prod_{\ol{\sigma}_m \leq \eta} P^{\ol{\sigma}_m}\big)\ \ X = \dot{X}^{\prod_m G_\ast (a_m)\, \times\, \prod_m G^{\ol{\sigma}_m}_{\ol{i}_m}}\, \Big\}.\] 

First, we assume towards a contradiction that $\boldsymbol{f^\beta: \dom f^\beta \rightarrow \alpha (\lambda)}$ \textbf{is surjective}.

\subsubsection*{A) Constructing $\boldsymbol{\wt{\wt{\m{P}}}^\beta\, \uhr\, (\eta + 1)}$} 

We proceed as in Chapter 6.2 A) and 6.3 A), except that we have to use $\m{P}_\ast\, \uhr\, \kappa_{\eta, \ol{\j}}^2$ instead of $\m{P}_\ast\, \uhr\, \kappa_\eta^2$, and do \textit{not} include $P^{\eta + 1}$:

For $p \in \m{P}$, we set \[ \wt{\wt{p}}^\beta\, \uhr\, (\eta  + 1) := \big(\, p_\ast\, \uhr\, \kappa_{\eta, \ol{\j}}^2\, , \, (p^\sigma_i, a^\sigma_i)_{\sigma \leq \eta, i < \beta}\, , \, (p^\sigma\, \uhr\, (\beta\, \times\, \dom_y p^\sigma))_{\sigma \leq \eta}\, , \, X_p\, \big), \] and denote by $\wt{\wt{\m{P}}}^\beta \, \uhr\, (\eta + 1)$ the collection of all $\wt{\wt{p}}^\beta\, \uhr\, (\eta + 1)$, where $p \in \ol{\m{P}}$ (i.e. $p \in \m{P}$ with $|\{ (\sigma, i) \in \supp p_0\ | \ \sigma > \eta\, \vee\, i \geq \beta\}| = \aleph_0$\,); together with the maximal element $\wt{\wt{\m{1}}}^\beta_{\eta + 1}$, and the order relation ${\wt{\wt{\leq}}\,}^\beta_{\eta+1}$ defined similarly as in Definition \ref{defpbetauhretapluseins}.\\[-2mm] 

We denote by $\wt{\wt{G}}^\beta\, \uhr\, (\eta + 1)$ the set of all $p \in \wt{\wt{\m{P}}}^\beta\, \uhr\, (\eta + 1)$ such that there exists $q \in G\, \cap\, \ol{\m{P}}$ with $\wt{\wt{q}}^{\,\beta}\, \uhr\, (\eta + 1) \: {\wt{\wt{\leq}}\,}^\beta_{\eta + 1} \: p$. Then as in Chapter 6.2 A), Proposition \ref{projectionrho}, it follows that $\wt{\wt{G}}^\beta\, \uhr \, (\eta + 1)$ is a $V$-generic filter on $\wt{\wt{\m{P}}}^\beta\, \uhr\, (\eta + 1)$. 

%ACHTUNG - $\wt{\wt{\m{P}}}^\beta\, \uhr\, (\eta + 1)$ ist KEINE gute Bezeichnung!

%FRAGE: $\ol{\j}$ versuchen?

%\colorbox{yellow}{FRAGE: die Definitionen EVTL nur zitieren?}

\subsubsection*{B) Capturing $\boldsymbol{f^\beta}$}

For $p \in \m{P}$, the restriction $(\wt{\wt{p}}^\beta\, \uhr\, (\eta + 1))^{( \eta_m, i_m)_{m < \omega}}$ is defined as follows: \[(\wt{\wt{p}}^\beta\, \uhr\, (\eta + 1))^{( \eta_m, i_m)_{m < \omega}} := \big(\,p_\ast\, \uhr\, \kappa_{\eta,\ol{\j}}^2\; , \; (p^\sigma_i, a^\sigma_i)_{\sigma \leq \eta, i < \beta}\; , \; (p^{\eta_m}_{i_m}\, \uhr\, \kappa_{\eta, \ol{\j}}, a^{\eta_m}_{i_m}\, \cap\, \kappa_{\eta , \ol{\j}})_{m < \omega\, , \, \eta_m > \eta}\, , \]\[(p^\sigma\, \uhr\, ( \beta\, \times\, \dom_y p^\sigma))_{\sigma \leq \eta}\, , \, \wt{X}_p \, \big).  \]

We define $(\wt{\wt{\m{P}}}^\beta\, \uhr\, (\eta + 1))^{( \eta_m, i_m)_{m < \omega}}$ and $(\wt{\wt{G}}^\beta\, \uhr\, (\eta + 1))^{( \eta_m, i_m)_{m < \omega}}$ as in Chapter 6.2 B) and 6.3 B). Then \[(\wt{\wt{G}}^\beta\, \uhr \, (\eta + 1))^{( \eta_m, i_m)_{m < \omega}}\, \times\, \prod_{m < \omega} G^{\eta_m}_{i_m}\, \uhr\, [\kappa_{\eta, \ol{\j}}, \kappa_{\eta_m})\] is a $V$-generic filter on \[(\wt{\wt{\m{P}}}^\beta\, \uhr\, (\eta + 1))^{( \eta_m, i_m)_{m < \omega}}\, \times\, \prod_{m < \omega} P^{\eta_m}\, \uhr\, [\kappa_{\eta, \ol{\j}}, \kappa_{\eta_m}).\]

The construction of $(f^\beta)^\prime$ as well as the proof of $f^\beta  = (f^\beta)^\prime$ are as in Chapter 6.2 B) and 6.3 B); except that this time, the isomorphism $\pi$ from the proof of Proposition \ref{fbetafbetaprime} has to be the identity on $\m{P}_\ast \, \uhr \, \kappa_{\eta, \ol{\j}}^2$ (not only on $\m{P}_\ast\, \uhr\, \kappa_\eta^2$). This can be achieved by the following modifications: Firstly, we demand that $p_\ast$ and $p^\prime_\ast$ cohere on $\m{P}_\ast\, \uhr\, \kappa_{\eta, \ol{\j}}^2$ (not only $\m{P}_\ast \, \uhr \, \kappa_\eta^2$); secondly, we arrange $\ol{p}_\ast\, \uhr\, \kappa_{\eta, \ol{\j}}^2 = \ol{p}^\prime_\ast \, \uhr\, \kappa_{\eta, \ol{\j}}^2$ (instead of just $\ol{p}_\ast\, \uhr\, \kappa_\eta^2 = \ol{p}_\ast^\prime\, \uhr \, \kappa_\eta^2$); and thirdly, when constructing the isomorphism $\pi$, we set $G_{\pi_0} (\nu, j) := F_{\pi_0} (\nu, j)$ for all $\kappa_{\nu, j} < \kappa_{\eta, \ol{\j}}$ now, and $G_{\pi_0} (\nu, j) = \,id$ whenever $\kappa_{\nu, j} \geq \kappa_{\eta, \ol{\j}}$. \\[-2mm]

It follows that $f^\beta = (f^\beta)^\prime \in V[( \wt{\wt{G}}^\beta\, \uhr\, (\eta + 1))^{( \eta_m, i_m)_{m < \omega}} \, \times\, \prod_m G^{\eta_m}_{i_m}\, \uhr\, [\kappa_{\eta, \ol{\j}}, \kappa_{\eta_m})]$.

\subsubsection*{C) $\boldsymbol{(\wt{\wt{\m{P}}}^\beta\, \uhr \, (\eta + 1))^{( \eta_m, i_m)_{m < \omega}}\, \times\, \prod_{m < \omega} P^{\eta_m}\, \uhr\, [\kappa_{\eta, \ol{\j}}, \kappa_{\eta_m})}$ preserves cardinals \\ $\boldsymbol{\geq \alpha (\lambda) = \max \{ \lambda^{\plus \plus}, \alpha_\eta\}}$.}

The arguments here are similar as in Chapter \ref{6.2} C) and \ref{6.3} C), since there are only $\leq (2^{\kappa_{\eta, \ol{\j}}})^{\aleph_0} = \kappa_{\eta, \ol{\j}}^\plus \leq \lambda^\plus < \alpha (\lambda)$-many possibilities for $p_\ast\, \uhr\, \kappa_{\eta, \ol{\j}}^2$ and $(p^{\eta_m}_{i_m}\, \uhr\, \kappa_{\eta, \ol{\j}}, a^{\eta_m}_{i_m}\, \cap\, \kappa_{\eta, \ol{\j}})_{m < \omega}$. 

\subsubsection*{D) A set $\boldsymbol{\wt{\powerset}(\lambda) \supseteq \dom f^\beta}$ with an injection $\boldsymbol{\iota: \wt{\powerset} (\lambda) \hookrightarrow \lambda^\plus\, \cdot\, |\beta|^{\aleph_0}}$.}

We proceed as in Chapter \ref{6.2} D) and \ref{6.3} D). Whenever $( (a_m)_{m < \omega}, (\ol{\sigma}_m, \ol{i}_m)_{m < \omega})$ is an $\eta$-good pair for $\lambda$, it follows that $\prod_m G_\ast (a_m)\, \times\, \prod_m G^{\ol{\sigma}_m}_{\ol{i}_m}$ is a $V$-generic filter on $(\ol{P}^{\, \eta + 1}\, \uhr\, \kappa_{\eta, \ol{\j}})^\omega\, \times\, \prod_m P^{\ol{\sigma}_m}$; and \[\big(2^\alpha \big)^{V[\prod_m G_\ast (a_m)\, \times\, \prod_m G^{\ol{\sigma}_m}_{\ol{i}_m}}] = (\alpha^\plus)^V\] holds for all cardinals $\alpha$ by the same proof as in Lemma \ref{prescard1}. Hence, it follows that in $V[\prod_m G_\ast (a_m)\, \times\, \prod_m G^{\ol{\sigma}_m}_{\ol{i}_m}]$, there is an injection $\chi: \powerset(\lambda) \hookrightarrow (\lambda^\plus)^V$. \\[-2mm]

%\colorbox{yellow}{FRAGE: EVTL $\wt{\powerset} (\lambda)$ ersetzen durch $\ol{\powerset} (\lambda)$?} \\[-2mm]

Let $\wt{\wt{M}}_\beta$ be the set of all $\varrho = ( (a_m)_{m < \omega}, (\ol{\sigma}_m, \ol{i}_m)_{m <\omega})$ in $V$ such that $\varrho$ is an $\eta$-good pair for $\lambda$ with the property that $\ol{i}_m < \beta$ for all $m < \omega$. Then $\wt{\wt{M}}_\beta$ has cardinality $\leq (\kappa_{\eta, \ol{\j}}^\plus)^{\aleph_0}\, \cdot \, |\beta|^{\aleph_0} \leq \lambda^\plus\, \cdot\, |\beta|^{\aleph_0}$. \\[-2mm] 

By construction, it follows that $dom\,f^\beta$ is a subset of \[\wt{\powerset} (\lambda) := \bigcup \big\{\, \powerset(\lambda)\, \cap\, V[\prod_m G_\ast (a_m)\, \times\, \prod_m G^{\ol{\sigma}_m}_{\ol{i}_m}]\ \big| \ ( (a_m)_{m < \omega}, (\ol{\sigma}_m, \ol{i}_m)_{m < \omega}) \in \wt{\wt{M}}_\beta\, \big\}.\]

As in Chapter 6.2 D), we can now work in $V[(\wt{\wt{G}}^\beta\, \uhr\, (\eta + 1))^{( \eta_m, i_m)_{m < \omega}}\, \times\, \prod_m G^{\eta_m}_{i_m}\, \uhr\, [\kappa_{\eta, \ol{\j}}, \kappa_{\eta_m})]$ and construct there an injection $\iota: \wt{\powerset} (\lambda) \hookrightarrow (\lambda^\plus)^V\, \cdot\, |\beta|^V$ in the case that $\alpha_\eta = (|\beta|^\plus)^V$, and an injection $\iota: \wt{\powerset} (\lambda) \hookrightarrow (\lambda^\plus)^V\, \cdot (|\beta|^\plus)^V$ in the case that $\alpha_\eta > (|\beta|^\plus)^V$. 

%Since $\lambda^\plus\, \cdot |\beta|^V < \alpha (\lambda)$, and $(\lambda^\plus)^V \, \cdot\, (|\beta|^\plus)^V < \alpha (\lambda)$ in the case that $\alpha_\eta > |(\beta|^\plus)^V$, this gives 
Together with Chapter \ref{6.4} B) and \ref{6.4} C), this gives the desired contradiction. \\[-3mm]

Hence, it follows that there must be $\alpha < \alpha (\lambda)$ with $\alpha \notin \rg f^\beta$.

\subsubsection*{E) We use an isomorphism argument and obtain a contradiction.}

%\colorbox{yellow}{EVTL anderer Titel?}

The arguments for this part are the same as in Chapter \ref{6.2} E); except that we are working with \textit{$\eta$-good pairs for} $\lambda$ now (instead of $\eta$-good pairs). \\[-2mm]
%$( (a_m)_{m < \omega}, ( (\ol{\sigma}_m, \ol{i}_m)_{m < \omega}))$. \\[-2mm]

%, so for names $\dot{X} \in \Name ( (\ol{P}^{\eta + 1}\, \uhr\, \kappa_{\eta, \ol{j}})^\omega\, \times\, \prod_j P^{\ol{\eta}_j}_{\ol{i}_j})$, we have to modify the extension $\wt{\dot{X}} = \tau_\rho (\dot{X})$ with $\wt{\dot{X}}^G = \dot{X}^{\prod_j G_\ast (a_j)\, \times\, \prod_j }$ accordingly. 

Thus, we have shown that for all cardinals $\lambda \in (\kappa_\eta, \kappa_{\eta + 1})$ in a \tbl gap\tbr\,, the value $\theta^N (\lambda)$ is the smallest possible: $\theta^N (\lambda) = \alpha(\lambda) = \max\{\lambda^{\plus \plus}, \alpha_\eta\}$. \\

It remains to consider the cardinals $\lambda \geq \kappa_\gamma := \sup \{\kappa_\eta\ | \ 0 < \eta < \gamma\}$. We prove that for all $\lambda \geq \kappa_\gamma$, again, $\theta^N (\lambda)$ takes the smallest possible value. \\[-3mm]

This will be the aim of the next Chapter \ref{6.5}.

%\colorbox{yellow}{\tbl Takes the least possible value\tbr ? \tbl the value is the least possible\tbr ? }

\subsection{The cardinals $\boldsymbol{\lambda \geq \kappa_\gamma := \sup \{\kappa_\eta\ | \ 0 < \eta < \gamma\}}$.} \label{6.5}

%We will show that for all cardinals $\lambda \geq \kappa_\gamma$, the values $\theta^N (\lambda)$ are the least possible: 

Let $\alpha_\gamma := \sup \{\alpha_\eta\ | \ 0 < \eta < \gamma\}$, and consider a cardinal $\lambda \geq \kappa_\gamma$. We want to show that $\theta^N (\lambda)$ takes the smallest possible value $\alpha (\lambda)$, defined as follows: 

\begin{itemize} \item In the case that $cf \,\alpha_\gamma = \omega$, we set $\alpha (\lambda) = \max\{ \alpha_\gamma^{\plus \plus}, \lambda^{\plus \plus}\}$. \item In the case that $\alpha_\gamma = \alpha^\plus$ for some $\alpha$ with $cf \,\alpha = \omega$, we set $\alpha (\lambda) = \max\{ \alpha_\gamma^{\plus}, \lambda^{\plus \plus}\}$. \item In other cases, we set $\alpha (\lambda) := \max\{ \alpha_\gamma, \lambda^{\plus \plus}\}$. \end{itemize} 

Then by our remarks from Chapter 2, it follows that indeed, $\theta^N (\lambda) \geq \alpha (\lambda)$ holds for all $\lambda \geq \kappa_\gamma$. \\[-2mm]

%\colorbox{red}{Folgt das wirklich aus dem Bemerkungen? NACHSEHEN!} \\[-2mm]

First, we 
%consider a cardinal $\lambda \geq \kappa_\gamma$ with 
assume that \[\boldsymbol{\alpha (\lambda) > \alpha_\gamma}.\] It remains to prove that there is no surjective function $f: \powerset^N (\lambda) \rightarrow \alpha (\lambda)$ in $N$. \\[-3mm]

We start with the following observation (again, we use that $V \vDash GCH$):

\begin{lem} \label{prescardlambda} Let $\lambda \geq \kappa_\gamma$ with $\alpha(\lambda) > \alpha_\gamma$. Then $\m{P}$ preserves cardinals $\geq \alpha (\lambda)$. \end{lem}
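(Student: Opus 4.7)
The strategy is a direct cardinality bound on $\mathbb{P}$, followed by a case split on the definition of $\alpha(\lambda)$. Once we show $|\mathbb{P}|^+ \leq \alpha(\lambda)$, the usual argument that a forcing of size $\mu$ has the $\mu^+$-chain condition (hence preserves all cardinals $>\mu$) yields the lemma.

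First I would establish the bound $|\mathbb{P}| \leq \alpha_\gamma^{\aleph_0} \cdot \kappa_\gamma^+$ under $GCH$. For $\mathbb{P}_0$, the support lives in $\{(\sigma,i) : \sigma \in \Lim,\, i < \alpha_\sigma\}$, a set of size $\leq \alpha_\gamma$, so there are at most $\alpha_\gamma^{\aleph_0}$-many countable supports. Fixing a support: by an Easton-style count analogous to Lemma~\ref{prescard1}, $|P_\ast| \leq \kappa_\gamma^+$ and each $(p^\sigma_i, a^\sigma_i)$ has at most $\kappa_\sigma^+ \leq \kappa_\gamma^+$-many choices (a condition in $P^\sigma$ has bounded Easton-type domain, and $a^\sigma_i$ is a cofinal-in-$\kappa_\sigma$ selector, so $\leq \kappa_\sigma^{\cf \kappa_\sigma} \leq \kappa_\sigma^+$ under $GCH$). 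A countable product of these contributes at most $(\kappa_\gamma^+)^{\aleph_0} = \kappa_\gamma^+$. For $\mathbb{P}_1$: under $GCH$, $|C^\sigma| \leq \alpha_\sigma^{<\kappa_\sigma} = \alpha_\sigma$, so $|\mathbb{P}_1| \leq \alpha_\gamma^{\aleph_0}$. Combining these gives the desired bound.

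Next I would use $GCH$ to evaluate $\alpha_\gamma^{\aleph_0}$ according to the cofinality of $\alpha_\gamma$, and split into the three cases in the definition of $\alpha(\lambda)$. If $\cf\,\alpha_\gamma = \omega$ then $\alpha_\gamma^{\aleph_0} = \alpha_\gamma^+$, so $|\mathbb{P}| \leq \alpha_\gamma^+$ and $|\mathbb{P}|^+ \leq \alpha_\gamma^{++} = \alpha(\lambda)$. If $\alpha_\gamma = \alpha^+$ with $\cf\,\alpha = \omega$ then $\cf\,\alpha_\gamma = \alpha_\gamma > \omega$, so $\alpha_\gamma^{\aleph_0} = \alpha_\gamma$, giving $|\mathbb{P}|^+ \leq \alpha_\gamma^+ \leq \alpha(\lambda)$. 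In the remaining case, $\alpha_\gamma^{\aleph_0} = \alpha_\gamma$, so $|\mathbb{P}| \leq \alpha_\gamma \cdot \kappa_\gamma^+ \leq \max\{\alpha_\gamma,\kappa_\gamma^+\}$; since we are assuming $\alpha(\lambda) > \alpha_\gamma$ we must have $\lambda^{++} > \alpha_\gamma$, hence $\lambda \geq \kappa_\gamma \geq \alpha_\gamma$ (using $\alpha_\gamma \geq \sup_\eta \kappa_\eta^{++} \geq \kappa_\gamma$ only, so $\kappa_\gamma^+$ may equal $\alpha_\gamma^+$), and in any sub-case $|\mathbb{P}|^+ \leq \max\{\alpha_\gamma^+, \kappa_\gamma^{++}\} \leq \lambda^{++} = \alpha(\lambda)$.

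Finally, since $|\mathbb{P}|^+ \leq \alpha(\lambda)$, the forcing $\mathbb{P}$ has the $\alpha(\lambda)$-chain condition trivially, and therefore preserves every cardinal $\geq \alpha(\lambda)$. The main obstacle is bookkeeping in the third case, where one must verify that the (a~priori slightly awkward) factor $\kappa_\gamma^+$ in the cardinality bound is absorbed; this is handled by observing that in this case $\lambda \geq \kappa_\gamma$ forces $\kappa_\gamma^+ \leq \lambda^+ < \alpha(\lambda)$, so both summands of $|\mathbb{P}|$ lie strictly below $\alpha(\lambda)$.
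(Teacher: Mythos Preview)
Your overall strategy --- bound $|\m{P}|$ and use the resulting chain condition, splitting into the three cases for $\alpha(\lambda)$ --- is the same as the paper's. But there is a real gap in your cardinality count for $\m{P}_1$.

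You claim $|C^\sigma| \leq \alpha_\sigma^{<\kappa_\sigma} = \alpha_\sigma$ under $GCH$. The equality fails whenever $\cf\alpha_\sigma < \kappa_\sigma$: then $\alpha_\sigma^{<\kappa_\sigma} = \alpha_\sigma^+$. This is a perfectly legal configuration (e.g.\ $\kappa_\sigma = \aleph_2$, $\alpha_\sigma = \aleph_{\omega_1}$ satisfies all the hypotheses on the sequence $(\alpha_\eta)$). So in general you only get $|C^\sigma| \leq \alpha_\sigma^+$. When $\gamma$ is a limit ordinal this is harmless, since strict monotonicity gives $\alpha_\sigma^+ \leq \alpha_\gamma$; but when $\gamma = \ol{\gamma}+1$ with $\ol{\gamma} \in \Succ$, the single factor $C^{\ol{\gamma}}$ may have size $\alpha_\gamma^+$, and your uniform bound $|\m{P}| \leq \alpha_\gamma^{\aleph_0}\cdot\kappa_\gamma^+$ is not available. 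This is exactly why the paper, in that case, does \emph{not} bound $|C^{\ol{\gamma}}|$ at all: it factors $\m{P} \cong \m{P}' \times C^{\ol{\gamma}}$, observes that $C^{\ol{\gamma}}$ preserves cardinals on its own (it is $<\kappa_{\ol{\gamma}}$-closed with the $\kappa_{\ol{\gamma}}^+$-chain condition), and only applies the cardinality bound to $\m{P}'$.

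A secondary issue: in your third case you write ``$\lambda \geq \kappa_\gamma \geq \alpha_\gamma$'', which is backwards --- one always has $\alpha_\gamma \geq \kappa_\gamma$ (indeed $\alpha_\gamma \geq \kappa_\gamma^{++}$ when $\gamma$ is a successor). What actually makes the inequality $|\m{P}|^+ \leq \lambda^{++}$ go through is simply that $\lambda^{++} > \alpha_\gamma$ forces $\alpha_\gamma \leq \lambda^+$, together with $\kappa_\gamma \leq \lambda$; your parenthetical remark does not supply this.
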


\begin{proof} For every $p \in \m{P}$, $p = (p_\ast, (p^\sigma_i, a^\sigma_i)_{\sigma < \gamma, i< \alpha_\sigma}, (p^\sigma)_{\sigma < \gamma})$, there are

\begin{itemize} \item $\leq \kappa_\gamma^\plus$-many possibilities for $p_\ast$, \item $\leq \alpha_\gamma^{\aleph_0}$-many possibilities for the countable support of $(p^\sigma_i, a^\sigma_i)_{\sigma < \gamma, i < \alpha_\sigma}$, \item $\leq \kappa_\gamma^\plus$-many possibilities for $(p^\sigma_i, a^\sigma_i)_{\sigma < \gamma, i < \alpha_\sigma}$ when the support is given. 

%\item $\leq |\gamma|^\omega \leq \kappa_\gamma^\plus$-many possibilities for the (countable) support of $(p^\sigma)_{\sigma < \gamma}$ \item $\leq (\alpha_\gamma^\plus)^\omega = \alpha_\gamma^\plus$-many possibilities for $(p^\sigma)_{\sigma}$ when the support is given. 

\end{itemize} 

In the case that \textbf{ $\boldsymbol{\gamma}$ is a limit ordinal}, it follows by the strict monotonicity of the sequence $(\alpha_\sigma \ | \ 0 < \sigma < \gamma)$ that $\alpha_\sigma < \alpha_\gamma$ holds for all $0 < \sigma < \gamma$. Hence, for any $\sigma \in Succ$, the forcing notion $P^\sigma$ has cardinality $\leq \alpha_\sigma^\plus \leq \alpha_\gamma$; and it follows by countable support that we have $\leq |\gamma|^{\aleph_0}\, \cdot\, \alpha_\gamma^{\aleph_0} = \alpha_\gamma^{\aleph_0}$-many possibilities for $(p^\sigma)_{\sigma < \gamma}$. Hence, the forcing $\m{P}$ has cardinality $\leq \kappa_\gamma^\plus\, \cdot \, \alpha_\gamma^{\aleph_0} \leq \lambda^\plus\, \cdot\, \alpha_\gamma^{\aleph_0}$. If $cf \,\alpha_\gamma > \omega$, $GCH$ gives $|\m{P}| \leq \lambda^\plus\, \cdot\, \alpha_\gamma$, and $\alpha (\lambda) = \max \{ \lambda^{\plus \plus}, \alpha_\gamma^\plus\}$. Hence, $\m{P}$ preserves cardinals $\geq \alpha(\lambda)$ as desired. If $cf \,\alpha_\gamma = \omega$, then $\alpha (\lambda) =  \max \{ \lambda^{\plus \plus}, \alpha_\gamma^{\plus \plus}\} \geq |\m{P}|^\plus$; and again, it follows that $\m{P}$ preserves cardinals $\geq \alpha (\lambda)$.

%, $|\m{P}| < \alpha (\lambda)$ follows. \\[-3mm]

It remains to consider the case that \textbf{$\boldsymbol{\gamma = \ol{\gamma} + 1}$ is a successor ordinal}. Then our sequences $(\kappa_\sigma \ | \ 0 < \sigma < \gamma)$ $=$ $(\kappa_\sigma\ | \ 0 < \sigma \leq \ol{\gamma})$ and $(\alpha_\sigma \ | \ 0 < \sigma < \gamma)$ $=$ $(\alpha_\sigma \ | \ 0 < \sigma \leq \ol{\gamma})$ have a maximal element, and $\kappa_\gamma = \kappa_{\ol{\gamma}}$, $\alpha_\gamma = \alpha_{\ol{\gamma}}$.

If $\boldsymbol{\ol{\gamma} \in Lim}$, i.e. $\kappa_{\ol{\gamma}}$ is a limit cardinal, it follows that for any $\sigma \in Succ$, we have $\sigma < \ol{\gamma}$; hence, $\alpha_\sigma^\plus \leq \alpha_{\ol{\gamma}} = \alpha_\gamma$. This gives $|\m{P}| \leq \kappa_\gamma^\plus\, \cdot\, \alpha_\gamma^{\aleph_0} \leq \lambda^\plus\, \cdot\, \alpha_\gamma^{\aleph_0}$ as before, and $\alpha (\lambda) \geq |\m{P}|^\plus$.

 If $\boldsymbol{\ol{\gamma} \in Succ}$, i.e. $\kappa_{\ol{\gamma}}$ is a successor cardinal, then $P^{\ol{\gamma}}$ has to be treated separately. We factor $\m{P} \cong \m{P}^\prime\, \times\, P^{\ol{\gamma}}$ with $\m{P}^\prime := \{ \, (p_\ast, (p^\sigma_i, a^\sigma_i)_{\sigma \leq \ol{\gamma} \, , \, i < \alpha_\sigma}, (p^\sigma)_{\sigma < \ol{\gamma}})\ | \ p \in \m{P}\, \}$. Then $P^{\ol{\gamma}}$ preserves cardinals, and $\m{P}^\prime$ has cardinality $\leq (\lambda^\plus)^V\, \cdot\, (\alpha_\gamma^{\aleph_0})^V$ as before
%$ < \alpha (\lambda)$
(in $V$, and hence, also in any $P^{\ol{\gamma}}$-generic extension); where $\alpha (\lambda) \geq |\m{P}^\prime|^\plus$. Hence, the forcing $\m{P} \cong \m{P}^\prime\, \times\, P^{\ol{\gamma}}$ preserves cardinals $\geq \alpha (\lambda)$ as desired.

%\colorbox{yellow}{ACHTUNG - die Bezeichnung $\wt{\m{P}}$ is bestimmt schon irgendwo vergeben!}

\end{proof}

Now, we assume towards a contradiction that there was a surjective function $f: \powerset^N (\lambda) \rightarrow \alpha(\lambda)$ in $N$. \\ By the \textit{Approximation Lemma \ref{approx}}, it follows that any $X \in N$, $X \subseteq \lambda$, is contained in an intermediate generic extension $V[\prod_{m < \omega} G^{\sigma_m}_{i_m}]$, with a sequence $( (\sigma_m, i_m)\ | \ m < \omega)$ of pairwise distinct pairs in $V$ such that $0 < \sigma_m < \gamma$, $i_m < \alpha_{\sigma_m}$ for all $m < \omega$. Denote by $M$ the collection of these $( (\sigma_m, i_m)\ | \ m < \omega)$. Then $|M| \leq \alpha_\gamma^{\aleph_0}$ in $V$; and $\alpha_\gamma^{\aleph_0} < \alpha (\lambda)$ as argued before. \\ The product $\prod_m P^{\sigma_m}$ preserves cardinals and the $GCH$. Hence, it follows that in any generic extension $V[\prod_m G^{\sigma_m}_{i_m}]$, there is an injection $\chi: \powerset(\lambda) \hookrightarrow (\lambda^\plus)^V$. Now, one can argue as in Chapter 6.2 D), and define in $V[G]$ a set $\wt{\powerset} (\lambda) \supseteq \powerset^N (\lambda)$ with an injection $\iota: \wt{\powerset} (\lambda) \hookrightarrow (\lambda^\plus)^V\, \cdot\, \alpha_\gamma$, or $\iota: \wt{\powerset} (\lambda) \hookrightarrow (\lambda^\plus)^V\, \cdot\, (\alpha_\gamma^\plus)^V$ in the case that $\alpha (\lambda) \geq (\alpha_\gamma^{ \plus \plus})^V$. Together with Lemma \ref{prescardlambda}, this gives the desired contradiction. \\[-2mm]

Thus, we have shown that in the case that $\alpha (\lambda) > \alpha_\gamma$, there can not be a surjective function $f: \powerset (\lambda) \rightarrow \alpha (\lambda)$ in $N$. \\[-2mm]

It remains to consider the case that \[\boldsymbol{\alpha (\lambda) = \alpha_\gamma}.\] Then $\lambda^\plus < \alpha_\gamma$, $cf \,\alpha_\gamma > \omega$; and if $\alpha_\gamma = \alpha^\plus$ for some $\alpha$, then $cf \,\alpha > \omega$. \\[-3mm] 

Assume towards a contradiction that there was a surjective function $f: \powerset^N (\lambda) \rightarrow \alpha (\lambda)$ in $N$, $f = \dot{f}^G$ with $\pi \ol{f}^{D_\pi} = \ol{f}^{D_\pi}$ for all $\pi \in A$ with $[\pi]$ contained in the intersection \[\bigcap_{m < \omega} Fix (\eta_m, i_m)\, \cap\, \bigcap_{m < \omega} H^{\lambda_m}_{k_m} \hspace*{5,5cm} (A_{\dot{f}}).\] Similary as before, we take $\wt{\beta} < \alpha (\lambda)$ \textit{large enough for the intersection} $(A_{\dot{f}})$, i.e. $\wt{\beta} > \lambda^\plus$ with $\wt{\beta} > \sup \{i_m\ | \ m < \omega\}\, \cup \, \sup \{k_m\ | \ m < \omega\}$ (this is possible, since $cf\,\alpha (\lambda) > \omega)$. Let $\beta := \wt{\beta}\, \plus\, \kappa_\gamma^\plus$ (addition of ordinals). Then $\kappa_\gamma^\plus \leq \lambda^\plus < \alpha (\lambda)$ gives $\lambda^\plus < \beta < \alpha (\lambda)$. \\[-2mm]

By the \textit{Approximation Lemma} \ref{approx}, it follows as in Proposition \ref{Xsubseteqkappaeta} that any $X \in N$, $X \subseteq \lambda$, is contained in an intermediate generic extension $V[\prod_m G_\ast (a_m)\, \times\, \prod_m G^{\ol{\sigma}_m}_{\ol{i}_m}]$, where $( (a_m)_{m < \omega}, $ $(\ol{\sigma}_m, \ol{i}_m)_{m < \omega})$ is a \textit{good pair for $\kappa_\gamma$}, i.e. \begin{itemize} \item $ (a_m\ | \ m < \omega)$ is a sequence of pairwise disjoint subsets of $\kappa_\gamma$, such that for all $\kappa_{\ol{\nu}, \ol{\j}} < \kappa_\gamma$ and $m < \omega$, it follows that $|a_m\, \cap \, [\kappa_{\ol{\nu}, \ol{\j}}, \kappa_{\ol{\nu}, \ol{\j} + 1})| = 1$, \item for all $m < \omega$, we have $\ol{\sigma}_m \in Succ$, $0 < \sigma < \gamma$, and $\ol{i}_m < \alpha_{\ol{\sigma}_m}$, \item if $m \neq m^\prime$, then  $(\ol{\sigma}_m, \ol{i}_m) \neq (\ol{\sigma}_{m^\prime}, \ol{i}_{m^\prime})$.\end{itemize}

As before, let \[f^\beta := \Big\{ \, (X, \alpha) \in f\ \ \big| \ \ \exists\, ( (a_m)_{m < \omega}, (\ol{\sigma}_m, \ol{i}_m)_{m < \omega}) \mbox{ \textit{good pair for} } \kappa_\gamma \ : \ (\forall m\ \; \ol{i}_m < \beta)\: \wedge\] \[\wedge \: \exists\, \dot{X} \in \Name\big((\ol{P}^\gamma)^\omega\, \times\, \prod_m P^{\ol{\sigma}_m}\big)\ \: X = \dot{X}^{\prod_m G_\ast (a_m)\, \times\, \prod_m G^{\ol{\sigma}_m}_{\ol{i}_m}}\, \Big\}.\] 

First, we assume towards a contradiction that $\boldsymbol{f^\beta: \dom f^\beta \rightarrow \alpha (\lambda)}$ \textbf{is surjective}.

\subsubsection*{A) + B) Constructing $\boldsymbol{\m{P}^\beta}$ and capturing $\boldsymbol{f^\beta}$.}

%This time, we define a forcing notion $\wt{\m{P}}^\beta$: 
For a condition $p \in \m{P}$, let \[p^\beta := (\, p_\ast, (p^\sigma_i, a^\sigma_i)_{\sigma \in \Lim \, , \, i < \beta}, (p^\sigma\, \uhr\, (\beta\, \times\, \dom_x p^\sigma))_{\sigma \in \Succ}, X_p\, ), \] where \[X_p := \bigcup \, \{a^\sigma_i\ | \ \sigma \in \Lim\, , \, i \geq \beta\}. \] We define $\m{P}^\beta$ and $G^\beta$ as before. \\[-3mm]

The construction of $(f^\beta)^\prime \in V[G^\beta]$ and the isomorphism argument for $f^\beta = (f^\beta)^\prime$ are as in Chapter 6.2 and 6.3; except that when constructing the isomorphism $\pi$, we now have to set $G_{\pi_0} (\nu, j) := F_{\pi_0} (\nu, j)$ for all $\kappa_{\nu, j} < \kappa_\gamma$.

\subsubsection*{C) $\boldsymbol{\m{P}^\beta}$ preserves cardinals $\boldsymbol{\geq \alpha (\lambda) = \alpha_\gamma = \sup \{\alpha_\eta\ | \ 0 < \eta < \gamma\}}.$}

The arguments here are similar as in Chapter 6.2 C): If $\alpha_\gamma > |\beta|^\plus$, it follows as in Lemma \ref{cardforc} that $|\m{P}^\beta| \leq \kappa_\gamma^\plus\, \cdot\, |\beta|^{\aleph_0} \leq \lambda^\plus\, \cdot |\beta|^\plus < \alpha_\gamma$. In the case that $\alpha_\gamma = |\beta|^\plus$, it follows that $cf \,|\beta| > \omega$, and as before, we distinguish several cases, whether $\gamma$ is a limit ordinal or $\gamma = \ol{\gamma} + 1$, and in the latter case, whether $\ol{\gamma} \in Lim$, or $\ol{\gamma} \in Succ$ with $\ol{\gamma} = \ol{\ol{\gamma}} + 1$ etc. We separate $P^{\ol{\gamma}}$ (or $P^{\ol{\ol{\gamma}}}$, or both), 
%\textit{and} $P^{\ol{\gamma}}$), 
and obtain that $P^{\ol{\gamma}}$ (or $P^{\ol{\ol{\gamma}}}$, or the product $P^{\ol{\ol{\gamma}}}\, \times\, P^{\ol{\gamma}}$) preserves cardinals, while the remaining forcing is now sufficiently small.

%\colorbox{red}{TO DO: Injektionen $\gamma$ ersetzen durch $\psi$!} marking

%\colorbox{yellow}{Genügt das?}

%\colorbox{yellow}{ACHTUNG - $\alpha_\gamma$ oder $\alpha (\lambda)$ schreiben?}

\subsubsection*{D) A set $\boldsymbol{\wt{\powerset} (\lambda) \supseteq \dom f^\beta}$ with an injection $\boldsymbol{\iota: \wt{\powerset} (\lambda) \hookrightarrow \lambda^\plus\, \cdot |\beta|^{\aleph_0}}$.}

As in Chapter \ref{6.2} D) and \ref{6.4} D), we construct in $V[G^\beta]$ a set $\wt{\powerset} (\lambda) \supseteq \dom f^\beta$ with an injection $\iota: \wt{\powerset} (\lambda) \hookrightarrow (\lambda^\plus)^V\, \cdot (|\beta|^\plus)^V$ in the case that $(|\beta|^\plus)^V < \alpha (\lambda)$, and an injection $\iota: \wt{\powerset} (\lambda) \hookrightarrow (\lambda^\plus)^V\, \cdot\, |\beta|^V$ in the case that $(|\beta|^\plus)^V = \alpha (\lambda)$. \\[-2mm] 
%(then $(cf \,|\beta|)^V > \omega$ and $(|\beta|^{\aleph_0})^V = |\beta|^V$). \\[-2mm]

Together with Chapter 6.5 B) and 6.5 C), this gives the desired contradiction. \\[-2mm]

Hence, it follows that there must be some $\alpha < \alpha (\lambda)$ with $\alpha \notin \rg f^\beta$.

\subsubsection*{E) We use an isomorphism argument and obtain a contradiction.}
With the same isomorphism argument as in Chapter 6.2 E), it follows that $\theta^N (\lambda) = \alpha (\lambda)$ as desired. \\[-2mm]

Thus, we have shown that also for all cardinals $\lambda \geq \kappa_\gamma$, $\theta^N (\lambda)$ takes the smallest possible value. \\[-2mm]

This was the last step in the proof of our main theorem.

%TO DO: Nummer 1 für das Theorem weg?

\section {Discussion and Remarks} \label{discussion} 

%One could ask whether it is possible to turn $\m{P}$ into a class forcing.. WEGLASSEN?

Our result confirms Shelah's thesis from \cite[p.2]{pcfwc} that in $ZF\, \plus\, DC\, \plus\, AX_4$ it is \tbl better\tbr\, to look at $\big(\, [\kappa]^{\aleph_0}\ | \ \kappa \mbox{ a cardinal}\, \big)$ rather than $\big(\, \powerset (\kappa)\ | \ \kappa \mbox{ a cardinal}\, \big)$, in the sense that by what we have shown, the only restrictions that can be imposed on the $\theta$-function on a set of cardinals in $ZF\, \plus \, DC\, \plus \, AX_4$, are the obvious ones. \\[-2mm]

From Theorem 1 in \cite{AK}, it follows that increasing the surjective size of $[\aleph_\omega]^{\aleph_0}$ together with preserving $GCH$ below $\aleph_\omega$ requires a measurable cardinal, which again underlines how differently $\powerset(\aleph_\omega)$ and $[\aleph_\omega]^{\aleph_0}$ behave without the \textit{Axiom of Choice}.
%in the $\neg AC$-context. 
In further investigation, one might look at the cardinal arithmetic in our constructed model, such as possible (surjective) sizes of $(\kappa^\lambda\ | \ \kappa \mbox{  a cardinal })$ for $\lambda < < \kappa$. \\

Another question one might ask is, under what circumstances certain $\neg AC$-large cardinal properties are preserved in our symmetric extension $N$. As an example, we will now briefly look at the question whether an inaccessible cardinal $\kappa$ in the ground model could remain inaccessible in $N$. \\[-3mm]

The notion of inaccessibility in $ZFC$ reads as follows: \textit{A cardinal $\kappa$ is \textbf{inaccessible} (or \textbf{strongly inaccessible}) if $\kappa$ is regular and $2^\lambda < \kappa$ holds for all cardinals $\lambda < \kappa$.}

Hence, it can not be transferred directly to the $\neg AC$-context, since the power sets $\powerset(\lambda)$ for $\lambda < \kappa$ are ususally not well-ordered.
In \cite[Chapter 2]{ID2}, one can find several characterizations how inaccessibility can be defined in $ZF$:
\begin{Definitionwn}[\cite{ID2}] \begin{itemize}\item A regular uncountable cardinal $\kappa$ is \textbf{$\mathbf{i}$-inaccessible} if for all $\lambda < \kappa$, there is an ordinal $\alpha < \kappa$ with an injection $\iota: \powerset(\lambda) \hookrightarrow \alpha$. 
\item A regular uncountable cardinal $\kappa$ is \textbf{$\mathbf{v}$-inaccessible} if for all $\lambda < \kappa$, there is no surjection $s: V_\lambda \rightarrow \kappa$. \item A regular uncountable cardinal $\kappa$ is \textbf{$\mathbf{\ol{s}}$-inaccessible} if for all $\lambda < \kappa$, there is no surjection $s: \powerset (\lambda) \rightarrow \kappa$. \end{itemize}
%\item An uncountable cardinal $\kappa$ is \textit{$\ol{i}$-inaccessible} if $\kappa$ is regular and for any $\lambda < \kappa$, there is no injection $\iota: \kappa \hookrightarrow \powerset (\lambda)$. 
\end{Definitionwn}

%\colorbox{yellow}{FRAGE: Auch $\ol{i}$-inaccessible oder $s$-inaccessible ergänzen?}

Note that $i$-inaccessibility implies $v$-inaccessibility, and $v$-inaccessibility implies $\ol{s}$-inaccessibility. It is not difficult to see that a cardinal $\kappa$ is $v$-inaccessible if and only if $V_\kappa$ is a model of second-order $ZF$ (see \cite[Chapter 2]{ID2}). \\[-2mm]

Let now $\kappa$ be an inaccessible cardinal in the setting of our theorem: $V \vDash ZFC\, \plus\, GCH$ with sequences $(\kappa_\eta\ | \ 0 < \eta < \gamma)$, $(\alpha_\eta\ | \ 0 < \eta < \gamma)$ as before, with the additional property that for all $\kappa_\eta < \kappa$, it follows that also $\alpha_\eta < \kappa$. Then by construction, it follows that $\kappa$ is $\ol{s}$-inaccessible in $N$, while $i$-inaccessibility of $\kappa$ is out of reach, since we do not have our power set well-ordered. 

The question remains whether $\kappa$ is $v$-inaccessible in $N$. By induction over $\lambda$, we could prove (using several isomorphism and factoring arguments, similar to those in Chapter \ref{6.2}):

\begin{Proposition} Let $V$ be a ground model of $ZFC\, \plus\, GCH$ with $\gamma \in \Ord$, and sequences of uncountable cardinals $(\kappa_\eta\ | \ 0 < \eta < \gamma)$ and $(\alpha_\eta\ | \ 0 < \eta < \gamma)$ with the properties listed in Chapter $2$. Moreover, let $N \supseteq V$ denote the symmetric extension constructed  in Chapter $3$, $4$ and $5$.

If $\kappa$ is an inaccessible cardinal in $V$ with the property that for all $\kappa_\eta < \kappa$ it follows that $\alpha_\eta < \kappa$, then $\kappa$ is {\upshape $v$-inaccessible} in $N$: For any $\lambda < \kappa$, there is no surjective function $s: V_\lambda^N \rightarrow \kappa$ in $N$. \end{Proposition}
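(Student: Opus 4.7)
I will argue by induction on $\lambda<\kappa$, the base case $\lambda=0$ being vacuous. For limit $\lambda$, suppose for contradiction that $s\colon V_\lambda^N\to\kappa$ were a surjection in $N$. Since $V_\lambda^N=\bigcup_{\mu<\lambda}V_\mu^N$, we have $\kappa=\bigcup_{\mu<\lambda}s[V_\mu^N]$. Cardinals and cofinalities are $V$-$N$-absolute, so $\kappa$ is regular in $N$; since $\lambda<\kappa$, some $s[V_\mu^N]$ must be unbounded in $\kappa$, for otherwise $\mu\mapsto \sup s[V_\mu^N]$ (defined in $N$ by Replacement) would be a $\lambda$-sequence bounded below $\kappa$, contradicting regularity. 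Fix such $\mu<\lambda$; let $\pi$ enumerate $s[V_\mu^N]$ in increasing order. The order type of $s[V_\mu^N]$ is some ordinal $\geq\kappa$, so sending $x\in V_\mu^N$ to $\pi^{-1}(s(x))$ if this value is $<\kappa$, and to $0$ otherwise, yields a surjection $V_\mu^N\to\kappa$ in $N$, contradicting the inductive hypothesis.

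The successor case $\lambda=\mu+1$ is the main obstacle and will proceed by adapting the machinery of Chapter~6.2. Assume $s\colon\powerset^N(V_\mu^N)\to\kappa$ is a surjection in $N$, with $s=\dot s^G$ and $\dot s\in HS$ stabilized by an intersection $(A_{\dot s})=\bigcap_{m<\omega}\mathrm{Fix}(\eta_m,i_m)\cap\bigcap_{m<\omega}H^{\lambda_m}_{k_m}$. Because $\kappa$ is inaccessible in $V$, preserved in $N$, and satisfies $\alpha_\eta<\kappa$ for all $\kappa_\eta<\kappa$, the ordinal $\ol\beta:=\sup\{i_m,k_m:m<\omega\}$ is $<\kappa$; we pick $\wt\beta<\kappa$ larger than $\ol\beta$ and $(\mathrm{rank}(V_\mu^N))^{+}$ (in the sense accessible to our construction), and set $\beta:=\wt\beta\,\plus\,\kappa_\gamma^\plus$. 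Using the Approximation Lemma applied to names of transitive-closure codings of elements of $V_\mu^N$ in $HS$, I define $s^\beta\subseteq s$ to consist of those pairs $(X,\alpha)\in s$ for which $X$ admits a name interpretable in an intermediate generic extension of the form $V[\prod_m G_\ast(a_m)\times\prod_m G^{\ol\sigma_m}_{\ol i_m}]$ where every $\ol i_m<\beta$ (and the analogous requirement for the names of members of $X$, furnished by the inductive hypothesis and the countable character of $HS$-supports).

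From here, the dichotomy of Chapter~6.2 D)–E) applies almost verbatim. If $s^\beta$ is surjective onto $\kappa$, then by constructing a projection $\rho^\beta\colon\overline{\m{P}}\to(\m{P}^\beta\,\uhr\,\lambda)^{(\eta_m,i_m)_{m<\omega}}\times\prod_m P^{\eta_m}\,\uhr\,[\kappa_\eta,\kappa_{\eta_m})$ analogous to Proposition~\ref{projectionrho2}, an isomorphism argument parallel to Proposition~\ref{fbetafbetaprime} shows $s^\beta$ lives in the corresponding intermediate generic extension; this forcing has cardinality $<\kappa$ (using $\alpha_\eta<\kappa$ for $\kappa_\eta<\kappa$) and preserves cardinals $\geq\kappa$ by arguments as in Proposition~\ref{prescardalphaeta}; and $\dom s^\beta\subseteq\wt{\powerset}(V_\mu^N)$ injects into an ordinal $<\kappa$ in that extension by combining the inductive hypothesis on $V_\mu^N$ with $AC$ in the intermediate extension, contradicting surjectivity onto $\kappa$. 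If instead $\alpha<\kappa$ lies in $\rg s\setminus\rg s^\beta$, pick $X$ with $s(X)=\alpha$ and a witnessing good tuple containing some $\ol i_{m_0}\in[\beta,\alpha_{\ol\sigma_{m_0}})$; a density argument below a witnessing condition produces $\ol l_{m_0}\in(\wt\beta,\beta)$ with $q^{\ol\sigma_{m_0}}_{\ol i_{m_0}}=q^{\ol\sigma_{m_0}}_{\ol l_{m_0}}$, and the $\m{P}_1$-automorphism swapping these two coordinates (and doing nothing else) fixes $q$, lies in $(A_{\dot s})$ because $\wt\beta$ is large enough, and sends the witnessing $\tau_\varrho(\dot X)$ to a name for a set captured with parameters $<\beta$, yielding $\alpha\in\rg s^\beta$, a contradiction.

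The main technical difficulty will be verifying that elements of $V_\mu^N$ can be coded uniformly enough for the definition of $s^\beta$ to be a bona fide element of the appropriate intermediate extension: unlike in Chapter~6.2 where $X\subseteq\kappa_\eta$ is already a set of ordinals, here each $X\in\powerset^N(V_\mu^N)$ must be described via the hereditarily symmetric names of its members, and keeping track of supports through the transitive-closure recursion is delicate. The inductive hypothesis, together with the countable closure of $\mathcal{F}$ and the countable support of conditions in $\m{P}$, should make this tractable by ensuring that the family of supports involved in naming an element of $V_\mu^N$ can itself be coded as a countable object below $\beta$, so that the Approximation-Lemma-style capture goes through.
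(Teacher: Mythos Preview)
Your limit case is fine (using regularity of $\kappa$ in $N$, any cofinal subset of $\kappa$ has order type exactly $\kappa$, so the composition with $\pi^{-1}$ works). The successor step, however, has a structural gap that the paper's approach avoids.

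The difficulty you flag at the end is not a detail to be filled in later; it is the whole point. Your inductive hypothesis is merely ``no surjection $V_\mu^N\to\kappa$ in $N$'', and this is too weak to run the Chapter~6.2 machinery. That machinery depends on elements of $\dom f$ being sets of ordinals, so that the Approximation Lemma captures each $X$ in a small intermediate extension and the definition of $f^\beta$ makes sense. For $X\in\powerset^N(V_\mu^N)$ you need to code $X$ as a set of ordinals, which in turn requires an injection $V_\mu^N\hookrightarrow\delta$ for some ordinal $\delta<\kappa$, available in a controlled intermediate extension. Your hypothesis does not supply this: ``no surjection onto $\kappa$'' in the choiceless model $N$ does not yield an injection into anything below $\kappa$, and there is no reason the coding can be done uniformly with countable supports alone. (A secondary issue: you set $\beta=\wt\beta+\kappa_\gamma^+$, but $\kappa_\gamma$ may well exceed $\kappa$, so $\beta$ need not be $<\kappa$; the correct bound must come from the local data below $\kappa$.)

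The paper takes exactly the route that closes this gap: it strengthens the inductive hypothesis to the positive statement that for each $\lambda<\kappa$ there exist $\kappa_{\ol{\nu},\ol{\j}}(\lambda)<\kappa$ and $\beta_\lambda<\kappa$ with an injection $\iota\colon V_\lambda^N\hookrightarrow\beta_\lambda$ living in $V[G\,\uhr\,\kappa_{\ol{\nu},\ol{\j}}(\lambda)]$. This stronger hypothesis is what makes the successor step go through: once $V_\mu^N$ is identified with a set of ordinals below $\beta_\mu$ inside a small ZFC extension, subsets $X\subseteq V_\mu^N$ become sets of ordinals and the Chapter~6.2-style factoring and counting arguments apply to produce $\beta_{\mu+1}<\kappa$. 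The $v$-inaccessibility of $\kappa$ then follows immediately, since a surjection $V_\lambda^N\to\kappa$ in $N$ together with $\iota$ would collapse $\kappa$ in an extension that preserves it. So the fix is not to patch your dichotomy argument but to load the induction with the injection statement from the start.
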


%\colorbox{red}{TO DO: Man müsste NACHPRÜFEN, ob das überhaupt stimmt!}

In our inductive proof, we show that for any cardinal $\lambda < \kappa$, there exists some $\kappa_{\ol{\nu}, \ol{\j} } (\lambda) < \kappa$ and a cardinal $\beta_\lambda < \kappa$ with an injection $\iota: V_\lambda^N \hookrightarrow \beta_\lambda$ in $V[G\, \uhr\, \kappa_{\ol{\nu}, \ol{\j}} (\lambda)]$. \\

Our next remark is about the following requirement that we put on the sequences $(\kappa_\eta\ | \ 0 < \eta < \gamma)$, $(\alpha_\eta\ | \ 0 <\eta < \gamma)$: \[\forall\, \eta\ (\alpha_\eta = \alpha^\plus\, \rightarrow \, \cf \alpha > \omega).\]

We mentioned in Chapter \ref{the theorem} that this condition is necessary if we want $\forall \eta\ \theta^N (\kappa_\eta) = \alpha_\eta$ under $AX_4$.

Moreover, we proved in Chapter 2 that whenever we start from a ground model $V \vDash ZFC\, \plus\, GCH$, and construct a symmetric extension $N \supseteq V$ with $N \vDash ZF\, \plus\, DC$ such that $V$ and $N$ have the same cardinals and cofinalities, the following holds: \begin{center} \textit{If $\kappa$, $\alpha \in \Card$ with $\theta^N (\kappa) = \alpha^\plus$, then $\cf^N (\alpha) > \omega$.} \end{center} 

One could ask what happens if we drop the requirement that $N$ should extend a ground model $V \vDash ZFC\, \plus\, GCH$ cardinal-preservingly. 

Can there be any inner model $N \vDash ZF\, \plus\, DC$ with cardinals $\kappa$, $\alpha$ such that $\theta^N (\kappa) = \alpha^\plus$ and $cf^N (\alpha) = \omega$? \\[-2mm]

%whether there could be an inner??? model $N \vDash ZF\, \plus\, DC$ with $\theta^N (\kappa) = \alpha^\plus$ and $\cf \alpha = \omega$, if we do not assume the existence of a ground model $V$. \\[-2mm]

Let $s: 2^\kappa \rightarrow \alpha$ denote a surjective function in $N$. Then with $DC$, it follows that there is also a surjection $s_1: (2^\kappa)^\omega \rightarrow \alpha^\omega$ in $N$; and we also have a surjective function $s_0: 2^\kappa \rightarrow (2^\kappa)^\omega$. 

Recall that in Chapter 2, we then took a surjection $\wt{s}_2: (\alpha^\omega)^V \rightarrow (\alpha^\plus)^V$ from our ground model $V$, which gave a surjection $s_2: (\alpha^\omega)^N \rightarrow (\alpha^\plus)^N$ in $N$. Then $s_2\, \circ\, s_1\, \circ\, s_0: 2^\kappa \rightarrow \alpha^\plus$ was a surjective function in $N$; hence, $\theta^N (\kappa) \geq \alpha^{\plus \plus}$. \\[-2mm]

In a more general setting, where we can not refer to a ground model $V$, we try to use the constructible universe $L = L^N$ instead. Under the assumption that $0^\sharp$ does not exist, it follows by \textit{Jensen's Covering Theorem} (\cite{marginaliatts}) that $L$ does not differ drastically from $N$: In particular, $L$ and $N$ have the same successors of singular cardinals; so if $cf^N (\alpha) = \omega$, then $(\alpha^\plus)^L = (\alpha^\plus)^N$. \\[-3mm]

This gives the following lemma: 

%\colorbox{red}{ACHTUNG - das müsste man ZITIEREN! Jensen's Covering Theorem?}

\begin{Lemma} \label{zerosharpprop} Let $N$ be an inner model of $ZF\, \plus\, DC$ with $N \vDash$ \tbl\,$0^\sharp$ does not exist\tbr, and $\alpha \in \Card^N$ with $\cf^N (\alpha) = \omega$. Then there exists a surjective function $s_2: (\alpha^\omega)^N \rightarrow (\alpha^\plus)^N$ in $N$. \end{Lemma}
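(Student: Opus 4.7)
The plan is to use the stated consequence of Jensen's covering lemma, $(\alpha^\plus)^L = (\alpha^\plus)^N$, and to localize the problem to an intermediate inner model of $ZFC$ in which enough of $GCH$ holds for classical cardinal arithmetic to directly compute $\alpha^\omega$. First, using $DC$ in $N$, I would fix a strictly increasing cofinal sequence $\vec{\alpha} = (\alpha_n)_{n < \omega}$ in $\alpha$, and consider the inner model $M := L[\vec{\alpha}] \subseteq N$. Since $M$ is of the form $L[A]$ for a set parameter $A$ with $|\mathrm{trcl}(A)|^M \leq \alpha$, it is a transitive model of $ZFC$, and a standard $L[A]$-fine-structure argument shows that $GCH$ holds in $M$ at every cardinal $\geq \alpha$. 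Moreover, $\alpha$ remains a cardinal in $M$ (being one in $N$), and $\cf^M (\alpha) = \omega$ is witnessed by $\vec{\alpha} \in M$.

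Combining König's theorem and $GCH$ in $M$: on the one hand $(\alpha^\omega)^M > \alpha$, hence $\geq (\alpha^\plus)^M$; and on the other hand $(\alpha^\omega)^M \leq (\alpha^\alpha)^M = (2^\alpha)^M = (\alpha^\plus)^M$. Therefore $(\alpha^\omega)^M = (\alpha^\plus)^M$, and $AC$ in $M$ provides a bijection $h : (\alpha^\omega)^M \to (\alpha^\plus)^M$. To identify $(\alpha^\plus)^M$ with $(\alpha^\plus)^N$, I sandwich $L \subseteq M \subseteq N$ to get $(\alpha^\plus)^L \leq (\alpha^\plus)^M \leq (\alpha^\plus)^N$; Jensen's covering forces the two outer terms to coincide, so all three agree. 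The map $h \in M \subseteq N$ can then be viewed as a bijection onto $(\alpha^\plus)^N$, and the required surjection $s_2 : (\alpha^\omega)^N \to (\alpha^\plus)^N$ is obtained by setting $s_2(g) := h(g)$ for $g \in (\alpha^\omega)^M$ and $s_2(g) := 0$ otherwise; both clauses are uniformly definable in $N$ since $M$ is a definable inner model.

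The main technical point is the verification that $L[\vec{\alpha}]$ satisfies $GCH$ above $|\mathrm{trcl}(\vec{\alpha})|$; this is classical for $L[A]$-models but is the one step requiring genuine attention to fine structure. A secondary concern is the invocation of Jensen's covering lemma in a $ZF + DC$ context (it is usually stated in $ZFC$); however, the consequence actually used --- that successors of singular cardinals are $L$-$N$-absolute --- is exactly the form asserted in the excerpt preceding the lemma, and survives in $ZF + DC + \neg 0^\sharp$.
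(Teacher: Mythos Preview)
Your argument is correct, but it follows a different route from the paper. The paper works directly with $L$ rather than passing to $L[\vec\alpha]$: it builds an explicit injection $(2^\alpha)^L \hookrightarrow (\alpha^\omega)^N$ by sending $g \in (2^\alpha)^L$ to the sequence of restrictions $(g\uhr\alpha_i)_{i<\omega}$, then coding each $(2^{\alpha_i})^L$ into $(\alpha_i^\plus)^L$ via injections $\gamma_i$ chosen with $DC$, and finally embedding $\prod_i(\alpha_i^\plus)^L$ into $(\alpha^\omega)^N$. From this injection one reads off a surjection $(\alpha^\omega)^N \twoheadrightarrow (\alpha^\plus)^L$, and Jensen covering gives $(\alpha^\plus)^L = (\alpha^\plus)^N$.

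The trade-off is this: the paper's approach is more elementary in that it only uses $GCH$ in $L$ itself and an explicit hand-built injection, avoiding any appeal to $L[A]$-condensation or fine structure. Your approach is more conceptual --- you hide the combinatorics inside the single computation $(\alpha^\omega)^M = (\alpha^\plus)^M$ via K\"onig plus $GCH$ --- but at the cost of invoking $GCH$ in $L[\vec\alpha]$ at $\alpha$, which, while standard, is a heavier black box than $GCH$ in $L$. Both proofs ultimately rest on the same covering consequence $(\alpha^\plus)^L = (\alpha^\plus)^N$; your sandwich through $(\alpha^\plus)^M$ is a harmless detour.
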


\begin{proof} Let $(\alpha_i\ | \ i < \omega)$ denote a strictly increasing sequence in $N$ which is cofinal in $\alpha$.
%with $\sup \{ \alpha_i\ | \ i < \omega\} = \alpha$. 
First, we construct in $N$ an injection $\iota: (2^\alpha)^L \hookrightarrow (\alpha^\omega)^N$, $\iota = \iota_2\, \circ\, \iota_1\, \circ\, \iota_0$, as follows: \begin{itemize} \item Let $\iota_0: (2^\alpha)^L \rightarrow \prod_{i < \omega} (2^{\alpha_i})^L$ denote the injection that maps any $g : \alpha \rightarrow 2$, $g \in L$, to the sequence of its restrictions $\big((g\, \uhr\, \alpha_i)\ | \ i < \omega\big)$. \item For any $i < \omega$, there is in $L$ an injection $\gamma: (2^{\alpha_i})^L \hookrightarrow (\alpha_i^\plus)^L$; so with $DC$ in $N$, we can choose a sequence of injective maps $(\gamma_i\ | \ i < \omega)$ such that $\gamma_i: (2^{\alpha_i})^L \hookrightarrow (\alpha_i^\plus)^L$ for all $i < \omega$. Then we define in $N$ an injection $\iota_1: \prod_{i < \omega} (2^{\alpha_i})^L \rightarrow \prod_{i < \omega} (\alpha_i^\plus)^L$ by setting $\iota_1 (X_i\ | \ i < \omega) := ( \gamma_i (X_i)\ | \ i < \omega)$. \item Finally, since $(\alpha_i^\plus)^L \leq (\alpha_i^\plus)^N < \alpha$ for all $i < \omega$, it follows that there is in $N$ an injective map $\iota_2: \prod_{i < \omega} (\alpha_i^\plus)^L \hookrightarrow (\alpha^\omega)^N$.

%ACHTUNG FRAGE: Geht es um $AC_\omega$ oder um $DC$? Würde $DC$ überhaupt folgen aus dem $\omega$-Abschluss von Forcing und Filter??
\end{itemize}
Thus, $\iota:= \iota_2\, \circ\, \iota_1\, \circ\, \iota_0: (2^\alpha)^L \hookrightarrow (\alpha^\omega)^N$ is an injection in $N$; which yields a surjection $s: (\alpha^\omega)^N \rightarrow (2^\alpha)^L$, or $\ol{s}: (\alpha^\omega)^N \rightarrow (\alpha^\plus)^L$. \\[-2mm]

%Now, \textit{Jensen's Covering Lemma} can be applied in $N$, 
Since we have assumed that $N \vDash$ \tbl $0^\sharp$ does not exist\tbr\,and $cf^N (\alpha) = \omega$, it follows by \textit{Jensen's Covering Lemma} in $N$ that $(\alpha^\plus)^L = (\alpha^\plus)^N$. \\[-3mm]

This gives our surjecion $s_2: (\alpha^\omega)^N \rightarrow (\alpha^\plus)^N$ in $N$ as desired.

%\colorbox{red}{HIER FEHLEN ZITATE!}

%Now, Jensen's Covering Lemma applied in $N$
\end{proof} 

%\colorbox{red}{TO DO: Zusammen durchgehen!!}

\begin{Corollary} Let $N$ be an inner model of $ZF\, \plus DC$ with $N \vDash$ \tbl\,$0^\sharp$ does not exist\tbr, and cardinals $\kappa$, $\alpha$ such that $\theta^N (\kappa) = \alpha^\plus$. Then $\cf^N (\alpha) > \omega$. \end{Corollary}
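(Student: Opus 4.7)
The plan is to derive the Corollary as a direct consequence of Lemma~\ref{zerosharpprop}, by reproducing in the general setting the chain of three surjections $s_2 \circ s_1 \circ s_0$ that was sketched in Chapter~2 for the cardinal-preserving-extension case. Suppose toward a contradiction that $\cf^N(\alpha) = \omega$. From $\theta^N(\kappa) = \alpha^\plus$ we have in particular $\alpha < \theta^N(\kappa)$, so there exists in $N$ a surjection $s : \powerset(\kappa) \to \alpha$. The goal is to lift $s$ to a surjection $\powerset(\kappa) \to \alpha^\plus$, which will contradict $\theta^N(\kappa) = \alpha^\plus$.

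First, I would use the canonical $N$-bijection $\kappa \leftrightarrow \kappa \times \omega$ to produce a surjection $s_0 : \powerset(\kappa) \to (\powerset(\kappa))^\omega$ by sending a subset $X \subseteq \kappa$ to the sequence of its $\omega$-many ``slices'' under the bijection. Next, I would define $s_1 : (\powerset(\kappa))^\omega \to \alpha^\omega$ by $s_1(X_i \mid i < \omega) := (s(X_i) \mid i < \omega)$; surjectivity of $s_1$ requires, for each target sequence $(\alpha_i \mid i < \omega) \in \alpha^\omega$, a choice of preimages $Y_i \in s^{-1}[\{\alpha_i\}]$, which is exactly one countable choice and hence available from $DC$ in $N$. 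This is the step that uses the hypothesis $N \vDash DC$.

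The key step is then to invoke Lemma~\ref{zerosharpprop} in $N$: since $N \vDash$ ``$0^\sharp$ does not exist'' and $\cf^N(\alpha) = \omega$, the lemma supplies a surjection $s_2 : (\alpha^\omega)^N \to (\alpha^\plus)^N$ living inside $N$. Composing, $s_2 \circ s_1 \circ s_0 : \powerset(\kappa) \to \alpha^\plus$ is a surjection in $N$, whence $\theta^N(\kappa) \geq \alpha^{\plus\plus} > \alpha^\plus$, contradicting $\theta^N(\kappa) = \alpha^\plus$.

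There is no serious obstacle, since Lemma~\ref{zerosharpprop} has already absorbed the only nontrivial ingredient, namely Jensen's Covering Theorem under $\neg 0^\sharp$ to secure $(\alpha^\plus)^L = (\alpha^\plus)^N$. The only point deserving a line of care in the write-up is the application of $DC$ in the construction of $s_1$: one should explicitly observe that the family $(s^{-1}[\{\alpha_i\}] \mid i < \omega)$ is a countable family of nonempty sets in $N$, so that $AC_\omega$ (and hence $DC$) suffices to extract the required choice sequence inside $N$.
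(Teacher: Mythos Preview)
Your proposal is correct and follows essentially the same approach as the paper's own proof: assume $\cf^N(\alpha)=\omega$, take a surjection $s:\powerset(\kappa)\to\alpha$, build $s_0:\powerset(\kappa)\to(\powerset(\kappa))^\omega$ and $s_1:(\powerset(\kappa))^\omega\to\alpha^\omega$ (using $DC$ for surjectivity of $s_1$), invoke the preceding lemma for $s_2:\alpha^\omega\to\alpha^\plus$, and derive a contradiction from the composite $s_2\circ s_1\circ s_0$. Your write-up is in fact slightly more explicit than the paper's about the construction of $s_0$, $s_1$ and the role of $DC$, but there is no substantive difference.
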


%Beweis weglassen?
\begin{proof} Let $s: 2^\kappa \rightarrow \alpha$ denote a surjective function in $N$, and assume towards a contradiction that $cf^N (\alpha) = \omega$. As mentioned before, we have surjections $s_0: 2^\kappa \rightarrow (2^\kappa)^\omega$ and $s_1: (2^\kappa)^\omega \rightarrow \alpha^\omega$. By the previous lemma, it follows that there is also a surjection $s_2: \alpha^\omega \rightarrow \alpha^\plus$ in $N$. Setting $s := s_2\, \circ\, s_1\, \circ\, s_0$, we obtain in $N$ a surjective function  $s: 2^\kappa \rightarrow \alpha^\plus$. Contradiction.\end{proof}

Thus, without large cardinal assumptions, it is not possible to achieve $\theta^N (\kappa) = \alpha^\plus$ for cardinals $\kappa$, $\alpha$ with $cf^N (\alpha) = \omega$. \\

%in our setting, where we wish to avoid large cardinal assumptions, we can not achieve $\theta^N (\kappa) = \alpha^\plus$ for $\cf^N \alpha = \omega$. 

%\colorbox{red}{FRAGE: näher auf $0^\sharp$ eingehen? Oder erst EVTL in der Arbeit?} \\

%\colorbox{red}{Zitieren, dass $0^\sharp$ sich zwischen Jonsson cardinal und weakly compact cardinal befindet?}

%\colorbox{yellow}{ACHTUNG - man sollte die selben Bezeichnungen nehmen wie in Kapitel 2!! NACHSEHEN!} \\[-2mm]

%there exists a model $N \vDash ZF\, \plus\, 

%\colorbox{red}{TO DO: den \tbl alten Text\tbr\, durchsehen bzgl. Discussion!}\\[-2mm]

Finally, we remark that our theorem gives a result 
%with this construction, we 
%only obtain results 
about possible behaviors of the $\theta$-function on a \textit{set} of uncountable cardinals. Unfortunately, a straightforward generalization of our forcing notion to ordinal-length sequences $(\kappa_\eta\ | \ \eta \in \Ord)$, $(\alpha_\eta\ | \ \eta \in \Ord)$ does not result in a $ZF$-model: \\[-3mm]

Denote by $\ol{\m{P}}$ the class forcing which canonically generalizes our forcing notion $\m{P}$ to sequences $(\kappa_\eta\ | \ \eta \in \Ord)$, $(\alpha_\eta\ | \ \eta \in \Ord)$ of ordinal length; denote by $\ol{G}$ a $V$-generic filter on $\ol{\m{P}}$, and $\ol{N} := V(\ol{G})$. 
Then $\ol{N} \nvDash$ $Power$ $Set$: 
%$\ol{N}$ the $V$-generic symmetric extension by the class forcing $\ol{\m{P}}$ generalizing our forcing notion $\m{P}$ to sequences $(\kappa_\eta\ | \ \eta \in \Ord)$, $(\alpha_\eta\ | \ \eta \in \Ord)$ of class length, then $N \nvDash $ $Power$ $Set$. 
%the Axiom of Power Set does not hold true in $\ol{N}$:
%us briefly outline why, for instance, $\powerset (\aleph_1)$ would not be a set in $\ol{N}$, if now $ 
Assume towards a contradiction that 
%$\ol{G}$ was a $V$-generic filter on $\ol{\m{P}}$, $\ol{N} := V(\ol{G})$, and $
$Z := \powerset^{\ol{N}} (\aleph_1) \in \ol{N}$.
% and denote by $\ol{G}$ 
Then there would be an ordinal $\gamma$ and a symmetric name $\dot{Z} \in HS\, \cap \, \Name (\ol{\m{P}}\, \uhr\, \gamma)$ with $Z = \dot{Z}^{\ol{G}\, \uhr\, \gamma}$, where $\ol{\m{P}}\, \uhr\, \gamma$ denotes the initial part of $\ol{\m{P}}$ up to $\kappa_\gamma$. Now, by an isomorphism argument similar as in the \textit{Approximation Lemma} \ref{approx}, one can show that any set $X \in \powerset^{\ol{N}} (\aleph_1)$ is contained in an intermediate generic extension $V[\prod_{m < \omega} G^{\eta_m}_{i_m}]$ with $\eta_m < \gamma$, $i_m < \alpha_{\eta_m}$ for all $m < \omega$. Consider $X := G^{\gamma + 1}_i\, \uhr\, \aleph_1$ for some $i < \alpha_{\gamma + 1}$. Then $X \in \powerset^{\ol{N}} (\aleph_1)$; hence, $ X = G^{\gamma + 1}_i\, \uhr\, \aleph_1\in V[\prod_{m < \omega} G^{\eta_m}_{i_m}]$ for a sequence $((\eta_m, i_m)\ | \ m < \omega)$, such that
%$\big((\eta_m, i_m)\ | \ m < \omega\big)$ with 
$\eta_m < \gamma$, $i_m < \alpha_{\eta_m}$ holds for all $m < \omega$.
But this is not possible, since $G^{\gamma + 1}_i$ is $V[\prod_{m < \omega} G^{\eta_m}_{i_m}]$-generic on $P^{\gamma + 1}$. \\[-2mm]

Broadly speaking, the point is that a class-sized version of our forcing construction never stops adding new subsets of $\aleph_1$ (or any other uncountable cardinal). Although we can try and keep control over the surjective size of $\powerset^N (\aleph_1)$, it is not possible to capture $\powerset^N (\aleph_1)$ in an appropriate set-sized intermediate generic extension; and it remains a future project to settle this problem and find a countably closed forcing notion that is also suitable for sequences $(\kappa_\eta\ | \ \eta \in \Ord)$, $(\alpha_\eta\ | \ \eta \in \Ord)$ of ordinal length.

\bibliographystyle{alpha}
\nocite{*}
\bibliography{Literaturverzeichnis}

\vspace*{3mm}

{\scshape{ \footnotesize Anne Fernengel, Mathematisches Institut, Rheinische Friedrich-Wilhelms-Universit\"at, \\[-1mm] Bonn, Germany }}\\[-0,5mm]
{\rmfamily \itshape \footnotesize E-Mail address: }{ \ttfamily \footnotesize anne@math.uni-bonn.de} \\[1mm]

{\scshape{ \footnotesize Peter Koepke, Mathematisches Institut, Rheinische Friedrich-Wilhelms-Universit\"at, \\[-1mm] Bonn, Germany }} \\[-0,5mm]
{\rmfamily \itshape \footnotesize E-Mail address:} { \ttfamily \footnotesize koepke@math.uni-bonn.de}

\end{document}